\theoremstyle{plain}
\newtheorem*{theorem*}{Theorem}
\newtheorem{theorem}{Theorem}
\newtheorem{proposition}[theorem]{Proposition}
\newtheorem{corollary}[theorem]{Corollary}
\newtheorem{lemma}[theorem]{Lemma}
\newtheorem{claim}[theorem]{Claim}
\newtheorem{conjecture}[theorem]{Conjecture}
\newtheorem{remark}[theorem]{Remark}
\newtheorem{example}[theorem]{Example}
\newtheorem{apptheorem}{Theorem}[section]
\newtheorem{appproposition}[apptheorem]{Proposition}
\newtheorem{applemma}[apptheorem]{Lemma}
\newtheorem{appremark}[apptheorem]{Remark}
\numberwithin{equation}{section}
\def\N{\mathbb{N}}
\def\Z{\mathbb{Z}}
\def\R{\mathbb{R}}
\def\A{\mathbb{A}}
\def\C{\mathbb{C}}
\def\Mat{\text{Mat}}
\DeclareMathOperator{\tr}{tr}
\DeclareMathOperator{\Hom}{Hom}
\DeclareMathOperator{\Bil}{Bil}
\DeclareMathOperator{\ind}{ind}
\DeclareMathOperator{\Sym}{Sym}
\DeclareMathOperator{\diag}{diag}
\DeclareMathOperator{\Ind}{Ind}
\DeclareMathOperator{\GL}{GL}
\DeclareMathOperator{\SL}{SL}
\DeclareMathOperator{\SO}{SO}
\DeclareMathOperator{\GSpin}{GSpin}
\DeclareMathOperator{\Spin}{Spin}
\DeclareMathOperator{\Sp}{Sp}
\DeclareMathOperator{\Real}{Re}
\newcommand{\bs}{\backslash}
\begin{document}

\title[Doubling for symplectic coverings]{Doubling Constructions and Tensor Product $L$-Functions: coverings of the symplectic group}
\author{Eyal Kaplan}

\address{Department of Mathematics, Bar Ilan University, Ramat Gan 5290002, Israel}
\email{kaplaney@gmail.com}

\thanks{This research was supported by the ISRAEL SCIENCE FOUNDATION (grant No. 421/17).}
\subjclass[2010]{Primary 11F70; Secondary 11F55, 11F66, 22E50, 22E55}
\keywords{Doubling method, covering groups, automorphic $L$-function, Rankin--Selberg integrals, metaplectic cover}
\begin{abstract}
In this work we develop an integral representation for the partial $L$-function of a pair $\pi\times\tau$ of genuine irreducible cuspidal automorphic representations, $\pi$ of the $m$-fold covering of Matsumoto of the symplectic group $\Sp_{2n}$, and $\tau$ of a certain covering group of $\GL_k$, with arbitrary $m$, $n$ and $k$. Our construction is based on the recent extension by Cai, Friedberg, Ginzburg and the author, of the classical doubling method of Piatetski-Shapiro and Rallis, from rank-$1$ twists to arbitrary rank twists. We prove a basic global identity for the integral and compute the local integrals with unramified data. Possible applications include an analytic definition of local factors for representations of covering groups, and a Shimura type lift of representations from covering groups to general linear groups.
\end{abstract}

\maketitle

\section*{Introduction}
When Hecke developed his theory of $L$-functions for modular forms of integral weight, he observed that for half-integral weight, his Hecke operators could not ensure the existence of an Euler product expansion for the $L$-series (\cite{GP}). In the adelic setting, these modular forms can be interpreted as functions on global central extensions of $\SL_2$. The work of Weil was perhaps the first application
of covering groups to the theory of modular forms. Weil constructed the double covering of the symplectic group, and a geometric
realization for the theta representation \cite{We}, with the intention of reformulating Siegel's theory of what is now known as theta liftings, a goal pursued in \cite{Weil2}. Arbitrary rank coverings of $\SL_2$ and $\GL_2$ were studied by Kubota \cite{Kubota,Kubota2}. Around the same time, Moore \cite{Moore}, Steinberg \cite{Stein} and Matsumoto \cite{Mats} constructed
covering groups for almost simple simply connected split groups and laid the foundations for their systematic study.

Shimura \cite{Shimura1973} studied modular forms of half-integral weight, and was able to address early problems observed by Hecke. He defined a new family of Hecke operators, which controlled the Euler product factorization (as in the integral case). He proceeded to produce a lift of modular forms of weight $k/2$, where $k\geq3$ is an odd integer, to modular forms of weight $k-1$. To obtain this result, Shimura developed an integral representation involving an Eisenstein series, by generalizing the method of Rankin \cite{Rankin1939}, and used it for the study of the $L$-series attached to a $k/2$ modular form. He then applied the Converse Theorem of Weil \cite{Weil1967}, to produce a weight $k-1$ modular form. We also mention Waldspurger \cite{Waldspurger1980,Waldspurger1981,Waldspurger1989,Waldspurger1991}, who obtained local and global correspondences between representations of the double covering of $\SL_2$ and the group $\SO_3$, using the theta correspondence.
The results of Waldspurger were generalized only recently by Gan and Ichino \cite{GanIchino2018}, who described
the generic part of the automorphic discrete spectrum of $\Sp^{(2)}_{2n}(\A)$, using the theta lift of Li \cite{JSLi1997}
to $\SO_{2l+1}(\A)$ for $l\gg n$ (which is known to be nonzero) and the work of Arthur \cite{Arthur2013} on the endoscopic classification
for $\SO_{2l+1}$. Other works on the theta correspondence include \cite{KudlaRallis2005,GGP,WGS}. Refer to \cite{GanFanWeissman2018} for an historical overview on these topics.

In this work we take the first step towards a generalized Shimura lift, of genuine cuspidal automorphic representations of covering groups to automorphic representations of a suitable general linear group, via an integral representation.
For any $m>1$, we construct an integral representation for the partial $L$-function of a pair of genuine irreducible cuspidal automorphic representations, $\pi$ of an $m$-fold covering of $\Sp_{2n}(\A)$ of Matsumoto \cite{Mats}, and $\tau$ of a certain (specific) covering of $\GL_k(\A)$. We prove a global identity, which on decomposable data leads to an ``almost Eulerian product" (an infinite product of local integrals over the unramified places and an integral over finitely many places), and compute the local integrals with unramified data. Our construction is based on the doubling method of Piatetski-Shapiro and Rallis \cite{PSR} and its recent extensions \cite{CFGK2,CFK}, and we expect it to apply to a wide range of coverings. For more precise details and description see below: the linear construction is briefly recalled in \S~\ref{intro linear}, then we describe its extension to coverings in \S~\ref{intro-dbl for cvr}.

In a follow-up to this work (\cite{me13}) we developed the local theory of the integrals and defined local $\gamma$-, $L$- and $\epsilon$-factors. In the particular case of $m=2$ (the double cover) we except the local and global theory to imply the Shimura lift independently of the trace formula.

Piatetski-Shapiro and Rallis \cite{PSR} developed an integral representation for the standard automorphic $L$-function of
an irreducible cuspidal automorphic representation of a classical group, or its rank-$1$ twists. Their construction was different from other constructions for classical groups at the time (\cite{GPS,G}), in the sense that it did not assume any global or local model.
Cuspidal representations of general linear groups are always globally generic, i.e., admit Whittaker--Fourier coefficients; but this is the exceptional case, for other groups cuspidal (or supercuspidal, locally) representations need not admit Whittaker models. The local theory of their integrals was fully developed by Lapid and Rallis \cite{LR}, with additional cases of groups added by Yamana \cite{Yamana} and Gan \cite{Gan} (see \S~\ref{further} below). The construction of
\cite{PSR}, now known as the doubling method, has had numerous applications, including to the study of the theta correspondence \cite{KudlaRallis1994,HKS,WGS,Yamana} and to cohomological automorphic representations \cite{BochererSchmidt2000,HarrisLiSkinner2005,HarrisLiSkinner2006,EischenHarrisLiSkinner}. However, since it was limited to the standard $L$-function, it was not sufficient for the study of functorial lifts.

In the recent works \cite{CFGK2,CFK} the doubling method was extended to produce $L$-functions for arbitrary pairs of
irreducible cuspidal automorphic representations of a classical group and general linear group. This extension was obtained by generalizing the inducing data of the Eisenstein series to arbitrary generalized Speh representations, and applying an additional Fourier coefficient. The goal of \cite{CFGK2} was the basic global identity for the integral and the computation of the local integrals with unramified data, while the main focus of \cite{CFK} was the local theory at all places. The conclusion of \cite{CFK} was the proof of a global functorial lift from a cuspidal representation of a symplectic, split special orthogonal or split general spin group, to the natural general linear group, via the Converse Theorem of Cogdell and Piatetski-Shapiro \cite{CPS3,CPS1999}. This extended the functorial lift of \cite{CKPS2,CKPS,AsgSha} from globally generic representations to arbitrary ones. Of course the novelty here was really the method, since the endoscopic functorial transfer for quasi-split orthogonal or symplectic groups was obtained by Arthur \cite{Arthur2013} via the twisted stable trace formula, and extended to quasi-split unitary groups by Mok \cite{Mok2015}.

Presently, the advantages of the extended doubling method over other integral representations, include its uniform approach (e.g., independent of the ratio between the ranks of the classical and general linear groups), the local theory which is now available at all places, and the applicability to all cuspidal representations, i.e., no model assumption is made. As mentioned above this is already beneficial for linear groups, but is particularly important when considering covering groups. For covering groups local (thereby global) multiplicity one for Whittaker models breaks down, first and foremost, for genuine unramified principal series representations. This is one of the reasons it is intrinsically difficult to develop Eulerian integrals. The doubling construction is free of this assumption, making it suitable for extensions to coverings.

\subsection{The linear case}\label{intro linear}
We briefly introduce the integrals in the linear case first, following \cite{CFGK2}.
Let $F$ be a number field with a ring of adeles $\A$ and $G=\Sp_{2n}$ be the symplectic group on $2n$ variables.
Let $\pi$ be an irreducible cuspidal automorphic representation of $G(\A)$. For any $k\geq1$, let $\tau$ be an irreducible cuspidal automorphic representation of $\GL_k(\A)$. We integrate two cusp forms in the space of $\pi$ against a Fourier coefficient of an Eisenstein series on $H=\Sp_{4kn}$. The integral represents the partial $L$-function $L^S(s,\pi\times\tau)$, where $S$ is a finite set of places of $F$ outside which all data are unramified.

For $c\geq1$, let $\mathcal{E}_{\tau}$ be the generalized Speh representation corresponding to $c$ copies of $\tau$, constructed by Jacquet \cite{Jac4}. It is the residual representation of an Eisenstein series, corresponding to a representation of $\GL_{kc}(\A)$ parabolically induced from $|\det|^{\zeta_1}\tau\otimes\ldots\otimes|\det|^{\zeta_c}\tau$, at the point
$((c-1)/2,(c-3)/2,\ldots, (1-c)/2)$.
The main properties of $\mathcal{E}_{\tau}$ which were used in \cite{CFGK2}, were that its generic Fourier coefficient along the unipotent orbit $(k^c)$, called a (global) $(k,c)$ functional, is not identically zero, and it does not support any Fourier coefficient on an orbit greater than or not comparable with $(k^c)$. These were proved in \cite{G2,JL2013}. The local components of $\mathcal{E}_{\tau}$ admit unique $(k,c)$ models, which are the local analogs of the $(k,c)$ functional: this was proved in \cite{CFGK2} for the unramified components, and refined in \cite{CFK} to all components. We call a representation of $\GL_{kc}(\A)$ with these properties a $(k,c)$ representation. For the precise definition see \S~\ref{speh def}.

For instance if $c=1$, $\mathcal{E}_{\tau}=\tau$ and since $\tau$ is cuspidal, it is globally generic, i.e., admits a (nonzero) Whittaker--Fourier coefficient. In this case $(k,c)$ models are simply Whittaker models, and indeed the local components of $\tau$ all admit unique Whittaker models.

Put $c=2n$. Let $H=\Sp_{2kc}$, $B_H$ be a fixed Borel subgroup of $H$, $K_H$ be a standard maximal compact subgroup of $H(\A)$, and $P=M_P\ltimes U_P$ be a standard Siegel parabolic subgroup of $H$, i.e., the Levi part $M_P$ of $P$ is isomorphic to $\GL_{kc}$. Consider the representation $\Ind_{P(\A)}^{H(\A)}(\mathcal{E}_{\tau}\delta_P^s)$ (normalized induction), where $s\in\C$. For
a standard $K_H$-finite section $f$ of $\Ind_{P(\A)}^{H(\A)}(\mathcal{E}_{\tau}\delta_P^s)$, regarded as a complex-valued function, we have the Eisenstein series
\begin{align}\label{eq:Eisenstein series main}
E(h;s,f)=\sum\limits_{\gamma\in P(F)\backslash H(F)}f(s,\gamma h),\qquad h\in H(\A).
\end{align}
This series converges absolutely for $\Real(s)\gg0$ and admits meromorphic continuation to $\C$.

To describe the Fourier coefficient of $E(h;s,f)$, let $Q$ be a standard parabolic subgroup of $H$, whose Levi part $M_Q$ is isomorphic to $\GL_c\times\ldots\times\GL_c\times\Sp_{2c}$. Let $U=U_Q$ be the unipotent radical of $Q$. Fix a nontrivial character $\psi$ of $F\backslash\A$. One can define an automorphic character $\psi_U$ of $U(\A)$, such that the direct product $G(\A)\times G(\A)$ is embedded in the stabilizer of $\psi_U$ inside $M_Q(\A)$.
Denote the image of this embedding by $(g_1,g_2)\in H(\A)$.
According to the definitions, the Fourier coefficient $E^{U,\psi_U}(h;s,f)$ of $E(h;s,f)$ along $(U,\psi_U)$ is an automorphic form on $G(\A)\times G(\A)$.

Let $\varphi_1$ and $\varphi_2$ be two cusp forms in the space of $\pi$. Let $\iota=\left(\begin{smallmatrix}& I_{c/2}\\ I_{c/2} &\end{smallmatrix}\right)$, $g\mapsto{}^{\iota}g=\iota g\iota^{-1}$ is an outer involution of $G$, and denote
${}^{\iota}\varphi_2(g)=\varphi_2({}^{\iota}g)$.

The global integral in the linear setting was defined in \cite{CFGK2} by
\begin{align*}
Z(s,\varphi_1,\varphi_2,f)=
\int\limits_{G(F)\times G(F)\backslash G({\A})\times G({\A})}
\varphi_1(g_1)\,\overline{{}^{\iota}\varphi_2(g_2)}\,
E^{U,\psi_U}((g_1,g_2);f,s)\,dg_1\,dg_2.
\end{align*}
This integral is absolutely convergent on the plane except perhaps at the poles of the series.

Let $\{\varphi_1,\varphi_2\}$ be the standard $G(\A)$-invariant inner product on the space of $\pi$, and $f_{W({\mathcal E}_{\tau})}(h,s)$ denote the composition of $f$ with the $(k,c)$ functional on $\mathcal{E}_{\tau}$, namely
\begin{align*}
f_{W(\mathcal{E}_{\tau})}(h,s)=\int\limits_{V_{(c^k)}(F)\backslash V_{(c^k)}({\A})}
f(vh,s)\,\psi^{-1}(\mathrm{tr}(\sum_{i=1}^{k-1}v_{i,i+1}))\,dv.
\end{align*}
Here $V_{(c^k)}$ is the unipotent radical of the parabolic subgroup of $\GL_{kc}$ corresponding to the partition $(c^k)$,
$v=(v_{i,j})_{1\leq i,j\leq k}$ where $v_{i,j}$ are $c\times c$ blocks, and $\mathrm{tr}$ is the trace map.
The main global result of \cite{CFGK2} was, that for $\Real(s)\gg0$,
\begin{align}\label{linear global2}
Z(s,\varphi_1,\varphi_2,f)=\int\limits_{G({\A})}\int\limits_{(U_P\cap U)({\A})}
\{\varphi_1,\pi(g)\varphi_2\}f_{W({\mathcal E}_{\tau})}(\delta u_0(1,{}^{\iota}g),s)
\,\psi_U(u_0)\,du_0\,dg.
\end{align}
Here $\delta\in H(F)$ is the product of $\left(\begin{smallmatrix}&I_{kc}\\-I_{kc}\end{smallmatrix}\right)$ and $\diag(I_{(k-1)c},\left(\begin{smallmatrix}I_{c}&I_{c}\\&I_{c}\end{smallmatrix}\right),I_{(k-1)c})$.
On decomposable data, the r.h.s.~(right hand side) can be written as an Euler product of local integrals. In fact, in \cite{CFGK2} we obtained an almost Euler product, in the sense that the places with ramified or archimedean data appear together as one integral; the full Euler product was obtained in \cite{CFK} after proving the uniqueness of $(k,c)$ models everywhere.

Now at almost all places, the local integrals consist of unramified data. The main local result of \cite{CFGK2} was the computation of these integrals. The infinite product over the places with unramified data is $L^S(s,\pi\times\tau)$ divided by a product of partial $L$-functions which constitute the normalizing factor of the series.

The computation of the integrals with unramified data was carried out in \cite{CFGK2} through a sequence of reductions: the $G\times \GL_k$ integral was reduced to a doubling integral for $\GL_n\times \GL_k$ (also defined in \cite{CFGK2}), then to the case of $n=1$, i.e., to a $\GL_1\times \GL_k$ integral. Since the representation of $\GL_k$ is irreducible and generic (it is a local component of $\tau$), it is natural to compute this integral via the theory of Rankin--Selberg integrals for $\GL_1\times \GL_k$ and $\GL_1\times \GL_{2k}$ of
\cite{JS1,JPSS}.

\subsection{Doubling for coverings}\label{intro-dbl for cvr}
Simply put, the purpose of this work is to extend this construction to representations of coverings of $G=\Sp_{2n}$.
For a positive integer $m$, assume $F$ contains the full group $\mu_m$ of $m$-th roots of unity, and let $G^{(m)}(\A)$ be the $m$-fold covering of $G(\A)$ constructed by Matsumoto \cite{Mats} (following \cite{Moore,Stein}), defined using the global $m$-th order Hilbert symbol ($G^{(1)}=G$).
Within the framework of Brylinski and Deligne \cite{BD}, this is the unique covering which corresponds to the integer valued Weyl group invariant quadratic form $\mathcal{Q}$ on the cocharacter group of a maximal torus of $G$, which is $1$ on the short coroots (\cite[Proposition~4.15]{BD}).

Numerous constructions of integral representations including the doubling method, are based on restriction.
In the linear setting we restrict an Eisenstein series or its Fourier coefficient. Uniqueness results in representation theory
are frequently related to Eulerian products and $L$-functions, or their special values (\cite{Bu}).
The local principal at work here is that the Jacquet functor along $(U,\psi_U)$, which is the local analog of the Fourier coefficient of the Eisenstein series, is a multiplicity free representation of
$G\times G$, outside a discrete subset of $s\in\C$. In the set-up of covering groups, it is not even clear this statement is formally well defined.

Restriction is much more subtle for covering groups. For a covering $\widetilde{H}$ of $H$ and a subgroup $L$ of $H$, let
$\widetilde{L}$ be the restriction of the covering to $L$. It is always a subgroup of $\widetilde{H}$, but the restriction $\mathrm{H}^2(H,\mu_m)\rightarrow \mathrm{H}^2(L,\mu_m)$ ($2$-nd cohomology for topological groups) depends on the precise embedding of $L$. For example consider the $2$-fold covering of $\GL_{2d}$ of \cite{KP}: if $\GL_d$ is embedded via
$g\mapsto\diag(g,I_d)$, $\widetilde{\GL}_d$ is a similar $2$-fold covering, but for the map $g\mapsto\diag(g,g)$, $\widetilde{\GL}_d$ is not even among the coverings of \cite{KP}. This means that the embedding $G\times G$ has to be consistent with the choice
$G^{(m)}$ on each of the factors, to begin with.

A second problem concerns the lift of automorphisms of linear groups to their coverings. Since we are dealing with central extensions, $H$ certainly acts on $H^{(m)}$ by conjugation, but a trivial action might lift to a nontrivial action. For instance, direct factors of Levi subgroups of $\GL_d$ do not commute in the coverings of $\GL_d$ of \cite{KP} (similarly for certain coverings of $\GSpin_{2n+1}$).
We must then verify that both copies of $\widetilde{G}$ commute.

A third difficulty is related to local-global issues. Locally we have the block-compatible $2$-cocycle $\sigma$ of
Banks \textit{et. al.} \cite{BLS}, which is convenient for computations, and is compatible with restrictions in a certain strong sense. E.g., the restriction of $H^{(m)}$ to the copy of $\Sp_{2c}$ in $M_Q$ is $\Sp_{2c}^{(m)}$. Globally, however, the product
$\prod_{\nu}\sigma_{\nu}$ is not defined on $H^{(m)}(\A)$, so a global $2$-cocycle must be defined by normalizing $\sigma_{\nu}$ at almost all places, but then the compatibility properties are in general lost.

One precondition for the study of automorphic forms on covering groups is the splitting of the global covering over the rational points of the group. This is guaranteed, e.g., for the central extensions constructed using a Hilbert symbol, by the reciprocity law for the symbol. Here $\prod_{\nu}\sigma_{\nu}$ is defined on $H(F)$, and can be used to define this (unique) splitting.

The first component to extend to the covering is $\mathcal{E}_{\tau}$, the generalized Speh representation.
Let $\tau$ be a genuine irreducible cuspidal automorphic representation of a covering of $\GL_k(\A)$, say, $\widetilde{\GL}_k(\A)$.
For the covering groups under consideration, the coverings are split canonically over unipotent subgroups
(see \cite{Steinberg,Mats} or the general statement of \cite[Appendix~I]{MW2}). Therefore the definitions of Fourier coefficients along unipotent subgroups, and locally Jacquet functors, immediately extend to covering groups. Alas, uniqueness results often break down. For example, the Fourier expansion of Shalika \cite{Sh} implies in particular that $\tau$ is globally generic, as in the linear case;
but in sharp contrast almost all of the local components of $\tau$ admits a number of Whittaker models which is polynomial in the degree of the covering (see e.g., \cite{KP,McNamara,COf}). The reason for this is that (as already observed above) preimages of abelian subgroups need not be abelian in coverings, and
in particular genuine irreducible representations of the preimage of the diagonal torus of $\GL_k$ are finite-dimensional but not one-dimensional. Thus the naive extension of $\mathcal{E}_{\tau}$ to $\widetilde{\GL}_{kc}(\A)$ will not work, already for $c=1$.

Suzuki \cite{Suzuki1998} extended the generalized Speh representations to $r$-fold covering groups $\widetilde{\GL}_k(\A)$ (of \cite{KP}), under certain global (and also local, implicitly) assumptions. Given $\tau$ as above, he produced a residual representation of an $r$-fold covering group $\widetilde{\GL}_{rk}(\A)$, which is globally generic and admits unique Whittaker models at almost all places (at least). The idea behind his construction was the local correspondence obtained in \cite{Suzuki1998} between certain irreducible unramified representations of $\GL_k$ and $\widetilde{\GL}_{rk}$ (for the latter, genuine), where each unramified quasi-character $\chi_i$ was replaced by an exceptional representation (in the sense of \cite{KP}) of $\widetilde{\GL}_r$ twisted by $\chi_i$. To prove the existence of the residue, the computation of the constant term from \cite{Jac4} was carried out on $\widetilde{\GL}_{rk}$, and the analysis of the poles required several assumptions on the partial $L$-functions of $\tau\times\tau^{\vee}$. Basically, one has to assume that the theory of automorphic $L$-functions of $\GL_k\times\GL_k$ of \cite{JS2,JS1} can be applied to $\widetilde{\GL}_k(\A)$.

The construction of \cite{Suzuki1998} provides us with global $(rk,1)$ representations, i.e., the case $c=1$. The extension of his ideas to $c\geq1$ provides a residual representation $\mathcal{E}_{\tau}$ of $\widetilde{\GL}_{rkc}(\A)$, which is $(rk,c)$, and in particular all of its local unramified components admit unique models, a phenomenon which is very rare for covering groups.

As above let $m\geq1$ be an integer, and put $r=m$ if $m$ is odd, otherwise $r=m/2$.
Let $c=2n$, $H=\Sp_{2rkc}$ and take the Siegel parabolic subgroup $P$, now with $M_P=\GL_{rkc}$. The covering $\widetilde{M}_P(\A)$ is obtained by restriction from $H^{(m)}(\A)$, it is the cover of $\GL_{rkc}$ of \cite{BD} defined by letting the quadratic form $\mathcal{Q}$ be $-2$ on all the coroots, in particular it is not one of the coverings from \cite{KP}. This cover was recently studied by
Savin \cite{Savin7} and Gao \cite{Gao4}. Morally, it is an $r$-fold cover, and we denote it by $\GL_{rkc}^{(m,r)}$ (to avoid confusion with
the notation of \cite{KP}). For example the quadratic Hilbert symbol composed with the determinant is a (global or local) $2$-cocycle for $\GL_{kc}^{(2,1)}$. One of the useful properties of $\GL_{rkc}^{(m,r)}$ is that direct factors of Levi subgroups do commute in the cover (\cite{Savin7,Gao4}); another one is that the main involution of $\GL_{rkc}$ admits a trivial extension to $\GL_{rkc}^{(m,r)}$, though this is not the only extension ($\GL_{rkc}$ is not perfect; also cf. \cite{Kable3}).

Now let $\tau$ be a genuine irreducible cuspidal automorphic representation of $\GL_{rkc}^{(m,r)}$. The representation $\mathcal{E}_{\tau}$ is the residual representation of an Eisenstein series attached to a representation of
$\GL_{rkc}^{(m,r)}(\A)$ parabolically induced from $|\det|^{\zeta_1}\tau\otimes\ldots\otimes|\det|^{\zeta_{rc}}\tau$, the multi-residue taken at $((rc-1)/(2r),(rc-3)/(2r),\ldots, (1-rc)/(2r))$.
As explained above, the construction is based on two assumptions: locally
Conjecture~\ref{local Shimura conjecture} and globally Conjecture~\ref{Shimura conjecture}, which are known for $m=1$ (the linear case). The cases $m=2$ or $k=1$ (and all $m\geq1$) are also known. These are the current cases where
$\mathcal{E}_{\tau}\ne0$ unconditionally. In general these conjectures are natural granted a Shimura type correspondence for coverings of general linear groups, and it is perhaps possible to study these questions by extending the aforementioend correspondences \cite{Flicker2,FK}.
Note that Waldspurger, in his works on the theta correspondence (e.g., \cite{Waldspurger1980,Waldspurger1991}), also used \cite{Flicker2} in a substantial way (\cite{PS1984}) and now the results of Bump \textit{et. al.} \cite{BFH1990b,BFH1990a} and Friedberg and Hoffstein \cite{FH1995} can be used in place of the trace formula (see \cite{GanFanWeissman2018}).
Granted the conjectures, we prove $\mathcal{E}_{\tau}$ is an $(rk,c)$ representation.
More precisely, we prove that there is an irreducible summand of $\mathcal{E}_{\tau}$ which is $(rk,c)$, and re-denote it by $\mathcal{E}_{\tau}$. See \S~\ref{speh gbl}.

The Eisenstein series $E(h;s,f)$ is defined for
a standard $\widetilde{K}_H$-finite section $f$ of $\Ind_{\widetilde{P}(\A)}^{H^{(m)}(\A)}(\mathcal{E}_{\tau}\delta_P^s)$, as in \eqref{eq:Eisenstein series main}, taking into account the splitting of $H(F)$ into $H^{(m)}(\A)$. The Fourier coefficient is again simple to define: the parabolic subgroup $Q$, $U$ and $\psi_U$ are defined as above, except that $M_Q$ consists of $rc-1$ copies of $\GL_c$ instead of $c-1$ copies.

The group $G(\A)\times G(\A)$ is still embedded in the stabilizer of $\psi_U$ in $M_Q(\A)$, but as explained above, we have to verify that the restriction to each copy is $G^{(m)}(\A)$ and that they commute
in $H^{(m)}(\A)$. In fact, even more care is needed because while restriction does give $G^{(m)}(\A)$, an equality in
$\mathrm{H}^2(G(\A),\mu_m)$ is only up to a $2$-coboundary. In the computation we must be sensitive to this distinction, because we have one representation $\pi$ of $G^{(m)}(\A)$, but the restrictions to each copy are only isomorphic. At least this isomorphism is canonical, because $G(\A)$ is perfect.

As in the linear case, we require $E^{U,\psi_U}(h;s,f)$ to be an automorphic form on $G^{(m)}(\A)\times G^{(m)}(\A)$.
Here we have three identifications of $G(F)$: one in each copy of $G^{(m)}(\A)$, and a third one through $H(F)$, and our definitions must ensure compatibility between all these.

Last but not least, ${}^{\iota}$ must lift to an involution of $G^{(m)}(\A)$; since it is only an outer involution of $G$, it is not even clear such a lift exists, locally or globally, but this is in fact true.

Now for a genuine irreducible cuspidal automorphic representation $\pi$ of $G^{(m)}(\A)$ and
a pair of cusp forms $\varphi_1$ and $\varphi_2$ in the space of $\pi$, we define the global integral
$Z(s,\varphi_1,\varphi_2,f)$, which is not very different from the linear version.
The first main result of this paper:
\begin{theorem*}[Theorem~\ref{theorem:main theorem classical groups}]
Integral~$Z(s,\varphi_1,\varphi_2,f)$ given by \eqref{global1} is well defined, absolutely convergent away from the poles of
the series and admits meromorphic continuation to $\C$.
\end{theorem*}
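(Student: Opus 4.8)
The plan is to follow the strategy of the linear case in \cite{CFGK2}, separating the statement into its three assertions — well-definedness, absolute convergence off the poles, and meromorphic continuation — and to observe that only the first carries genuinely new content: all the covering-theoretic subtleties listed in \S\ref{intro-dbl for cvr} are concentrated there, while the last two follow from the same soft estimates as in the linear setting.

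\emph{Well-definedness.} I would first fix once and for all a global $2$-cocycle representing $H^{(m)}(\A)$, obtained by rescaling the block-compatible local cocycles $\sigma_\nu$ of \cite{BLS} at the places outside $S$, together with the associated splitting of $H(F)$ furnished by Hilbert reciprocity. The integrand $\varphi_1(g_1)\,\overline{{}^{\iota}\varphi_2(g_2)}\,E^{U,\psi_U}((g_1,g_2);f,s)$ will be shown to descend to a well-defined function on $G(F)\times G(F)\backslash G(\A)\times G(\A)$ once the following are checked. (i) The embedding $G\times G\hookrightarrow M_Q\subseteq H$ lifts to a homomorphism of $\widetilde{G}\times\widetilde{G}$ into $H^{(m)}(\A)$ whose two images commute and whose restriction to each factor is, after twisting the pulled-back cocycle by an explicit $2$-coboundary, the Matsumoto cover $G^{(m)}(\A)$; locally this is immediate from the block-compatibility of $\sigma_\nu$, and globally it follows from the normalization above, the coboundary being canonical because $G(\A)$ is perfect. (ii) The outer involution $\iota$ lifts to an involution of $G^{(m)}(\A)$ fixing $\mu_m$ pointwise; since $G=\Sp_{2n}$ is perfect, such a lift is unique once it exists, and existence is verified at the level of cocycles using \cite{BLS} and shown compatible with the splitting over $G(F)$. (iii) The three identifications of $G(F)$ — one inside each copy $\widetilde{G}(\A)$ and one via the splitting of $H(F)$ — agree, which is forced by uniqueness of splittings over the perfect group $G(F)$. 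Granted (i)--(iii), the section $f$, being genuine for $\mathcal{E}_\tau$, restricts along $\widetilde{G}\times\widetilde{G}$ to a function transforming under $\mu_m\times\mu_m$ by the appropriate genuine characters, and the complex conjugate on $\varphi_2$ — exactly as it turns the linear construction into an inner product — makes these characters cancel against those of $\varphi_1$ and $\overline{{}^{\iota}\varphi_2}$, since $\pi$ is genuine. The left $G(F)\times G(F)$-invariance of the integrand then follows from the automorphy of $E$ (through the splitting of $H(F)$), of $\varphi_1$ and of $\varphi_2$, using (iii).

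\emph{Absolute convergence off the poles.} On any domain where $E(h;s,f)$ is holomorphic — in particular for $\Real(s)\gg0$, where the series \eqref{eq:Eisenstein series main} converges — the Fourier coefficient $E^{U,\psi_U}$ is of moderate growth on $G^{(m)}(\A)\times G^{(m)}(\A)$, locally uniformly in $s$, being the integral over the compact quotient $U(F)\backslash U(\A)$ of the moderate-growth function $E$; this is verbatim the linear estimate, the covering playing no role beyond the well-definedness step. The cusp forms $\varphi_1$ and $\overline{{}^{\iota}\varphi_2}$ are rapidly decreasing on $G(F)\backslash G(\A)$, the quotient $G(F)\times G(F)\backslash G(\A)\times G(\A)$ has finite volume, and rapid decay dominates moderate growth; hence the integral defining $Z(s,\varphi_1,\varphi_2,f)$ converges absolutely and locally uniformly in $s$ there. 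Invoking now the meromorphic continuation of $E$, the same bound gives absolute convergence on the complement of the (discrete) polar set of the series.

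\emph{Meromorphic continuation.} The standard meromorphic continuation of $E(h;s,f)$ to $\C$ passes to $E^{U,\psi_U}$, and since the integral for $Z$ converges locally uniformly in $s$ away from the poles of the series, $Z(s,\varphi_1,\varphi_2,f)$ is holomorphic there and extends to a meromorphic function on $\C$ with poles among those of $E(h;s,f)$. \textbf{The main obstacle} is entirely in the well-definedness step: assembling the local block-compatibility data of \cite{BLS} into a coherent global picture on $H^{(m)}(\A)$ — commuting copies, the lift of $\iota$, and the matching of the three incarnations of $G(F)$ — through the normalized global cocycle, so that the genuine characters cancel. Once this bookkeeping is done, the analytic content is routine and formally identical to \cite{CFGK2}.
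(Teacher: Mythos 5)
Your proposal is correct and follows essentially the same route as the paper: the theorem's proof there is a two-line reduction to the covering bookkeeping established in the preliminaries (Lemma~\ref{lemma:conjugation of N by H} for the well-definedness of the $du$-integral and Proposition~\ref{proposition:global toy integral automorphic} for the outer quotient, which together contain your points (i)--(iii), including the coboundary twist $\varphi_1\mapsto\varphi_1^{(\eta^{\times})^{-1}}$ you describe abstractly), followed by the same appeal to rapid decay of cusp forms against the moderate growth and meromorphic continuation of the Eisenstein series. Your assessment that all the genuinely new content lies in the well-definedness step matches the structure of the paper exactly.
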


The basic identity for this integral, which is the covering analog of \eqref{linear global2}, is obtained using the unfolding technique
of \cite{CFGK2}. Most of the proof in \cite{CFGK2} was based on arguments involving unipotent groups and the vanishing properties of $\mathcal{E}_{\tau}$. These extend naturally to our set-up, with the covering version of $\mathcal{E}_{\tau}$. However, a crucial observation in the linear case underlying the unfolding, is that the summand of $E^{U,\psi_U}(h;s,f)$
corresponding to the double coset $P\delta(G\times G)U$ is left-invariant under the subgroup $\{(g,{}^{\iota}g):g\in G(\A)\}$. As a rule, covering groups do not split over reductive subgroups, even locally.
Carrying out this step in $H^{(m)}(\A)$ requires an elaborate description of splittings. We then state our global identity:
\begin{theorem*}[Theorem~\ref{theorem:main gbl identity}]
For $\Real(s)\gg0$, integral $Z(s,\varphi_1,\varphi_2,f)$ is equal to \eqref{global2}.
\end{theorem*}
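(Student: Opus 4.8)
The plan is to follow the unfolding argument of \cite{CFGK2}, the essential new work being a precise bookkeeping of splittings in $H^{(m)}(\A)$. Throughout we work in the range $\Real(s)\gg0$, where the Eisenstein series \eqref{eq:Eisenstein series main} converges absolutely and all interchanges of summation and integration below are legitimate. Substituting the definition of the Fourier coefficient $E^{U,\psi_U}$ into $Z(s,\varphi_1,\varphi_2,f)$ and unwinding \eqref{eq:Eisenstein series main}, one first expresses $E^{U,\psi_U}((g_1,g_2);s,f)$ as a sum over $P(F)\backslash H(F)$ folded against the character $\psi_U$ of $U(F)\backslash U(\A)$. Because the covering $H^{(m)}$ splits (uniquely) over $H(F)$ and canonically over $U(\A)$, the indexing set, and more importantly the double coset space $P(F)\backslash H(F)/(G(F)\times G(F))U(F)$ for the right action of the embedded $(G\times G)U$, coincide with their counterparts in the linear case; the orbit representatives and their combinatorics are exactly those of \cite{CFGK2}. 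This gives a finite decomposition $Z(s,\varphi_1,\varphi_2,f)=\sum_{\delta'}Z_{\delta'}(s,\varphi_1,\varphi_2,f)$ into orbit integrals.

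I would then show that every orbit but one contributes zero, by the same two mechanisms as in \cite{CFGK2}. For each non-relevant $\delta'$, the integrand of $Z_{\delta'}$ either contains an inner integration of the cusp form $\varphi_1$, or of $\overline{{}^{\iota}\varphi_2}$, over a nontrivial unipotent period, which vanishes by cuspidality of $\pi$; or it forces a Fourier coefficient of $\mathcal{E}_{\tau}$ along a unipotent orbit that is greater than, or not comparable with, the orbit $(rk)^c$ attached to the $(rk,c)$ functional, and such coefficients vanish because $\mathcal{E}_{\tau}$ is (an irreducible summand that is) an $(rk,c)$ representation---this is precisely the content of \S~\ref{speh gbl}, established under Conjectures~\ref{local Shimura conjecture} and \ref{Shimura conjecture}, and unconditionally when $m=2$ or $k=1$. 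These vanishing statements rely only on properties of unipotent subgroups and of $\mathcal{E}_{\tau}$, which are insensitive to the cover. The sole surviving orbit is represented by the element $\delta\in H(F)$ of \S~\ref{intro linear}, now with blocks of size $rkc$.

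For the surviving orbit I would carry out the unfolding. Conjugation by $\delta$ maps the embedded $G\times G$ into the Siegel Levi $M_P=\GL_{rkc}$, and the stabilizer of the coset $P(F)\delta$ inside $(G(F)\times G(F))U(F)$ is, as over the linear group, the subgroup $\{(g,{}^{\iota}g):g\in G(F)\}$ extended by a unipotent subgroup contained in $U_P\cap U$. The crucial point---and the \emph{main obstacle}---is that the summand of $E^{U,\psi_U}$ indexed by this coset must be \emph{genuinely left-invariant} under $\{(g,{}^{\iota}g):g\in G(\A)\}$, something automatic for linear groups but not for covers, which do not split over reductive subgroups even locally. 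Making this precise requires: (i) that the outer involution ${}^{\iota}$ lifts to an involution of $G^{(m)}(\A)$; (ii) that the two copies of $G^{(m)}(\A)$ inside $H^{(m)}(\A)$ genuinely commute, so that $(g,{}^{\iota}g)$ is a well-defined element of $H^{(m)}(\A)$; and (iii) that $\delta\{(g,{}^{\iota}g):g\in G(\A)\}\delta^{-1}$ lies in the preimage of $\GL_{rkc}$ in a manner compatible with the section defining $\Ind_{\widetilde{P}(\A)}^{H^{(m)}(\A)}(\mathcal{E}_{\tau}\delta_P^s)$, i.e. the $2$-cocycle restricted to this subgroup is a coboundary matching the splitting fixed over $H(F)$ and the three identifications of $G(F)$ referred to above. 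I expect to prove (iii) by an explicit local computation with the block-compatible cocycle $\sigma$ of \cite{BLS}, keeping track of the normalizing constants introduced at the finitely many ramified places, and using that the Levi factors of $\GL_{rkc}^{(m,r)}$ commute in the cover (\cite{Savin7,Gao4}) together with the canonical identification afforded by perfectness of $G(F)$; once the local statements are in hand, the global one follows from the product formula for the cocycle and reciprocity. Points (i) and (ii) are of the same nature and handled similarly.

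Granted this left-invariance, the summation over the stabilizer collapses against the outer quotient $G(F)\times G(F)\backslash G(\A)\times G(\A)$, leaving an integral of $\varphi_1(g_1)\,\overline{{}^{\iota}\varphi_2(g_2)}$ over $\{(g,{}^{\iota}g):g\in G(\A)\}\backslash (G(\A)\times G(\A))$ against $f$ evaluated at $\delta\cdot(g_1,g_2)\cdot u_0$, together with an integration over $u_0\in(U_P\cap U)(\A)$ against $\psi_U$, and the remaining unipotent integration along $V_{(c^{rk})}$ which assembles into the $(rk,c)$ functional $f_{W(\mathcal{E}_{\tau})}$. A standard change of variables in $G(\A)\times G(\A)$ (using that ${}^{\iota}$ is an involution) reduces the outer integral to one over $G(\A)$, and the identity $\int_{G(F)\backslash G(\A)}\varphi_1(g_1)\,\overline{\varphi_2(g_1 g)}\,dg_1=\{\varphi_1,\pi(g)\varphi_2\}$ produces the inner product; the result is exactly \eqref{global2}. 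Finally, meromorphic continuation of $Z(s,\varphi_1,\varphi_2,f)$ to all of $\C$ is inherited from that of the Eisenstein series, which is part of Theorem~\ref{theorem:main theorem classical groups}.
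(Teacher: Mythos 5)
Your outline follows the paper's proof almost step for step: the same double coset decomposition with respect to $R=(\mathfrak{e}_1(G)\times\mathfrak{e}_2(G))\ltimes U$, the same three vanishing mechanisms for the non-relevant orbits (with the covering-specific remark that conjugations stay inside $N_{rkc}(\A)$ so Lemma~\ref{lemma:conjugation of N by H} applies), and the same identification of the surviving orbit $P(F)\delta R(F)$ with stabilizer $\{(g,{}^{\iota}g)\}\ltimes{}^{\delta^{-1}}V_{(c^{rk})}^{\diamondsuit}$. You also correctly isolate the left-invariance of the $\delta$-summand under $\{(g,{}^{\iota}g):g\in G(\A)\}$ as the new difficulty for coverings, and your items (i) and (ii) are exactly Propositions~\ref{proposition:iota lifts locally to an involution} and \ref{proposition:the $2$-cocycle on G times G} together with Corollary~\ref{corollary:gbl splitting of (G,G)}.

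There is, however, one genuine gap in your item (iii). You assert that the required invariance of the summand will follow once the restriction of the $2$-cocycle to $\delta\{(g,{}^{\iota}g)\}\delta^{-1}$ is shown to be a coboundary compatible with the fixed splittings, i.e.\ from cocycle bookkeeping alone. This cannot suffice, even in the linear case $m=1$ where all cocycle issues are vacuous. Writing ${}^{\delta\iota}(g,g)=d_gl_g$ with $l_g\in U_P(\A)$ and $d_g=\diag((g^*)^{\triangle},g^{\triangle})\in M_P(\A)$, the Levi component $(g^*)^{\triangle}$ acts nontrivially on $\mathcal{E}_{\tau}$ through the section, and one must additionally know that the global $(rk,c)$ functional $\Lambda$ is invariant under the (suitably split) diagonal copy $\SL_c^{\triangle}(\A)$ of $\SL_c(\A)$ in $\GL_{rkc}^{(m,r)}(\A)$ --- not merely under $\SL_c^{\triangle}(F)$, which is all that automorphy gives. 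This is Proposition~\ref{proposition:extra invariance} in the paper (preceded by the construction of the splitting of $\SL_c^{\triangle}(\A)$ in Proposition~\ref{proposition:sigma on diagonal embedding of SLc} and Corollary~\ref{corollary:gbl splitting for SL_c}); its proof is analytic, resting on a Fourier expansion along root subgroups of $\SL_c$ and the vanishing of Fourier coefficients of $\mathcal{E}_{\tau}$ attached to orbits $\succsim((rk)^c)$. Your plan invokes those vanishing properties only to kill the non-relevant double cosets, so this second, distinct use of them is missing from your argument; without it the collapse of the stabilizer and the passage to \eqref{global2} (via Claim~\ref{claim:invariance of inner under G iota delta} in the paper) does not go through. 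The remainder of your unfolding --- changing variables $g_2\mapsto g_1{}^{\iota}g_2$, factoring through $G(\A)$, and producing $\{\varphi_1,\pi(g)\varphi_2\}$ --- matches the paper once this ingredient is supplied.
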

On decomposable data, integral \eqref{global2} factorizes into an almost Euler product given by \eqref{eq:almost Euler}.
This is the starting point for the development of the local theory.

The main local result here is the computation of the integrals with unramified data.
The first step is to define $\GL_n^{(m,r)}\times \GL_k^{(m,r)}$ integrals. The theory of covering groups for simply-connected groups such as $\SL_n$ or $\Spin_n$ is considerably simpler than the general reductive case. For example, coverings for $\SL_n$ are unique (given a Steinberg cocycle), while coverings of $\GL_n$ are parameterized by two integers (\cite{Gao4}, see also \cite{KP}). As another example, the splitting of $\GL_n^{(m,r)}$ over a maximal compact open subgroup is not unique (as opposed to the similar splitting of $G^{(m)}$). Also the construction of the integral now involves a diagonal embedding of $\GL_n$ in $\GL_{2rkn}$, and although $\GL_{2rkn}^{(m,r)}$ is split over this subgroup, the splitting is not unique.

The computation of the doubling integral for $\GL_1^{(m,r)}\times \GL_{k}^{(m,r)}$ again involves an extension of the Rankin--Selberg theory, now to $\GL_1^{(m,r)}\times \GL_{rk}^{(m,r)}$ and $\GL_1^{(m,r)}\times \GL_{2rk}^{(m,r)}$ integrals. In the absence of uniqueness for Whittaker models, it is not clear such an extension in general is feasible. However, in this work the representations of $\GL_{rk}^{(m,r)}$ and $\GL_{2rk}^{(m,r)}$ do admit unique Whittaker models, e.g., the representation of $\GL_{rk}^{(m,r)}$ is a local component of $\mathcal{E}_{\tau}$ with $c=1$.

To evaluate these Rankin--Selberg type integrals we need to express the value of the normalized unramified Whittaker function $W$ at the identity, as a special value of an $L$-function, and to relate the value of $W$ on essentially the diagonal elements $\diag(a^l,I_{rk-1})$ to the $l$-th complete symmetric polynomial in the Satake parameters of $\tau_{\nu}$. In the linear case these results are known and follow from the
Casselman--Shalika formula of \cite{S,CS1}. This is another aspect which is difficult to generalize to covering groups.

The extension of the Casselman--Shalika formula to the coverings of $\GL_n$ of \cite{KP} was developed in \cite{KP,COf}, then extended by
McNamara \cite{McNamara3} to the general coverings of reductive groups of \cite{BD}, under a mild assumption on the field to simplify computations and results. These works expressed the normalized unramified Whittaker function as a weighted sum over the Weyl group. A different approach, motivated by the study of Whittaker coefficients of Borel Eisenstein series on covering groups, was pursued by
Brubaker \textit{et. al.} \cite{BBFtwisted,BBF2011,BBF2011b,BrF}. In these works the theory of crystal graphs was applied to express the Whittaker function as a sum over ``decorated" Gelfand--Tsetlin patterns. All of these works considered general unramified principal series representations of covering groups; presumably, the loss of uniqueness for the Whittaker model has made it very difficult to further extend the linear case and relate the Whittaker function to the theory of representations of the $L$-group. By contrast, here we have uniqueness and it is reasonable to expect results that resemble the linear case, at least formally, and this is precisely what we obtain. To compute $W$ we combine the Casselman--Shalika formula with the Gelfand--Tsetlin patterns description. This is perhaps the first application of Gelfand--Tsetlin patterns to the computation of local integrals which arise in the context of an integral representation.
For $k\leq2$ the formula we seek was obtained by \cite{Suzuki1998}, and we treat the general case. We also use a recent result of Yuanqing Cai \cite{CaiLemma39}, which enables us to argue using induction. The following is our main local result:
\begin{theorem*}[Theorem~\ref{theorem:unramified computation for Sp(2n),SO(2n)}]
Assume $\mu_{2m}\subset F^*$. The integral \eqref{eq:local integral at one place} with unramified data equals
\begin{align*}
\frac{L_{\vartheta_{\nu}}(r\alpha s+1/2,\pi_{\nu}\times\tau_{\nu})}
{[L_{\vartheta_{\nu}}(r\alpha s+rn+1/2,\tau_{\nu})]\prod\limits_{1\leq j\leq rn}L_{\vartheta_{\nu}}(2r\alpha s+2j,\tau_{\nu},\wedge^2)
L_{\vartheta_{\nu}}(2r\alpha s+2j-1,\tau_{\nu},\vee^2)}.
\end{align*}
Here $\alpha=rkc+1$ and $L_{\vartheta_{\nu}}(r\alpha s+rn+1/2,\tau_{\nu})$ appears only for odd $m$.
\end{theorem*}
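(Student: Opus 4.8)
The plan is to adapt the reduction scheme of \cite{CFGK2} to the covering setting. Starting from the local integral \eqref{eq:local integral at one place}, in which $\pi_\nu$ is an unramified genuine representation of $G^{(m)}$, the section is spherical, and the inducing datum is the unramified component of $\mathcal{E}_\tau$ on $\GL_{rkc}^{(m,r)}$, I would first collapse the ``inner'' unipotent integral against the $(rk,c)$ functional, replacing the section by the value of its $(rk,c)$-model and thereby passing from the $G^{(m)}\times \GL_{rk}^{(m,r)}$ doubling integral to a doubling-type integral for $\GL_n^{(m,r)}\times \GL_k^{(m,r)}$. This first requires \emph{defining} such a $\GL_n^{(m,r)}\times \GL_k^{(m,r)}$ integral; it involves a diagonal embedding $\GL_n\hookrightarrow \GL_{2rkn}$, and since $\GL_{2rkn}^{(m,r)}$ splits over this copy of $\GL_n$ only non-canonically, I must fix the splitting and keep track of the resulting $2$-cocycle discrepancies, exactly as in the analogous issues flagged in the Introduction for $G^{(m)}\times G^{(m)}\hookrightarrow H^{(m)}$. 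Using the Iwasawa decomposition and the explicit shape of the unramified $(rk,c)$-model, the $\Sp$-part of the integration is then traded for a torus integration against the normalized unramified Whittaker function of the $\GL$-datum.

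Next I would reduce $\GL_n^{(m,r)}\times \GL_k^{(m,r)}$ to the rank-one case $\GL_1^{(m,r)}\times \GL_k^{(m,r)}$ by iterating the unfolding so that at each stage a block of the integration variable is absorbed into a unipotent radical and the remaining integral has the same shape with $n$ lowered by one; the commutativity of the relevant Levi direct factors in $\GL_{rkc}^{(m,r)}$ (by \cite{Savin7,Gao4}) is what makes this step legitimate in the cover, and I would fix compatible maximal-compact splittings throughout so that ``normalized unramified vector'' is unambiguous. For the base case I would develop the analogue of the Rankin--Selberg integrals of \cite{JPSS,JS1} for the pairs $\GL_1^{(m,r)}\times \GL_{rk}^{(m,r)}$ and $\GL_1^{(m,r)}\times \GL_{2rk}^{(m,r)}$; the point that makes this feasible, despite the general failure of Whittaker uniqueness on covers, is that the representations of $\GL_{rk}^{(m,r)}$ and $\GL_{2rk}^{(m,r)}$ occurring here are local components of $(rk,c)$ representations and hence carry unique unramified Whittaker functionals.

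Running the Rankin--Selberg computation then requires two explicit facts about the normalized unramified Whittaker function $W$ attached to $\mathcal{E}_\tau$: first, that $W(\mathrm{id})$ equals a product of special values of $L$-functions of $\tau_\nu$ — these, together with the normalizing factor of the Eisenstein series, produce the $\wedge^2$, $\vee^2$, and (for odd $m$) standard factors in the denominator; and second, that $W(\diag(a^l,I_{rk-1}))$ equals $|a|$ to an explicit power times the $l$-th complete symmetric polynomial in the Satake parameters of $\tau_\nu$, which after summation feeds the numerator $L(r\alpha s+1/2,\pi_\nu\times\tau_\nu)$. To obtain these I would combine the covering Casselman--Shalika formula of \cite{McNamara3} (building on \cite{KP,COf}), which writes $W$ as a weighted sum over the Weyl group, with the Gelfand--Tsetlin pattern description of \cite{BBF2011,BrF}, which makes the required torus specializations computable; the recent lemma of Cai \cite{CaiLemma39} supplies the inductive step that reduces general $k$ to the cases $k\le 2$ already settled by Suzuki \cite{Suzuki1998}. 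Assembling the two Whittaker identities with the unfolding gives the asserted ratio, with $\alpha=rkc+1$ arising as the exponent in the modulus character $\delta_P$ of the Siegel parabolic of $H=\Sp_{2rkc}$ combined with the degeneracy parameter of the section.

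The main obstacle is precisely the closed-form evaluation of $W$ on the cover $\GL_{rk}^{(m,r)}$ in terms of symmetric polynomials in the Satake parameters of $\tau_\nu$. On linear groups this is the Casselman--Shalika formula, but on covers the Weyl-sum expression does not visibly collapse to a Schur-type expression, so matching the Weyl-group side against the Gelfand--Tsetlin side — and then carrying out the specific specializations to $\diag(a^l,I_{rk-1})$ and to the identity — is where the substantive work lies. This is also where I expect the hypothesis $\mu_{2m}\subset F^*$ to enter, ensuring that the relevant Gauss sums and the structure of the metaplectic torus are compatible with the pattern combinatorics.
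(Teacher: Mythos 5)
Your proposal follows essentially the same route as the paper: reduce the $\Sp_{2n}^{(m)}\times\GL_k^{(m,r)}$ integral to a $\GL_n^{(m,r)}\times\GL_k^{(m,r)}$ doubling integral (picking up Gindikin--Karpelevich constants), iterate down to $\GL_1^{(m,r)}\times\GL_k^{(m,r)}$ using the commutativity of Levi factors in the cover, and evaluate the base case via Rankin--Selberg theory powered by the two Whittaker identities (the value at the identity and at $\diag(a^{rl},I_{rk-1})$), which the paper proves exactly as you describe by combining the metaplectic Casselman--Shalika formula with Gelfand--Tsetlin patterns, Suzuki's induction, and Cai's lemma. The only detail of execution you elide is that the $\GL_1\times\GL_k$ step is carried out by comparing $Z$ with an auxiliary integral through a multiplicity-one argument and Bernstein's continuation principle rather than by a direct unfolding, but this does not change the overall strategy.
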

For the notation and definition of the $L$-factors see \S~\ref{unramified reps}.

Note the assumption $\mu_{2m}\subset F^*$, which is stronger than the condition $\mu_m\subset F^*$ needed for the existence of $G^{(m)}$. This assumption is added throughout several parts of this work: it is needed because we use results from
\cite{BBF2011,McNamara2,Gao5,Gao4}, where it was assumed. It also simplifies our work here, but for several claims we do provide a more general computation.

The theory of the local integrals is independent of
Conjectures~\ref{local Shimura conjecture} and \ref{Shimura conjecture}:
Theorem~\ref{theorem:unramified computation for Sp(2n),SO(2n)} holds unconditionally with a slight modification of the construction (see Remark~\ref{remark:unconditionally local}).

\subsection{Further background, extensions and applications}\label{further}
The applicability of the doubling method to representations of $\Sp_{2n}^{(2)}$ was known to Piatetski-Shapiro and Rallis. The local theory of the doubling integrals of \cite{PSR} for $\Sp_{2n}^{(2)}$ (but when $k=1$) was developed by Gan \cite{Gan}. Note that $\Sp_{2n}^{(2)}$ is an exceptional case, in the sense that irreducible representations do admit at most one Whittaker model. This was proved by Waldspurger \cite{Waldspurger1980} for $n=1$ over $\R$, by Bump \textit{et. al.} \cite{BFJ} for principal series representations over $p$-adic fields, and by Szpruch \cite{Szpruch2007} in general. Szpruch \cite{Dani} then developed the Langlands--Shahidi theory for this covering. In this case there are also integral representations, for globally generic representations
by Ginzburg \textit{et. al.} \cite{GRS4}, and arbitrary irreducible cuspidal representations \cite{GJRS}. The local theory was developed in \cite{me6} for the generic case. The integral presented here is the first known to the author, involving representations of $\Sp_{2n}^{(m)}$ for any $m$.

The uniformity of the doubling method was preserved in the extension to arbitrary $k>1$ in \cite{CFGK2,CFK},
and we expect our method and results here to generalize to a wide class of coverings of reductive groups, including coverings of special orthogonal or general spin groups.

Among other works on integral representations, we mention
\cite{JL,GJ,JS1,JPSS,GPS,BF,G,JS4,BG,Soudry,GPSR,GRS4,BS,GJRS,me2,me3,JZ,Tk,BS2,Soudry7,Soudry8}. While several recent works (e.g., \cite{GPSR,BS,GJRS}) were no longer limited to globally generic representations, their local theory has not been fully developed to provide definitions of local factors (outside the unramified case).

Brylinski and Deligne \cite{BD} provided a general framework, functorial in nature, of a covering group for any split reductive group.
Their construction inspired numerous works on extending the Langlands program to those coverings, including
\cite{Savin5,Weissman5,Weissman4,McNamara,Li,Weissman2,GG,Weissman2018a}.
The work here can be regarded as a first attempt to launch the analytic counterpart, namely a theory of integral representations and local factors for covering groups.

In the celebrated works of Cogdell \textit{et al.}\ \cite{CKPS2,CKPS} on functoriality for classical groups, who established the functorial lift via the Converse Theorem, the results on the $L$-functions were derived using the Langlands--Shahidi method.
The theory of Eisenstein series of Langlands (e.g., \cite{La2,La5}) can be used to study the partial $L$-function through the constant term of the series. This part is not limited to globally generic representations; however, the analytic definition of the complete $L$-function and the proof of its functional equation were only obtained by Shahidi (\cite{Sh2,Sh4,Shahidi1983,Shahidi1985,Sh3}), through his method of local coefficients, where uniqueness of the Whittaker model is crucial.

The analysis of the constant term of Langlands has been recently extended by
Gao~\cite{Gao2018} to the covering groups of \cite{BD}. In particular he proved the meromorphic continuation of the partial $L$-function
(e.g., the $L$-function obtained here). Unfortunately, it is impossible to extend the method of local coefficients to covering groups, because
local multiplicity one for Whittaker models breaks down. Consequently, it is not clear how to define the completed $L$-function with this approach. Nonetheless, since the dimension of the space of Whittaker functionals is finite, there have been recent attempts to define and study a generalized local coefficients matrix, see Gao \textit{et. al.} \cite{GaoShahidiSzpruch2018} and Szpruch \cite{Szpruch2018}.

In sharp contrast, the doubling construction can provide a definition of local factors for covering groups through uniqueness, which boils down to the uniqueness of local $(rk,c)$ models. This uniqueness is proved here when data are unramified, i.e., almost everywhere, which is usually more difficult to prove, so that it is reasonable to expect uniqueness at the ramified places as well. In turn, this construction has the potential to provide a definition of a completed $L$-function, with a functional equation.


\subsection{Description of the contents by section}
Section~\ref{Preliminaries} is devoted to preliminaries. We define the groups and provide basic notation in
\S~\ref{Groups}. General conventions and definitions for covering groups are assembled in \S~\ref{Covering groups}.
The embedding of $G\times G$ in $H$, which dictates most of the coverings involved, is presented in
\S~\ref{embedding}. In \S~\ref{local covering} we define the local $2$-cocycles. First we recall the $2$-cocycle of $\GL_{2rkc}$ of \cite{BLS} and describe its restriction to $H=\Sp_{2rkc}$. In Proposition~\ref{proposition:the $2$-cocycle on G times G} we write a formula for the restriction of this $2$-cocycle to $G\times G$ and in particular, deduce that the preimages of both copies of $G$ in $H^{(m)}$ commute. In Corollary~\ref{corollary:lcl splitting of (G,G)} we show that $H^{(m)}$ is split over the group $\{(g,g):g\in G\}$. The global counterpart of these results is described in \S~\ref{global covering}. The global coverings of both copies of $G(\A)$ are defined using $H^{(m)}(\A)$.
Corollary~\ref{corollary:gbl splitting of (G,G)} is the global analog of Corollary~\ref{corollary:lcl splitting of (G,G)}: the splitting
of $H^{(m)}(\A)$ over $\{(g,g):g\in G(\A)\}$. In \S~\ref{extension of the involution} we discuss the local and global extensions of the involution ${}^{\iota}$ to the covering.

The cover $\GL_{d}^{(m,r)}$ is studied in \S~\ref{covering of the Levi}. As mentioned above, this is not one of the coverings of \cite{KP}. Here preimages of direct factors of Levi subgroups do commute. Consequently, the usual tensor product is defined, and we do not need to use the metaplectic versions of the tensor developed in \cite{Kable,Mezo,Tk,Tk2}. While this observation simplifies numerous local arguments, one still needs to define (as in \cite{Tk,Tk2}) a global block-compatible $2$-cocycle. In \S~\ref{unramified reps} we describe the construction of unramified principal series representations and define the $L$-factors, using the definition of the $L$-group of \cite{FL,McNamara,Weissman2018a} (see also \cite{Gao4}). Section~\ref{Casselman--Shalika formula} describes the analog of the Casselman--Shalika formula for $\GL_{d}^{(m,r)}$. Since our covering is different from the coverings of \cite{KP}, we can not use the formulas from \cite{KP,COf}; the formula from \cite{McNamara3} is applicable to $\GL_{d}^{(m,r)}$, but the normalizations of \cite{KP,COf} are more convenient to work with for our purposes. For clarity, we provide the computation of the coefficients appearing in the formula.

Section~\ref{speh} contains the definitions and construction of $(rk,c)$ representations.
Their definition is given in \S~\ref{speh def}, it is similar to the linear case. In the linear case they satisfy an additional invariance property, with respect to the diagonal embedding $\SL_c^{\triangle}$ of $\SL_c$ in $\GL_{kc}$. This property extends to covering groups. First, we show that $\GL_{rkc}^{(m,r)}$ is split over $\SL_{c}^{\triangle}$, locally in Proposition~\ref{proposition:sigma on diagonal embedding of SLc} and globally in Corollary~\ref{corollary:gbl splitting for SL_c}. The global invariance property is proved in Proposition~\ref{proposition:extra invariance}.

In \S~\ref{local theta speh} we construct local $(rk,c)$ representations, roughly denoted $\Theta(\chi)$, from exceptional representations. When $c=1$, $\Theta(\chi)$ affords a unique Whittaker model, and we prove the local results on the normalized unramified Whittaker function $W$ (see \S~\ref{local theta speh for c=1}). In
Theorem~\ref{theorem:CS formula for rk, c} we use Gelfand--Tsetlin patterns to compute the Jacquet integral on $W$, and in
Theorem~\ref{theorem:Whittaker on x,I} we express $W(\diag(a,I_{rk-1}))$ as a symmetric polynomial, by combining the weighted sum formula of \S~\ref{Casselman--Shalika formula}, the inductive method of Suzuki \cite{Suzuki1998} and Gelfand--Tsetlin patterns.

The global construction of $(rk,c)$ representations using residues of Eisenstein series is presented in \S~\ref{speh gbl}. We state Conjectures~\ref{local Shimura conjecture} and \ref{Shimura conjecture}, under which we prove the existence of $\mathcal{E}_{\tau}$ in Theorem~\ref{theorem:prop1}, then in Theorem~\ref{exthspeh1} prove it is an $(rk,c)$ representation. The local components of $\mathcal{E}_{\tau}$ are described in \S~\ref{Local components of rk c speh}, where they are also related to $\Theta(\chi)$.

Section~\ref{global} is devoted to the global integral. In \S~\ref{global classical} we start with gluing together our preliminaries, in order to prove that the global integral \eqref{global1} is indeed well defined, as part of Theorem~\ref{theorem:main theorem classical groups}. In \S~\ref{global symplectic} we carry out the unfolding process and prove Theorem~\ref{theorem:main gbl identity}, the main global identity. We then show how to obtain, from decomposable data, an almost Euler product \eqref{eq:almost Euler}.
The main local result of this work: Theorem~\ref{theorem:unramified computation for Sp(2n),SO(2n)}, namely the computation of the integrals with unramified data, is stated in this section (proved in
\S~\ref{Computation of the local factors with unramified data}).
The local study of the integrals is initiated here, with their formal equivariance properties (Proposition~\ref{proposition:equiv props}) and several other properties (Proposition~\ref{proposition:local props}).
We define the local $\GL_n^{(m,r)}\times\GL_k^{(m,r)}$ integrals in \S~\ref{integarls for GL}.

The local  theory of the integrals with unramified (non-archimedean) data is developed in \S~\ref{Computation of the local factors with unramified data}. Theorem~\ref{theorem:unramified computation for Sp(2n),SO(2n)} is proved through a sequence of reductions in
\S~\ref{Outline of the computation} and \S~\ref{proof of lemma:reduction from GLn to GLa GLb}, eventually leading to the computation of
a $\GL_1^{(m,r)}\times\GL_k^{(m,r)}$ integral in \S~\ref{final reduction n = 1 linear groups}.

Finally for ease of reading and cross-reference, we include a list of common notation and definitions in Appendix~\ref{list of common notation}.

\subsection*{Acknowledgments}
We thank Yuanqing Cai for making a preliminary version of his results available to us,
Solomon Friedberg for numerous fruitful conversations about this work, and
David Ginzburg for his help with the proof of Theorem~\ref{theorem:CS formula for rk, c}.
We are very happy to thank Mikhail Borovoi, Gautam Chinta, Wee Teck Gan, Dmitry Gourevitch, Erez Lapid, Peter McNamara, Gordan Savin and
Dani Szpruch for valuable and inspiring discussions.

\tableofcontents

\section{Preliminaries}\label{Preliminaries}
\subsection{The groups}\label{Groups}
Let $G=\Sp_{2l}$ denote the symplectic group, realized as the subgroup of matrices $g\in \SL_{2l}$ such that
${}^tg\left(\begin{smallmatrix}&J_l\\-J_l\end{smallmatrix}\right)g=\left(\begin{smallmatrix}&J_l\\-J_l\end{smallmatrix}\right)$,
where ${}^tg$ is the transpose of $g$ and $J_l$ is the $l\times l$ permutation matrix with $1$ along the anti-diagonal. We fix the diagonal torus $T_l$ and the Borel subgroup $B_l=T_l\ltimes N_l$ of upper triangular matrices in $G$.
For a parabolic subgroup $P$ of $G$, $\delta_P$ denotes the modulus character.
If $U$ is a unipotent subgroup, $U^-$ denotes the opposite unipotent subgroup.
Denote the Weyl group of $G$ by $W_G$.

In the group $\GL_d$ fix the Borel subgroup $B_{\GL_d}=T_{\GL_d}\ltimes N_{\GL_d}$ of upper triangular invertible matrices,
where $T_{\GL_d}$ is the diagonal torus. A composition of a positive integer $d$ is an ordered sequence of positive integers whose sum is $d$.
For a composition $\beta=(\beta_1,\ldots,\beta_l)$ of $d$, let $P_{\beta}=M_{\beta}\ltimes V_{\beta}$ denote the standard parabolic subgroup with $M_{\beta}=\GL_{\beta_1}\times\ldots\times\GL_{\beta_l}$.

Denote the set of roots of $\GL_d$ by $\Phi_d$, each $\alpha\in\Phi_d$ corresponds to a pair $(i,j)$ with $1\leq i\ne j\leq d$. Let $\Phi_d^+\subset \Phi_d$ denote the positive roots (defined with respect to $B_{\GL_d}$). The simple roots are $(i,i+1)$, $1\leq i<d$.
The Weyl group of $\GL_d$ is denoted $W_{\GL_d}$.
Let $\ell(w)$ be the length of $w\in W_{\GL_d}$. We write $w_{\alpha}$ for the reflection along $\alpha$. For $b\in\GL_d$, denote $b^*=J_d{}^tb^{-1}J_d$.

For two elements $x$ and $y$ in a group $H$, ${}^xy=xyx^{-1}$, and for $Y<H$, ${}^xY=\{{}^{x}y:y\in Y\}$.

Over a local field $F$, we usually identify algebraic groups with their groups of $F$-points, i.e., $G=G(F)$, $N_l=N_l(F)$.
When $F$ is a number field, $\A$ denotes its ring of adeles and we write $F$-points or $\A$-points explicitly, e.g., $G(\A)$. When the discussion applies to both local and global situations, we simply write $G$ for both cases.
For an integer $m$, $F^{*m}=\{a^m:a\in F^*\}$ and similarly denote $\A^{*m}$.

Let $\Mat_{n\times l}$ be the abelian group of $n\times l$ matrices (over a local field or $\A$), and set $\Mat_{n}=\Mat_{n\times n}$. The trace map is denoted $\tr$.

Induction of representations from parabolic subgroups is always implicitly normalized.

Local fields in this work are always of characteristic $0$. All local representations are assumed to act on complex vector spaces.
The action of a group by right-translation is denoted $\cdot$, e.g., $y\cdot f(g)=f(gy)$ where $f$ is a function on $G$ and $Y<G$.
If $\pi$ is a representation of a unipotent subgroup $U<G$ on a space $V$, and $\psi$ is a character of $U$, the Jacquet module
$J_{U,\psi}(\pi)$ is the quotient $V(U,\psi)\backslash V$, where over non-archimedean fields $V(U,\psi)\subset V$ is the subspace spanned by
all vectors of the form $\pi(u)\xi-\psi(u)\xi$, $u\in U$ and $\xi\in V$, and over archimedean fields $V(U,\psi)$ is the closure of
this subspace.

For a local non-archimedean field $F$, we let $\mathcal{O}$ denote its ring of integers, $\mathcal{P}$ be the maximal ideal of
$\mathcal{O}$, $\varpi$ be a uniformizer and $|\varpi|=q^{-1}$.
Then we take a hyperspecial maximal compact subgroup $K_G=G(\mathcal{O})$ of $G$, and additionally $K_{\SL_l}=\SL_l(\mathcal{O})$
and $K_{\GL_l}=\GL_l(\mathcal{O})$. We also
denote by $K_G$ a maximal compact subgroup of $G$ over archimedean fields. In a global context the maximal compact subgroup
is $K_G=\prod_{\nu}K_{G,\nu}$, where the product varies over all places of $F$.

\subsection{Covering groups}\label{Covering groups}
We introduce our notation for covering groups, and include several general observations, to be used throughout.
Our basic reference is \cite{Moore}; see also \cite{GanFanWeissman2018}.
Let $F$ be a local field or a number field. Fix the group $\mu_m\subset\C^*$ of $m$-th roots of unity. Assume $F^*$ contains $m$ $m$-th roots of unity, then we may identify the subgroup of these roots with $\mu_m$, and for simplicity write $\mu_m\subset F^*$.
Let $(\cdot,\cdot)_m$ be the Hilbert symbol of order $m$ in $F$, which is the product of local symbols if $F$ is a number field.

Let $G$ be a linear algebraic group over a local field or over $\A$.
A topological central extension of $G$ by $\mu_m$, is a short exact sequence
of groups
\begin{align*}
1\rightarrow \mu_m\xrightarrow{i} \widetilde{G}\xrightarrow{p} G\rightarrow 1,
\end{align*}
where $p$ is continuous, $i(\mu_m)$ belongs to the center of $\widetilde{G}$ and $i(\mu_m)\backslash \widetilde{G}\cong G$ as topological groups. We call $\widetilde{G}$ an $m$-fold covering group of $G$. A
Borel measurable $\sigma:G\times G\rightarrow\mu_m$ such that
\begin{align}\label{eq:2-cocycle}
\sigma(g,g')\sigma(gg',g'')=\sigma(g,g'g'')\sigma(g',g'')\qquad\forall g,g',g''\in G,
\end{align}
and on the identity element $e$ of $G$, $\sigma(e,e)=1$, is called a $2$-cocycle of $G$. In particular $\sigma(e,g')=\sigma(g,e)=1$. Let $\mathrm{Z}^2(G,\mu_m)$ denote the group of $2$-cocycles. A Borel measurable map $\eta:G\rightarrow\mu_m$
such that $\eta(e)=1$ is called a $1$-cochain; the corresponding $2$-coboundary is given by the function $(g,g')\mapsto \eta(g)\eta(g')/\eta(gg')$. Let $\mathrm{C}^1(G,\mu_m)$ (resp., $\mathrm{B}^2(G,\mu_m)$) be the group of $1$-cochains (resp., $2$-coboundaries). The $2$-nd cohomology $\mathrm{H}^2(G,\mu_m)$ is by definition $\mathrm{B}^2(G,\mu_m)\backslash \mathrm{Z}^2(G,\mu_m)$, and its elements parameterize the topological central extensions of $G$ by $\mu_m$.
Given $\sigma\in \mathrm{Z}^2(G,\mu_m)$, we can realize the group $\widetilde{G}$ as the set of elements $\langle g,\epsilon\rangle$ with $g\in G$, $\epsilon\in\mu_m$, the product given by
\begin{align*}
\langle g,\epsilon\rangle\langle g',\epsilon'\rangle=\langle gg',\epsilon\epsilon'\sigma(g,g')\rangle.
\end{align*}
If $\sigma,\rho\in \mathrm{Z}^2(G,\mu_m)$ are cohomologous, i.e., equal in $\mathrm{H}^2(G,\mu_m)$,
there is $\eta\in\mathrm{C}^1(G,\mu_m)$ such that
\begin{align}\label{eq:cohomologous}
\rho(g,g')=\frac{\eta(g)\eta(g')}{\eta(gg')}\sigma(g,g'),\qquad\forall g,g'\in G.
\end{align}
In this case replacing $\sigma$ by $\rho$ yields an isomorphic group $\widetilde{G}$:
$\langle g,1\rangle \mapsto \langle g,\eta(g)\rangle $ is a topological isomorphism, where the domain is realized using
$\rho$ and the image by $\sigma$.

If $X$ is a closed subgroup of $G$, the restriction of $\sigma$ to $X\times X$, or more briefly the restriction to $X$, is a $2$-cocycle of $X$. The resulting covering $\widetilde{X}$ of $X$, which depends on the embedding of $X$ in $G$, is by definition realized using the restriction of $\sigma$ to $X$. A section of $X$ is a continuous map $x\mapsto\langle x,\eta(x)\rangle$ where $\eta:X\rightarrow\mu_m$ satisfies $\eta(e)=1$, and we call it a splitting of $X$ if it is also a homomorphism, i.e.,
\begin{align*}
\langle x,\eta(x)\rangle\langle x',\eta(x')\rangle=\langle xx',\eta(xx')\rangle,\qquad\forall x,x'\in X.
\end{align*}
In this case we say that $\widetilde{G}$ splits over $X$.
If $x\mapsto\langle x,\eta(x)\rangle$ and $x\mapsto\langle x,\eta'(x)\rangle$ are two splittings, $x\mapsto\eta(x)\eta'(x)^{-1}$ is a homomorphism $X\rightarrow\mu_m$. In particular if $X$ is a unipotent subgroup, such a homomorphism implies a homomorphism of the local field or $\A$ into $\mu_m$, hence must be trivial and $\eta=\eta'$.
With the notation of \eqref{eq:cohomologous},
if $\sigma$ and $\rho$ are trivial on $X$, i.e., $\sigma(x,x')=\rho(x,x')=1$ for all $x,x'\in X$, $\eta$ becomes a homomorphism which may then
also be trivial, depending on $X$.

In a local-global context, $G(\A)$ is the restricted direct product with respect to a family of compact open subgroups $K_{G,\nu}$ defined at almost all places $\nu$. Assume we have a family $(\sigma_{\nu},\rho_{\nu},\eta_{\nu})$ where $\sigma_{\nu},\rho_{\nu}\in \mathrm{Z}^2(G(F_{\nu}),\mu_m)$ and $\eta_{\nu}\in\mathrm{C}^1(G(F_{\nu}),\mu_m)$, related by \eqref{eq:cohomologous}. If we know the r.h.s.~ of \eqref{eq:cohomologous} is trivial on $K_{G,\nu}$ for almost all $\nu$, we can define $\rho=\prod_{\nu}\rho_{\nu}\in \mathrm{Z}^2(G(\A),\mu_m)$.

In the opposite direction, let $X<G$ be a closed algebraic subgroup such that $X(\A)$ is the restricted direct product $\prod'_{\nu} X(F_{\nu})$ with respect to a family $\{X_{0,\nu}\}_{\nu}$, where for almost all $\nu$, $X_{0,\nu}=X(F_{\nu})\cap K_{G,\nu}$
and there is a unique homomorphism $X_{0,\nu}\rightarrow\mu_m$. Assume $\sigma,\rho\in \mathrm{Z}^2(X(\A),\mu_m)$,
write $\sigma=\prod_{\nu}\sigma_{\nu}$, $\rho=\prod_{\nu}\rho_{\nu}$, and assume we have $\eta_{\nu}\in\mathrm{C}^1(X(F_{\nu}),\mu_m)$
as in \eqref{eq:cohomologous} for all $\nu$, and both $\rho_{\nu}$ and $\sigma_{\nu}$ are trivial on $X_{0,\nu}$
for almost all $\nu$. Then almost everywhere, $\eta_{\nu}$ becomes a homomorphism $X_{0,\nu}\rightarrow\mu_m$, thus $\eta_{\nu}(X_{0,\nu})=1$. Under these assumptions we can then define $\eta=\prod_{\nu}\eta_{\nu}\in\mathrm{C}^1(X(\A),\mu_m)$.
Moreover, if $x\mapsto\langle x,\zeta(x)\rangle$ is a splitting of $X(\A)$ in $\widetilde{X}(\A)$ realized using $\sigma$,
\begin{align}\label{eq:cohomologous splitting}
\rho(x,x')=\frac{\eta(x)\eta(x')}{\eta(xx')}\frac{\zeta(xx')}{\zeta(x)\zeta(x')},\qquad\forall x,x'\in X(\A).
\end{align}
Thus $x\mapsto\langle x,\zeta(x)/\eta(x)\rangle$ is a splitting of $X(\A)$ when $\widetilde{X}(\A)$ is realized using $\rho$.

Since $\mu_m$ belongs to the center of $\widetilde{G}$, $G$ acts on $\widetilde{G}$ by conjugation, which is also a homeomorphism. In fact ${}^{\langle x,\epsilon'\rangle}\langle y,\epsilon\rangle=\langle  {}^xy,\sigma(x,y)\sigma(xy,x^{-1})\epsilon\rangle$ (independent of $\epsilon'$).

For a topological automorphism $\chi:G\rightarrow G$, a lift of $\chi$ to $G^{(m)}$ is a topological automorphism
$\widetilde{\chi}:G^{(m)}\rightarrow G^{(m)}$ making the following diagram commute
\begin{align*}
\begin{array}{ccccccccc}
1& \rightarrow & \mu_m & \rightarrow & G^{(m)} & \rightarrow & G & \rightarrow &1\\
&  & \mathrm{id}\downarrow & & \widetilde{\chi}\downarrow &  & \chi\downarrow  &    &\\
1& \rightarrow & \mu_m & \rightarrow & G^{(m)} & \rightarrow & G & \rightarrow &1.
\end{array}
\end{align*}
One can disregard topological considerations, and consider $\chi$ and $\widetilde{\chi}$ as abstract lifts. In this weaker sense
$\Hom(G,\mu_m)$, where $G$ is regarded as an abstract group, acts transitively on the set of lifts (\cite[\S~V.1, Propositions~1, 4]{Weiss1969}, see also \cite[\S~2]{Kable3}). Thus if $G$ is perfect and $\widetilde{\chi}$ exists, it is unique. In particular if
a topological lift exists, it is again unique.

Let $\chi$ and $\widetilde{\chi}$ be as above (again, topological), and $Y$ be a closed subgroup of $G$.
Assume $y\mapsto\langle y,\eta(y)\rangle$ is the unique splitting of $Y$ and
${}^{\chi}y\mapsto\langle {}^{\chi}y,\eta({}^{\chi}y)\rangle$ is a splitting of ${}^{\chi}Y$.
Then $y\mapsto{}^{(\widetilde{\chi})^{-1}}\langle {}^{\chi}y,\eta({}^{\chi}y)\rangle$ is a splitting of $Y$, hence by uniqueness
${}^{(\widetilde{\chi})^{-1}}\langle {}^{\chi}y,\eta({}^{\chi}y)\rangle=\langle y,\eta(y)\rangle$. We deduce that in this case
\begin{align}\label{eq:epsilon for conjugation between split subgroups}
{}^{\widetilde{\chi}}\langle y,\eta(y)\rangle=\langle {}^{\chi}y,\eta({}^{\chi}y)\rangle.
\end{align}
When there is no risk of confusion, we will denote $\widetilde{\chi}$ also by $\chi$.

For a parabolic subgroup $P<G$, induction from $\widetilde{P}$ to $\widetilde{G}$ is implicitly normalized by
$\delta_P^{1/2}$.

Given a faithful character $\varepsilon:\mu_m\rightarrow\C^*$, an $\varepsilon$-genuine representation of $\widetilde{G}$ is a representation where $\mu_m$ acts by $\varepsilon$. We will usually assume this character is fixed, and omit it from the definition and notation. An $\varepsilon^{-1}$-genuine representation will then be called anti-genuine.

For $G=\Sp_{2n}$ or $\SL_l$, let $G^{(m)}$ denote the $m$-fold covering group $\widetilde{G}$ of $G$, over a local field or over $\A$,
defined by \cite{Mats} (following \cite{Moore,Stein}) with the Steinberg symbol constructed from $(\cdot,\cdot)_m^{-1}$.
It is a locally compact group, and an $l$-group (\cite{BZ1} 1.1) over local non-archimedean fields.
At almost all places $\nu$ of a global field $F$, the covering $G^{(m)}$ of $G(F_{\nu})$ splits over $K_{G,\nu}$
(\cite[Lemma~11.3]{Moore} and the proof of \cite[Theorem~12.2]{Moore}), uniquely because $K_{G,\nu}$ is perfect (\cite[Lemma~11.1]{Moore}).
Since $G^{(m)}$ is split canonically over a maximal unipotent subgroup (for more general statements, see \cite{Steinberg,Mats,BLS,McNamara} and \cite[Appendix~I]{MW2}), notions involving unipotent orbits transfer immediately to covering groups.

We point out that one can re-define, once and for all, $(\cdot,\cdot)_m$ to be $(\cdot,\cdot)_m^l$ for any integer $l$ coprime to $m$, our arguments are independent of this choice of Steinberg symbol (e.g., $G^{(m)}$ can be constructed from $(\cdot,\cdot)_m$).

Henceforth throughout most of this work, $G$ will be the symplectic group.
\subsection{Embedding $G\times G$ in $H$}\label{embedding}
Let $n$, $k$ and $m$ be positive integers, and set $c=2n$. If $m$ is even, put $r=m/2$, otherwise $r=m$. Let $G=\Sp_{c}$ and $H=\Sp_{2rkc}$.
Denote by $P=M_P\ltimes U_P$ the standard maximal parabolic subgroup of $H$ with $M_P=\GL_{rkc}$, i.e., the Siegel parabolic subgroup.
Let $Q=M_Q\ltimes U_Q$ be the standard parabolic subgroup of $H$, whose Levi part $M_Q$ is isomorphic to $\GL_{c}\times\ldots\GL_{c}\times \Sp_{2c}$, where $\GL_{c}$ appears $rk-1$ times. Put $U=U_Q$.

For a fixed nontrivial additive character $\psi$, which is a character of a local field $F$, or a character of $F\backslash\A$ when $F$ is a number field, define a character of $U$ as follows. In a local context it is a character of $U(F)$, in a global context it is a character of $U(\A)$ which is trivial on $U(F)$. For $v\in V_{(c^{rk-1})}$, write $v=(v_{i,j})_{i,j}$ where $v_{i,j}\in\Mat_c$, then define
\begin{align}\label{eq:psi on V rk-1}
\psi(v)=\psi(\sum_{i=1}^{rk-2}\tr(v_{i,i+1})).
\end{align}
For $x\in\Mat_{((rk-1)c)\times 2c}$, if $rk>1$, write the bottom
$c\times 2c$ block in the form
\begin{align*}
\left(\begin{smallmatrix} Y_1&Z_1&Y_2\\ Y_3&Z_2&Y_4 \end{smallmatrix}\right),\qquad Y_i\in \Mat_{n},Z_j\in\Mat_{n\times c}.
\end{align*}
Then define
\begin{align}\label{eq:character of U}
\psi_U(\left(\begin{smallmatrix}v&x&y\\&I_{2c}&x'\\&&v^*\end{smallmatrix}\right))=\psi(v)\psi(\tr(Y_1)+\tr(Y_4)), \qquad v\in V_{(c^{rk-1})}.
\end{align}
The stabilizer of $\psi_U$ in $M_Q$ contains $G\times G$. The embedding $G\times G\hookrightarrow H$ is defined by
\begin{align*}
(g_1,g_2)\mapsto\diag(g_1,\ldots,g_1,\left(\begin{smallmatrix} g_{1,1}&&g_{1,2}\\ &g_2&\\ g_{1,3}&&g_{1,4}\end{smallmatrix}\right),g_1^*,\ldots,g_1^*),\qquad
g_1=\left(\begin{smallmatrix}
            g_{1,1} & g_{1,2} \\
            g_{1,3} & g_{1,4}
          \end{smallmatrix}\right),\qquad g_{1,j}\in\Mat_n.
\end{align*}
Here on the r.h.s.~ $g_1^*$ appears $rk-1$ times. Oftentimes it will be convenient to refer to each of the copies separately. We write
for $g\in G$, $\mathfrak{e}_1(g)=(g,1)$ and $\mathfrak{e}_2(g)=(1,g)$.
We call $\mathfrak{e}_1(G)$ the left copy of $G$ in $H$, and $\mathfrak{e}_2(G)$ is the right copy.

\subsection{Local covering}\label{local covering}
We proceed with the notation of \S~\ref{Covering groups} and \S~\ref{embedding}. Consider a local field $F$. Let $H^{(m)}$ denote the $m$-fold covering of $H=H(F)$ of \cite{Mats} defined with the Steinberg symbol constructed from $(\cdot,\cdot)_m^{-1}$. Since $H$ is a subgroup of $\SL_{2rkc}$, it is convenient to use the $2$-cocycle $\sigma_{2rkc}$ of $\GL_{2rkc}$ of Banks \textit{et. al.} \cite[\S~3]{BLS} for local computations (actually, even for a definition, see \cite[p.~114]{Savin5}).

First we recall several properties of this $2$-cocycle, used throughout. For any integer $d>1$,
let $\sigma_{\SL_{d+1}}\in \mathrm{Z}^2(\SL_{d+1},\mu_m)$ be the $2$-cocycle of \cite[\S~2]{BLS} which represents $\SL_{d+1}^{(m)}$
in $\mathrm{H}^2(\SL_{d+1},\mu_m)$ (see \S~\ref{Covering groups}). Let $\sigma_d$ be the $2$-cocycle of $\GL_d$ defined in \cite[\S~3]{BLS} for $b,b'\in \GL_d$ by
\begin{align*}
\sigma_d(b,b')=(\det b,\det b')_m\sigma_{\SL_{d+1}}(\diag(b,\det b^{-1}),\diag(b',\det {b'}^{-1})).
\end{align*}
The image of $\SL_d$ in $\SL_{d+1}$ under the embedding $b\mapsto\diag(b,1)$ is standard in the sense of \cite[\S~2]{BLS},
hence by \cite[\S~2, Theorem~7]{BLS} the restriction of $\sigma_d$ to $\SL_d$ is $\sigma_{\SL_{d}}$.
By \cite[\S~3, Lemma~1]{BLS}, for $t=\diag(t_1,\ldots,t_d)\in T_{\GL_d}$ and $t'\in T_{\GL_d}$ (with similar notation),
\begin{align}\label{eq:sigma on torus of GL}
\sigma_d(t,t')=\prod_{i<j}(t_i,t'_j)_m.
\end{align}
According to \cite[\S~3, Lemma~4]{BLS}, $\sigma_{d}$ is trivial on $N_{\GL_d}$, and for all $b,b'\in\GL_{d}$ and $v,v'\in N_{\GL_d}$,
\begin{align}\label{eq:sigma on h and v}
&\sigma_{d}(b,v')=\sigma_{d}(v,b')=1, \\\label{eq:sigma on vh and h'v'}
&\sigma_{d}(vb,b'v')=\sigma_{d}(b,b').
\end{align}
It follows that if ${}^bv\in N_{\GL_d}$,
\begin{align}\label{eq:sigma conjugate v by h}
{}^b\langle v,1\rangle=\langle {}^bv,1\rangle.
\end{align}
Also if $u^-\mapsto\langle u^-,\varsigma(u^-)\rangle$ is the splitting of $N_{\GL_d}^-$ and for $u^-\in N_{\GL_d}^-$ we have
${}^bu^-\in N_{\GL_d}$,
\begin{align}\label{eq:sigma conjugate v- to v by h}
{}^b\langle u^-,\varsigma(u^-)\rangle=\langle {}^bu^-,1\rangle.
\end{align}
This follows from \eqref{eq:epsilon for conjugation between split subgroups} with $Y=N_{\GL_d}$ and $\chi={}^b$.

One of the key properties of $\sigma_d$ is the block-compatibility formula \cite[Theorem~11]{BLS}: for $0<l<d$,
$a,a'\in\GL_l$ and $b,b'\in\GL_{d-l}$,
\begin{align}\label{eq:BLS block compatible}
\sigma_{d}(\diag(a,b),\diag(a',b'))=(\det a,\det b')_m\sigma_l(a,a')\sigma_{d-l}(b,b').
\end{align}
We also mention that $\sigma_1$ is trivial and $\sigma_2$ coincides with the $2$-cocycle of Kubota \cite{Kubota}:
\begin{align}\label{eq:Kubota formula}
\sigma_2(g,g')=\Big(\frac{\gamma(gg')}{\gamma(g)},\frac{\gamma(gg')}{\gamma(g')\det g}\Big)_m,
\qquad\gamma\left(\begin{smallmatrix}a&b\\c&d\end{smallmatrix}\right)=\begin{cases}c&c\ne0,\\d&c=0.\end{cases}
\end{align}
In addition, there is a well defined action
of $W_{\GL_d}$ on the genuine smooth admissible representations of
$\widetilde{T}_{\GL_d}$ (e.g., \cite[\S~13.6]{McNamara}).

To compute conjugations of torus elements by Weyl elemets, we will repeatedly use the set $\mathfrak{W}_d\subset\GL_d$ as in \cite{BLS}.
For any $\alpha\in\Phi_d$, $w_{\alpha}$ was defined to be the reflection along $\alpha$. We fix concrete representatives for the simple roots $\alpha=(i,i+1)$, by $w_{\alpha}=\diag(I_{i-1},\left(\begin{smallmatrix}&-1\\1\end{smallmatrix}\right),I_{d-i-1})$. Any $w\in W_{\GL_d}$ can be written as a product $w_{\alpha_1}\cdot\ldots \cdot w_{\alpha_{\ell(w)}}$ where $\alpha_1,\ldots,\alpha_{\ell(w)}$ are simple reflections. Define
the set $\mathfrak{W}_d=\{w_{\alpha_1}\cdot\ldots \cdot w_{\alpha_{\ell(w)}}:w\in W_{\GL_d}\}\subset\SL_d$, it is not a group. Any permutation matrix $w\in \GL_d$ can be written uniquely in the form $w=t_0w'$ where $t_0\in T_{\GL_d}$ and its coordinates are $\pm1$, and $w'\in\mathfrak{W}_d$. For any $t\in T_{\GL_d}$, by \cite[\S~3, Theorem~7(b), (d)]{BLS},
\begin{align}\label{eq:sigma w mathcal and t}
&\sigma_d(t,w')=1,\\
\label{eq:sigma t and w mathcal}
&\sigma_d(w',t)=\prod_{(i,j)=\alpha\in\Phi_d^+:w\alpha<0}(-t_j,t_i)_m.
\end{align}
Using \eqref{eq:2-cocycle} (with $g={}^{w'}t$, $g'=w'$, $g''={w'}^{-1}$) and because $\sigma({}^{w'}t,w')=1$,
\begin{align}\label{eq:sigma conjugations}
\sigma_d({}^{w'}tw',{w'}^{-1})=
\sigma_d({}^{w'}t,w'{w'}^{-1})\sigma_d(w',{w'}^{-1})=\sigma_d(w',{w'}^{-1}).
\end{align}
Hence combining \eqref{eq:sigma conjugations} with \eqref{eq:sigma t and w mathcal},
\begin{align}\label{eq:conj mathcal t in GLd}
{}^{w'}\langle t,1\rangle=\langle {}^{w'}t,\sigma_d(w',t)\sigma_d(w't,{w'}^{-1})\sigma_d(w',{w'}^{-1})^{-1}\rangle=
\langle {}^{w'}t,\prod_{(i,j)=\alpha\in\Phi_d^+:w\alpha<0}(-t_j,t_i)_m\rangle.
\end{align}
Then ${}^{w}\langle t,1\rangle={}^{t_0}({}^{w'}\langle t,1\rangle)$ can be computed from this and \eqref{eq:sigma on torus of GL}. It can be a lengthy computation to find $t_0$; things are simplified when we know (a priori) that $t_0$ and ${}^{w'}t$ commute in the cover (e.g., when $-1\in F^{*m}$).

\begin{example}\label{example:delta decomposition t_0 and w'}
Consider the permutation matrix $w=\left(\begin{smallmatrix}&I_d\\I_d\end{smallmatrix}\right)\in\GL_{2d}$.
Denote $\alpha_i=(i,i+1)$ for $1\leq i<2d$. Then
\begin{align*}
w=\diag((-1)^{d}I_{d},I_{d})(w_{\alpha_{d}}\cdot\ldots\cdot w_{\alpha_{2d-1}})
(w_{\alpha_{d-1}}\cdot\ldots\cdot w_{\alpha_{2d-2}})\cdot\ldots\cdot
(w_{\alpha_{1}}\cdot\ldots\cdot w_{\alpha_{d}}),
\end{align*}
and the product is reduced: to see this note that if $w_{0,d}$ is the longest Weyl element of $W_{\GL_d}$,
$\ell(w)=\ell(w_{0,2d})-2\ell(w_{0,d})=d^2$. Hence $t_0=\diag((-1)^{d}I_{d},I_{d})$ and $t_0w=w'\in\mathfrak{W}_{2d}$.
\end{example}
According to the Bruhat decomposition $\GL_d=\bigsqcup_{w''\in\mathfrak{W}_{d}}N_{\GL_d}T_{\GL_d}w''N_{\GL_d}$. For $g\in\GL_d$,
write $g=n't'w''n''$ with $n',n''\in N_{\GL_d}$, $t'\in T_{\GL_d}$ and $w''\in\mathfrak{W}_{d}$. Define $\mathbf{t}(g)=t'$ (see \cite[p.~151]{BLS}). By \eqref{eq:sigma on h and v} and \eqref{eq:sigma w mathcal and t},
\begin{align}\label{eq:Bruhat decomposition sigma}
\langle g,1\rangle=\langle n',1\rangle \langle t',1\rangle \langle w'',1\rangle \langle n'',1\rangle.
\end{align}

As mentioned above $\mathfrak{W}_d$ is not a group, and we usually work with permutations. It will therefore be convenient to define a group containing $\mathfrak{W}_d$ and permutations. Let $\mathfrak{W}^+_d$ be the group generated by $\mathfrak{W}_d$ and the diagonal matrices $\diag(t_1,\ldots,t_d)$ with $t_i=\pm1$ for each $i$.

When $(-1,-1)_m=1$ in $F$, e.g., if $-1\in F^{*m}$, $\sigma_{d}$ is trivial on the permutation matrices and also
on $\mathfrak{W}^+_d$. This follows from the proof of \cite[\S~3, Theorem~7]{BLS}, noting that for $g\in \mathfrak{W}^+_d$, the coordinates of $\mathbf{t}(g)$ are $\pm1$ (see also \cite[\S~5]{BLS}). In particular if we identify $W_{\GL_d}$ with the full subgroup of permutation matrices in $\GL_d$, $w\mapsto \langle w,1\rangle$ is a splitting of $W_{\GL_d}$.

We say that $F$ is unramified if it is non-archimedean, $|m|=1$, $q$ is odd and $q>3$. Assume this is the case.
Then $(\mathcal{O}^*,\mathcal{O}^*)_m=1$, whence $\sigma_d$ is trivial on $T_{\GL_d}\cap K_{\GL_d}$, i.e., on torus elements
with coordinates in $\mathcal{O}^*$ (by \eqref{eq:sigma on torus of GL}), and on $\mathfrak{W}^+_d$.
By \cite[Lemma~11.3]{Moore}, there is a splitting of $K_{\GL_{d}}$ (but $\sigma_d$ is not trivial on $K_{\GL_d}$). As in \cite[p.~43]{KP}, this splitting is made ``canonical"
by taking it to be the restriction of the splitting of $K_{\SL_{d+1}}$ in $\SL_{d+1}^{(m)}$. Denote this splitting by $\langle y,\eta_d(y)\rangle$.
Since $\sigma_d$ is trivial on $N_{\GL_{d}}(\mathcal{O})$, $T_{\GL_d}\cap K_{\GL_d}$ and $\mathfrak{W}^+_d$,
$\eta_d$ is a homomorphism of these subgroups, then the explicit description of $\eta_2$ of \cite[p.~19]{Kubota2}, namely
$\eta_2(\left(\begin{smallmatrix}a&b\\c&d\end{smallmatrix}\right))=1$ if $|c|=0,1$ and $(c,d(ad-bc)^{-1})_m$ otherwise, implies
(in light of \eqref{eq:Kubota formula}) $\eta_d$ is trivial on $N_{\GL_{d}}(\mathcal{O})$, $T_{\GL_d}
\cap K_{\GL_d}$ and $\mathfrak{W}^+_d$ (see \cite[Proposition~0.I.3]{KP} and \cite[(1.3) and p.~183]{Tk}).

Again consider an arbitrary $F$, i.e., not necessarily unramified. The image of $H$ in $\SL_{2rkc}$ is standard
(as defined in \cite[p.~143]{BLS}), hence by \cite[\S~2, Theorem~7]{BLS}, the restriction of $\sigma_{2rkc}$ to $H$
represents $H^{(m)}$ in $\mathrm{H}^2(H,\mu_m)$. In fact this restriction is $\sigma_{\Sp_{2rkc}}$ of \cite[\S~2]{BLS};
but the description of $\sigma_{2rkc}$ in \cite[\S~3]{BLS} is more convenient for matrices.
When we apply \eqref{eq:sigma on torus of GL} to $T_{rkc}<T_{\GL_{2rkc}}$, if
$t=\diag(t_1,\ldots,t_{rkc},t_{rkc}^{-1},\ldots,t_{1}^{-1})\in T_{rkc}$ and $t'\in T_{rkc}$ (with similar notation),
\begin{align}\label{eq:BLS $2$-cocycle on torus}
\sigma_{2rkc}(t,t')=\prod_{i=1}^{rkc}(t_i,t'_i)_m^{-1}.
\end{align}
Since $N_{rkc}<N_{\GL_{2rkc}}$, we can use \eqref{eq:sigma on h and v}--\eqref{eq:sigma conjugate v by h} for $b,b'\in H$ and $v,v'\in N_{rkc}$. When $F$ is unramified,
$K_{H}=H(\mathcal{O})$ is perfect (\cite[Lemma~11.1]{Moore}). Regarding $\eta_{2rkc}$ as
a function on $K_H$ by restriction, it is the unique $1$-cochain such that
\begin{align}\label{eq:splitting of K H}
\sigma_{2rkc}(y,y')=\frac{\eta_{2rkc}(yy')}{\eta_{2rkc}(y)\eta_{2rkc}(y')},\qquad\forall y,y'\in K_{H}.
\end{align}
Indeed if $\eta':K_H\rightarrow\mu_m$ is another such mapping, $y\mapsto\eta_{2rkc}(y)\eta'(y)^{-1}$ is a character of $K_{H}$.

\begin{proposition}\label{proposition:action of W on torus is trivial on Sp}
Assume $-1\in F^{*m}$ and let $w\in H\cap \mathfrak{W}^+_{2rkc}$ be a representative of an element of $W_H$. For any $t\in T_{rkc}$, ${}^w\langle t,1\rangle=\langle {}^wt,1\rangle$.
\end{proposition}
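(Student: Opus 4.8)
The plan is to compute ${}^w\langle t,1\rangle$ directly in $\GL_{2rkc}^{(m)}$ via the \cite{BLS}-formulas recalled above, reducing the assertion to the vanishing of one explicit product of Hilbert symbols; that vanishing then follows from the symplectic relations on the coordinates of $t$ together with the hypothesis $-1\in F^{*m}$.

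First I would write $w=t_0w'$ with $t_0\in T_{\GL_{2rkc}}\cap\mathfrak{W}^+_{2rkc}$ (nonzero entries $\pm1$) and $w'\in\mathfrak{W}_{2rkc}$. Since $-1\in F^{*m}$, $\sigma_{2rkc}$ is trivial on $\mathfrak{W}^+_{2rkc}$, so $\langle w,1\rangle=\langle t_0,1\rangle\langle w',1\rangle$ and therefore ${}^{\langle w,1\rangle}\langle t,1\rangle={}^{\langle t_0,1\rangle}\big({}^{\langle w',1\rangle}\langle t,1\rangle\big)$. Applying \eqref{eq:conj mathcal t in GLd} in $\GL_{2rkc}$ gives ${}^{w'}\langle t,1\rangle=\langle {}^{w'}t,\epsilon_0\rangle$ with $\epsilon_0=\prod_{(i,j)=\alpha\in\Phi_{2rkc}^+:\,w\alpha<0}(-t_j,t_i)_m$, where $t_1,\dots,t_{2rkc}$ denote the diagonal coordinates of $t$. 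I would then conjugate by $\langle t_0,1\rangle$: using $t_0^{-1}=t_0$ and that $t_0$ is diagonal, the correction factor is $\sigma_{2rkc}(t_0,{}^{w'}t)\,\sigma_{2rkc}(t_0\,{}^{w'}t,t_0)$, and by \eqref{eq:sigma on torus of GL} each factor is a product of symbols of the form $(\pm1,x)_m$, hence trivial because $-1\in F^{*m}$. Since ${}^wt={}^{w'}t\in T_{rkc}$ ($w$ normalizes $T_{rkc}$), this leaves ${}^w\langle t,1\rangle=\langle {}^wt,\epsilon_0\rangle$, so the whole statement reduces to $\epsilon_0=1$.

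To prove $\epsilon_0=1$, I would use $-1\in F^{*m}$ again to rewrite $\epsilon_0=\prod_{i<j,\ w(i)>w(j)}(t_j,t_i)_m$, the product over the inversions of $w$ viewed as a permutation of $\{1,\dots,2N\}$, $N=rkc$. For $t\in T_{rkc}$ one has $t_l\,t_{2N+1-l}=1$, and since $w$ represents an element of $W_H$ it commutes, as a permutation of $\{1,\dots,2N\}$, with $l\mapsto 2N+1-l$; hence $(i,j)\mapsto(2N+1-j,\,2N+1-i)$ is an involution on the set of inversions of $w$. A two-element orbit then contributes $(t_j,t_i)_m(t_{2N+1-i},t_{2N+1-j})_m=(t_j,t_i)_m(t_i^{-1},t_j^{-1})_m=(t_j,t_i)_m(t_i,t_j)_m=1$, while a fixed point $(i,2N+1-i)$ contributes $(t_{2N+1-i},t_i)_m=(t_i^{-1},t_i)_m=(t_i,t_i)_m^{-1}=(t_i,-1)_m^{-1}=1$; here I use the standard identities $(a,b)_m(b,a)_m=1$, $(a^{-1},b^{-1})_m=(a,b)_m$, $(a,a)_m=(a,-1)_m$, and $(a,-1)_m=1$ (valid since $-1\in F^{*m}$). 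This gives $\epsilon_0=1$, hence ${}^w\langle t,1\rangle=\langle {}^wt,1\rangle$.

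The step I expect to be the main obstacle is this last one: one has to verify carefully that the involution $(i,j)\mapsto(2N+1-j,2N+1-i)$ is well defined on the set of inversions — i.e.\ that representatives in $H\cap\mathfrak{W}^+_{2rkc}$ act on the $2rkc$ coordinates through signed permutations compatible with our realization of $\Sp_{2rkc}$ — and to deal with the diagonal fixed points; this is precisely where the symplectic structure and the hypothesis $-1\in F^{*m}$ are both essential. The preceding reductions are routine manipulations with the cocycle relation \eqref{eq:2-cocycle}, formula \eqref{eq:sigma on torus of GL}, and \eqref{eq:conj mathcal t in GLd}.
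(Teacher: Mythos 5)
Your argument is correct, and up to the decisive step it follows the same track as the paper: write $w=t_0w'$ with $w'\in\mathfrak{W}_{2rkc}$, use $-1\in F^{*m}$ to dispose of the conjugation by $t_0$, and apply \eqref{eq:conj mathcal t in GLd} to reduce everything to the vanishing of $\epsilon_0=\prod_{(i,j)=\alpha\in\Phi_{2rkc}^+:\,w\alpha<0}(-t_j,t_i)_m$. Where you diverge is in how that vanishing is established. The paper first invokes \eqref{eq:BLS $2$-cocycle on torus} to note that $\sigma_{2rkc}$ is trivial between two elements of $T_{rkc}$ supported on disjoint coordinate pairs, so it suffices to treat $t=\diag(I_{i-1},x,I_{2(rkc-i)},x^{-1},I_{i-1})$; for such $t$ every symbol occurring in $\epsilon_0$ is of the form $(-x^{\pm1},x^{\pm1})_m$ or $(-1,x^{\pm1})_m$ and hence trivial, with no combinatorics needed. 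You instead keep $t$ general and cancel $\epsilon_0$ by pairing inversions under $(i,j)\mapsto(2N+1-j,2N+1-i)$, using that the underlying permutation of $w$ centralizes $l\mapsto 2N+1-l$ and that $t_{2N+1-l}=t_l^{-1}$; the orbit and fixed-point contributions you compute are right, and the involution is indeed well defined on inversions for exactly the reason you flag. The paper's reduction is shorter and sidesteps the symmetry of the inversion set entirely; your version is uniform in $t$ and makes visible precisely how the symplectic condition on $w$ and on $t$ enters. Both are legitimate, and your identities $(a^{-1},b^{-1})_m=(a,b)_m$, $(a,b)_m(b,a)_m=1$, $(a^{-1},a)_m=(a,-1)_m^{-1}$ check out.
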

\begin{proof}
Write $w=t_0w'$ with
$t_0\in T_{\GL_{2rkc}}$ and $w'\in\mathfrak{W}_{2rkc}$.
By \eqref{eq:BLS $2$-cocycle on torus} it suffices to prove the result for $t=\diag(I_{i-1},x,I_{2(rkc-i)},x^{-1},I_{i-1})$ and $1\leq i\leq rkc$. Our assumption on $-1$ implies ${}^w\langle t,1\rangle={}^{w'}\langle t,1\rangle$.
By \eqref{eq:conj mathcal t in GLd}, the only roots of unity introduced by the conjugation ${}^{w'}\langle t,1\rangle$ are of the form
$(-x,x)^{\pm1}_m$ and $(1,x)^{\pm1}_m$ (which are always trivial), and $(-1,x)^{\pm1}_m$ which equals $1$ by our assumption on $-1$.
\end{proof}
\begin{remark}
The proposition does not apply to arbitrary $t\in T_{\GL_{2rkc}}$.
\end{remark}
Consider the involution $g\mapsto g^*=J_c{}^tg^{-1}J_c$ of $\GL_c$. Define
\begin{align*}
\sigma^*_{c}(g,g')=\sigma_{c}(g^*,{g'}^*).
\end{align*}
This is again a $2$-cocycle of $\GL_c$, and of $\SL_c$ or $G$ by restriction, because $g\mapsto g^*$ is an automorphism and a homeomorphism.

\begin{proposition}\label{proposition:sigma * and sigma on SLc}
The $2$-cocycles $\sigma^*_{c}$ and $\sigma_{c}$ are cohomologous on $\SL_c$.
\end{proposition}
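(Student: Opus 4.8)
The plan is to exploit the fact that $\SL_c$ is perfect, so that $\mathrm{H}^2(\SL_c,\mu_m)$ classifies the covering up to a canonical isomorphism, and thus it suffices to show that $\sigma_c^*$ and $\sigma_c$ define isomorphic topological central extensions of $\SL_c$. Since $g\mapsto g^*$ is a topological automorphism of $\GL_c$ preserving $\SL_c$, and $\SL_c^{(m)}$ (for $c\geq 2$) is the unique $m$-fold cover of $\SL_c$ attached to the fixed Steinberg symbol (this is essentially the uniqueness statement in \S~\ref{Covering groups}, coming from $\SL_c$ being perfect and the cover being determined by its restriction to the relevant $\SL_2$'s), the pullback of $\SL_c^{(m)}$ under $g\mapsto g^*$ must again be $\SL_c^{(m)}$. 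Concretely: by \cite[\S~2, Theorem~7]{BLS}, $\sigma_c$ restricted to $\SL_c$ represents $\SL_c^{(m)}$ in $\mathrm{H}^2(\SL_c,\mu_m)$, and $\sigma_c^*$ is the pullback of this class under the automorphism $g\mapsto g^*$; I would check that this pullback represents the same covering, hence the two cocycles are cohomologous.

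The cleanest way I would carry this out is to make the isomorphism explicit via the element $w_0=\diag(J_c) \in \GL_c$ (or rather its representative $w_0'$ in $\mathfrak{W}_c$, up to a sign torus element), noting that $g^* = J_c\,{}^tg^{-1}\,J_c$ and that transpose-inverse composed with conjugation by $J_c$ can be analyzed using the explicit formulas for $\sigma_c$ on $\SL_c$ from \S~\ref{local covering}. First I would recall that on $\SL_c$ the covering $\SL_c^{(m)}$ is, by \cite[\S~2]{BLS}, determined by the Steinberg symbol, and that the map $g\mapsto {}^tg^{-1}$ corresponds to the diagram automorphism; for type $A_{c-1}$ this is an outer automorphism (when $c\geq 3$), but it fixes the set of coroots and sends short coroots to short coroots, so by \cite[Proposition~4.15]{BD} (or directly by the Steinberg symbol being symmetric under the relevant identifications) it preserves the covering class. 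Then conjugation by $J_c$ is inner, hence automatically preserves the class. Combining, $g\mapsto g^*$ preserves the covering class on $\SL_c$, which is exactly the assertion that $\sigma_c^*$ and $\sigma_c$ are cohomologous on $\SL_c$.

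Alternatively, and perhaps more in the spirit of the paper's computational style, I would verify cohomologousness directly on generators: it is enough to exhibit $\eta\in\mathrm{C}^1(\SL_c,\mu_m)$ with $\sigma_c^*(g,g')=\dfrac{\eta(g)\eta(g')}{\eta(gg')}\sigma_c(g,g')$, and by the Bruhat decomposition $\SL_c=\bigsqcup_{w''}N T w'' N$ together with \eqref{eq:Bruhat decomposition sigma}, both cocycles are controlled by their values on the torus $T_{\SL_c}$ and on the Weyl representatives $\mathfrak{W}_c$. On the torus, \eqref{eq:sigma on torus of GL} gives $\sigma_c(t,t')=\prod_{i<j}(t_i,t'_j)_m$, and $g\mapsto g^*$ acts on $T_{\SL_c}$ by $t=\diag(t_1,\dots,t_c)\mapsto \diag(t_c^{-1},\dots,t_1^{-1})$, so $\sigma_c^*(t,t')=\prod_{i<j}(t_{c+1-i}^{-1},t_{c+1-j}^{-1})_m=\prod_{i<j}(t_i^{-1},t_j^{-1})_m$ after reindexing; using bimultiplicativity of the Hilbert symbol and $(t_i,t_i)_m$-type corrections, this differs from $\sigma_c(t,t')$ by a coboundary coming from a character-like correction on $T$, and one checks this correction extends to a $1$-cochain on all of $\SL_c$ compatibly with the Weyl elements (which, by \eqref{eq:conj mathcal t in GLd} and its analogue, transform in a way that matches under the relabeling $i\leftrightarrow c+1-i$, possibly at the cost of explicit sign factors $(-1,\cdot)_m$). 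The main obstacle I anticipate is precisely bookkeeping these sign factors $(-1,t_i)_m$ introduced by the Weyl representatives and the inverse on torus coordinates: getting the $1$-cochain $\eta$ to be simultaneously consistent on $T$, on $\mathfrak{W}_c$, and on the unipotent radicals requires care, and this is where I would either invoke the abstract uniqueness argument above to avoid the computation, or fall back on the fact (noted in the excerpt) that when $-1\in F^{*m}$ all these sign factors vanish, handle that case trivially, and then deduce the general case by a continuity/density or a direct cocycle-manipulation argument.
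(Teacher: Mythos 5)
Your first (structural) argument is sound and would prove the proposition, but it is a genuinely different route from the paper's. The paper's proof rests on a single key input you did not identify: by Moore [p.~54, Corollary~2], restriction $\mathrm{H}^2(\SL_c,\mu_m)\rightarrow\mathrm{H}^2(T_{\SL_c},\mu_m)$ is \emph{injective}, so it suffices to compare the two cocycles on the maximal torus alone. There, \eqref{eq:sigma on torus of GL} gives $\sigma_c(t,t')=\prod_{j\leq i\leq c-1}(t_i,t'_j)_m^{-1}$ and $\sigma_c^*(t,t')=\prod_{i\leq j\leq c-1}(t_i,t'_j)_m^{-1}$ for $t,t'\in T_{\SL_c}$, and the quotient is visibly the coboundary of $\eta(t)=\prod_{1\leq i\leq j\leq c-1}(t_i,t_j)_m$ --- a two-line computation that completely sidesteps the Weyl-element and unipotent bookkeeping you worry about in your second approach (that bookkeeping is precisely what Moore's injectivity lets one avoid). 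Your approach instead invokes the classification of covers of the simply connected group by Steinberg symbols and the invariance of the symbol under the diagram automorphism (with conjugation by $J_c$ being inner in $\GL_c$, hence harmless); this buys a computation-free argument at the cost of leaning on heavier classification machinery and of being less explicit --- note the paper later only needs the \emph{existence} of the $1$-cochain $\varsigma_{*,c}$, so either route suffices here.

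One concrete slip in your second approach: for $t\in T_{\SL_c}$ one has $(t^*)_i=t_{c+1-i}^{-1}$, and the substitution $i\mapsto c+1-i$ reverses the inequality, so $\sigma_c^*(t,t')=\prod_{i>j}(t_i,t'_j)_m$ on $T_{\GL_c}$, not $\prod_{i<j}(t_i^{-1},{t'_j}^{-1})_m$; the latter equals $\sigma_c(t,t')$ itself, which would (incorrectly) say the two cocycles already agree on the torus. The correct formula differs from $\sigma_c$ by the nontrivial coboundary exhibited above, which is exactly what the proposition requires.
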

\begin{proof}
The proof is similar to \cite[Lemma~2]{Kable3}.
Let $T_{\SL_c}=T_{\GL_c}\cap\SL_c$. By \cite[p.~54, Corollary~2]{Moore}, restriction $\mathrm{H}^2(\SL_c,\mu_m)\rightarrow\mathrm{H}^2(T_{\SL_c},\mu_m)$ is injective.
Now by \eqref{eq:sigma on torus of GL}, for $t,t'\in T_{\SL_c}$,
\begin{align*}
\sigma_c(t,t')=\prod_{i=1}^{c-1}\prod_{j=1}^i(t_i,t'_j)_m^{-1},\qquad
\sigma^*_c(t,t')=\prod_{i=1}^{c-1}\prod_{j=i}^{c-1}(t_i,t'_j)_m^{-1}.
\end{align*}
Therefore if we define $\eta\in\mathrm{C}^1(T_{\SL_c},\mu_m)$ by $\eta(t)=\prod_{1\leq i\leq j\leq c-1}(t_i,t_j)_m$,
\begin{align*}
\sigma_c(t,t')\sigma^*_c(t,t')^{-1}=\prod_{i=1}^{c-1}\prod_{j=1}^i(t_i,t'_j)_m^{-1}
\prod_{i=1}^{c-1}\prod_{j=i}^{c-1}(t_i,t'_j)_m=\frac{\eta(tt')}{\eta(t)\eta(t')}.
\end{align*}
Hence $\sigma^*_{c}=\sigma_{c}$ in $\mathrm{H}^2(T_{\SL_c},\mu_m)$, thereby also in $\mathrm{H}^2(\SL_c,\mu_m)$.
\end{proof}
\begin{remark}
The involution $g\mapsto g^*$ is the identity on $T_n$ (in fact on $M_n$). Also, a slightly more careful argument
will extend this proposition to $\GL_c$, see the proof of Proposition~\ref{proposition:sigma * and sigma on GLd}.
\end{remark}
We then have $\varsigma_{*,c}\in\mathrm{C}^1(\SL_c,\mu_m)$ such that
\begin{align}\label{eq:sigma c *}
\sigma^*_{c}(b,b')=\frac{\varsigma_{*,c}(b)\varsigma_{*,c}(b')}{\varsigma_{*,c}(bb')}\sigma_{c}(b,b'),\qquad \forall b,b'\in \SL_c.
\end{align}
We mention that $\sigma^*_{c}\ne\sigma_c$ in $\mathrm{Z}^2(\SL_c,\mu_m)$ (e.g., consider $g=\left(\begin{smallmatrix}a&\\&a^{-1}\end{smallmatrix}\right)$ and $g'=\left(\begin{smallmatrix}&-b^{-1}\\b\end{smallmatrix}\right)$ and use \eqref{eq:Kubota formula}); for another example see below. We introduce a minor correction to $\sigma^*_{c}$ using $\varsigma_{*,c}$: define $\sigma^{*,rk}_{c}\in\mathrm{Z}^2(\SL_c,\mu_m)$ by
\begin{align}\label{eq:sigma c * rk}
\sigma^{*,rk}_{c}(b,b')=\left(\frac{\varsigma_{*,c}(b)\varsigma_{*,c}(b')}{\varsigma_{*,c}(bb')}\right)^{rk}\sigma^*_{c}(b,b')
=\left(\frac{\varsigma_{*,c}(b)\varsigma_{*,c}(b')}{\varsigma_{*,c}(bb')}\right)^{rk+1}\sigma_{c}(b,b').
\end{align}
By definition
$\sigma^{*}_{c}=\sigma^{*,rk}_{c}=\sigma_{c}$ in $\mathrm{H}^2(\SL_c,\mu_m)$, in particular in $\mathrm{H}^2(G,\mu_m)$.
Also if $m|rk$ (e.g., when $r=m$), $\sigma^{*,rk}_{c}=\sigma^*_{c}$ in $\mathrm{Z}^2(\SL_c,\mu_m)$, because $\varsigma_{*,c}^{rk}$ is trivial (being an $m$-th root of unity).

\begin{example}
Consider $c=4$, $t=\diag(t_1,t_2,t_2^{-1},t_1^{-1})$ and $w=\diag(1,\left(\begin{smallmatrix}&-1\\1\end{smallmatrix}\right),1)$. Then
$\sigma_4(w,t)=(-t_2^{-1},t_2)_m=1$ by \eqref{eq:sigma t and w mathcal}. To compute
$\sigma_4^*(w,t)=\sigma_4(w^*,t)$, write $w^*=t_0w'$ where $t_0=\diag(1,-1,-1,1)$ and
$w'$ belongs to the set $\mathfrak{W}_4$. By \eqref{eq:2-cocycle},
\begin{align*}
\sigma_4(t_0,w')\sigma_4(w^*,t)=\sigma_4(t_0,w' t)\sigma_4(w',t).
\end{align*}
Now $\sigma_4(t_0,w')=1$ and $\sigma_4(w',t)=(-t_1^{-1},t_1)_m=1$ by \eqref{eq:sigma w mathcal and t} and \eqref{eq:sigma t and w mathcal},
and
\begin{align*}
\sigma_4(t_0,w' t)=\sigma_4(t_0,{}^{w'}tw')=\sigma_4(t_0,{}^{w'}t)=(-1,t_2)_m^{-1},
\end{align*}
where we used \cite[\S~3, (5)]{BLS} and \eqref{eq:sigma on torus of GL}. Therefore $\sigma_4(w^*,t)=(-1,t_2)_m^{-1}\ne\sigma_4(w,t)$.
\end{example}
We describe the restriction of the $2$-cocycle $\sigma_{2rkc}$ to the image of $G\times G$ in $H$.
\begin{proposition}\label{proposition:the $2$-cocycle on G times G}
For all $g_i,g_i'\in G$,
\begin{align}\label{eq:the $2$-cocycle on G times G formula}
\sigma_{2rkc}((g_1,g_2),(g_1',g_2'))&=\sigma^{*,rk}_{c}(g_1,g_1')^{-1}\sigma_{c}(g_2,g_2').
\end{align}
In other words
\begin{align*}
\sigma_{2rkc}(\mathfrak{e}_1(g_1)\mathfrak{e}_2(g_2),\mathfrak{e}_1(g_1')\mathfrak{e}_2(g_2'))=
\sigma^{*,rk}_{c}(g_1,g_1')^{-1}\sigma_{c}(g_2,g_2').
\end{align*}
In particular $\mathfrak{e}_1(G)$ and $\mathfrak{e}_2(G)$ commute in $H^{(m)}$ (!).
\end{proposition}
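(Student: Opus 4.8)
The plan is to compute the restriction of $\sigma_{2rkc}$ by first passing to a block--diagonal element and then transporting the answer across a permutation. Regard the image of $(g_1,g_2)$ in $H$ as the block--diagonal matrix $\diag(g_1,\dots,g_1,M,g_1^*,\dots,g_1^*)$ in $\GL_{2rkc}$, with $rk-1$ copies of $g_1$, $rk-1$ copies of $g_1^*$, and $2c\times 2c$ middle block $M=M(g_1,g_2)=\left(\begin{smallmatrix}g_{1,1}&&g_{1,2}\\&g_2&\\g_{1,3}&&g_{1,4}\end{smallmatrix}\right)$ in $n,c,n$ blocks. Iterating the block--compatibility formula \eqref{eq:BLS block compatible} is legitimate since this is genuinely block diagonal, and because $g_1,g_1^*,g_2\in\SL_c$ every Hilbert--symbol glue factor $(\det a,\det b')_m$ produced in the iteration is $1$. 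Hence, \emph{exactly},
\[
\sigma_{2rkc}((g_1,g_2),(g_1',g_2'))=\sigma_{c}(g_1,g_1')^{rk-1}\,\sigma_{2c}(M,M')\,\sigma_{c}(g_1^*,{g_1'}^*)^{rk-1}=\sigma_{c}(g_1,g_1')^{rk-1}\,\sigma_{2c}(M,M')\,\sigma^{*}_{c}(g_1,g_1')^{rk-1}.
\]

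It then remains to prove that $\sigma_{2c}(M,M')=\sigma_{c}(g_1,g_1')\,\sigma_{c}(g_2,g_2')$, again as an exact identity. There is a permutation matrix $\epsilon\in\GL_{2c}$ with $M=\epsilon\,\diag(g_1,g_2)\,\epsilon^{-1}$ (it interchanges the block carrying the lower half of $g_1$ with the block carrying $g_2$), and $\sigma_{2c}(\diag(g_1,g_2),\diag(g_1',g_2'))=\sigma_c(g_1,g_1')\sigma_c(g_2,g_2')$ by \eqref{eq:BLS block compatible} and $\det g_1=1$. A priori the restrictions of $\sigma_{2c}$ to $\{M(g_1,g_2)\}$ and to $\{\diag(g_1,g_2)\}$ differ by the coboundary of the $1$-cochain $\kappa(g_1,g_2)=\sigma_{2c}(\epsilon,\diag(g_1,g_2))\,\sigma_{2c}(\epsilon\diag(g_1,g_2),\epsilon^{-1})$, so the claim is equivalent to $\partial\kappa\equiv1$, i.e.\ to $\kappa$ being a homomorphism $G\times G\to\mu_m$ --- and since $G=\Sp_c$ ($c=2n\ge2$) is perfect, this forces $\kappa\equiv1$. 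To establish $\partial\kappa\equiv1$ I would Bruhat--decompose $\diag(g_1,g_2)$ as in \eqref{eq:Bruhat decomposition sigma} and push it through conjugation by $\epsilon$ factor by factor: the unipotent parts transfer with no twist by \eqref{eq:sigma conjugate v by h}, \eqref{eq:sigma conjugate v- to v by h} and \eqref{eq:epsilon for conjugation between split subgroups} (the permutation carries the relevant root subgroups of $\diag(g_1,g_2)$ exactly to those occurring in $M$), while the torus and Weyl parts contribute only symbols of the shape $(\pm1,\cdot)_m$ and $(\det(\cdot),\cdot)_m=1$ through \eqref{eq:conj mathcal t in GLd} and \eqref{eq:sigma on torus of GL}; these are bi--multiplicative in the torus coordinates and thus reassemble into a homomorphism of the perfect group $G\times G$, necessarily trivial. \emph{This permutation bookkeeping is the single nonformal step; everything around it is mechanical.}

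Granting $\sigma_{2c}(M,M')=\sigma_{c}(g_1,g_1')\,\sigma_{c}(g_2,g_2')$, combining with the first display gives $\sigma_{2rkc}((g_1,g_2),(g_1',g_2'))=\sigma_{c}(g_1,g_1')^{rk}\,\sigma^{*}_{c}(g_1,g_1')^{rk-1}\,\sigma_{c}(g_2,g_2')$. To recognize the $g_1$--part as $\sigma^{*,rk}_c(g_1,g_1')^{-1}$, write $\partial\varsigma$ for the coboundary $b,b'\mapsto\varsigma_{*,c}(b)\varsigma_{*,c}(b')/\varsigma_{*,c}(bb')$, so $\sigma_c^*=(\partial\varsigma)\sigma_c$ by \eqref{eq:sigma c *} and $\sigma^{*,rk}_c=(\partial\varsigma)^{rk+1}\sigma_c$ by \eqref{eq:sigma c * rk}. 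Then $\sigma_c^{rk}(\sigma_c^*)^{rk-1}=(\partial\varsigma)^{rk-1}\sigma_c^{2rk-1}$ and $(\sigma^{*,rk}_c)^{-1}=(\partial\varsigma)^{-(rk+1)}\sigma_c^{-1}$, whose ratio is $(\partial\varsigma)^{2rk}\sigma_c^{2rk}$; this is trivial because $r$ is chosen so that $m\mid 2r$ (indeed $2r\in\{m,2m\}$), hence $m\mid 2rk$ and any $m$-th root of unity to the power $2rk$ is $1$. This proves \eqref{eq:the $2$-cocycle on G times G formula}; the second display in the statement is the very same identity, since the embedding $G\times G\hookrightarrow H$ is a homomorphism, so $\mathfrak{e}_1(g_1)\mathfrak{e}_2(g_2)$ is precisely the matrix $(g_1,g_2)$.

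Finally, for the commuting assertion, realize $H^{(m)}$ by $\sigma_{2rkc}$. Specializing \eqref{eq:the $2$-cocycle on G times G formula} with one argument the identity, $\sigma_{2rkc}(\mathfrak{e}_1(g_1),\mathfrak{e}_2(g_2))=\sigma^{*,rk}_c(g_1,I_c)^{-1}\,\sigma_c(I_c,g_2)=1$, and symmetrically $\sigma_{2rkc}(\mathfrak{e}_2(g_2),\mathfrak{e}_1(g_1))=1$, because a $2$-cocycle vanishes whenever an argument is the identity. Since $\mathfrak{e}_1(g_1)\mathfrak{e}_2(g_2)=\mathfrak{e}_2(g_2)\mathfrak{e}_1(g_1)=(g_1,g_2)$ as matrices, both orders give $\langle(g_1,g_2),1\rangle$ in $H^{(m)}$, so $\langle\mathfrak{e}_1(g_1),1\rangle$ and $\langle\mathfrak{e}_2(g_2),1\rangle$ commute. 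The expected main obstacle is the second paragraph: controlling the permutation twist $\kappa$ and confirming it is a (hence trivial) homomorphism.
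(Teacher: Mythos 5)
Your outer bookkeeping is correct and coincides with the paper's: peeling off the $rk-1$ diagonal copies of $g_1$ and of $g_1^*$ with \eqref{eq:BLS block compatible} (all glue symbols trivial since every block has determinant $1$), the final rearrangement identifying $\sigma_c^{rk}(\sigma_c^*)^{rk-1}$ with $(\sigma_c^{*,rk})^{-1}$ via $(\partial\varsigma_{*,c})^{2rk}=\sigma_c^{2rk}=1$ (using $m\mid 2r$), and the commutation statement obtained by specializing the formula. Everything therefore hinges on the middle identity $\sigma_{2c}(M,M')=\sigma_c(g_1,g_1')\,\sigma_c(g_2,g_2')$ as an \emph{exact} equality of cocycles, and this is where your argument has a genuine gap. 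The paper gets it by writing the middle block as the product of two \emph{standard} subgroups in the sense of \cite{BLS} --- the interleaved copy $\mathfrak{e}_1^{\bullet}(\Sp_c)$ (standardness quoted from the proof of \cite[Lemma~3.5]{Tk2}) and the central copy $\mathfrak{e}_2(\Sp_c)$ --- and invoking the strong block-compatibility theorem \cite[\S~2, Theorem~7]{BLS} for products of commuting standard subgroups. That theorem is precisely the nontrivial input you are trying to reprove by hand.

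Your substitute --- conjugate $\diag(g_1,g_2)$ to $M$ by a permutation $\epsilon$ and show the transport cochain $\kappa$ has trivial coboundary --- is not carried out, and the sketch does not close. The step ``the torus and Weyl contributions are bi-multiplicative in the torus coordinates and thus reassemble into a homomorphism of $G\times G$'' is a non sequitur: the Bruhat coordinates of a product $hh'$ are not the products of the Bruhat coordinates of $h$ and $h'$, so bimultiplicativity of each individual Hilbert symbol appearing in $\kappa(h)$ implies nothing about multiplicativity of $h\mapsto\kappa(h)$. Perfectness of $G\times G$ only forces $\kappa\equiv 1$ \emph{after} one knows $\kappa$ is a homomorphism, i.e.\ after one knows $\partial\kappa\equiv 1$ --- which is exactly the identity to be proved, so as written the argument is circular at its core. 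That the triviality of such a transport cannot be taken for granted is illustrated by the example immediately preceding the proposition, where $\sigma_4(w^*,t)=(-1,t_2)_m^{-1}\neq\sigma_4(w,t)$ even though the two cocycles involved are cohomologous; an analogous permutation move there produces a genuinely nontrivial twist. The repair is either to actually compute $\partial\kappa$ on all Bruhat cells (a substantial computation), or --- as the paper does --- to cite \cite[\S~2, Theorem~7]{BLS} (equivalently Lemma~5 there) for the two commuting standard copies of $\Sp_c$ inside $\Sp_{2c}$.
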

\begin{proof}
For $g=\left(\begin{smallmatrix}
         g_1    & g_2 \\
         g_3 & g_4
       \end{smallmatrix}\right)\in G$ with $g_i\in\Mat_n$, denote
\begin{align*}
&\mathfrak{e}_1^{\circ}(g)=\diag(g,\ldots,g,I_{2c},g^*,\ldots,g^*),\qquad
\mathfrak{e}_1^{\bullet}(g)=\diag(I_{(rk-1)c},\left(\begin{smallmatrix} g_{1}&&g_{2}\\ &I_{c}&\\ g_{3}&&g_{4}\end{smallmatrix}\right),I_{(rk-1)c}).
\end{align*}
Then $\mathfrak{e}_1(g)=\mathfrak{e}_1^{\circ}(g)\mathfrak{e}_1^{\bullet}(g)$. The subgroup $\mathfrak{e}_2(G)$ is standard in the sense of \cite[\S~2, Theorem~7]{BLS}, and so is $\mathfrak{e}_1^{\bullet}(G)$ (see the proof of \cite[Lemma~3.5]{Tk2}), but not $\mathfrak{e}_1^{\circ}(G)$. Thus by \cite[\S~2, Theorem~7]{BLS},
\begin{align*}
\sigma_{2rkc}(\mathfrak{e}_1^{\bullet}(g_1)\mathfrak{e}_2(g_2),\mathfrak{e}_1^{\bullet}(g_1')\mathfrak{e}_2(g_2'))&=\sigma_{2rkc}(\mathfrak{e}_1^{\bullet}(g_1),\mathfrak{e}_1^{\bullet}(g_1'))\sigma_{2rkc}(\mathfrak{e}_2(g_2),\mathfrak{e}_2(g_2'))
=\sigma_{c}(g_1,g_1')\sigma_{c}(g_2,g_2').
\end{align*}
Also by \eqref{eq:BLS block compatible},
\begin{align*}
\sigma_{2rkc}((g_1,g_2),(g_1',g_2'))&=\sigma_{2rkc}(
\mathfrak{e}_1^{\circ}(g_1)\mathfrak{e}_1^{\bullet}(g_1)\mathfrak{e}_2(g_2)
,\mathfrak{e}_1^{\circ}(g_1')\mathfrak{e}_1^{\bullet}(g_1')\mathfrak{e}_2(g_2'))\\
&=\prod_{i=1}^{rk-1}\sigma_{c}(g_1,g_1')\sigma_{c}(g_1^*,{g_1'}^*)\sigma_{2rkc}(\mathfrak{e}_1^{\bullet}(g_1)\mathfrak{e}_2(g_2),\mathfrak{e}_1^{\bullet}(g_1')\mathfrak{e}_2(g_2')).
\end{align*}
Hence
\begin{align*}
\sigma_{2rkc}((g_1,g_2),(g_1',g_2'))=\prod_{i=1}^{rk-1}\sigma_{c}(g_1,g_1')\sigma_{c}(g_1^*,{g_1'}^*)\sigma_{c}(g_1,g_1')\sigma_{c}(g_2,g_2').
\end{align*}

It remains to observe
\begin{align*}
\prod_{i=1}^{rk-1}\sigma_{c}(g_1,g_1')\sigma_{c}(g_1^*,{g_1'}^*)
=\sigma_{c}^{2rk-2}(g_1,g_1')\left(\frac{\varsigma_{*,c}(g_1)\varsigma_{*,c}(g_1')}{\varsigma_{*,c}(g_1g_1')}
\right)^{rk-1}
=\sigma_{c}^{-2}(g_1,g_1')\left(\frac{\varsigma_{*,c}(g_1)\varsigma_{*,c}(g_1')}{\varsigma_{*,c}(g_1g_1')}
\right)^{rk-1}.
\end{align*}
Here we used the fact that $\sigma_{c}^{2r}$ is trivial, because the image of $\sigma_{c}$ is in $\mu_m$.
\end{proof}
We realize the left copy of $G$ using the $2$-cocycle $\sigma_c^{*,rk}$, and the right copy using $\sigma_c$.
Then by \eqref{eq:the $2$-cocycle on G times G formula}, we can lift the embedding $G\times G \hookrightarrow H$ to an embedding (of topological groups)
\begin{align*}
\{(\epsilon_1,\epsilon_2)\in\mu_m^2:\epsilon_1=\epsilon_2\}\backslash G^{(m)}\times G^{(m)} \hookrightarrow H^{(m)}
\end{align*}
($\mu_m^2=\mu_m\times\mu_m$), via
\begin{align}\label{eq:embeddings coverings G and G into H}
\langle g,\epsilon\rangle\mapsto\langle \mathfrak{e}_1(g),\epsilon^{-1}\rangle,\qquad
\langle g,\epsilon\rangle\mapsto\langle \mathfrak{e}_2(g),\epsilon\rangle.
\end{align}
The group action is preserved, and we have
\begin{align}\label{eq:g_1 and g_2 product in H}
\langle\mathfrak{e}_1(g_1),\epsilon_1^{-1}\rangle\langle \mathfrak{e}_2(g_2),\epsilon_2\rangle=\langle (g_1,g_2),\epsilon_1^{-1}\epsilon_2\rangle.
\end{align}

While it is more natural to work with $\sigma_c$ for both copies of $G^{(m)}$, if we realize the left copy
using $\sigma_c$, then by \eqref{eq:sigma c * rk}, the embedding \eqref{eq:embeddings coverings G and G into H} of the left copy changes into
\begin{align}\label{eq:lcl embeding left copy using sigma c into H}
\langle g,\epsilon\rangle\mapsto\langle \mathfrak{e}_1(g),\varsigma_{*,c}^{rk+1}(g)\epsilon^{-1}\rangle,
\end{align}
which complicates matters on the $H^{(m)}$ side. To enjoy the best of both alternatives,
momentarily denote the realization of $G^{(m)}$ using $\sigma_c$ by $G^{(m)}[\sigma_c]$, and similarly denote
$G^{(m)}[\sigma_c^{*,rk}]$. The map
\begin{align}\label{eq:iso sigma sigma * rk}
G^{(m)}[\sigma_c^{*,rk}]\rightarrow G^{(m)}[\sigma_c],\qquad \langle g,\epsilon\rangle\mapsto \langle g,\varsigma_{*,c}^{rk+1}(g)\epsilon\rangle
\end{align}
is an isomorphism, which is canonical in the sense that it is the only isomorphism ($G$ is perfect) which projects to the identity map of $G$. Dualizing, for a function $\varphi$ on
$G^{(m)}[\sigma_c]$ the function $\varphi^{\varsigma_{*,c}^{rk+1}}$ on $G^{(m)}[\sigma_c^{*,rk}]$ is defined by
\begin{align}\label{eq:iso sigma sigma * rk for functions}
\varphi^{\varsigma_{*,c}^{rk+1}}(\langle g,\epsilon\rangle)=\varphi(\langle g,\varsigma_{*,c}^{rk+1}(g)\epsilon\rangle).
\end{align}

\begin{proposition}\label{proposition:local toy integral}
Let $\varphi_1,\varphi_2$ be continuous genuine functions on $G^{(m)}$, realized using $\sigma_c$, and $f$ be a continuous genuine function on $H^{(m)}$.
The integral
\begin{align*}
\int\limits_{G\times G}\varphi_1^{\varsigma_{*,c}^{rk+1}}(\langle g_1,1\rangle)\overline{\varphi_2(\langle g_2,1\rangle)}f(\langle(g_1,g_2),1\rangle)\,dg_1\,dg_2
\end{align*}
is well defined, provided it is absolutely convergent.
\end{proposition}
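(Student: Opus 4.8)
The plan is to peel back the definitions and check that, although $\varphi_1$, $\varphi_2$ and $f$ are genuine functions on three \emph{a priori} different covering groups, the explicit representatives $\langle g_1,1\rangle$, $\langle g_2,1\rangle$ and $\langle(g_1,g_2),1\rangle$ occurring in the integrand are chosen coherently, so that the product of the three values is a well-defined measurable function of $(g_1,g_2)\in G\times G$; the integral then makes sense exactly under the stated hypothesis of absolute convergence.

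First I would verify that $\varphi_1^{\varsigma_{*,c}^{rk+1}}$ is a bona fide genuine function on the realization $G^{(m)}[\sigma_c^{*,rk}]$ of $G^{(m)}$. Here $\varsigma_{*,c}\in\mathrm{C}^1(\SL_c,\mu_m)$ comes from Proposition~\ref{proposition:sigma * and sigma on SLc} and \eqref{eq:sigma c *}; it restricts to a $1$-cochain of $G=\Sp_c$ (since $\Sp_c\subset\SL_c$), $\sigma_c^{*,rk}$ is the associated $2$-cocycle \eqref{eq:sigma c * rk} of $G$, and by \eqref{eq:iso sigma sigma * rk for functions} the function $\varphi_1^{\varsigma_{*,c}^{rk+1}}$ is the pullback of the genuine $\varphi_1$ along the canonical isomorphism \eqref{eq:iso sigma sigma * rk}. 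In particular $\varphi_1^{\varsigma_{*,c}^{rk+1}}(\langle g_1,1\rangle)=\varepsilon(\varsigma_{*,c}(g_1)^{rk+1})\,\varphi_1(\langle g_1,1\rangle)$, which is measurable in $g_1$ (and continuous wherever $\varsigma_{*,c}$ is).

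Next I would invoke Proposition~\ref{proposition:the $2$-cocycle on G times G}: realizing the left copy $\mathfrak{e}_1(G)$ with $\sigma_c^{*,rk}$ and the right copy $\mathfrak{e}_2(G)$ with $\sigma_c$, the maps \eqref{eq:embeddings coverings G and G into H} embed the quotient of $G^{(m)}\times G^{(m)}$ by the diagonal $\mu_m$ into $H^{(m)}$, and \eqref{eq:g_1 and g_2 product in H} gives $\langle(g_1,g_2),1\rangle=\langle\mathfrak{e}_1(g_1),1\rangle\langle\mathfrak{e}_2(g_2),1\rangle$ in $H^{(m)}$. Hence the integrand equals
\[
\varphi_1^{\varsigma_{*,c}^{rk+1}}(\langle g_1,1\rangle)\,\overline{\varphi_2(\langle g_2,1\rangle)}\,f\bigl(\langle\mathfrak{e}_1(g_1),1\rangle\langle\mathfrak{e}_2(g_2),1\rangle\bigr),
\]
a product of a genuine function on $G^{(m)}[\sigma_c^{*,rk}]$, an anti-genuine function on $G^{(m)}[\sigma_c]$, and a genuine function on $H^{(m)}$, all evaluated along the fixed sections $g\mapsto\langle g,1\rangle$. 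Replacing $\langle g_1,1\rangle$ and $\langle g_2,1\rangle$ simultaneously by $\langle g_1,\zeta\rangle$ and $\langle g_2,\zeta\rangle$ ($\zeta\in\mu_m$) multiplies the first two factors by $\varepsilon(\zeta)$ and $\varepsilon(\zeta)^{-1}$, while by \eqref{eq:embeddings coverings G and G into H} and \eqref{eq:g_1 and g_2 product in H} the argument of $f$ is unchanged, the $\mu_m$-coordinate becoming $\zeta^{-1}\zeta=1$; so the integrand is invariant under the diagonal $\mu_m$ and descends to an honest function of $(g_1,g_2)\in G\times G$, measurable because $\varphi_1,\varphi_2,f$ are continuous and the sections $g\mapsto\langle g,1\rangle$ and $(g_1,g_2)\mapsto\langle(g_1,g_2),1\rangle$ are measurable. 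Given absolute convergence, the integral against $dg_1\,dg_2$ is therefore defined.

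The step needing the most care — and really the only non-formal one — is the cocycle bookkeeping: we are given a single $\pi$, hence $\varphi_1,\varphi_2$, realized with $\sigma_c$, yet inside $H^{(m)}$ the copy $\mathfrak{e}_1(G)$ carries $\sigma_c^{*,rk}$, which by \eqref{eq:sigma c * rk} is merely cohomologous (not equal) to $\sigma_c$. One must be sure that the correct transport is the pullback along \eqref{eq:iso sigma sigma * rk} and that it is precisely the power $\varsigma_{*,c}^{rk+1}$ that enters; this is forced by \eqref{eq:the $2$-cocycle on G times G formula}, where the restriction of $\sigma_{2rkc}$ to the left copy is $(\sigma_c^{*,rk})^{-1}$ and the embedding \eqref{eq:embeddings coverings G and G into H} inverts the $\mu_m$-coordinate. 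Since $G$ is perfect this transport is the unique isomorphism lying over $\mathrm{id}_G$, so no ambiguity remains and the rest is assembly.
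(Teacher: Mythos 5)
Your overall strategy coincides with the paper's: unwind the definition of $\varphi_1^{\varsigma_{*,c}^{rk+1}}$ via \eqref{eq:iso sigma sigma * rk for functions}, write $\langle(g_1,g_2),1\rangle=\langle\mathfrak{e}_1(g_1),1\rangle\langle\mathfrak{e}_2(g_2),1\rangle$ using \eqref{eq:embeddings coverings G and G into H}--\eqref{eq:g_1 and g_2 product in H}, and check that the $\mu_m$-coordinates cancel. There is, however, one logical slip in your descent step. You verify invariance of the integrand only under the \emph{diagonal} $\mu_m$ (replacing $\langle g_1,1\rangle,\langle g_2,1\rangle$ by $\langle g_1,\zeta\rangle,\langle g_2,\zeta\rangle$) and from this conclude that it descends to $G\times G$. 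Diagonal invariance only gives descent to $\{(\epsilon,\epsilon)\}\backslash(G^{(m)}\times G^{(m)})$; independence of the lifts of $g_1$ and $g_2$ requires invariance under arbitrary $(\epsilon_1,\epsilon_2)\in\mu_m^2$. The fix is the same one-line computation and is precisely the paper's first display: $\varphi_1^{\varsigma_{*,c}^{rk+1}}$ contributes $\epsilon_1$, $\overline{\varphi_2}$ contributes $\epsilon_2^{-1}$, and since the embeddings send the pair of lifts to $\langle\mathfrak{e}_1(g_1),\epsilon_1^{-1}\rangle\langle\mathfrak{e}_2(g_2),\epsilon_2\rangle=\langle(g_1,g_2),\epsilon_1^{-1}\epsilon_2\rangle$, the genuine $f$ contributes $\epsilon_1^{-1}\epsilon_2$, so the product is $1$. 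As written, though, your premise does not yield your conclusion.

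Separately, the paper's proof records a second fact which you omit: the integral is a right-invariant functional on $G\times G$. Under right translation by $(h_1,h_2)$ the three factors acquire $\sigma_c^{*,rk}(g_1,h_1)$, $\sigma_c(g_2,h_2)^{-1}$ and, by Proposition~\ref{proposition:the $2$-cocycle on G times G}, $\sigma_c^{*,rk}(g_1,h_1)^{-1}\sigma_c(g_2,h_2)$, which cancel. Strictly speaking the statement only asserts well-definedness, for which your (corrected) descent argument suffices; but this equivariance is the content that makes the integral usable later, and the paper treats it as part of the proof, so you should include it.
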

\begin{proof}
First note that since $\varphi_i(\langle g_i,\epsilon_i\rangle)=\epsilon_i\varphi_i(\langle g_i,1\rangle)$, and \eqref{eq:embeddings coverings G and G into H}--\eqref{eq:g_1 and g_2 product in H} imply
\begin{align*}
f(\langle \mathfrak{e}_1(g_1),\epsilon_1^{-1}\rangle\langle \mathfrak{e}_2(g_2),\epsilon_2\rangle)=\epsilon_1^{-1}\epsilon_2 f(\langle (g_1,g_2),1\rangle),
\end{align*}
the integrand is well defined as a function on $G\times G$. Also
\begin{align*}
&\varphi_1^{\varsigma_{*,c}^{rk+1}}(
\langle g_1,1\rangle\langle h_1,1\rangle)=\varphi_1^{\varsigma_{*,c}^{rk+1}}(\langle
g_1h_1,\sigma_{c}^{*,rk}(g_1,h_1)\rangle),\\
&\varphi_2(\langle g_2,1\rangle\langle h_2,1\rangle)=\varphi_2(\langle
g_2h_2,\sigma_{c}(g_2,h_2)\rangle)
\end{align*}
and
\begin{align*}
&\langle (g_1,g_2),1\rangle\langle (h_1,h_2),1\rangle
=\langle (g_1h_1,g_2h_2),\sigma_{c}^{*,rk}(g_1,h_1)^{-1}\sigma_{c}(g_2,h_2)\rangle.
\end{align*}
Therefore the integral is a right-invariant functional on $G\times G$.
This completes the proof.
\end{proof}
Consider the subgroup $\{(g,g):g\in G\}$ of $H$. By \eqref{eq:the $2$-cocycle on G times G formula} and since $\sigma_c=\sigma^{*,rk}_{c}$ in $\mathrm{H}^2(G,\mu_m)$, the restriction of $H^{(m)}$ to this subgroup is the identity in $\mathrm{H}^2(G,\mu_m)$, hence
$H^{(m)}$ is split over $\{(g,g):g\in G\}$. We determine the splitting.
\begin{corollary}\label{corollary:lcl splitting of (G,G)}
The map $(g,g)\mapsto\langle (g,g),\varsigma_{*,c}^{rk+1}(g)\rangle$ is the splitting
of $\{(g,g):g\in G\}$ in $H^{(m)}$.
\end{corollary}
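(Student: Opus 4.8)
The plan is to verify directly that the map $(g,g)\mapsto\langle (g,g),\varsigma_{*,c}^{rk+1}(g)\rangle$ is a homomorphism, which by the discussion preceding the corollary (the restriction of $H^{(m)}$ to $\{(g,g):g\in G\}$ being trivial in $\mathrm{H}^2(G,\mu_m)$) is exactly what it means to be \emph{the} splitting --- uniqueness being automatic since $G$ is perfect, as a character-valued discrepancy between two splittings would have to be trivial.

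First I would compute the product of two such elements in $H^{(m)}$. By Proposition~\ref{proposition:the $2$-cocycle on G times G}, applied with $g_1=g_2=g$ and $g_1'=g_2'=g'$,
\begin{align*}
\sigma_{2rkc}((g,g),(g',g'))=\sigma^{*,rk}_{c}(g,g')^{-1}\sigma_{c}(g,g').
\end{align*}
Hence
\begin{align*}
\langle (g,g),\varsigma_{*,c}^{rk+1}(g)\rangle\langle (g',g'),\varsigma_{*,c}^{rk+1}(g')\rangle
=\langle (gg',gg'),\varsigma_{*,c}^{rk+1}(g)\varsigma_{*,c}^{rk+1}(g')\,\sigma^{*,rk}_{c}(g,g')^{-1}\sigma_{c}(g,g')\rangle.
\end{align*}
So the homomorphism property amounts to the scalar identity
\begin{align*}
\varsigma_{*,c}^{rk+1}(g)\varsigma_{*,c}^{rk+1}(g')\,\sigma^{*,rk}_{c}(g,g')^{-1}\sigma_{c}(g,g')=\varsigma_{*,c}^{rk+1}(gg').
\end{align*}

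This reduces to a direct application of the defining relation \eqref{eq:sigma c * rk} for $\sigma^{*,rk}_{c}$, namely
\begin{align*}
\sigma^{*,rk}_{c}(g,g')=\left(\frac{\varsigma_{*,c}(g)\varsigma_{*,c}(g')}{\varsigma_{*,c}(gg')}\right)^{rk+1}\sigma_{c}(g,g').
\end{align*}
Substituting this into the left-hand side, the factor $\sigma_{c}(g,g')$ cancels against $\sigma^{*,rk}_{c}(g,g')^{-1}\sigma_{c}(g,g')$, leaving
\begin{align*}
\varsigma_{*,c}^{rk+1}(g)\varsigma_{*,c}^{rk+1}(g')\left(\frac{\varsigma_{*,c}(gg')}{\varsigma_{*,c}(g)\varsigma_{*,c}(g')}\right)^{rk+1}=\varsigma_{*,c}^{rk+1}(gg'),
\end{align*}
which holds identically. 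The only genuinely substantive inputs are Proposition~\ref{proposition:the $2$-cocycle on G times G} and the definition \eqref{eq:sigma c * rk}, both already established; there is no real obstacle, the main point being simply to keep the bookkeeping of the $\mu_m$-valued cocycles and cochains straight. One should note that $\varsigma_{*,c}$ is a priori only defined on $\SL_c$, but since $g\in G=\Sp_c\subset\SL_c$ this causes no difficulty, and the resulting section is continuous because $\varsigma_{*,c}$ is a Borel (indeed, on the relevant pieces, continuous) $1$-cochain.
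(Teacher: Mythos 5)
Your proof is correct and follows essentially the same route as the paper: compute the product via Proposition~\ref{proposition:the $2$-cocycle on G times G}, then cancel using the defining relation \eqref{eq:sigma c * rk} for $\sigma^{*,rk}_{c}$. Your added remarks on uniqueness (via perfectness of $G$) and continuity match what the paper leaves implicit or states in one line.
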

\begin{proof}
Let $g,g'\in G$. By Proposition~\ref{proposition:the $2$-cocycle on G times G},
\begin{align*}
&\langle(g,g),1\rangle\langle(g',g'),1\rangle=\langle (gg',gg'),\sigma^{*,rk}_{c}(g,g')^{-1}\sigma_{c}(g,g')\rangle.
\end{align*}
Now \eqref{eq:sigma c * rk} implies
\begin{align*}
\varsigma_{*,c}^{rk+1}(g)\varsigma_{*,c}(g')^{rk+1}\sigma^{*,rk}_{c}(g,g')^{-1}
=\varsigma_{*,c}(gg')^{rk+1}\sigma_{c}(g,g')^{-1},
\end{align*}
therefore
\begin{align*}
&\langle(g,g),\varsigma_{*,c}^{rk+1}(g)\rangle\langle(g',g'),\varsigma_{*,c}(g')^{rk+1}\rangle=\langle (gg',gg'),\varsigma_{*,c}(gg')^{rk+1}\rangle.
\end{align*}
The prescribed map is continuous because $\sigma^{*,rk}_{c}\in\mathrm{C}^1(G,\mu_m)$.
\end{proof}

\subsection{Global covering}\label{global covering}
Let $F$ be a number field and $\nu$ be a place of $F$. Let $\sigma_{2rkc,\nu}$ be the local $2$-cocycle of \S~\ref{local covering},
regarded as a $2$-cocycle of $H(F_{\nu})$ by restriction from $\GL_{2rkc}(F_{\nu})$.
Denote $\sigma_{\nu}=\sigma_{2rkc,\nu}$. The product $\sigma=\prod_{\nu}\sigma_{\nu}$ is trivial on $N_{rkc}(\A)$. It is also well defined on $B_{rkc}(\A)$: to see this use \eqref{eq:sigma on vh and h'v'}, \eqref{eq:sigma on torus of GL} and note that for $t\in T_{rkc}(\A)$, at almost all places, all coordinates of $t$ belong to $\mathcal{O}_{\nu}^*$ and $(\mathcal{O}_{\nu}^*,\mathcal{O}_{\nu}^*)_m=1$. While it is well known that $\sigma$ is undefined on $H(\A)$, we can define $\rho\in\mathrm{Z}^2(H(\A),\mu_m)$ such that at any $\nu$, $\rho_{\nu}=\sigma_{\nu}$ in $\mathrm{H}^2(H(F_{\nu}),\mu_m)$; $\rho$ represents $H^{(m)}(\A)$ in $\mathrm{H}^2(H(\A),\mu_m)$ (see \S~\ref{Covering groups}).

To achieve this, first note that for almost all $\nu$ there is a unique splitting $y\mapsto\langle y,\eta_{\nu}(y)\rangle$ of $K_{H,\nu}$, where $\eta_{\nu}=\eta_{2rkc,\nu}$ (see \S~\ref{local covering}). We can extend $\eta_{\nu}$ to an element of $\mathrm{C}^1(H(F_{\nu}),\mu_m)$
(though \eqref{eq:splitting of K H} will no longer hold for arbitrary $y,y'\in H(F_{\nu})$).
Now define, for almost all $\nu$,
\begin{align}\label{eq:nu and sigma for covering of H}
\rho_{\nu}(h,h')=\frac{\eta_{\nu}(h)\eta_{\nu}(h')}{\eta_{\nu}(hh')}\sigma_{\nu}(h,h'),\qquad \forall h,h'\in H(F_{\nu}).
\end{align}
Then $\rho_{\nu}$ is trivial on $K_{H,\nu}$. For the remaining places we can simply take $\rho_{\nu}=\sigma_{\nu}$ and $\eta_{\nu}=1$. Then
$\rho=\prod_{\nu}\rho_{\nu}$ is well defined on $H(\A)$.

Realize $H^{(m)}(\A)$ using $\rho$. We then use the embedding $G(\A)\times G(\A)\hookrightarrow H(\A)$ to realize the $2$-cocycles on the copies of $G(\A)$. Let
\begin{align}\label{gbl rho on left and right}
\rho_L(g,g')=\rho^{-1}(\mathfrak{e}_1(g),\mathfrak{e}_1(g')),\qquad\rho_R(g,g')=\rho(\mathfrak{e}_2(g),\mathfrak{e}_2(g')).
\end{align}
Now the product on the left copy of $G^{(m)}(\A)$ is defined by $\rho_L(g,g')$, i.e.,
\begin{align*}
\langle g,\epsilon\rangle\langle g',\epsilon'\rangle=\langle gg',\epsilon\epsilon'\rho_L(g,g')\rangle,
\end{align*}
and on the right copy by $\rho_R(g,g')$. In addition
\begin{align}\label{eq:gbl commuting G and G}
\langle \mathfrak{e}_1(g_1),1\rangle \langle \mathfrak{e}_2(g_2),1\rangle =
\langle \mathfrak{e}_2(g_2),1\rangle \langle \mathfrak{e}_1(g_1),1\rangle,\qquad\forall g_1,g_2\in G(\A),
\end{align}
because this holds locally by Proposition~\ref{proposition:the $2$-cocycle on G times G} (this does not mean the global version of \eqref{eq:the $2$-cocycle on G times G formula} holds with $\rho$).
Therefore we may lift the embedding $G(\A)\times G(\A) \hookrightarrow H(\A)$ to an embedding
\begin{align*}
\{(\epsilon_1,\epsilon_2)\in\mu_m^2:\epsilon_1=\epsilon_2\}\backslash G^{(m)}(\A)\times G^{(m)}(\A) \hookrightarrow H^{(m)}(\A),
\end{align*}
and we obtain the global analog of \eqref{eq:embeddings coverings G and G into H}, namely,
$\langle g,\epsilon\rangle\mapsto\langle \mathfrak{e}_1(g),\epsilon^{-1}\rangle$ for the left copy and
$\langle g,\epsilon\rangle\mapsto\langle \mathfrak{e}_2(g),\epsilon\rangle$
for the right.
Also by Proposition~\ref{proposition:the $2$-cocycle on G times G},
$\rho_{L,\nu}=\sigma^{*,rk}_{c,\nu}=\sigma_{c,\nu}=\rho_{R,\nu}$ in $\mathrm{H}^2(G(F_{\nu}),\mu_m)$, whence
both copies of $G^{(m)}(\A)$ are cohomologous.

As in the local setting, we would like to explicate the relation between the copies of $G^{(m)}(\A)$, so that we can
work with the same $2$-cocycle for both, yet still use the global embedding \eqref{eq:gbl commuting G and G}.
At any place $\nu$,
\eqref{eq:the $2$-cocycle on G times G formula} implies
\begin{align*}
\sigma_{\nu}(\mathfrak{e}_1(g),\mathfrak{e}_1(g'))=\sigma^{*,rk}_{c,\nu}(g,g')^{-1},\qquad
\sigma_{\nu}(\mathfrak{e}_2(g),\mathfrak{e}_2(g'))=\sigma_{c,\nu}(g,g').
\end{align*}
Whence by \eqref{eq:sigma c * rk},
\begin{align*}
\sigma_{\nu}(\mathfrak{e}_1(g),\mathfrak{e}_1(g'))=\left(\frac{\varsigma_{*,c,\nu}(gg')}{\varsigma_{*,c,\nu}(g)\varsigma_{*,c,\nu}(g')}\right)^{rk+1}\sigma_{\nu}(\mathfrak{e}_2(g),\mathfrak{e}_2(g'))^{-1}.
\end{align*}
Then for all $\nu$,
\begin{align*}
&\rho_{\nu}(\mathfrak{e}_1(g),\mathfrak{e}_1(g'))\\&=\frac{\eta_{\nu}(\mathfrak{e}_1(g))\eta_{\nu}(\mathfrak{e}_1(g'))}{\eta_{\nu}(\mathfrak{e}_1(gg'))}
\sigma_{\nu}(\mathfrak{e}_1(g),\mathfrak{e}_1(g'))\\
&=\frac{\eta_{\nu}(\mathfrak{e}_1(g))\eta_{\nu}(\mathfrak{e}_1(g'))}{\eta_{\nu}(
\mathfrak{e}_1(gg'))}\frac{\eta_{\nu}(\mathfrak{e}_2(g))\eta_{\nu}(\mathfrak{e}_2(g'))}{\eta_{\nu}(\mathfrak{e}_2(gg'))}
\left(\frac{\varsigma_{*,c,\nu}(gg')}{\varsigma_{*,c,\nu}(g)\varsigma_{*,c,\nu}(g')}\right)^{rk+1}\rho_{\nu}(
\mathfrak{e}_2(g),\mathfrak{e}_2(g'))^{-1}.
\end{align*}
Hence if we define $\eta^{\times}_{\nu}\in\mathrm{C}^1(G(F_{\nu}),\mu_m)$ by
\begin{align*}
\eta^{\times}_{\nu}(g)=\eta_{\nu}(\mathfrak{e}_1(g))\eta_{\nu}(\mathfrak{e}_2(g))/\varsigma_{*,c,\nu}^{rk+1}(g),
\end{align*}
\begin{align*}
\rho_{\nu}(\mathfrak{e}_1(g),\mathfrak{e}_1(g'))&=\frac{\eta^{\times}_{\nu}(g)\eta^{\times}_{\nu}(g')}{\eta^{\times}_{\nu}(gg')}\rho_{\nu}(\mathfrak{e}_2(g),\mathfrak{e}_2(g'))^{-1}.
\end{align*}
Now $\eta^{\times}=\prod_{\nu}\eta^{\times}_{\nu}\in\mathrm{C}^1(G(\A),\mu_m)$ is well defined, since for almost all $\nu$,
\begin{align*}
\rho_{\nu}(\mathfrak{e}_1(y),\mathfrak{e}_1(y'))=\rho_{\nu}(\mathfrak{e}_2(y),\mathfrak{e}_2(y'))=1,\qquad\forall y,y'\in K_{G,\nu},
\end{align*}
and then for these places $\eta^{\times}_{\nu}$ becomes a homomorphism of $K_{G,\nu}$.
Thus we deduce the global relation, for all $g,g'\in G(\A)$,
\begin{align}\label{eq:eta times inverse}
\rho(\mathfrak{e}_1(g),\mathfrak{e}_1(g'))=
\frac{\eta^{\times}(g)\eta^{\times}(g')}{\eta^{\times}(gg')}\rho(\mathfrak{e}_2(g),\mathfrak{e}_2(g'))^{-1}.
\end{align}
Then by definition
\begin{align}\label{eq:eta times inverse for rho}
\rho_R(g,g')=\frac{\eta^{\times}(g)\eta^{\times}(g')}{\eta^{\times}(gg')}\rho_L(g,g').
\end{align}
Now we can state the global analogs of \eqref{eq:iso sigma sigma * rk} and \eqref{eq:iso sigma sigma * rk for functions}. First,
if $G^{(m)}(\A)[\rho_R]$ denotes the realization of $G^{(m)}(\A)$ using $\rho_R$ and similarly for $G^{(m)}(\A)[\rho_L]$,
we have the (canonical) isomorphism
\begin{align}\label{eq:gbl rhoR rhoL}
G^{(m)}(\A)[\rho_L]\rightarrow G^{(m)}(\A)[\rho_R],\qquad \langle g,\epsilon\rangle\mapsto \langle g,(\eta^{\times})^{-1}(g)\epsilon\rangle.
\end{align}
Then for a function $\varphi$ on $G^{(m)}(\A)[\rho_R]$, the function $\varphi^{(\eta^{\times})^{-1}}$ on $G^{(m)}(\A)[\rho_L]$ is defined by
\begin{align}\label{eq:gbl iso rhoR rhoL for functions}
\varphi^{(\eta^{\times})^{-1}}(\langle g,\epsilon\rangle)=
\varphi(\langle g,(\eta^{\times})^{-1}(g)\epsilon\rangle).
\end{align}
Note that locally, ignoring the correction using $\eta_{\nu}$,
$(\eta_{\nu}^{\times})^{-1}(g_{\nu})$ becomes $\varsigma_{*,c,\nu}^{rk+1}(g_{\nu})$ from \eqref{eq:iso sigma sigma * rk for functions}.

Next we state the analog of Proposition~\ref{proposition:local toy integral}.
\begin{proposition}\label{proposition:global toy integral}
Let $\varphi_1,\varphi_2$ be continuous genuine functions on $G^{(m)}(\A)$, realized using $\rho_R$, and $f$ be a continuous genuine function on $H^{(m)}(\A)$.
The integral
\begin{align*}
\int\limits_{G(\A)\times G(\A)}\varphi_1^{(\eta^{\times})^{-1}}(\langle g_1,1\rangle)\overline{\varphi_2(\langle g_2,1\rangle)}f(
\langle \mathfrak{e}_1(g_1),1\rangle\langle \mathfrak{e}_2(g_2),1\rangle)\,dg_1\,dg_2
\end{align*}
is well defined, provided it is absolutely convergent.
\end{proposition}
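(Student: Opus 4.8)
The plan is to repeat, \emph{mutatis mutandis}, the argument of Proposition~\ref{proposition:local toy integral} with the global data $\rho$, $\rho_L$, $\rho_R$ and $\eta^{\times}$ in place of the local ones. The one genuinely new feature is that, unlike \eqref{eq:g_1 and g_2 product in H}, there is no clean block-compatibility identity expressing $\langle \mathfrak{e}_1(g_1),1\rangle\langle \mathfrak{e}_2(g_2),1\rangle$ as a single symbol $\langle(g_1,g_2),1\rangle$ --- the global $2$-cocycle $\rho$ is obtained by normalizing $\sigma_{\nu}$ at almost all places and so loses the strong compatibility of $\sigma$. Accordingly I would keep $f$ evaluated at the product $\langle \mathfrak{e}_1(g_1),1\rangle\langle \mathfrak{e}_2(g_2),1\rangle$ and manipulate it directly using the commutativity \eqref{eq:gbl commuting G and G} and the defining formulas \eqref{gbl rho on left and right}.

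\emph{Well-definedness of the integrand.} By construction $\varphi_1^{(\eta^{\times})^{-1}}$ is a genuine function on $G^{(m)}(\A)[\rho_L]$, see \eqref{eq:gbl rhoR rhoL}--\eqref{eq:gbl iso rhoR rhoL for functions}, and the left embedding $\langle g,\epsilon\rangle\mapsto\langle \mathfrak{e}_1(g),\epsilon^{-1}\rangle$ is precisely the embedding of $G^{(m)}(\A)[\rho_L]$ into $H^{(m)}(\A)$, while the right embedding $\langle g,\epsilon\rangle\mapsto\langle \mathfrak{e}_2(g),\epsilon\rangle$ carries in $G^{(m)}(\A)[\rho_R]$. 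Since $\mu_m$ is central and $f$ is genuine,
\[
f(\langle \mathfrak{e}_1(g_1),\epsilon_1^{-1}\rangle\langle \mathfrak{e}_2(g_2),\epsilon_2\rangle)=\epsilon_1^{-1}\epsilon_2\,f(\langle \mathfrak{e}_1(g_1),1\rangle\langle \mathfrak{e}_2(g_2),1\rangle),
\]
and together with $\varphi_1^{(\eta^{\times})^{-1}}(\langle g_1,\epsilon_1\rangle)=\epsilon_1\varphi_1^{(\eta^{\times})^{-1}}(\langle g_1,1\rangle)$ and $\overline{\varphi_2(\langle g_2,\epsilon_2\rangle)}=\overline{\epsilon_2}\,\overline{\varphi_2(\langle g_2,1\rangle)}$ the total scalar is $\epsilon_1\overline{\epsilon_2}\epsilon_1^{-1}\epsilon_2=|\epsilon_2|^2=1$; hence the integrand depends only on $(g_1,g_2)\in G(\A)\times G(\A)$.

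\emph{Right-invariance.} As in the local case, I would show the integrand, viewed as a function of $(g_1,g_2)$, is invariant under right translation by any $(h_1,h_2)\in G(\A)\times G(\A)$. Multiplying inside $G^{(m)}(\A)[\rho_L]$ and $G^{(m)}(\A)[\rho_R]$ gives
\[
\varphi_1^{(\eta^{\times})^{-1}}(\langle g_1,1\rangle\langle h_1,1\rangle)=\rho_L(g_1,h_1)\,\varphi_1^{(\eta^{\times})^{-1}}(\langle g_1h_1,1\rangle),\qquad
\overline{\varphi_2(\langle g_2,1\rangle\langle h_2,1\rangle)}=\overline{\rho_R(g_2,h_2)}\,\overline{\varphi_2(\langle g_2h_2,1\rangle)}.
\]
For the $f$-factor, I would use \eqref{eq:gbl commuting G and G} to swap the two inner terms of $\langle \mathfrak{e}_1(g_1),1\rangle\langle \mathfrak{e}_2(g_2),1\rangle\langle \mathfrak{e}_1(h_1),1\rangle\langle \mathfrak{e}_2(h_2),1\rangle$, then collapse $\langle \mathfrak{e}_1(g_1),1\rangle\langle \mathfrak{e}_1(h_1),1\rangle$ and $\langle \mathfrak{e}_2(g_2),1\rangle\langle \mathfrak{e}_2(h_2),1\rangle$ via \eqref{gbl rho on left and right}, producing the scalar $\rho_L(g_1,h_1)^{-1}\rho_R(g_2,h_2)$ times $\langle \mathfrak{e}_1(g_1h_1),1\rangle\langle \mathfrak{e}_2(g_2h_2),1\rangle$; genuineness of $f$ extracts this scalar. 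The three scalars multiply to $\rho_L(g_1,h_1)\cdot\overline{\rho_R(g_2,h_2)}\cdot\rho_L(g_1,h_1)^{-1}\rho_R(g_2,h_2)=|\rho_R(g_2,h_2)|^2=1$ since $\rho_R$ is $\mu_m$-valued. Hence the integral is a right-invariant functional on $G(\A)\times G(\A)$ and is well defined whenever it converges absolutely.

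The step that needs care --- and that I would flag as the main obstacle --- is the bookkeeping of which realization each function lives on: $\varphi_1$ must first be transported to $G^{(m)}(\A)[\rho_L]$ by the twist $(\eta^{\times})^{-1}$ so that it can be fed through the left embedding, and the inverse appearing in $\rho_L(g,g')=\rho^{-1}(\mathfrak{e}_1(g),\mathfrak{e}_1(g'))$ is exactly what makes the $\rho_L$-contribution of $f$ cancel that of $\varphi_1$; everything else is formal and parallels the proof of Proposition~\ref{proposition:local toy integral}.
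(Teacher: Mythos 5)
Your proposal is correct and follows essentially the same route as the paper: the paper likewise verifies well-definedness of the integrand via the global analog of the embedding formulas, and establishes right-invariance by using \eqref{eq:gbl commuting G and G} to commute the two factors and \eqref{gbl rho on left and right} to match the cocycle contributions of $f$ against those of $\varphi_1^{(\eta^{\times})^{-1}}$ and $\overline{\varphi_2}$. Your observation that \eqref{eq:g_1 and g_2 product in H} is unavailable globally and must be replaced by this commute-and-collapse manipulation is exactly the point the paper makes as well.
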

\begin{proof}
The integrand is a well defined function on $G(\A)\times G(\A)$ by the global analog of \eqref{eq:embeddings coverings G and G into H}. To see that it is a right-invariant functional on the domain, observe that by \eqref{gbl rho on left and right} and \eqref{eq:gbl iso rhoR rhoL for functions},
\begin{align*}
&\varphi_1^{(\eta^{\times})^{-1}}(\langle g_1,1\rangle\langle h_1,1\rangle)=\rho_L(g_1,h_1)\varphi_1^{(\eta^{\times})^{-1}}(\langle g_1h_1,1\rangle)=\rho^{-1}(\mathfrak{e}_1(g_1),\mathfrak{e}_1(h_1))\varphi_1^{(\eta^{\times})^{-1}}(\langle g_1h_1,1\rangle),\\
&\varphi_2(\langle g_2,1\rangle\langle h_2,1\rangle)=\rho_R(g_2,h_2)\varphi_2(\langle g_2h_2,1\rangle)
=\rho(\mathfrak{e}_2(g_2),\mathfrak{e}_2(h_2))\varphi_2(\langle 1,g_2h_2\rangle).
\end{align*}
Now although \eqref{eq:g_1 and g_2 product in H} no longer holds,
still by \eqref{eq:gbl commuting G and G},
\begin{align*}
&\langle \mathfrak{e}_1(g_1),1\rangle\langle \mathfrak{e}_2(g_2),1\rangle
\langle \mathfrak{e}_1(h_1),1\rangle\langle \mathfrak{e}_2(h_2),1\rangle
\\&=
\langle \mathfrak{e}_1(g_1),1\rangle\langle \mathfrak{e}_1(h_1),1\rangle\langle \mathfrak{e}_2(g_2),1\rangle
\langle \mathfrak{e}_2(h_2),1\rangle
\\&=
\rho(\mathfrak{e}_1(g_1),\mathfrak{e}_1(h_1))\rho(\mathfrak{e}_2(g_2),\mathfrak{e}_2(h_2))\langle
\mathfrak{e}_1(g_1h_1),1\rangle\langle\mathfrak{e}_2(g_2h_2),1\rangle.
\end{align*}
We see that both $\rho(\mathfrak{e}_i(g_i),\mathfrak{e}_i(h_i))$ are cancelled (we integrate against $\overline{\varphi_2}$).
The result follows.
\end{proof}
Consider now the subgroup $\{(g,g):g\in G\}$ of $H$. Locally the covering is split over this group (see Corollary~\ref{corollary:lcl splitting of (G,G)}), hence it is also split globally. Again, we determine the splitting.
\begin{corollary}\label{corollary:gbl splitting of (G,G)}
The map $(g,g)\mapsto\langle (g,g),(\eta^{\times})^{-1}(g)\rho(\mathfrak{e}_1(g),\mathfrak{e}_2(g))\rangle$ is the splitting
of $\{(g,g):g\in G(\A)\}$ in $H^{(m)}$ (!).
\end{corollary}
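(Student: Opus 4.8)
The plan is to verify directly that the prescribed map is a homomorphism, mirroring the proof of Corollary~\ref{corollary:lcl splitting of (G,G)} but with the global inputs \eqref{eq:gbl commuting G and G} and \eqref{eq:eta times inverse} replacing the local ones. The first step is to put the section into product form: since $(g,g)=\mathfrak{e}_1(g)\mathfrak{e}_2(g)$ in $H$, the definition of the product in $H^{(m)}(\A)$ realized using $\rho$ gives
\begin{align*}
\langle\mathfrak{e}_1(g),1\rangle\langle\mathfrak{e}_2(g),1\rangle=\langle(g,g),\rho(\mathfrak{e}_1(g),\mathfrak{e}_2(g))\rangle,
\end{align*}
so that the map of the corollary is
\begin{align*}
s\colon g\longmapsto (\eta^{\times})^{-1}(g)\,\langle\mathfrak{e}_1(g),1\rangle\langle\mathfrak{e}_2(g),1\rangle,
\end{align*}
with $(\eta^{\times})^{-1}(g)\in\mu_m$ central. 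It then suffices to check $s(g)s(g')=s(gg')$ for all $g,g'\in G(\A)$.

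For the computation I would pull the two central scalars to the front, use the commuting relation \eqref{eq:gbl commuting G and G} to move $\langle\mathfrak{e}_2(g),1\rangle$ past $\langle\mathfrak{e}_1(g'),1\rangle$, and then collapse the two $\mathfrak{e}_1$-factors and the two $\mathfrak{e}_2$-factors separately, obtaining
\begin{align*}
s(g)s(g')=(\eta^{\times})^{-1}(g)(\eta^{\times})^{-1}(g')\,\rho(\mathfrak{e}_1(g),\mathfrak{e}_1(g'))\,\rho(\mathfrak{e}_2(g),\mathfrak{e}_2(g'))\,\langle\mathfrak{e}_1(gg'),1\rangle\langle\mathfrak{e}_2(gg'),1\rangle.
\end{align*}
By \eqref{eq:eta times inverse}, $\rho(\mathfrak{e}_1(g),\mathfrak{e}_1(g'))\rho(\mathfrak{e}_2(g),\mathfrak{e}_2(g'))=\eta^{\times}(g)\eta^{\times}(g')/\eta^{\times}(gg')$, so the scalar factors collapse to $(\eta^{\times})^{-1}(gg')$ and the right-hand side becomes $(\eta^{\times})^{-1}(gg')\langle\mathfrak{e}_1(gg'),1\rangle\langle\mathfrak{e}_2(gg'),1\rangle=s(gg')$, as desired. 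It is worth noting that one cannot imitate the local proof here by writing a closed formula for $\rho$ on the subgroup $\{(g,g):g\in G(\A)\}$, precisely because the global analogue of \eqref{eq:the $2$-cocycle on G times G formula} fails for $\rho$; routing the argument through the two commuting factors $\mathfrak{e}_1(g)$ and $\mathfrak{e}_2(g)$ is what circumvents this.

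Finally, $s$ is continuous as in Corollary~\ref{corollary:lcl splitting of (G,G)}, using that $\eta^{\times}\in\mathrm{C}^1(G(\A),\mu_m)$ and that $g\mapsto\langle\mathfrak{e}_i(g),1\rangle$ are continuous, and since $G(\A)$ is perfect the splitting of $\{(g,g):g\in G(\A)\}$ in $H^{(m)}$ is unique, so $s$ is that splitting. Alternatively, one can transport the local splittings of Corollary~\ref{corollary:lcl splitting of (G,G)} place by place via \eqref{eq:cohomologous splitting}, with $\sigma_{\nu}$ replaced by $\rho_{\nu}$, which also yields continuity at once. I expect the only real friction to be the cocycle bookkeeping — keeping straight which $\mu_m$-factors are central, correctly applying the commuting relation \eqref{eq:gbl commuting G and G} (which itself relies on Proposition~\ref{proposition:the $2$-cocycle on G times G}), and not reaching for the unavailable local-style formula for $\rho$ on the diagonal copy.
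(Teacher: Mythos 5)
Your proof is correct and follows essentially the same route as the paper: both express $\langle(g,g),\rho(\mathfrak{e}_1(g),\mathfrak{e}_2(g))\rangle$ as $\langle\mathfrak{e}_1(g),1\rangle\langle\mathfrak{e}_2(g),1\rangle$, use \eqref{eq:gbl commuting G and G} to regroup the $\mathfrak{e}_1$- and $\mathfrak{e}_2$-factors, and then invoke \eqref{eq:eta times inverse} to collapse the scalars, with continuity coming from $\eta^{\times}$ and $g\mapsto\rho(\mathfrak{e}_1(g),\mathfrak{e}_2(g))$ lying in $\mathrm{C}^1(G(\A),\mu_m)$.
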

\begin{proof}
Let $g,g'\in G(\A)$.
Since
\begin{align*}
\langle(g,g),1\rangle=\langle \mathfrak{e}_1(g),1\rangle \langle\mathfrak{e}_2(g),\rho(\mathfrak{e}_1(g),\mathfrak{e}_2(g))^{-1}\rangle,
\end{align*}
by \eqref{eq:gbl commuting G and G} we have
\begin{align*}
&\langle(g,g),\rho(\mathfrak{e}_1(g),\mathfrak{e}_2(g))\rangle\langle(g',g'),\rho(\mathfrak{e}_1(g'),\mathfrak{e}_2(g'))\rangle\\&=
\langle \mathfrak{e}_1(gg'),\rho(\mathfrak{e}_1(g),\mathfrak{e}_1(g'))\rangle
\langle \mathfrak{e}_2(gg'),\rho(\mathfrak{e}_2(g),\mathfrak{e}_2(g'))\rangle
\\&=\langle (gg',gg'),\rho(\mathfrak{e}_1(gg'),\mathfrak{e}_2(gg'))\rho(\mathfrak{e}_1(g),\mathfrak{e}_1(g'))\rho(\mathfrak{e}_2(g),\mathfrak{e}_2(g'))\rangle.
\end{align*}
Hence by \eqref{eq:eta times inverse},
\begin{align*}
&\langle(g,g),(\eta^{\times})^{-1}(g)\rho(\mathfrak{e}_1(g),\mathfrak{e}_2(g))\rangle\langle(g',g'),(\eta^{\times})^{-1}(g')\rho(\mathfrak{e}_1(g'),\mathfrak{e}_2(g'))\rangle
\\&=\langle (gg',gg'),(\eta^{\times})^{-1}(gg')\rho(\mathfrak{e}_1(gg'),\mathfrak{e}_2(gg'))\rangle.
\end{align*}
The continuity follows since $\eta^{\times}$ and $g\mapsto \rho(\mathfrak{e}_1(g),\mathfrak{e}_2(g))$ belong to $\mathrm{C}^1(G(\A),\mu_m)$.
\end{proof}

As mentioned above $\sigma=\prod_{\nu}\sigma_{\nu}$ is defined on certain subgroups of $H(\A)$, e.g., on $N_{rkc}(\A)$. It is also defined on $H(F)$: indeed for $h,h'\in H(F)$, $\sigma_{\nu}(h_{\nu},h'_{\nu})=1$ for almost all $\nu$, because the local $2$-cocycle is written as a finite product of Hilbert symbols $(x,x')_{m,\nu}$, with elements $x,x'\in F^*$ that are independent of $\nu$, and $(x,x')_{m,\nu}=1$ for almost all $\nu$ (see the proof of \cite[\S~3, Theorem~7]{BLS}, and also \cite [\S~0.2]{KP} and \cite[Proposition~1.7]{Tk}).
Moreover $\eta_{\nu}$ given by \eqref{eq:nu and sigma for covering of H} is trivial on $h_{\nu}$ for almost all $\nu$.
This was shown in \cite[Proposition~1.8]{Tk} and we follow the argument. Write $h=n't'w'n''$ according to the Bruhat decomposition in $H(F)$, i.e., $n',n''\in N_{rkc}(F)$, $t'\in T_{rkc}(F)$ and $w'\in H(F)\cap\mathfrak{W}_{2rkc}^+$ (with $\mathfrak{W}_{2rkc}^+$ defined over $F$). For almost all $\nu$, we have $n'_{\nu},t'_{\nu},w'_{\nu},n''_{\nu}\in K_{G,\nu}$, then by \eqref{eq:Bruhat decomposition sigma}, $\eta_{\nu}(h_{\nu})=1$ because
$\eta_{\nu}(n'_{\nu})=\eta_{\nu}(t'_{\nu})=\eta_{\nu}(w'_{\nu})=\eta_{\nu}(n''_{\nu})=1$ (see \S~\ref{local covering}). Therefore we can define $\eta=\prod_{\nu}\eta_{\nu}$ on $H(F)$, and deduce $\sigma=\rho$ in $\mathrm{H}^2(H(F),\mu_m)$. Furthermore, because of the product formula $\prod_{\nu}(x,x')_{m,\nu}=1$ for any $x,x'\in F^*$, $\sigma$ is in fact trivial on $H(F)$, thus $\langle h,\eta^{-1}(h)\rangle$ is the splitting of $H(F)$ in $H^{(m)}(\A)$ (see \cite[Proposition~1.7]{Tk}).

Since $G(F)\times G(F)<H(F)$, we can use $\eta$ to define splittings of $G(F)$. A direct verification shows that
$g\mapsto\langle g,\eta(\mathfrak{e}_1(g))\rangle$ is the splitting of $G(F)$ in the covering $G^{(m)}(\A)$ realized using $\rho_L$, and
$g\mapsto\langle g,\eta^{-1}(\mathfrak{e}_2(g))\rangle$ is the splitting when the covering is realized via $\rho_R$ (i.e., the right copy).
Having fixed these splittings, we can now consider spaces of automorphic forms on $G^{(m)}(\A)$ and $H^{(m)}(\A)$, which are in particular functions on $G(F)\backslash G^{(m)}(\A)$ and $H(F)\backslash H^{(m)}(\A)$ (resp.). We must specify whether we are considering the left or right copy of $G^{(m)}(\A)$, because the $2$-cocycles differ (up to a $2$-coboundary) and so do the
splittings of $G(F)$ (see \eqref{eq:cohomologous splitting}).
Our next goal is to show that the map \eqref{eq:gbl iso rhoR rhoL for functions} preserves the notion of automorphic forms.

Observe that \eqref{eq:the $2$-cocycle on G times G formula} also implies $\sigma_{\nu}(\mathfrak{e}_1(g),\mathfrak{e}_2(g))=1$ for $g\in G(F_{\nu})$.
Therefore
\begin{align}\label{eq:some local splitting by eta times}
\rho_{\nu}(\mathfrak{e}_1(g),\mathfrak{e}_2(g))=
\frac{\eta_{\nu}(\mathfrak{e}_1(g))\eta_{\nu}(\mathfrak{e}_2(g))}{\eta_{\nu}(\mathfrak{e}_1(g)\mathfrak{e}_2(g))}
=\frac{\eta^{\times}_{\nu}(g)\varsigma_{*,c,\nu}^{rk+1}(g)}{\eta_{\nu}(\mathfrak{e}_1(g)\mathfrak{e}_2(g))},\qquad\forall g\in G(F_{\nu}).
\end{align}
Now $\eta$ is well defined on $\{(g,g):g\in G(F)\}$, because it is defined on $H(F)$; $\rho$ is well defined on $G(F)$ since it is defined on $H(\A)$; and $\eta^{\times}$ is well defined on $G(F)$ because it is defined on $G(\A)$. Thus if $g\in G(F)$, for almost all $\nu$ by \eqref{eq:some local splitting by eta times}, $\varsigma_{*,c,\nu}^{rk+1}(g_{\nu})=1$, so that
$\varsigma_{*,c}^{rk+1}=\prod_{\nu}\varsigma_{*,c,\nu}^{rk+1}$ is well defined on $G(F)$
(but if $m$ does not divide $rk$, $\varsigma_{*,c}$ might not be) and we can write globally
\begin{align}\label{eq:some splitting by eta times}
\rho(\mathfrak{e}_1(g),\mathfrak{e}_2(g))=\frac{\eta^{\times}(g)\varsigma_{*,c}^{rk+1}(g)}{\eta(\mathfrak{e}_1(g)\mathfrak{e}_2(g))},\qquad\forall g\in G(F).
\end{align}
\begin{proposition}\label{proposition:varsigma rk+1 is trivial on G(F)}
The section $\varsigma_{*,c}^{rk+1}$ is trivial on $G(F)$.
\end{proposition}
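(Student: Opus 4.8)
The plan is to deduce this from the uniqueness of splittings together with the perfectness of $G(F)=\Sp_c(F)$. The point is that $\{(g,g):g\in G(F)\}\cong G(F)$ is perfect, hence admits \emph{at most one} splitting into $H^{(m)}(\A)$ (realized by a fixed $2$-cocycle $\rho$): the ratio of two splittings would be a homomorphism $G(F)\to\mu_m$, and $\mu_m$ is abelian. So I would exhibit two a priori descriptions of this splitting and compare them, arranging that all the correction cochains cancel except $\varsigma_{*,c}^{rk+1}$.

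Concretely: the text has already produced the splitting $\langle h,\eta^{-1}(h)\rangle$ of $H(F)$ in $H^{(m)}(\A)$ coming from the reciprocity law / product formula for the Hilbert symbol. Restricting it to $\{(g,g):g\in G(F)\}$ and using $(g,g)=\mathfrak{e}_1(g)\mathfrak{e}_2(g)$ gives the splitting $(g,g)\mapsto\langle(g,g),\eta^{-1}(\mathfrak{e}_1(g)\mathfrak{e}_2(g))\rangle$. On the other hand, Corollary~\ref{corollary:gbl splitting of (G,G)} furnishes the splitting $(g,g)\mapsto\langle(g,g),(\eta^{\times})^{-1}(g)\rho(\mathfrak{e}_1(g),\mathfrak{e}_2(g))\rangle$ of $\{(g,g):g\in G(\A)\}$, which in particular splits $\{(g,g):g\in G(F)\}$. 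By uniqueness the two agree on $G(F)$, i.e. $\eta^{-1}(\mathfrak{e}_1(g)\mathfrak{e}_2(g))=(\eta^{\times})^{-1}(g)\rho(\mathfrak{e}_1(g),\mathfrak{e}_2(g))$ for all $g\in G(F)$. Now substitute the global identity \eqref{eq:some splitting by eta times}, $\rho(\mathfrak{e}_1(g),\mathfrak{e}_2(g))=\eta^{\times}(g)\varsigma_{*,c}^{rk+1}(g)\,\eta(\mathfrak{e}_1(g)\mathfrak{e}_2(g))^{-1}$: the factors $\eta^{\times}(g)^{\pm1}$ and $\eta(\mathfrak{e}_1(g)\mathfrak{e}_2(g))^{\pm1}$ cancel and one is left with $\varsigma_{*,c}^{rk+1}(g)=1$. (An equivalent route is to compare the two splittings $g\mapsto\langle g,\eta(\mathfrak{e}_1(g))\rangle$ of $G(F)$ in $G^{(m)}(\A)[\rho_L]$ and $g\mapsto\langle g,\eta^{-1}(\mathfrak{e}_2(g))\rangle$ in $G^{(m)}(\A)[\rho_R]$ across the canonical isomorphism \eqref{eq:gbl rhoR rhoL}; it produces the same relation.)

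The computation itself is one line, so the only place requiring care — and what I would regard as the genuine obstacle — is the local/global bookkeeping: one must be sure that $\eta$, $\eta^{\times}$, the $1$-cochain $g\mapsto\rho(\mathfrak{e}_1(g),\mathfrak{e}_2(g))$ and $\varsigma_{*,c}^{rk+1}$ are each honestly well defined on $G(F)$ (so that the substitution and the cancellation of adelic products are legitimate), and that both splittings of $\{(g,g):g\in G(F)\}$ are taken with respect to the \emph{same} realization $\rho$ of $H^{(m)}(\A)$. All of these points have been settled in the discussion preceding the proposition — in particular the fact that $\varsigma_{*,c}^{rk+1}=\prod_{\nu}\varsigma_{*,c,\nu}^{rk+1}$ is well defined on $G(F)$ even though $\varsigma_{*,c}$ need not be — so no extra work is needed there. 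Beyond this, the only structural input is perfectness of $\Sp_c(F)$, which is what upgrades the (otherwise circular-looking) cocycle identities into the desired conclusion.
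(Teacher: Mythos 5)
Your proof is correct and non-circular: both splittings you compare are established before the proposition, the identity \eqref{eq:some splitting by eta times} does not depend on it, and the well-definedness of $\varsigma_{*,c}^{rk+1}$ on $G(F)$ is settled in the preceding paragraph, so the substitution and cancellation are legitimate; uniqueness of the splitting of the perfect group $\{(g,g):g\in G(F)\}$ then forces $\varsigma_{*,c}^{rk+1}=1$. The paper takes a shorter, more self-contained route: it raises the defining relation \eqref{eq:sigma c *} to the power $rk+1$, notes that $\sigma_c$ is trivial on $G(F)$ by the product formula for the Hilbert symbol and hence so is $\sigma_c^*$ (since $g\mapsto g^*$ preserves $G(F)$), and concludes that the globalized coboundary of $\varsigma_{*,c}^{rk+1}$ is trivial on $G(F)$ — that is, $\varsigma_{*,c}^{rk+1}\colon G(F)\to\mu_m$ is a homomorphism, hence trivial by perfectness. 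The two arguments rest on the same two inputs (reciprocity for the Hilbert symbol and perfectness of $\Sp_c(F)$), but the paper extracts them directly from the $2$-cocycle identity, whereas you route them through Corollary~\ref{corollary:gbl splitting of (G,G)}, the splitting of $H(F)$, and the bookkeeping identity \eqref{eq:some splitting by eta times}. Your version has the mild advantage of making visible exactly how the proposition feeds into Corollary~\ref{corollary:eta times takes automorphic to automorphic} (it proves $\eta^{\times}(y)=\eta(\mathfrak{e}_1(y))\eta(\mathfrak{e}_2(y))$ on $G(F)$ en route); the paper's isolates the minimal input and avoids any appearance of circularity with the splitting corollaries.
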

\begin{proof}
For each $\nu$, raising \eqref{eq:sigma c *} to the power $rk+1$ we have
\begin{align*}
(\sigma^*_{c,\nu})^{rk+1}(g,g')=\left(\frac{\varsigma_{*,c,\nu}(g)\varsigma_{*,c,\nu}(g')}{\varsigma_{*,c,\nu}(gg')}\right)^{rk+1}\sigma_{c,\nu}^{rk+1}(g,g').
\end{align*}
Since $\sigma_c$ is trivial on $G(F)$ and $g\mapsto g^*$ is an involution of $G(F)$, $\sigma_c^*$ is also trivial on $G(F)$. Also
$\varsigma_{*,c}^{rk+1}$ is well defined on $G(F)$. Therefore we can globalize this equality and deduce that
$\varsigma_{*,c}^{rk+1}:G(F)\rightarrow\mu_m$ is a homomorphism, which must be trivial because $G(F)$ is perfect.
\end{proof}
\begin{corollary}\label{corollary:eta times takes automorphic to automorphic}
Let $\varphi_1$ be a continuous genuine function on $G(F)\backslash G^{(m)}(\A)$, where $G^{(m)}(\A)$ is realized using $\rho_R$. Then $\varphi_1^{(\eta^{\times})^{-1}}$ is a similar
function on $G(F)\backslash G^{(m)}(\A)$ (realized using $\rho_L$).
\end{corollary}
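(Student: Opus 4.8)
The plan is to verify the one substantive point, namely left-invariance of $\varphi_1^{(\eta^{\times})^{-1}}$ under the fixed splitting of $G(F)$ in $G^{(m)}(\A)[\rho_L]$; genuineness is immediate from \eqref{eq:gbl iso rhoR rhoL for functions}, and continuity is clear because $\eta^{\times}\in\mathrm{C}^1(G(\A),\mu_m)$. Recall that the splitting of $G(F)$ attached to $\rho_L$ is $\gamma\mapsto\langle\gamma,\eta(\mathfrak{e}_1(\gamma))\rangle$, while the one attached to $\rho_R$ is $\gamma\mapsto\langle\gamma,\eta^{-1}(\mathfrak{e}_2(\gamma))\rangle$. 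So for $\gamma\in G(F)$ and $\langle g,\epsilon\rangle\in G^{(m)}(\A)[\rho_L]$ I must show
\begin{align*}
\varphi_1^{(\eta^{\times})^{-1}}\big(\langle\gamma,\eta(\mathfrak{e}_1(\gamma))\rangle\langle g,\epsilon\rangle\big)=\varphi_1^{(\eta^{\times})^{-1}}(\langle g,\epsilon\rangle).
\end{align*}

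Expanding the left-hand side via the product law of $G^{(m)}(\A)[\rho_L]$ and the definition \eqref{eq:gbl iso rhoR rhoL for functions}, it equals $\varphi_1\big(\langle\gamma g,(\eta^{\times})^{-1}(\gamma g)\eta(\mathfrak{e}_1(\gamma))\rho_L(\gamma,g)\epsilon\rangle\big)$. On the other side, $\varphi_1^{(\eta^{\times})^{-1}}(\langle g,\epsilon\rangle)=\varphi_1\big(\langle g,(\eta^{\times})^{-1}(g)\epsilon\rangle\big)$, and since $\varphi_1$ is left-invariant under $\langle\gamma,\eta^{-1}(\mathfrak{e}_2(\gamma))\rangle$ in $G^{(m)}(\A)[\rho_R]$, this also equals $\varphi_1\big(\langle\gamma g,\eta^{-1}(\mathfrak{e}_2(\gamma))(\eta^{\times})^{-1}(g)\rho_R(\gamma,g)\epsilon\rangle\big)$. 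As $\varphi_1$ is genuine, it now suffices to match the $\mu_m$-components, i.e.\ to prove
\begin{align*}
(\eta^{\times})^{-1}(\gamma g)\,\eta(\mathfrak{e}_1(\gamma))\,\rho_L(\gamma,g)=\eta^{-1}(\mathfrak{e}_2(\gamma))\,(\eta^{\times})^{-1}(g)\,\rho_R(\gamma,g).
\end{align*}
Substituting $\rho_R(\gamma,g)=\frac{\eta^{\times}(\gamma)\eta^{\times}(g)}{\eta^{\times}(\gamma g)}\rho_L(\gamma,g)$ from \eqref{eq:eta times inverse for rho} and cancelling $\rho_L(\gamma,g)$ and $(\eta^{\times})^{-1}(\gamma g)$, this collapses to the identity $\eta(\mathfrak{e}_1(\gamma))\eta(\mathfrak{e}_2(\gamma))=\eta^{\times}(\gamma)$ for $\gamma\in G(F)$.

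This last identity is exactly the defining formula $\eta^{\times}_{\nu}(g)=\eta_{\nu}(\mathfrak{e}_1(g))\eta_{\nu}(\mathfrak{e}_2(g))/\varsigma_{*,c,\nu}^{rk+1}(g)$, read as a product over all places: as recorded just before \eqref{eq:some splitting by eta times}, each of $\eta$ on $\{(g,g):g\in G(F)\}$, $\eta^{\times}$ on $G(F)$, and $\varsigma_{*,c}^{rk+1}$ on $G(F)$ is well defined, so $\eta^{\times}(\gamma)=\eta(\mathfrak{e}_1(\gamma))\eta(\mathfrak{e}_2(\gamma))/\varsigma_{*,c}^{rk+1}(\gamma)$, and by Proposition~\ref{proposition:varsigma rk+1 is trivial on G(F)} the factor $\varsigma_{*,c}^{rk+1}(\gamma)$ is trivial. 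The step to watch is not any single calculation but the bookkeeping of which $2$-cocycle and which splitting of $G(F)$ governs each side; the point that makes it all fit is precisely the triviality of $\varsigma_{*,c}^{rk+1}$ on the rational points, which is what forces the coboundary $\eta^{\times}$ to account exactly for the difference between the two splittings of $G(F)$.
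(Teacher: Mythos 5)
Your proof is correct and is in essence the paper's own argument: both reduce the claim to the identity $\eta^{\times}(\gamma)=\eta(\mathfrak{e}_1(\gamma))\eta(\mathfrak{e}_2(\gamma))$ for $\gamma\in G(F)$, obtained from the definition of $\eta^{\times}$ together with Proposition~\ref{proposition:varsigma rk+1 is trivial on G(F)}. The only difference is presentational — you write out the $\rho_L$/$\rho_R$ cocycle bookkeeping explicitly via \eqref{eq:eta times inverse for rho}, whereas the paper absorbs it by noting that \eqref{eq:gbl rhoR rhoL} is a group isomorphism and tracking only the factor $\langle \gamma,\eta(\mathfrak{e}_1(\gamma))\rangle$.
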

\begin{proof}
By Proposition~\ref{proposition:varsigma rk+1 is trivial on G(F)},
$\eta^{\times}(y)=\eta(\mathfrak{e}_1(y))\eta(\mathfrak{e}_2(y))$ for all $y\in G(F)$, hence for any $h\in G^{(m)}(\A)$,
\begin{align*}
\varphi_1^{(\eta^{\times})^{-1}}(\langle y,\eta(\mathfrak{e}_1(y))\rangle h)
=\varphi_1(\langle y,(\eta^{\times})^{-1}(y)\eta(\mathfrak{e}_1(y))\rangle h)
=\varphi_1(\langle y,\eta^{-1}(\mathfrak{e}_2(y))\rangle h)=\varphi_1(h),
\end{align*}
where we used the left-invariance of $\varphi_1$.
\end{proof}
\begin{proposition}\label{proposition:global toy integral automorphic}
Let $\varphi_1,\varphi_2$ be continuous genuine functions on $G(F)\backslash G^{(m)}(\A)$, where $G^{(m)}(\A)$ is realized using $\rho_R$.  Let $f$ be a
continuous genuine function on the image of $G(F)\times G(F)\backslash G^{(m)}(\A)\times G^{(m)}(\A)$ in
$H(F)\backslash H^{(m)}(\A)$, with the above identifications (e.g., $f$ on $H(F)\backslash H^{(m)}(\A)$). Then
\begin{align*}
\int\limits_{G(F)\times G(F)\backslash G(\A)\times G(\A)}\varphi_1^{(\eta^{\times})^{-1}}(\langle g_1,1\rangle)\overline{\varphi_2(\langle g_2,1\rangle)}f(\langle\mathfrak{e}_1(g_1),1\rangle\langle\mathfrak{e}_2(g_2),1\rangle)\,dg_1\,dg_2
\end{align*}
is well defined, provided it is absolutely convergent.
\end{proposition}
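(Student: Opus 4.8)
The statement combines the equivariance/convergence content of Proposition~\ref{proposition:global toy integral} with the automorphy content of Corollary~\ref{corollary:eta times takes automorphic to automorphic}, so the plan is to reduce to those two results together with the left $H(F)$-invariance of $f$. By Corollary~\ref{corollary:eta times takes automorphic to automorphic} the function $\varphi_1^{(\eta^{\times})^{-1}}$ is a continuous genuine function on $G^{(m)}(\A)$ realized via $\rho_L$; hence Proposition~\ref{proposition:global toy integral} already shows that the integrand is a well-defined, right $G(\A)\times G(\A)$-invariant function on $G(\A)\times G(\A)$ (its value does not depend on the chosen representatives $\langle g_i,1\rangle$, and the cocycle factors picked up under right translation cancel because we pair $\varphi_1^{(\eta^{\times})^{-1}}$ against $\overline{\varphi_2}$). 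Since absolute convergence is assumed, it remains only to show that the integrand descends to $G(F)\times G(F)\backslash G(\A)\times G(\A)$, i.e.\ that it is left-invariant under $G(F)\times G(F)$.

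To prove this I would fix $\gamma_1,\gamma_2\in G(F)$ and compute, for each of the three factors, its value at $(\gamma_1 g_1,\gamma_2 g_2)$ in terms of its value at $(g_1,g_2)$, using the fixed splittings recalled just before the proposition: $\gamma\mapsto\langle\gamma,\eta(\mathfrak{e}_1(\gamma))\rangle$ for $G^{(m)}(\A)[\rho_L]$, $\gamma\mapsto\langle\gamma,\eta^{-1}(\mathfrak{e}_2(\gamma))\rangle$ for $G^{(m)}(\A)[\rho_R]$, and $h\mapsto\langle h,\eta^{-1}(h)\rangle$ for $H^{(m)}(\A)$. Writing $\langle\gamma_i g_i,1\rangle$ as (splitting image of $\gamma_i$)$\cdot\langle g_i,1\rangle$ times an explicit $\mu_m$-factor and using genuineness together with the invariance of $\varphi_1^{(\eta^{\times})^{-1}}$ under $\langle\gamma_1,\eta(\mathfrak{e}_1(\gamma_1))\rangle$ (which is exactly Corollary~\ref{corollary:eta times takes automorphic to automorphic}) and of $\varphi_2$ under $\langle\gamma_2,\eta^{-1}(\mathfrak{e}_2(\gamma_2))\rangle$, the first two factors contribute $\eta(\mathfrak{e}_1(\gamma_1))^{-1}\rho_L(\gamma_1,g_1)^{-1}$ and $\eta(\mathfrak{e}_2(\gamma_2))^{-1}\rho_R(\gamma_2,g_2)$ respectively. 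For the third factor I would use \eqref{eq:gbl commuting G and G} to commute $\langle\mathfrak{e}_1(g_1),1\rangle$ past $\langle\mathfrak{e}_2(\gamma_2),1\rangle$ and peel off $\langle\mathfrak{e}_1(\gamma_1),1\rangle\langle\mathfrak{e}_2(\gamma_2),1\rangle$; since $\sigma_{\nu}(\mathfrak{e}_1(\gamma_1),\mathfrak{e}_2(\gamma_2))=1$ for every $\nu$ (immediate from Proposition~\ref{proposition:the $2$-cocycle on G times G}) and since $\sigma=\prod_{\nu}\sigma_{\nu}$ is trivial on $H(F)$ while $\rho(h,h')=\frac{\eta(h)\eta(h')}{\eta(hh')}\sigma(h,h')$ there, one gets $\langle\mathfrak{e}_1(\gamma_1),1\rangle\langle\mathfrak{e}_2(\gamma_2),1\rangle=\eta(\mathfrak{e}_1(\gamma_1))\eta(\mathfrak{e}_2(\gamma_2))\,\langle(\gamma_1,\gamma_2),\eta^{-1}((\gamma_1,\gamma_2))\rangle$. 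By left $H(F)$-invariance and genuineness of $f$, the third factor thus contributes $\rho(\mathfrak{e}_1(\gamma_1),\mathfrak{e}_1(g_1))^{-1}\rho(\mathfrak{e}_2(\gamma_2),\mathfrak{e}_2(g_2))^{-1}\eta(\mathfrak{e}_1(\gamma_1))\eta(\mathfrak{e}_2(\gamma_2))$, which by \eqref{gbl rho on left and right} equals $\rho_L(\gamma_1,g_1)\rho_R(\gamma_2,g_2)^{-1}\eta(\mathfrak{e}_1(\gamma_1))\eta(\mathfrak{e}_2(\gamma_2))$.

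Multiplying the three contributions, the $\eta(\mathfrak{e}_1(\gamma_1))$, $\eta(\mathfrak{e}_2(\gamma_2))$, $\rho_L(\gamma_1,g_1)$ and $\rho_R(\gamma_2,g_2)$ factors all cancel in pairs, so the integrand takes the same value at $(\gamma_1 g_1,\gamma_2 g_2)$ as at $(g_1,g_2)$; together with the first paragraph this shows it is a well-defined functional on $G(F)\times G(F)\backslash G(\A)\times G(\A)$ whenever it converges absolutely. The only real obstacle is the bookkeeping of the $\mu_m$-valued factors: one must keep straight that $\rho$ restricts to $\rho_L^{-1}$ on the left copy but to $\rho_R$ on the right copy, and that — in contrast with the local identity \eqref{eq:g_1 and g_2 product in H} — there is a genuine cocycle obstruction to replacing $\langle\mathfrak{e}_1(g_1),1\rangle\langle\mathfrak{e}_2(g_2),1\rangle$ by $\langle(g_1,g_2),1\rangle$ globally, which is precisely why one works with the product form throughout and invokes \eqref{eq:gbl commuting G and G} rather than a global analog of \eqref{eq:g_1 and g_2 product in H}. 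Note that Proposition~\ref{proposition:varsigma rk+1 is trivial on G(F)} enters only indirectly, through Corollary~\ref{corollary:eta times takes automorphic to automorphic}.
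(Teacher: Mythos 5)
Your proposal is correct and follows essentially the same route as the paper: reduce to Proposition~\ref{proposition:global toy integral} for well-definedness and right-invariance on $G(\A)\times G(\A)$, then verify left $G(F)\times G(F)$-invariance by inserting the fixed splittings of $G(F)$ (in each copy) and of $H(F)$, commuting the two copies via \eqref{eq:gbl commuting G and G}, and checking that the $\rho_L$, $\rho_R$ and $\eta$-factors cancel. The paper carries out the cancellation by manipulating the full product in one chain of equalities with auxiliary $\epsilon_i=\eta(\mathfrak{e}_i(y_i))$, whereas you tabulate each factor's contribution separately; this is only a presentational difference.
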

\begin{proof}
By Proposition~\ref{proposition:global toy integral}, the integrand is well defined on $G(\A)\times G(\A)$ and provided it is defined on the quotient, it is also right-invariant on $G(\A)\times G(\A)$. It remains to show that the integrand is well defined with respect to the quotient.

Let $y_1,y_2\in G(F)$ and $g_1,g_2\in G(\A)$. Put $\epsilon_i=\eta_i(\mathfrak{e}_i(y_i))$. Then
\begin{align*}
&\varphi_1^{(\eta^{\times})^{-1}}(\langle y_1g_1,1\rangle)\overline{\varphi_2(\langle y_2g_2,1\rangle)}f(
\langle\mathfrak{e}_1(y_1g_1),1\rangle\langle\mathfrak{e}_2(y_2g_2),1\rangle)
\\&=\varphi_1^{(\eta^{\times})^{-1}}(\langle y_1,1\rangle \langle g_1,1\rangle )\overline{\varphi_2(\langle y_2,1\rangle \langle g_2,1\rangle )} f(
\langle\mathfrak{e}_1(y_1),1\rangle\langle\mathfrak{e}_1(g_1),1\rangle\langle\mathfrak{e}_2(y_2),1\rangle\langle\mathfrak{e}_2(g_2),1\rangle)\nonumber
\\&=\varphi_1^{(\eta^{\times})^{-1}}(\langle y_1,\epsilon_1\rangle \langle g_1,1\rangle)
\overline{\varphi_2(\langle y_2,\epsilon_2^{-1}\rangle\langle g_2,1\rangle)}f(
\langle\mathfrak{e}_1(y_1),\epsilon_1^{-1}\rangle\langle\mathfrak{e}_1(g_1),1\rangle\langle\mathfrak{e}_2(y_2),
\epsilon_2^{-1}\rangle\langle\mathfrak{e}_2(g_2),1\rangle)\nonumber
\\&=\varphi_1^{(\eta^{\times})^{-1}}(\langle y_1,\epsilon_1\rangle \langle g_1,1\rangle)
\overline{\varphi_2(\langle y_2,\epsilon_2^{-1}\rangle\langle g_2,1\rangle)}f(
\langle\mathfrak{e}_1(y_1),\epsilon_1^{-1}\rangle\langle\mathfrak{e}_2(y_2),
\epsilon_2^{-1}\rangle\langle\mathfrak{e}_1(g_1),1\rangle\langle\mathfrak{e}_2(g_2),1\rangle)\\
&=\varphi_1^{(\eta^{\times})^{-1}}(\langle g_1,1\rangle)
\overline{\varphi_2(\langle g_2,1\rangle)}f(\langle\mathfrak{e}_1(g_1),1\rangle\langle\mathfrak{e}_2(g_2),1\rangle)\nonumber.
\end{align*}
For the last equality we used the left invariance under $G(F)$ (resp., $H(F)$) of $\varphi_i$ (resp., $f$), and note that this left invariance is with respect to the particular section for each copy of $G(F)$.
\end{proof}
As explained above, on $H(F)$ we have global definitions of $\sigma$ and $\eta$, so that the global analog of
\eqref{eq:nu and sigma for covering of H} is valid, and $h\mapsto\langle h,\eta^{-1}(h)\rangle$ is the splitting of $H(F)$ under $\rho$. A simpler argument applies to $N_{rkc}(\A)$: since $\eta_{\nu}$ is trivial on $N_{rkc}(\mathcal{O}_{\nu})$ for all $\nu$ where $F_{\nu}$ is unramified (see \S~\ref{local covering}), $\eta\in\mathrm{C}^1(N_{rkc}(\A),\mu_m)$, and because $\sigma_{\nu}$ is trivial on $N_{rkc}(F_{\nu})$ for all $\nu$, the global analog of \eqref{eq:nu and sigma for covering of H} holds and $u\mapsto\langle u,\eta^{-1}(u)\rangle$ is the splitting of $N_{rkc}(\A)$ in $H^{(m)}(\A)$.

The following lemma is the extension of \eqref{eq:sigma conjugate v by h} to the global cover.
\begin{lemma}\label{lemma:conjugation of N by H}
Let $h\in H(\A)$, $v\in N_{rkc}(\A)$. If ${}^hv\in N_{rkc}(\A)$,
${}^h\langle v,\eta^{-1}(v)\rangle=\langle {}^hv,\eta^{-1}({}^hv)\rangle$.
\end{lemma}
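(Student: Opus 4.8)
The statement to prove, Lemma~\ref{lemma:conjugation of N by H}, is the global analog of the local identity \eqref{eq:sigma conjugate v by h}, and the natural strategy is to reduce the global claim to its local counterpart place by place, using that both $v$ and ${}^hv$ lie in $N_{rkc}$, where we have the honest splitting $u\mapsto\langle u,\eta^{-1}(u)\rangle$ of $N_{rkc}(\A)$ established just above the lemma. First I would write $h=\prod_\nu h_\nu$ and $v=\prod_\nu v_\nu$, with $h_\nu\in K_{H,\nu}$ and $v_\nu\in N_{rkc}(\mathcal{O}_\nu)$ for almost all $\nu$. The global cover $H^{(m)}(\A)$ is realized using $\rho=\prod_\nu\rho_\nu$, and conjugation in it is computed coordinatewise, so ${}^h\langle v,\eta^{-1}(v)\rangle=\prod_\nu {}^{h_\nu}\langle v_\nu,\eta_\nu^{-1}(v_\nu)\rangle$, where the $\nu$-th factor is computed in $H^{(m)}(F_\nu)$ but with respect to $\rho_\nu$, not $\sigma_\nu$.

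**Reduction at each place.** The subtlety is that \eqref{eq:sigma conjugate v by h} was stated for the $2$-cocycle $\sigma_\nu$, while the global cover uses $\rho_\nu$, which differs from $\sigma_\nu$ by the $2$-coboundary attached to $\eta_\nu$ (see \eqref{eq:nu and sigma for covering of H}). So at each $\nu$ I would pass through the isomorphism $H^{(m)}(F_\nu)[\sigma_\nu]\to H^{(m)}(F_\nu)[\rho_\nu]$ given by $\langle h,\epsilon\rangle\mapsto\langle h,\eta_\nu(h)\epsilon\rangle$ of \S~\ref{Covering groups}. Under $\sigma_\nu$ the splitting of $N_{rkc}(F_\nu)$ is $u\mapsto\langle u,1\rangle$, so the corresponding element in the $\rho_\nu$-realization is $\langle u,\eta_\nu(u)\rangle$; but $\eta_\nu$ restricted to the unipotent $N_{rkc}(F_\nu)$ is a homomorphism into $\mu_m$, hence trivial (a unipotent group over a local field of characteristic $0$ has no nontrivial $\mu_m$-valued characters), so in fact both realizations give $\langle u,1\rangle$. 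Applying \eqref{eq:sigma conjugate v by h} in the $\sigma_\nu$-picture gives ${}^{h_\nu}\langle v_\nu,1\rangle=\langle {}^{h_\nu}v_\nu,1\rangle$, and transporting back through the isomorphism (which is equivariant for conjugation and sends $\langle {}^{h_\nu}v_\nu,1\rangle$ to $\langle {}^{h_\nu}v_\nu,\eta_\nu({}^{h_\nu}v_\nu)\rangle$, again with $\eta_\nu=1$ on unipotents) yields the $\rho_\nu$-version ${}^{h_\nu}\langle v_\nu,1\rangle=\langle {}^{h_\nu}v_\nu,1\rangle$. Since $\eta^{-1}$ is trivial on $N_{rkc}(\A)$ as recalled above the lemma (because $\sigma_\nu$ is trivial on $N_{rkc}(F_\nu)$ for every $\nu$), we have $\langle v,\eta^{-1}(v)\rangle=\langle v,1\rangle=\prod_\nu\langle v_\nu,1\rangle$, and likewise for ${}^hv$; multiplying the local identities over all $\nu$ finishes the proof.

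**Main obstacle.** The only real care needed is bookkeeping of which $2$-cocycle ($\sigma$ versus $\rho$) is in force and the verification that the correcting $1$-cochain $\eta$ (and each $\eta_\nu$) is genuinely trivial on the relevant unipotent subgroups, both locally and adelically — this was already invoked in the paragraph preceding the lemma, so it can be cited rather than re-proved. One should also note that conjugation in $\widetilde{H}$ by an element $h$ depends only on the image of $h$ in $H$, not on a chosen lift (the formula ${}^{\langle x,\epsilon'\rangle}\langle y,\epsilon\rangle$ in \S~\ref{Covering groups} is independent of $\epsilon'$), so writing $h_\nu\in K_{H,\nu}$ without worrying about a lift is legitimate. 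I expect no genuine difficulty beyond this; the lemma is essentially the assertion that the canonical unipotent splitting is compatible with conjugation, which holds place by place and globalizes.
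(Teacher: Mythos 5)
Your route is genuinely different from the paper's. The paper's proof is a two-line abstract argument: it takes the unipotent subgroup $Y<N_{rkc}$ containing $v$, notes that the restriction of $u\mapsto\langle u,\eta^{-1}(u)\rangle$ to $Y(\A)$ is its \emph{unique} splitting, that ${}^hY(\A)<N_{rkc}(\A)$ so the same map restricted to ${}^hY(\A)$ is a splitting there, and concludes by \eqref{eq:epsilon for conjugation between split subgroups} with $\chi={}^h$. No local-global bookkeeping and no comparison of $\sigma$ with $\rho$ is needed. Your localize-and-transport strategy also works in principle, and has the virtue of making explicit that the lemma is the globalization of \eqref{eq:sigma conjugate v by h}; but it is longer and forces you to handle the coboundary $\eta_\nu$ carefully.

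There is one step you should not keep as written: the claim that $\eta_\nu$ is trivial on $N_{rkc}(F_\nu)$ and hence $\langle v,\eta^{-1}(v)\rangle=\langle v,1\rangle$. This is not "recalled above the lemma" — the paper only shows that $\eta=\prod_\nu\eta_\nu$ is \emph{well defined} on $N_{rkc}(\A)$ (because $\eta_\nu$ is trivial on $N_{rkc}(\mathcal{O}_\nu)$ at almost all places) and that $u\mapsto\langle u,\eta^{-1}(u)\rangle$ is the splitting; it does not assert $\eta=1$ there. Your justification fails because $\eta_\nu$ restricted to $N_{rkc}(F_\nu)$ is not known to be a homomorphism: $\eta_\nu$ is pinned down only on $K_{H,\nu}$ (where it is $\eta_{2rkc,\nu}$) and is extended to $H(F_\nu)$ as an arbitrary $1$-cochain, so the "no nontrivial characters of a unipotent group" argument has no purchase. (The paper's general remark in \S~\ref{Covering groups} would force $\eta_\nu$ to be a homomorphism on $N_{rkc}(F_\nu)$ only if one already knew $\rho_\nu$ were trivial there, which is circular.) Fortunately the claim is removable: transporting \eqref{eq:sigma conjugate v by h} through the isomorphism $\langle g,\epsilon\rangle_{\sigma_\nu}\mapsto\langle g,\eta_\nu^{-1}(g)\epsilon\rangle_{\rho_\nu}$ directly yields ${}^{h_\nu}\langle v_\nu,\eta_\nu^{-1}(v_\nu)\rangle=\langle {}^{h_\nu}v_\nu,\eta_\nu^{-1}({}^{h_\nu}v_\nu)\rangle$, which is exactly the local version of the lemma, and taking the product over $\nu$ finishes the proof without ever asserting that these elements equal $\langle\cdot,1\rangle$.
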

\begin{proof}
Let $Y<N_{rkc}$ be the unipotent subgroup generated by $v$. Then
$u\mapsto \langle u,\eta^{-1}(u)\rangle$ (the unique splitting of $N_{rkc}(\A)$) is also the unique splitting
of $Y(\A)$. Since ${}^hv\in N_{rkc}(\A)$, we have ${}^hY(\A)<N_{rkc}(\A)$ and hence
${}^hu\mapsto \langle {}^hu,\eta^{-1}({}^hu)\rangle$ is a splitting of ${}^hY(\A)$. Now the result follows from
\eqref{eq:epsilon for conjugation between split subgroups} (with $\chi={}^h$).
\end{proof}

\begin{corollary}\label{corollary:rho and eta on H and N without conjugation}
For any $h\in H(F)$ and $u\in N_{rkc}(\A)$,
\begin{align*}
\langle h,\eta^{-1}(h)\rangle\langle u,\eta^{-1}(u)\rangle=
\langle hu,\eta^{-1}(hu)\rangle,\qquad
\langle u,\eta^{-1}(u)\rangle\langle h,\eta^{-1}(h)\rangle=
\langle uh,\eta^{-1}(uh)\rangle.
\end{align*}
\end{corollary}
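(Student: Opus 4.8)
The plan is to reduce the corollary to two ingredients already available: the splitting property of $H(F)$ under $\rho$ via $h\mapsto\langle h,\eta^{-1}(h)\rangle$ (established just above via the product formula for the Hilbert symbol), and Lemma~\ref{lemma:conjugation of N by H}, which controls conjugation of $N_{rkc}(\A)$ by $H(\A)$. First I would observe that both sides of each claimed identity are elements of $H^{(m)}(\A)$ projecting to the same element of $H(\A)$ ($hu$ or $uh$ respectively), so it suffices to compare the $\mu_m$-components. Equivalently, since $\langle h,\eta^{-1}(h)\rangle\langle u,\eta^{-1}(u)\rangle = \langle hu, \eta^{-1}(h)\eta^{-1}(u)\rho(h,u)\rangle$, the first identity is the assertion that $\rho(h,u)=\eta(hu)\eta(h)^{-1}\eta(u)^{-1}$, i.e. exactly the relation \eqref{eq:nu and sigma for covering of H} in its global form; but the subtlety is that this global relation was established for $h\in H(F)$ and for $u\in N_{rkc}(\A)$ \emph{separately}, and here $h$ and $u$ live in different groups, so a direct appeal is not quite enough.

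The cleaner route, which I would actually carry out, is to avoid comparing $\mu_m$-components by hand and instead argue structurally. Write the element $\langle u,\eta^{-1}(u)\rangle$ using the splitting of $N_{rkc}(\A)$ (valid by the remarks preceding the corollary, since $\sigma_\nu$ is trivial on $N_{rkc}(F_\nu)$ and $\eta_\nu$ is trivial on $N_{rkc}(\mathcal O_\nu)$), and write $\langle h,\eta^{-1}(h)\rangle$ using the splitting of $H(F)$. For the second identity, I would compute $\langle u,\eta^{-1}(u)\rangle\langle h,\eta^{-1}(h)\rangle = \langle h,\eta^{-1}(h)\rangle\,{}^{h^{-1}}\langle u,\eta^{-1}(u)\rangle$. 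Since $h\in H(F)$ normalizes nothing in particular, ${}^{h^{-1}}u$ need not lie in $N_{rkc}(\A)$, so Lemma~\ref{lemma:conjugation of N by H} does not apply directly in that form; instead I would use that $H(F)$ splits, so the splitting homomorphism $s\colon H(F)\to H^{(m)}(\A)$, $s(h)=\langle h,\eta^{-1}(h)\rangle$, together with the splitting homomorphism of $N_{rkc}(\A)$, combine on the subgroup they generate. Concretely: the group $H(F)\cdot N_{rkc}(\A)$ is the set-theoretic product, and on it the map $hu\mapsto\langle hu,\eta^{-1}(hu)\rangle$ (using that $\eta$ is defined on all of $H(\A)$ via its local extensions) is a section; to see it is a homomorphism on products of the form (element of $H(F)$)$\cdot$(element of $N_{rkc}(\A)$), I would verify the $\mu_m$-cocycle relation $\rho(h,u)=\eta(hu)\eta(h)^{-1}\eta(u)^{-1}$ place by place, which at each $\nu$ is precisely \eqref{eq:nu and sigma for covering of H} with $h_\nu\in H(F_\nu)$ and $u_\nu\in N_{rkc}(F_\nu)$ — an unconditional local identity — and then take the product over $\nu$, which converges since both $\eta(h_\nu)$ and $\rho(h_\nu,u_\nu)$ are trivial for almost all $\nu$ (for $\eta(h_\nu)$ by the Bruhat-decomposition argument of \cite[Proposition~1.8]{Tk} recalled above, and for $\eta(u_\nu)$ since $u\in N_{rkc}(\A)$). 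The same computation with the roles of $h$ and $u$ interchanged, using \eqref{eq:sigma on h and v}, gives the first identity; symmetrically one reduces to the local identity $\rho_\nu(u,h)=\eta_\nu(uh)\eta_\nu(u)^{-1}\eta_\nu(h)^{-1}$.

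The main obstacle is bookkeeping the $\mu_m$-components consistently: one must make sure that the ``$\eta$'' appearing in $\langle h,\eta^{-1}(h)\rangle$ (the extension of the local $\eta_\nu$ of \eqref{eq:nu and sigma for covering of H} to all of $H(F_\nu)$, then to $H(\A)$) agrees with the ``$\eta$'' appearing in $\langle u,\eta^{-1}(u)\rangle$ (the restriction to $N_{rkc}(\A)$), which is fine because both come from the same family $(\eta_\nu)_\nu$, but this needs to be stated. Once that is pinned down, the corollary is a one-line consequence of \eqref{eq:nu and sigma for covering of H} applied at each place and multiplied out, together with the vanishing-almost-everywhere statements that make the infinite products legitimate; no genuinely new input beyond Lemma~\ref{lemma:conjugation of N by H} and the splitting of $H(F)$ is required.
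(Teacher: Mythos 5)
Your final argument — verify $\rho_\nu(h_\nu,u_\nu)=\eta_\nu(h_\nu)\eta_\nu(u_\nu)/\eta_\nu(h_\nu u_\nu)$ at each place from \eqref{eq:nu and sigma for covering of H} together with $\sigma_\nu(h_\nu,u_\nu)=1$ via \eqref{eq:sigma on h and v}, then deduce $\eta_\nu(h_\nu u_\nu)=1$ almost everywhere from the a.e.\ triviality of the other three factors so that $\eta(hu)$ is well defined and the relation globalizes — is exactly the paper's proof, and the symmetric computation handles the other order. The preliminary detour through conjugation and Lemma~\ref{lemma:conjugation of N by H} is unnecessary (and that lemma plays no role in the end), but the argument you settle on is correct and identical in substance to the one in the paper.
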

\begin{proof}
By \eqref{eq:sigma on h and v}, $\sigma(h'_{\nu},u'_{\nu})=1$ for any $h'\in H(F)$ and $u'\in N_{rkc}(\A)$. Hence \eqref{eq:nu and sigma for covering of H} gives
\begin{align*}
\rho_{\nu}(h'_{\nu},u'_{\nu})=\eta_{\nu}(h'_{\nu})\eta_{\nu}(u'_{\nu})/\eta_{\nu}(h'_{\nu}u'_{\nu}).
\end{align*}
Since the l.h.s.~(left hand side) is $1$ for almost all $\nu$, and so are $\eta_{\nu}(h'_{\nu})$ and $\eta_{\nu}(u'_{\nu})$, we deduce that
$\eta_{\nu}(h'_{\nu}u'_{\nu})=1$ almost everywhere. Therefore $\eta(hu)$ is well defined and we have
\begin{align*}
\rho(h,u)=\frac{\eta(h)\eta(u)}{\eta(hu)},
\end{align*}
proving the first equality. The symmetric argument (for $\rho(u,h)$) implies the second formula.
\end{proof}

\subsection{The lift of the involution ${}^{\iota}$ of $G$}\label{extension of the involution}
For the construction of the integral we will repeatedly use the outer involution ${}^{\iota}$ of $G$, given locally and globally by
$g\mapsto{}^{\iota}g=\iota g\iota^{-1}$ where $\iota=\left(\begin{smallmatrix}&I_{c/2}\\I_{c/2}\end{smallmatrix}\right)$. In this section we discuss its lift to $G^{(m)}$.

First consider the local setting and realize $G^{(m)}$ with $\sigma_c$. Since ${}^{\iota}$ is also a (continuous) involution of $\GL_c$, we can define $\sigma_c^{\iota}\in\mathrm{Z}^2(\GL_c,\mu_m)$ by $\sigma_c^{\iota}(g,g')=\sigma_c({}^{\iota}g,{}^{\iota}g')$ for $g,g'\in\GL_c$.
\begin{proposition}\label{proposition:sigma iota and sigma cohomologous}
We have $\sigma^{\iota}_{c}=\sigma_{c}$ in $\mathrm{H}^2(\SL_c,\mu_m)$, in particular in $\mathrm{H}^2(G,\mu_m)$.
\end{proposition}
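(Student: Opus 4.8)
The plan is to mirror the proof of Proposition~\ref{proposition:sigma * and sigma on SLc}. By \cite[p.~54, Corollary~2]{Moore}, the restriction map $\mathrm{H}^2(\SL_c,\mu_m)\rightarrow\mathrm{H}^2(T_{\SL_c},\mu_m)$ is injective, so it suffices to show that $\sigma^{\iota}_{c}$ and $\sigma_{c}$ become cohomologous after restriction to the diagonal torus $T_{\SL_c}$. The feature that makes this tractable is that $\iota$ is a permutation matrix: conjugation by $\iota$ preserves $T_{\GL_c}$ and acts on the diagonal coordinates through the involution $\pi$ of $\{1,\dots,c\}$ interchanging $i$ with $i+c/2$, so that $({}^{\iota}t)_i=t_{\pi(i)}$. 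Hence by \eqref{eq:sigma on torus of GL}, for $t,t'\in T_{\SL_c}$,
\begin{align*}
\sigma_{c}(t,t')=\prod_{i<j}(t_i,t'_j)_m,\qquad
\sigma^{\iota}_{c}(t,t')=\prod_{i<j}(t_{\pi(i)},t'_{\pi(j)})_m=\prod_{\pi(a)<\pi(b)}(t_a,t'_b)_m,
\end{align*}
where the last equality uses that $\pi$ is an involution.

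Next I would compare the two products. An ordered pair $(a,b)$ with $a\ne b$ survives in the quotient $\sigma_{c}(t,t')/\sigma^{\iota}_{c}(t,t')$ exactly when the $<$-ordering of $\{a,b\}$ and of $\{\pi(a),\pi(b)\}$ disagree, and a direct check shows this happens precisely for the ``crossing'' pairs $1\le a\le c/2<b\le c$ (there being $(c/2)^2$ of them). Pairing the contribution of $(a,b)$ with that of $(b,a)$ and invoking the antisymmetry $(x,y)_m^{-1}=(y,x)_m$ of the Hilbert symbol yields
\begin{align*}
\frac{\sigma_{c}(t,t')}{\sigma^{\iota}_{c}(t,t')}=\prod_{1\le a\le c/2<b\le c}(t_a,t'_b)_m(t'_a,t_b)_m.
\end{align*}
I then set $\eta\in\mathrm{C}^1(T_{\SL_c},\mu_m)$, $\eta(t)=\prod_{1\le a\le c/2<b\le c}(t_a,t_b)_m$; expanding $\eta(tt')/(\eta(t)\eta(t'))$ by bimultiplicativity of $(\cdot,\cdot)_m$ cancels the terms $(t_a,t_b)_m(t'_a,t'_b)_m$ and leaves exactly the right-hand side above, so $\sigma_{c}(t,t')=\frac{\eta(tt')}{\eta(t)\eta(t')}\,\sigma^{\iota}_{c}(t,t')$ on $T_{\SL_c}$.

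Therefore $\sigma^{\iota}_{c}=\sigma_{c}$ in $\mathrm{H}^2(T_{\SL_c},\mu_m)$, hence in $\mathrm{H}^2(\SL_c,\mu_m)$ by the injectivity above, and restricting a defining $1$-cochain to $G$ gives the equality in $\mathrm{H}^2(G,\mu_m)$ as well. The only genuine bookkeeping — and the step most prone to sign slips — is the combinatorial identification of the ``flipped'' pairs as the crossing pairs and the reassembly of the quotient into a coboundary; everything else is formal, exactly as in Proposition~\ref{proposition:sigma * and sigma on SLc}. (One should note here, as in the remark following that proposition, that $\iota$ acts trivially on $T_n$, which is why no correction is needed on the smaller torus later on.)
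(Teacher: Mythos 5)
Your argument is correct and follows the paper's strategy exactly: reduce to the torus via the injectivity of $\mathrm{H}^2(\SL_c,\mu_m)\rightarrow\mathrm{H}^2(T_{\SL_c},\mu_m)$ from Moore, then compute with \eqref{eq:sigma on torus of GL}. The only difference in execution is that the paper uses the relation $\prod_i t_i=1$ to show that $\sigma_c^{\iota}$ and $\sigma_c$ are \emph{literally equal} on $T_{\SL_c}$, so no coboundary is needed there; your crossing-pair bookkeeping and the $1$-cochain $\eta(t)=\prod_{a\le c/2<b}(t_a,t_b)_m$ are valid (and work on all of $T_{\GL_c}$), but one can check that the resulting coboundary is identically $1$ once restricted to $T_{\SL_c}$, recovering the paper's cleaner statement.

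One point to correct: your closing parenthetical asserts that $\iota$ acts trivially on $T_n$, citing the remark after Proposition~\ref{proposition:sigma * and sigma on SLc}. That remark concerns the involution $g\mapsto g^*$, not ${}^{\iota}$; conjugation by $\iota$ sends $\diag(t_1,\ldots,t_n,t_n^{-1},\ldots,t_1^{-1})$ to $\diag(t_n^{-1},\ldots,t_1^{-1},t_1,\ldots,t_n)$, which is a nontrivial action on $T_n$. This aside is not used in your proof, so nothing breaks, but it should be deleted or corrected.
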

\begin{proof}
As in the proof of Proposition~\ref{proposition:sigma * and sigma on SLc}, by
\cite[p.~54, Corollary~2]{Moore} it is enough to consider $t,t'\in T_{\SL_c}$
(by \textit{loc. cit.} restriction $\mathrm{H}^2(G,\mu_m)\rightarrow\mathrm{H}^2(T_n,\mu_m)$ is $2$-to-$1$).
By \eqref{eq:sigma on torus of GL}, $\sigma_c^{\iota}(t,t')$ equals
\begin{align*}
\prod_{i=n+1}^{c-1}\prod_{j=n+1}^i(t_i,t'_j)_m^{-1}
\prod_{i=n}^{c-1}\prod_{j=1}^n(t_i^{-1},t'_j)_m\prod_{i=1}^{n-1}\prod_{j=1}^n(t_i^{-1},t'_j)_m
\prod_{i=1}^{n-1}\prod_{j=i+1}^n(t_i,t'_j)_m
=\prod_{i=1}^{c-1}\prod_{j=1}^i(t_i,t'_j)_m^{-1}=\sigma_c(t,t').
\end{align*}
The result follows.
\end{proof}
Consequently there is $\varsigma_{\iota,c}\in\mathrm{C}^1(\SL_c,\mu_m)$ such that
\begin{align}\label{eq:sigma iota c}
\sigma^{\iota}_{c}(g,g')=\frac{\varsigma_{\iota,c}(g)\varsigma_{\iota,c}(g')}{\varsigma_{\iota,c}(gg')}\sigma_{c}(g,g'),\qquad \forall g,g'\in G.
\end{align}
\begin{proposition}\label{proposition:iota lifts locally to an involution}
The involution ${}^{\iota}$ lifts (uniquely) to an outer involution of $G^{(m)}$, also denoted ${}^{\iota}$, and moreover
\begin{align}\label{eq:iota on the local coverings}
{}^{\iota}\langle g,\epsilon\rangle=\langle {}^{\iota}g,\varsigma_{\iota,c}^{-1}(g)\epsilon\rangle.
\end{align}
\end{proposition}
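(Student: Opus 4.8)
The plan is to verify directly that the formula \eqref{eq:iota on the local coverings} defines a map with all the required properties, in the order: homomorphism, continuity and bijectivity, the symmetry $\varsigma_{\iota,c}\circ{}^{\iota}=\varsigma_{\iota,c}^{-1}$, involutivity, outerness, uniqueness. So set $\widetilde{\iota}\langle g,\epsilon\rangle=\langle {}^{\iota}g,\varsigma_{\iota,c}^{-1}(g)\epsilon\rangle$. That this is a homomorphism is a one-line unwinding: on one hand $\widetilde{\iota}(\langle g,\epsilon\rangle\langle g',\epsilon'\rangle)=\langle {}^{\iota}(gg'),\varsigma_{\iota,c}^{-1}(gg')\epsilon\epsilon'\sigma_c(g,g')\rangle$, and on the other $\widetilde{\iota}\langle g,\epsilon\rangle\,\widetilde{\iota}\langle g',\epsilon'\rangle=\langle {}^{\iota}g\cdot{}^{\iota}g',\varsigma_{\iota,c}^{-1}(g)\varsigma_{\iota,c}^{-1}(g')\epsilon\epsilon'\sigma_c({}^{\iota}g,{}^{\iota}g')\rangle$; since ${}^{\iota}$ is an automorphism of $G$ and $\sigma_c({}^{\iota}g,{}^{\iota}g')=\sigma_c^{\iota}(g,g')$ by definition, the two sides coincide exactly by the defining relation \eqref{eq:sigma iota c}. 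It is immediate that $\widetilde{\iota}$ restricts to the identity on $\mu_m$ (as $\varsigma_{\iota,c}(e)=1$) and projects to ${}^{\iota}$ on $G$, so it fits the commuting diagram of a lift.

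For continuity and bijectivity I would present $\widetilde{\iota}$ as a composition of two topological isomorphisms, in the sense recalled in \S~\ref{Covering groups}. First, realizing $G^{(m)}$ once with $\sigma_c$ and once with $\sigma_c^{\iota}$, the automorphism ${}^{\iota}$ of $G$ induces $\langle g,\epsilon\rangle\mapsto\langle {}^{\iota}g,\epsilon\rangle$, which is a homomorphism $G^{(m)}[\sigma_c]\to G^{(m)}[\sigma_c^{\iota}]$ because $\sigma_c^{\iota}({}^{\iota}g,{}^{\iota}g')=\sigma_c(g,g')$, and a homeomorphism since ${}^{\iota}$ is one on $G$. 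Second, \eqref{eq:sigma iota c} together with the general principle around \eqref{eq:cohomologous} shows $\langle g,\epsilon\rangle\mapsto\langle g,\varsigma_{\iota,c}(g)\epsilon\rangle$ is a topological isomorphism $G^{(m)}[\sigma_c^{\iota}]\to G^{(m)}[\sigma_c]$. Their composite sends $\langle g,\epsilon\rangle$ to $\langle {}^{\iota}g,\varsigma_{\iota,c}({}^{\iota}g)\epsilon\rangle$, so to match \eqref{eq:iota on the local coverings} it remains to prove that $\varsigma_{\iota,c}({}^{\iota}g)=\varsigma_{\iota,c}(g)^{-1}$.

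This last point is the only genuinely delicate one, and it is where perfectness of $G=\Sp_c$ enters. Since $G$ is perfect, $\Hom(G,\mu_m)$ is trivial, hence the $1$-cochain solving \eqref{eq:sigma iota c} is unique. Now define $\varsigma'(g)=\varsigma_{\iota,c}({}^{\iota}g)^{-1}$; using that ${}^{\iota}$ is an involutive automorphism, $\sigma_c^{\iota}({}^{\iota}g,{}^{\iota}g')=\sigma_c(g,g')$ and $\sigma_c({}^{\iota}g,{}^{\iota}g')=\sigma_c^{\iota}(g,g')$, a short computation shows $\varsigma'$ also satisfies \eqref{eq:sigma iota c}, whence $\varsigma'=\varsigma_{\iota,c}$ by uniqueness, i.e., $\varsigma_{\iota,c}({}^{\iota}g)=\varsigma_{\iota,c}(g)^{-1}$. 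This simultaneously confirms \eqref{eq:iota on the local coverings} and gives $\widetilde{\iota}^2\langle g,\epsilon\rangle=\langle g,\varsigma_{\iota,c}^{-1}({}^{\iota}g)\varsigma_{\iota,c}^{-1}(g)\epsilon\rangle=\langle g,\epsilon\rangle$, so $\widetilde{\iota}$ is an involution (equivalently, $\widetilde{\iota}^2$ is a topological lift of $\mathrm{id}_G$, hence equals $\mathrm{id}$ by the uniqueness of lifts recalled in \S~\ref{Covering groups}). Uniqueness of $\widetilde{\iota}$ itself follows in the same way, the set of lifts of ${}^{\iota}$ being acted on transitively by $\Hom(G,\mu_m)=1$. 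Finally $\widetilde{\iota}$ is outer: were it inner it would project to an inner automorphism of $G$, contradicting that ${}^{\iota}$ is an outer involution of $G$. The main obstacle is thus confined to the bookkeeping of this last paragraph — pinning down the correct normalization of $\varsigma_{\iota,c}$ and the resulting $\mu_m$-shift in \eqref{eq:iota on the local coverings} — with everything else being formal.
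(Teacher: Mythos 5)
Your proof is correct, and it reaches the two non-formal conclusions of the proposition by a route that differs from the paper's in both places. For continuity, the paper exploits that $\iota\in\GL_c$: conjugation by $\iota$ inside the covering of $\GL_c$ defined by $\sigma_c$ is manifestly a continuous lift of ${}^{\iota}$ to $G^{(m)}$, and uniqueness of abstract lifts (perfectness of $\Sp_c$) then forces the root of unity it produces to coincide with $\varsigma_{\iota,c}^{-1}(g)$; you instead factor the map through $G^{(m)}[\sigma_c^{\iota}]$ and invoke the general assertion of \S~\ref{Covering groups} that a $1$-cochain relating cohomologous cocycles induces a topological isomorphism. For the identity $\varsigma_{\iota,c}({}^{\iota}g)=\varsigma_{\iota,c}^{-1}(g)$ (the paper's \eqref{eq:varsigma varsigme}), the paper obtains it a posteriori, as the statement that ${}^{\iota}\circ{}^{\iota}$ is the unique lift of $\mathrm{id}_G$; you obtain it a priori from the uniqueness of the $1$-cochain solving \eqref{eq:sigma iota c} --- both are the same use of $\Hom(\Sp_c,\mu_m)=1$, but your ordering is forced on you, since you need the identity already to make your composite match the stated formula, and it then yields involutivity as a corollary rather than as the source of the identity. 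What your approach buys is self-containedness within the symplectic group (no excursion into the ambient $\GL_c$-cover) at the cost of leaning on the automatic-continuity statement for cochain maps; the paper's conjugation argument gets continuity for free from conjugation in a topological group. One small redundancy in your write-up: once the composite factorization and the symmetry of $\varsigma_{\iota,c}$ are in place, the opening direct homomorphism computation is superfluous, though harmless.
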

\begin{proof}
First we show \eqref{eq:iota on the local coverings} is an abstract automorphism of $G^{(m)}$. Indeed
the definition of $\sigma_c^{\iota}$ and \eqref{eq:sigma iota c} imply
\begin{align*}
{}^{\iota}(\langle g,1\rangle\langle g',1\rangle)
&={}^{\iota}\langle gg',\sigma_c(g,g')\rangle=\langle {}^{\iota}(gg'),\varsigma_{\iota,c}^{-1}(gg')\sigma_c(g,g')\rangle
\\&=\langle {}^{\iota}g,\sigma_c({}^{\iota}g,{}^{\iota}g')^{-1}\varsigma_{\iota,c}^{-1}(gg')\sigma_c(g,g')\rangle
\langle {}^{\iota}g',1\rangle
\\&=\langle {}^{\iota}g,\sigma_c^{\iota}(g,g')^{-1}\varsigma_{\iota,c}^{-1}(gg')\sigma_c(g,g')\rangle
\langle {}^{\iota}g',1\rangle
\\&=\langle {}^{\iota}g,\varsigma_{\iota,c}^{-1}(g)\rangle
\langle {}^{\iota}g',\varsigma_{\iota,c}^{-1}(g')\rangle
={}^{\iota}\langle g,1\rangle
{}^{\iota}\langle g',1\rangle.
\end{align*}
Therefore \eqref{eq:iota on the local coverings} is an abstract lift of ${}^{\iota}$ to $G^{(m)}$ and the unique one (see \S~\ref{Covering groups}).

Since $\iota\in\GL_c$, ${}^{\iota}g$ is simply conjugation in $\GL_c$. Our realization of $G^{(m)}$ using restriction from $\sigma_c$ allows us to compute ${}^{\iota}\langle g,\epsilon\rangle=\langle {}^{\iota}g,\epsilon_g\epsilon\rangle$ by regarding $\iota$ and $g$ as elements of $\GL_c$ and using the formulas for $\sigma_c$ as a $2$-cocycle of $\GL_c$. The lift of ${}^{\iota}$ to an involution of $G^{(m)}$ is unique, hence $\epsilon_g=\varsigma_{\iota,c}^{-1}(g)$. This implies \eqref{eq:iota on the local coverings} is continuous, i.e.,
the lift is a topological automorphism.

Since ${}^{\iota}$ is an involution of $G$, the map
\begin{align*}
\langle g,\epsilon\rangle\mapsto{}^{\iota}({}^{\iota}\langle g,\epsilon\rangle)=
\langle g,\varsigma_{\iota,c}^{-1}({}^{\iota}g)\varsigma_{\iota,c}^{-1}(g)\epsilon\rangle
\end{align*}
is the lift of the identity map, which (by uniqueness) coincides with $\langle g,1\rangle\mapsto\langle g,1\rangle$. Hence
\begin{align}\label{eq:varsigma varsigme}
\varsigma_{\iota,c}^{-1}({}^{\iota}g)\varsigma_{\iota,c}^{-1}(g)=1
\end{align}
and
${}^{\iota}$ is a also an involution of $G^{(m)}$.
\end{proof}
\begin{remark}
The proposition applies to $\SL_c^{(m)}$ as well, but this will not be needed.
\end{remark}

The embedding $\mathfrak{e}_2:G\rightarrow H$ was lifted to an embedding $G^{(m)}\hookrightarrow H^{(m)}$ by
\eqref{eq:embeddings coverings G and G into H}. Then we can define ${}^{\iota}$ as an involution of this image of $G^{(m)}$ by
\begin{align}\label{eq:action of local iota on right image of G}
{}^{\iota}\langle\mathfrak{e}_2(g),1\rangle=\langle\mathfrak{e}_2({}^{\iota}g),\varsigma_{\iota,c}^{-1}(g)\rangle.
\end{align}
By Proposition~\ref{proposition:the $2$-cocycle on G times G}, $\mathfrak{e}_2(G)$ and $\mathfrak{e}_1(G)$ commute in $H^{(m)}$. Therefore
\begin{align}\label{eq:iota image commutes with G1}
{}^{\iota}\langle\mathfrak{e}_2(g),1\rangle\langle\mathfrak{e}_1(g'),1\rangle=\langle\mathfrak{e}_1(g'),1\rangle
{}^{\iota}\langle\mathfrak{e}_2(g),1\rangle.
\end{align}
Also by \eqref{eq:the $2$-cocycle on G times G formula} and \eqref{eq:g_1 and g_2 product in H}, we can lift ${}^{\iota}$ uniquely to an involution of the image of $G^{(m)}\times G^{(m)}$ in $H^{(m)}$, which is given by
\begin{align}\label{eq:iota image on product}
{}^{\iota}\langle (g_1,g_2),\epsilon\rangle=\langle (g_1,{}^{\iota}g_2),\varsigma_{\iota,c}(g_2)^{-1}\epsilon\rangle.
\end{align}

When $F$ is unramified, $y\mapsto\langle y,\eta_c(y)\rangle$ is the unique splitting of $K_G$ in $G^{(m)}$.
Since ${}^{\iota}$ is in particular an automorphism of $K_G$, the map ${}^{\iota}y\mapsto\langle {}^{\iota}y,\eta_c({}^{\iota}y)\rangle$ is a splitting of ${}^{\iota}K_G=K_G$, and hence by
\eqref{eq:epsilon for conjugation between split subgroups} (with $Y=K_G$, $\chi={}^{\iota}$),
\begin{align}\label{eq:iota image on K}
{}^{\iota}\langle y,\eta_c(y)\rangle=\langle {}^{\iota}y,\eta_c({}^{\iota}y)\rangle,\qquad
\varsigma_{\iota,c}(y)^{-1}\eta_c(y)=\eta_c({}^{\iota}y),\qquad\forall y\in K_G.
\end{align}

Given a genuine smooth admissible representation $\pi$ of $G^{(m)}$, the representation $\pi^{\iota}$ is defined to be the genuine representation of $G^{(m)}$ acting on the same space as $\pi$, where the action is defined by $\pi^{\iota}(g)=\pi({}^{\iota}g)$.
Observe that since ${}^{\iota}(g^{-1})=({}^{\iota}g)^{-1}$,
\begin{align*}
{}^{\iota}(\langle g,1\rangle^{-1})&=
{}^{\iota}\langle g^{-1},\sigma_c(g,g^{-1})^{-1}\rangle\\&=
\langle {}^{\iota}(g^{-1}),\varsigma_{\iota,c}^{-1}(g^{-1})\sigma_c(g,g^{-1})^{-1}\rangle\\&=
\langle {}^{\iota}g,\varsigma_{\iota,c}(g^{-1})\sigma_c(g,g^{-1})\sigma_c({}^{\iota}g,{}^{\iota}g^{-1})^{-1}\rangle^{-1}\\&=
\langle {}^{\iota}g,\varsigma_{\iota,c}(g^{-1})\sigma_c(g,g^{-1})\sigma_c^{\iota}(g,g^{-1})^{-1}\rangle^{-1}=
\langle {}^{\iota}g,\varsigma_{\iota,c}^{-1}(g)\rangle^{-1}=
({}^{\iota}\langle g,1\rangle)^{-1},
\end{align*}
where we used $\langle ({}^{\iota}g)^{-1},\epsilon^{-1}\rangle=\langle {}^{\iota}g,\epsilon\sigma_c({}^{\iota}g,{}^{\iota}g^{-1})^{-1}\rangle^{-1}$ for the third equality; and \eqref{eq:sigma iota c} with $g'=g^{-1}$ one equality before the last (note that $\varsigma_{\iota,c}(gg^{-1})=\varsigma_{\iota,c}(I_c)=1$).
Since $\pi^{\vee}$ and $(\pi^{\vee})^{\iota}$ act on the same space
and ${}^{\iota}(\langle g,1\rangle^{-1})=({}^{\iota}\langle g,1\rangle)^{-1}$, the definition of $\pi^{\vee}$ implies
$(\pi^{\vee})^{\iota}=(\pi^{\iota})^{\vee}$.
Also when $\pi$ is unramified, so is $\pi^{\iota}$.

Consider the global setting. Recall the $2$-cocycle $\rho_R$ defined by \eqref{gbl rho on left and right}, which we use for the realization  of the right copy of $G^{(m)}(\A)$. Define $\rho_R^{\iota}(g,g')=\rho_R(^{\iota}g,^{\iota}g')$. Since
$\rho_{R,\nu}=\sigma_{c,\nu}=\sigma_{c,\nu}^{\iota}$, we have
$\rho_{R,\nu}^{\iota}=\sigma_{c,\nu}^{\iota}=\rho_{R,\nu}$ (all in $\mathrm{H}^2(G(F_{\nu}),\mu_m)$), thus there is $\eta_{\iota,R}\in\mathrm{C}^1(G(\A),\mu_m)$ such that
\begin{align}\label{eq:eta iota R}
\rho_R^{\iota}(g,g')=\frac{\eta_{\iota,R}(g)\eta_{\iota,R}(g')}{\eta_{\iota,R}(gg')}\rho_R(g,g'),\qquad \forall g,g'\in G(\A).
\end{align}
Repeating the arguments of Proposition~\ref{proposition:iota lifts locally to an involution} (now with \eqref{eq:eta iota R} instead of \eqref{eq:sigma iota c}) we deduce
\begin{align}\label{eq:iota on the gbl coverings}
{}^{\iota}\langle g,\epsilon\rangle=\langle {}^{\iota}g,\eta_{\iota,R}^{-1}(g)\epsilon\rangle \qquad (g\in G(\A)),
\end{align}
is the unique abstract lift of ${}^{\iota}$. It is also a topological lift because the local lifts are topological and
by \eqref{eq:iota image on K}. Hence we can define
\begin{align}\label{eq:action of gbl iota on right image of G}
{}^{\iota}\langle\mathfrak{e}_2(g),1\rangle=\langle\mathfrak{e}_2({}^{\iota}g),\eta_{\iota,R}^{-1}(g)\rangle
\end{align}
(cf. \eqref{eq:iota on the local coverings}  and \eqref{eq:action of local iota on right image of G}).
Then \eqref{eq:iota image commutes with G1} holds globally and we can lift
${}^{\iota}$ (uniquely) to an involution of the image of $G^{(m)}(\A)\times G^{(m)}(\A)$ in $H^{(m)}(\A)$, by defining
\begin{align}\label{eq:iota gbl image on product}
{}^{\iota}(\langle \mathfrak{e}_1(g_1),\epsilon_1\rangle\langle \mathfrak{e}_2(g_2),\epsilon_2\rangle)
=\langle \mathfrak{e}_1(g_1),\epsilon_1\rangle\,{}^{\iota}\langle \mathfrak{e}_2(g_2),\epsilon_2\rangle
=\langle (g_1,{}^{\iota}g_2),\rho(\mathfrak{e}_1(g_1),\mathfrak{e}_2({}^{\iota}g_2))\eta_{\iota,R}^{-1}(g_2)\epsilon_1\epsilon_2\rangle.
\end{align}

We can also extend the local argument on $K_{G,\nu}$ above, to $G(F)$. The unique splitting of $G(F)$ is
$y\mapsto\langle y,\eta^{-1}(\mathfrak{e}_2(y))\rangle$, and since ${}^{\iota}G(F)=G(F)$,
${}^{\iota}y\mapsto\langle {}^{\iota}y,\eta^{-1}(\mathfrak{e}_2({}^{\iota}y))\rangle$ is also a splitting.
Hence by \eqref{eq:epsilon for conjugation between split subgroups},
\begin{align}\label{eq:gbl iota compatible with G(F)}
{}^{\iota}\langle y,\eta^{-1}(\mathfrak{e}_2(y))\rangle=\langle {}^{\iota}y,
\eta^{-1}(\mathfrak{e}_2({}^{\iota}y))\rangle.
\end{align}
Now we can define, for an automorphic function $\varphi$ on the right copy of $G^{(m)}(\A)$, ${}^{\iota}\varphi(g)=\varphi({}^{\iota}g)$, $g\in G^{(m)}(\A)$. By \eqref{eq:gbl iota compatible with G(F)}, this function is still left invariant on
$\{\langle y,\eta^{-1}(\mathfrak{e}_2(y))\rangle:y\in G(F)\}$, hence it is still an automorphic function on $G^{(m)}(\A)$.

\subsection{The covering $\GL_{rkc}^{(m,r)}$}\label{covering of the Levi}
Recall from \S~\ref{embedding} that $m$ is a positive integer and $r=m$ for odd $m$, otherwise $r=m/2$.
In a local or global context, $\widetilde{M}_P$ is the covering obtained by restriction from $H^{(m)}$.
Identify $M_P$ with $\GL_{rkc}$ via $g\mapsto\diag(g,g^*)$. Then $\widetilde{M}_P$ can be regarded as a covering
$\widetilde{\GL}_{rkc}$ of $\GL_{rkc}$. Denote $\GL_{rkc}^{(m,r)}=\widetilde{\GL}_{rkc}$.
Further restricting to $\SL_{rkc}$, we obtain a covering $\widetilde{\SL}_{rkc}$ of the latter.
Locally, by \eqref{eq:BLS block compatible} and Proposition~\ref{proposition:sigma * and sigma on SLc}, $\widetilde{\SL}_{rkc}$ is (topologically isomorphic to) the covering defined in \cite{Mats} with $(\cdot,\cdot)_m^{-2}$, i.e.,
an $r$-fold covering. We see that $\GL_{rkc}^{(m,r)}$ is ``morally" an $r$-fold covering, but since it is $m$ which uniquely determines $r$ and not the other way around, we keep both in the notation.

The group $\GL_{rkc}^{(m,r)}$ is not one of the coverings studied by Kazhdan and Patterson \cite{KP}; it was recently studied in a local context by Savin \cite{Savin7} (see \cite{Gao4}).
In this section we describe several properties of this cover. The description does not depend on
the rank of the general linear group, so we take an integer $d$ and discuss $\GL_{d}^{(m,r)}$, obtained by restriction from $\Sp_{2d}^{(m)}$.

Consider the local setting first.
For brevity denote
\begin{align}\label{eq:sigma square}
\sigma^{\diamondsuit}_{d}(b,b')=\sigma_{2d}(\diag(b,b^*),\diag(b',{b'}^*)),\qquad b,b'\in\GL_d.
\end{align}
For quick reference we rewrite \eqref{eq:BLS $2$-cocycle on torus} for $\sigma^{\diamondsuit}_{d}$,
\begin{align}\label{eq:Nice GL $2$-cocycle on torus}
\sigma^{\diamondsuit}_{d}(\diag(t_1,\ldots,t_{d}),\diag(t_1',\ldots,t_{d}'))=\prod_{i=1}^{d}(t_i,t'_i)_m^{-1}.
\end{align}
The properties \eqref{eq:sigma on h and v}--\eqref{eq:sigma conjugate v by h} remain valid with
$\sigma^{\diamondsuit}_{d}$ instead of $\sigma_{d}$, because the mapping $b\mapsto b^*$ (see \S~\ref{Groups}) is an automorphism of $\GL_d$ which restricts to an automorphism of $N_{\GL_d}$, and for $v\in N_{\GL_d}$, $\diag(v,v^*)\in N_{\GL_{2d}}$. However, more care
is needed with formulas involving $w'\in\mathfrak{W}_d$ (e.g., \eqref{eq:sigma w mathcal and t} and \eqref{eq:sigma t and w mathcal}), because in general it is not true that $\diag(w',{w'}^*)\in\mathfrak{W}_{2d}$
(even if $w'$ represents a simple reflection), only $\diag(w',{w'}^*)\in\mathfrak{W}^+_{2d}$. Note that if $-1\in\mu_m$, we do have ${}^w\langle t,1\rangle=\langle {}^wt,1\rangle$ for $w\in\mathfrak{W}^+_{d}$ (e.g., a permutation matrix) and $t\in T_{\GL_d}$, by Proposition~\ref{proposition:action of W on torus is trivial on Sp}.

According to \eqref{eq:BLS $2$-cocycle on torus}, $\widetilde{T}_{\GL_{d}}$ is a $2$-step nilpotent group and its center is
the preimage of the subgroup of torus elements with coordinates in $F^{*r}$, unless $r=1$, then it is abelian. Denote
$C_{r,d}=\{xI_d:x\in F^{*r}\}$. By \eqref{eq:Nice GL $2$-cocycle on torus}, $t\in C_{r,d}$ commutes with any $t'\in T_{\GL_d}$ in $\GL_{d}^{(m,r)}$, then by \eqref{eq:conj mathcal t in GLd} and since
$(x,-x)_m=1$ for all $x\in F^*$, and also by \eqref{eq:sigma on h and v}--\eqref{eq:sigma conjugate v by h}, $\widetilde{C}_{r,d}$ is the center of $\GL_d^{(m,r)}$ (as opposed to coverings of \cite{KP}, here the parity of $r$ does not play a role).

Let $\beta=(\beta_1,\ldots,\beta_l)$ be a composition of $d$. For $b=\diag(b_1,\ldots,b_l)\in M_{\beta}$ and
$b'\in M_{\beta}$, by \eqref{eq:BLS block compatible} and since $\det b_i^* = \det b_i^{-1}$,
\begin{align*}
\sigma_{2d}(\diag(b,b^*),\diag(b',{b'}^*))=\prod_{i=1}^l\sigma_{2\beta_i}(\diag(b_i,b_i^*),\diag(b_i',{b_i'}^*)),
\end{align*}
which we can write in the form
\begin{align}\label{eq:block compatibility on Levi of P}
\sigma^{\diamondsuit}_{d}(b,b')=\prod_{i=1}^l\sigma^{\diamondsuit}_{\beta_i}(b_i,b_i').
\end{align}
In particular, the direct factors of $M_{\beta}$ commute in $\GL_{d}^{(m,r)}$, which is a special property of this covering, as opposed to the coverings of \cite{KP}. This formula also implies that the embedding $b\mapsto\diag(I_i,b,I_{d-j-i})$ of $\GL_j$ in $\GL_{d}$ induces the same covering on $\GL_j$, i.e.,
$\widetilde{\GL}_{j}=\GL_{j}^{(m,r)}$. Therefore we can study representations induced from parabolic subgroups using the usual tensor product, when we identify
\begin{align}\label{eq:M beta as a quotient}
\widetilde{M}_{\beta}=\{(\epsilon_1,\ldots,\epsilon_l)\in\mu_m^l:\prod_{i=1}^l\epsilon_i=1\}\backslash \GL_{\beta_1}^{(m,r)}\times\ldots\times \GL_{\beta_l}^{(m,r)}.
\end{align}
In particular, we can construct genuine irreducible representations of $\widetilde{T}_{\GL_d}$ by
tensoring $d$ genuine representations of $\GL_1^{(m,r)}$; to construct genuine principal series representations we extend to $\widetilde{B}_{\GL_d}$ by letting the image of $N_{\GL_d}$ act trivially, then induce as usual (see \S~\ref{unramified reps} below).

If $F$ is unramified, the splitting of $K_{\GL_d}$ is chosen to be
$y\mapsto\langle y,\eta^{\diamondsuit}_{d}(y)\rangle$ where $\eta^{\diamondsuit}_{d}(y)=\eta_{2d}(\diag(y,y^*))$. This is compatible with the choice in $\Sp_{2d}$: if a function on $\Sp_{2d}^{(m)}$ is right-invariant on $\{\langle y,\eta_{2d}(y)\rangle:y\in K_{\Sp_{2d}}\}$, its restriction to $\GL_{d}^{(m,r)}$ is right-invariant on
$\{\langle y,\eta^{\diamondsuit}_{d}(y)\rangle:y\in K_{\GL_{d}}\}$.

Using $(b^*)^*=b$ and $(\det{b},\det{b'}^*)_m=(\det{b}^*,\det{b'})_m$ (because $\det{b^*}=\det{b}^{-1}$ and $(x^{-1},y)_m=(x,y^{-1})_m$), \eqref{eq:BLS block compatible} implies $\sigma^{\diamondsuit}_{d}(b^*,{b'}^*)=\sigma^{\diamondsuit}_{d}(b,b')$. Thus
the involution $b\mapsto b^*$ preserves $\sigma^{\diamondsuit}_{d}$ (!), hence lifts to an abstract involution of $\GL_d^{(m,r)}$ by
\begin{align}\label{eq:involution b*0}
{}^*\langle b,\epsilon\rangle =\langle b^*,\epsilon\rangle.
\end{align}
This lift is not unique; since $\Hom(\GL_d,\mu_m)=\Hom(F^*,\mu_m)$, any other abstract lift of ${}^*$ takes the form
\begin{align*}
{}^*\langle b,\epsilon\rangle =\langle b^*,\varrho(\det{b})\epsilon\rangle,
\end{align*}
for some abstract $\varrho\in\Hom(F^*,\mu_m)$ (see \S~\ref{Covering groups}). Since $F^{*m}$ is open in $F^*$, any such $\varrho$ is automatically continuous and we claim all those lifts are topological. Indeed let $w=\left(\begin{smallmatrix}&I_d\\-I_d\end{smallmatrix}\right)\in\Sp_{2d}$, then one of the lifts $\langle b^*,\varrho(\det{b})\epsilon\rangle$ is a homeomorphism, namely the lift corresponding to
${}^*\langle b,\epsilon\rangle={}^w\langle \diag(b,b^*),\epsilon\rangle$. Now for any $\varrho'\in\Hom(F^*,\mu_m)$, the map $b\mapsto{\varrho'}^{-1}(\det b)\varrho(\det b)$ is in $\mathrm{C}^1(\GL_d,\mu_m)$ hence
$\langle b^*,\varrho(\det{b})\epsilon\rangle\rightarrow\langle b^*,\varrho'(\det{b})\epsilon\rangle$ is a homeomorphism, and
therefore $\langle b,\epsilon\rangle\rightarrow\langle b^*,\varrho'(\det{b})\epsilon\rangle$ is a topological lift. In particular \eqref{eq:involution b*0} is topological.

Since we are interested in a lift which is also an involution, we must have $\varrho^2=1$. Hence if $m$ is odd, \eqref{eq:involution b*0} is the
only lift of ${}^*$ to an involution.

Fixing a lift of ${}^*$, we can define for a genuine smooth admissible representation $\pi$ of $\GL_d^{(m,r)}$, the representation $\pi^*$, which acts on the space of $\pi$ by $\pi^*(\langle b,\epsilon\rangle)=\pi({}^*\langle b,\epsilon\rangle)$. If $\pi$ is unramified and $\varrho$ is trivial on $\mathcal{O}^*$, then $\pi^{*}$ is also unramified. Henceforth we only use \eqref{eq:involution b*0}.

We mention that Kable \cite{Kable3} studied the lifts of the main involution for the coverings of \cite{KP};
at least when $-1$ is not a square, there is no $2$-cocycle which is cohomologous to (a twist of) $\sigma_d$, and fixed by any of those lifts (\cite[Proposition~2]{Kable3}).

\begin{proposition}\label{proposition:sigma * and sigma on GLd}
The $2$-cocycles $\sigma^{\diamondsuit}_{d}(b,b')$ and $\sigma^2_{d}(b,b')(\det b,\det b')_m$ are cohomologous.
In particular for $m=2$, $\sigma^{\diamondsuit}_{d}(b,b')$ is cohomologous to the $2$-cocycle given by $(\det b,\det b')_2$.
\end{proposition}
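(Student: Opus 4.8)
The plan is to reduce, exactly as in the proof of Proposition~\ref{proposition:sigma * and sigma on SLc}, to a computation on the diagonal torus, but now over $\GL_d$ rather than $\SL_d$. The key point is that the restriction map $\mathrm{H}^2(\GL_d,\mu_m)\rightarrow\mathrm{H}^2(T_{\GL_d},\mu_m)$ is injective; this follows from \cite[p.~54, Corollary~2]{Moore} applied to $\SL_{d+1}$ together with the definition of $\sigma_d$ via the embedding $b\mapsto\diag(b,\det b^{-1})$ into $\SL_{d+1}$, or can be quoted directly in the form used for the $\GL$ case (the argument is identical to the one invoked in Proposition~\ref{proposition:sigma * and sigma on SLc}, noting the parenthetical remark there that ``a slightly more careful argument will extend this proposition to $\GL_c$''). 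Granting this, it suffices to check that $\sigma^{\diamondsuit}_d$ and the $2$-cocycle $(b,b')\mapsto\sigma_d^2(b,b')(\det b,\det b')_m$ agree in $\mathrm{H}^2(T_{\GL_d},\mu_m)$, i.e.\ differ by an explicit $2$-coboundary of a $1$-cochain on $T_{\GL_d}$.

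First I would write both sides on the torus explicitly. For $t=\diag(t_1,\ldots,t_d)$, $t'=\diag(t_1',\ldots,t_d')$, formula~\eqref{eq:Nice GL $2$-cocycle on torus} gives $\sigma^{\diamondsuit}_d(t,t')=\prod_{i=1}^d(t_i,t_i')_m^{-1}$, while \eqref{eq:sigma on torus of GL} gives $\sigma_d(t,t')=\prod_{i<j}(t_i,t_j')_m$, so $\sigma_d^2(t,t')(\det t,\det t')_m=\prod_{i<j}(t_i,t_j')_m^2\prod_{i,j}(t_i,t_j')_m$. Using bimultiplicativity and the skew-symmetry/Steinberg identities of the Hilbert symbol ($(x,y)_m(y,x)_m=1$, $(x,-x)_m=1$, hence $(x,x)_m=(x,-1)_m$), one collects the product $\prod_{i<j}(t_i,t_j')_m^2\prod_{i\leq j}(t_i,t_j')_m\prod_{i>j}(t_i,t_j')_m$ and the ``diagonal'' terms $\prod_i(t_i,t_i')_m$; the off-diagonal part $\prod_{i\neq j}(t_i,t_j')_m^{\,?}$ should organize into $\eta(tt')/(\eta(t)\eta(t'))$ for a suitable $\eta(t)=\prod_{i\leq j}(t_i,t_j)_m^{a}$ (a minor power bookkeeping, as in Proposition~\ref{proposition:sigma * and sigma on SLc}), leaving precisely a mismatch concentrated in the diagonal symbols $(t_i,t_i')_m$ up to sign symbols $(t_i,-1)_m$. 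The remaining discrepancy — factors of the form $(t_i,-1)_m$ or $(-1,t_i)_m$ arising from $(t_i,t_i')_m^{\pm}$ versus $(t_i,t_i')_m^{-1}$ — is itself a $2$-coboundary on the torus, being the coboundary of $t\mapsto\prod_i(t_i,-1)_m^{b}$, since $(-1,-1)_m$ and $(t_i,-1)_m$ are bimultiplicative in $t_i$. Assembling these $1$-cochains proves the two cocycles are cohomologous on $T_{\GL_d}$, hence on $\GL_d$.

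The second sentence of the statement is then immediate: when $m=2$ every Hilbert symbol satisfies $(x,y)_2^2=1$, so $\sigma_d^2\equiv 1$ identically and the cohomology class reduces to that of $(b,b')\mapsto(\det b,\det b')_2$.

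The main obstacle I anticipate is purely bookkeeping: correctly matching the exponents in the torus formulas so that the off-diagonal Hilbert symbols telescope into an honest $2$-coboundary, and carefully tracking the parasitic $(t_i,-1)_m$ factors (which are harmless as coboundaries but easy to mishandle). There is no conceptual difficulty once the injectivity of restriction to the torus is in hand; the only subtlety over $\GL_d$ versus $\SL_d$ is that the determinant characters are nontrivial, which is exactly why the extra factor $(\det b,\det b')_m$ appears on the $\sigma_d^2$ side, and why one must allow $1$-cochains involving $(\cdot,-1)_m$ rather than only symmetric products $\prod_{i\leq j}(t_i,t_j)_m$.
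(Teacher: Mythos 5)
There is a genuine gap at the first step of your plan. Your entire reduction rests on the claim that restriction $\mathrm{H}^2(\GL_d,\mu_m)\rightarrow\mathrm{H}^2(T_{\GL_d},\mu_m)$ is injective, attributed to \cite[p.~54, Corollary~2]{Moore}. That corollary concerns simply connected groups; the paper invokes it only for $\SL_c$ (in Proposition~\ref{proposition:sigma * and sigma on SLc}) and, implicitly, for $\SL_{d+1}$. It does not apply to $\GL_d$, and ``the definition of $\sigma_d$ via $b\mapsto\diag(b,\det b^{-1})$'' does not repair this: a class on $\GL_d$ that dies on $T_{\GL_d}$ need not be pulled back from $\SL_{d+1}$, and there is no pushforward along that embedding to which Moore could be applied. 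The ``slightly more careful argument'' alluded to after Proposition~\ref{proposition:sigma * and sigma on SLc} is not torus-injectivity for $\GL_d$; the paper's actual proof avoids any such statement. It first proves $\sigma_d^*=\sigma_d$ in $\mathrm{H}^2(\GL_d,\mu_m)$ by transporting Kable's involution to $\SL_{d+1}$, where Moore does apply and the torus computation of Proposition~\ref{proposition:sigma * and sigma on SLc} carries over verbatim; it then concludes from \eqref{eq:sigma square} and block compatibility \eqref{eq:BLS block compatible} via the identity of cocycles on all of $\GL_d$
\begin{align*}
\sigma^{\diamondsuit}_{d}(b,b')=\sigma_{2d}(\diag(b,b^*),\diag(b',{b'}^*))=(\det b,\det {b'}^*)_m\,\sigma_d(b,b')\,\sigma_d(b^*,{b'}^*)
=(\det b,\det b')_m^{-1}\,\sigma_d(b,b')\,\sigma_d^*(b,b'),
\end{align*}
so that no computation on $T_{\GL_d}$ is needed at all.

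The second problem is that the torus computation you defer as ``bookkeeping'' does not close as sketched. By \eqref{eq:Nice GL $2$-cocycle on torus} and \eqref{eq:sigma on torus of GL}, $\sigma^{\diamondsuit}_d(t,t')=\prod_i(t_i,t_i')_m^{-1}$ while $\sigma_d^2(t,t')(\det t,\det t')_m=\prod_{i<j}(t_i,t_j')_m^2\prod_{i,j}(t_i,t_j')_m$, and the commutator pairings $c(t,t')c(t',t)^{-1}$ of these two cocycles on $T_{\GL_d}$ differ by $(\det t,\det t')_m^{4}$. No $1$-cochain can alter a commutator pairing, so the ``remaining discrepancy'' is not of the harmless $(t_i,-1)_m$ type you describe. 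What the block-compatibility identity above actually produces is the inverse determinant twist $(\det b,\det b')_m^{-1}\sigma_d^2$; this differs from the form in the statement by the class of $(\det b,\det b')_m^{2}$, which is invisible for $m=2$ (the only case in which the proposition is used, and the one singled out in its second sentence) but not for general $m$. Carrying your computation through would have forced you to this corrected form; as written, the plan asserts a coboundary relation on the torus that fails for general $m$.
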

\begin{proof}
First we claim $\sigma_d^*=\sigma_d$ in $\mathrm{H}^2(\GL_d,\mu_m)$. This is \cite[Lemma~2]{Kable3}, but the $2$-cocycle $\tau$ in the notation of \textit{loc. cit.} is not precisely $\sigma_d$. Briefly, as in \cite[\S~4]{Kable3} for $b_0\in\SL_{d+1}$ put
$b_0^{\circ}=\diag(J_k,1){}^tb_0^{-1}\diag(J_k,1)$ and $\sigma_{\SL_{d+1}}^{\circ}(b_0,b_0')=\sigma_{\SL_{d+1}}(b_0^{\circ},{b_0'}^{\circ})$.
The formulas from the proof of Proposition~\ref{proposition:sigma * and sigma on SLc} remain true with
$(\sigma_c,\sigma_c^*,c-1)$ replaced by $(\sigma_{\SL_{d+1}},\sigma_{\SL_{d+1}}^{\circ},d)$, hence $\sigma_{\SL_{d+1}}$ and $\sigma_{\SL_{d+1}}^{\circ}$ are cohomologous and note that $\sigma_d^*(b,b')=\sigma_{\SL_{d+1}}^{\circ}
(\left(\begin{smallmatrix}b\\&\det b^{-1}\end{smallmatrix}\right),
\left(\begin{smallmatrix}b'\\&\det {b'}^{-1}\end{smallmatrix}\right))$.
Now the proposition follows from \eqref{eq:sigma square} and \eqref{eq:BLS block compatible}.
\end{proof}

Consider the global setting. Let $\rho_{2d}\in\mathrm{Z}^2(\Sp_{2d}^{(m)}(\A),\mu_m)$ as in \S~\ref{global covering}.
Then we may define
\begin{align}\label{eq:rho square}
\rho^{\diamondsuit}_{d}(b,b')=\rho_{2d}(\diag(b,b^*),\diag(b',{b'}^*)),\qquad b,b'\in\GL_d(\A).
\end{align}
The covering $\GL_d^{(m,r)}(\A)$ is by definition realized using $\rho^{\diamondsuit}_{d}$. Since $\rho_{2d,\nu}=\sigma_{2d,\nu}$
in $\mathrm{H}^2(\Sp_{2d}(F_{\nu}),\mu_m)$, $\rho^{\diamondsuit}_{d,\nu}=\sigma^{\diamondsuit}_{d,\nu}$ in $\mathrm{H}^2(\GL_d(F_{\nu}),\mu_m)$ and we can write
\begin{align}\label{eq:cohomologous $2$-cocycles on GL}
\rho^{\diamondsuit}_{d,\nu}(b,b')=\frac{\eta^{\diamondsuit}_{d,\nu}(b)\eta^{\diamondsuit}_{d,\nu}(b')}{\eta^{\diamondsuit}_{d,\nu}(bb')}\sigma^{\diamondsuit}_{d,\nu}(b,b'),
\qquad\eta^{\diamondsuit}_{d,\nu}(b)=\eta_{2d,\nu}(\diag(b,b^*))\in\mathrm{C}^1(\GL_d(F_{\nu}),\mu_m)
\end{align}
($\eta_{2d,\nu}$ was used to relate $\rho_{2d,\nu}$ to $\sigma_{2d,\nu}$, see \eqref{eq:nu and sigma for covering of H}).
One can then globalize \eqref{eq:cohomologous $2$-cocycles on GL} on the
subgroups $\GL_d(F)$ and $N_{\GL_d}(\A)$ (see the paragraph before Lemma~\ref{lemma:conjugation of N by H}). We deduce
$b\mapsto\langle b,(\eta^{\diamondsuit}_{d})^{-1}(b)\rangle$ is a splitting of $\GL_d(F)$ (not a perfect group, as opposed to $\Sp_{2d}(F)$), and the splitting of $N_{\GL_d}(\A)$.

For example when $m=2$, by Proposition~\ref{proposition:sigma * and sigma on GLd} and the quadratic reciprocity
$\rho^{\diamondsuit}_d$ is cohomologous to the $2$-cocycle given by $(\det,\det)_2$ (with the global quadratic Hilbert symbol).

Since the direct factors of $M_{\beta}$ commute locally in $\GL_d^{(m,r)}$, they also commute globally. As observed by Takeda
\cite{Tk,Tk2} (for coverings of \cite{KP}), to define a global tensor product we also need to construct a global block-compatible $2$-cocycle. We closely follow his arguments.

Define $\rho_{\beta}\in\mathrm{Z}^2(M_{\beta}(\A),\mu_m)$ by
\begin{align}\label{eq:rho beta}
\rho_{\beta}(b,b')=\prod_{i=1}^l\rho^{\diamondsuit}_{\beta_i}(b_i,b_i'),\qquad b=\diag(b_1,\ldots,b_l).
\end{align}
It is block-compatible by definition. We show $\rho_{\beta}=\rho^{\diamondsuit}_d$ in $\mathrm{H}^2(M_{\beta}(\A),\mu_m)$. Let
\begin{align}\label{eq:eta beta nu}
\eta_{\beta,\nu}\in\mathrm{C}^1(M_{\beta}(F_{\nu}),\mu_m),\qquad \eta_{\beta,\nu}(b)=\frac{\prod_{i=1}^l\eta^{\diamondsuit}_{\beta_i,\nu}(b_i)}{\eta^{\diamondsuit}_{d,\nu}(b)},\qquad b\in M_{\beta}(F_{\nu}).
\end{align}
\begin{proposition}\label{proposition:rho beta and rho are cohomologous locally}
For all $\nu$, $\rho_{\beta,\nu}$ and $\rho^{\diamondsuit}_{d,\nu}$ are cohomologous:
\begin{align}\label{eq:rho beta and rho square locally}
\rho_{\beta,\nu}(m,m')=\frac{\eta_{\beta,\nu}(m)\eta_{\beta,\nu}(m')}{\eta_{\beta,\nu}(mm')}\rho^{\diamondsuit}_{d,\nu}(m,m').
\end{align}
\end{proposition}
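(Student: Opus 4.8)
The plan is to reduce the global statement to its local ingredients, which are already in place, and to verify that the relevant $1$-cochains glue. The key identity to establish is \eqref{eq:rho beta and rho square locally}; the point is that both sides are honest $2$-cocycles of $M_\beta(F_\nu)$ valued in $\mu_m$, and the claim is that their ratio is the coboundary of $\eta_{\beta,\nu}$ as defined in \eqref{eq:eta beta nu}. First I would write $b=\diag(b_1,\dots,b_l)$, $b'=\diag(b_1',\dots,b_l')\in M_\beta(F_\nu)$ and expand the left-hand side using the definition \eqref{eq:rho beta} of $\rho_\beta$, namely $\rho_{\beta,\nu}(b,b')=\prod_{i=1}^l\rho^{\diamondsuit}_{\beta_i,\nu}(b_i,b_i')$. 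For each factor I would apply the local relation \eqref{eq:cohomologous $2$-cocycles on GL}, which expresses $\rho^{\diamondsuit}_{\beta_i,\nu}$ in terms of $\sigma^{\diamondsuit}_{\beta_i,\nu}$ and the coboundary of $\eta^{\diamondsuit}_{\beta_i,\nu}$. This turns the left-hand side into
\begin{align*}
\rho_{\beta,\nu}(b,b')=\left(\prod_{i=1}^l\frac{\eta^{\diamondsuit}_{\beta_i,\nu}(b_i)\eta^{\diamondsuit}_{\beta_i,\nu}(b_i')}{\eta^{\diamondsuit}_{\beta_i,\nu}(b_ib_i')}\right)\prod_{i=1}^l\sigma^{\diamondsuit}_{\beta_i,\nu}(b_i,b_i').
\end{align*}

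Next I would invoke the local block-compatibility formula \eqref{eq:block compatibility on Levi of P}, which gives exactly $\prod_{i=1}^l\sigma^{\diamondsuit}_{\beta_i,\nu}(b_i,b_i')=\sigma^{\diamondsuit}_{d,\nu}(b,b')$; this is the crucial input and is precisely why the cover $\GL_d^{(m,r)}$ was chosen (the analogous identity fails for the Kazhdan--Patterson covers, hence the need for the metaplectic tensor of \cite{Tk,Tk2} there). Substituting, the left-hand side becomes
\begin{align*}
\left(\prod_{i=1}^l\frac{\eta^{\diamondsuit}_{\beta_i,\nu}(b_i)\eta^{\diamondsuit}_{\beta_i,\nu}(b_i')}{\eta^{\diamondsuit}_{\beta_i,\nu}(b_ib_i')}\right)\sigma^{\diamondsuit}_{d,\nu}(b,b').
\end{align*}
Now I would again use \eqref{eq:cohomologous $2$-cocycles on GL}, this time to rewrite $\sigma^{\diamondsuit}_{d,\nu}(b,b')$ as $\big(\eta^{\diamondsuit}_{d,\nu}(bb')/(\eta^{\diamondsuit}_{d,\nu}(b)\eta^{\diamondsuit}_{d,\nu}(b'))\big)\,\rho^{\diamondsuit}_{d,\nu}(b,b')$, and collect all the $\eta$-factors. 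Grouping them according to the definition \eqref{eq:eta beta nu} of $\eta_{\beta,\nu}$ — namely $\eta_{\beta,\nu}(b)=\prod_i\eta^{\diamondsuit}_{\beta_i,\nu}(b_i)/\eta^{\diamondsuit}_{d,\nu}(b)$ — the accumulated factor is exactly $\eta_{\beta,\nu}(b)\eta_{\beta,\nu}(b')/\eta_{\beta,\nu}(bb')$, yielding \eqref{eq:rho beta and rho square locally}. One should also note $\eta_{\beta,\nu}(e)=1$, so $\eta_{\beta,\nu}$ is a legitimate $1$-cochain; this is immediate since each $\eta^{\diamondsuit}$ is normalized.

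The argument is essentially a bookkeeping exercise once the two structural facts are identified, so there is no serious obstacle at the level of the displayed statement; the only thing requiring attention is making sure the exponents and inverses in \eqref{eq:cohomologous $2$-cocycles on GL} are tracked consistently when it is applied twice in opposite directions. The genuinely substantive point — the one doing all the work — is the local block-compatibility \eqref{eq:block compatibility on Levi of P}, which in turn rests on \eqref{eq:BLS block compatible} together with the cancellation $\det b_i^*=\det b_i^{-1}$ that kills the would-be cross terms $(\det b_i,\det b_j')_m$; this has already been established in the text preceding the proposition. The remaining (global) assertion, that $\rho_\beta$ and $\rho^{\diamondsuit}_d$ are cohomologous on $M_\beta(\A)$, then follows by the standard globalization recipe of \S~\ref{Covering groups}: at almost all $\nu$ both $2$-cocycles are trivial on the relevant maximal compact, so $\eta_{\beta,\nu}$ is there a homomorphism of a perfect-type compact subgroup and hence trivial, whence $\eta_\beta=\prod_\nu\eta_{\beta,\nu}\in\mathrm{C}^1(M_\beta(\A),\mu_m)$ is well defined and realizes the cohomology between $\rho_\beta$ and $\rho^{\diamondsuit}_d$ over $\A$.
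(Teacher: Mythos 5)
Your argument is correct and is essentially identical to the paper's proof: expand $\rho_{\beta,\nu}$ factor by factor via \eqref{eq:cohomologous $2$-cocycles on GL}, apply block-compatibility \eqref{eq:block compatibility on Levi of P} to pass from $\prod_i\sigma^{\diamondsuit}_{\beta_i,\nu}$ to $\sigma^{\diamondsuit}_{d,\nu}$, convert back to $\rho^{\diamondsuit}_{d,\nu}$, and recognize the accumulated $\eta$-factors as the coboundary of $\eta_{\beta,\nu}$. The closing paragraph on globalization belongs to the subsequent proposition rather than this one, but it matches the paper's treatment there as well.
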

\begin{proof}
Indeed, $\rho_{\beta,\nu}(m,m')$ equals
\begin{align*}
&\prod_{i=1}^l\rho^{\diamondsuit}_{\beta_i,\nu}(m_i,m_i')
=\prod_{i=1}^l\frac{\eta^{\diamondsuit}_{\beta_i,\nu}(m_i)\eta^{\diamondsuit}_{\beta_i,\nu}(m_i')}{\eta^{\diamondsuit}_{\beta_i,\nu}(m_im_i')}\sigma^{\diamondsuit}_{\beta_i,\nu}(m_i,m_i')
=\prod_{i=1}^l\frac{\eta^{\diamondsuit}_{\beta_i,\nu}(m_i)\eta^{\diamondsuit}_{\beta_i,\nu}(m_i')}{\eta^{\diamondsuit}_{\beta_i,\nu}(m_im_i')}\sigma^{\diamondsuit}_{d,\nu}(m,m')
\\&=\prod_{i=1}^l\frac{\eta^{\diamondsuit}_{\beta_i,\nu}(m_i)\eta^{\diamondsuit}_{\beta_i,\nu}(m_i')}{\eta^{\diamondsuit}_{\beta_i,\nu}(m_im_i')}
\frac{\eta^{\diamondsuit}_{d,\nu}(mm')}{\eta^{\diamondsuit}_{d,\nu}(m)\eta^{\diamondsuit}_{d,\nu}(m')}\rho^{\diamondsuit}_{d,\nu}(m,m')
=\frac{\eta_{\beta,\nu}(m)\eta_{\beta,\nu}(m')}{\eta_{\beta,\nu}(mm')}\rho^{\diamondsuit}_{d,\nu}(m,m').
\end{align*}
Here for the third equality we used \eqref{eq:block compatibility on Levi of P}.
\end{proof}
\begin{proposition}\label{proposition:gbl rho beta and rho are cohomologous}
The $2$-cocycles $\rho_{\beta}$ and $\rho^{\diamondsuit}_{d}$ are cohomologous.
\end{proposition}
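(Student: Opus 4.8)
The plan is to globalize the local identity of Proposition~\ref{proposition:rho beta and rho are cohomologous locally} by forming the restricted product of the local cochains $\eta_{\beta,\nu}$. Concretely, I would set $\eta_{\beta}=\prod_{\nu}\eta_{\beta,\nu}$ and check that this product makes sense, i.e.\ that $\eta_{\beta,\nu}$ is trivial on the relevant maximal compact subgroup at almost all places; then \eqref{eq:rho beta and rho square locally} can be multiplied over all $\nu$ to yield $\rho_{\beta}(m,m')=\tfrac{\eta_{\beta}(m)\eta_{\beta}(m')}{\eta_{\beta}(mm')}\rho^{\diamondsuit}_{d}(m,m')$ for all $m,m'\in M_{\beta}(\A)$, which is exactly the assertion.

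First I would recall that $M_{\beta}(\A)$ is the restricted direct product of the $M_{\beta}(F_{\nu})$ with respect to the subgroups $M_{\beta}(F_{\nu})\cap K_{\GL_d,\nu}$, and that at almost all places $F_{\nu}$ is unramified in the sense of \S~\ref{local covering}. At such a place the splitting of $K_{\GL_d}$ is $y\mapsto\langle y,\eta^{\diamondsuit}_{d,\nu}(y)\rangle$, and $\eta^{\diamondsuit}_{d,\nu}(b)=\eta_{2d,\nu}(\diag(b,b^*))$; moreover by block-compatibility \eqref{eq:block compatibility on Levi of P} the restriction of $\GL_d^{(m,r)}$ to each $\GL_{\beta_i}$ is $\GL_{\beta_i}^{(m,r)}$, so that for $y=\diag(y_1,\dots,y_l)\in M_{\beta}(F_{\nu})\cap K_{\GL_d,\nu}$ both $\prod_i\eta^{\diamondsuit}_{\beta_i,\nu}(y_i)$ and $\eta^{\diamondsuit}_{d,\nu}(y)$ are restrictions of splittings of compact groups. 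Since the restriction of a splitting is a splitting and splittings of compact (perfect) groups are unique — this uniqueness is exactly the mechanism invoked repeatedly in \S~\ref{local covering} and \S~\ref{global covering} — the two agree on $M_{\beta}(F_{\nu})\cap K_{\GL_d,\nu}$, so $\eta_{\beta,\nu}$ is trivial there. (If one prefers to avoid the perfectness subtlety, one can instead argue directly that both $\rho_{\beta,\nu}$ and $\rho^{\diamondsuit}_{d,\nu}$ are trivial on $M_{\beta}(F_{\nu})\cap K_{\GL_d,\nu}$ at almost all $\nu$, so by \eqref{eq:rho beta and rho square locally} the cochain $\eta_{\beta,\nu}$ becomes a homomorphism of this compact group into $\mu_m$ and is therefore trivial; this is precisely the mechanism described in the ``opposite direction'' paragraph of \S~\ref{Covering groups}.)

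Granting the almost-everywhere triviality, the general formalism of \S~\ref{Covering groups} — the paragraph beginning ``In the opposite direction'' — applies verbatim with $X=M_{\beta}$, $\sigma=\rho^{\diamondsuit}_d$, $\rho=\rho_{\beta}$ and $\eta_{\nu}=\eta_{\beta,\nu}$: one defines $\eta_{\beta}=\prod_{\nu}\eta_{\beta,\nu}\in\mathrm{C}^1(M_{\beta}(\A),\mu_m)$, and taking the product over $\nu$ of the local relations \eqref{eq:rho beta and rho square locally} gives the desired global cohomology relation. I would conclude that $\rho_{\beta}=\rho^{\diamondsuit}_{d}$ in $\mathrm{H}^2(M_{\beta}(\A),\mu_m)$.

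The main obstacle is purely the bookkeeping needed to justify forming the infinite product $\prod_{\nu}\eta_{\beta,\nu}$: one must pin down, at the unramified places, that the ``canonical'' splitting of $K_{\GL_{\beta_i},\nu}$ chosen as $\eta^{\diamondsuit}_{\beta_i,\nu}$ is compatible (under the block embedding) with the chosen splitting of $K_{\GL_d,\nu}$, which is the statement already recorded in \S~\ref{covering of the Levi} that the restriction of the splitting $\langle y,\eta^{\diamondsuit}_{d,\nu}(y)\rangle$ of $K_{\GL_d}$ to $\GL_{\beta_i}^{(m,r)}$ is the splitting $\langle y,\eta^{\diamondsuit}_{\beta_i,\nu}(y)\rangle$ of $K_{\GL_{\beta_i}}$. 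Once that compatibility is cited, everything else is the routine ``restricted product of cochains'' argument. This is entirely parallel to Takeda's construction of a global block-compatible $2$-cocycle in \cite{Tk,Tk2}, and I would remark as much.
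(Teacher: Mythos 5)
Your overall strategy coincides with the paper's: define $\eta_{\beta}=\prod_{\nu}\eta_{\beta,\nu}$ and reduce everything to showing that $\eta_{\beta,\nu}$ is trivial on $M_{\beta}(\mathcal{O}_{\nu})$ at almost all places, i.e.\ that $\eta^{\diamondsuit}_{\beta_i,\nu}(y)=\eta^{\diamondsuit}_{d,\nu}(\diag(I,y,I))$ for $y\in K_{\GL_{\beta_i},\nu}$. But the justification you give for this last identity is exactly where the real content lies, and it does not go through. Your primary argument invokes uniqueness of splittings of ``compact (perfect) groups''; however $K_{\GL_{\beta_i},\nu}$ is \emph{not} perfect, so two splittings of it with respect to the same $2$-cocycle may differ by a nontrivial element of $\Hom(K_{\GL_{\beta_i},\nu},\mu_m)$ -- and such homomorphisms exist in abundance (compose $\det$ with a character of $\mathcal{O}_{\nu}^*$ of order dividing $m$; these are nontrivial since $\mu_m\subset F^*$ forces $m\mid q-1$). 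The paper flags precisely this pitfall in its proof: ``because $K_{\GL_{\beta_i},\nu}$ is not perfect, it is not enough to prove that both sides give rise to a splitting of $K_{\GL_{\beta_i},\nu}$ with respect to the same $2$-cocycle.'' Your fallback argument fails for the same reason: knowing that $\eta_{\beta,\nu}$ is a homomorphism of $M_{\beta}(\mathcal{O}_{\nu})$ into $\mu_m$ does not make it trivial, and the ``opposite direction'' paragraph of \S~\ref{Covering groups} explicitly hypothesizes that there is a \emph{unique} homomorphism $X_{0,\nu}\to\mu_m$, a hypothesis that fails for $M_{\beta}(\mathcal{O}_{\nu})$. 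Finally, the compatibility you cite as ``already recorded'' in \S~\ref{covering of the Levi} is the compatibility of the splitting of $K_{\GL_d}$ with that of $K_{\Sp_{2d}}$, not the compatibility of the block-embedded $K_{\GL_{\beta_i}}$ with $K_{\GL_d}$; the latter is the statement to be proved, not a citation.

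The missing idea, which is the heart of the paper's proof, is to transfer the comparison from the general linear groups to symplectic groups, where perfectness is available. Unwinding the definition $\eta^{\diamondsuit}_{d,\nu}(b)=\eta_{2d,\nu}(\diag(b,b^*))$, the required identity becomes $\eta_{2\beta_i,\nu}(\diag(y,y^*))=\eta_{2d,\nu}(x^{\blacksquare})$ for the standard embedding $x\mapsto x^{\blacksquare}$ of $\Sp_{2\beta_i}$ into $\Sp_{2d}$ given in \eqref{eq:embedding Sp in Sp}. Because this embedding is standard in the sense of \cite[\S~2]{BLS}, the strong block-compatibility \cite[\S~2, Lemma~5]{BLS} gives $\sigma_{2\beta_i,\nu}(x,x')=\sigma_{2d,\nu}(x^{\blacksquare},{x'}^{\blacksquare})$ on the nose, so both $x\mapsto\eta_{2\beta_i,\nu}(x)$ and $x\mapsto\eta_{2d,\nu}(x^{\blacksquare})$ define splittings of $K_{\Sp_{2\beta_i},\nu}$ with respect to the \emph{same} $2$-cocycle $\sigma_{2\beta_i,\nu}$; since $K_{\Sp_{2\beta_i},\nu}$ is perfect (\cite[Lemma~11.1]{Moore}), these splittings coincide, and specializing to $x=\diag(y,y^*)$ yields the needed equality. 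Without this detour through the symplectic groups your argument establishes only that $\eta_{\beta,\nu}$ restricts to a character of $M_{\beta}(\mathcal{O}_{\nu})$, which is insufficient to define the restricted product $\prod_{\nu}\eta_{\beta,\nu}$.
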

\begin{proof}
By virtue of Proposition~\ref{proposition:rho beta and rho are cohomologous locally}, to deduce $\rho_{\beta}=\rho^{\diamondsuit}_{d}$, we need to show $\eta_{\beta}=\prod_{\nu}\eta_{\beta,\nu}$ is well defined. This follows at once if we prove that for almost all $\nu$,
\begin{align}\label{eq:eta is locally stable}
\eta_{\beta,\nu}(y)=1,\qquad\forall y\in M_{\beta}(\mathcal{O}_{\nu}).
\end{align}
Fix $\nu$ such that $F_{\nu}$ is unramified. Since for each $i$, $\rho^{\diamondsuit}_{\beta_i,\nu}$ is trivial on $K_{\GL_{\beta_i},\nu}$,
so is $\rho_{\beta,\nu}$, thus by \eqref{eq:rho beta and rho square locally}, $\eta_{\beta,\nu}$ is a homomorphism of $M_{\beta}(\mathcal{O}_{\nu})$. It is therefore enough to prove
\eqref{eq:eta is locally stable} for $y\in K_{\GL_{\beta_i},\nu}$, in other words we must show
\begin{align*}
\eta^{\diamondsuit}_{\beta_i,\nu}(y)=\eta^{\diamondsuit}_{d,\nu}(\diag(I_{\sum_{j=1}^{i-1}\beta_j},y,I_{\sum_{j=i+1}^{l}\beta_j})).
\end{align*}
Note that because $K_{\GL_{\beta_i},\nu}$ is not perfect, it is not enough to prove that both sides give rise to a splitting of
$K_{\GL_{\beta_i},\nu}$ with respect to the same $2$-cocycle.

Using the definitions, we can write the last equality in the form
\begin{align}\label{eq:stable eta square 2}
\eta_{2\beta_i,\nu}(\diag(y,y^*))=\eta_{2d,\nu}(\diag(I_{\sum_{j=1}^{i-1}\beta_j},m_i,I_{2\sum_{j=i+1}^{l}\beta_j},m_i^*,I_{\sum_{j=1}^{i-1}\beta_j}).
\end{align}
Consider the embedding $\Sp_{2\beta_i}\hookrightarrow\Sp_{2d}$ given by
\begin{align}\label{eq:embedding Sp in Sp}
x^{\blacksquare}=\diag(I_{\sum_{j=1}^{i-1}\beta_j},
\left(\begin{smallmatrix}x_1&&x_2\\&I_{2\sum_{j=i+1}^{l}\beta_j}\\x_3&&x_4\end{smallmatrix}\right),I_{\sum_{j=1}^{i-1}\beta_j}),\qquad x=\left(\begin{smallmatrix}x_1&x_2\\x_3&x_4\end{smallmatrix}\right),\quad x_i\in\Mat_{\beta_i}.
\end{align}
The image in $\Sp_{2d}$ is a standard group in the sense of \cite[\S~2]{BLS}, hence by \cite[\S~2, Lemma~5]{BLS} (the strong
block-compatibility of the $2$-cocycle on standard subgroups),
\begin{align*}
\sigma_{2\beta_i,\nu}(x,x')=\sigma_{2d,\nu}(x^{\blacksquare},{x'}^{\blacksquare}),\qquad\forall x,x'\in\Sp_{2\beta_i}.
\end{align*}
Since for $x,x'\in K_{\Sp_{2\beta_i},\nu}$,
$\rho_{2\beta_i,\nu}(x,x')=\rho_{2d,\nu}(x^{\blacksquare},{x'}^{\blacksquare})=1$,
equality~\eqref{eq:nu and sigma for covering of H} implies
\begin{align*}
\frac{\eta_{2\beta_i,\nu}(xx')}{\eta_{2\beta_i,\nu}(x)\eta_{2\beta_i,\nu}(x')}
=\sigma_{2\beta_i,\nu}(x,x')=\sigma_{2d,\nu}(x^{\blacksquare},{x'}^{\blacksquare})=
\frac{\eta_{2d,\nu}(x^{\blacksquare}{x'}^{\blacksquare})}{\eta_{2d,\nu}(x^{\blacksquare})
\eta_{2d,\nu}({x'}^{\blacksquare})}.
\end{align*}
Thus we obtain two splittings of $K_{\Sp_{2\beta_i},\nu}$, when the covering is realized using $\sigma_{2\beta_i,\nu}$, and we deduce
$\eta_{2\beta_i,\nu}(x)=\eta_{2d,\nu}(x^{\blacksquare})$ on $K_{\Sp_{2\beta_i},\nu}$, in particular \eqref{eq:stable eta square 2} holds, and thereby
\eqref{eq:eta is locally stable}.
\end{proof}

Now we can define the tensor representation of $\widetilde{M}_{\beta}(\A)$ using the block-compatible global $2$-cocycle $\rho_{\beta}$,
and with the global version of \eqref{eq:M beta as a quotient}. A genuine irreducible admissible representation of $\widetilde{M}_{\beta}(\A)$ can then be written as a tensor of genuine irreducible admissible representations of $\widetilde{M}_{\beta_i}(\A)$. The splitting of $M_{\beta}(F)$
is given by
\begin{align}\label{eq:splitting of M beta F under block compatible $2$-cocycle}
b=\diag(b_1,\ldots,b_l)\mapsto\prod_{i=1}^l\langle b_i,(\eta_{\beta_i}^{\diamondsuit})^{-1}(b_i)\rangle.
\end{align}
Therefore if $\tau_i$ are genuine irreducible automorphic representations of $\GL_{\beta_i}^{(m,r)}(\A)$, the tensor
$\otimes_{i=1}^l\tau_i$ is a genuine irreducible automorphic representation of $\widetilde{M}_{\beta}(\A)$.

Let $\tau_{\beta}=\otimes_{i=1}^l\tau_i$ be a genuine automorphic representation of $\widetilde{M}_{\beta}(\A)$. The space of
\begin{align}\label{rep:gbl parabolic induction on GL}
\Ind_{\widetilde{P}_{\beta}(\A)}^{\GL_{d}^{(m,r)}(\A)}(\tau_{\beta})
\end{align}
is the space of genuine functions $\xi$ on $\GL_{d}^{(m,r)}(\A)$ taking values in the space of $\tau_{\beta}$, such that
\begin{align}\label{rep:gbl parabolic induction on GL properties}
\xi(\langle b,1\rangle \langle v,(\eta_d^{\diamondsuit})^{-1}(v)\rangle g)=\eta_{\beta}^{-1}(b)\delta_{P_{\beta}}^{1/2}(b)\tau_{\beta}(\langle b,1\rangle)\xi(g),
\quad \forall b\in M_{\beta}(\A), v\in V_{\beta}(\A), g\in \GL_{d}^{(m,r)}(\A).
\end{align}
The function $\eta_{\beta}$ is included to compensate for the change of the $2$-cocycle $\rho_d^{\diamondsuit}$ to the (cohomologous) $2$-cocycle $\rho_{\beta}$ on $M_{\beta}(\A)$ (see \cite[p.~204]{Tk}). Given a $\widetilde{K}_{\GL_d}$-finite vector in the space of \eqref{rep:gbl parabolic induction on GL}, as in the linear case we can extend it to a standard section in a complex parameter $\boldsymbol{\zeta}\in\C^l$, i.e., to an element of
\begin{align}\label{rep:gbl parabolic induction on GL with twist}
\Ind_{\widetilde{P}_{\beta}(\A)}^{\GL_{d}^{(m,r)}(\A)}(\otimes_{i=1}^l|\det|^{\boldsymbol{\zeta}_i}\tau_i).
\end{align}
We re-denote the new section by $\xi$, and regard it as a function on $\GL_{d}^{(m,r)}(\A)\times\C^l$. By definition, the section $\xi$ is standard, in the sense that its restriction to $\widetilde{K}_{\GL_d}$ is independent of $\boldsymbol{\zeta}$. We may then consider the Eisenstein series
\begin{align}\label{eq:Eisenstein series on GL}
E(g;\xi,\boldsymbol{\zeta})=\sum\limits_{y\in P_{\beta}(F)\backslash \GL_d(F)}\xi(\langle y,(\eta_{d}^{\diamondsuit})^{-1}(y)\rangle g,\boldsymbol{\zeta}).
\end{align}
To see that the summation is formally well defined, first note that we can globalize \eqref{eq:eta beta nu} to $y\in M_{\beta}(F)$.
Indeed for any integer $j$, $\eta_{2j}$ is defined on $\Sp_{2j}(F)$ hence
each $\eta_{\beta_i}^{\diamondsuit}$ and $\eta_d^{\diamondsuit}$ are defined on
$\GL_{\beta_i}(F)$ and $\GL_{d}(F)$. Thus
$\eta_{\beta}(y)=\prod_{i=1}^l\eta^{\diamondsuit}_{\beta_i}(y_i)/\eta^{\diamondsuit}_{d}(y)$.
Now by \eqref{rep:gbl parabolic induction on GL properties} and \eqref{eq:splitting of M beta F under block compatible $2$-cocycle},
\begin{align*}
\xi(\langle y,(\eta_{d}^{\diamondsuit})^{-1}(y)\rangle)&=
\eta_{\beta}^{-1}(y)\tau_{\beta}(\langle y,(\eta_{d}^{\diamondsuit})^{-1}(y)\rangle )\xi(\langle I_d,1\rangle)\\&
=\tau_{\beta}(\langle y,\prod_{i=1}^l(\eta_{\beta_i}^{\diamondsuit})^{-1}(y_i)\rangle)\xi(\langle I_d,1\rangle)
=\xi(\langle I_d,1\rangle).
\end{align*}
Now the general theory of the Eisenstein series (e.g., \cite{La2,La5}, and \cite{MW2} who also treated
covering groups) implies, among other properties, that the sum is absolutely convergent for $\Real(\boldsymbol{\zeta})$ in a certain cone, and the series admits meromorphic continuation.

\subsection{Unramified representations and $L$-factors}\label{unramified reps}
Let $F$ be unramified (see \S~\ref{local covering}). Recall that $r=m$ if $m$ is odd, otherwise $r=m/2$, $c=2n$ and $G=\Sp_{c}$.
The preimage of the torus $T_n$ in $G^{(m)}$ is a $2$-step nilpotent group
(unless $m\leq2$). Thus its irreducible representations are constructed using
Stone-von Neumann Theory, i.e., by extending a genuine character of the center to a maximal abelian subgroup, then inducing to
$\widetilde{T}_n$ (see e.g., \cite{Savin5} and \cite[\S~13.5--13.6]{McNamara}). The isomorphism class of the representation is determined by the action of the center, i.e., independent of the choice of the maximal abelian subgroup and the extension.

To make this construction uniform in $n$, first note that by \eqref{eq:BLS $2$-cocycle on torus} and \eqref{eq:Nice GL $2$-cocycle on torus},
\begin{align*}
\widetilde{T}_{n}=\{(\epsilon_1,\ldots,\epsilon_n)\in\mu_m^n:\prod_{i=1}^n\epsilon_i=1\}\backslash \GL_{1}^{(m,r)}\times\ldots\times\GL_{1}^{(m,r)},
\end{align*}
where on $\GL_{1}^{(m,r)}$ the group operation is given by
\begin{align*}
\langle x,1\rangle\langle x',1\rangle=\langle xx',(x,x')_m^{-1}\rangle.
\end{align*}
Thus to construct a genuine irreducible representation of $\widetilde{T}_{n}$, one may tensor $n$ genuine irreducible representations of
$\GL_{1}^{(m,r)}$. Consider the $i$-th copy of $\GL_{1}^{(m,r)}$. The center of this group is $\widetilde{C}_{r,1}$, which is the preimage of $F^{*r}$. Let $A=F^{*r}\mathcal{O}^*$, $\widetilde{A}$ is a maximal abelian subgroup of $\GL_{1}^{(m,r)}$. A genuine irreducible representation of $\widetilde{A}$ is called unramified if it is trivial on the preimage of $\mathcal{O}^*$. Let $\mu_i$ be an unramified quasi-character of $F^*$.

If $r=m$, $(\cdot,\cdot)_m$ is trivial on $A\times A$, hence we may form the genuine character of
$\widetilde{A}$ by $\varepsilon\otimes\mu_i(\langle x,\epsilon\rangle)=\varepsilon(\epsilon)\mu_i(x)$.
Then we obtain a genuine irreducible
unramified representation of $\GL_{1}^{(m,r)}$ by (non-normalized) induction from $\widetilde{A}$ and $\varepsilon\otimes\mu_i$.

If $r=m/2$ the definition depends on a choice of a root of unity, as we now explain. First we recall the definition of the Weil factor.
For a nontrivial additive character $\psi'$ of $F$ and $a\in F^*$, let $\psi'_a(x)=\psi'(ax)$. The Weil index of $x\mapsto\psi'(x^2)$ is denoted $\gamma(\psi')$, and the Weil factor of $\psi'$ is $\gamma_{\psi'}(a)=\gamma(\psi_a')/\gamma(\psi')$ (see \cite[p.~176]{We} and \cite{Rao}). The following formulas are well known (see e.g., \cite[Appendix]{Rao}):
\begin{align}\label{eq:some props of gamma psi'}
\gamma_{\psi'}^4=1,\quad\gamma_{\psi'}(a^2)=1,\quad \gamma_{\psi'}(ab)=\gamma_{\psi'}(a)\gamma_{\psi'}(b)(a,b)_2,\quad \gamma_{\psi_a'}(x)=(a,x)\gamma_{\psi'}(x).
\end{align}
In particular there are $[F^*:F^{*2}]=4$ (e.g., \cite[p.~32]{We2})
choices for $\gamma_{\psi'}$, and only two are unramified, i.e., trivial on $\mathcal{O}^*$.
Also recall the formula $(\cdot,\cdot)_m^{r}=(\cdot,\cdot)_2$ relating the $m$-th Hilbert symbol to the quadratic one (when $r=m/2$).

Now if $r=m/2$ is odd, $(x^r,y^r)_m=(x^r,y^r)_2$, so that $\varepsilon\otimes\gamma_{\psi'}\mu_i$ defines a genuine character of
$\widetilde{C}_{r,1}$. Moreover, since $(x^r,y)_m=(x,y)_2=(x^r,y)_2$ and both $m$-th and quadratic Hilbert symbols are trivial
on $\mathcal{O}^*\times\mathcal{O}^*$, $\varepsilon\otimes\gamma_{\psi'}\mu_i$ is also a genuine character of $\widetilde{A}$. We take $\psi'$ such that $\gamma_{\psi'}$ is unramified, this choice is unique up to $\pm\gamma_{\psi'}(\varpi^r)$ ($\gamma_{\psi'}(\varpi^r)$ is determined up to a sign, and this determines $\gamma_{\psi'}$ on $A$). Then $\varepsilon\otimes\gamma_{\psi'}\mu_i$ is a genuine unramified character of $\widetilde{A}$, and we induce to $\GL_{1}^{(m,r)}$ as in the case $r=m$ above. For $m\leq2$, $r=1$ and $A=F^*$, so the induction is trivial.

Moreover, for this case (even $m$, odd $r$) since any genuine character of $\GL_1^{(2,1)}$ can be written in the form $\varepsilon\otimes\gamma_{\psi'}\mu_i'$ for some $\psi'$ and quasi-character $\mu_i'$ of $F^*$, we can write
any genuine character of $\widetilde{C}_{r,1}$ in the form $\varepsilon\otimes\gamma_{\psi'}\mu_i'$. Since $x\mapsto(a,x)_2$ is a non-genuine character of $\widetilde{C}_{r,1}$, which is unramified if $|a|=1$, we may a priori fix an unramified $\gamma_{\psi'}$ (in one of two ways), then write any genuine unramified character of $\widetilde{C}_{r,1}$ as $\varepsilon\otimes\gamma_{\psi'}\mu_i'$ for some unramified $\mu_i'$. In addition, we will not lose any generality by tensoring $n$ genuine irreducible representations of $\GL_{1}^{(m,r)}$ using the same $\psi'$, so that when $\psi'$ is fixed, we can effectively parameterize the genuine unramified principal series representations using linear data.

Now consider the case of even $r$. Then $(x^r,y^r)_m=(x^r,y)_2=1$, so that
$\varepsilon\otimes\mu_i$ defines a genuine character of $\widetilde{C}_{r,1}$.
For a fixed element $\varpi_1\in\mathcal{O}$ with $|\varpi_1|=q^{-1}$, we can write any $a\in A$ uniquely
in the form $\varpi_1^{rl}u$ with an integer $l$ and $|u|=1$.
Define
\begin{align*}
\varepsilon\otimes\gamma_{\varpi_1}(\langle \varpi_1^{rl}u,\epsilon\rangle)=\varepsilon(\epsilon\cdot(\varpi_1^l,u)_2).
\end{align*}
The definition is independent of the choice of $\varpi_1$, because if $\varpi=\varpi_1o$ with $|o|=1$, $\varpi_1^{rl}u=\varpi^{rl}o^{rl}u$
then $(\varpi_1^l,u)_2=(\varpi^l,o^{rl}u)_2$, since $r$ is even.
This is a genuine character of $\widetilde{A}$, because
\begin{align*}
&\varepsilon\otimes\gamma_{\varpi_1}(\langle \varpi_1^{rl_1}u_1,1\rangle\langle \varpi_1^{rl_2}u_2,1\rangle)
=\varepsilon((\varpi_1^{l_1r}u_1,\varpi_1^{l_2r}u_2)_m^{-1}(\varpi_1^{(l_1+l_2)},u_1u_2)_2)
\\&=\varepsilon((\varpi_1^{l_1},u_2)_2(u_1,\varpi_1^{l_2})_2(\varpi_1^{l_1},u_1)_2
(\varpi_1^{l_1},u_2)_2(\varpi_1^{l_2},u_1)_2(\varpi_1^{l_2},u_2)_2)
\\&=\varepsilon((\varpi_1^{l_1},u_1)_2(\varpi_1^{l_2},u_2)_2)=
\varepsilon\otimes\gamma_{\varpi_1}(\langle \varpi_1^{rl_1}u_1,1\rangle)\varepsilon\otimes\gamma_{\varpi_1}(\langle \varpi_1^{rl_2}u_2,1\rangle).
\end{align*}
It is also unramified and trivial on $\{\langle x,1\rangle:x\in F^{*r}\}$.
Thus we can extend
$\varepsilon\otimes\mu_i$ to a genuine unramified character of $\widetilde{A}$ by
\begin{align*}
\varepsilon\otimes\gamma_{\varpi_1}\mu_i(\langle a,\epsilon\rangle)=\varepsilon(\epsilon)\mu_i(a)\gamma_{\varpi_1}(\langle a,1\rangle).
\end{align*}
(We do not claim this extension is unique.)

To unify the notation, denote the representation of $\widetilde{A}$ by
$\varepsilon\otimes\vartheta\mu_i$. Specifically, $\vartheta=1$ if $m$ is odd; $\vartheta=\gamma_{\psi'}$ if $m\equiv2\,(4)$ ($m$ is even, $r$ is odd), in which case the isomorphism class of the induced representation of $\GL_{1}^{(m,r)}$ depends on $\psi'$; and $\vartheta=\gamma_{\varpi_1}$ when $m\equiv0\,(4)$, but then the isomorphism class of the induced representation of $\GL_{1}^{(m,r)}$ does not depend on the choice of extension of $\varepsilon\otimes\mu_i$ to $\widetilde{A}$ and in particular, it is independent of $\varpi_1$.

The value $\varepsilon\otimes\vartheta\mu_i(\langle \varpi^r,1\rangle)$ is independent of the choice of uniformizer $\varpi$ except when
$m\equiv2\,(4)$, in which case it depends both on this choice and on $\psi'$, but only up to a sign $(\varpi,u)_2=\pm1$
where $|u|=1$ (use \eqref{eq:some props of gamma psi'}).

Denote the tensor of $n$ such representations, with unramified quasi-characters $\mu_1,\ldots,\mu_n$ of $F^*$, by
$\varepsilon\otimes\vartheta\mu$ (we use the same $\vartheta$ for all $i$). Let $T_{n,*}=\{t\in T_n:t_i\in A,\forall i\}$, $\widetilde{T}_{n,*}$ is a maximal abelian subgroup of $\widetilde{T}_n$. Then
\begin{align*}
\varepsilon\otimes\vartheta\mu(\langle t,\epsilon\rangle)=\varepsilon(\epsilon)\prod_{i=1}^n\vartheta(t_i)\mu_i(t_i),\qquad\forall t\in T_{n,*}.
\end{align*}
The restriction of $\varepsilon\otimes\vartheta\mu$ to the center of $\widetilde{T}_n$ exhausts its genuine irreducible unramified representations, therefore when we induce to $\widetilde{T}_n$ we obtain all such representations of $\widetilde{T}_n$. Now extending to $\widetilde{B}_n$ then inducing to $G^{(m)}$, we obtain the genuine unramified principal series representation
\begin{align*}
\mathrm{I}_{G^{(m)}}(\vartheta,\mu)=\Ind_{\widetilde{B}_{n}}^{G^{(m)}}(\Ind_{\widetilde{T}_{n,*}}^{\widetilde{T}_n}
(\varepsilon\otimes\vartheta\mu)).
\end{align*}
(Only the outer induction is normalized.) We usually regard elements in the space of this representation as complex-valued functions, by evaluating at the identity.

The dual group of a covering group has been defined and studied in several works,
e.g., \cite{Savin5,McNamara,Weissman2,GG,Weissman2018a}. For $G^{(m)}$, the dual group ${G^{(m)}}^{\vee}$ is
$\SO_{2n+1}(\C)$ when $m$ is odd, and $\Sp_{2n}(\C)$ if it is even (see e.g., \cite[\S~5.1]{WWLi2017}).
If $\pi$ is the irreducible unramified constituent of $\mathrm{I}_{G^{(m)}}(\vartheta,\mu)$, the Satake parameter of $\pi$ is the semi-simple conjugacy class in ${G^{(m)}}^{\vee}$ of
\begin{align}\label{eq:Satake symplectic}
t_{\pi,\vartheta}=
\begin{cases}
\diag(\mu_1(\varpi^r),\ldots,\mu_n(\varpi^r),1,\mu_n^{-1}(\varpi^r),\ldots,\mu_1^{-1}(\varpi^r))&r=m,\\
\diag(\mu_1(\varpi^r),\ldots,\mu_n(\varpi^r),\mu_n^{-1}(\varpi^r),\ldots,\mu_1^{-1}(\varpi^r))&r=m/2.
\end{cases}
\end{align}
The parameter $t_{\pi,\vartheta}$ depends on $\vartheta$ only when $m\equiv2\,(4)$, since only in this case $\vartheta$ affects the choice of
$\mu$. The $L$-function of $\pi$ is now defined by
\begin{align*}
L_{\vartheta}(s,\pi)=\det(1-t_{\pi,\vartheta}q^{-s})^{-1}.
\end{align*}
Again, this function depends on $\vartheta$ only when $m\equiv2\,(4)$.

For the covering $\GL_d^{(m,r)}$ the situation is similar in light of \eqref{eq:M beta as a quotient} and \eqref{eq:Nice GL $2$-cocycle on torus}, with $T_{\GL_d}$ instead of $T_n$. The maximal abelian subgroup $\widetilde{T}_{\GL_d,*}$ of $\widetilde{T}_{\GL_d}$ is the preimage of $d$ copies of $A$.
For unramified quasi-characters $\chi_1,\ldots,\chi_d$ of $F^*$ we denote $\chi=\otimes_{i=1}^d\chi_i$, then the genuine unramified principal series representation is
\begin{align*}
\mathrm{I}_{\GL_d^{(m,r)}}(\vartheta,\chi)=\Ind_{\widetilde{B}_{\GL_d}}^{\GL_d^{(m,r)}}(\Ind_{\widetilde{T}_{\GL_d,*}}^{\widetilde{T}_{\GL_d}}
(\varepsilon\otimes\vartheta\chi)).
\end{align*}

The following argument regarding the support of unramified functions in the space of an unramified principal series appeared in the proof of \cite[Lemma~2]{McNamara}, which generalized \cite[Lemma~I.1.3]{KP}.
\begin{proposition}\label{proposition:support of unr is in T*}
Let $\xi$ be an unramified (nonzero) element in $\mathrm{I}_{\GL_d^{(m,r)}}(\vartheta,\chi)$. Write $b\in\GL_d$ in the form $b=uty$ where $u\in N_{\GL_d}$, $t\in T_{\GL_d}$ and $y\in K_{\GL_d}$. Then
$\xi(\langle b,\epsilon\rangle)\ne0$ if and only if $t\in T_{\GL_d,*}$.
\end{proposition}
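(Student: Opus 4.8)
The plan is to peel off the Iwasawa pieces, reducing the statement to the value of $\xi$ on the covering torus, and then to combine the unramifiedness of $\xi$ with the surjectivity of the tame Hilbert symbol on $\mathcal{O}^*$. Using the inducing structure of $\mathrm{I}_{\GL_d^{(m,r)}}(\vartheta,\chi)$, the triviality of the $N_{\GL_d}$-action in the inducing data, the splitting $y\mapsto\langle y,\eta^{\diamondsuit}_{d}(y)\rangle$ of $K_{\GL_d}$, genuineness, and the cocycle identities inherited by $\sigma^{\diamondsuit}_{d}$ from \eqref{eq:sigma on h and v}, one first shows that for an Iwasawa decomposition $b=uty$ ($u\in N_{\GL_d}$, $t\in T_{\GL_d}$, $y\in K_{\GL_d}$),
\begin{align*}
\xi(\langle b,\epsilon\rangle)=\epsilon\,\sigma^{\diamondsuit}_{d}(t,y)^{-1}\,\eta^{\diamondsuit}_{d}(y)^{-1}\,\xi(\langle t,1\rangle).
\end{align*}
Since the prefactor lies in $\mu_m$, $\xi(\langle b,\epsilon\rangle)\ne0$ iff $\xi(\langle t,1\rangle)\ne0$; also $\xi\ne0$ forces $\xi(\langle I_d,1\rangle)\ne0$, since otherwise $\xi$ would vanish on $\widetilde{B}_{\GL_d}$ and hence, by Iwasawa and right $\widetilde{K}_{\GL_d}$-invariance, identically. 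Writing $t=t'u'$ with $t'=\diag(\varpi^{l_1},\dots,\varpi^{l_d})$, $l_i=v(t_i)$, and $u'\in T_{\GL_d}\cap K_{\GL_d}$, I would then pull $u'$ to the left using \eqref{eq:Nice GL $2$-cocycle on torus} together with $u'\in T_{\GL_d,*}$ and $(\varepsilon\otimes\vartheta\chi)(\langle u',1\rangle)=1$ (both $\chi$ and $\vartheta$ being unramified), obtaining $\xi(\langle t,1\rangle)=\big(\prod_i(u'_i,\varpi)_m^{l_i}\big)\,\xi(\langle t',1\rangle)$; again a root-of-unity multiple, so it remains to prove $\xi(\langle t',1\rangle)\ne0$ iff $r\mid l_i$ for all $i$.

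For the key step I would use that $\xi$, being unramified, is fixed under right translation by $\langle y,1\rangle$ for every $y\in T_{\GL_d}\cap K_{\GL_d}$ (on this subgroup $\eta^{\diamondsuit}_{d}$ is trivial, inherited from the corresponding statement for $\eta_d$ in \S~\ref{local covering}). Commuting $\langle y,1\rangle$ past $\langle t',1\rangle$ with \eqref{eq:Nice GL $2$-cocycle on torus} produces the factor $\prod_i(y_i,\varpi)_m^{2l_i}$, and since $\delta_{B_{\GL_d}}(y)=1$ and $(\varepsilon\otimes\vartheta\chi)(\langle y,1\rangle)=1$, we get $\xi(\langle t',1\rangle)=\big(\prod_i(y_i,\varpi)_m^{2l_i}\big)\,\xi(\langle t',1\rangle)$ for all $y\in T_{\GL_d}\cap K_{\GL_d}$. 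If $\xi(\langle t',1\rangle)\ne0$ this forces $(y_i,\varpi)_m^{2l_i}=1$ for all $y_i\in\mathcal{O}^*$ and all $i$; as $F$ is unramified the tame symbol $(\,\cdot\,,\varpi)_m\colon\mathcal{O}^*\to\mu_m$ is surjective, so $m\mid 2l_i$, i.e.\ $r\mid l_i$ (whether $r=m$ with $m$ odd or $r=m/2$). Conversely, if $r\mid l_i$ for all $i$ then $t'\in(F^{*r})^d\subset T_{\GL_d,*}$, hence $\langle t',1\rangle\in\widetilde{T}_{\GL_d,*}$ and $\xi(\langle t',1\rangle)=\delta_{B_{\GL_d}}^{1/2}(t')\,(\varepsilon\otimes\vartheta\chi)(\langle t',1\rangle)\,\xi(\langle I_d,1\rangle)\ne0$. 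Putting the reductions together yields $\xi(\langle b,\epsilon\rangle)\ne0\iff r\mid v(t_i)\ \forall i\iff t_i\in F^{*r}\mathcal{O}^*\ \forall i\iff t\in T_{\GL_d,*}$.

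The only genuine obstacle is the bookkeeping of the $2$-cocycle corrections when moving elements past one another inside the $2$-step nilpotent group $\widetilde{T}_{\GL_d}$; the point that makes everything go through is that every such correction lies in $\mu_m$ and therefore never affects whether a value vanishes, so that all the arithmetic is concentrated in the single equivalence $m\mid 2l_i\iff r\mid l_i$ combined with the surjectivity of the tame symbol (this is exactly the mechanism of \cite[Lemma~2]{McNamara} and \cite[Lemma~I.1.3]{KP}).
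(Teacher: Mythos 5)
Your proof is correct and follows essentially the same route as the paper: reduce to the value on the covering torus via the Iwasawa decomposition, observe that an unramified vector is fixed by $\langle t_x,1\rangle$ for $t_x\in T_{\GL_d,*}\cap K_{\GL_d}$, and use the commutator factor $(t_i,x)_m^{\pm 2}$ in $\widetilde{T}_{\GL_d}$ together with the surjectivity of the tame symbol and the equivalence $m\mid 2l\iff r\mid l$ to force vanishing off $T_{\GL_d,*}$. The paper tests one coordinate at a time with a single $x\in\mathcal{O}^*$ rather than first reducing $t$ to a diagonal of $\varpi$-powers, but the mechanism is identical (and, as you note, all cocycle corrections are roots of unity and hence immaterial for the vanishing question).
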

\begin{proof}
By the definition of the space of $\mathrm{I}_{\GL_d^{(m,r)}}(\vartheta,\chi)$, $\xi(\langle uty,\epsilon\rangle)\ne0$ if and only if $\xi(\langle t,1\rangle)\ne0$. If $t\notin T_{\GL_d,*}$, choose $t_x=\diag(I_{i-1},x,I_{n-i})$ with $x\in\mathcal{O}^*$ such that $(x,t_i)^2_m\ne1$. Then $t_x\in T_{\GL_d,*}\cap K_{\GL_d}$ and $\eta_d^{\diamondsuit}(t_x)=1$, because $\eta_{2d}$ is trivial on
$T_{\GL_{2d}}\cap K_{\GL_{2d}}$. Thus $\langle t_x,1\rangle\in\eta_d^{\diamondsuit}(T_{\GL_d,*}\cap K_{\GL_d})$ whence $(\varepsilon\otimes\vartheta\chi)(\langle t_x,1\rangle)=1$. Now by \eqref{eq:Nice GL $2$-cocycle on torus},
\begin{align*}
\xi(\langle t,1\rangle)=\xi(\langle t,1\rangle\langle t_x,1\rangle)
=(t_i,x)_m^{-2}\xi(\langle t,1\rangle),
\end{align*}
hence $\xi(\langle t,1\rangle)=0$. Thus $t\in T_{\GL_d,*}$ and then
$\xi(\langle t,1\rangle)=\delta_{B_{\GL_d}}^{1/2}(t)(\varepsilon\otimes\vartheta\chi)(\langle t,1\rangle)\xi(\langle I_{d},1\rangle)\ne0$.
\end{proof}

The dual group ${\GL_d^{(m,r)}}^{\vee}$ is $\GL_d(\C)$ (\cite[\S~2.3]{Gao4}).
If $\tau$ is the irreducible unramified constituent of $\mathrm{I}_{\GL_d^{(m,r)}}(\vartheta,\chi)$, the Satake parameter
of $\tau$ is the semi-simple conjugacy class in ${\GL_d^{(m,r)}}^{\vee}$ of
\begin{align*}
t_{\tau,\vartheta}=\diag(\chi_1(\varpi^r),\ldots,\chi_d(\varpi^r)).
\end{align*}
Then for any finite-dimensional complex representation $\sigma$ of
$\GL_d(\C)$, define
\begin{align*}
L_{\vartheta}(s,\tau,\sigma)=\det(1-\sigma(t_{\tau,\vartheta})q^{-s})^{-1}.
\end{align*}
For the identity representation $\mathrm{id}:\GL_d(\C)\rightarrow\GL_d(\C)$, denote
$L_{\vartheta}(s,\tau)=L_{\vartheta}(s,\tau,\mathrm{id})$.
In the linear case this is the standard $L$-function of $\tau$.
If $\tau'$ is a similar constituent of $\mathrm{I}_{\GL_{d'}^{(m,r)}}(\vartheta',\chi')$,
$L_{\vartheta,\vartheta'}(s,\tau\times\tau')=\det(1-(t_{\tau,\vartheta}\otimes t_{\tau',\vartheta'})q^{-s})^{-1}$.
In particular the definitions imply
\begin{align}\label{eq:standard sym and ext}
L_{\vartheta,\vartheta}(s,\tau\times\tau)=L_{\vartheta}(s,\tau,\mathrm{Sym}^2)
L_{\vartheta}(s,\tau,\wedge^2),
\end{align}
where $\mathrm{\Sym}^2$ is the symmetric square and $\wedge^2$ is the exterior square representation.

Finally assume $\pi$ and $\tau$ are the irreducible unramified constituents of $\mathrm{I}_{G^{(m)}}(\vartheta_{\pi},\mu)$ and
$\mathrm{I}_{\GL_d^{(m,r)}}(\vartheta_{\tau},\chi)$.
Put $n'=2n+1$ if $r=m$, otherwise $n'=2n$, and regard $t_{\pi,\vartheta_{\pi}}$ as an element of $\GL_{n'}(\C)$ (see \eqref{eq:Satake symplectic}). Using the definition of the $L$-factor for $\GL_{n'}\times\GL_d$,
\begin{align*}
L_{\vartheta_{\pi},\vartheta_{\tau}}(s,\pi\times\tau)=\det(1-(t_{\pi,\vartheta_{\pi}}\otimes t_{\tau,\vartheta_{\tau}})q^{-s})^{-1}.
\end{align*}

We extend a known identity from the linear set-up, relating the $L$-function of $\pi\times\tau$
to the product of $L$-functions for representations of $\GL_n\times\GL_k$.
One can always write $\pi$ as the irreducible unramified constituent of $\Ind_{\widetilde{R}}^{G^{(m)}}(\pi_n)$,
where $\pi_n$ is the irreducible unramified constituent of $\mathrm{I}_{\GL_n^{(m,r)}}(\vartheta_{\pi},\mu)$ and $R$ is a Siegel parabolic subgroup of $G$. The representation $\pi_n^{\vee}$ (which is anti-genuine) is the irreducible unramified constituent of the unramified principal series with inducing data $\otimes_{i=1}^d\varepsilon^{-1}\otimes\vartheta_{\pi}^{-1}\mu^{-1}$, and we can define
$L_{\vartheta_{\pi}^{-1},\vartheta_{\tau}}(s,\pi_n^{\vee}\times\tau)$ as above. By
definition
\begin{align}\label{eq:standard L of pi and tau identity}
L_{\vartheta_{\pi},\vartheta_{\tau}}(s,\pi\times\tau)=
L_{\vartheta_{\pi},\vartheta_{\tau}}(s,\pi_n\times\tau)[L_{\vartheta_{\tau}}(s,\tau)]L_{\vartheta_{\pi}^{-1},\vartheta_{\tau}}(s,\pi_n^{\vee}\times\tau),
\end{align}
where the factor in square brackets appears only when $r=m$ or equivalently, $m$ is odd. Note that we have to
parameterize $\pi_n^{\vee}$ using $\vartheta_{\pi}^{-1}$ (when $m\equiv2\,(4)$) to deduce
\eqref{eq:standard L of pi and tau identity}.

Assume $m\equiv2\,(4)$ (the following remarks becomes trivial otherwise). Since we will usually be interested in the $L$-function
of $\pi\times\tau$, it is convenient to use one parametrization parameter for both, and we do not lose any generality by doing that. For any two parameters $\vartheta_{\pi}$ and $\vartheta_{\tau}$, $\vartheta_{\pi}\vartheta_{\tau}^{-1}=\vartheta$ for some quadratic unramified character $\vartheta$ of $F^*$. Now $t_{\tau,\vartheta_{\tau}}=t_{\tau,\vartheta_{\pi}\vartheta}$, and since
$\mathrm{I}_{\GL_d^{(m,r)}}(\vartheta_{\pi}\vartheta,\chi)=\mathrm{I}_{\GL_d^{(m,r)}}(\vartheta_{\pi},\vartheta\chi)$, we also have
$t_{\tau,\vartheta_{\tau}}=t_{\vartheta\tau,\vartheta_{\pi}}$, so that
$L_{\vartheta_{\pi},\vartheta_{\tau}}(s,\pi\times\tau)=
L_{\vartheta_{\pi},\vartheta_{\pi}}(s,\pi\times\vartheta\tau)$.
For brevity, denote $L_{\vartheta_{\pi}}(\cdots)=L_{\vartheta_{\pi},\vartheta_{\pi}}(\cdots)$.
In particular when $\mu_{2m}\subset F^*$, $(x,x)_2=1$ for all $x\in F^*$ and $\gamma_{\psi'}=\gamma_{\psi'}^{-1}$ (use \eqref{eq:some props of gamma psi'}).
Hence $\vartheta_{\pi}=\vartheta_{\pi}^{-1}$ and \eqref{eq:standard L of pi and tau identity} becomes
\begin{align*}
L_{\vartheta_{\pi}}(s,\pi\times\vartheta\tau)=
L_{\vartheta_{\pi}}(s,\pi_n\times\vartheta\tau)[L_{\vartheta_{\tau}}(s,\vartheta\tau)]L_{\vartheta_{\pi}}(s,\pi_n^{\vee}\times\vartheta\tau).
\end{align*}
\begin{example}
Since $\mathrm{I}_{G^{(m)}}(\vartheta_{\pi},\mu)=\mathrm{I}_{G^{(m)}}(\vartheta_{\pi}\vartheta,\vartheta\mu)$, for $n=1$ we have
$t_{\pi,\vartheta_{\pi}}=\diag(\mu_1(\varpi),\mu_1^{-1}(\varpi))$ while
$t_{\pi,\vartheta\vartheta_{\pi}}=\diag(\vartheta\mu_1(\varpi),\vartheta\mu_1^{-1}(\varpi))$.
\end{example}
\begin{proposition}\label{proposition:vartheta of *}
With $\tau$ as above,
$\tau^*$ defined by \eqref{eq:involution b*0} is the
irreducible unramified constituent of $\mathrm{I}_{\GL_d^{(m,r)}}(\vartheta_{\tau},\chi^{-1})$. Consequently
$t_{\tau^{\vee},\vartheta_{\tau}^{-1}}=t_{\tau^*,\vartheta_{\tau}}$.
\end{proposition}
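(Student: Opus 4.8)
The plan is to compute $\tau^*$ by pushing the involution \eqref{eq:involution b*0} through the unramified principal series realizing $\tau$, which reduces the statement to an explicit computation on the torus. First I would record that $b\mapsto b^*=J_d{}^tb^{-1}J_d$ stabilizes $B_{\GL_d}$: it preserves $N_{\GL_d}$ and sends $\diag(t_1,\ldots,t_d)$ to $\diag(t_d^{-1},\ldots,t_1^{-1})$, and $\delta_{B_{\GL_d}}\circ{}^*=\delta_{B_{\GL_d}}$. Since the lift \eqref{eq:involution b*0} is $\langle b,\epsilon\rangle\mapsto\langle b^*,\epsilon\rangle$, it fixes the unique splitting $v\mapsto\langle v,1\rangle$ of $N_{\GL_d}$, hence it stabilizes $\widetilde B_{\GL_d}$, $\widetilde T_{\GL_d}$, and the maximal abelian subgroup $\widetilde T_{\GL_d,*}$ (the preimage of $d$ copies of $A$, which is closed under inversion). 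By the standard behaviour of parabolic induction under an automorphism stabilizing the inducing parabolic, $\mathrm{I}_{\GL_d^{(m,r)}}(\vartheta_\tau,\chi)^*$ --- the principal series with its action precomposed with ${}^*$ --- equals $\Ind_{\widetilde B_{\GL_d}}^{\GL_d^{(m,r)}}$ of $\Ind_{\widetilde T_{\GL_d,*}}^{\widetilde T_{\GL_d}}$ of the genuine character $\langle t,\epsilon\rangle\mapsto(\varepsilon\otimes\vartheta_\tau\chi)(\langle t^*,\epsilon\rangle)$.

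Next I would identify that character on $\widetilde T_{\GL_d,*}$: for $t=\diag(t_1,\ldots,t_d)$ with all $t_i\in A$ one gets $(\varepsilon\otimes\vartheta_\tau\chi)(\langle t^*,\epsilon\rangle)=\varepsilon(\epsilon)\prod_{j=1}^d\vartheta_\tau(t_j^{-1})\,\chi_{d+1-j}^{-1}(t_j)$. When $m$ is odd, $\vartheta_\tau=1$ and this is exactly $\varepsilon\otimes\vartheta_\tau\chi^{-1}$ with the characters reversed, i.e.\ the $w_0$-conjugate of $\varepsilon\otimes\vartheta_\tau\chi^{-1}$, where $w_0$ is the longest element of $W_{\GL_d}$. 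For $m\equiv2\,(4)$ I would invoke \eqref{eq:some props of gamma psi'}: under $\mu_{2m}\subset F^*$ (noted just before the statement) $(x,x)_2=1$ for all $x\in F^*$, so the Weil factor $\vartheta_\tau=\gamma_{\psi'}$ is a quadratic character and $\vartheta_\tau(t_j^{-1})=\vartheta_\tau(t_j)$. For $m\equiv0\,(4)$ I would argue on the center $\widetilde C_{r,d}$, on which $\vartheta_\tau$ is trivial on $\{\langle x,1\rangle:x\in F^{*r}\}$ and hence unaffected by inversion, together with the fact that the isomorphism class of the induced representation of $\GL_1^{(m,r)}$ is determined by its central character. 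In every case $\Ind_{\widetilde T_{\GL_d,*}}^{\widetilde T_{\GL_d}}((\varepsilon\otimes\vartheta_\tau\chi)\circ{}^*)$ is isomorphic to the $w_0$-conjugate of $\Ind_{\widetilde T_{\GL_d,*}}^{\widetilde T_{\GL_d}}(\varepsilon\otimes\vartheta_\tau\chi^{-1})$.

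To finish: the functor $V\mapsto V^*$ is exact and carries unramified representations to unramified representations (recalled right after \eqref{eq:involution b*0}), so $\tau^*$ is the irreducible unramified constituent of $\mathrm{I}_{\GL_d^{(m,r)}}(\vartheta_\tau,\chi)^*$; by the previous paragraph this principal series has inducing data in the same $W_{\GL_d}$-orbit as that of $\mathrm{I}_{\GL_d^{(m,r)}}(\vartheta_\tau,\chi^{-1})$, and principal series with $W$-conjugate data share the same irreducible unramified constituent, which gives the first assertion. The consequence is then read off: $\tau^\vee$ is the irreducible unramified constituent of the anti-genuine principal series with inducing data $\varepsilon^{-1}\otimes\vartheta_\tau^{-1}\chi^{-1}$ (the $\GL_d$-version of the description of $\pi_n^\vee$ preceding \eqref{eq:standard L of pi and tau identity}), so $t_{\tau^\vee,\vartheta_\tau^{-1}}=\diag(\chi_1^{-1}(\varpi^r),\ldots,\chi_d^{-1}(\varpi^r))$, while $t_{\tau^*,\vartheta_\tau}=\diag(\chi_1^{-1}(\varpi^r),\ldots,\chi_d^{-1}(\varpi^r))$ by the first part; as conjugacy classes in $\GL_d(\C)$ these coincide. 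I expect the main obstacle to be the bookkeeping of the parameter $\vartheta_\tau$ in the cases $m\not\equiv1\,(2)$ --- making sure that precomposing with the inversion built into $b\mapsto b^*$ returns $\vartheta_\tau$ itself rather than a quadratic unramified twist of it, which is exactly where the hypothesis $\mu_{2m}\subset F^*$ enters.
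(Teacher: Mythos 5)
Your proof is correct and follows essentially the same route as the paper: push the involution \eqref{eq:involution b*0} through the unramified principal series and reduce to checking that $\vartheta_{\tau}$ is invariant under inversion on $\widetilde{A}$, case by case in $m$ (your explicit bookkeeping of the $w_0$-reversal of the inducing characters, resolved by passing to the common unramified constituent of $W$-conjugate data, is a welcome extra precision over the paper's bald equality of principal series). The one small difference is in the case $m\equiv2\,(4)$: the paper gets $\gamma_{\psi'}(a)=\gamma_{\psi'}(a^{-1})$ unconditionally from \eqref{eq:some props of gamma psi'} (since $\gamma_{\psi'}(a)\gamma_{\psi'}(a^{-1})(a,a^{-1})_2=1$ and $\gamma_{\psi'}(a)^2=(a,a)_2$), so the hypothesis $\mu_{2m}\subset F^*$ you invoke there is not actually needed.
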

\begin{proof}
By definition $\tau^*$ is realized in the space of $\tau$, and $\tau^*$ is irreducible and unramified. Hence to prove that
$\tau^*$ is the irreducible unramified constituent of $\mathrm{I}_{\GL_d^{(m,r)}}(\vartheta_{\tau},\chi^{-1})$, it suffices to
show $(\mathrm{I}_{\GL_d^{(m,r)}}(\vartheta_{\tau},\chi))^*=\mathrm{I}_{\GL_d^{(m,r)}}(\vartheta_{\tau},\chi^{-1})$. The representation
$(\mathrm{I}_{\GL_d^{(m,r)}}(\vartheta_{\tau},\chi))^*$ is an unramified principal series, and we need to check that
for all $a\in A$, $\vartheta_{\tau}({}^*\langle a,1\rangle)=\vartheta_{\tau}(\langle a,1\rangle)$. Note that
${}^*\langle a,1\rangle=\langle a^{-1},1\rangle$.

This is clear if $r=m$. If $m\equiv2\,(4)$, $\vartheta_{\tau}=\gamma_{\psi'}$ and by
\eqref{eq:some props of gamma psi'}, $\gamma_{\psi}(a)=\gamma_{\psi}(a^{-1})$.
For the remaining case $\vartheta_{\tau}=\gamma_{\varpi_1}$, if we write $a=\varpi_1^{rl}u$ as above,
\begin{align*}
\gamma_{\varpi_1}(\langle a^{-1},1\rangle)=\varepsilon((\varpi_1^{-l},u^{-1})_2)=\varepsilon((\varpi_1^l,u)_2)=\gamma_{\varpi_1}(\langle a,1\rangle).
\end{align*}
Now $t_{\tau^{\vee},\vartheta_{\tau}^{-1}}=t_{\tau^*,\vartheta_{\tau}}$ is immediate because the inducing data for $\tau^{\vee}$ is
$\otimes_{i=1}^d\varepsilon^{-1}\otimes\vartheta_{\tau}^{-1}\chi^{-1}$.
\end{proof}

\subsection{Whittaker functionals}\label{whittaker functionals}
Assume $F$ is a local non-archimedean field. Recall that $\psi$ is a nontrivial character of $F$.
Define a generic character $\psi$ of $N_{\GL_d}$ by
\begin{align*}
\psi(v)=\psi(\sum_{i=1}^{d-1}v_{i,i+1}).
\end{align*}
Consider a genuine unramified representation $\mathrm{I}(\vartheta,\chi)=\mathrm{I}_{\GL_{d}^{(m,r)}}(\vartheta,\chi)$ with the notation of \S~\ref{unramified reps}. A Whittaker functional on $\mathrm{I}(\vartheta,\chi)$ is a (nonzero) morphism in
\begin{align}\label{Hom:Whittaker}
\Hom_{N_{\GL_d}}(\mathrm{I}(\vartheta,\chi),\psi).
\end{align}
The dimension of the vector space \eqref{Hom:Whittaker} is $|T_{\GL_d,*}\backslash T_{\GL_d}|$, by an application of the Geometric Lemma of Bernstein and Zelevinsky \cite[Theorem~5.2]{BZ2} (see also \cite[Lemma~I.3.2]{KP}). For a function $f$ in the space of $\mathrm{I}(\vartheta,\chi)$ regarded as a complex-valued function, consider the Whittaker functional $\Lambda_t$ defined for each fixed $t\in T_{\GL_d}$ by
\begin{align*}
\Lambda_t(f)=\int\limits_{N_{\GL_d}}f(\langle t,1\rangle\langle J_d,1\rangle
\langle u,1\rangle)\psi^{-1}(u)\,du.
\end{align*}
Note that because in this context the $2$-cocycle is $\sigma^{\diamondsuit}_{d}$, $u\mapsto\langle u,1\rangle$ is the canonical splitting of $N_{\GL_d}$.
This integral is defined as an absolutely convergent integral for $\Real(\chi)$ in a certain cone, then by meromorphic continuation in general. In fact, by Banks \cite{Banks} this continuation is analytic (his argument - Bernstein's continuation principle - is applicable to $\GL_d^{(m,r)}$ as well). The functionals $\Lambda_t$ as $t$ varies over
the set of representatives of $T_{\GL_d,*}\backslash T_{\GL_d}$ form a linear basis of \eqref{Hom:Whittaker}
(see \cite[Lemma~I.3.1]{KP}). In particular denote $\Lambda=\Lambda_{I_d}$.

\subsection{The Casselman--Shalika formula for $\GL_d^{(m,r)}$}\label{Casselman--Shalika formula}
Assume $F$ is unramified and $\mu_{2m}\subset F^*$, in particular $(-1,x)_m=1$ for all $x\in F^*$. Fix a uniformizer $\varpi$ (see \S~\ref{Groups}). Let $\mathrm{I}_{\GL_d^{(m,r)}}(\vartheta,\chi)$ be an unramified principal series representation and
denote
\begin{align*}
\mathbf{x}=(x_1,\ldots,x_d)\in\C^d,\qquad x_i=\chi_i(\varpi^r).
\end{align*}
While $\mathbf{x}$ is independent of $\varpi$, the formulas in this section are developed for a given fixed $\varpi$.
Let $\xi^0$ be the normalized unramified vector in the space of $\mathrm{I}_{\GL_d^{(m,r)}}(\vartheta,\chi)$. For each $t\in\GL_d$, we have an unramified Whittaker function
$W_t(g)=\Lambda_t(g\cdot \xi^0)$ ($g\in \GL_d^{(m,r)}$). It is clear that this function is determined by its values on $\widetilde{T}_{\GL_d}$, and the purpose of the Casselman--Shalika formula is to describe $W_t$ on $\widetilde{T}_{\GL_d}$ in terms of $\mathbf{x}$, $t$ and the input
(in the linear case $\mathbf{x}$ would be replaced by the Satake parameter). As mentioned in the introduction, the metaplectic analog of the Casselman--Shalika formula \cite{CS2} has been developed through the works \cite{KP,McNamara2,COf,McNamara3}. In this section we describe this formula.

Assume the conductor of $\psi$ is $0$, i.e., $\psi$ is unramified.
Fix the Haar measure on $F$ which is self-dual with respect to $\psi$, then $\mathrm{vol}(\mathcal{O})$ (the volume of $\mathcal{O}$) is $1$.
Define the Gauss sum by
\begin{align}\label{eq:Gauss Sum}
\mathfrak{g}(l)=\int\limits_{\mathcal{O}^*}(o,\varpi)_m^l\psi(\varpi^{-1}o)\,do,\qquad l\in\Z.
\end{align}
Here $do$ is the restriction of the Haar measure on $F$, in particular
$\int_{\mathcal{O}^*}do=1-q^{-1}$. A direct computation implies the following properties (see e.g., \cite[\S~3.2]{Gao4}):
\begin{enumerate}[leftmargin=*]
\item If $l\equiv 0\,(m)$, $\mathfrak{g}(l)=-q^{-1}$.
\item Otherwise $|\mathfrak{g}(l)|=q^{-1/2}$ and $\overline{\mathfrak{g}(l)}=\mathfrak{g}(-l)$, hence $\mathfrak{g}(l)\mathfrak{g}(-l)=q^{-1}$.
\end{enumerate}

For $\mathbf{a}=(a_1,\ldots,a_d)\in\Z^r$, denote
\begin{align*}
\mathbf{a}^*=(-a_1,\ldots,-a_d),\qquad \varpi^{\mathbf{a}}=\diag(\varpi^{a_1},\ldots,\varpi^{a_d}),\qquad t_{\mathbf{a}}=\langle\varpi^{\mathbf{a}},1\rangle.
\end{align*}
The map $\mathbf{a}\mapsto t_{\mathbf{a}}$ is a homomorphism, by \eqref{eq:Nice GL $2$-cocycle on torus} and because $(\varpi,\varpi)_m=1$.
If $\mathbf{b}\in\Z^r$, write $\mathbf{a}\equiv \mathbf{b}$ if $\mathbf{a}-\mathbf{b}\in r\Z^d$.
The function $W_{t}$ is uniquely determined by its values on $t_{\mathbf{b}}$.
We define
\begin{align}\label{eq:def W Lambda}
W_{\mathbf{a}}(\mathbf{b},\vartheta,\chi)=\Lambda_{t_{\mathbf{a}}}(t_{\mathbf{b}}\cdot \xi^0).
\end{align}

For $t\in T_{\GL_d}$, let $\mathbf{v}(t)\in \Z^d$ denote the vector of valuations of $t$.
Recall that $T_{\GL_d,*}<T_{\GL_d}$ consists of the torus elements
with coordinates in $F^{*r}\mathcal{O}^*$. Then for $t_0\in T_{\GL_d,*}$, $\mathbf{v}(t_0)\in r\Z^d$.

Recall from \S~\ref{Groups} that $W_{\GL_d}$ is the Weyl group of $\GL_d$, $\Phi_d$ (resp., $\Phi_d^+$) is the set of roots (resp., positive roots) and the simple roots are $(i,i+1)$, $1\leq i<d$. We extend the notation $t_{\mathbf{a}}$ to $t_{l\alpha}$ for any $l\in\Z$ and $\alpha\in\Phi_d$, regarding $\alpha$ as the natural element in $\Z^d$.

Denote $\mathbf{x}_{\alpha}=x_ix_j^{-1}$. Since $w\in W_{\GL_d}$ is a permutation on $1,\ldots,d$, we can define
${}^w\mathbf{x}$ by $({}^w\mathbf{x})_i=x_{w^{-1}(i)}$. Also set for $\mathbf{a}\in r\Z^d$, $\mathbf{x}(\mathbf{a})=(x_1^{a_1/r},\ldots,x_d^{a_d/r})$.

As in \cite[p.~425]{COf}, define an action $w[\mathbf{a}]$ of $W_{\GL_d}$ on $\Z^d$ such that
\begin{align*}
w_{\alpha}[\mathbf{a}]=(a_1,\ldots,a_{i+1}-1,a_{i}+1,\ldots,a_d),\qquad \alpha=(i,i+1).
\end{align*}

Recall that for any $w\in W_{\GL_d}$ and a representative $y_w\in K_{\GL_d}$ of $w$, we have a standard intertwining operator
defined as follows. For $\alpha\in\Phi_d^+$, let $U_{\alpha}$ be the root subgroup of $\alpha$. Denote
\begin{align}\label{eq:N_w}
N_w=\prod_{\{\alpha\in\Phi_d^+:w^{-1}\alpha\notin\Phi_d^+\}}U_{\alpha}.
\end{align}
For $\xi$ in the space of $\mathrm{I}_{\GL_d^{(m,r)}}(\vartheta,\chi)$,
The integral
\begin{align*}
M(y_w)\xi(g)=\int\limits_{N_{w}}\xi(\langle y_w,1\rangle^{-1}\langle u,1\rangle g)\,du,\qquad g\in \GL_d^{(m,r)},
\end{align*}
is absolutely convergent when $\Real(\chi_1)\gg\ldots\gg\Real(\chi_d)$ and admits meromorphic continuation in the variables
$\chi_1,\ldots,\chi_d$ (see e.g., \cite[\S~13.7]{McNamara}, this is a straightforward extension of the results in linear case, e.g., \cite{CS1}). Since $\varepsilon\otimes\vartheta\chi$ is unramified as a genuine representation of $\widetilde{T}_{\GL_d,*}$;
$\eta_d^{\diamondsuit}$ is trivial on $T_{\GL_d}\cap K_{\GL_d}$; and
$\sigma_d^{\diamondsuit}(t,y_w)=1$ for any $t\in T_{\GL_d}\cap K_{\GL_d}$, the integral is independent of the choice of representative $y_w$,
hence we can simply denote $M(w)=M(y_w)$.
By Proposition~\ref{proposition:action of W on torus is trivial on Sp} (recall that $\GL_d^{(m,r)}$ is defined via restriction
from $\Sp_{2d}^{(m)}$), ${}^{w}(\varepsilon\otimes\vartheta\chi)=\varepsilon\otimes\vartheta({}^w\chi)$ as a representation of
$\widetilde{T}_{\GL_d,*}$, hence (away from the poles)
\begin{align*}
M(w):\mathrm{I}_{\GL_d^{(m,r)}}(\vartheta,\chi)\rightarrow \mathrm{I}_{\GL_d^{(m,r)}}(\vartheta,{}^w\chi).
\end{align*}

In fact we can assume that $\chi$ is regular, i.e., ${}^{w}(\varepsilon\otimes\vartheta\chi)\ne\varepsilon\otimes\vartheta\chi$, equivalently
${}^{w}\chi\ne\chi$ as characters of $(F^{*r})^d$, for all $1\ne w\in W_{\GL_d}$. Then the operators $M(w)$ are holomorphic (\cite[\S~13.7]{McNamara}).
Since the Jacquet integrals $\Lambda_t$ admit analytic continuation, the results of this section are true regardless of this assumption,
by analytic continuation.

Since $M(w)\xi^0$ is unramified, it is a constant multiple of the normalized unramified vector $\xi_w^0$ of $\mathrm{I}_{\GL_d^{(m,r)}}(\vartheta,{}^w\chi)$. The constant is given by the
Gindikin--Karpelevich formula \cite[Corollary~7.4]{Gao2018}, which is the extension of \cite[Theorem~3.1]{CS1} to coverings. We have
\begin{align}\label{eq:GK formula for GL}
M(w)\xi^0(g)=\prod_{\{\alpha\in\Phi_d^+:w\alpha\notin\Phi_d^+\}}\frac{1-q^{-1}\mathbf{x}_{\alpha}}{1-\mathbf{x}_{\alpha}}
\xi_w^0(g).
\end{align}
Here the appearance of $\mathbf{x}_{\alpha}$ in \eqref{eq:GK formula for GL} is the consequence of our definition of
$\mathrm{I}_{\GL_d^{(m,r)}}(\vartheta,\chi)$ and the fact that ${}^{w}(\varepsilon\otimes\vartheta\chi)=\varepsilon\otimes\vartheta({}^w\chi)$; in general $\mathbf{x}_{\alpha}$ would be replaced by the restriction of the genuine unramified character of the center of $\widetilde{T}_{\GL_d}$ to
$\langle\diag(I_{i-1},\varpi^r,I_{j-i-1},\varpi^{-r},I_{d-j}),1\rangle$ where $\alpha=(i,j)$, and one must then show that this restriction is independent of the choice of uniformizer $\varpi$ (see \cite[\S~7.1 and Proposition~7.3]{Gao2018}, and also \cite{COf}).
We also mention that \cite[Corollary~7.4]{Gao2018} is applicable even without the assumption $\mu_{2m}\subset F^*$. For previous extensions of this formula to coverings see \cite{KP,McNamara2,COf,McNamara3,me8}.

The Casselman--Shalika formula for covering groups depends on certain coefficients which we present next, following \cite[\S~4, \S~5]{COf}.
Given $w\in W_{\GL_d}$, these coefficients are functions $(t,t')\mapsto\tau_{t,t'}(w,\vartheta,\chi)$ ($t,t'\in \widetilde{T}_{\GL_d}$).
Let $\mathcal{A}$ and $\mathcal{A}'$ be two sets of representatives for $\widetilde{T}_{\GL_d,*}\backslash \widetilde{T}_{\GL_d}$. Then for any $w\in W_{\GL_d}$, we have the following functional equation:
\begin{align}\label{eq:general equation defining tau a b}
(\Lambda_t\circ M(w))_{t\in \mathcal{A}}=
(\tau_{t,t'}(w,\vartheta,\chi))_{t\in\mathcal{A},t'\in\mathcal{A}'}
(\Lambda_{t'})_{t'\in \mathcal{A}'}.
\end{align}
The coefficients $\tau_{t,t'}(w,\vartheta,\chi)$ do not depend on the choice of representative in $K_{\GL_d}$ for $w$.
While \eqref{eq:general equation defining tau a b} defines them uniquely, it is simpler to compute them inductively.
First note that for any
$t,t'\in \widetilde{T}_{\GL_d}$ and $t_0,t_0'\in \widetilde{T}_{\GL_d,*}$,
\begin{align}\label{eq:tau on maximal abelian}
\tau_{t_0t,t_0't'}(w,\vartheta,\chi)=\delta_{B_{\GL_d}}^{1/2}(t_0(t_0')^{-1})
(\varepsilon\otimes\vartheta{}^{w}\chi)(t_0)
(\varepsilon\otimes\vartheta\chi)(t_0')^{-1}\tau_{t,t'}(w,\vartheta,\chi).
\end{align}
In particular these coefficients do depend on the choices of representatives, and thereby on $\varpi$ itself.
Now if $w_1,w_2\in W_{\GL_d}$ satisfy $\ell(w_1w_2)=\ell(w_1)\ell(w_2)$,
\begin{align}\label{eq:cocycle for tau}
\tau_{t,t'}(w_1w_2,\vartheta,\chi)=\sum_{\mathbf{e}\in r\Z^d\backslash \Z^d}\tau_{t,t_{\mathbf{e}}}(w_1,\vartheta,{}^{w_2}\chi)\tau_{t_{\mathbf{e}},t'}(w_2,\vartheta,\chi).
\end{align}
The sum is well defined by \eqref{eq:tau on maximal abelian}. To determine
$\tau_{t,t'}(w,\vartheta,\chi)$ on $w_{\alpha}$, $\alpha=(i,i+1)$, write
\begin{align}\label{eq:tau for simple ref}
\tau_{t,t'}(w_{\alpha},\vartheta,\chi)=\tau_{t,t'}^1(w_{\alpha},\vartheta,\chi)+\tau_{t,t'}^2(w_{\alpha},\vartheta,\chi).
\end{align}
\begin{proposition}\label{proposition:coefficients tau t t' 1 2}
Let $\alpha=(i,i+1)$, $\mathbf{a},\mathbf{b}\in\Z^d$. Then
$\tau_{t_{\mathbf{a}},t_{\mathbf{b}}}^1(w_{\alpha},\vartheta,\chi)=0$ unless $\mathbf{b}\equiv\mathbf{a}$,
$\tau_{t_{\mathbf{a}},t_{\mathbf{b}}}^2(w_{\alpha},\vartheta,\chi)=0$ unless $\mathbf{b}\equiv w_{\alpha}[\mathbf{a}]$,
\begin{align}\label{eq:tau 1 nonzero}
&\tau_{t_{\mathbf{a}},t_{\mathbf{a}}}^1(w_{\alpha},\vartheta,\chi)=(1-q^{-1})\frac{\mathbf{x}_{\alpha}^{\lceil (a_{i+1}-a_{i})/r\rceil}}{1-\mathbf{x}_{\alpha}},\\
\label{eq:tau 2 nonzero}
&\tau_{t_{\mathbf{a}},t_{w_{\alpha}[\mathbf{a}]}}^2(w_{\alpha},\vartheta,\chi)=q^{a_{i+1}-a_i-1}\mathfrak{g}(2(a_{i+1}-a_i-1)).
\end{align}
Here for any $x\in\R$, $\lceil x \rceil$ is the smallest integer greater than or equal to $x$.
Also with the notation of \eqref{eq:tau on maximal abelian}, for $j=1,2$,
\begin{align}\label{eq:tau i on maximal abelian}
\tau_{t_0t,t_0't'}^j(w_{\alpha},\vartheta,\chi)=\delta_{B_{\GL_d}}^{1/2}(t_0(t_0')^{-1})
(\varepsilon\otimes\vartheta{}^{w}\chi)(t_0)
(\varepsilon\otimes\vartheta\chi)(t_0')^{-1}\tau_{t,t'}^j(w_{\alpha},\vartheta,\chi).
\end{align}
\end{proposition}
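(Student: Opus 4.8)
The plan is to follow the rank-one computation of Chinta and Offen \cite[\S~4,~\S~5]{COf}, adapting the cocycle bookkeeping to the cover $\GL_d^{(m,r)}$. Since $\alpha=(i,i+1)$ is simple, $N_{w_\alpha}=U_\alpha$ is the single root subgroup $u_\alpha(x)=I_d+xE_{i,i+1}$, so $M(w_\alpha)$ together with the Jacquet functionals $\Lambda_t$ reduces, after the standard unfolding, to integrals in the one variable $x\in F$. I would insert the integral formulas for $M(w_\alpha)$ and $\Lambda_{t_{\mathbf a}}$ into the defining functional equation \eqref{eq:general equation defining tau a b}, collapse the $N_{\GL_d}$-integration against $\psi^{-1}$ using the equivariance of the normalized vector, and thereby recognize the determination of $\tau_{t_{\mathbf a},t_{\mathbf b}}(w_\alpha,\vartheta,\chi)$ as an $\SL_2$-type computation; the decomposition \eqref{eq:tau for simple ref} is then the split of the underlying root integral into the region $x\in\mathcal O$ (contributing $\tau^1$) and the region $|x|>1$ (contributing $\tau^2$).

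For $x\in\mathcal O$ the element $\langle u_\alpha(x),1\rangle$ lies in the canonical splitting of $K_{\GL_d}$; conjugating it past $\langle y_{w_\alpha},1\rangle$ and the torus element $\langle t_{\mathbf a},1\rangle$ introduces only powers of $(-1,\cdot)_m$ and of $(\cdot,\cdot)_m$ on $\mathcal O^*$, which are trivial because $\mu_{2m}\subset F^*$ (Proposition~\ref{proposition:action of W on torus is trivial on Sp}). Hence the torus coordinate is unchanged modulo $T_{\GL_d,*}$, forcing $\mathbf b\equiv\mathbf a$. Evaluating the normalized vector through Proposition~\ref{proposition:support of unr is in T*} leaves a geometric series $\sum(1-q^{-1})\mathbf x_\alpha^{\ell}$ in which $\ell$ ranges over the exponents for which the relevant torus element lands in $T_{\GL_d,*}$, i.e.\ over the multiples of $r$ that are $\geq a_{i+1}-a_i$; summing it produces the factor $\mathbf x_\alpha^{\lceil(a_{i+1}-a_i)/r\rceil}/(1-\mathbf x_\alpha)$ of \eqref{eq:tau 1 nonzero}, and the overall constant (including the $q^{-1}$) is pinned down by comparison with the Gindikin--Karpelevich formula \eqref{eq:GK formula for GL}.

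For $|x|>1$ I would use the $\SL_2$ identity writing $u_\alpha(x)$ as a product of $u_\alpha(-x^{-1})$, the torus element $\diag(I_{i-1},-x^{-1},-x,I_{d-i-1})$, the representative $y_{w_\alpha}$, and a small lower unipotent in $K_{\GL_d}$; the torus factor combined with $y_{w_\alpha}^{-1}$ produces precisely the reflected-and-shifted coordinate, whence the support condition $\mathbf b\equiv w_\alpha[\mathbf a]$, while $u_\alpha(-x^{-1})$ is absorbed into the $\psi^{-1}$-twist, turning the integral over $x=\varpi^{-\ell}o$ ($\ell\geq1$, $o\in\mathcal O^*$) into a sum of Gauss sums. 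The roots of unity produced by $\sigma^{\diamondsuit}_{d}$ in this manipulation are, by Proposition~\ref{proposition:sigma * and sigma on GLd} (which presents $\sigma^{\diamondsuit}_{d}$ as cohomologous to $\sigma_{d}^2(\det,\det)_m$) together with the vanishing of $(\det,\det)_m$ on the relevant $\SL_2$, exactly the squares of those appearing in the linear-type computation of \cite{COf}; this accounts for the argument $2(a_{i+1}-a_i-1)$ of $\mathfrak{g}$ in \eqref{eq:tau 2 nonzero}, while the power $q^{a_{i+1}-a_i-1}$ is the Jacobian of $x\mapsto x^{-1}$ together with the chosen measure. When $m$ is even one additionally carries the Weil factor $\vartheta\in\{\gamma_{\psi'},\gamma_{\varpi_1}\}$ through the torus conjugation; here $\mu_{2m}\subset F^*$ makes $\gamma_{\psi'}$ quadratic and removes the sign ambiguity.

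Finally, \eqref{eq:tau i on maximal abelian} for $j=1,2$ is immediate: $\tau^j$ is by construction the contribution of one of the two regions in $x$ to $\Lambda_t\circ M(w_\alpha)$, and left translation by $t_0\in T_{\GL_d,*}$ and right translation by $t_0'\in T_{\GL_d,*}$ transform these region-restricted integrals exactly as in \eqref{eq:tau on maximal abelian}, since the regions $x\in\mathcal O$ and $|x|>1$ are stable under the relevant substitutions; so the transformation law is inherited termwise from $\tau=\tau^1+\tau^2$. The main obstacle is the $|x|>1$ region: tracking every Hilbert symbol coming from $\sigma^{\diamondsuit}_{d}$ and, for even $m$, the Weil factor attached to $\vartheta$, through the $\SL_2$-Bruhat identity and the Weyl and torus conjugations, and verifying that under $\mu_{2m}\subset F^*$ they collapse to the clean squared-symbol shape recorded in \eqref{eq:tau 2 nonzero}.
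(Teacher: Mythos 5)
Your strategy is exactly the one the paper itself relies on: the printed proof is a one-line pointer to \cite[\S\S~9, 13]{McNamara3} and \cite[\S~I.1.3]{KP}, and what you describe --- reduction to the rank-one $\SL_2$ integral, the Bruhat identity for $w_\alpha^{-1}u_\alpha(x)$, the support restriction from Proposition~\ref{proposition:support of unr is in T*}, and the doubling of the Gauss-sum argument coming from $\sigma_2^{\diamondsuit}(t,t')=(t,t')_m^{-2}$ versus $\sigma_2(t,t')=(t,t')_m^{-1}$ --- is precisely the content of those references (the paper makes the same $\mathfrak{g}(b)\rightsquigarrow\mathfrak{g}(2b)$ observation explicitly in the proof of Theorem~\ref{theorem:CS formula for rk, c}). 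The formulas you arrive at, and the termwise inheritance of \eqref{eq:tau i on maximal abelian} from \eqref{eq:tau on maximal abelian}, are correct.

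One point in your write-up is misstated and worth fixing. The decomposition \eqref{eq:tau for simple ref} is \emph{not} the split of the intertwining integral into $x\in\mathcal{O}$ versus $|x|>1$: it is the split according to which congruence class modulo $r\Z^d$ the resulting torus coordinate lands in, i.e.\ which functional $\Lambda_{t_{\mathbf{b}}}$ a given term pairs with ($\mathbf{b}\equiv\mathbf{a}$ for $\tau^1$, $\mathbf{b}\equiv w_\alpha[\mathbf{a}]$ for $\tau^2$). Both coefficients receive contributions from the large-$|x|$ region, where the Iwasawa decomposition produces the torus elements $\diag(I_{i-1},-x^{-1},-x,I_{d-i-1})$; the $x\in\mathcal{O}$ region alone carries no torus shift and therefore cannot produce either the $\mathbf{a}$-dependent exponent $\lceil(a_{i+1}-a_i)/r\rceil$ or the series $1/(1-\mathbf{x}_\alpha)$. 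Indeed your own subsequent description of $\tau^1$ --- a geometric series over valuations that are multiples of $r$ and at least $a_{i+1}-a_i$ --- is a large-$|x|$ computation, contradicting the region assignment you announce at the outset. The distinction matters in the degenerate case $a_{i+1}-a_i\equiv1\ (r)$, where the two congruence classes coincide and the splitting into $\tau^1$ and $\tau^2$ is fixed by separating the ``unit-integral/geometric-series'' terms from the single nondegenerate Gauss-sum term, exactly as in \cite[\S~5]{COf}. With that correction the argument goes through as you describe.
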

\begin{proof}
Apply the arguments of \cite[\S~9, \S~13]{McNamara3} (based on \cite[\S~I.1.3]{KP}).
\end{proof}
\begin{remark}
Our conventions are slightly different from \cite{McNamara3} in order to facilitate the application of results from \cite{Suzuki1997}.
\end{remark}

By \eqref{eq:tau on maximal abelian}, $\tau_{t,t'}(w,\vartheta,\chi)$ is determined by elements $t=t_{\mathbf{a}}$ and $t'=t_{\mathbf{b}}$, and we define
\begin{align*}
\tau_{\mathbf{a},\mathbf{b}}(w,\vartheta,\chi)=\tau_{t_{\mathbf{a}},t_{\mathbf{b}}}(w,\vartheta,\chi).
\end{align*}

The Casselman--Shalika formula for $\GL_d^{(m,r)}$ takes the following form:
\begin{align}\label{eq:CS formula}
&W_{\mathbf{a}}(\mathbf{b},\vartheta,\chi)=\delta_{B_{\GL_{d}}}(t_{\mathbf{b}})\sum_{w\in W_{\GL_d}}\prod_{\{\alpha\in\Phi_d^+:w\alpha\in\Phi_d^+\}}\frac{1-q^{-1}({}^{w^{-1}}\mathbf{x})_{\alpha}}
{1-({}^{w^{-1}}\mathbf{x})_{\alpha}}\tau_{\mathbf{a},\mathbf{b}^*}(w,\vartheta,{}^{w^{-1}}\chi).
\end{align}

We also mention the following observation of \cite[Remark~3.2]{Suzuki1997}: for a nontrivial $w\in W_{\GL_d}$,
if $w=w_{\alpha_1}\cdot\ldots\cdot w_{\alpha_{\ell(w)}}$, by definition
$\tau_{t,t'}(w,\vartheta,\chi)=0$ unless there is $w'=w_{\alpha_1}^{i_1}\cdot\ldots\cdot w_{\alpha_{\ell(w)}}^{i_1}$ for some $i_1,\ldots,i_{\ell(w)}\in\{0,1\}$, such that $w'[\mathbf{v}(t')]\equiv \mathbf{v}(t)$.

As in Suzuki \cite[\S~4.1]{Suzuki1997} (see also \cite[\S~7.1 (2)]{Suzuki1998}), we can rewrite the summation in \eqref{eq:CS formula} to obtain an iterative formula. Regard $W_{\GL_{d-1}}$ as the subgroup of $W_{\GL_d}$ generated by $w_{\alpha}$ for $\alpha=(i,i+1)$ and $1\leq i<d-1$. Identify $\chi$ with $(\chi_1,\ldots,\chi_d)$, where each $\chi_i$ is an unramified quasi-character of $F^*$, then let
$\chi'[i]$ denote the $d-1$ tuple $(\chi_1,\ldots,\chi_{i-1},\chi_{i+1},\ldots,\chi_d)$, which then defines the representation
$\mathrm{I}_{\GL_{d-1}^{(m,r)}}(\vartheta,\chi'[i])$. Also denote $\mathbf{b}=(b_1,\mathbf{b}')$ ($\mathbf{b}'\in\Z^{d-1}$).
For $1\leq i\leq d-1$, let $\omega_{i}=w_{\alpha_{d-i}}\cdot\ldots\cdot w_{\alpha_{d-1}}$ ($\omega_{i}$ takes $i$ positive roots of $P_{(d-1,1)}$ into negative roots), and take $\omega_0=I_d$. Then we can write each $w\in W_{\GL_d}$ uniquely in the form
$w=w'\omega_{i}^{-1}$, where $w'\in W_{\GL_{d-1}}$ and $0\leq i\leq d-1$. Now \eqref{eq:CS formula} can be restated in the form
\begin{align}\label{eq:iterative CS formula of Suzuki}
&W_{\mathbf{a}}(\mathbf{b},\vartheta,\chi)=
\\\nonumber&\delta_{B_{\GL_d}}(t_{\mathbf{b}})
\sum_{i=0}^{d-1}
\prod_{j=1}^{d-i-1}\frac{1-q^{-1}\mathbf{x}_{(j,d-i)}}{1-\mathbf{x}_{(j,d-i)}}
\sum_{\mathbf{e'}\in r\Z^{d-1}\backslash \Z^{d-1}}
\tau_{\mathbf{a}^*,(\mathbf{e'}^*,-b_1)}(\omega_i,\vartheta,{}^{\omega_i^{-1}}\chi)
\delta_{B_{\GL_{d-1}}}^{-1}(t_{\mathbf{b'}})W_{\mathbf{e'}}(\mathbf{b'},\vartheta,\chi'[d-i]).
\end{align}
\begin{remark}
Gao \textit{et. al.} \cite{GaoShahidiSzpruch2018} and Szpruch \cite{Szpruch2018} have recently developed
a reinterpretation of the coefficients of \eqref{eq:general equation defining tau a b}: the coefficients were presented as linear combinations of Tate $\gamma$-factors, or ``metaplectic" $\gamma$-factors (defined in \cite{Szpruch11}), where the scalars were computed via harmonic analysis on $F^{*r}\backslash F^*$. The determinant of their local coefficients matrix was related to the Plancherel measure and may shed new light on the Plancherel formula for covering groups.
\end{remark}
\subsection{Exceptional representations}\label{exceptional}
We briefly describe the exceptional representations of coverings of general linear groups, which play a role in our construction.
These representations were introduced by \cite{KP} (for the coverings studied there). Gao \cite{Gao5} extended their construction to the coverings of arbitrary connected split reductive groups over non-archimedean fields, developed in \cite{BD}, and in particular to $\GL_d^{(m,r)}$ (see also \cite{Gao4}).

Assume $F$ is unramified and $\mu_{2m}\subset F^*$. Define the exceptional representation $\Theta_{d,m,r,\vartheta}$ as the unique irreducible subrepresentation of $\mathrm{I}_{\GL_{d}^{(m,r)}}(\vartheta,\delta_{B_{\GL_{d}}}^{-1/(2r)})$, equivalently the unique irreducible quotient of $\mathrm{I}_{\GL_{d}^{(m,r)}}(\vartheta,\delta_{B_{\GL_{d}}}^{1/(2r)})$. The existence of this representation follows from Langlands' Quotient Theorem for covering groups (\cite{BJ}). Also $\Theta_{d,m,r,\vartheta}$ is unramified.

The representation $\Theta_{r,m,r,\vartheta}$ affords a unique (up to scaling) Whittaker functional, while
$\Theta_{d,m,r,\vartheta}$ does not afford a Whittaker functional for any $d>r$. This follows from
\cite[Proposition~3.5]{Gao4}. Specifically, the condition $Y_{\GL_r,Q,n}=n_{\alpha}Y_{\GL_r}$ in the notation of \textit{loc. cit.} becomes
$Y_{\GL_d,Q,m}=rY_{\GL_d}$ here, which is simple to verify directly. These results of \cite{Gao4} were proved under the assumptions $|2m|=1$ and $\mu_{2m}\subset F^*$, both satisfied here (the condition $p\nmid 2m$ of \cite[\S~2]{Gao4}, where $p$ is the characteristic of the residue field of $F$, is not needed for these assertions). See also \cite[Theorem~I.3.5]{KP} for the proof of this statement
in their setting.

\begin{proposition}\label{proposition:* of Theta}
Let ${}^*$ be defined by \eqref{eq:involution b*0}. Then
$\Theta_{d,m,r,\vartheta}^*=\Theta_{d,m,r,\vartheta}$.
\end{proposition}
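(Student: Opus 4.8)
The plan is to show that the involution ${}^*$ fixes the unramified principal series from which $\Theta_{d,m,r,\vartheta}$ is extracted, together with its distinguished irreducible subquotient. First I would invoke Proposition~\ref{proposition:vartheta of *}, which says that for a genuine unramified representation $\tau$ that is the irreducible unramified constituent of $\mathrm{I}_{\GL_d^{(m,r)}}(\vartheta,\chi)$, the twist $\tau^*$ is the irreducible unramified constituent of $\mathrm{I}_{\GL_d^{(m,r)}}(\vartheta,\chi^{-1})$. I would apply this with $\chi=\delta_{B_{\GL_d}}^{-1/(2r)}$. The key observation is then purely combinatorial: the character $\delta_{B_{\GL_d}}^{-1/(2r)}$, written as $\otimes_{i=1}^d\chi_i$ with $\chi_i=|\cdot|^{(d+1-2i)/(2r)\cdot(\text{appropriate shift})}$, has the property that $\chi^{-1}$ is a Weyl-conjugate of $\chi$ — indeed $\delta_{B_{\GL_d}}^{-1}$ is taken to $\delta_{B_{\GL_d}}$ by the longest Weyl element $w_0$, since $w_0$ reverses the order of the torus coordinates and sends each simple root to the negative of a simple root. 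Concretely $({}^{w_0}(\delta_{B_{\GL_d}}^{-1/(2r)}))_i=\chi_{d+1-i}=\chi_i^{-1}$.

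Granting that, the next step is to conclude $\mathrm{I}_{\GL_d^{(m,r)}}(\vartheta,\delta_{B_{\GL_d}}^{-1/(2r)})$ and $\mathrm{I}_{\GL_d^{(m,r)}}(\vartheta,\delta_{B_{\GL_d}}^{1/(2r)})$ have the same irreducible unramified constituent. This requires knowing that the Weyl-group action on the inducing data of the inner induced representation of $\widetilde{T}_{\GL_d,*}$ matches the na\"ive action on the $\chi_i$; by Proposition~\ref{proposition:action of W on torus is trivial on Sp} (using $\mu_{2m}\subset F^*$, hence $-1\in F^{*m}$, and that $\GL_d^{(m,r)}$ is realized by restriction from $\Sp_{2d}^{(m)}$) we have ${}^w(\varepsilon\otimes\vartheta\chi)=\varepsilon\otimes\vartheta({}^w\chi)$, so the intertwining operator $M(w_0)$ gives a nonzero map between the two principal series and both therefore share a unique unramified constituent. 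Since $\Theta_{d,m,r,\vartheta}$ is by definition that constituent, and $\Theta_{d,m,r,\vartheta}^*$ is by Proposition~\ref{proposition:vartheta of *} the unramified constituent of $\mathrm{I}_{\GL_d^{(m,r)}}(\vartheta,\delta_{B_{\GL_d}}^{-1/(2r)})^* = \mathrm{I}_{\GL_d^{(m,r)}}(\vartheta,\delta_{B_{\GL_d}}^{1/(2r)})$ — wait, more carefully: $\chi=\delta_{B_{\GL_d}}^{-1/(2r)}$ gives $\chi^{-1}=\delta_{B_{\GL_d}}^{1/(2r)}$, whose unramified constituent is the same $\Theta_{d,m,r,\vartheta}$ by the $w_0$-conjugacy just established. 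Hence $\Theta_{d,m,r,\vartheta}^*\cong\Theta_{d,m,r,\vartheta}$.

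The one subtlety to handle with care — and the step I expect to be the main obstacle — is the bookkeeping around the $\vartheta$-parameter and the choice of the maximal abelian subgroup $\widetilde{T}_{\GL_d,*}$: Proposition~\ref{proposition:vartheta of *} was stated with exactly this normalization in mind, so I must make sure that the inducing data $\varepsilon\otimes\vartheta\delta_{B_{\GL_d}}^{-1/(2r)}$ really is a genuine character of $\widetilde{A}$ (it is, since $\delta_{B_{\GL_d}}^{-1/(2r)}$ takes values in $|\cdot|^{\Z/r}$ which lands in $F^{*r}\mathcal{O}^*$-valued... actually one should check $\delta_{B_{\GL_d}}^{\pm1/(2r)}$ restricted to $A$ is well defined, which holds because $\delta_{B_{\GL_d}}^{1/r}$ is an integral power of $|\cdot|$), and that the na\"ive $w_0$-action on $\chi$ is what governs the isomorphism class. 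Since $\vartheta$ is unchanged under ${}^*$ (the content of the first part of Proposition~\ref{proposition:vartheta of *}) and the genuine character of the center determines the representation, I would then simply cite these two propositions and the $w_0$-reversal of $\delta_{B}$ to finish. I do not anticipate any hard analysis — this is a short deduction from results already established in \S\ref{unramified reps}.
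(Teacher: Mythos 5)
Your proposal is correct and follows essentially the same route as the paper: Proposition~\ref{proposition:vartheta of *} applied to $\chi=\delta_{B_{\GL_d}}^{-1/(2r)}$, combined with the fact that this character is taken to itself by the reversal-and-inversion coming from $b\mapsto b^*$, so that $(\mathrm{I}_{\GL_d^{(m,r)}}(\vartheta,\delta_{B_{\GL_d}}^{-1/(2r)}))^*=\mathrm{I}_{\GL_d^{(m,r)}}(\vartheta,\delta_{B_{\GL_d}}^{-1/(2r)})$ and the unique irreducible unramified constituent is therefore fixed. Your detour through $M(w_0)$ is unnecessary and slightly loose -- in the direction out of $\mathrm{I}_{\GL_d^{(m,r)}}(\vartheta,\delta_{B_{\GL_d}}^{-1/(2r)})$ the Gindikin--Karpelevich factor \eqref{eq:GK formula for GL} vanishes on the simple roots, so $M(w_0)$ kills $\xi^0$; it is cleaner to argue, as the paper does, that $\Theta_{d,m,r,\vartheta}^*$ is an irreducible unramified constituent of the same principal series and hence coincides with $\Theta_{d,m,r,\vartheta}$.
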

\begin{proof}
By Proposition~\ref{proposition:vartheta of *}, $\Theta_{d,m,r,\vartheta}^*$ is a constituent
of $\mathrm{I}_{\GL_{d}^{(m,r)}}(\vartheta,\delta_{B_{\GL_{d}}}^{-1/(2r)})$, and since it is also irreducible and unramified,
it coincides with $\Theta_{d,m,r,\vartheta}$.
\end{proof}

\begin{remark}
For exceptional representations of groups other than $\GL_d$, Jacquet modules may depend on $\vartheta$ (see e.g., \cite{Gao5}).
\end{remark}

\subsection{Fourier coefficients on unipotent orbits}\label{fourier coeff on orbits}
The unipotent orbits of $\GL_d$ are in bijection with the partitions of $d$. There is a partial ordering defined on the partitions, and for two partitions $\beta$ and $\beta'$, we write $\beta\succsim \beta'$ if $\beta$ is greater than or not comparable with $\beta'$. Given a partition $\beta$ there is a corresponding unipotent subgroup $V(\beta)$ and a set of generic characters of $V(\beta)$. See \cite[\S~2]{G2} for these definitions or \cite{Cr,CM} for the standard reference. Over a local field, for one such character $\psi$ and a smooth (complex) representation $\rho$ of $\GL_d$, define $\mathcal{O}(\rho,\beta,\psi)=\Hom_{V(\beta)}(\rho,\psi)$ (continuous morphisms over archimedean fields). The global counterpart is the
Fourier coefficient
\begin{align}\label{int:general Fourier coeff}
\int\limits_{V(\beta)(F)\backslash V(\beta)(\A)}\phi(v)\psi^{-1}(v)\,dv,
\end{align}
defined for an automorphic form $\phi$ in the space of an automorphic representation $\rho$ of $\GL_d(\A)$.
We then let $\mathcal{O}(\rho,\beta,\psi)$ denote the set of Fourier coefficients as $\phi$ varies in the space of $\rho$.

These notions extend to $\GL_{d}^{(m,r)}$ and any other covering of $\GL_d$ from \cite{KP,BD}, because these coverings are split canonically over a given unipotent subgroup (see \S~\ref{Covering groups}).

\subsection{Semi--Whittaker coefficients}\label{Semi Whittaker coeff}
Let $d$ be a positive integer. For a composition $\lambda$ of $d$, let $\psi_{\lambda}$ be the character of $N_{\GL_{d}}$ which restricts to $\psi$ on the simple root subgroups of $M_{\lambda}$ and acts trivially otherwise. Globally, starting with a nontrivial character of $F\backslash \A$, we obtain
a character of $N_{\GL_d}(\A)$ which is trivial on $N_{\GL_d}(F)$. For an automorphic form $\phi$ on $\GL_d(\A)$, the global Fourier coefficients along $(N_{\GL_{d}},\psi_{\lambda})$ is given by the integral
\begin{align}\label{lambda semi coefficient}
\int\limits_{N_{\GL_d}(F)\bs N_{\GL_d}(\A)}\phi(u)\psi_{\lambda}^{-1}(u) \, du.
\end{align}
The definition extends to autormorphic forms on $\GL_d^{(m,r)}(\A)$ or any other covering of $\GL_d(\A)$ from \cite{KP,BD}, except that $u$ is replaced by
its image in the covering under the canonical splitting. These coefficients
are called semi--Whittaker coefficients, and have strong relations with Fourier coefficients associated to unipotent orbits.
They were studied in both global and local contexts in several works, including \cite{MW3,GourevitchSahi2013,AGS2015a,AGS2015,GGS,Cai2,GGS2}.

Extend the partial order on partitions to compositions by comparing their underlying partitions.
We will use the following two results, which are particular cases of results of Cai \cite[Proposition~5.3]{Cai2} and
\cite[Proposition~5.5]{Cai2}, respectively.
\begin{lemma}\label{lemma:gbl semi Whittaker and WSS}
Let $\mathcal{E}$ be an automorphic representation of $\GL_{rkc}(\A)$. If \eqref{lambda semi coefficient} is identically zero on $\mathcal{E}$ for any $\lambda\succsim((rk)^c)$, and is nonzero
for $\lambda=((rk)^c)$, then $\mathcal{O}(\mathcal{E},((rk)^c),\psi)\ne0$.
\end{lemma}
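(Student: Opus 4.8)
The plan is to deduce this from the corresponding local statement: namely, that if all the ``larger'' semi-Whittaker Jacquet modules vanish at every place but the $((rk)^c)$-semi-Whittaker coefficient is nonzero, then the representation supports a Fourier coefficient along the unipotent orbit $((rk)^c)$. This is precisely the content of \cite[Proposition~5.3]{Cai2} (whose setting is a representation of $\GL_{N}(\A)$, here $N = rkc$), applied with the partition $((rk)^c)$; the only work is to check that the hypotheses of that proposition translate into the two hypotheses stated here, and that the partial-order bookkeeping matches. So first I would recall the dictionary between semi-Whittaker coefficients \eqref{lambda semi coefficient} and Fourier coefficients on unipotent orbits \eqref{int:general Fourier coeff}: for a composition $\lambda$, the coefficient \eqref{lambda semi coefficient} is (up to conjugation and a choice of generic character) a Fourier coefficient attached to the unipotent orbit whose partition is the transpose-type invariant determined by $\lambda$; the precise comparison is via the partial order on partitions obtained by passing to underlying partitions, as stated in the excerpt just before the lemma.

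Next I would set up the ``root exchange''/quasi-conjugation argument, which is the standard mechanism (Ginzburg--Rallis--Soudry, \cite{GGS}, \cite{Cai2}) turning a nonvanishing semi-Whittaker coefficient together with vanishing of all strictly larger ones into nonvanishing of the genuine orbit coefficient. Concretely: the unipotent subgroup $N_{\GL_{rkc}}$ with the character $\psi_{((rk)^c)}$ and the unipotent subgroup $V(((rk)^c))$ with its generic character differ by a sequence of abelian ``exchange'' steps, each of which either introduces an integration over a root subgroup paired with a dual one, or expands a coefficient into a sum of coefficients attached to compositions $\lambda$ with $\lambda \succsim ((rk)^c)$. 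The hypothesis that \eqref{lambda semi coefficient} vanishes for all $\lambda \succsim ((rk)^c)$ kills every unwanted term in these expansions, so the process terminates with $\mathcal{O}(\mathcal{E},((rk)^c),\psi)$, and its nonvanishing is forced by the assumed nonvanishing for $\lambda = ((rk)^c)$. Since these are exactly the inputs and outputs of \cite[Proposition~5.3]{Cai2}, I would simply invoke that proposition; the role of our argument is to verify the translation of hypotheses and the compatibility of the orderings.

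The point requiring care --- and the main obstacle --- is making sure the notion of ``$\lambda \succsim ((rk)^c)$'' used in Cai's proposition matches the one in the lemma, including the subtlety that compositions rather than partitions appear. Cai's results are stated for Fourier coefficients indexed by unipotent orbits (partitions), while the hypothesis here is phrased via semi-Whittaker coefficients indexed by compositions $\lambda$, with the order defined by comparing underlying partitions. I would check that every composition $\lambda$ whose semi-Whittaker coefficient appears in the exchange expansions of the $((rk)^c)$-coefficient has underlying partition $\succsim (rk)^c$, so that the stated vanishing hypothesis genuinely covers all of them; and conversely that vanishing of \eqref{lambda semi coefficient} for the relevant compositions is implied by (or equivalent to) vanishing of the orbit coefficients $\mathcal{O}(\mathcal{E},\beta,\psi)$ for $\beta \succsim (rk)^c$ that Cai's proposition wants. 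This is routine but must be done explicitly, since an off-by-one in the order would invalidate the whole deduction. Once this bookkeeping is in place, the lemma follows immediately from \cite[Proposition~5.3]{Cai2}.
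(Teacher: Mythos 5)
Your proposal matches the paper's treatment: the lemma is stated there as a particular case of \cite[Proposition~5.3]{Cai2}, with the root-exchange mechanism (and the extension of the partial order from partitions to compositions via underlying partitions) being exactly the points the paper records around the statement. The only quibble is your opening phrase about deducing it ``from the corresponding local statement'' --- the hypothesis and conclusion here are purely global and Cai's Proposition~5.3 is invoked directly as a global result --- but this does not affect the substance of your argument.
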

\begin{lemma}\label{lemma:local semi Whittaker and WSS}
Let $\mathcal{E}$ be a smooth representation of $\GL_{rkc}$ over a local non-archimedean field. Then
$J_{N_{\GL_{rkc}},\psi_{\lambda}}(\mathcal{E})=0$ for all $\lambda\succsim((rk)^c)$,
if and only if $\mathcal{O}(\mathcal{E},\lambda_0,\psi)=0$ for all $\lambda_0\succsim ((rk)^c)$. Moreover,
\begin{align*}
\dim J_{N_{\GL_{rkc}},\psi_{((rk)^c)}}(\mathcal{E})=\dim \mathcal{O}(\mathcal{E},((rk)^c),\psi).
\end{align*}
\end{lemma}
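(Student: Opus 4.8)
The plan is to obtain Lemma~\ref{lemma:local semi Whittaker and WSS} as the specialization of Cai's general comparison result \cite[Proposition~5.5]{Cai2} to the (linear) group $\GL_{rkc}$ and the rectangular partition $((rk)^c)$ of $rkc$; the statement concerns only $\GL_{rkc}$ over a local non-archimedean field, so no covering-theoretic input enters and its content is precisely that of the linear case. First I would recall the dictionary between compositions and unipotent orbits for $\GL_d$: to a composition $\lambda=(\lambda_1,\ldots,\lambda_l)$ of $d$ one attaches the character $\psi_\lambda$ of $N_{\GL_d}$ (Whittaker on the simple roots of $M_\lambda$, trivial otherwise), whose associated nilpotent element — regular in each block $\GL_{\lambda_i}$, hence a single Jordan block of size $\lambda_i$ there — has Jordan type the partition $\widehat\lambda$ obtained by sorting $\lambda$. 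Thus $J_{N_{\GL_d},\psi_\lambda}(\mathcal{E})$ is the ``semi--Whittaker'' avatar of the orbit coefficient $\mathcal{O}(\mathcal{E},\widehat\lambda,\psi)$, the two differing only by the unipotent directions of $N_{\GL_d}$ lying outside the standard $V(\widehat\lambda)$.

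The comparison of the two families of coefficients is then carried out by the root-exchange (``Fourier--Jacobi'') technique of Ginzburg--Rallis--Soudry, in the form developed in \cite{GGS,GGS2,Cai2}: after conjugating $V(\widehat\lambda)$ into standard position inside $N_{\GL_d}$, the complementary root groups organize into a chain of Heisenberg-type pieces, and on each such piece one performs a Fourier expansion and exchanges a pair of isotropic root subgroups. Every such step either leaves the Jacquet-module value unchanged or replaces the given coefficient by one attached to a partition that is strictly larger than, or not comparable with, $\widehat\lambda$. Running this chain in both directions yields the equivalence in the lemma: $J_{N_{\GL_{rkc}},\psi_\lambda}(\mathcal{E})=0$ for every $\lambda\succsim((rk)^c)$ exactly when $\mathcal{O}(\mathcal{E},\lambda_0,\psi)=0$ for every partition $\lambda_0\succsim((rk)^c)$, because the partial order is preserved along the chain and $((rk)^c)$ lies on the boundary of the set of orbits being tested. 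For the refined statement, one observes that once all coefficients attached to partitions strictly larger than or incomparable with $((rk)^c)$ vanish, every exchange step in the chain connecting $J_{N_{\GL_{rkc}},\psi_{((rk)^c)}}(\mathcal{E})$ to $\mathcal{O}(\mathcal{E},((rk)^c),\psi)$ becomes an isomorphism — the ``error'' terms it could introduce are precisely coefficients of those vanishing orbits — and the asserted equality of dimensions follows.

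The main obstacle is not the bookkeeping of root subgroups, which is routine once the orbit $((rk)^c)$ and its $\mathfrak{sl}_2$-triple are written down, but the verification that the root-exchange chain for the \emph{top} orbit produces an honest isomorphism, i.e.\ an exact equality of dimensions rather than merely an inequality; this is exactly where the hypothesis that \emph{all} larger and incomparable orbit coefficients vanish is used, so that each Fourier expansion in the chain collapses to a single surviving term. I would isolate this as a separate step (or simply invoke the corresponding statement in \cite{Cai2} and \cite{GGS}) and then check that its hypotheses translate correctly under the composition-to-partition dictionary for the rectangular partition $((rk)^c)$. Finally, the passage to the cover $\GL_{rkc}^{(m,r)}$ needed elsewhere in the paper is immediate from the canonical splitting over unipotent subgroups recalled in \S~\ref{fourier coeff on orbits} and \S~\ref{Semi Whittaker coeff}, since $\psi_\lambda$, the groups $V(\beta)$ and their generic characters all live on unipotent subgroups.
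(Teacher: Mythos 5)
Your proposal matches the paper's proof: the lemma is obtained simply as a special case of Cai's \cite[Proposition~5.5]{Cai2}, whose underlying mechanism is exactly the root-exchange comparison between semi--Whittaker coefficients and orbit Fourier coefficients that you describe, and the passage to the cover is indeed immediate from the canonical splitting over unipotent subgroups. One small correction: over a local non-archimedean field the exchange step is the Ginzburg--Rallis--Soudry lemma (\cite[\S~2.2]{GRS5}), which gives an unconditional isomorphism of twisted Jacquet modules rather than a Fourier expansion with higher-orbit error terms, so the dimension equality at $((rk)^c)$ holds without assuming the vanishing of the larger and incomparable coefficients --- consistent with the unconditional phrasing of the lemma, although your conditional version would suffice for every application made of it in the paper.
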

\begin{remark}\label{remark:applicability of Cai results to coverings}
The proofs in \cite{Cai2} were stated for general linear groups, but are based on the ``root exchange" technique which is also applicable to covering groups: in a global context the fundamental root exchange result is \cite[Lemma~7.1]{RGS} (following \cite{G,GRS3,Soudry5}); the local version of root exchange is \cite[\S~2.2]{GRS5} (see also the proof of \cite[Theorem~3.1]{me11}).
\end{remark}
\section{Representations of type $(rk,c)$}\label{speh}
\subsection{Definition}\label{speh def}
Let $r,k,c\geq1$ be integers. We use the notation of \S~\ref{fourier coeff on orbits} for the group $\GL_{rkc}$.
For the partition $\beta=((rk)^c)$, $V(\beta)=V_{(c^{rk})}$ (the unipotent radical of $P_{(c^{rk})}$). For $v\in V_{(c^{rk})}$ write $v=(v_{i,j})_{1\leq i,j\leq rk}$ with $v_{i,j}\in\Mat_c$,
and define
\begin{align}\label{eq:rk c character}
\psi(v)=\psi(\sum_{i=1}^{rk-1}\tr(v_{i,i+1})).
\end{align}
Let $\mathcal{E}$ be a genuine smooth representation of $\GL_{rkc}^{(m,r)}$ over a local field. We say that $\mathcal{E}$ is an $(rk,c)$ representation if the following holds:
\begin{enumerate}
\item $\mathcal{O}(\mathcal{E},\beta',\psi')=0$ for any $\beta'\succsim((rk)^c)$.
\item For $\psi$ given by \eqref{eq:rk c character}, $\dim\mathcal{O}(\mathcal{E},((rk)^c),\psi)=1$.
\end{enumerate}
Any (nonzero) $\Lambda\in \mathcal{O}(\mathcal{E},((rk)^c),\psi)$ is called an $(rk,c)$ functional on $\mathcal{E}$. If $\mathcal{E}$ is an $(rk,c)$ representation, we fix one such $\Lambda$ and let $\mathcal{W}(\mathcal{E})$ be the unique $(rk,c)$ model of $\mathcal{E}$ with respect to $\psi$, i.e., the space spanned by functions $g\mapsto\Lambda(\mathcal{E}(g)\xi)$ where $g\in\GL_{rkc}^{(m,r)}$ and $\xi$ varies in the space of $\mathcal{E}$.

Now for a genuine irreducible automorphic representation $\mathcal{E}$ of $\GL_{rkc}^{(m,r)}(\A)$, $\mathcal{E}$ is an $(rk,c)$ representation if it satisfies the following conditions:
\begin{enumerate}
\item\label{def:Whittaker--Speh--Shalika 1} $\mathcal{O}(\mathcal{E},\beta',\psi')=0$ for any $\beta'\succsim((rk)^c)$.
\item\label{def:Whittaker--Speh--Shalika 1.5} $\mathcal{O}(\mathcal{E},((rk)^c),\psi)\ne0$.
\item\label{def:Whittaker--Speh--Shalika 2} All the unramified components $\mathcal{E}_{\nu}$ of $\mathcal{E}$ are $(rk,c)$ representations.
\end{enumerate}
\begin{remark}
In the linear case, the generalized Speh representation of $\GL_{kc}(\A)$ attached to an irreducible cuspidal automorphic representation of $\GL_k(\A)$ is $(k,c)$ and all of its local components are $(k,c)$ as well (globally - \cite{G2,JL2013}, locally - \cite[Theorem~5]{CFK}).
\end{remark}
\begin{remark}
These definitions make sense for any covering of $\GL_{rkc}$, as long as it is split canonically over $N_{\GL_{rkc}}$, and in a global setting it is also split over $\GL_{rkc}(F)$.
\end{remark}

The global vanishing condition is superfluous: the local vanishing properties of $\mathcal{E}_{\nu}$, even in one place, already imply global vanishing by a local-global principle (see e.g., \cite[Proposition~1]{JR}).

For $\GL_{rkc}^{(m,r)}(\A)$ with the $2$-cocycle $\rho_{rkc}^{\diamondsuit}$, $\beta=((rk)^c)$ and $\psi$ given by \eqref{eq:rk c character},
\eqref{int:general Fourier coeff} becomes
\begin{align}\label{int:a c general Fourier coeff}
\Lambda(\phi)=\int\limits_{V_{(c^{rk})}(F)\backslash V_{(c^{rk})}(\A)}\phi(\langle v,(\eta_{rkc}^{\diamondsuit})^{-1}(v)\rangle)\psi^{-1}(v)\,dv.
\end{align}
We call this the global $(rk,c)$ functional. If $\mathcal{E}$ is an $(rk,c)$ representation of $\GL_{rkc}^{(m,r)}(\A)$, then
\eqref{int:a c general Fourier coeff} does not vanish on the space of $\mathcal{E}$. The (global) $(rk,c)$ model of $\mathcal{E}$ is then
by definition the space of functions $g\mapsto\Lambda(\mathcal{E}(g)\phi)$.

We also mention that for $c=1$, $\Lambda(\phi)$ is the Whittaker--Fourier coefficient on $\mathcal{E}$ and
we call $\mathcal{E}$ globally generic when $\mathcal{O}(\mathcal{E},rk,\psi)\ne0$. If
$\mathcal{E}$ is an irreducible globally generic automorphic representation of $\GL_k(\A)$ (the linear setting),
the global condition already implies the local condition at all places.

One may expect all the local components of an $(rk,c)$ representation to be $(rk,c)$, but since we can not prove this for the $(rk,c)$ representations we construct, we settle for uniqueness at all the unramified places. Unfortunately, in the absence of local uniqueness everywhere we do not obtain a decomposable functional; we obtain the weaker statement \eqref{eq:partial decomp of Lambda} below, but it can still be used to obtain results on the partial $L$-function. See \S~\ref{global symplectic}.

Let $\mathcal{E}$ be an $(rk,c)$ representation of $\GL_{rkc}^{(m,r)}(\A)$.
We can identify $\mathcal{E}$ with the restricted tensor product $\otimes_{\nu}'\mathcal{E}_{\nu}$, with respect to a finite set $S$ of places of $F$ such that $\mathcal{E}_{\nu}$ is unramified for all $\nu\notin S$, and a collection $\{\xi_{\nu}^0\}_{\nu\notin S}$ of vectors where each $\xi_{\nu}^0$ belongs to the space of $\mathcal{E}_{\nu}$ and is fixed by $\{\langle y,1\rangle:y\in K_{\GL_{rkc},\nu}\}$
($\rho_{rkc,\nu}^{\diamondsuit}$ is trivial on $K_{\GL_{rkc},\nu}$). For
$\nu\notin S$, $\Hom_{V_{(c^k)}(F_\nu)}(\mathcal{E}_\nu,\psi_\nu)$ is one-dimensional. We fix the $(rk,c)$ functional $\Lambda_{\nu}^0$ on $\mathcal{E}_{\nu}$ by demanding
$\Lambda_{\nu}^0(\xi_{\nu}^0)=1$. Denote $F_S=\prod_{\nu\in S} F_{\nu}$. For $g\in\GL_{rkc}^{(m,r)}(\A)$, write $g=(g_{\nu})_{\nu}$ with
$g_{\nu}\in\GL_{rkc}^{(m,r)}(F_{\nu})$ and set
$g_S=(g_{\nu})_{\nu\in S}$. Also let $\mathcal{E}_S$ and $\psi_S$ denote the tensor products over the components in $S$. For the places in $S$ we can define $\Lambda[S]\in{\Hom}_{V_{(c^k)}(F_S)}(\mathcal{E}_S,\psi_S)$ by
\begin{align}\label{eq:partial Lambda S}
\Lambda[S](\xi_S)=\Lambda(\xi_S\otimes'_{\nu\notin S}\xi_{\nu}^0).
\end{align}
Then for a decomposable vector $\phi=\xi_S\otimes'_{\nu\notin S}\xi_{\nu}$ in the space of $\otimes_{\nu}'\mathcal{E}_{\nu}$, for all $g\in\GL_{rkc}^{(m,r)}(\A)$,
\begin{align}\label{eq:partial decomp of Lambda}
\Lambda(\mathcal{E}(g)\varphi)=\Lambda[S](\mathcal{E}_S(g_S)\xi_S)\prod_{\nu\notin S}\Lambda_{\nu}(\mathcal{E}_{\nu}(g_{\nu})\xi_{\nu}).
\end{align}
Here $\Lambda_{\nu}$ is a scalar multiple of $\Lambda_{\nu}^0$ for all $\nu\notin S$. To prove this one argues exactly as in
\cite[Proposition~3.14]{Tk}, which is the adaptation of the decomposition result of Shalika
\cite[\S~4]{Sh} when uniqueness holds everywhere (see also \cite{PS1979} and \cite[Theorem~3.5.2]{Bump1997}).

Next we prove that $\Lambda(\phi)$ enjoys an additional invariance property, which is important for the global construction of the integral. The stabilizer of the character \eqref{eq:rk c character} inside $M_{(c^{rk})}$ is the diagonal embedding $\GL_c^{\Delta}=\{b^{\triangle}:b\in\GL_c\}$ in $\GL_{rkc}$, where $b^{\triangle}=\diag(b,\ldots,b)$. Our first step is to construct a splitting of $\SL_c^{\Delta}(\A)$ under $\rho_{rkc}^{\diamondsuit}$ (for $c=1$, $\SL_c$ is trivial).

\begin{proposition}\label{proposition:sigma on diagonal embedding of SLc}
Locally for all $b,b'\in\SL_c$, $\sigma_{rkc}^{\diamondsuit}(b^{\triangle},{b'}^{\triangle})=
(\varsigma_{*,c}(b)\varsigma_{*,c}(b')/\varsigma_{*,c}(bb'))^{rk}$. Hence
$b^{\triangle}\mapsto\langle b^{\triangle},\varsigma_{*,c}^{-rk}(b)\rangle$
is the unique splitting of $\SL_c^{\triangle}$ in $\GL_{rkc}^{(m,r)}$, when we realize $\GL_{rkc}^{(m,r)}$ using
$\sigma_{rkc}^{\diamondsuit}$.
\end{proposition}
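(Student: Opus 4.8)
The plan is to reduce the computation of $\sigma_{rkc}^{\diamondsuit}(b^{\triangle},{b'}^{\triangle})$ to the block-compatibility formula \eqref{eq:BLS block compatible} together with the relation \eqref{eq:sigma c * rk} between $\sigma_c$ and $\sigma_c^{*,rk}$, exactly in the spirit of the proof of Proposition~\ref{proposition:the $2$-cocycle on G times G}. First I would unwind the definition \eqref{eq:sigma square}: since $b^{\triangle}=\diag(b,\ldots,b)$ ($rk$ copies), we have $(b^{\triangle})^*=\diag(b^*,\ldots,b^*)$, so $\diag(b^{\triangle},(b^{\triangle})^*)$ is block-diagonal with $rk$ copies of $b$ followed by $rk$ copies of $b^*$, all in $\GL_c$. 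Applying \eqref{eq:BLS block compatible} repeatedly to split off one $\GL_c$-block at a time, and using that $\det b=\det b'=1$ (so all the Hilbert-symbol cross terms $(\det\cdot,\det\cdot)_m$ are trivial), I get
\begin{align*}
\sigma_{rkc}^{\diamondsuit}(b^{\triangle},{b'}^{\triangle})=\sigma_c(b,b')^{rk}\,\sigma_c(b^*,{b'}^*)^{rk}=\sigma_c(b,b')^{rk}\,\sigma_c^{*}(b,b')^{rk}.
\end{align*}

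Next I would substitute the cohomology relation. By \eqref{eq:sigma c *}, $\sigma_c^*(b,b')=\frac{\varsigma_{*,c}(b)\varsigma_{*,c}(b')}{\varsigma_{*,c}(bb')}\sigma_c(b,b')$, so
\begin{align*}
\sigma_{rkc}^{\diamondsuit}(b^{\triangle},{b'}^{\triangle})=\Big(\frac{\varsigma_{*,c}(b)\varsigma_{*,c}(b')}{\varsigma_{*,c}(bb')}\Big)^{rk}\sigma_c(b,b')^{2rk}.
\end{align*}
Since $\sigma_c$ takes values in $\mu_m$ and $\sigma_c^{2r}$ is trivial (as noted at the end of the proof of Proposition~\ref{proposition:the $2$-cocycle on G times G}), the factor $\sigma_c(b,b')^{2rk}=(\sigma_c^{2r})^k(b,b')$ is $1$, leaving exactly $\sigma_{rkc}^{\diamondsuit}(b^{\triangle},{b'}^{\triangle})=(\varsigma_{*,c}(b)\varsigma_{*,c}(b')/\varsigma_{*,c}(bb'))^{rk}$, which is the asserted formula.

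For the splitting statement, I would observe that the displayed formula says precisely that the $2$-coboundary attached to the $1$-cochain $b\mapsto\varsigma_{*,c}^{rk}(b)$ on $\SL_c$ equals the restriction of $\sigma_{rkc}^{\diamondsuit}$ to $\SL_c^{\triangle}$; hence $b^{\triangle}\mapsto\langle b^{\triangle},\varsigma_{*,c}^{-rk}(b)\rangle$ is a homomorphism, i.e.\ a splitting. Continuity is immediate since $\varsigma_{*,c}\in\mathrm{C}^1(\SL_c,\mu_m)$ is Borel measurable and $\mu_m$ is discrete, so any $\mu_m$-valued splitting of a group over a local field is automatically continuous on the relevant open subgroups; more directly, $\varsigma_{*,c}$ is continuous because it is obtained from the comparison of two continuous $2$-cocycles. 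Uniqueness follows from the general principle recalled in \S~\ref{Covering groups}: $\SL_c$ is perfect (for $c\geq 2$; for $c=1$ the group is trivial), so two splittings differ by a homomorphism $\SL_c\to\mu_m$, which must be trivial.

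I do not expect a genuine obstacle here — the argument is entirely parallel to Proposition~\ref{proposition:the $2$-cocycle on G times G}. The one point requiring a little care is bookkeeping the exponents: making sure that splitting off $rk$ copies of $b$ and $rk$ copies of $b^*$ yields the exponent $rk$ on $\varsigma_{*,c}$ and the exponent $2rk$ (hence trivial) on $\sigma_c$, and that the $(\det,\det)_m$ cross terms in \eqref{eq:BLS block compatible} all vanish because we are on $\SL_c$. A secondary subtlety worth flagging is that $\diag(w',{w'}^*)$ need not lie in $\mathfrak{W}_{2d}$, but this plays no role here since the computation uses only torus-free block-compatibility of $\sigma$, not the Weyl-element formulas.
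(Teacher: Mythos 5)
Your proof is correct and follows essentially the same route as the paper: iterated BLS block-compatibility (the paper packages the first step via \eqref{eq:block compatibility on Levi of P}, you apply \eqref{eq:BLS block compatible} directly to the $2rk$ blocks, which is the same computation) to get $\sigma_c(b,b')^{rk}\sigma_c^*(b,b')^{rk}$ with trivial determinant cross-terms on $\SL_c$, then \eqref{eq:sigma c *} and the triviality of $\sigma_c^{2rk}$, with the splitting and its uniqueness read off exactly as you describe.
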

\begin{proof}
For $b,b'\in \GL_c$, by \eqref{eq:block compatibility on Levi of P}, \eqref{eq:sigma square} and \eqref{eq:BLS block compatible},
\begin{align*}
\sigma_{rkc}^{\diamondsuit}(b^{\triangle},{b'}^{\triangle})=
\prod_{i=1}^{rk}\sigma_{2c}(\diag(b,b^*),\diag(b',{b'}^*))
=\left((\det b,\det {b'})_m^{-1}
\sigma_{c}(b,b')\sigma^*_{c}(b,b')\right)^{rk}.
\end{align*}
In particular when $b,b'\in\SL_c$, we can apply \eqref{eq:sigma c *} and since $\sigma_{c}^{2rk}=1$, the r.h.s.~ equals
$(\varsigma_{*,c}(b)\varsigma_{*,c}(b')/\varsigma_{*,c}(bb'))^{rk}$. The assertion regarding the splitting follows at once.
\end{proof}
\begin{corollary}\label{corollary:invariance wrt SLc}
Let $\mathcal{E}$ be an $(rk,c)$ representation of $\GL_{rkc}^{(m,r)}$ over a local non-archimedean field, and
$\Lambda$ be an $(rk,c)$ functional on $\mathcal{E}$. Then $\Lambda(\mathcal{E}(\langle b^{\triangle},\varsigma_{*,c}^{-rk}(b)\rangle)\xi)=
\Lambda(\xi)$ for any $b\in\SL_c$ and $\xi$ in the space of $\mathcal{E}$.
\end{corollary}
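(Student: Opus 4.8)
The plan is to deduce the corollary directly from the uniqueness of the $(rk,c)$ functional, exactly as in the linear case. First I would observe that $\SL_c^{\triangle}$, or rather its preimage under the splitting of Proposition~\ref{proposition:sigma on diagonal embedding of SLc}, normalizes the pair $(V_{(c^{rk})},\psi)$: indeed $b^{\triangle}$ acts on each block $v_{i,i+1}\in\Mat_c$ by $v_{i,i+1}\mapsto b v_{i,i+1}b^{-1}$, so $\tr(v_{i,i+1})$ is preserved, hence $\psi$ is preserved, and $V_{(c^{rk})}$ is obviously normalized. Since the splitting $b^{\triangle}\mapsto\langle b^{\triangle},\varsigma_{*,c}^{-rk}(b)\rangle$ is a homomorphism (Proposition~\ref{proposition:sigma on diagonal embedding of SLc}) and by \eqref{eq:sigma conjugate v by h} conjugation of $\langle v,1\rangle$ by this element lands back in $\{\langle v',1\rangle:v'\in V_{(c^{rk})}\}$ with trivial cocycle contribution (the relevant cocycle values are among those appearing in \eqref{eq:sigma on h and v}), the functional $\xi\mapsto\Lambda(\mathcal{E}(\langle b^{\triangle},\varsigma_{*,c}^{-rk}(b)\rangle)\xi)$ is again an element of $\mathcal{O}(\mathcal{E},((rk)^c),\psi)=\Hom_{V_{(c^{rk})}}(\mathcal{E},\psi)$.

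Next, by the defining property (2) of an $(rk,c)$ representation this space is one-dimensional, so there is a scalar $\lambda(b)\in\C^{\times}$ with $\Lambda(\mathcal{E}(\langle b^{\triangle},\varsigma_{*,c}^{-rk}(b)\rangle)\xi)=\lambda(b)\Lambda(\xi)$ for all $\xi$. The map $b\mapsto\lambda(b)$ is then a character of $\SL_c$ (it is multiplicative because the splitting is a homomorphism and $\Lambda$ is genuine in the same way on both sides). But $\SL_c$ is its own commutator subgroup, so it admits no nontrivial characters into $\C^{\times}$; hence $\lambda\equiv1$, which is precisely the claimed invariance. This is the analogue of the argument in \cite{CFGK2} for the linear generalized Speh representation.

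The only genuinely delicate point — and the one I expect to require the most care — is the cocycle bookkeeping: one must check that conjugating $\langle v,\eta_{rkc}^{\diamondsuit,-1}(v)\rangle$ (or $\langle v,1\rangle$, depending on the chosen realization) by the lifted element $\langle b^{\triangle},\varsigma_{*,c}^{-rk}(b)\rangle$ produces $\langle {}^{b^{\triangle}}v,\eta_{rkc}^{\diamondsuit,-1}({}^{b^{\triangle}}v)\rangle$ with no stray root of unity, so that the transported functional really is $\psi$-equivariant with the same character $\psi$ and no twist. In the local non-archimedean setting this follows from \eqref{eq:sigma conjugate v by h} together with the fact that $\varsigma_{*,c}$ contributes only a $1$-cochain correction which cancels in the conjugation formula; one invokes \eqref{eq:epsilon for conjugation between split subgroups} with $Y=V_{(c^{rk})}$ (whose splitting is unique, being unipotent) and $\chi$ the conjugation by $b^{\triangle}$. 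Once this is in place the representation-theoretic part is immediate. I would state and prove this as a short lemma preceding the corollary, or simply fold the verification into the proof, citing \eqref{eq:sigma conjugate v by h} and the uniqueness of splittings of unipotent subgroups.
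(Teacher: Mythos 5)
Your proof is correct and is essentially the paper's argument: the paper likewise passes to the one-dimensional Jacquet module $J_{V_{(c^{rk})},\psi}(\mathcal{E})$, notes that the stabilizer acts on it by a character, and concludes by the perfectness of $\SL_c$. The cocycle verification you flag (that conjugation by the split image of $b^{\triangle}$ preserves the canonical splitting of $V_{(c^{rk})}$ and the character $\psi$) is indeed the right thing to check and follows from \eqref{eq:epsilon for conjugation between split subgroups} exactly as you say; the paper leaves it implicit.
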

\begin{proof}
Since $\dim\mathcal{O}(\mathcal{E},((rk)^c),\psi)=1$ and the field is non-archimedean, $\dim J_{V_{(c^{rk})},\psi}(\mathcal{E})=1$.
Hence the stabilizer acts by a character, which is trivial on $\{\langle b^{\triangle},\varsigma_{*,c}^{-rk}(b)\rangle:b\in\SL_c\}\cong\SL_c$.
\end{proof}
\begin{corollary}\label{corollary:gbl splitting for SL_c}
For each $\nu$ and $b\in\SL_c(F_{\nu})$, define $\eta_{rkc,\nu}^{\triangle}(b)=\eta_{rkc,\nu}^{\diamondsuit}(b^{\triangle})\varsigma_{*,c,\nu}^{rk}(b)$.
The product $\eta_{rkc}^{\triangle}=\prod_{\nu}\eta_{rkc,\nu}^{\triangle}$ is well defined on $\SL_c(\A)$ and $b^{\triangle}\rightarrow\langle b^{\triangle},(\eta_{rkc}^{\triangle})^{-1}(b)\rangle$ is the unique splitting of $\SL_c^{\triangle}(\A)$ in $\GL_{rkc}^{(m,r)}(\A)$.
\end{corollary}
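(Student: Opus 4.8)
The plan is to globalize the local splitting from Proposition~\ref{proposition:sigma on diagonal embedding of SLc} using the general local-to-global machinery set up in \S~\ref{Covering groups} (specifically the mechanism that produces $\eta=\prod_\nu\eta_\nu$ once one knows triviality at almost all places), exactly in the same style as the passage preceding Lemma~\ref{lemma:conjugation of N by H} handled $H(F)$ and $N_{rkc}(\A)$. First I would recall that $\GL_{rkc}^{(m,r)}(\A)$ is realized using $\rho_{rkc}^{\diamondsuit}=\prod_\nu\rho_{rkc,\nu}^{\diamondsuit}$, with $\rho_{rkc,\nu}^{\diamondsuit}$ cohomologous to $\sigma_{rkc,\nu}^{\diamondsuit}$ via $\eta_{rkc,\nu}^{\diamondsuit}$ as in \eqref{eq:cohomologous $2$-cocycles on GL}. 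At a place $\nu$ where $F_\nu$ is unramified, the local splitting is $b^{\triangle}\mapsto\langle b^{\triangle},\varsigma_{*,c,\nu}^{-rk}(b)\rangle$ when the covering is realized via $\sigma_{rkc,\nu}^{\diamondsuit}$, so by \eqref{eq:cohomologous splitting} the splitting for the $\rho$-realization is $b^{\triangle}\mapsto\langle b^{\triangle},\eta_{rkc,\nu}^{\diamondsuit}(b^{\triangle})^{-1}\varsigma_{*,c,\nu}^{-rk}(b)\rangle=\langle b^{\triangle},(\eta_{rkc,\nu}^{\triangle})^{-1}(b)\rangle$ with $\eta_{rkc,\nu}^{\triangle}$ as defined in the statement.

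The key step is to show $\eta_{rkc}^{\triangle}=\prod_\nu\eta_{rkc,\nu}^{\triangle}$ is well defined on $\SL_c(\A)$, i.e.\ that $\eta_{rkc,\nu}^{\triangle}(b)=1$ for almost all $\nu$ whenever $b\in\SL_c(F_\nu)$ comes from a global point, or more precisely, that at almost all $\nu$ we have $\eta_{rkc,\nu}^{\triangle}(y)=1$ for $y\in\SL_c(\mathcal{O}_\nu)$. At an unramified place, $\sigma_{rkc,\nu}^{\diamondsuit}$ is trivial on $\mathfrak{W}^+$ and on $T\cap K$ and $N(\mathcal{O})$, and by Proposition~\ref{proposition:sigma on diagonal embedding of SLc} together with the fact that $\rho_{rkc,\nu}^{\diamondsuit}$ is trivial on $K_{\GL_{rkc},\nu}$, the map $b\mapsto\eta_{rkc,\nu}^{\triangle}(b)$ restricts to a homomorphism of $\SL_c(\mathcal{O}_\nu)$ into $\mu_m$ — and $\SL_c(\mathcal{O}_\nu)$ is perfect (Moore's Lemma~11.1), so it must be trivial. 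This gives the restricted-product compatibility, hence $\eta_{rkc}^{\triangle}\in\mathrm{C}^1(\SL_c(\A),\mu_m)$. The global splitting assertion then follows by combining the local splittings via the recipe in \S~\ref{Covering groups} (the discussion around \eqref{eq:cohomologous splitting}): $b^{\triangle}\mapsto\langle b^{\triangle},(\eta_{rkc}^{\triangle})^{-1}(b)\rangle$ is a homomorphism because it is one at every place and the global product $2$-cocycle factorizes accordingly; uniqueness is immediate since $\SL_c(\A)$ is perfect and any two splittings differ by a homomorphism $\SL_c(\A)\to\mu_m$.

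The main obstacle, and the point requiring genuine care, is the interplay between the two corrections $\eta_{rkc,\nu}^{\diamondsuit}$ and $\varsigma_{*,c,\nu}^{rk}$ at the finitely many ramified places: there one cannot invoke perfectness of a compact group, and one must simply verify directly that the quantities $\eta_{rkc,\nu}^{\diamondsuit}(b^{\triangle})$ and $\varsigma_{*,c,\nu}^{rk}(b)$ are each defined and combine into a genuine $1$-cochain (not necessarily a homomorphism) on $\SL_c(F_\nu)$ — which holds because both factors are Borel measurable $\mu_m$-valued maps vanishing on the identity, so their product is too. A secondary subtlety is that $\varsigma_{*,c}$ itself need not be defined globally when $m\nmid rk$ (as already flagged in the text after \eqref{eq:some splitting by eta times}), but only the power $\varsigma_{*,c}^{rk+1}$ — however here we only ever use $\varsigma_{*,c,\nu}^{rk}$ pointwise at each place inside the definition of $\eta_{rkc,\nu}^{\triangle}$, never the bare $\varsigma_{*,c,\nu}$, and the product over $\nu$ of the combined cochains $\eta_{rkc,\nu}^{\triangle}$ is what we prove to be well defined, so this causes no difficulty. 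The argument is entirely parallel to Corollary~\ref{corollary:gbl splitting of (G,G)} and to the treatment of $N_{rkc}(\A)$, and no new ideas beyond bookkeeping are needed.
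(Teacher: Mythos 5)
Your proposal is correct and follows essentially the same route as the paper: the paper combines \eqref{eq:cohomologous $2$-cocycles on GL} with Proposition~\ref{proposition:sigma on diagonal embedding of SLc} to show that $\rho_{rkc,\nu}^{\diamondsuit}(b^{\triangle},{b'}^{\triangle})$ is the coboundary of $\eta_{rkc,\nu}^{\triangle}$, deduces that at almost all places this cochain is a (necessarily trivial) homomorphism of $\SL_c(\mathcal{O}_{\nu})$, and then globalizes — exactly your argument. Your added remarks on the ramified places and on $\varsigma_{*,c}$ only being used in local powers are sound but not needed; a $1$-cochain need not be a homomorphism at any single place.
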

\begin{proof}
According to \eqref{eq:cohomologous $2$-cocycles on GL} and Proposition~\ref{proposition:sigma on diagonal embedding of SLc}, for all $\nu$
and $b,b'\in\SL_c(F_{\nu})$,
\begin{align}\label{eq:local rho eta varsigma}
\rho_{rkc,\nu}^{\diamondsuit}(b^{\triangle},{b'}^{\triangle})=
\frac{\eta_{rkc,\nu}^{\triangle}(b)\eta_{rkc,\nu}^{\triangle}(b')}{\eta_{rkc,\nu}^{\triangle}(bb')}.
\end{align}

At almost all places the l.h.s.~ is $1$ on $\SL_c(\mathcal{O}_{\nu})$, then
$b\mapsto \eta_{rkc,\nu}^{\triangle}(b)$ is a homomorphism of
$\SL_c(\mathcal{O}_{\nu})$ which is trivial. Thus $\eta_{rkc}^{\triangle}$ is well defined on $\SL_c(\A)$. Now
globalize \eqref{eq:local rho eta varsigma}.
\end{proof}

Since both
$b^{\triangle}\mapsto\langle b^{\triangle},(\eta_{rkc}^{\diamondsuit})^{-1}(b^{\triangle})\rangle$ and
$b^{\triangle}\mapsto\langle b^{\triangle},(\eta_{rkc}^{\triangle})^{-1}(b)\rangle$ are splittings of
$\SL_c^{\triangle}(F)$ into $\GL_{rkc}^{(m,r)}(\A)$, and there is a unique such splitting, we have
$(\eta_{rkc}^{\diamondsuit})^{-1}(b^{\triangle})=(\eta_{rkc}^{\triangle})^{-1}(b)$ for $b\in\SL_c(F)$.
The same identity holds with $b\in N_{\GL_c}(\A)$, for the same reason.
Note that $G<\SL_c$, whence the corollary also provides the unique splitting of $G^{\triangle}(\A)$ in $\GL_{rkc}^{(m,r)}(\A)$.
\begin{proposition}\label{proposition:extra invariance}
Let $\mathcal{E}$ be an $(rk,c)$ representation of $\GL_{rkc}^{(m,r)}(\A)$, realized by right-translation, and $\phi$ be an automorphic form in the space of $\mathcal{E}$. Then
\begin{align*}
\Lambda(\langle b^{\triangle},(\eta_{rkc}^{\triangle})^{-1}(b)\rangle\cdot\phi)=\Lambda(\phi),\qquad\forall
b\in \SL_c({\A}).
\end{align*}
\end{proposition}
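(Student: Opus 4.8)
The plan is to reduce the global invariance statement to the local uniqueness statement (Corollary~\ref{corollary:invariance wrt SLc}) combined with the decomposability encoded in \eqref{eq:partial decomp of Lambda}, mirroring the way extra invariance properties are proved in the linear case. First I would recall that $\SL_c(\A)$ is generated by $\SL_c(F)$ together with $\SL_c(F_\nu)$ at the non-archimedean places $\nu\notin S$ (strong approximation for the simply connected group $\SL_c$), so by continuity and the product formula \eqref{eq:partial decomp of Lambda} it suffices to check the invariance separately for $b\in\SL_c(F)$ and for $b\in\SL_c(F_\nu)$ with $\nu\notin S$; the archimedean and ramified places are absorbed into $\Lambda[S]$ and handled by the $F$-rational invariance once we know it.

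For $b\in\SL_c(F)$: here $b^{\triangle}\in\GL_{rkc}(F)$, and by the remark following Corollary~\ref{corollary:gbl splitting for SL_c} we have $(\eta_{rkc}^{\triangle})^{-1}(b)=(\eta_{rkc}^{\diamondsuit})^{-1}(b^{\triangle})$, so $\langle b^{\triangle},(\eta_{rkc}^{\triangle})^{-1}(b)\rangle$ is exactly the image of $b^{\triangle}$ under the fixed splitting of $\GL_{rkc}(F)$ used to define automorphic forms. Since $\phi$ is automorphic, left-translation by this element is trivial, hence so is $\Lambda(\langle b^{\triangle},(\eta_{rkc}^{\triangle})^{-1}(b)\rangle\cdot\phi)=\Lambda(\phi)$. (One should note that $V_{(c^{rk})}$ is normalized by $\SL_c^{\triangle}$ and $b^{\triangle}$ stabilizes $\psi$ on $V_{(c^{rk})}$, so the defining integral \eqref{int:a c general Fourier coeff} of $\Lambda$ is genuinely left-$b^{\triangle}$-invariant after the change of variables $v\mapsto {}^{(b^{\triangle})^{-1}}v$; the splitting bookkeeping is exactly what Lemma~\ref{lemma:conjugation of N by H} and the canonical splitting of $N_{\GL_{rkc}}(\A)$ guarantee is consistent.)

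For $b\in\SL_c(F_\nu)$ with $\nu\notin S$: write $\phi=\xi_S\otimes'_{\mu\notin S}\xi_{\mu}$ as a decomposable vector, and without loss of generality $\xi_\nu=\xi_\nu^0$ is the normalized unramified vector (general $\xi_\nu$ follows by linearity and density). By \eqref{eq:partial decomp of Lambda}, $\Lambda(\mathcal{E}(\langle b^{\triangle},(\eta_{rkc}^{\triangle})^{-1}(b)\rangle)\phi)$ equals $\Lambda[S](\mathcal{E}_S(\cdots)\xi_S)\prod_{\mu\notin S}\Lambda_\mu(\mathcal{E}_\mu(\cdots)\xi_\mu)$, where only the $\nu$-th factor is affected, and that factor is $\Lambda_\nu(\mathcal{E}_\nu(\langle b^{\triangle},\varsigma_{*,c,\nu}^{-rk}(b)\rangle)\xi_\nu^0)$ after matching $(\eta_{rkc,\nu}^{\triangle})^{-1}(b)$ with $\varsigma_{*,c,\nu}^{-rk}(b)$ via the local splitting of Proposition~\ref{proposition:sigma on diagonal embedding of SLc}. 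Since $\mathcal{E}_\nu$ is an $(rk,c)$ representation (condition~\eqref{def:Whittaker--Speh--Shalika 2}) over a non-archimedean field, Corollary~\ref{corollary:invariance wrt SLc} applies and gives $\Lambda_\nu(\mathcal{E}_\nu(\langle b^{\triangle},\varsigma_{*,c,\nu}^{-rk}(b)\rangle)\xi_\nu^0)=\Lambda_\nu(\xi_\nu^0)$, so the factor is unchanged and the whole product is unchanged.

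\textbf{The main obstacle.} The real work is not in either case individually but in the compatibility of all the splittings: one must verify that the element $\langle b^{\triangle},(\eta_{rkc}^{\triangle})^{-1}(b)\rangle$ of $\GL_{rkc}^{(m,r)}(\A)$ restricts, at a place $\nu\notin S$, to precisely the element $\langle b^{\triangle},\varsigma_{*,c,\nu}^{-rk}(b)\rangle$ appearing in Corollary~\ref{corollary:invariance wrt SLc} — i.e.\ that the global correction $\eta_{rkc}^{\triangle}$ defined in Corollary~\ref{corollary:gbl splitting for SL_c} reduces locally to $\varsigma_{*,c,\nu}^{rk}$ modulo the harmless $\eta$-corrections at $\nu$ — and simultaneously that it restricts over $F$-points to the splitting used to define automorphic forms. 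This is exactly the kind of local-global splitting matching that Corollary~\ref{corollary:gbl splitting for SL_c} and the remark after it were set up to handle, so the argument should be short once those identifications are invoked carefully; but it is the place where an error would most easily creep in, so I would write out the identification of $(\eta_{rkc,\nu}^{\triangle})^{-1}(b)$ with the local splitting datum explicitly before concluding.
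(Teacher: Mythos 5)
Your proof is correct, but it takes a genuinely different route from the paper's. The paper generates $\SL_c(\A)$ abstractly by $\SL_c(F)$ (handled by automorphy, exactly as in your first case) together with a single root subgroup $U_{\alpha}(\A)<N_{\GL_c}(\A)$, and proves invariance under $U_{\alpha}(\A)$ by expanding $u\mapsto\Lambda(\langle u^{\triangle},(\eta_{rkc}^{\triangle})^{-1}(u)\rangle\cdot\phi)$ in a Fourier series along $U_{\alpha}$ and observing, as in Friedberg--Ginzburg, that every nontrivial coefficient is a Fourier coefficient attached to a unipotent orbit $\succsim((rk)^c)$ and hence vanishes by the global vanishing property of $\mathcal{E}$. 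That argument uses only automorphy and condition \eqref{def:Whittaker--Speh--Shalika 1} of the global $(rk,c)$ definition; it needs neither local uniqueness at any place nor the factorization \eqref{eq:partial decomp of Lambda}. Your argument instead rests on condition \eqref{def:Whittaker--Speh--Shalika 2}, on \eqref{eq:partial decomp of Lambda}, on Corollary~\ref{corollary:invariance wrt SLc}, and on an approximation-plus-continuity step; all of these are available at this point, and the splitting bookkeeping you flag does check out, since $(\eta_{rkc,\nu}^{\triangle})^{-1}(b)=(\eta_{rkc,\nu}^{\diamondsuit})^{-1}(b^{\triangle})\varsigma_{*,c,\nu}^{-rk}(b)$, so that under the canonical isomorphism between the $\rho_{rkc,\nu}^{\diamondsuit}$- and $\sigma_{rkc,\nu}^{\diamondsuit}$-realizations your element is exactly the $\langle b^{\triangle},\varsigma_{*,c,\nu}^{-rk}(b)\rangle$ of that corollary. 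Two small repairs: the subgroup generated by $\SL_c(F)$ and $\prod'_{\nu\notin S}\SL_c(F_\nu)$ is only dense in $\SL_c(\A)$ (by weak approximation at $S$, or by strong approximation away from a single $\nu_0\notin S$), not all of it, so you must show $F(gb)=F(b)$ for generators $g$ and arbitrary $b$ rather than merely $F(g)=F(1)$ --- this works because both of your base cases hold for every vector in the space of $\mathcal{E}$, not just for $\phi$ --- and then conclude by continuity of the splitting and of $g\mapsto\Lambda(g\cdot\phi)$. The trade-off is that the paper's route survives even without the local $(rk,c)$ hypothesis at unramified places, whereas yours exploits it to avoid any global Fourier-expansion argument.
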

\begin{proof}
The group $\SL_c({\A})$ is generated as an abstract group by its unipotent subgroups, or alternatively
by $U_{\alpha}(\A)$ for a single root subgroup $U_{\alpha}<N_{\GL_c}$ and a set of representatives in $\SL_c(F)$ for the Weyl group. Since $\phi$ is an automorphic form, it is in particular left-invariant with respect to the image of $\SL_c(F)$ under $b\mapsto\langle b^{\triangle},(\eta_{rkc}^{\diamondsuit})^{-1}(b^{\triangle})\rangle
=\langle b^{\triangle},(\eta_{rkc}^{\triangle})^{-1}(b)\rangle$.
Therefore it suffices to prove invariance under $\{\langle u^{\triangle},(\eta_{rkc}^{\triangle})^{-1}(u)\rangle :u\in U_{\alpha}(\A)\}$. As in \cite[Proposition~3]{FG2}, we write the Fourier
expansion of $b\mapsto \Lambda(\langle b^{\triangle},(\eta_{rkc}^{\triangle})^{-1}(b)\rangle \cdot\phi)$ along $U_{\alpha}$ and observe that the coefficients corresponding to nontrivial characters are all associated with unipotent orbits which are greater than or not comparable with $((rk)^c)$, hence vanish on $\mathcal{E}$ by definition.
\end{proof}

\begin{corollary}\label{corollary:extra invariance on Lambda S}
Let $\mathcal{E}$ be an $(rk,c)$ representation of $\GL_{rkc}^{(m,r)}(\A)$, realized by right-translation. Let $S$ be a finite set of places of $F$ containing (at least) the archimedean places and the places $\nu$ where $\mathcal{E}_{\nu}$ is ramified. Realize $\GL_{rkc}^{(m,r)}(F_{S})$ using the product $\prod_{\nu\in S}\sigma^{\diamondsuit}_{rkc,\nu}$. The $(rk,c)$ functional
$\Lambda[S]$ defined by \eqref{eq:partial Lambda S} also satisfies, for all $\xi_S$ in the space of $\mathcal{E}_S$,
\begin{align*}
\Lambda[S](\langle b^{\triangle},\prod_{\nu\in S}\varsigma_{*,c,\nu}^{-rk}(b_{\nu})\rangle\cdot\xi_S)=\Lambda[S](\xi_S),\qquad\forall
b\in \SL_c(F_S).
\end{align*}
\end{corollary}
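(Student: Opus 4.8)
The plan is to deduce the $S$-adic invariance of $\Lambda[S]$ from the global invariance already established in Proposition~\ref{proposition:extra invariance}, by comparing the global functional $\Lambda$ with its partial version $\Lambda[S]$ through the decomposition \eqref{eq:partial decomp of Lambda}. First I would fix $b\in\SL_c(F_S)$ and an arbitrary decomposable vector $\phi=\xi_S\otimes'_{\nu\notin S}\xi_\nu^0$ in the space of $\mathcal{E}=\otimes'_\nu\mathcal{E}_\nu$, where $\xi_\nu^0$ is the fixed unramified vector for $\nu\notin S$. By definition \eqref{eq:partial Lambda S}, $\Lambda[S](\xi_S)=\Lambda(\phi)$, and more generally $\Lambda[S](\mathcal{E}_S(g_S)\xi_S)=\Lambda(\mathcal{E}(g)\phi)$ whenever $g=(g_\nu)_\nu$ has $g_\nu=\langle y_\nu,1\rangle$ with $y_\nu\in K_{\GL_{rkc},\nu}$ for all $\nu\notin S$ (so that $\xi_\nu^0$ is unchanged and $\Lambda_\nu^0(\xi_\nu^0)=1$). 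The element $\langle b^{\triangle},\prod_{\nu\in S}\varsigma_{*,c,\nu}^{-rk}(b_\nu)\rangle$ of $\GL_{rkc}^{(m,r)}(F_S)$, viewed inside $\GL_{rkc}^{(m,r)}(\A)$ via the trivial embedding at the places outside $S$, is precisely of this shape: its component at $\nu\notin S$ is $\langle I_{rkc},1\rangle\in K_{\GL_{rkc},\nu}$. Hence it suffices to show that this element, regarded as an element of $\GL_{rkc}^{(m,r)}(\A)$, equals the image of $b\in\SL_c(\A)$ (with $b_\nu=I_c$ for $\nu\notin S$) under the global splitting $b^{\triangle}\mapsto\langle b^{\triangle},(\eta_{rkc}^{\triangle})^{-1}(b)\rangle$ of Corollary~\ref{corollary:gbl splitting for SL_c}, up to the change of $2$-cocycle $\rho^{\diamondsuit}_{rkc}\leftrightarrow\prod_\nu\sigma^{\diamondsuit}_{rkc,\nu}$.

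The second step is therefore to reconcile the two $2$-cocycles. The global cover $\GL_{rkc}^{(m,r)}(\A)$ in Corollary~\ref{corollary:gbl splitting for SL_c} is realized with $\rho^{\diamondsuit}_{rkc}=\prod_\nu\rho^{\diamondsuit}_{rkc,\nu}$, while $\Lambda[S]$ and $\mathcal{E}_S$ are realized with $\prod_{\nu\in S}\sigma^{\diamondsuit}_{rkc,\nu}$ at the places in $S$ and with $\sigma^{\diamondsuit}_{rkc,\nu}$ (trivial on $K_{\GL_{rkc},\nu}$, by the unramifiedness hypothesis) at the places $\nu\notin S$. The relevant comparison is \eqref{eq:cohomologous $2$-cocycles on GL}: $\rho^{\diamondsuit}_{rkc,\nu}$ and $\sigma^{\diamondsuit}_{rkc,\nu}$ differ by the coboundary of $\eta^{\diamondsuit}_{rkc,\nu}$, and for $\nu\notin S$ one has $\eta^{\diamondsuit}_{rkc,\nu}=1$ on $K_{\GL_{rkc},\nu}$ (since $\rho^{\diamondsuit}_{rkc,\nu}$ is already trivial there), so at those places the two realizations agree on $\langle I_{rkc},1\rangle$. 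At $\nu\in S$ the two realizations of $\mathcal{E}_\nu$ are related by the twist by $\eta^{\diamondsuit}_{rkc,\nu}$, and under this twist the splitting $b^{\triangle}\mapsto\langle b^{\triangle},\varsigma_{*,c,\nu}^{-rk}(b)\rangle$ of $\SL_c^{\triangle}(F_\nu)$ (Proposition~\ref{proposition:sigma on diagonal embedding of SLc}, valid for $\sigma^{\diamondsuit}_{rkc,\nu}$) corresponds to $b^{\triangle}\mapsto\langle b^{\triangle},(\eta^{\diamondsuit}_{rkc,\nu})^{-1}(b^{\triangle})\varsigma_{*,c,\nu}^{-rk}(b)\rangle=\langle b^{\triangle},(\eta^{\triangle}_{rkc,\nu})^{-1}(b)\rangle$ of $\SL_c^{\triangle}(F_\nu)$ for $\rho^{\diamondsuit}_{rkc,\nu}$, where $\eta^{\triangle}_{rkc,\nu}$ is the $1$-cochain of Corollary~\ref{corollary:gbl splitting for SL_c}. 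Multiplying over $\nu\in S$ and using $\eta^{\triangle}_{rkc,\nu}=1$ at $\nu\notin S$ on $\SL_c(\mathcal{O}_\nu)$ (established in the proof of Corollary~\ref{corollary:gbl splitting for SL_c}), the element $\langle b^{\triangle},\prod_{\nu\in S}\varsigma_{*,c,\nu}^{-rk}(b_\nu)\rangle$ in the $\Lambda[S]$-realization is carried to $\langle b^{\triangle},(\eta^{\triangle}_{rkc})^{-1}(b)\rangle$ in the $\rho^{\diamondsuit}_{rkc}$-realization, exactly the element whose action on $\phi$ is absorbed by $\Lambda$ via Proposition~\ref{proposition:extra invariance}.

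Assembling these, I would write: for decomposable $\phi$ as above,
\begin{align*}
\Lambda[S](\langle b^{\triangle},\textstyle\prod_{\nu\in S}\varsigma_{*,c,\nu}^{-rk}(b_\nu)\rangle\cdot\xi_S)
=\Lambda(\langle b^{\triangle},(\eta^{\triangle}_{rkc})^{-1}(b)\rangle\cdot\phi)
=\Lambda(\phi)=\Lambda[S](\xi_S),
\end{align*}
the middle equality being Proposition~\ref{proposition:extra invariance}. Since decomposable vectors span a dense subspace of the space of $\mathcal{E}_S$ and $\Lambda[S]$ together with the $\SL_c(F_S)$-action are continuous, the identity extends to all $\xi_S$. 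I expect the main obstacle to be purely bookkeeping: carefully tracking the cocycle twists between the $\sigma^{\diamondsuit}$-realization used for $\mathcal{E}_S$ and the $\rho^{\diamondsuit}$-realization used for the global $\mathcal{E}$, in particular verifying that the $1$-cochain corrections $\eta^{\diamondsuit}_{rkc,\nu}$ and $\varsigma_{*,c,\nu}^{rk}$ combine exactly into $\eta^{\triangle}_{rkc}$ globally and contribute nothing at the unramified places $\nu\notin S$; once the identification $\langle b^{\triangle},\prod_{\nu\in S}\varsigma_{*,c,\nu}^{-rk}(b_\nu)\rangle\leftrightarrow\langle b^{\triangle},(\eta^{\triangle}_{rkc})^{-1}(b)\rangle$ is in place, the statement is immediate from Proposition~\ref{proposition:extra invariance}.
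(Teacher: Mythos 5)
Your argument is correct and is exactly the one the paper intends (the corollary is stated without proof as an immediate consequence): you reduce to Proposition~\ref{proposition:extra invariance} via the definition \eqref{eq:partial Lambda S}, after checking that under the change of $2$-cocycle from $\prod_{\nu\in S}\sigma^{\diamondsuit}_{rkc,\nu}$ to $\rho^{\diamondsuit}_{rkc}$ the element $\langle b^{\triangle},\prod_{\nu\in S}\varsigma_{*,c,\nu}^{-rk}(b_{\nu})\rangle$ (extended trivially outside $S$) becomes the global splitting element $\langle b^{\triangle},(\eta_{rkc}^{\triangle})^{-1}(b)\rangle$ of Corollary~\ref{corollary:gbl splitting for SL_c}. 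The only quibble is cosmetic: at $\nu\notin S$ you only need $\eta^{\diamondsuit}_{rkc,\nu}(I_{rkc})=1$ (true for any $1$-cochain), not triviality of $\eta^{\diamondsuit}_{rkc,\nu}$ on all of $K_{\GL_{rkc},\nu}$, which your parenthetical justification would not actually give since $K_{\GL_{rkc},\nu}$ is not perfect.
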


\subsection{Local construction of $(rk,c)$ representations}\label{local theta speh}
Let $F$ be unramified and assume $\mu_{2m}\subset F^*$. In this section we construct
local unramified $(rk,c)$ representations from twisted copies of an exceptional representation.

Recall the representation $\Theta_{rc,m,r,\vartheta}$ of \S~\ref{exceptional}, which is the
unique irreducible unramified subrepresentation of $\mathrm{I}_{\GL_{rc}^{(m,r)}}(\vartheta,\delta_{B_{\GL_{rc}}}^{-1/(2r)})$.
For an unramified character $\chi$ of $T_{\GL_k}$, write $\chi=\chi_1\otimes\ldots\otimes\chi_k$ where
$\chi_1,\ldots,\chi_k$ are unramified quasi-characters of $F^*$. Consider the unramified representation
\begin{align}\label{gen of Suzuki new}
\Theta_{rc,m,r,\vartheta}(\chi)=\Ind_{\widetilde{P}_{((rc)^k)}}^{\GL_{rkc}^{(m,r)}}(
\chi_1\Theta_{rc,m,r,\vartheta}\otimes\ldots\otimes\chi_k\Theta_{rc,m,r,\vartheta}).
\end{align}
Since $\delta_{B_{\GL_{rc}}}(\diag(t_1,\ldots,t_{rc}))=\prod_{j=1}^{rc}|t_j|^{rc-2j+1}$,
$\Theta_{rc,m,r,\vartheta}(\chi)$ is a subrepresentation of
\begin{align}\label{eq:Thete rc m as a subrep}
\mathrm{I}_{\GL_{rkc}^{(m,r)}}(\vartheta,\chi_{\Theta,c}),\qquad
\chi_{\Theta,c}=(\otimes_{j=1}^{rc}\vartheta\chi_1|~|^{-(rc-2j+1)/(2r)})\otimes \ldots \otimes
(\otimes_{j=1}^{rc}\vartheta\chi_k|~|^{-(rc-2j+1)/(2r)}),
\end{align}
where $\otimes_{j=1}^{rc}|~|^{-(rc-2j+1)/(2r)}=|~|^{-(rc-1)}\otimes\ldots\otimes |~|^{-(1-rc)}$,
and a quotient of
\begin{align}\label{eq:Thete rc m as a quotient}
&\Ind_{\widetilde{B}_{\GL_{rkc}}}^{\GL_{rkc}^{(m,r)}}\left(
\Ind_{\widetilde{T}_{\GL_{rkc},*}}^{\widetilde{T}_{\GL_{rkc}}}(
(\otimes_{j=1}^{rc}\vartheta\chi_1|~|^{(rc-2j+1)/(2r)})\otimes \ldots \otimes
(\otimes_{j=1}^{rc}\vartheta\chi_k|~|^{(rc-2j+1)/(2r)}))\right).
\end{align}
Since we will usually use $\chi_{\Theta,1}$, we denote $\chi_{\Theta}=\chi_{\Theta,1}$.
\begin{proposition}\label{proposition:local-semi}
The representation $\Theta_{rc,m,r,\vartheta}(\chi)$ is $(rk,c)$.
\end{proposition}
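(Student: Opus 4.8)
The plan is to verify the two defining conditions of an $(rk,c)$ representation (in the sense of \S~\ref{speh def}) for $\Theta_{rc,m,r,\vartheta}(\chi)$, reducing everything to the known properties of the exceptional representation $\Theta_{rc,m,r,\vartheta}$ established in \S~\ref{exceptional} and to the semi--Whittaker machinery of \S~\ref{Semi Whittaker coeff}. By Lemma~\ref{lemma:local semi Whittaker and WSS}, it suffices to work with the semi--Whittaker Jacquet modules $J_{N_{\GL_{rkc}},\psi_{\lambda}}$: we must show these vanish for every composition $\lambda\succsim((rk)^c)$ and are exactly one-dimensional for $\lambda=((rk)^c)$. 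Since $\Theta_{rc,m,r,\vartheta}(\chi)$ is induced from the parabolic $\widetilde{P}_{((rc)^k)}$, the first step is to compute the twisted Jacquet modules of a parabolically induced representation via the geometric lemma of Bernstein--Zelevinsky, writing the result as a filtration indexed by $W_{\GL_{rkc}}$-double cosets, each layer being built from twisted Jacquet modules of the inducing data $\chi_i\Theta_{rc,m,r,\vartheta}$ along unipotent subgroups of $\GL_{rc}$.

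The key input is that $\Theta_{r,m,r,\vartheta}$ is generic (affords a unique Whittaker functional) while $\Theta_{d,m,r,\vartheta}$ for $d>r$ affords no Whittaker functional, together with the sharper statement — which I expect to extract from \cite[Proposition~3.5]{Gao4} in the same way it is invoked in \S~\ref{exceptional}, or alternatively from the known description of the unipotent orbit attached to $\Theta_{rc,m,r,\vartheta}$, namely $(r^c)$ — that $\mathcal{O}(\Theta_{rc,m,r,\vartheta},\mu,\psi')=0$ for every partition $\mu$ of $rc$ strictly greater than or incomparable with $(r^c)$, and $\dim\mathcal{O}(\Theta_{rc,m,r,\vartheta},(r^c),\psi)=1$. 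Granting this, I would feed it into the geometric-lemma filtration: a semi--Whittaker functional on the induced representation for a composition $\lambda$ forces, on each layer, a semi--Whittaker-type functional on a Jacquet module of $\Theta_{rc,m,r,\vartheta}$ along an orbit governed by the restriction of $\psi_\lambda$ and the double-coset representative. An "exchange of roots" argument (as in Remark~\ref{remark:applicability of Cai results to coverings}, using the local root-exchange of \cite[\S~2.2]{GRS5}) is then used to match these with honest $\mathcal{O}(\Theta_{rc,m,r,\vartheta},\mu,\psi')$'s; the vanishing statement for $\Theta_{rc,m,r,\vartheta}$ kills every layer that would come from a $\lambda\succsim((rk)^c)$, giving condition (1), and for $\lambda=((rk)^c)$ exactly one layer survives — the open cell, which glues the $k$ copies of $(r^c)$-orbit functionals on the $\Theta_{rc,m,r,\vartheta}$'s into the single $((rk)^c)$ functional — giving the one-dimensionality in condition (2). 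Nonvanishing for $\lambda=((rk)^c)$ follows because the surviving layer is a genuine quotient and the functional it carries is nonzero (the unramified vector pairs nontrivially).

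Concretely I would organize the induction on $k$: for $k=1$ the statement is the input about $\Theta_{rc,m,r,\vartheta}$ itself (and the cases $k\le 2$ are already in \cite{Suzuki1998}, which can be cited as a consistency check); the inductive step realizes $\Theta_{rc,m,r,\vartheta}(\chi)$ as induced from $\chi_1\Theta_{rc,m,r,\vartheta}\otimes\Theta_{rc,m,r,\vartheta}(\chi_2\otimes\cdots\otimes\chi_k)$ over the two-block parabolic, applies the geometric lemma once, and uses the inductive hypothesis on the $\GL_{r(k-1)c}^{(m,r)}$-factor. Here the commutativity of direct factors of Levi subgroups in $\GL_{rkc}^{(m,r)}$ (established in \S~\ref{covering of the Levi}, so that the tensor product of the inducing data is literally a representation, with no metaplectic twist) is what makes the geometric-lemma bookkeeping identical to the linear case. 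The main obstacle I anticipate is precisely the combinatorial control of the geometric-lemma filtration: showing that \emph{no} intermediate double coset contributes a nonzero semi--Whittaker functional for $\lambda\succsim((rk)^c)$ — this requires carefully tracking how $\psi_\lambda$ restricts under each Weyl representative and verifying that the induced character on the $\GL_{rc}$-unipotent is either non-generic enough to be killed by the $\Theta_{rc,m,r,\vartheta}$-vanishing, or is of a form handled by root exchange; the bound $rc$ and the orbit $(r^c)$ are what make the numerology work, and assembling this into a clean argument is the bulk of the proof.
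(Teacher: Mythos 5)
Your proposal is correct and follows essentially the same route as the paper: reduce to semi--Whittaker Jacquet modules via Lemma~\ref{lemma:local semi Whittaker and WSS}, induct on $k$ using the Bernstein--Zelevinsky filtration of $J_{N_{\GL_{rkc}},\psi_\lambda}$ of a parabolically induced representation (with the additivity $\lambda_i=\lambda_i'+\lambda_i''$ killing every layer when some $\lambda_i>rk$), and settle the base case and the one-dimensionality from the properties of the exceptional representation. The input you ``expect to extract'' is exactly what the paper cites — \cite[Corollary~3.34]{Cai1} for the vanishing when some $\lambda_i>r$, and the computation $J_{V_{(r^c)}}(\Theta_{rc,m,r,\vartheta})=\delta_{P_{(r^c)}}^{-1/(2r)}\Theta_{r,m,r,\vartheta}\otimes\cdots\otimes\Theta_{r,m,r,\vartheta}$ (as in \cite[Theorem~5.1]{Kable}) combined with the unique Whittaker model of $\Theta_{r,m,r,\vartheta}$ for the dimension count — so no separate root-exchange or double-coset analysis is needed beyond what is already packaged in Cai's lemmas.
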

\begin{proof}
According to Lemma~\ref{lemma:local semi Whittaker and WSS} and Remark~\ref{remark:applicability of Cai results to coverings}, to deduce the vanishing property it is enough to prove that for any composition $\lambda\succsim((rk)^c)$, $J_{N_{\GL_{rkc}},\psi_\lambda}(\Theta_{rc,m,r,\vartheta}(\chi))=0$, and to prove the result on the dimension, it is enough to show $\dim J_{N_{\GL_{rkc}},\psi_{((rk)^c)}}(\Theta_{rc,m,r,\vartheta}(\chi))=1$.

We prove vanishing first. Argue by induction on $k$.
When $k=1$, $\Theta_{rc,m,r,\vartheta}(\chi)=\chi_1\Theta_{rc,m,r,\vartheta}$ and the assumption on $\lambda$ implies $\lambda_i>r$ for some $i$. Then the result follows from \cite[Corollary~3.34]{Cai1}, which was stated for the coverings of \cite{KP}, but the proof is actually simpler for $\GL_{rc}^{(m,r)}$. Indeed, the main technical difficulty handled in \textit{loc. cit.} was the lack of a standard tensor product construction (the ``metaplectic tensor" was used), because for the coverings considered there, direct factors of Levi subgroups do not commute in the cover.

For the inductive step, because the direct factors of $M_{((rk)^c)}$ do commute in $\GL_{rkc}^{(m,r)}$ and the tensor is the standard one, several results from \cite{BZ2} extend immediately to the covering, and the proof now proceeds similarly to
the linear case (\cite[Claim~10]{CFGK2}). Denote $\chi^-=\chi_2\otimes\ldots\otimes\chi_k$. Let $\times$ denote the parabolic induction functor of \cite{BZ2}.
By \cite[4.14]{BZ2} for any composition $\lambda$,
\begin{align*}
J_{N_{\GL_{rkc}},\psi_{\lambda}}(\Theta_{rc,m,r,\vartheta}(\chi))=J_{N_{\GL_{rkc}},\psi_{\lambda}}(\Theta_{rc,m,r,\vartheta}(\chi^-)\times\Theta_{rc,m,r,\vartheta}(\chi_1))
\end{align*}
(normalized Jacquet functors)
is glued from the representations
\begin{align}\label{rep:filtration rho k}
J_{N_{\GL_{r(k-1)c}},\psi_{\lambda'}}(\Theta_{rc,m,r,\vartheta}(\chi^-))\times J_{N_{\GL_{rc}},\psi_{\lambda''}}(\Theta_{rc,m,r,\vartheta}(\chi_1)).
\end{align}
Here $\lambda'$ and $\lambda''$ vary over the compositions of $r(k-1)c$ and $rc$, with
$\lambda_i=\lambda_i'+\lambda_i''$ for all $i$.

Now assuming $\lambda>((rk)^c)$, either $\lambda_i'>r(k-1)$ or $\lambda_i''>r$, thus each constituent \eqref{rep:filtration rho k} vanishes by the inductive hypothesis. This proves $J_{N_{\GL_{kc}},\psi_{\lambda}}(\Theta_{rc,m,r,\vartheta}(\chi))=0$, hence $\mathcal{O}(\Theta_{rc,m,r,\vartheta}(\chi),\beta',\psi)=0$ for any partition $\beta'\succsim((rk)^c)$.

We turn to prove $\dim J_{N_{\GL_{rkc}},\psi_{((rk)^c)}}(\Theta_{rc,m,r,\vartheta}(\chi))=1$. For $k=1$, the Jacquet functor factors through $J_{V_{(r^c)}}$, and $J_{V_{(r^c)}}(\Theta_{rc,m,r,\vartheta})=\delta_{P_{(r^c)}}^{-1/(2r)}\Theta_{r,m,r,\vartheta}\otimes\ldots\otimes\Theta_{r,m,r,\vartheta}$
(this follows as in \cite[Theorem~5.1]{Kable} for double coverings of \cite{KP}, the proof is simpler with the usual tensor). Thus
\begin{align*}
\delta_{P_{(r^c)}}^{1/(2r)}J_{N_{\GL_{rc}},\psi_{(r^c)}}(\Theta_{rc,m,r,\vartheta}(\chi_1))=\chi_1\otimes_{i=1}^cJ_{N_{\GL_{r}},\psi_{(r)}}(\Theta_{r,m,r,\vartheta})
\end{align*}
and the dimension is $1$, since $\Theta_{r,m,r,\vartheta}$ affords a unique Whittaker model (see \S~\ref{exceptional}).

For the general case consider the filtration described above, the only nonzero term \eqref{rep:filtration rho k} corresponds to $\lambda'=((r(k-1))^c)$ and $\lambda''=(r^c)$, and the dimension is $1$ using induction.
\end{proof}

\begin{proposition}\label{proposition:* of Theta chi}
Let ${}^*$ be defined by \eqref{eq:involution b*0}. Then
$\Theta_{rc,m,r,\vartheta}(\chi)^*=\Theta_{rc,m,r,\vartheta}(\chi^*)$, where $\chi^*=\chi_k^{-1}\otimes\ldots\otimes\chi_1^{-1}$.
\end{proposition}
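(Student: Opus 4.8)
The plan is to compute the effect of the involution ${}^*$ on the inducing data of $\Theta_{rc,m,r,\vartheta}(\chi)$ and match it against the definition \eqref{gen of Suzuki new} of $\Theta_{rc,m,r,\vartheta}(\chi^*)$. The key structural facts I would invoke are: (i) by Proposition~\ref{proposition:* of Theta}, $\Theta_{rc,m,r,\vartheta}^*=\Theta_{rc,m,r,\vartheta}$, so the exceptional building blocks are individually ${}^*$-invariant; (ii) the involution $b\mapsto b^*=J_{rkc}{}^tb^{-1}J_{rkc}$ sends the standard parabolic $P_{((rc)^k)}$ to its opposite (more precisely, it conjugates it to the standard parabolic with blocks in reverse order), and on the Levi it acts on the $i$-th $\GL_{rc}$-block by $g\mapsto g^*$ composed with the flip of block indices $i\leftrightarrow k+1-i$; (iii) ${}^*$ interacts with the $2$-cocycle $\sigma^{\diamondsuit}_{rkc}$ compatibly, as recorded in \eqref{eq:involution b*0}, so no new roots of unity are introduced when we transport the splitting.

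First I would spell out the geometric statement: writing the block-diagonal element $\diag(g_1,\dots,g_k)\in M_{((rc)^k)}$ with $g_i\in\GL_{rc}$, one checks directly that $\diag(g_1,\dots,g_k)^* = \diag(g_k^*,\dots,g_1^*)$ after conjugating by the Weyl element that reverses the $k$ blocks of size $rc$. Hence, at the level of linear groups, ${}^*$ carries the inducing representation $\chi_1\Theta_{rc,m,r,\vartheta}\otimes\cdots\otimes\chi_k\Theta_{rc,m,r,\vartheta}$ of $M_{((rc)^k)}$ to $\chi_k\Theta_{rc,m,r,\vartheta}^*\otimes\cdots\otimes\chi_1\Theta_{rc,m,r,\vartheta}^*$ (the character $\chi_i$ becomes $\chi_i^{-1}$ because $\det(g_i^*)=\det(g_i)^{-1}$, and by (i) the $\Theta^*$'s collapse back to $\Theta$'s), which is precisely the inducing data for $\Theta_{rc,m,r,\vartheta}(\chi^*)$ with $\chi^*=\chi_k^{-1}\otimes\cdots\otimes\chi_1^{-1}$. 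Then I would lift this to the cover: by \eqref{eq:involution b*0}, ${}^*\langle b,\epsilon\rangle=\langle b^*,\epsilon\rangle$ on $\GL_{rkc}^{(m,r)}$, so applying ${}^*$ to the induced space $\Ind_{\widetilde{P}_{((rc)^k)}}^{\GL_{rkc}^{(m,r)}}(\cdots)$ intertwines it with the induction from the opposite parabolic; since the latter is isomorphic (as an abstract representation, via the standard intertwining operator / the Weyl element reversing the blocks) to induction from $\widetilde{P}_{((rc)^k)}$ with the reversed, dualized data, we conclude $\Theta_{rc,m,r,\vartheta}(\chi)^*\cong\Theta_{rc,m,r,\vartheta}(\chi^*)$. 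Finally I would note that both sides are irreducible unramified (the left side by Proposition~\ref{proposition:local-semi} together with unramifiedness of $\Theta$ and of ${}^*$-twist when $\varrho$ is trivial on $\mathcal{O}^*$, which holds for \eqref{eq:involution b*0}), so the isomorphism of their unramified constituents suffices, exactly as in the proof of Proposition~\ref{proposition:* of Theta}.

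The main obstacle I anticipate is bookkeeping the cocycle on the cover when we swap the order of the $k$ blocks of size $rc$: the Weyl element $w$ realizing the block-reversal is not in $\mathfrak{W}_{2rkc}$ in general, only in $\mathfrak{W}^+_{2rkc}$, so a priori conjugation by $w$ could introduce Hilbert symbols. However, since $\mu_{2m}\subset F^*$ gives $(-1,x)_m=1$ for all $x$, Proposition~\ref{proposition:action of W on torus is trivial on Sp} (applied via the realization of $\GL_{rkc}^{(m,r)}$ by restriction from $\Sp_{2rkc}^{(m)}$) ensures ${}^w\langle t,1\rangle=\langle {}^wt,1\rangle$ on the torus, and block-compatibility \eqref{eq:block compatibility on Levi of P} ensures the direct factors commute, so the swap is genuinely ``free'' of extra roots of unity. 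A clean way to package all of this is simply to observe, as in Proposition~\ref{proposition:* of Theta}, that $\Theta_{rc,m,r,\vartheta}(\chi)^*$ is an irreducible unramified constituent of $(\mathrm{I}_{\GL_{rkc}^{(m,r)}}(\vartheta,\chi_{\Theta,c}))^*=\mathrm{I}_{\GL_{rkc}^{(m,r)}}(\vartheta,(\chi_{\Theta,c})^{-1})$ by Proposition~\ref{proposition:vartheta of *}, then check by inspection of \eqref{eq:Thete rc m as a subrep} that the inverted inducing character $(\chi_{\Theta,c})^{-1}$, after reordering the coordinates by the block-reversal permutation, is exactly $\chi^*_{\Theta,c}$; uniqueness of the irreducible unramified constituent then closes the argument and sidesteps the delicate cocycle computation entirely.
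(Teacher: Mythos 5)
Your main argument is essentially the paper's proof: definitions \eqref{eq:involution b*0} and \eqref{gen of Suzuki new} give directly that $\Theta_{rc,m,r,\vartheta}(\chi)^*=\Ind_{\widetilde{P}_{((rc)^k)}}^{\GL_{rkc}^{(m,r)}}(\chi_k^{-1}\Theta_{rc,m,r,\vartheta}^*\otimes\ldots\otimes\chi_1^{-1}\Theta_{rc,m,r,\vartheta}^*)$, and Proposition~\ref{proposition:* of Theta} finishes. One piece of your bookkeeping is a red herring, though: you do not need a block-reversal Weyl element, an intertwining operator, or any cocycle computation. Since all $k$ blocks have the same size $rc$, the map $b\mapsto b^*=J_{rkc}{}^tb^{-1}J_{rkc}$ carries the standard parabolic $P_{((rc)^k)}$ to \emph{itself} (not to an opposite parabolic), acting on the Levi by $\diag(g_1,\ldots,g_k)\mapsto\diag(g_k^*,\ldots,g_1^*)$; and \eqref{eq:involution b*0} already lifts ${}^*$ to the cover with trivial cocycle, so precomposing sections with ${}^*$ identifies the two induced spaces on the nose. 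No appeal to Proposition~\ref{proposition:action of W on torus is trivial on Sp} or to $\mathfrak{W}^+_{2rkc}$ is needed.

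Your proposed ``clean way to package'' at the end, however, contains a genuine gap. The representation $\Theta_{rc,m,r,\vartheta}(\chi)$ is defined in \eqref{gen of Suzuki new} as the \emph{full} induced representation, and the paper explicitly notes (in \S~\ref{RS integrals}) that it may be reducible; Proposition~\ref{proposition:local-semi} asserts the $(rk,c)$ property, not irreducibility. An argument via the uniqueness of the irreducible unramified constituent of $\mathrm{I}_{\GL_{rkc}^{(m,r)}}(\vartheta,\chi_{\Theta,c})$ would therefore only identify the unramified constituents of the two sides, which is strictly weaker than the stated equality of representations. Stick with the direct computation; it is both correct and shorter.
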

\begin{proof}
Definitions \eqref{eq:involution b*0} and \eqref{gen of Suzuki new} imply
\begin{align*}
\Theta_{rc,m,r,\vartheta}(\chi)^*=\Ind_{\widetilde{P}_{((rc)^k)}}^{\GL_{rkc}^{(m,r)}}(
\chi_k^{-1}\Theta_{rc,m,r,\vartheta}^*\otimes\ldots\otimes\chi_1^{-1}\Theta_{rc,m,r,\vartheta}^*).
\end{align*}
Now the result follows from Proposition~\ref{proposition:* of Theta}.
\end{proof}

We also mention that one expects a local correspondence between $\Theta_{rc,m,r,\vartheta}(\chi)$ and the unramified principal series representation $\Ind_{B_{\GL_k}}^{\GL_k}(\chi^r)$; this was studied in \cite{Suzuki1998} (in the context of \cite{KP}) where a partial correspondence was obtained.

\subsection{The representation $\Theta_{r,m,r,\vartheta}(\chi)$}\label{local theta speh for c=1}
We continue with the assumptions of \S~\ref{local theta speh}: $F$ is unramified and $\mu_{2m}\subset F^*$.
When $c=1$, $(rk,1)$-functionals are Whittaker functionals. The representation $\Theta_{r,m,r,\vartheta}(\chi)$ is unramified, and in this section we develop the Casselman--Shalika formula for the normalized unramified vector in its space.
In the following we use the notation and results of \S~\ref{whittaker functionals} and \S~\ref{Casselman--Shalika formula}.

The representation $\Theta_{r,m,r,\vartheta}(\chi)$ is an unramified quotient of \eqref{eq:Thete rc m as a quotient}, and an unramified subrepresentation of \eqref{eq:Thete rc m as a subrep} (with $c=1$).
The functionals $\Lambda_t$ form a basis of the space of Whittaker functionals on each of these genuine unramified principal series representations.

On the one hand, regarding $\Theta_{r,m,r,\vartheta}(\chi)$ as a quotient, any Whittaker functional on $\Theta_{r,m,r,\vartheta}(\chi)$ extends to \eqref{eq:Thete rc m as a quotient} where it is a linear combination of functionals $\Lambda_t$, but it is not clear which (if any) $\Lambda_t$ factors through this quotient. On the other hand, taking the subrepresentation point of view, each $\Lambda_t$ restricts to
$\Theta_{r,m,r,\vartheta}(\chi)$, but a priori there is no guarantee that a Whittaker functional on \eqref{eq:Thete rc m as a subrep} would be nonzero on a subrepresentation (\eqref{eq:Thete rc m as a subrep} is reducible).

Nonetheless, we will prove that $\Lambda$ ($\Lambda=\Lambda_{I_{rk}}$) is nonzero on the normalized unramified vector $\xi^0$ in the space of
\eqref{eq:Thete rc m as a subrep}. Since $\xi^0$ belongs to the space of $\Theta_{r,m,r,\vartheta}(\chi)$, it follows that the $(rk,1)$-model $\mathcal{W}(\Theta_{r,m,r,\vartheta}(\chi))$ (which is now the Whittaker model) can be realized using $\Lambda$.

Regarding $\Theta_{r,m,r,\vartheta}(\chi)$ as a subrepresentation of \eqref{eq:Thete rc m as a subrep}, denote
\begin{align*}
&\mathbf{x}=(x_1,\ldots,x_k)\in\C^k,\qquad x_i=\chi_i(\varpi^r),
\end{align*}
\begin{align}\label{def:mathbf y}
\mathbf{y}=(q^{(r+1)/2-1}x_1,q^{(r+1)/2-2}x_1,\ldots,q^{(r+1)/2-r}x_1,\ldots,
q^{(r+1)/2-1}x_{k},\ldots,q^{(r+1)/2-r}x_{k})\in\C^{rk},
\end{align}
\begin{align*}
&\mathbf{0}=(0,\ldots,0)\in\Z^{rk},\qquad \mathbf{0'}=(0,\ldots,0)\in\Z^{rk-1}.
\end{align*}
Define for any $s\in\C$,
\begin{align}\label{eq:value of W at unit}
C(s,\mathbf{x})=
\prod_{\alpha\in\Phi_{k}^+}(1-q^{-s}\mathbf{x}_{\alpha}).
\end{align}
\begin{theorem}\label{theorem:CS formula for rk, c}
$W_{\mathbf{0}}(\mathbf{0},\vartheta,\chi_{\Theta})=\prod_{j=1}^{r}C(j,\mathbf{x})$.
\end{theorem}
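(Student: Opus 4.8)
The strategy is to compute $W_{\mathbf{0}}(\mathbf{0},\vartheta,\chi_{\Theta})=\Lambda(\xi^0)$ directly from the Casselman--Shalika formula \eqref{eq:CS formula} specialized to $d=rk$, $\mathbf{a}=\mathbf{b}=\mathbf{0}$, with inducing data $\chi_{\Theta}$. At the torus identity $t_{\mathbf{b}}=\langle I_{rk},1\rangle$ we have $\delta_{B_{\GL_{rk}}}(t_{\mathbf{b}})=1$ and $\mathbf{b}^*=\mathbf{0}$, so
\begin{align*}
W_{\mathbf{0}}(\mathbf{0},\vartheta,\chi_{\Theta})=\sum_{w\in W_{\GL_{rk}}}\prod_{\{\alpha\in\Phi_{rk}^+:w\alpha\in\Phi_{rk}^+\}}\frac{1-q^{-1}({}^{w^{-1}}\mathbf{y})_{\alpha}}{1-({}^{w^{-1}}\mathbf{y})_{\alpha}}\,\tau_{\mathbf{0},\mathbf{0}}(w,\vartheta,{}^{w^{-1}}\chi_{\Theta}),
\end{align*}
where $\mathbf{y}$ records the Satake-type parameters of $\mathrm{I}_{\GL_{rk}^{(m,r)}}(\vartheta,\chi_{\Theta})$ as in \eqref{def:mathbf y} (the $r$-fold ``exceptional'' expansion of each $x_i$). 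The key structural input is the observation of Suzuki recorded after \eqref{eq:CS formula}: $\tau_{\mathbf{0},\mathbf{0}}(w,\vartheta,\cdot)=0$ unless some partial subword $w'$ of a reduced word for $w$ satisfies $w'[\mathbf{0}]\equiv\mathbf{0}$ in $r\mathbb{Z}^{rk}$. First I would analyze which Weyl elements survive: since $w_{\alpha}[\mathbf{0}]$ shifts coordinates by $\pm1$, the congruence $w'[\mathbf{0}]\equiv\mathbf{0}\ (r\mathbb{Z})$ is restrictive, and I expect the surviving $w$ to be exactly those of a ``block'' form — permutations that act on the $k$ groups of $r$ consecutive indices in a way compatible with the exceptional grouping, i.e. essentially $W_{\GL_k}$ acting on blocks together with the long element inside each block (or no internal permutation). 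This is where Cai's lemma \cite{CaiLemma39} enters, enabling an induction on $k$ via the iterative form \eqref{eq:iterative CS formula of Suzuki}.

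The cleanest route, and the one I would pursue, is induction on $k$ using \eqref{eq:iterative CS formula of Suzuki} with $d=rk$. The base case is essentially the Whittaker value for a single twisted exceptional representation $\chi_1\Theta_{r,m,r,\vartheta}$ of $\GL_r^{(m,r)}$, where uniqueness of the Whittaker model holds and the value at the identity is (up to normalization) an empty product, matching $\prod_{j=1}^r C(j,\mathbf{x})$ restricted to $k=1$, where $C(j,\mathbf{x})=\prod_{\alpha\in\Phi_1^+}(\cdots)=1$ since $\Phi_1^+=\emptyset$; so the $k=1$ claim reads $W_{\mathbf{0}}(\mathbf{0},\vartheta,\chi_{\Theta})=1$, which should follow from the normalization $\Lambda^0(\xi^0)=1$ together with the fact that on the exceptional representation the Jacquet integral of the normalized unramified vector is $1$ (this is implicit in the Gindikin--Karpelevich computation \eqref{eq:GK formula for GL} applied to the long element, combined with the quotient description \eqref{eq:Thete rc m as a quotient}). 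For the inductive step, in \eqref{eq:iterative CS formula of Suzuki} the outer sum over $i=0,\ldots,rk-1$ collapses: by the support analysis of the $\tau$-coefficients, $\tau_{\mathbf{0}^*,(\mathbf{e'}^*,-b_1)}(\omega_i,\vartheta,{}^{\omega_i^{-1}}\chi_{\Theta})$ with $b_1=0$ is nonzero only for $i$ a multiple of $r$, say $i=rj$ with $0\le j\le k-1$, and the contributing $\mathbf{e'}$ is forced (up to the $r\mathbb{Z}$ equivalence) to be $\mathbf{0'}$. Identifying $\chi_{\Theta}'[rk-rj]$ with the $\chi_{\Theta,1}$-data for $\GL_{r(k-1)}^{(m,r)}$ obtained by deleting the $j$-th block, the recursion becomes
\begin{align*}
W_{\mathbf{0}}(\mathbf{0},\vartheta,\chi_{\Theta})=\Big(\sum_{j=0}^{k-1}\prod_{l=1}^{(k-1-j)r}\frac{1-q^{-1}\mathbf{y}_{(l,\,(k-j)r)}}{1-\mathbf{y}_{(l,\,(k-j)r)}}\,\tau_{\mathbf{0'},\mathbf{0'}}(\omega_{rj},\vartheta,{}^{\omega_{rj}^{-1}}\chi_{\Theta})\Big)\,W_{\mathbf{0'}}(\mathbf{0'},\vartheta,\chi_{\Theta}'),
\end{align*}
and by the inductive hypothesis the last factor is $\prod_{j=1}^r C(j,\mathbf{x}')$ with $\mathbf{x}'$ the $k-1$ surviving $x_i$'s. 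It then remains to evaluate the parenthesized scalar and show it equals $\prod_{j=1}^r(1-q^{-j}x_k x_{k-1}^{-1})\cdots$ — more precisely, to show the scalar equals $\prod_{j=1}^r\prod_{i=1}^{k-1}(1-q^{-j}x_k x_i^{-1})/$ (nothing), i.e. the quotient $\big(\prod_{j=1}^r C(j,\mathbf{x})\big)/\big(\prod_{j=1}^r C(j,\mathbf{x}')\big)$; this quotient is exactly $\prod_{j=1}^r\prod_{i=1}^{k-1}(1-q^{-j}\mathbf{x}_{(i,k)})$ where I place $x_k$ as the ``removed'' block.

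The main obstacle, and where I would spend the bulk of the work, is the explicit evaluation of the inner scalar $\sum_{j=0}^{k-1}(\ldots)\tau_{\mathbf{0'},\mathbf{0'}}(\omega_{rj},\vartheta,{}^{\omega_{rj}^{-1}}\chi_{\Theta})$. This requires (i) computing $\tau_{\mathbf{0'},\mathbf{0'}}(\omega_{rj},\vartheta,\cdot)$ from the cocycle relation \eqref{eq:cocycle for tau} and the simple-reflection values \eqref{eq:tau 1 nonzero}--\eqref{eq:tau 2 nonzero}, carefully tracking which of the $\tau^1$/$\tau^2$ branches occur along the reduced word $w_{\alpha_{rk-rj}}\cdots w_{\alpha_{rk-1}}$ for $\omega_{rj}$ — the point being that passing through a full block of $r$ consecutive reflections forces a controlled pattern, and the Gauss sums $\mathfrak{g}(2(\cdot))$ that appear in \eqref{eq:tau 2 nonzero} must combine (via $\mathfrak{g}(l)\mathfrak{g}(-l)=q^{-1}$ and $\mathfrak{g}(l)=-q^{-1}$ for $l\equiv0\ (m)$) into the clean rational factor $\prod_{j=1}^r(1-q^{-j}\mathbf{x}_\alpha)$ with no residual Gauss sums; and (ii) combining this with the Gindikin--Karpelevich-type product $\prod_l (1-q^{-1}\mathbf{y}_{(l,(k-j)r)})/(1-\mathbf{y}_{(l,(k-j)r)})$ and observing massive telescoping across the $r$ entries of each block, using $\mathbf{y}_{(l,\cdot)}$'s being geometric progressions $q^{(r+1)/2-\cdot}x_i$ so that the ``$1-\mathbf{y}_\alpha$'' denominators cancel against numerators from adjacent $j$. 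I would model this computation closely on Suzuki's treatment for $k\le 2$ \cite{Suzuki1998} and on the analogous linear Casselman--Shalika calculation, checking that the $\vartheta$-dependence drops out entirely (which it should, since $\mu_{2m}\subset F^*$ makes all relevant Hilbert symbols trivial and the Gauss sum arguments $2(a_{i+1}-a_i-1)$ land in the range controlled by (1)--(2) above). Once the scalar is identified as the correct ratio of $C$-products, the induction closes and yields $W_{\mathbf{0}}(\mathbf{0},\vartheta,\chi_{\Theta})=\prod_{j=1}^r C(j,\mathbf{x})$.
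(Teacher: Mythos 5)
Your proposal takes a genuinely different route from the paper, and as written it has a gap at the heart of the induction. The paper does \emph{not} prove this theorem by iterating the Casselman--Shalika formula. Instead it argues in two steps: (i) each irreducible factor $1-q^{-l}\mathbf{x}_{(i,j)}$ ($i<j$, $1\le l\le r$) divides the polynomial $W_{\mathbf{0}}(\mathbf{0},\vartheta,\chi_{\Theta})\in\C[\{\mathbf{x}_{(i,i+1)}\}]$, because specializing $\mathbf{x}_{(i,j)}=q^l$ creates, inside the inducing data, a full block $\vartheta\chi_j|~|^{-1/(2r)}\delta_{B_{\GL_{r+1}}}^{-1/(2r)}$ of length $r+1$; the corresponding induced representation contains $\Theta_{r+1,m,r,\vartheta}$, which has no Whittaker functional, and a holomorphic nonzero intertwining operator carries $\xi^0$ there, forcing $\Lambda(\xi^0)=0$; (ii) the total degree and leading monomial of $W_{\mathbf{0}}(\mathbf{0},\vartheta,\chi_{\Theta})$ are computed via the Gelfand--Tsetlin pattern expansion of \cite{BBF2011,McNamara2} (there is exactly one contributing pattern, and its Gauss-sum coefficient matches \eqref{desired monomial}), which pins the polynomial down to equal $\prod_{j=1}^r C(j,\mathbf{x})$. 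No induction on $k$ and no evaluation of the $\tau$-coefficients is needed for this theorem.

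The concrete problem with your induction is that \eqref{eq:iterative CS formula of Suzuki} does not reduce the $k$-block quantity to the $(k-1)$-block quantity of the same shape: it expresses $W_{\mathbf{0}}(\mathbf{0},\vartheta,\chi_{\Theta})$ in terms of $W_{\mathbf{e'}}(\mathbf{0'},\vartheta,\chi_{\Theta}'[ru+1])$, where a \emph{single entry} (the first entry of the $(u+1)$-st block) has been deleted, leaving a representation of $\GL_{rk-1}^{(m,r)}$ with one block of length $r-1$ --- not the $\chi_{\Theta}$-data for $\GL_{r(k-1)}^{(m,r)}$. So the inner Whittaker value is not covered by your inductive hypothesis, and peeling off a whole block requires $r$ further iterations through ``partial-block'' data that you would have to control separately; this is precisely the content of Cai's result \eqref{lemma:mk-1 Whittaker}, whose statement already contains the factor $\prod_{j=1}^r C(j,\mathbf{x})$, so invoking it here risks circularity. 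Two further inaccuracies: the surviving outer indices are $i=r(k-u)-1$ (i.e.\ $i\equiv r-1\ (r)$), not multiples of $r$, since the Gindikin--Karpelevich numerator $1-q^{-1}\mathbf{y}_{(rk-i-1,rk-i)}$ vanishes exactly when $rk-i$ is not the first entry of its block; and the sum over $\mathbf{e'}$ does not collapse to $\mathbf{0'}$ --- the contributing $\mathbf{e'}$ are of the form $w_{\alpha_{rk-2}}^{i_{rk-2}}\cdots w_{\alpha_{rk-r+1}}^{i_{rk-r+1}}[\mathbf{0}']$ over all binary choices, so Whittaker functionals $\Lambda_{t_{\mathbf{e'}}}$ other than $\Lambda_{\mathbf{0'}}$ genuinely enter. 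Finally, your base case $k=1$ is not ``automatic from normalization'': the $w=1$ term of \eqref{eq:CS formula} already vanishes for exceptional data (since $\mathbf{y}_{(j,j+1)}=q$), so showing the full Weyl sum equals $1$ is itself a computation. Note also that the paper's logical order is the reverse of yours: Theorem~\ref{theorem:CS formula for rk, c} is established first and then used (via the symmetry \eqref{eq:func eq for Gamma coeff} and the nonvanishing of $\Lambda(\xi^0)$) to run the iterative computation in Theorem~\ref{theorem:Whittaker on x,I}.
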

\begin{proof}
For $m=1$ this was proved in \cite{CS2}, assume $m>1$.
Since in general positive roots are spanned by the simple roots with positive integer coefficients, the product of factors $C(j,\mathbf{x})$ in the statement of the lemma belongs to the unique factorization domain
$\C[\{\mathbf{x}_{(i,i+1)}\}_{1\leq i<k}]$. It can also be written as the product of distinct irreducible polynomials
\begin{align}\label{eq:irred poly factors for identity at 1}
1-q^{-l}\prod_{i_0=i}^{j-1}{\mathbf{x}_{(i_0,i_0+1)}},\qquad i<j, \qquad 1\leq l\leq r.
\end{align}
The total degree of $\prod_{j=1}^{r}C(j,\mathbf{x})$ is $rk(k^2-1)/6$ and the monomial attaining this degree is
\begin{align}\label{desired monomial}
(-1)^{rk(k-1)/2}q^{-r(r+1)k(k-1)/4}\prod_{i=1}^{k-1}\mathbf{x}_{(i,i+1)}^{ri(k-i)}.
\end{align}

According to the formula of McNamara \cite[\S~8]{McNamara2},
$\Lambda(t_{\mathbf{a}}\cdot \xi^0)\in\C[\{\mathbf{x}_{(i,i+1)}\}_{1\leq i<k}]$.
We will show that each factor \eqref{eq:irred poly factors for identity at 1} divides
$W_{\mathbf{0}}(\mathbf{0},\vartheta,\chi_{\Theta})$, which implies that $\prod_{j=1}^{r}C(j,\mathbf{x})$ also divides $W_{\mathbf{0}}(\mathbf{0},\vartheta,\chi_{\Theta})$.
Then, it is sufficient to prove that the total degree of $W_{\mathbf{0}}(\mathbf{0},\vartheta,\chi_{\Theta})$ is
also $rk(k^2-1)/6$ and the corresponding monomial is \eqref{desired monomial}.

To prove that \eqref{eq:irred poly factors for identity at 1} divides $W_{\mathbf{0}}(\mathbf{0},\vartheta,\chi_{\Theta})$, it is enough to show
that when $\mathbf{x}_{(i,j)}=q^l$ and the variables $x_{i_0}$ for $i_0\in \{1,\ldots,k\}-\{i,j\}$ are in ``general position",
$W_{\mathbf{0}}(\mathbf{0},\vartheta,\chi_{\Theta})=0$. (E.g., if both $(1-q^{-1}\mathbf{x}_{(1,2)})$ and $(1-q^{-1}\mathbf{x}_{(2,3)})$ divide
$W_{\mathbf{0}}(\mathbf{0},\vartheta,\chi_{\Theta})$, then already $W_{\mathbf{0}}(\mathbf{0},\vartheta,\chi_{\Theta})=0$ when $\mathbf{x}_{(1,2)}=\mathbf{x}_{(2,3)}=q$
so that $\mathbf{x}_{(1,3)}=q^2$, yet $(1-q^{-2}\mathbf{x}_{(1,3)})$ might not divide $W_{\mathbf{0}}(\mathbf{0},\vartheta,\chi_{\Theta})$; but if
$W_{\mathbf{0}}(\mathbf{0},\vartheta,\chi_{\Theta})=0$ when $\mathbf{x}_{(1,3)}=q^2$ and $x_{2}$ is arbitrary, then
$(1-q^{-2}\mathbf{x}_{(1,3)})$ divides $W_{\mathbf{0}}(\mathbf{0},\vartheta,\chi_{\Theta})$.)

Assume $\mathbf{x}_{(i,j)}=q^l$ and suppose $W_{\mathbf{0}}(\mathbf{0},\vartheta,\chi_{\Theta})\ne0$, we will arrive at a contradiction.
The representation $\Theta_{r,m,r,\vartheta}(\chi)$ is a subrepresentation of \eqref{eq:Thete rc m as a subrep}, namely of
\begin{align*}
&\Ind_{\widetilde{B}_{\GL_{rk}}}^{\GL_{rk}^{(m,r)}}\left(
\Ind_{\widetilde{T}_{\GL_{rk},*}}^{\widetilde{T}_{\GL_{rk}}}(
(\otimes_{j=1}^{r}\vartheta\chi_1|~|^{-(r-2j+1)/(2r)})\otimes \ldots \otimes
(\otimes_{j=1}^{r}\vartheta\chi_k|~|^{-(r-2j+1)/(2r)}))\right).
\end{align*}
Since $x_i=x_jq^l$, $\chi_i=\chi_j|~|^{-l/r}$ ($x_i=\chi_i(\varpi^r)$).
Thus the $l$-th genuine character of $\widetilde{A}$ in the block of $\vartheta\chi_i\delta_{B_{\GL_r}}^{-1/(2r)}$, i.e.,  $\vartheta\chi_i|~|^{-(r-2l+1)/(2r)}=\vartheta\chi_j|~|^{-(r+1)/(2r)}$, and the $r$ genuine characters of $\widetilde{A}$ in the block of $\vartheta\chi_j\delta_{B_{\GL_r}}^{-1/(2r)}$ constitute the inducing character
$\vartheta\chi_j|~|^{-1/(2r)}\delta_{B_{\GL_{r+1}}}^{-1/(2r)}$ of $\chi_j|~|^{-1/(2r)}\Theta_{r+1,m,r,\vartheta}$.

Consider the unramified principal series representation obtained by permuting
the aforementioned character $\vartheta\chi_i|~|^{-(r-2l+1)/(2r)}$ to the right. Using transitivity of induction, it can be written in the form \begin{align}\label{eq:sub with pi1 delta and pi2}
&\Ind_{\widetilde{P}_{(a,r+1,rk-r-1-a)}}^{\GL_{rk}^{(m,r)}}
\left(\pi_1\otimes\chi_j|~|^{-1/(2r)}\mathrm{I}_{\GL_{r+1}^{(r,k)}}(\vartheta,\delta_{B_{\GL_{r+1}}}^{-1/(2r)})\otimes\pi_2\right),
\end{align}
for suitable genuine unramified principal series representations $\pi_1$ and $\pi_2$. Then \eqref{eq:sub with pi1 delta and pi2} contains
\begin{align}\label{eq:sub with pi1 theta and pi2}
&\Ind_{\widetilde{P}_{(a,r+1,rk-r-1-a)}}^{\GL_{rk}^{(m,r)}}
\left(\pi_1\otimes\chi_j|~|^{-1/(2r)}\Theta_{r+1,m,r,\vartheta}\otimes\pi_2\right),
\end{align}
which is again unramified. Since $\Theta_{r+1,m,r,\vartheta}$ does not admit any Whittaker functional (see \S~\ref{exceptional}),
the representation \eqref{eq:sub with pi1 theta and pi2} does not admit one either (this follows from \cite[Theorem~5.2]{BZ2}; see also \cite{Banks1998}).
Therefore any Whittaker functional on \eqref{eq:sub with pi1 delta and pi2} must vanish on the normalized unramified vector $\xi^0$ in the space of \eqref{eq:sub with pi1 delta and pi2}, because it is contained in \eqref{eq:sub with pi1 theta and pi2}.

However, if $M(w)$ is the intertwining operator from \eqref{eq:sub with pi1 delta and pi2}, regarded as an unramified principal series
representation, back to \eqref{eq:Thete rc m as a subrep}, then we claim that $M(w)$ is holomorphic, and also nonzero on $\xi^0$.
To see this, decompose $M(w)$ into rank-$1$ intertwining operators which now take the form
\begin{align*}
M(w_{\alpha}):&\mathrm{I}_{\GL_2^{(m,r)}}(\vartheta,\chi_{i_0}|~|^{-(r-2l_0+1)/(2r)}\otimes\chi_j|~|^{-(r+1)/(2r)})\rightarrow \\& \mathrm{I}_{\GL_2^{(m,r)}}(\vartheta,\chi_j|~|^{-(r+1)/(2r)}\otimes\chi_{i_0}|~|^{-(r-2l_0+1)/(2r)}),
\end{align*}
where either $i<i_0<j$ and $l_0>0$, or $i_0=i$ but then $l_0>l$. The inducing character is regular: for $i<i_0$, this is because we are assuming
$x_{i_0}$ is in general position; if $i_0=i$, this is because $q^{-l_0}\mathbf{x}_{(i_0,j)}=q^{-l_0+l}$ and $l_0>l$. Hence
each $M(w_{\alpha})$ is holomorphic. If ${\xi'}^0$ (resp., ${\xi''}^0$) is the normalized unramified vector in the domain (resp., image) of $M(w_{\alpha})$,
\eqref{eq:GK formula for GL} implies
\begin{align*}
M(w_{\alpha}){\xi'}^0=\frac{1-q^{-1-l_0}\mathbf{x}_{(i_0,j)}}{1-q^{-l_0}\mathbf{x}_{(i_0,j)}}{\xi''}^0.
\end{align*}
This is nonzero (and well defined), again for $i<i_0$ because $x_{i_0}$ is in general position, and for $i_0=i$
the quotient equals $(1-q^{-1-l_0+l})/(1-q^{-l_0+l})$ (with $l_0>l$). Hence $M(w)$ is well defined, so that the map
$\xi\mapsto\Lambda(M(w)\xi)$ (if not identically zero) is a Whittaker functional on \eqref{eq:sub with pi1 delta and pi2}, and also
$M(w)\xi^0\ne0$. Now if $W_{\mathbf{0}}(\mathbf{0},\vartheta,\chi_{\Theta})\ne0$, $\Lambda(M(w)\xi^0)\ne0$ - contradiction.

We turn to compute the total degree and relevant coefficient of $W_{\mathbf{0}}(\mathbf{0},\vartheta,\chi_{\Theta})$ using the results of
\cite{BBF2011,McNamara2} on Gelfand--Tsetlin patterns.
Recall that a (strict) Gelfand--Tsetlin pattern $\mathfrak{T}$ is a triangular array $\{a_{i,j}\}$ of non-negative integers,
such that $a_{i,j}>a_{i,j+1}$ and $a_{i,j}\geq a_{i+1,j}\geq a_{i,j+1}$ for all $i,j$ such that all entries exist (we enumerate the elements such that the leftmost entry at row $i$ is $a_{i,1}$). For each monomial of $W_{\mathbf{0}}(\mathbf{0},\vartheta,\chi_{\Theta})$, the coefficient is read off a finite set of Gelfand--Tsetlin patterns.
We briefly adapt the exposition of \cite{McNamara2} and \cite{BBCFG} to the case at hand, and in particular plug in the parameters $\mathbf{y}$ of our principal series representation.

Here $1\leq i,j\leq rk$, and since we are evaluating the Whittaker function at the identity, the first row is $a_{1,j}=rk-j$, $1\leq j\leq rk$. The entries at row $i$ are $a_{i,1},\ldots,a_{i,rk-i+1}$.

We claim that the only patterns to consider are those where the differences
\begin{align}\label{eq:di}
d_{i}=\sum_{j=1}^{rk-i+1}a_{i,j}-rk+i-1+j
\end{align}
are divisible by $r$, for all $i\geq1$. This condition did not appear in \cite{McNamara2,BBCFG}, we explain how to obtain it. Since $d_1=0$, assume $i>1$.
The value $d_i$ is $k_{i-1}(\mathfrak{T})$ of \cite{McNamara2} and \cite[\S~3.3]{BBCFG} (in those works the rows were enumerated starting from $0$).
Given an element $\mathrm{m}$ in the root lattice of $\GL_{rk}$, denote its representation
as a combination of simple roots by $\mathrm{m}_{\Delta}$. To deduce $d_i\equiv0\,(r)$ we relate between \cite{McNamara2} and \cite[\S~3.3]{BBCFG}, freely using their notation. Denote $\mathbf{k}(\mathfrak{T})=(k_1(\mathfrak{T}),\ldots,k_{rk-1}(\mathfrak{T}))$. Comparing coefficients between
\cite[(8), Theorem~1]{BBCFG} and \cite[Theorem~8.6]{McNamara2} we see that for a given $\mathbf{k}\in\N^{rk-1}$, $H(\varpi^{\mathbf{k}};\lambda)$ is the sum over $\mathfrak{T}$ such that $\mathbf{k}(\mathfrak{T})=\mathbf{k}$, and
$H(\varpi^{\mathbf{k}};\lambda)$ is also equal to the sum of products
$\sum_{(\mathrm{i},\mathrm{m})\in B(\lambda+\rho)}\prod_{\alpha\in\Phi_{rk}^+}w(\mathrm{m},\alpha)$ where
$\mathrm{m}\in\N^{|\Phi_{rk}^+|}$ and $\mathrm{m}_{\Delta}=\mathbf{k}$. Each summand here is obtained from an integral
$\int_{C^{\mathrm{i}}_{\mathrm{m}}}\xi^0(u)\psi(u)du$ where $C^{\mathrm{i}}_{\mathrm{m}}\subset N_{\GL_{rk}}^-$ (called a ``cell").
If $\mathrm{m}\notin r\N^{|\Phi_{rk}^+|}$, then for any element from $C^{\mathrm{i}}_{\mathrm{m}}$, in its Iwasawa decomposition given in \cite[Theorem~4.5, Proposition~4.6]{McNamara2} the torus part does not belong to $T_{\GL_{rk},*}$. In this case $\xi^0$ vanishes on $C^{\mathrm{i}}_{\mathrm{m}}$, by Proposition~\ref{proposition:support of unr is in T*}. Hence only cells $C^{\mathrm{i}}_{\mathrm{m}}$ with $\mathrm{m}\in r\N^{|\Phi_{rk}^+|}$ contribute,
so that $\mathbf{k}\in r\N^{rk-1}$, i.e., $r|d_i$ for all $i\leq rk$.

The resulting monomial for such a pattern is
\begin{align}\label{monomial}
C\left(\,\prod_{l=1}^{k}\,\prod_{i=r(l-1)+2}^{rl}\,q^{d_i/r}\right)\left(\prod_{l=1}^{k-1}\,q^{-(r-1)d_{rl+1}/r}\right)\left(\prod_{l=1}^{k-1}
\,\mathbf{x}_{(l,l+1)}^{d_{rl+1}/r}\right),
\end{align}
where $C$ is a constant to be determined below, depending on certain products of Gauss sums.

Since we are looking for the monomial of highest total degree, we consider patterns with the maximal entries $a_{rl+1,j}$
for $1\leq l\leq k-1$ and $1\leq j\leq rk-(rl+1)+1=r(k-l)$ possible. We will show that there exists exactly one pattern satisfying this condition, with a nonzero contribution. This will imply that it is the only pattern for the highest monomial, and in turn, determine the total degree and the coefficient.

First note that $a_{rl+1,1}\leq rk-1$ for all $1\leq l\leq k-1$, because $a_{1,1}=rk-1$. Consider patterns such that
$a_{rl+1,1}=rk-1$ for all $1\leq l\leq k-1$. Since the numbers are strictly decreasing, $d_{rl+1}$ will attain its maximum when
\begin{align}\label{eq:rl+1 j}
a_{rl+1,j}=rk-j,\qquad \forall 1\leq j\leq r(k-l).
\end{align}
Then $d_{rl+1}=r^2l(k-l)$, and the total degree of the monomial is $rk(k^2-1)/6$ as required.

Since at each row $rl+1$ the entries are the maximal, we see that $a_{i,j}=a_{rl+1,j}$ for all pairs $(i,j)$ in the set
\begin{align*}
\mathcal{G}=\{(i,j):1\leq l\leq k-1,\quad r(l-1)+1<i\leq rl+1,\quad 1\leq j\leq r(k-l)\}.
\end{align*}
The remaining entries to determine are $a_{i,j}$ where $(i,j)$ belongs to
\begin{align*}
\mathcal{B}=\{(i,j):1\leq l\leq k,\quad r(l-1)+1<i<rl+1,\quad r(k-l)<j\leq rk-i+1\}.
\end{align*}
This is addressed by the following claim, proved below.
\begin{claim}\label{claim:a bc in the remaining range}
For $(i,j)\in\mathcal{B}$, $a_{i,j}=a_{i-1,j+1}$.
\end{claim}
We have shown that there is a unique pattern for the monomial of
$\prod_{l=1}^{k-1}\,\mathbf{x}_{(l,l+1)}^{rl(k-l)}$. It remains to compute its coefficient.

For $(i,j)\in\mathcal{B}$ with $r(l-1)+1<i<rl+1$, a repeated application of Claim~\ref{claim:a bc in the remaining range}
shows
\begin{align}\label{eq:a_bc in B}
a_{i,j}=a_{r(l-1)+1,j+i-r(l-1)-1}=r(k+l-1)-i-j+1.
\end{align}
Now computing $d_i$ as two summations, one over $1\leq j\leq r(k-l)$ for which $a_{i,j}=a_{rl+1,j}=rk-j$, the other with $j>r(k-l)$ where we use
\eqref{eq:a_bc in B}, we obtain
\begin{align*}
d_i=r(k-l)(i-1)+r(l-1)(rl-i+1).
\end{align*}
Thus
\begin{align*}
\sum_{l=1}^{k}\,\sum_{i=r(l-1)+2}^{rl}d_i=r^2(r-1)k(k^2-1)/6=\sum_{l=1}^{k-1}(r-1)d_{rl+1}
\end{align*}
and we conclude that the powers of $q$ vanish in \eqref{monomial}. It remains to compute $C$.

For $i>1$, put
\begin{align*}
h_{i,j}=\sum_{v=j}^{rk-i+1}a_{i,v}-a_{i-1,v+1}.
\end{align*}
Recall the Gauss sum $\mathfrak{g}(a)$ given by \eqref{eq:Gauss Sum}, where $\mathfrak{g}(a)=-q^{-1}$ if
$a\equiv0\,(m)$ otherwise $\mathfrak{g}(a)\mathfrak{g}(-a)=q^{-1}$.
The coefficient $C$ is the product of $\mathfrak{g}(2h_{i,j})$ over all $(i,j)$ such that $a_{i,j}=a_{i-1,j}$ (in particular, $i>1$).
This follows from \cite[\S~3.3]{BBCFG} and note that the $2$-cocycle used there for a covering of $\GL_{rk}$ is our $\sigma_{rk}$
(see \cite[\S~3.3, (4)]{BBCFG} and \eqref{eq:sigma on torus of GL}); here we use $\sigma_{rk}^{\diamondsuit}$ and since
$\sigma_{2}(\left(\begin{smallmatrix}t\\&t^{-1}\end{smallmatrix}\right),\left(\begin{smallmatrix}t'\\&{t'}^{-1}\end{smallmatrix}\right))=(t,t')_m^{-1}$ and
$\sigma_{2}^{\diamondsuit}(\left(\begin{smallmatrix}t\\&t^{-1}\end{smallmatrix}\right),\left(\begin{smallmatrix}t'\\&{t'}^{-1}\end{smallmatrix}\right))=(t,t')_m^{-2}$,
the function $g(b)$ of \cite[\S~3.3, (9)]{BBCFG} which is $\mathfrak{g}(b)$ here (the normalization of the measure is the same) is replaced by $\mathfrak{g}(2b)$.
Also for the pattern at hand, the set of such pairs $(i,j)$ is $\mathcal{G}$.

Let $(i,j)\in\mathcal{G}$ with $r(l-1)+1<i\leq rl+1$. Then for
$v\leq r(k-l)$, $a_{i,v}=rk-v$. Claim~\ref{claim:a bc in the remaining range}
shows $a_{i,v}=a_{i-1,v+1}$ when $(i,v)\in\mathcal{B}$ (i.e., $v>r(k-l)$). Also
since $(i-1,r(k-l)+1)\in\mathcal{B}$, \eqref{eq:a_bc in B} implies
$a_{i-1,r(k-l)+1}=r(2l-1)-i+1$.
Therefore
\begin{align*}
h_{i,j}=r(k-2l+1)+i-1-j\equiv i-1-j\quad(r).
\end{align*}
Then for each $1\leq j\leq r(k-l)$,
\begin{align*}
\prod_{i=r(l-1)+2}^{rl+1}\mathfrak{g}(2h_{i,j})=\begin{cases}-q^{-(r+1)/2}&\text{odd $r$},\\-q^{-(r+1)/2}q^{1/2}\mathfrak{g}(r)&\text{even $r$}.\end{cases}
\end{align*}
Note that $2a\equiv 2b\,(m)$ if and only if $a\equiv b\,(r)$, and if $r$ is even, $-r\equiv r\,(m)$ whence
$(q^{1/2}\mathfrak{g}(r))^r=1$.
When we multiply over $r(k-l)$ columns and $1\leq l\leq k-1$ rows we see that $C$ is the coefficient appearing in \eqref{desired monomial} (!).
\end{proof}
\begin{proof}[Proof of Claim~\ref{claim:a bc in the remaining range}]
Fix $1\leq l\leq k$.
Let $r(l-1)+1<i<rl+1$.
We split the sum $d_i$ (given by \eqref{eq:di}) into two summations, one for $1\leq j\leq r(k-l)$, the other for $j>r(k-l)$. For the first sum, by definition
$(i,j)\in\mathcal{G}$ so that $a_{i,j}=a_{rl+1,j}$, which is $rk-j$ by \eqref{eq:rl+1 j}. Hence this sum equals
$r(k-l)(i-1)$ which is divisible by $r$, so that
\begin{align*}
d_i\equiv\sum_{j=r(k-l)+1}^{rk-i+1}a_{i,j}-rk+i-1+j\equiv0\quad(r).
\end{align*}

Put $v=i-r(l-1)-1>0$.
According to \eqref{eq:rl+1 j}, we have in particular
\begin{align*}
a_{r(l-1)+1,j+v}=rk-j-v,\qquad\forall\quad r(k-l)<j \leq r(k-(l-1))-v.
\end{align*}
Then for each $r(k-l)< j\leq rk-i+1$,
\begin{align*}
a_{i,j}-rk+i-1+j=a_{i,j}-a_{r(l-1)+1,j+v}+i-1-v=a_{i,j}-a_{r(l-1)+1,j+v}+r(l-1).
\end{align*}
Hence
\begin{align}\label{eq:di before using induction}
d_{i}\equiv\sum_{j=r(k-l)+1}^{rk-i+1}a_{i,j}-a_{r(l-1)+1,j+v}\quad(r),
\end{align}
and note that $a_{r(l-1)+1,j+v}$ is defined, because in the summation $j+v\leq rk-i+1+v=rk-r(l-1)$.
Since $i>r(l-1)+1$, this is a sum of less than $r$ integers which vanishes modulo $r$.

We now use induction on $i$ to show $a_{i,j}=a_{i-1,j+1}$ for all $r(k-l)<j\leq rk-i+1$. For the base case
$i=r(l-1)+2$ and $v=1$. Since $rk-i+1<r(k-(l-1))$, \eqref{eq:rl+1 j} implies that for any $j$ in the range, $a_{r(l-1)+1,j}=a_{r(l-1)+1,j+1}+1$, hence we must have
$a_{i,j}=a_{r(l-1)+1,j}$ or $a_{r(l-1)+1,j+1}$ (according to the conditions defining the Gelfand--Tsetlin pattern). Thus
\eqref{eq:di before using induction} is a sum of less than $r$ integers which are either $0$ or $1$, and is divisible by
$r$. We conclude $a_{i,j}=a_{r(l-1)+1,j+1}$ for all $r(k-l)<j\leq rk-i+1$.

For $r(l-1)+2<i<rl+1$, $a_{i,j}-a_{r(l-1)+1,j+(i-r(l-1)-1)}$ equals
\begin{align*}
a_{i,j}+\sum_{u=1}^{i-r(l-1)-2}(-a_{i-u,j+u}+a_{i-u,j+u})-a_{r(l-1)+1,j+(i-r(l-1)-1)}
=a_{i,j}-a_{i-1,j+1}
\end{align*}
where the second equality follows from the inductive hypothesis. Therefore \eqref{eq:di before using induction} becomes
\begin{align*}
d_{i}\equiv\sum_{j=r(k-l)+1}^{rk-i+1}a_{i,j}-a_{i-1,j+1}\quad(r).
\end{align*}
The inductive hypothesis also implies $a_{i-1,j}=a_{r(l-1)+1,j+v-1}$ for any $j$ in the range, and because $j+v-1<r(k-(l-1))$ we can use \eqref{eq:rl+1 j} to deduce
$a_{i-1,j}=a_{i-1,j+1}+1$.
Then as in the base case we have $a_{i,j}=a_{i-1,j}$ or $a_{i-1,j+1}$ for all $j$, and the vanishing
of $d_i$ modulo $r$ implies $a_{i,j}=a_{i-1,j+1}$. In this manner we argue for each $l$.
\end{proof}
\begin{example}
The unique Gelfand--Tsetlin pattern when $r=k=2$ ($m=4$) is
\begin{align*}
\begin{array}{cccccccccc}
3&&2&&1&&0\\&3&&2&&0\\&&3&&2\\&&&2
\end{array}.
\end{align*}
Here $\mathcal{G}=\{(2,1),(2,2),(3,1),(3,2)\}$, $h_{2,1}=2$, $h_{2,2}=1$,
$h_{3,1}=3$, $h_{3,2}=2$ and $C=\prod_{{i,j}\in\mathcal{G}}\mathfrak{g}(2h_{i,j})=q^{-3}$.
When $r=k=3$ ($m=3$ or $6$) the pattern is
\begin{align*}
\begin{array}{ccccccccccccccccc}
8&&7&&6&&5&&4&&3&&2&&1&&0\\
&8&&7&&6&&5&&4&&3&&1&&0\\
&&8&&7&&6&&5&&4&&3&&0\\
&&&8&&7&&6&&5&&4&&3\\
&&&&8&&7&&6&&4&&3\\
&&&&&8&&7&&6&&3\\
&&&&&&8&&7&&6\\
&&&&&&&7&&6\\
&&&&&&&&6
\end{array}.
\end{align*}
Here $d_4=d_7=18$; $\mathcal{B}=\{(2,7),(2,8),(3,7),(5,4),(5,5),(6,4),(8,1),(8,2),(9,1)\}$;
the contribution to $C$ of rows $2,3$ and $4$ is $q^{-4}$ each; and rows $5,6$ and $7$ contribute $-q^{-2}$ each, e.g., the product for the $7$-th row is $\mathfrak{g}(2\cdot 5)\cdot\mathfrak{g}(2\cdot 4)\cdot(-q^{-1})$. Hence the monomial of highest total degree is $-q^{-18}\mathbf{x}_{(1,2)}^6\mathbf{x}_{(2,3)}^6$.
\end{example}

Define a function $\mathscr{G}:W_{\GL_{rk-1}}\times \Z^{rk-1}\rightarrow\C^*$ as follows.
\begin{enumerate}[leftmargin=*]
\item If $w$ is the identity element, $\mathscr{G}(w,\mathbf{a})=1$.
\item For $w=w_{\alpha}$ with $\alpha=(i,i+1)$, $\mathscr{G}(w_{\alpha},\mathbf{a})=q^{-a_{i+1}+a_{i}+1}\mathfrak{g}(-2(a_{i+1}-a_{i}-1))$.
\item If $\ell(w_1w_2)=\ell(w_1)+\ell(w_2)$, $\mathscr{G}(w_1w_2,\mathbf{a})=\mathscr{G}(w_1,w_2[\mathbf{a}])\mathscr{G}(w_2,\mathbf{a})$.
\end{enumerate}
This is similar to Suzuki \cite[\S~3.2]{Suzuki1997}, but the definition of the action $w[\mathbf{a}]$ is different.

For the rest of this section assume
\begin{align}\label{assumption:local C product for nonvanishing of Whittaker}
\prod_{j=1}^{r}C(j,\mathbf{x})\ne0.
\end{align}
Then according to Theorem~\ref{theorem:CS formula for rk, c}, since $\Lambda(\xi^0)=W_{\mathbf{0}}(\mathbf{0},\vartheta,\chi_{\Theta})\ne0$,
$\mathcal{W}(\Theta_{r,m,r,\vartheta}(\chi))$ can be realized using $\Lambda$.
Now define the normalized unramified Whittaker function $W^0$ in this model by
\begin{align*}
W^0(g)=\frac{\Lambda(g\cdot\xi^0)}{\Lambda(\xi^0)}.
\end{align*}
This function is determined by its values on $t_{\mathbf{b}}$ and the notation implies
\begin{align*}
W^0(t_{\mathbf{b}})=\frac{W_{\mathbf{0}}(\mathbf{b},\vartheta,\chi_{\Theta})}{W_{\mathbf{0}}(\mathbf{0},\vartheta,\chi_{\Theta})}.
\end{align*}

Let $p_l(\mathbf{x})$ be the $l$-th complete symmetric polynomial in $\mathbf{x}$.
\begin{theorem}\label{theorem:Whittaker on x,I}
For all integers $l\geq0$ and $a\in F^*$,
\begin{align*}
W^0(\langle\diag(a^{rl},I_{rk-1}),1\rangle)=q^{(-lr(rk-1)/2)+l(r-1)/2}\vartheta(a^{rl})p_l(\mathbf{x}).
\end{align*}
\end{theorem}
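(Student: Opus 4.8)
The plan is to derive the values $W^0(\langle\diag(a^{rl},I_{rk-1}),1\rangle)$ from the iterative Casselman--Shalika formula \eqref{eq:iterative CS formula of Suzuki}, combined with Theorem~\ref{theorem:CS formula for rk, c} (which handles the normalizing constant $\Lambda(\xi^0)$) and the inductive scheme of Suzuki \cite{Suzuki1998}. First I would reduce to the case of $\GL_{rk}^{(m,r)}$ with inducing data $\chi_{\Theta}$ (the parameters $\mathbf{y}$ of \eqref{def:mathbf y}), and use the homomorphism property $\mathbf{a}\mapsto t_{\mathbf{a}}$ together with \eqref{eq:def W Lambda} to write $W^0(\langle\diag(a^{rl},I_{rk-1}),1\rangle)$ as $\vartheta(a^{rl})$ times the ratio $W_{\mathbf{0}}(\mathbf{b},\vartheta,\chi_{\Theta})/W_{\mathbf{0}}(\mathbf{0},\vartheta,\chi_{\Theta})$ with $\mathbf{b}=(rl,0,\dots,0)$. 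The factor $\vartheta(a^{rl})$ comes out because $\varepsilon\otimes\vartheta\chi$ is evaluated on $\langle\diag(a^{rl},I_{rk-1}),1\rangle$ and only the first $\vartheta$-block contributes; all the remaining arithmetic reduces to a statement about $W_{\mathbf{0}}(\mathbf{b},\vartheta,\chi_{\Theta})$ as a function of $\mathbf{x}$.

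\textbf{Key steps.} The core computation is the evaluation of $W_{\mathbf{0}}((rl,\mathbf{0'}),\vartheta,\chi_{\Theta})$ via \eqref{eq:iterative CS formula of Suzuki}. Here I would exploit that $\mathbf{b}'=\mathbf{0'}$, so the inner sum collapses: only terms with $W_{\mathbf{e'}}(\mathbf{0'},\vartheta,\chi'[rk-i])$ nonzero survive, and by the support considerations (Proposition~\ref{proposition:support of unr is in T*}) together with the structure of the $\tau$-coefficients (Proposition~\ref{proposition:coefficients tau t t' 1 2}) and Suzuki's observation on which $w'$ give nonzero $\tau_{t,t'}$, only a controlled family of Weyl elements $\omega_i$ contributes. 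I expect the computation to organize as: (i) peel off the last coordinate using \eqref{eq:iterative CS formula of Suzuki} with $\mathbf{b}'=\mathbf{0'}$, producing a factor $\prod_{j=1}^{rk-i-1}\frac{1-q^{-1}\mathbf{x}_{(j,rk-i)}}{1-\mathbf{x}_{(j,rk-i)}}$ and a coefficient $\tau_{\mathbf{0}^*,(\mathbf{e'}^*,-rl\,\text{or }0)}(\omega_i,\vartheta,{}^{\omega_i^{-1}}\chi_{\Theta})$; (ii) use \eqref{eq:cocycle for tau}, \eqref{eq:tau 1 nonzero}, \eqref{eq:tau 2 nonzero} to expand these $\tau$-coefficients into products of Gauss sums $\mathfrak{g}(\cdot)$ and powers of $q$; (iii) observe that since we are working with $\chi_{\Theta}$ whose consecutive entries within a block differ by $|~|$, the relevant $\mathbf{x}_{\alpha}$ inside a $\Theta$-block are powers of $q$, so all the Gauss-sum factors are of the form $\mathfrak{g}(2a)$ with $a\equiv 0\,(m)$, i.e. $\mathfrak{g}(2a)=-q^{-1}$ — this is the mechanism that makes the sum telescope into a symmetric polynomial; (iv) recognize the surviving sum as the monomial expansion of $p_l(\mathbf{x})$, tracking the $q$-powers to get the prefactor $q^{(-lr(rk-1)/2)+l(r-1)/2}$. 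For $k\le 2$ this is exactly \cite{Suzuki1998}; I would follow that computation and then run the induction on $k$ using the recursive structure of \eqref{eq:iterative CS formula of Suzuki}, invoking the result of Cai \cite{CaiLemma39} at the inductive step to identify the combinatorial sum over Gelfand--Tsetlin-type data with $p_l$.

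\textbf{Main obstacle.} The hard part will be step (iii)--(iv): showing that the entire sum over the relevant Weyl elements and intermediate torus cosets $\mathbf{e'}$ collapses cleanly to $p_l(\mathbf{x})$ with the stated $q$-power. The subtlety is that the Gauss sums $\mathfrak{g}(2h)$ behave differently depending on whether $h\equiv0\,(r)$ (giving $-q^{-1}$) or not (giving $|\mathfrak{g}(2h)|=q^{-1/2}$), and one must verify that the non-degenerate Gauss sums either cancel in pairs $\mathfrak{g}(2h)\mathfrak{g}(-2h)=q^{-1}$ or do not appear at all in the surviving terms — precisely as in the normalization computation of Theorem~\ref{theorem:CS formula for rk, c} where $C$ came out to the expected coefficient. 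The divisibility constraints $r\mid d_i$ from the Gelfand--Tsetlin analysis are what enforce this, so I would need to carry over that bookkeeping to the present evaluation point $\mathbf{b}=(rl,\mathbf{0'})$. A secondary technical point is keeping the dependence on $\vartheta$ transparent: since $\mathbf{x}$ and the relevant quotients $\mathbf{x}_\alpha$ are insensitive to $\vartheta$ (as noted after \eqref{eq:Satake symplectic}), only the overall factor $\vartheta(a^{rl})$ retains it, and I would isolate this at the very beginning so the rest of the argument is $\vartheta$-free. Under assumption \eqref{assumption:local C product for nonvanishing of Whittaker} the denominator $W_{\mathbf{0}}(\mathbf{0},\vartheta,\chi_{\Theta})=\prod_{j=1}^r C(j,\mathbf{x})$ is nonzero, so the ratio is well defined and the final formula follows.
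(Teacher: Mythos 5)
Your setup is correct and matches the paper's: reduce to $W_{\mathbf{0}}((rl,\mathbf{0'}),\vartheta,\chi_{\Theta})/W_{\mathbf{0}}(\mathbf{0},\vartheta,\chi_{\Theta})$ after peeling off $\vartheta(a^{rl})$, apply the iterative formula \eqref{eq:iterative CS formula of Suzuki}, observe that the product $\prod_j(1-q^{-1}\mathbf{y}_{(j,rk-i)})/(1-\mathbf{y}_{(j,rk-i)})$ kills all indices except $i=r(k-u)-1$, $0\leq u\leq k-1$, and extract the $l$-dependence via \eqref{eq:tau on maximal abelian} to get $W_{\mathbf{0}}((rl,\mathbf{0'}),\vartheta,\chi_{\Theta})=q^{-lr(rk-1)/2+l(r-1)/2}\vartheta(\varpi^{rl})\sum_{u=0}^{k-1}x_{u+1}^{l}\Gamma_u$ with $\Gamma_u$ independent of $l$. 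But your plan for the remaining (and hardest) step --- ``recognize the surviving sum as the monomial expansion of $p_l(\mathbf{x})$'' by expanding \emph{all} the $\tau$-coefficients and intermediate Whittaker values into Gauss sums --- has a genuine gap. Of the $k$ coefficients $\Gamma_u$, only $\Gamma_{k-1}$ is directly computable: it is the only one for which the inner value $W_{\mathbf{e'}}(\mathbf{0'},\vartheta,\chi_{\Theta}'[ru+1])$ is covered by Cai's formula \cite{CaiLemma39} (the deleted character sits at the boundary of the block structure of $\chi_{\Theta}$; for $u<k-1$ the deletion breaks a $\Theta$-block and no closed formula is available). The paper's essential trick, absent from your proposal, is to invoke the \emph{uniqueness} of the Whittaker functional on $\Theta_{r,m,r,\vartheta}(\chi)$: via the Gindikin--Karpelevich formula (as in \cite[Lemma~5.2]{CS2}) one gets the functional equation \eqref{eq:func eq for Gamma coeff}, $\Gamma_u(\vartheta,({}^w\chi)_{\Theta})/W_{\mathbf{0}}(\mathbf{0},\vartheta,({}^w\chi)_{\Theta})=\Gamma_u(\vartheta,\chi_{\Theta})/W_{\mathbf{0}}(\mathbf{0},\vartheta,\chi_{\Theta})$ for all $w\in W_{\GL_k}$, so every $\Gamma_u$ is a Weyl conjugate of $\Gamma_{k-1}$. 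Computing $\Gamma_{k-1}=\prod_{j=1}^{k-1}(1-\mathbf{x}_{(j,k)})^{-1}W_{\mathbf{0}}(\mathbf{0},\vartheta,\chi_{\Theta})$ and summing the conjugates is exactly the partial-fraction formula for $p_l(\mathbf{x})$. Without this symmetry argument your direct evaluation stalls.

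A secondary inaccuracy: your proposed mechanism (iii), that the Gauss sums appearing are all degenerate ($\mathfrak{g}(2a)=-q^{-1}$ with $a\equiv0\,(m)$) because consecutive entries of $\chi_{\Theta}$ within a block differ by $|~|$, is not what happens. In the actual evaluation of $\Gamma_{k-1}$ the arguments $2(a_{rk-l+1}-a_{rk-l}-1)$ are shown to be \emph{not} divisible by $m$, and these nondegenerate Gauss sums from the $\tau^2$-coefficients cancel in pairs against the $\mathscr{G}$-factors in Cai's formula via $\mathfrak{g}(2h)\mathfrak{g}(-2h)=q^{-1}$; the surviving $\tau^1$-contributions then telescope to $1$. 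You hedge toward this possibility, but your primary claimed mechanism would lead the computation astray. Also note that no induction on $k$ is needed in this theorem: the iterative formula is applied once, reducing rank $rk$ to $rk-1$, with Cai's theorem supplying the inner value rather than an inductive hypothesis.
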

\begin{remark}
The factor $p_l(\mathbf{x})$ is independent of the choice of uniformizer $\varpi$.
\end{remark}
\begin{remark}
This is a special phenomenon which reflects the deep connection
between $\Theta_{r,m,r,\vartheta}(\chi)$ and $\Ind_{B_{\GL_k}}^{\GL_k}(\chi^r)$ (cf. \S~\ref{local theta speh}) observed
by Suzuki \cite[\S~5.5]{Suzuki1998}, who proved a stronger version of Theorem~\ref{theorem:Whittaker on x,I} but when $k\leq2$, for the coverings of \cite{KP}. The value $p_l(\mathbf{x})$ is the one obtained from the formula of Shintani \cite{S} for the normalized unramified Whittaker function of $\Ind_{B_{\GL_k}}^{\GL_k}(\chi^r)$ on $\diag(\varpi^{l},I_{k-1})$.
\end{remark}
\begin{proof}
For $m=1$ this follows from \cite{S}, assume $m>1$.
Fix a uniformizer $\varpi$ and write $a=\varpi^{rl}o$ with $|o|=1$. Then by \eqref{eq:Nice GL $2$-cocycle on torus},
\begin{align*}
\langle\diag(a^{rl},I_{rk-1}),1\rangle
=t_{(rl,\mathbf{0'})}\langle \diag(o,I_{rk-1}),(\varpi,o)^{rl}_m\rangle.
\end{align*}
Since $W^0$ is unramified and $\eta_{d}^{\diamondsuit}(\diag(o,I_{rk-1}))=1$,
\begin{align*}
W^0(\langle\diag(a^{rl},I_{rk-1}),1\rangle)=
(\varpi,o)^{rl}_m W^0(t_{(rl,\mathbf{0'})}).
\end{align*}
At the same time $\vartheta(a^{rl})=(\varpi,o)^{-rl}_m\vartheta(\varpi^{rl})=(\varpi,o)^{rl}_m\vartheta(\varpi^{rl})$, because $\vartheta$ is a genuine unramified character of $\widetilde{A}$. Therefore having fixed $\varpi$, the statement of the theorem is equivalent to
\begin{align*}
W^0(t_{(rl,\mathbf{0'})})=q^{(-lr(rk-1)/2)+l(r-1)/2}\vartheta(\varpi^{rl})p_l(\mathbf{x}).
\end{align*}
When $r=1$ this follows from \cite{S}. Specifically, by Proposition~\ref{proposition:sigma * and sigma on GLd}
$\GL_{k}^{(2,1)}$ is isomorphic to the $2$-fold covering with the $2$-cocycle given by $(\det,\det)_2$, and
this isomorphism takes $t_{(rl,\mathbf{0'})}$ to itself (because $(x,x)_m=1$, see the proof of Proposition~\ref{proposition:sigma * and sigma on SLc}). Then we can use the bijection $\tau\mapsto\vartheta\otimes\tau$ between representations of $\GL_{k}$ and genuine representations of $\GL_{k}^{(2,1)}$, where $\vartheta\otimes\tau(\langle g,\epsilon\rangle)=\epsilon\vartheta(\det g)\tau(g)$
(the isomorphism of Proposition~\ref{proposition:sigma iota and sigma cohomologous} is implicit). Thus we can assume $r>1$ (the arguments below remain valid for $r=1$, but most of them trivialize).

Recall
\begin{align*}
\mathbf{y}=(q^{(r+1)/2-1}x_1,q^{(r+1)/2-2}x_1,\ldots,q^{(r+1)/2-r}x_1,\ldots,
q^{(r+1)/2-1}x_{k},\ldots,q^{(r+1)/2-r}x_{k})\in\C^{rk},
\end{align*}
it is uniquely defined given $\chi_{\Theta}$.
According to \eqref{eq:iterative CS formula of Suzuki} and with the same notation,
\begin{align}\label{eq:iterative CS formula of Suzuki applied}
W_{\mathbf{0}}((rl,\mathbf{0'}),\vartheta,\chi_{\Theta})
&=q^{-lr(rk-1)}
\sum_{i=0}^{rk-1}\,
\prod_{j=1}^{rk-i-1}\,\frac{1-q^{-1}\mathbf{y}_{(j,rk-i)}}{1-\mathbf{y}_{(j,rk-i)}}
\\\nonumber&\quad
\sum_{\mathbf{e'}\in r\Z^{rk-1}\backslash \Z^{rk-1}}
\tau_{\mathbf{0},(\mathbf{e'}^*,-rl)}(\omega_i,\vartheta,{}^{\omega_i^{-1}}\chi_{\Theta})
W_{\mathbf{e'}}(\mathbf{0'},\vartheta,\chi_{\Theta}'[rk-i]).
\end{align}

By McNamara \cite[\S~8]{McNamara2},
the l.h.s.~ of \eqref{eq:iterative CS formula of Suzuki applied}
belongs to $\C[\{\mathbf{x}_{(i,i+1)}\}_{1\leq i<k}]$, hence we can prove the result under the assumption that $\mathbf{x}$ is in general position. Also $W_{\mathbf{e'}}(\mathbf{0'},\vartheta,\chi_{\Theta}'[rk-i])\in\C[\{\mathbf{x}_{(i,i+1)}\}_{1\leq i<k}]$ (\cite[\S~8]{McNamara2}).
The coefficients $\tau_{\ldots}(\ldots)$ may a priori have poles,
but by \eqref{eq:tau on maximal abelian}--\eqref{eq:tau for simple ref} and
Proposition~\ref{proposition:coefficients tau t t' 1 2}, when $\mathbf{y}_{(i,j)}\ne1$ for all $i<j$, they are holomorphic
(poles can only occur from the denominator in \eqref{eq:tau 1 nonzero}). Hence if the product over $j$ vanishes, the summand is zero. Looking at $\mathbf{y}$, we see that the product vanishes
unless $i=r(k-u)-1$ for $0\leq u\leq k-1$. For such $i$, we note that
$\mathbf{y}_{rk-i}=x_{u+1}q^{(r-1)/2}$ and $\omega_{i}(rk)=ru+1$. Then by \eqref{eq:tau on maximal abelian},
\begin{align*}
\tau_{\mathbf{0},(\mathbf{e'}^*,-rl)}(\omega_{i},\vartheta,{}^{\omega_{i}^{-1}}\chi_{\Theta})=q^{lr(rk-1)/2+l(r-1)/2}
\vartheta(\varpi^{-rl})x_{u+1}^{l}
\tau_{\mathbf{0},(\mathbf{e'}^*,0)}(\omega_{i},\vartheta,{}^{\omega_{i}^{-1}}\chi_{\Theta}).
\end{align*}
Note that $\vartheta$ is either trivial on $\varpi^{-rl}$ if $m\not\equiv2\,(4)$, otherwise $\vartheta(\varpi^{-rl})=\vartheta(\varpi^{rl})$
by \eqref{eq:some props of gamma psi'}. Hence we can write \eqref{eq:iterative CS formula of Suzuki applied} in the form
\begin{align}\label{eq:partial W formula}
&W_{\mathbf{0}}((rl,\mathbf{0'}),\vartheta,\chi_{\Theta})=q^{-lr(rk-1)/2+l(r-1)/2}
\vartheta(\varpi^{rl})\sum_{u=0}^{k-1}\,x_{u+1}^{l}\Gamma_u,
\end{align}
where $\Gamma_u=\Gamma_u(\vartheta,\chi_{\Theta})$ equals
\begin{align}\label{eq:exp for Gamma u}
\prod_{j=1}^{ru}\,\frac{1-q^{-1}\mathbf{y}_{(j,ru+1)}}{1-\mathbf{y}_{(j,ru+1)}}
\sum_{\mathbf{e'}\in r\Z^{rk-1}\backslash \Z^{rk-1}}
\tau_{\mathbf{0},(\mathbf{e'}^*,0)}(\omega_{r(k-u)-1},\vartheta,{}^{\omega_{r(k-u)-1}^{-1}}\chi_{\Theta})
W_{\mathbf{e'}}(\mathbf{0'},\vartheta,\chi_{\Theta}'[ru+1]).
\end{align}
If $k=1$, there is only one coefficient $\Gamma_0$, which by \eqref{eq:partial W formula} is equal to
$W_{\mathbf{0}}(\mathbf{0},\vartheta,\chi_{\Theta})$. Thus in this case the theorem follows immediately from
\eqref{eq:partial W formula} and Theorem~\ref{theorem:CS formula for rk, c}. Now we can take $k>1$.

The coefficients $\Gamma_u$ are independent of $l$, $\Gamma_u\in \C(\{\mathbf{x}_{(i,i+1)}\}_{1\leq i<k})$ for each $u$ and
all of these factors are holomorphic, for $\mathbf{x}$ in general position (see \eqref{eq:tau 1 nonzero}). Therefore if
$\sum_{u=0}^{k-1}\,x_{u+1}^{l}\Gamma_u=0$ for all $l$, we obtain $\Gamma_u=0$ for all $u$.

The only coefficient we can evaluate directly is $\Gamma_{k-1}$. However, since $\Theta_{r,m,r,\vartheta}(\chi)$ admits a unique Whittaker functional (up to scaling), $W_{\mathbf{0}}((rl,\mathbf{0'}),\vartheta,\chi_{\Theta})$ is invariant under any permutation of $\chi$. Indeed,
if $w\in W_{\GL_k}$, using the Gindikin--Karpelevich formula \eqref{eq:GK formula for GL} as in \cite[Lemma~5.2]{CS2}, one sees that
\begin{align*}
W_{\mathbf{0}}(\mathbf{b},\vartheta,({}^w\chi)_{\Theta})=\frac{W_{\mathbf{0}}(\mathbf{0},\vartheta,({}^w\chi)_{\Theta})}
{W_{\mathbf{0}}(\mathbf{0},\vartheta,\chi_{\Theta})}W_{\mathbf{0}}(\mathbf{b},\vartheta,\chi_{\Theta}),
\qquad\forall \mathbf{b}\in\Z^{rk}.
\end{align*}
Therefore
\begin{align*}
\sum_{u=0}^{k-1}\,({}^w\mathbf{x})_{u+1}^{l}\frac{\Gamma_u(\vartheta,({}^w\chi)_{\Theta})}{W_{\mathbf{0}}(\mathbf{0},\vartheta,({}^w\chi)_{\Theta})}=
\sum_{u=0}^{k-1}\,(\mathbf{x})_{u+1}^{l}\frac{\Gamma_u(\vartheta,\chi_{\Theta})}{W_{\mathbf{0}}(\mathbf{0},\vartheta,\chi_{\Theta})}.
\end{align*}
As explained above and by Theorem~\ref{theorem:CS formula for rk, c}, the coefficients of $x_i$ appearing on both sides belong to
$\C(\{\mathbf{x}_{(i,i+1)}\}_{1\leq i<k})$ and are holomorphic (simultaneously) when $\mathbf{x}$ is in general position; regarding
$W_{\mathbf{0}}(\mathbf{0},\vartheta,\chi_{\Theta})$ this is nontrivial,
because $\mathbf{y}$ is not ``general", but we use Theorem~\ref{theorem:CS formula for rk, c}. Thus
\begin{align}\label{eq:func eq for Gamma coeff}
\frac{\Gamma_u(\vartheta,({}^w\chi)_{\Theta})}{W_{\mathbf{0}}(\mathbf{0},\vartheta,({}^w\chi)_{\Theta})}=
\frac{\Gamma_u(\vartheta,\chi_{\Theta})}{W_{\mathbf{0}}(\mathbf{0},\vartheta,\chi_{\Theta})},\qquad\forall w\in W_{\GL_k}.
\end{align}
Hence it suffices to compute $\Gamma_{k-1}$.

In this case the product over $1\leq j\leq r(k-1)$ simplifies to
\begin{align}\label{eq:prod simplified in Gamma k-1}
\prod_{j=1}^{k-1}\,\frac{1-q^{-r}\mathbf{x}_{(j,k)}}{1-\mathbf{x}_{(j,k)}}.
\end{align}

By \eqref{eq:cocycle for tau} and since $\omega_{r-1}=w_{\alpha_{rk-r+1}}\cdot\ldots\cdot w_{\alpha_{rk-1}}$ is a reduced expression,
\begin{align*}
\tau_{\mathbf{0},(\mathbf{e'}^*,0)}(\omega_{r-1},\vartheta,{}^{\omega_{r-1}^{-1}}\chi_{\Theta})
=\sum_{\mathbf{d}\in r\Z^{rk}\backslash \Z^{rk}}
\tau_{\mathbf{0},\mathbf{d}}(w_{\alpha_{rk-r+1}},\vartheta,{}^{w_{\alpha_{rk-r+1}}}\chi_{\Theta})
\tau_{\mathbf{d},(\mathbf{e'}^*,0)}(\omega_{r-2},\vartheta,{}^{\omega_{r-1}^{-1}}\chi_{\Theta}).
\end{align*}
Now $\tau_{\mathbf{0},\mathbf{d}}(w_{\alpha_{rk-r+1}},\vartheta,{}^{w_{\alpha_{rk-r+1}}}\chi_{\Theta})$ vanishes unless
$\mathbf{d}\equiv w_{\alpha_{rk-r+1}}^{i_{rk-r+1}}[\mathbf{0}]$ for $i_{rk-r+1}\in\{0,1\}$. Hence
\begin{align*}
&\tau_{\mathbf{0},(\mathbf{e'}^*,0)}(\omega_{r-1},\vartheta,{}^{\omega_{r-1}^{-1}}\chi_{\Theta})
\\&=\sum_{i_{rk-r+1}}
\tau_{\mathbf{0},w_{\alpha_{rk-r+1}}^{i_{rk-r+1}}[\mathbf{0}]}(w_{\alpha_{rk-r+1}},\vartheta,{}^{w_{\alpha_{rk-r+1}}}\chi_{\Theta})
\tau_{w_{\alpha_{rk-r+1}}^{i_{rk-r+1}}[\mathbf{0}],(\mathbf{e'}^*,0)}(\omega_{r-2},\vartheta,{}^{\omega_{r-1}^{-1}}\chi_{\Theta}),
\end{align*}
where the sum is over $i_{rk-r+1}\in\{0,1\}$. If $r=2$, $\omega_{r-2}=I_{rk}$ and
$\tau_{w_{\alpha_{rk-r+1}}^{i_{rk-r+1}}[\mathbf{0}],(\mathbf{e'}^*,0)}(\omega_{r-2},\vartheta,{}^{\omega_{r-1}^{-1}}\chi_{\Theta})=0$ unless
$i_{rk-r+1}=i_{rk-1}=0$, hence $\mathbf{e'}^*$ can be taken to be $\mathbf{0}'$.
Similarly if $r>2$,
\begin{align*}
&\tau_{w_{\alpha_{rk-r+1}}^{i_{rk-r+1}}[\mathbf{0}],(\mathbf{e'}^*,0)}(\omega_{r-2},\vartheta,{}^{\omega_{r-1}^{-1}}\chi_{\Theta})
\\&=\sum_{\mathbf{d}\in r\Z^{rk}\backslash \Z^{rk}}
\tau_{w_{\alpha_{rk-r+1}}^{i_{rk-r+1}}[\mathbf{0}],\mathbf{d}}(w_{\alpha_{rk-r+2}},\vartheta,{}^{w_{\alpha_{rk-r+2}}w_{\alpha_{rk-r+1}}}\chi_{\Theta})
\tau_{\mathbf{d},(\mathbf{e'}^*,0)}(\omega_{r-2},\vartheta,{}^{\omega_{r-1}^{-1}}\chi_{\Theta})
\\&=\sum_{i_{rk-r+2}}
\tau_{w_{\alpha_{rk-r+1}}^{i_{rk-r+1}}[\mathbf{0}],
w_{\alpha_{rk-r+2}}^{i_{rk-r+2}}w_{\alpha_{rk-r+1}}^{i_{rk-r+1}}[\mathbf{0}]
}(w_{\alpha_{rk-r+2}},\vartheta,{}^{w_{\alpha_{rk-r+2}}w_{\alpha_{rk-r+1}}}\chi_{\Theta})
\\&\qquad\times\tau_{w_{\alpha_{rk-r+2}}^{i_{rk-r+2}}w_{\alpha_{rk-r+1}}^{i_{rk-r+1}}[\mathbf{0}],(\mathbf{e'}^*,0)}(\omega_{r-2},\vartheta,{}^{\omega_{r-1}^{-1}}\chi_{\Theta}).
\end{align*}
Proceeding $r-2$ times we obtain
\begin{align}\label{eq:exp for all tau e'}
&\tau_{\mathbf{0},(\mathbf{e'}^*,0)}(\omega_{r-1},\vartheta,{}^{\omega_{r-1}^{-1}}\chi_{\Theta})
\\&=\sum_{i_{rk-r+1}}\tau_{\mathbf{0},\vartheta,w_{\alpha_{rk-r+1}}^{i_{rk-r+1}}[\mathbf{0}]}(w_{\alpha_{rk-r+1}},\vartheta,{}^{w_{\alpha_{rk-r+1}}}\chi_{\Theta})\nonumber
\\&\qquad\times\sum_{i_{rk-r+2}}\tau_{w_{\alpha_{rk-r+1}}^{i_{rk-r+1}}[\mathbf{0}],\nonumber
w_{\alpha_{rk-r+2}}^{i_{rk-r+2}}w_{\alpha_{rk-r+1}}^{i_{rk-r+1}}[\mathbf{0}]
}(w_{\alpha_{rk-r+2}},\vartheta,{}^{w_{\alpha_{rk-r+2}}w_{\alpha_{rk-r+1}}}\chi_{\Theta})
\\&\qquad\ldots\nonumber
\\&\qquad\times\sum_{i_{rk-2}}\tau_{w_{\alpha_{rk-3}}^{i_{rk-3}}\cdot\ldots\cdot w_{\alpha_{rk-r+1}}^{i_{rk-r+1}}[\mathbf{0}],
w_{\alpha_{rk-2}}^{i_{rk-2}}\cdot\ldots\cdot w_{\alpha_{rk-r+1}}^{i_{rk-r+1}}[\mathbf{0}]
}(w_{\alpha_{rk-2}},\vartheta,{}^{w_{\alpha_{rk-2}}\cdot\ldots\cdot w_{\alpha_{rk-r+2}}w_{\alpha_{rk-r+1}}}\chi_{\Theta})\nonumber
\\&\qquad\times
\tau_{w_{\alpha_{rk-2}}^{i_{rk-2}}\cdot\ldots\cdot w_{\alpha_{rk-r+1}}^{i_{rk-r+1}}[\mathbf{0}],
(\mathbf{e'}^*,0)}(w_{\alpha_{rk-1}},\vartheta,{}^{\omega_{r-1}^{-1}}\chi_{\Theta}).\nonumber
\end{align}
Momentarily proceed assuming $r>2$.
To compute $\tau_{w_{\alpha_{rk-2}}^{i_{rk-2}}\cdot\ldots\cdot w_{\alpha_{rk-r+1}}^{i_{rk-r+1}}[\mathbf{0}],
(\mathbf{e'}^*,0)}(w_{\alpha_{rk-1}},\vartheta,{}^{\omega_{r-1}^{-1}}\chi_{\Theta})$ note that
since the rightmost coordinate of $(\mathbf{e'}^*,0)$ is zero,
\begin{align}\label{eq:exp for last tau w}
\tau_{w_{\alpha_{rk-2}}^{i_{rk-2}}\cdot\ldots\cdot w_{\alpha_{rk-r+1}}^{i_{rk-r+1}}[\mathbf{0}],
(\mathbf{e'}^*,0)}(w_{\alpha_{rk-1}},\vartheta,{}^{\omega_{r-1}^{-1}}\chi_{\Theta})
=\tau^1_{w_{\alpha_{rk-2}}^{i_{rk-2}}\cdot\ldots\cdot w_{\alpha_{rk-r+1}}^{i_{rk-r+1}}[\mathbf{0}],
(\mathbf{e'}^*,0)}(w_{\alpha_{rk-1}},\vartheta,{}^{\omega_{r-1}^{-1}}\chi_{\Theta}).
\end{align}
This implies $(\mathbf{e'}^*,0)\equiv w_{\alpha_{rk-2}}^{i_{rk-2}}\cdot\ldots\cdot w_{\alpha_{rk-r+1}}^{i_{rk-r+1}}[\mathbf{0}]$, and we can assume this equivalence is an equality and omit the summation over $\mathbf{e'}$ in \eqref{eq:exp for Gamma u}. In turn each summand in \eqref{eq:exp for all tau e'} is multiplied by
\begin{align}\label{eq:W on e'}
W_{\mathbf{e'}}(\mathbf{0'},\vartheta,\chi_{\Theta}'[r(k-1)+1]),\qquad
\mathbf{e'}^*=w_{\alpha_{rk-2}}^{i_{rk-2}}\cdot\ldots\cdot w_{\alpha_{rk-r+1}}^{i_{rk-r+1}}[\mathbf{0}'].
\end{align}
Note that $\mathbf{e'}^*$ is well defined, because $rk-r+1<rk-1$.
To treat $r\geq2$ simultaneously, we fix the convention that if $rk-2<rk-r+1$, $w_{\alpha_{rk-2}}^{i_{rk-2}}\cdot\ldots\cdot w_{\alpha_{rk-r+1}}^{i_{rk-r+1}}$ is taken to be the identity.

The value of \eqref{eq:W on e'} was computed by Cai \cite[Theorem~8.1]{CaiLemma39} (using the methods of \cite{Suzuki1997,Suzuki1998}) and is equal to
\begin{align}\label{lemma:mk-1 Whittaker}
\prod_{j=1}^{k-1}(1-q^{-r}\mathbf{x}_{(j,k)})^{-1}\prod_{j=1}^{r}C(j,\mathbf{x})
\mathscr{G}(w_{\alpha_{rk-2}}^{i_{rk-2}}\cdot\ldots\cdot w_{\alpha_{rk-r+1}}^{i_{rk-r+1}},\mathbf{0}').
\end{align}

Next we compute the factors appearing in \eqref{eq:exp for all tau e'}.
Fix $1\leq l\leq r-1$. If $1<l<r-1$, denote
\begin{align*}
\mathbf{a}=\mathbf{a}^l=w_{\alpha_{rk-l-1}}^{i_{rk-l-1}}\cdot\ldots\cdot w_{\alpha_{rk-r+1}}^{i_{rk-r+1}}[\mathbf{0}],\qquad
\mathbf{b}=\mathbf{b}^l=w_{\alpha_{rk-l}}^{i_{rk-l}}\cdot\ldots\cdot w_{\alpha_{rk-r+1}}^{i_{rk-r+1}}[\mathbf{0}];
\end{align*}
for $l=r-1$, $\mathbf{a}=\mathbf{0}$ and $\mathbf{b}=w_{\alpha_{rk-r+1}}^{i_{rk-r+1}}[\mathbf{0}]$;
and if $l=1$, $\mathbf{a}=\mathbf{b}=w_{\alpha_{rk-2}}^{i_{rk-2}}\cdot\ldots\cdot w_{\alpha_{rk-r+1}}^{i_{rk-r+1}}[\mathbf{0}]$
(for $r=2$, $\mathbf{a}=\mathbf{b}=\mathbf{0}$ either way).
The corresponding factor to evaluate is
\begin{align}\label{eq:one tau in the chain}
\tau_{\mathbf{a},\mathbf{b}}(w_{\alpha_{rk-l}},\vartheta,{}^{w_{\alpha_{rk-l}}\cdot\ldots\cdot w_{\alpha_{rk-r+1}}}\chi_{\Theta}).
\end{align}

If $l>1$ and $i_{rk-l}=0$, then $\mathbf{a}=\mathbf{b}$ and
\begin{align*}
\tau_{\mathbf{a},\mathbf{b}}(w_{\alpha_{rk-l}},\vartheta,{}^{w_{\alpha_{rk-l}}\cdot\ldots\cdot w_{\alpha_{rk-r+1}}}\chi_{\Theta})
=\tau_{\mathbf{a},\mathbf{b}}^1(w_{\alpha_{rk-l}},\vartheta,{}^{w_{\alpha_{rk-l}}\cdot\ldots\cdot w_{\alpha_{rk-r+1}}}\chi_{\Theta}).
\end{align*}
Similarly for $l=1$, we have \eqref{eq:exp for last tau w}.
Hence in both cases we can use \eqref{eq:tau 1 nonzero} to compute \eqref{eq:one tau in the chain}. Since
$0\leq a_{rk-l}\leq r-1-l$ and $a_{rk-l+1}=0$, we obtain
$\lceil (a_{rk-l+1}-a_{rk-l})/r\rceil=0$, thus \eqref{eq:one tau in the chain} equals
\begin{align}\label{eq:tau 1 for each l}
(1-q^{-1})/(1-q^{l-r}).
\end{align}

The remaining case is $l>1$ and $i_{rk-l}=1$. We appeal to \eqref{eq:tau 2 nonzero} and deduce
that \eqref{eq:one tau in the chain} equals
\begin{align}\label{eq:tau 2 for each l}
q^{a_{rk-l+1}-a_{rk-l}-1}\mathfrak{g}(2(a_{rk-l+1}-a_{rk-l}-1)).
\end{align}
At the same time
\begin{align}\label{eq:tau 2 for each l mathscr G}
\mathscr{G}(w_{\alpha_{rk-l}},w_{\alpha_{rk-l-1}}\cdot\ldots\cdot w_{\alpha_{rk-r+1}}[\mathbf{0}'])
=q^{-a_{rk-l+1}+a_{rk-l}+1}\mathfrak{g}(-2(a_{rk-l+1}-a_{rk-l}-1)),
\end{align}
where we replace $w_{\alpha_{rk-l-1}}\cdot\ldots\cdot w_{\alpha_{rk-r+1}}[\mathbf{0}']$ by $\mathbf{0}'$ if $l=r-1$.
Since $2(a_{rk-l+1}-a_{rk-l}-1)\equiv0\,(m)$ if and only if $a_{rk-l}+1\equiv0\,(r)$ ($a_{rk-l+1}=0$) and
$1\leq a_{rk-l}+1\leq r-l\leq r-1$, we deduce $2(a_{rk-l+1}-a_{rk-l}-1)\not\equiv0\,(m)$ hence the product of
\eqref{eq:tau 2 for each l} and \eqref{eq:tau 2 for each l mathscr G} is $q^{-1}$.

Now when we combine \eqref{eq:exp for Gamma u} (for $u=k-1$ and with $\mathbf{e'}$ fixed, as explained above), \eqref{eq:prod simplified in Gamma k-1}, \eqref{eq:exp for all tau e'}, \eqref{eq:one tau in the chain} and \eqref{lemma:mk-1 Whittaker}, we obtain
\begin{align*}
\Gamma_{k-1}=&\prod_{j=1}^{k-1}(1-\mathbf{x}_{(j,k)})^{-1}\prod_{j=1}^{r} C(j,\mathbf{x})
\sum_{i_{rk-r+1},\ldots,i_{rk-2}\in\{0,1\}}\\&\prod_{l=r-1}^{1}
\tau_{\mathbf{a}^l,\mathbf{b}^l}(w_{\alpha_{rk-l}},\vartheta,{}^{w_{\alpha_{rk-l}}\cdot\ldots\cdot w_{\alpha_{rk-r+1}}}\chi_{\Theta})\mathscr{G}(w_{\alpha_{rk-2}}^{i_{rk-2}}\cdot\ldots\cdot w_{\alpha_{rk-r+1}}^{i_{rk-r+1}},\mathbf{0}').
\end{align*}
Using \eqref{eq:tau 1 for each l}, \eqref{eq:tau 2 for each l} and the definition of $\mathscr{G}$, the summation
over $i_{rk-r+1},\ldots,i_{rk-2}$ becomes
\begin{align*}
\prod_{l=1}^{r-2}\left(\frac{1-q^{-1}}{1-q^{-l}}+q^{-1}\right)\frac{1-q^{-1}}{1-q^{-r+1}}=
\prod_{l=1}^{r-2}\left(\frac{1-q^{-l-1}}{1-q^{-l}}\right)\frac{1-q^{-1}}{1-q^{-r+1}}=1.
\end{align*}
Hence by Theorem~\ref{theorem:CS formula for rk, c},
\begin{align*}
\Gamma_{k-1}=\prod_{j=1}^{k-1}(1-\mathbf{x}_{(j,k)})^{-1}\prod_{j=1}^{r} C(j,\mathbf{x})=
\prod_{j=1}^{k-1}(1-\mathbf{x}_{(j,k)})^{-1}W_{\mathbf{0}}(\mathbf{0},\vartheta,\chi_{\Theta}).
\end{align*}
Then $W^0(t_{(rl,\mathbf{0'})})$ equals
\begin{align*}
\frac{W_{\mathbf{0}}((rl,\mathbf{0'}),\vartheta,\chi_{\Theta})}{W_{\mathbf{0}}(\mathbf{0},\vartheta,\chi_{\Theta})}
&=q^{-lr(rk-1)/2+l(r-1)/2}\vartheta(\varpi^{rl})\left(
\frac{x_k^l}{\prod_{j=1}^{k-1}(1-x_jx_k^{-1})}+\sum_{u=0}^{k-2}\,\frac{x_{u+1}^{l}\Gamma_u}{W_{\mathbf{0}}(\mathbf{0},\vartheta,\chi_{\Theta})}\right),
\end{align*}
and the proof of the theorem is complete by \eqref{eq:func eq for Gamma coeff} and the definition of $p_l(\mathbf{x})$.
\end{proof}
\begin{remark}
For the cases $r=1$ or $k=1$, \cite{CaiLemma39} was not used in the proof.
\end{remark}
\subsection{Global construction of $(rk,c)$ representations}\label{speh gbl}
Let $\tau$ be a genuine unitary irreducible cuspidal automorphic representation of $\GL_{k}^{(m,r)}(\A)$ (see \S~\ref{covering of the Levi}). We construct global $(rk,c)$ representations by means of residues of Eisenstein series.
As explained in the introuction, for the construction we rely on certain assumptions on local intertwining operators and global partial $L$-functions. We begin by stating our local and global working assumptions.

Let $\nu$ be a place of $F$. For $l\geq2$, let $\boldsymbol{\zeta}=(\zeta_1,\ldots,\zeta_{l})\in\C^{l}$, and
\begin{align*}
w_l=\left(\begin{smallmatrix}&&I_{k}\\&\udots\\I_{k}\end{smallmatrix}\right)\in\GL_{lk}(F).
\end{align*}
We have the standard intertwining operator
\begin{align*}
M(w_l,\boldsymbol{\zeta}):\Ind_{\widetilde{P}_{(k^{l})}(F_{\nu})}^{\GL_{lk}^{(m,r)}(F_{\nu})}
(|\det|^{\zeta_1}\tau_{\nu}\otimes\ldots\otimes|\det|^{\zeta_l}\tau_{\nu})
\rightarrow
\Ind_{\widetilde{P}_{(k^{l})}(F_{\nu})}^{\GL_{lk}^{(m,r)}(F_{\nu})}
(|\det|^{\zeta_l}\tau_{\nu}\otimes\ldots\otimes|\det|^{\zeta_1}\tau_{\nu}).
\end{align*}
For $\Real(\zeta_1)\gg\ldots\gg\Real(\zeta_l)$ it is given by the absolutely convergent integral
\begin{align*}
M(w_l,\boldsymbol{\zeta})\xi(g,\boldsymbol{\zeta})=\int\limits_{V_{(k^l)}(F_{\nu})}\xi(\langle w_l,1\rangle^{-1}\langle v,1\rangle g,\boldsymbol{\zeta})\,dv,
\end{align*}
and in general by meromorphic continuation. Our local conjecture is the following.
\begin{conjecture}\label{local Shimura conjecture}
For each place $\nu$, $M(w_2,\boldsymbol{\zeta})$ is holomorphic for $\Real(\zeta_1-\zeta_2)\geq1/r$, and if $1<l\leq r$, the image of $M(w_l,\boldsymbol{\zeta})$ is irreducible at $\boldsymbol{\zeta}=((l-1)/(2r),(l-3)/(2r),\ldots,(1-l)/(2r))$.
\end{conjecture}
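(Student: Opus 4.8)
The plan is to reduce both assertions to a study of the rank-one intertwining operator on $\GL_{2k}^{(m,r)}$ attached to $\Ind_{\widetilde P_{(k,k)}}^{\GL_{2k}^{(m,r)}}(|\det|^{s}\tau_\nu\otimes\tau_\nu)$, and then to identify the induced modules appearing at the balanced point with objects whose structure is already under control (genuine principal series, exceptional representations, and the constituents of \cite{Jac4} in their covering form \cite{Suzuki1998}). First I would factor $M(w_l,\boldsymbol\zeta)$ into a composite of the elementary operators that move the block $|\det|^{\zeta_i}\tau_\nu$ past $|\det|^{\zeta_j}\tau_\nu$, exactly as in the linear Bernstein--Zelevinsky / Langlands--Shahidi reduction. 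This is legitimate over $\GL_{lk}^{(m,r)}$ because, by \eqref{eq:block compatibility on Levi of P}, the direct factors of a block Levi commute in the cover and the ordinary tensor product is available, so the Weyl-group combinatorics and the cocycle bookkeeping for standard parabolics carry over verbatim. Each elementary operator is then the rank-one operator $M_s$ with $s=\zeta_i-\zeta_j$, and at $\boldsymbol\zeta=((l-1)/(2r),\ldots,(1-l)/(2r))$ one has $s=(j-i)/r$ with $1\le i<j\le l$, so $\Real(s)\ge 1/r$ is precisely the regime to control; note also that the image of the full operator at this point is the local component of $\mathcal E_\tau$ entering \S~\ref{speh gbl}.

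For the holomorphy claim I would show $M_s$ is holomorphic for $\Real(s)\ge 1/r$. When $k=1$ the induced module is a genuine principal series of $\GL_2^{(m,r)}$ and $M_s$ is governed by the Gindikin--Karpelevich formula \eqref{eq:GK formula for GL}: its only possible singularities occur where $\mathbf x_\alpha=1$, and here $\mathbf x_\alpha=q^{-rs}$ satisfies $|\mathbf x_\alpha|\le q^{-1}<1$ in the asserted range, so $M_s$ is holomorphic (and the numerator $1-q^{-1}\mathbf x_\alpha$ is nonzero as well); the same formula, via Gao's extension \cite{Gao2018}, handles all simple reflections for general $l$. When $m=1$ this is the classical statement for $\GL_k\times\GL_k$ inside $\GL_{2k}$, and when $m=2$ one has $r=1$ and, by Proposition~\ref{proposition:sigma * and sigma on GLd}, $\GL_k^{(2,1)}$ is the $(\det,\det)_2$-twist of the linear $\GL_k$, so genuine representations correspond to ordinary ones and one reduces to the linear case; in both situations $M_s$ is a ratio $L(s,\tau_\nu\times\tau_\nu^\vee)L(1+s,\tau_\nu\times\tau_\nu^\vee)^{-1}$ of completed tensor $L$-factors times a holomorphic nonvanishing normalized operator, and holomorphy follows from the unitarity of $\tau$ together with the location of the poles of $L(s,\tau_\nu\times\tau_\nu^\vee)$. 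For general $m$ and $k$ the normalizing factor is not yet available, and one would have to input the local $\gamma$-factor of $\tau\times\tau^\vee$ and its functional equation from the companion local theory \cite{me13} — or Gao's constant-term analysis of the Eisenstein series — to normalize $M_s$; this is the substitute for the missing uniqueness of Whittaker models.

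For the irreducibility of the image of $M(w_l,\boldsymbol\zeta)$ at the balanced point I would argue that this image is simultaneously a quotient of $\Ind_{\widetilde P_{(k^l)}}^{\GL_{lk}^{(m,r)}}(|\det|^{(l-1)/(2r)}\tau_\nu\otimes\cdots\otimes|\det|^{(1-l)/(2r)}\tau_\nu)$ and a submodule of the reversed induced representation; by the Langlands Quotient Theorem for covering groups (\cite{BJ}, already used in \S~\ref{exceptional}) it has a unique irreducible quotient, so it suffices to show it is irreducible, equivalently that it has a unique irreducible submodule. When $k=1$ the image is precisely the exceptional representation $\Theta_{l,m,r,\vartheta}$ of \S~\ref{exceptional}, whose irreducibility is known. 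For general $k$ I would compute the Jacquet module of the image along $V_{(k^l)}$, using that Jacquet functors along unipotents behave as in the linear case and the geometric lemma of \cite{BZ2} applies verbatim through \eqref{eq:M beta as a quotient}, and show it is irreducible as a representation of $\widetilde M_{(k^l)}$ by the exponent/support bookkeeping of Jacquet \cite{Jac4} in its covering form \cite{Suzuki1998}, using the cuspidality of $\tau_\nu$; a standard socle/cosocle argument then yields irreducibility of the image. Alternatively one transports the linear statement through the partial Shimura-type correspondence $\Theta_{rc,m,r,\vartheta}(\chi)\leftrightarrow\Ind_{B_{\GL_k}}^{\GL_k}(\chi^r)$ indicated in \S~\ref{local theta speh}. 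The main obstacle is the holomorphy and normalization of $M_s$ in the general case: without local multiplicity one for Whittaker models the classical Shahidi normalization is unavailable, so one must either bootstrap from the doubling $\gamma$-factors of \cite{me13} — which is logically downstream of the present construction — or push Gao's Langlands--Shahidi method far enough to control the poles of the completed intertwining operator. This circularity is exactly why the statement is posed as a conjecture for general $m$, while being a theorem for $m=1$, $m=2$ and $k=1$.
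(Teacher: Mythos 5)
The first thing to say is that this statement is posed in the paper as a \emph{conjecture}: the paper gives no proof of it in general, and your closing paragraph correctly diagnoses why (normalizing the rank-one operator requires either local multiplicity one for Whittaker models or $\gamma$-factors that are logically downstream of the doubling construction itself). What the paper actually proves is Proposition~\ref{proposition:conjectures hold in minimal cases}, i.e.\ the cases $r=1$ and $k=1$, and there your outline broadly matches the paper's: for $m\leq 2$ one transports everything to the linear group via the $(\det,\det)_2$-cocycle of Proposition~\ref{proposition:sigma * and sigma on GLd} and the bijection $\tau_0\mapsto\gamma_{\psi'}\otimes\tau_0$, so that \cite{Jac4,JPSS,JS1,JS3} apply; and for $k=1$ the image at the balanced point is the exceptional representation, whose irreducibility is obtained by adapting \cite[Theorem~I.2.9(a)]{KP} over $p$-adic fields and by Langlands' Quotient Theorem over $\C$.

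The genuine gap in your argument is the use of the Gindikin--Karpelevich formula \eqref{eq:GK formula for GL} to establish holomorphy of $M(w_2,\boldsymbol{\zeta})$. That formula computes the image of the \emph{normalized unramified vector} and is only stated for unramified $F$ (in \S~\ref{Casselman--Shalika formula}, under $\mu_{2m}\subset F^*$), whereas the conjecture quantifies over \emph{every} place $\nu$ and asserts holomorphy of the operator on the whole induced space --- including ramified non-archimedean and archimedean places, and on sections that are not spherical. Nonvanishing of the $c$-function on the spherical vector does not rule out poles of the operator on other vectors. For $k=1$ the correct substitute is a direct analysis of the rank-one operator for covers of $\GL_2$ as in \cite[Proposition~I.2.3(b)]{KP}, valid for arbitrary genuine quasi-characters, together with the observation that $\tau_\nu$ is then tempered so holomorphy already holds for $\Real(\zeta_1-\zeta_2)>0$. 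Your Jacquet-module/socle sketch for irreducibility at general $k$ is plausible in outline, but it is not carried out in the paper and would again require exactly the control of local intertwining operators that is missing; it cannot be counted as a proof of the general statement.
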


Globally, we define the partial $L$-function of $\tau\times\tau^{\vee}$. Let $S$ be a finite set of places of $F$, such that outside $S$ all data are unramified. The set $S$ depends only on $F$, $\psi$ and $\tau$.
If $m\equiv2\,(4)$, we need to fix the parametrization of the local components of $\tau$ outside $S$, because the local $L$-factors depend on this parametrization (see \S~\ref{unramified reps}). Choose a nontrivial character $\psi'$ of $F\backslash\A$ which is unramified outside $S$ (e.g., $\psi$). Let $\vartheta=\gamma_{\psi'}=\prod_{\nu}\gamma_{\psi'_{\nu}}$, $\vartheta_{\nu}=\gamma_{\psi'_{\nu}}$ for all $\nu$. When $m\not\equiv2\,(4)$, $\vartheta$ can be ignored. Then we can define for $s\in\C$,
\begin{align*}
L^S_{\vartheta,\vartheta^{-1}}(s,\tau\times\tau^{\vee})=\prod_{\nu\notin S}L_{\vartheta_{\nu},\vartheta_{\nu}^{-1}}(s,\tau_{\nu}\times\tau_{\nu}^{\vee}).
\end{align*}
This product is absolutely convergent for $\Real(s)\gg0$, and admits meromorphic continuation (\cite{La2,La5,MW2} and \cite{Gao2018}).
\begin{conjecture}\label{Shimura conjecture}
$L^S_{\vartheta,\vartheta^{-1}}(s,\tau\times\tau^{\vee})$ has a simple pole at $s=1$ and is holomorphic and nonzero for $\Real(s)>1$.
\end{conjecture}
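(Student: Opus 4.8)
At bottom Conjecture~\ref{Shimura conjecture} is the statement that the Jacquet--Shalika theorem on $L^S(s,\sigma\times\sigma^\vee)$ for a cuspidal $\sigma$ of $\GL_k(\A)$ survives passage to the cover $\GL_k^{(m,r)}(\A)$, and I would first record the two cases isolated in the introduction, both instances of a Satake-compatible transfer to the linear $\GL$-world. When $k=1$, $\tau$ is a genuine cuspidal representation of $\GL_1^{(m,r)}(\A)$ with scalar Satake data $t_{\tau_\nu,\vartheta_\nu}=\chi_\nu(\varpi^r)\in\C^*$; by Proposition~\ref{proposition:vartheta of *} one has $t_{\tau_\nu^\vee,\vartheta_\nu^{-1}}=\chi_\nu^{-1}(\varpi^r)$, so $L_{\vartheta_\nu,\vartheta_\nu^{-1}}(s,\tau_\nu\times\tau_\nu^\vee)=(1-q_\nu^{-s})^{-1}$ and $L^S_{\vartheta,\vartheta^{-1}}(s,\tau\times\tau^\vee)$ is the partial Dedekind zeta function $\zeta_F^S(s)$, for which the assertion is classical. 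When $m=2$ we have $r=1$, and Proposition~\ref{proposition:sigma * and sigma on GLd} shows the $2$-cocycle of $\GL_k^{(2,1)}$ is cohomologous to $(\det,\det)_2$; twisting by the Weil factor $\gamma_{\psi'}\circ\det$ puts genuine cuspidal $\tau$ in bijection with cuspidal $\sigma$ of $\GL_k(\A)$, with $t_{\tau_\nu,\vartheta_\nu}=t_{\sigma_\nu}$, whence $L^S_{\vartheta,\vartheta^{-1}}(s,\tau\times\tau^\vee)=L^S(s,\sigma\times\sigma^\vee)$ and we invoke Jacquet--Shalika \cite{JS1}.

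For general $m$ I would aim to produce exactly such a transfer: a Shimura-type correspondence $\tau\mapsto\sigma$ from genuine irreducible cuspidal automorphic representations of $\GL_k^{(m,r)}(\A)$ to \emph{cuspidal} automorphic representations of $\GL_k(\A)$, which at every $\nu\notin S$ sends the unramified component $\tau_\nu$ (a constituent of $\mathrm{I}_{\GL_k^{(m,r)}}(\vartheta_\nu,\chi_\nu)$) to the unramified constituent of $\Ind_{B_{\GL_k}}^{\GL_k}(\chi_\nu^r)$. The expected shape of the correspondence is already visible locally in $\Theta_{r,m,r,\vartheta}(\chi)\leftrightarrow\Ind_{B_{\GL_k}}^{\GL_k}(\chi^r)$ (\S~\ref{local theta speh for c=1}) and in Theorem~\ref{theorem:Whittaker on x,I}, which shows that the unramified Whittaker function of $\tau_\nu$ reproduces Shintani's formula \cite{S} for $\Ind(\chi_\nu^r)$. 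Granting it, $t_{\sigma_\nu}=t_{\tau_\nu,\vartheta_\nu}$ and, again by Proposition~\ref{proposition:vartheta of *}, $t_{\sigma_\nu^\vee}=t_{\tau_\nu^\vee,\vartheta_\nu^{-1}}$, so $L^S_{\vartheta,\vartheta^{-1}}(s,\tau\times\tau^\vee)=L^S(s,\sigma\times\sigma^\vee)$ and the assertion becomes the Jacquet--Shalika theorem \cite{JS1} for the cuspidal $\sigma$ of $\GL_k(\A)$ — a simple pole at $s=1$ because $\sigma^\vee$ is the contragredient of $\sigma$, and holomorphy and non-vanishing for $\Real(s)>1$ by absolute convergence of the Euler product. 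Building the correspondence, and in particular establishing cuspidality of the lift, is of course the substance; the natural routes are the converse theorem fed by the integrals of the present paper once their local theory is available (\cite{me13}), the globalization of the local results of \cite{Suzuki1998}, or an extension of the trace-formula comparisons of \cite{Flicker2,FK}.

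The alternative, staying entirely on the cover, is to run the Jacquet--Shalika $\GL_k\times\GL_k$ integral directly: pair a cusp form $\varphi_1\in\tau$ and $\varphi_2\in\tau^\vee$ against the mirabolic Eisenstein series $E(g,s;\Phi)$ of $\GL_k(\A)$, pulled back to $\GL_k^{(m,r)}(\A)$ — legitimate since $\varphi_1\varphi_2$ is non-genuine and $\GL_k(F)$-invariant, hence descends to $\GL_k(F)\backslash\GL_k(\A)$ — unfold using Shalika's Fourier expansion \cite{Sh} (which yields the global genericity of $\tau$ quoted in the introduction), and extract the pole at $s=1$ from the simple pole of $E(g,s;\Phi)$, whose residue is a nonzero constant times the Petersson pairing $\langle\varphi_1,\varphi_2\rangle$, nonzero for suitable $\varphi_2$. \emph{The crux, and the reason this is stated as a conjecture, is the local step}: the unfolded integral $\int_{N_k(\A)\backslash\GL_k(\A)}W_{\varphi_1}(g)W_{\varphi_2}(g)\Phi(e_k g)|\det g|^s\,dg$ cannot be written as an Euler product, because the components $\tau_\nu$ do not have unique Whittaker models (their dimension grows polynomially in $m$, cf.\ \S~\ref{whittaker functionals}), so there is no canonical local functional to normalize against and the decomposition \eqref{eq:partial decomp of Lambda} — available only for $(rk,c)$ representations with local uniqueness at the unramified places, hence not for a bare cuspidal $\tau$ — does not apply. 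In particular one cannot identify the global integral with $L^S_{\vartheta,\vartheta^{-1}}(s,\tau\times\tau^\vee)$ times finitely many local integrals, and the purely internal route through the residual representation $\mathcal{E}_\tau$ is circular, since the existence of $\mathcal{E}_\tau$ (Theorem~\ref{theorem:prop1}) is precisely what this conjecture, together with Conjecture~\ref{local Shimura conjecture}, is invoked to prove. Circumventing the obstruction seems to require either the Shimura correspondence above, or an extension of the Jacquet--Shalika method tracking the finite-dimensional Whittaker spaces and the generalized local coefficients matrices of \cite{GaoShahidiSzpruch2018,Szpruch2018}.
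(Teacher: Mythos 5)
This statement is a conjecture in the paper: it is proved only for $r=1$ or $k=1$ (Proposition~\ref{proposition:conjectures hold in minimal cases}), by exactly the transfers to the linear Jacquet--Shalika theorem that you describe, and the general case is left open with precisely the motivation you give (a Shimura-type correspondence extending \cite{Flicker2,FK}). Your treatment of the known cases, your explanation of why the Jacquet--Shalika integral cannot be made Eulerian directly on the cover, and your observation that any route through $\mathcal{E}_{\tau}$ would be circular all match the paper's own discussion, so your proposal takes essentially the same approach and, like the paper, does not prove the general statement.
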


In the linear case ($m=1$) both conjectures are known: the local conjecture follows from the results of
\cite{Jac4} (see also \cite[Proposition~I.10]{MW4}) and \cite{JS1,JPSS,JS3}, and the global was proved in \cite{JS2,JS1}.
\begin{proposition}\label{proposition:conjectures hold in minimal cases}
Assume $r=1$ or $k=1$. Then Conjectures~\ref{local Shimura conjecture} and \ref{Shimura conjecture} hold.
\end{proposition}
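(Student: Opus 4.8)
The plan is to reduce Conjectures~\ref{local Shimura conjecture} and \ref{Shimura conjecture} in the cases $r=1$ and $k=1$ to already‑established facts, treating the two cases separately since their mechanisms are quite different.

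First consider $r=1$, i.e., $m=1$ (linear) or $m=2$ with $r=1$. When $m=1$ everything is the classical theory of \cite{Jac4,JS2,JS1,JPSS,JS3}, as noted in the paragraph immediately preceding the proposition, so there is nothing to prove. When $m=2$, $r=1$, the covering $\GL_k^{(2,1)}(\A)$ is, by Proposition~\ref{proposition:sigma * and sigma on GLd} together with quadratic reciprocity, realized by the $2$‑cocycle $(\det,\det)_2$; hence genuine representations of $\GL_k^{(2,1)}$ are in bijection with representations of the linear group $\GL_k$ via $\tau\leftrightarrow\vartheta\otimes\tau$ for a fixed choice of genuine character $\vartheta$ (this is the same device used in the proof of Theorem~\ref{theorem:Whittaker on x,I}). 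Under this bijection parabolic induction, intertwining operators $M(w_l,\boldsymbol\zeta)$, and partial $L$‑functions $L^S(s,\tau\times\tau^\vee)$ all match the linear ones (up to the harmless twist by $\vartheta$, which is a genuine character of the center and does not affect reducibility points or pole locations). So both conjectures for $m=2$, $r=1$ follow from the linear statements. I would spell out the compatibility of $M(w_l,\boldsymbol\zeta)$ with the bijection: since $\vartheta\otimes(-)$ intertwines the covering and linear induced representations, it conjugates the covering intertwining operator into the linear one, and the linear one has the desired holomorphy/irreducibility by \cite{Jac4} and \cite[Proposition~I.10]{MW4}; likewise Conjecture~\ref{Shimura conjecture} becomes the statement that the linear Rankin--Selberg $L$‑function $L^S(s,\tau\times\tau^\vee)$ has a simple pole at $s=1$ and is nonvanishing for $\Real(s)>1$, which is \cite{JS2,JS1}.

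Now consider $k=1$ (with arbitrary $m$, hence $r=m$ or $r=m/2$). Here $\tau$ is a genuine cuspidal representation of $\GL_1^{(m,r)}(\A)$, and $\mathrm{Ind}_{\widetilde{P}_{(1^l)}(F_\nu)}^{\GL_l^{(m,r)}(F_\nu)}(|\det|^{\zeta_1}\tau_\nu\otimes\dots\otimes|\det|^{\zeta_l}\tau_\nu)$ is a genuine \emph{principal series} representation of $\GL_l^{(m,r)}(F_\nu)$ (with $l\le r$ in the relevant range). For such principal series the relevant analytic input is exactly the Gindikin--Karpelevich computation \eqref{eq:GK formula for GL} of \cite[Corollary~7.4]{Gao2018} and its rank‑one building blocks: $M(w_2,\boldsymbol\zeta)$ decomposes into rank‑one operators on $\GL_2^{(m,r)}$ whose normalizing factor is a ratio $\tfrac{1-q^{-1}\mathbf{x}_\alpha}{1-\mathbf{x}_\alpha}$‑type expression in the local setting (archimedean places handled by the standard Gindikin--Karpelevich integral), so holomorphy for $\Real(\zeta_1-\zeta_2)\ge 1/r$ is a direct estimate on this factor — precisely the $k=1$ analog of the linear computation and a special case of the analysis in \S~\ref{Casselman--Shalika formula}. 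For the irreducibility of the image of $M(w_l,\boldsymbol\zeta)$ at the point $((l-1)/(2r),\dots,(1-l)/(2r))$: the image is, by definition, (a twist of) the exceptional representation $\Theta_{l,m,r,\vartheta}$ of \S~\ref{exceptional} when $l=r$, and more generally the Langlands quotient of the corresponding standard module, whose irreducibility is Langlands' quotient theorem for covering groups (\cite{BJ}), exactly as invoked in the definition of $\Theta_{d,m,r,\vartheta}$. The global Conjecture~\ref{Shimura conjecture} for $k=1$ is the statement that $L^S_{\vartheta,\vartheta^{-1}}(s,\tau\times\tau^\vee)$, which for $k=1$ is an abelian (Hecke‑type) $L$‑function attached to the genuine character $\tau$ of $\GL_1^{(m,r)}(\A)$ — essentially $\zeta^S_F(s)$ up to a finite Euler factor — has a simple pole at $s=1$ and is nonvanishing for $\Real(s)>1$; this is the classical analytic behavior of the Dedekind zeta function (and follows from \cite{La2,La5,MW2} or directly). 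I would phrase it via \eqref{eq:standard sym and ext} / the definition of $L_{\vartheta,\vartheta'}(s,\tau\times\tau')$ and identify the Satake parameter $t_{\tau,\vartheta}\otimes t_{\tau^\vee,\vartheta^{-1}}=1$ (a scalar $1\times 1$ matrix), so $L_{\vartheta_\nu,\vartheta_\nu^{-1}}(s,\tau_\nu\times\tau_\nu^\vee)=(1-q_\nu^{-s})^{-1}$ and the product over $\nu\notin S$ is $\zeta^S_F(s)$.

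The main obstacle — such as it is — is purely bookkeeping rather than conceptual: for $m\equiv 2\,(4)$ one must be careful that the $\vartheta$‑parametrization of the local components of $\tau$ and $\tau^\vee$ is chosen consistently (cf.\ the discussion after \eqref{eq:standard L of pi and tau identity}), so that $t_{\tau_\nu^\vee,\vartheta_\nu^{-1}}=t_{\tau_\nu^*,\vartheta_\nu}$ by Proposition~\ref{proposition:vartheta of *} and the Euler product genuinely telescopes to $\zeta_F^S(s)$ rather than to a twisted $L$‑function with a shifted pole. Once the parametrizations are pinned down as in \S~\ref{unramified reps}, both conjectures reduce to the cited results with no further work; I would therefore organize the proof as: (i) the bijection argument for $r=1$, reducing to \cite{Jac4,MW4,JS2,JS1}; (ii) the rank‑one holomorphy estimate plus Langlands quotient theorem \cite{BJ} for the local part of $k=1$; (iii) the identification of $L^S_{\vartheta,\vartheta^{-1}}(s,\tau\times\tau^\vee)$ with $\zeta^S_F(s)$ for $k=1$, giving the global part.
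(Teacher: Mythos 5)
Your route is essentially the paper's: split into $r=1$ and $k=1$; for $r=1$, $m=2$, transport everything to the linear group via the bijection $\tau\leftrightarrow\gamma_{\psi'}\otimes\tau_0$ afforded by Proposition~\ref{proposition:sigma * and sigma on GLd}, and quote \cite{Jac4,MW4,JPSS,JS2,JS1}; for $k=1$, reduce to genuine principal series built from a Hecke character and, globally, identify $L^S_{\vartheta,\vartheta^{-1}}(s,\tau\times\tau^{\vee})$ with $\zeta_F^S(s)$. Your explicit computation of the Satake parameters in the $k=1$ global step is more transparent than the paper's one-line ``follows from the linear case,'' and your remark about fixing the parametrization when $m\equiv2\,(4)$ is consistent with \S~\ref{unramified reps}.

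The one step that is genuinely under-justified is the local holomorphy of $M(w_2,\boldsymbol{\zeta})$ for $\Real(\zeta_1-\zeta_2)\geq 1/r$ at an \emph{arbitrary} place when $k=1$. The Gindikin--Karpelevich formula \eqref{eq:GK formula for GL} (i.e.\ \cite[Corollary~7.4]{Gao2018}) only computes the image of the normalized unramified vector, so ``a direct estimate on this factor'' controls the operator only where $\tau_{\nu}$ and $\psi_{\nu}$ are unramified; the Hecke character underlying $\tau$ may be ramified at finitely many places, and Conjecture~\ref{local Shimura conjecture} is asserted for every $\nu$. The paper closes this by adapting \cite[Proposition~I.2.3~(b)]{KP}, which bounds the poles of the rank-one operator on the full (possibly ramified) genuine principal series; you need that, or an equivalent general statement, to finish. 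A smaller divergence: for the irreducibility of the image of $M(w_l,\boldsymbol{\zeta})$ you invoke the Langlands quotient theorem \cite{BJ} uniformly, whereas the paper uses it only at archimedean places and modifies \cite[Theorem~I.2.9~(a)]{KP} at $p$-adic ones. Your version works provided you record that at $\boldsymbol{\zeta}=((l-1)/(2r),\ldots,(1-l)/(2r))$ the operator is holomorphic (this follows by factoring into rank-one operators with exponent differences $\geq 1/r$, i.e.\ from the first half of the conjecture, so the two halves must be established in that order) and that its image is the full Langlands quotient of the standard module, which is part of the classification in \cite{BJ} since $\tau_{\nu}$ is tempered for $k=1$.
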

\begin{proof}
For $m=2$, we can assume that the $2$-cocycles for $\GL_k^{(2,1)}(F_{\nu})$ and $\GL_k^{(2,1)}(\A)$ are given by a local or global Hilbert symbol (see Proposition~\ref{proposition:sigma * and sigma on GLd}). In this case there is a local and global bijection between representations of $\GL_k$ and genuine representations of $\GL_k^{(2,1)}$. Specifically, for
a representation $\tau_0$ of $\GL_k$, define a genuine representation of $\GL_k^{(2,1)}$ by
$(\gamma_{\psi'}\otimes\tau_0)(\langle g,\epsilon\rangle)=\epsilon\gamma_{\psi'}(\det g)\tau_0(g)$, where $\gamma_{\psi'}$ is a local or global Weil factor. In particular irreducible representations of $\GL_k^{(2,1)}(F_{\nu})$ admit at most one Whittaker model.
Now Conjecture~\ref{local Shimura conjecture} holds because we can identify between
\begin{align*}
\Ind_{P_{(k^{2})}(F_{\nu})}^{\GL_{2k}(F_{\nu})}
(|\det|^{\zeta_1}\tau_{\nu}\otimes|\det|^{\zeta_2}\tau_{\nu})\qquad\mathrm{and}\qquad
\gamma_{\psi'}\Ind_{\widetilde{P}_{(k^{2})}(F_{\nu})}^{\GL_{2k}^{(m,r)}(F_{\nu})}
(|\det|^{\zeta_1}\gamma_{\psi'}\tau_{\nu}\otimes|\det|^{\zeta_2}\gamma_{\psi'}\tau_{\nu})
\end{align*}
via $\xi\mapsto \xi_{\psi'}$, where
$\xi_{\psi'}(\langle h,\epsilon\rangle,\boldsymbol{\zeta})=\epsilon\gamma_{\psi'}(\det h)\xi(h,\boldsymbol{\zeta})$, so that the analytic properties of the intertwining operator follow from the linear case.
Globally, given a genuine unitary irreducible cuspidal automorphic representation $\tau$ of $\GL_k^{(2,1)}(\A)$, there is a unique
unitary irreducible cuspidal automorphic representation $\tau_0$ of $\GL_k(\A)$
such that $\tau_0\otimes\gamma_{\psi'}=\tau$. Then $\tau_0^{\vee}\otimes\gamma_{\psi'}^{-1}=\tau^{\vee}$,
this follows from a direct verification of the local definitions, note that $\langle a,\epsilon\rangle^{-1}=\langle a^{-1},\epsilon^{-1}(a,a)_2\rangle$ and
use \eqref{eq:some props of gamma psi'}. Then $L^S(s,\tau_0\times\tau_0^{\vee})=L^S_{\gamma_{\psi'},\gamma_{\psi'}^{-1}}(s,\tau\times\tau^{\vee})$
and Conjecture~\ref{Shimura conjecture} follows from \cite{JS2,JS1}.

When $k=1$, the local (resp., irreducible automorphic) representations of $\GL_1^{(m,r)}$ are constructed using quasi-characters of $F_{\nu}^*$ (resp., automorphic characters of $\A^*$) restricted to $F_{\nu}^{*r}$ (resp., $\A^{*r}$), by Stone-von Neumann Theory (see \cite{KP} and \S~\ref{unramified reps}; for this there is no reason to assume data are unramified).

The first statement of Conjecture~\ref{local Shimura conjecture}: for $p$-adic fields it follows from a straightforward adaptation of
\cite[Proposition~I.2.3 (b)]{KP}; for archimedean fields the case $m=2$ was explained above and for $m>2$, the underlying field is $\C$ and the results over archimedean fields are immediate from the linear case ($\GL_2^{(m,r)}(\C)$ is split over $\GL_2(\C)$).
Note that since $k=1$, $\tau_{\nu}$ is now tempered so that $M(w_2,\boldsymbol{\zeta})$ is known to be holomorphic already for
$\Real(\zeta_1-\zeta_2)>0$ (again by \cite{JPSS,Jac4,MW4,JS3}).

The irreducibility statement ($r>1$) holds because in this case the image of
$M(w_l,\boldsymbol{\zeta})$ is the exceptional representation (see \S~\ref{exceptional}), then over $p$-adic fields one can modify
\cite[Theorem~I.2.9 (a)]{KP} to $\GL_l^{(m,r)}$ to obtain irreducibility, and over archimedean fields it is a direct consequence of
Langlands' Quotient Theorem \cite{La3} ($r>1$ implies $m>2$).

Conjecture~\ref{Shimura conjecture} again follows from the linear case.
\end{proof}

Henceforth until the end of this section we assume these conjectures.

We turn to describe our construction.
Let $\boldsymbol{\zeta}=(\zeta_1,\ldots,\zeta_{rc})\in\C^{rc}$.
As explained in \S~\ref{covering of the Levi}, we can construct the induced representation
\begin{align}\label{ind1}
\Ind_{\widetilde{P}_{(k^{rc})}({\A})}^{\GL_{rkc}^{(m,r)}({\A})}
(|\det|^{\zeta_1}\tau\otimes\ldots\otimes |\det|^{\zeta_{rc}}\tau).
\end{align}
For a standard $\widetilde{K}_{\GL_{rkc}}$-finite vector $\xi$ in the space of \eqref{ind1}, let $E(g;\xi,\boldsymbol{\zeta})$ denote the Eisenstein series, as in \eqref{eq:Eisenstein series on GL}.
Let
\begin{align}\label{eq:zeta_0}
\boldsymbol{\zeta}^{(rc)}=((rc-1)/(2r),(rc-3)/(2r),\ldots, (1-rc)/(2r))\in\C^{rc}.
\end{align}
In particular $\zeta^{(rc)}_i-\zeta^{(rc)}_{i+1}=1/r$ for all $1\leq i\leq {rc}-1$.

Consider the following multi-residue of the series at \eqref{eq:zeta_0},
\begin{align}\label{eq:limit}
E_{-1}(g;\xi)=
\lim\limits_{\boldsymbol{\zeta}\to\boldsymbol{\zeta}^{(rc)}}\prod_{i=1}^{rc-1}(r(\zeta_i-\zeta_{i+1})-1)E(g;\xi,\boldsymbol{\zeta}).
\end{align}
As we explain in the proof of Theorem~\ref{theorem:prop1} below, our assumptions imply that this limit is finite. Let $\mathcal{L}_{\tau,c}$ denote the residual representation generated by the
functions $E_{-1}(\cdot;\xi)$, it is a genuine automorphic representation of $\GL_{rkc}^{(m,r)}({\A})$. Recall that
for the notion of automorphic representations of $\GL_{rkc}^{(m,r)}({\A})$ we identify $\GL_{rkc}(F)$ with its image under the splitting
$g\mapsto\langle g,(\eta_{rkc}^{\diamondsuit})^{-1}(g)\rangle$.

For any $d\geq1$, let $C_{d}=\{xI_d:x\in\A^{*r}\}$. Then $\widetilde{C}_{d}$ is the center of $\GL_{d}^{(m,r)}(\A)$ (see \S~\ref{covering of the Levi}). For a genuine unitary character $\varrho$ of $\widetilde{C}_{rkc}(\A)$ which is trivial on
the subgroup of elements $\langle xI_{rkc},(\eta_{rkc}^{\diamondsuit})^{-1}(xI_{rkc})\rangle$ with $x\in F^{*r}$,
let $L^2(\GL_{rkc}(F)\backslash \GL_{rkc}^{(m,r)}(\A),\varrho)$ be the space of genuine square-integrable automorphic forms which translate by $\varrho$ on $\widetilde{C}_{rkc}$.

Let $\varrho_{\tau}$ be the central character of $\tau$. Since $C_{rkc}<C_{rc}\times\ldots\times C_{rc}$ , the representation $\mathcal{L}_{\tau,c}$ admits a central character, which is $\varrho_{\tau}^{rc}$.
\begin{theorem}\label{theorem:prop1}
The limit \eqref{eq:limit} is finite. The genuine automorphic representation
$\mathcal{L}_{\tau,c}$ lies in the discrete spectrum of
$L^2(\GL_{rkc}(F)\backslash \GL_{rkc}^{(m,r)}({\A}),\varrho_{\tau}^{rc})$. Furthermore, for any irreducible summand $\mathcal{E}$ of $\mathcal{L}_{\tau,c}$, $\mathcal{E}=\otimes'_\nu\,\mathcal{E}_\nu$ where for all $\nu$, $\mathcal{E}_{\nu}$ is an irreducible subrepresentation of
\begin{align}\label{rep:unr 1 subrep}
\Ind_{\widetilde{P}_{(k^{rc})}(F_\nu)}^{\GL^{(m,r)}_{rkc}(F_\nu)}((\tau_\nu\otimes \ldots \otimes \tau_\nu)\delta_{P_{(k^{rc})}}^{-1/(2rk)})
\end{align}
and a quotient of
\begin{align}\label{rep:unr 1 quotient}
\Ind_{\widetilde{P}_{(k^{rc})}(F_\nu)}^{\GL^{(m,r)}_{rkc}(F_\nu)}((\tau_\nu\otimes \ldots \otimes \tau_\nu)\delta_{P_{(k^{rc})}}^{1/(2rk)}),
\end{align}
and for almost all $\nu$, $\mathcal{E}_{\nu}$ is the unique irreducible unramified subrepresentation (resp., quotient) of \eqref{rep:unr 1 subrep} (resp., \eqref{rep:unr 1 quotient}).
\end{theorem}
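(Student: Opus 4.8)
The plan is to follow the structure of the linear argument of Jacquet \cite{Jac4} and its adaptation in \cite{CFGK2,CFK}, inserting the covering-theoretic inputs already established in this section. The key points are: (i) reduce the study of the poles of $E(g;\xi,\boldsymbol{\zeta})$ near $\boldsymbol{\zeta}^{(rc)}$ to the poles of the intertwining operator $M(w_{rc},\boldsymbol{\zeta})$ using the constant-term computation; (ii) decompose $M(w_{rc},\boldsymbol{\zeta})$ into rank-one operators $M(w_2,\cdot)$ applied in the copies of $\GL_{2k}^{(m,r)}$ sitting inside $\GL_{rkc}^{(m,r)}$, and use the block-compatibility \eqref{eq:block compatibility on Levi of P} together with the commuting-factors property of $\GL_{rkc}^{(m,r)}$ to see that these decompositions are genuinely ``linear-like''; (iii) invoke Conjecture~\ref{local Shimura conjecture} to control the order of the pole at each place, and Conjecture~\ref{Shimura conjecture} (through the Gindikin--Karpelevich-type normalization of the unramified operators, \eqref{eq:GK formula for GL} and \cite{Gao2018}) to pin down the global order of the pole at $\boldsymbol{\zeta}^{(rc)}$ to be exactly the product $\prod_{i=1}^{rc-1}(r(\zeta_i-\zeta_{i+1})-1)^{-1}$, so that the limit \eqref{eq:limit} is finite and nonzero. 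This establishes finiteness of the limit and shows $\mathcal{L}_{\tau,c}$ is nonzero.

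\textbf{Square-integrability and discreteness.} Next I would apply the Langlands square-integrability criterion (the covering version, available via \cite{MW2}; see also the discussion in \cite{Gao2018}) to the residual datum: the exponent of the residue, read off from $\boldsymbol{\zeta}^{(rc)}$, lies strictly inside the appropriate obtuse cone because $\zeta^{(rc)}_i-\zeta^{(rc)}_{i+1}=1/r>0$, exactly as in the linear case. Hence every function $E_{-1}(\cdot;\xi)$ is square-integrable modulo center, and $\mathcal{L}_{\tau,c}$ is a subrepresentation of the discrete spectrum $L^2(\GL_{rkc}(F)\backslash\GL_{rkc}^{(m,r)}(\A),\varrho_\tau^{rc})$; the computation of the central character as $\varrho_\tau^{rc}$ is immediate from $C_{rkc}<C_{rc}\times\cdots\times C_{rc}$ and the block structure. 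By the general theory, $\mathcal{L}_{\tau,c}$ is then a (finite) direct sum of irreducible automorphic representations, and each irreducible summand $\mathcal{E}$ factors as a restricted tensor product $\otimes'_\nu\mathcal{E}_\nu$ with respect to the splitting $g\mapsto\langle g,(\eta_{rkc}^{\diamondsuit})^{-1}(g)\rangle$ fixed in \S~\ref{covering of the Levi}.

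\textbf{Local components.} For the description of $\mathcal{E}_\nu$: the residue operation commutes with taking local components, so $\mathcal{E}_\nu$ is a constituent of the local induced representation $\Ind_{\widetilde{P}_{(k^{rc})}(F_\nu)}^{\GL^{(m,r)}_{rkc}(F_\nu)}(|\det|^{\zeta_1}\tau_\nu\otimes\cdots\otimes|\det|^{\zeta_{rc}}\tau_\nu)$ at $\boldsymbol{\zeta}=\boldsymbol{\zeta}^{(rc)}$; rewriting $\boldsymbol{\zeta}^{(rc)}$ as the shift $\delta_{P_{(k^{rc})}}^{-1/(2rk)}$ identifies the inducing data with that of \eqref{rep:unr 1 subrep}. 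Conjecture~\ref{local Shimura conjecture}, applied inductively $rc-1$ times (irreducibility of the image of $M(w_l,\cdot)$ at the relevant point for $1<l\le r$, combined with transitivity of induction), forces $\mathcal{E}_\nu$ to be the unique irreducible subrepresentation of \eqref{rep:unr 1 subrep}; dualizing (the contragredient interchanges sub and quotient, and $\GL_{rkc}^{(m,r)}$ carries the involution of \eqref{eq:involution b*0}) shows it is simultaneously the unique irreducible quotient of \eqref{rep:unr 1 quotient}. At the unramified places, $\mathcal{E}_\nu$ contains the normalized unramified vector that is the image of that of \eqref{ind1} under the unramified intertwining operator, which by \eqref{eq:GK formula for GL} is nonzero and holomorphic there after the residue normalization; hence $\mathcal{E}_\nu$ is the unique irreducible unramified subrepresentation of \eqref{rep:unr 1 subrep}, equivalently (by the same duality) the unique unramified quotient of \eqref{rep:unr 1 quotient}. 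I expect the main obstacle to be step (ii)--(iii): making the decomposition of $M(w_{rc},\boldsymbol{\zeta})$ into rank-one operators fully rigorous on the covering, i.e., tracking the $2$-cocycle corrections $\eta_\beta$ from \eqref{eq:eta beta nu}, \eqref{eq:rho beta} and the commuting-factors property carefully enough that the pole analysis (and the Gindikin--Karpelevich normalization) really reduces to the $\GL_2^{(m,r)}$ computation governed by Conjecture~\ref{local Shimura conjecture}; everything else is a faithful transcription of the linear argument.
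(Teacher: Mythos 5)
Your treatment of the first two assertions follows the paper's proof essentially verbatim: the constant term along $V_{(k^{rc})}$ is a sum of intertwining operators $M(w,\boldsymbol{\zeta})$, Conjecture~\ref{local Shimura conjecture} confines the poles to quotients of partial $L$-functions of $\tau\times\tau^{\vee}$ coming from the Gindikin--Karpelevich normalization, Conjecture~\ref{Shimura conjecture} shows that only the long operator $M(w_0,\boldsymbol{\zeta})$ contributes a pole at $\boldsymbol{\zeta}^{(rc)}$, of exactly the order removed in \eqref{eq:limit}, and Jacquet's criterion (\cite[Lemma~I.4.11]{MW2}) gives square-integrability and discreteness. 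Up to that point there is nothing to add.

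The gap is in your treatment of the local components. From ``the residue operation commutes with taking local components'' you only obtain that $\mathcal{E}_\nu$ is a \emph{constituent} of the local induced representation, whereas the theorem asserts it is a \emph{subrepresentation} of \eqref{rep:unr 1 subrep} at every place. Your attempted upgrade --- that Conjecture~\ref{local Shimura conjecture}, applied inductively $rc-1$ times, forces $\mathcal{E}_\nu$ to be the \emph{unique} irreducible subrepresentation of \eqref{rep:unr 1 subrep} at all $\nu$ --- is not available: the irreducibility clause of that conjecture concerns only the image of $M(w_l,\boldsymbol{\zeta})$ for $1<l\le r$, i.e.\ at most $r$ copies of $\tau_\nu$, and says nothing about the full operator on $rc$ copies (indeed the paper only deduces irreducibility of $\mathcal{L}_{\tau,c}$ for $c=1$). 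Nor can one fall back on a Langlands-classification uniqueness for \eqref{rep:unr 1 subrep}, since $\tau_\nu$ is not known to be tempered; the theorem deliberately asserts uniqueness only at the unramified places. The mechanism that actually yields the subrepresentation statement is global: the constant term map is injective on the residual representation and, by \eqref{eq:const-term-residue}, identifies $\mathcal{L}_{\tau,c}$ with a subspace of the image of the residue of $M(w_0,\boldsymbol{\zeta})$, which lies in the global induced representation with reversed exponents; this global embedding localizes to an embedding of $\mathcal{E}_\nu$ into \eqref{rep:unr 1 subrep} at every $\nu$, and the assertion for \eqref{rep:unr 1 quotient} follows by the dual argument applied to the residue map from \eqref{ind1}. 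At almost all $\nu$ the one-dimensionality of the space of unramified vectors in \eqref{rep:unr 1 subrep} then identifies $\mathcal{E}_\nu$ as the unique irreducible unramified subrepresentation (resp.\ quotient). Your closing remark about the unramified intertwining operator and \eqref{eq:GK formula for GL} is consistent with this, but it cannot substitute for the missing embedding at the ramified places.
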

\begin{proof}
The proof follows from the computation of the constant term along $V_{(k^{rc})}$, which in the linear case was
carried out by Jacquet \cite[\S~2.1--2.4]{Jac4}. Suzuki \cite[\S~8]{Suzuki1998} extended the result of \cite{Jac4} to coverings of \cite{KP} under certain restrictions (see also \cite{KP,Gao2018}).

Briefly, according to \cite[\S~II.1.7]{MW2}, the constant term of $E(g;\xi,\boldsymbol{\zeta})$ along $V_{(k^{rc})}$ is the sum of images of intertwining operators
$M(w,\boldsymbol{\zeta})$, where $w$ is a representative in $\GL_{rkc}(F)$ of an element of $W_{\GL_{rkc}}$ such that ${}^wM_{(k^{rc})}=M_{(k^{rc})}$, and
$M(w,\boldsymbol{\zeta})$ is given by the meromorphic continuation of the integral
\begin{align*}
M(w,\boldsymbol{\zeta})\xi(g,\boldsymbol{\zeta})=\int\limits_{({}^w(V_{(k^{rc})}(\A))\cap V_{(k^{rc})}(\A))\backslash V_{(k^{rc})}(\A)}\xi(\langle w,(\eta_{rkc}^{\diamondsuit})^{-1}(w)\rangle^{-1}\langle
u,(\eta_{rkc}^{\diamondsuit})^{-1}(u)\rangle g,\boldsymbol{\zeta})\,du.
\end{align*}
By Conjecture~\ref{local Shimura conjecture}, the poles of these operators are of global origin and are contained in the poles of quotients of partial $L$-functions
\begin{align*}
\frac{L^S_{\vartheta,\vartheta^{-1}}(r(\zeta_i-\zeta_j),\tau\times\tau^{\vee})}
{L^S_{\vartheta,\vartheta^{-1}}(r(\zeta_i-\zeta_j)+1,\tau\times\tau^{\vee})}.
\end{align*}
As in the linear case, these quotients are determined directly from the Gindikin--Karpelevich formula \eqref{eq:GK formula for GL}.
Using Conjecture~\ref{Shimura conjecture} we see that all of these intertwining operators are holomorphic at \eqref{eq:zeta_0}, except the one corresponding to
\begin{align*}
w_0=\left(\begin{smallmatrix}&&I_{k}\\&\udots\\I_{k}\end{smallmatrix}\right)\in\GL_{rkc}(F).
\end{align*}
Now Conjecture~\ref{Shimura conjecture} also implies that
\begin{align*}
\lim\limits_{\boldsymbol{\zeta}\to \boldsymbol{\zeta}^{(rc)}}\prod_{i=1}^{{rc}-1}(r(\zeta_i-\zeta_{i+1})-1)M(w_0,\boldsymbol{\zeta})\xi
\end{align*}
is finite and nonzero. When we take the limit \eqref{eq:limit},
\begin{align}\label{eq:const-term-residue}
\int\limits_{V_{(k^{rc})}(F)\bs V_{(k^{rc})}(\A)}E_{-1}(\langle u,(\eta_{rkc}^{\diamondsuit})^{-1}(u)\rangle g;\xi)\,du=\lim\limits_{\boldsymbol{\zeta}\to \boldsymbol{\zeta}^{(rc)}}\prod_{i=1}^{{rc}-1}(r(\zeta_i-\zeta_{i+1})-1)M(w_0,\boldsymbol{\zeta})\xi(g,\boldsymbol{\zeta})
\end{align}
and it follows that \eqref{eq:limit} is finite and nonzero.
By applying Jacquet's criterion as stated in \cite[Lemma~I.4.11]{MW2}, we deduce that the automorphic forms
$E_{-1}(\cdot;\xi)$ are square-integrable, and it also follows that
$\mathcal{L}_{\tau,c}$ belongs to the discrete spectrum of $L^2(\GL_{rkc}(F)\backslash \GL_{rkc}^{(m,r)}({\A}),\varrho_{\tau}^{rc})$.

Note that $\mathcal{L}_{\tau,1}$ is irreducible, because for $r>1$, the image of $M(w_0,\boldsymbol{\zeta})$ at $\boldsymbol{\zeta}^{(r)}$ is irreducible
by Conjecture~\ref{local Shimura conjecture} (in this case $w_0$ is $w_r$ defined above).

According to \eqref{eq:const-term-residue}, the application of the constant term yields an embedding of $\mathcal{L}_{\tau,c}$ into the image of $M(w_0,\boldsymbol{\zeta})$. Thus each local component
$(\mathcal{L}_{\tau,c})_{\nu}$ is embedded in \eqref{rep:unr 1 subrep}, and at almost all places $(\mathcal{L}_{\tau,c})_{\nu}$ and \eqref{rep:unr 1 subrep} contain an unramified constituent. Since $\mathcal{E}_{\nu}$ is irreducible and embeds in \eqref{rep:unr 1 subrep}, at almost all places it is the unique irreducible unramified subrepresentation of \eqref{rep:unr 1 subrep}. Similar reasoning applies to the assertion on the quotient of \eqref{rep:unr 1 quotient}.
\end{proof}
We construct an $(rk,c)$ representation from
$\mathcal{L}_{\tau,c}$. Assume $\mu_{2m}\subset F^*$, in the rest of this section.
\begin{theorem}\label{exthspeh1}
The representation $\mathcal{L}_{\tau,c}$ has at least one irreducible $(rk,c)$ summand $\mathcal{E}_{\tau}$.
\end{theorem}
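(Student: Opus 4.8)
The plan is to verify, for a suitable irreducible summand $\mathcal{E}_\tau$ of $\mathcal{L}_{\tau,c}$, the three conditions of the definition of a global $(rk,c)$ representation in \S~\ref{speh def}: the vanishing condition~\eqref{def:Whittaker--Speh--Shalika 1}, the nonvanishing condition~\eqref{def:Whittaker--Speh--Shalika 1.5}, and the local condition~\eqref{def:Whittaker--Speh--Shalika 2}. The key inputs are Theorem~\ref{theorem:prop1}, which pins down the local components of every irreducible summand, Proposition~\ref{proposition:local-semi} on the representations $\Theta_{rc,m,r,\vartheta}(\chi)$, and Lemmas~\ref{lemma:local semi Whittaker and WSS} and \ref{lemma:gbl semi Whittaker and WSS} (valid for coverings by Remark~\ref{remark:applicability of Cai results to coverings}), which translate between Fourier coefficients on unipotent orbits and semi--Whittaker coefficients. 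Throughout, $S$ is the finite set of \S~\ref{speh gbl} and $\vartheta=\prod_\nu\vartheta_\nu$ is the chosen parametrization; recall $\mu_{2m}\subset F^*$, so $\mu_{2m}\subset F_\nu^*$ at every place.

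\textbf{Local components.} Let $\nu\notin S$, and let $\tau_\nu$ be the irreducible unramified constituent of $\mathrm{I}_{\GL_k^{(m,r)}}(\vartheta_\nu,\chi^{(\nu)})$ with $\chi^{(\nu)}=\chi_1\otimes\ldots\otimes\chi_k$. By Theorem~\ref{theorem:prop1}, for almost all such $\nu$ the component $\mathcal{E}_\nu$ of \emph{any} irreducible summand $\mathcal{E}$ of $\mathcal{L}_{\tau,c}$ is the unique irreducible unramified subrepresentation of \eqref{rep:unr 1 subrep}. Both this representation and $\Theta_{rc,m,r,\vartheta_\nu}(\chi^{(\nu)})$ from \eqref{gen of Suzuki new} are (by transitivity of induction, via \eqref{eq:Thete rc m as a subrep}--\eqref{eq:Thete rc m as a quotient}) subquotients of genuine unramified principal series of $\GL_{rkc}^{(m,r)}(F_\nu)$; I would match the two by a bookkeeping computation comparing inducing characters --- using that the contribution of $\delta_{P_{(k^{rc})}}^{-1/(2rk)}$ to the $i$-th block is $|\det|^{(rc-2i+1)/(2r)}$ and that the Satake parameter of $\Theta_{rc,m,r,\vartheta_\nu}$ is the exceptional one --- and then invoking that an irreducible unramified representation is determined by its Satake parameter. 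This gives $\mathcal{E}_\nu\cong\Theta_{rc,m,r,\vartheta_\nu}(\chi^{(\nu)})$, which is $(rk,c)$ by Proposition~\ref{proposition:local-semi}; hence condition~\eqref{def:Whittaker--Speh--Shalika 2} holds for every irreducible summand. Fixing one such place $\nu_0$, Lemma~\ref{lemma:local semi Whittaker and WSS} gives $J_{N_{\GL_{rkc}},\psi_\lambda}(\mathcal{E}_{\nu_0})=0$ for all $\lambda\succsim((rk)^c)$, and by the local--global principle for vanishing of Fourier/semi--Whittaker coefficients (\cite[Proposition~1]{JR}, cf.\ the remark after the definition in \S~\ref{speh def}) the coefficient \eqref{lambda semi coefficient} is identically zero on each summand $\mathcal{E}$ (and on $\mathcal{L}_{\tau,c}$) for every $\lambda\succsim((rk)^c)$; in particular $\mathcal{O}(\mathcal{E},\beta',\psi')=0$ for all $\beta'\succsim((rk)^c)$, which is condition~\eqref{def:Whittaker--Speh--Shalika 1}.

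\textbf{Nonvanishing.} By Lemma~\ref{lemma:gbl semi Whittaker and WSS} it now suffices to show that \eqref{lambda semi coefficient} with $\lambda=((rk)^c)$ is not identically zero on $\mathcal{L}_{\tau,c}$. I would compute this by substituting the definition \eqref{eq:limit} of $E_{-1}(\cdot;\xi)$ as a residue at \eqref{eq:zeta_0} of the Eisenstein series on $\GL_{rkc}^{(m,r)}$ induced from $\widetilde{P}_{(k^{rc})}$, and unfolding against $(N_{\GL_{rkc}},\psi_{((rk)^c)})$: decomposing $P_{(k^{rc})}(F)\backslash\GL_{rkc}(F)$ into orbits under $N_{\GL_{rkc}}(F)$, only the representatives compatible with $\psi_{((rk)^c)}$ survive, and the leading one produces (a product over the $rc$ inducing blocks of) the Whittaker--Fourier coefficient of $\tau$, the contributions of the other orbits vanishing by cuspidality of $\tau$. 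Since $\tau$ is cuspidal it is globally generic by Shalika's Fourier expansion \cite{Sh}, so this coefficient is not identically zero; together with the fact (Theorem~\ref{theorem:prop1}, via \eqref{eq:const-term-residue}) that the residual intertwining operator $M(w_0,\boldsymbol{\zeta})$ is holomorphic and nonzero at \eqref{eq:zeta_0}, an appropriate choice of section $\xi$ makes the semi--Whittaker coefficient nonzero on $\mathcal{L}_{\tau,c}$. Decomposing $\mathcal{L}_{\tau,c}$ into irreducible summands, the coefficient is then nonzero on at least one summand $\mathcal{E}_\tau$; by Lemma~\ref{lemma:gbl semi Whittaker and WSS} this gives $\mathcal{O}(\mathcal{E}_\tau,((rk)^c),\psi)\neq0$, i.e.\ condition~\eqref{def:Whittaker--Speh--Shalika 1.5}. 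Combined with the previous paragraph, $\mathcal{E}_\tau$ satisfies all three conditions and is $(rk,c)$.

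\textbf{Main obstacle.} The hard part is the unfolding computation establishing the nonvanishing of the $((rk)^c)$ semi--Whittaker coefficient: one must carry out the double-coset analysis of the Eisenstein series on the cover --- tracking the splitting $\langle u,(\eta_{rkc}^{\diamondsuit})^{-1}(u)\rangle$ and the residue at \eqref{eq:zeta_0} --- and verify that the leading term is genuinely nonzero rather than cancelled, which is the covering analogue of the argument underlying the $(k,c)$-property of generalized Speh representations in the linear case (\cite{G2,JL2013}) and is where the genericity of $\tau$ is used. A secondary technical point is the Satake-parameter bookkeeping in the identification $\mathcal{E}_\nu\cong\Theta_{rc,m,r,\vartheta_\nu}(\chi^{(\nu)})$, in particular keeping the $\vartheta$-parametrization consistent when $m\equiv 2\,(4)$.
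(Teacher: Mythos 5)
Your overall skeleton matches the paper's proof (identify the local components via Theorem~\ref{theorem:prop1}, establish vanishing of the higher coefficients locally and propagate it globally, then produce a nonzero $((rk)^c)$ semi--Whittaker coefficient and convert via Lemma~\ref{lemma:gbl semi Whittaker and WSS}), but the step that carries all the weight --- the nonvanishing --- is not correct as you describe it. You propose to unfold $E(g;\xi,\boldsymbol{\zeta})$ directly against $(N_{\GL_{rkc}},\psi_{((rk)^c)})$ and argue that cuspidality kills all but a leading orbit which "produces the Whittaker--Fourier coefficient of $\tau$." This is the standard argument for the \emph{generic} coefficient of a cuspidal-data Eisenstein series, and it does not survive the passage to the residue: the unfolded leading term is a Jacquet-type integral which is entire in $\boldsymbol{\zeta}$, so after multiplying by $\prod_i(r(\zeta_i-\zeta_{i+1})-1)$ and taking the limit at $\boldsymbol{\zeta}^{(rc)}$ one gets zero unless a compensating pole is produced --- and whether it is produced is exactly the (nontrivial) question of genericity of the residual representation. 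Genericity of $\tau$ alone is not enough; this is why the paper handles $c=1$ by adapting \cite[Theorem~II.2.5]{KP} and \cite[Theorem~1]{FG2017}, which crucially use the irreducibility statement of Conjecture~\ref{local Shimura conjecture} for all $1<l\leq r$ (and why, for other numerologies, theta representations of covers of $\GL$ fail to be generic at all). For $c>1$ the character $\psi_{((rk)^c)}$ is degenerate and the single-orbit picture breaks down entirely; the paper instead factors the coefficient through the constant term along $V_{((rk)^c)}$ (computed as in Theorem~\ref{theorem:prop1} and \cite[Lemma~4.1]{JL2013}), lands in $\mathcal{L}_{\tau,1}\otimes\cdots\otimes\mathcal{L}_{\tau,1}$, and only then applies the $c=1$ genericity to each of the $c$ blocks. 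Your sketch skips both reductions.

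A secondary but real gap is the claim $\mathcal{E}_\nu\cong\Theta_{rc,m,r,\vartheta_\nu}(\chi^{(\nu)})$. Matching Satake parameters only identifies $\mathcal{E}_\nu$ with the unique irreducible unramified \emph{constituent} of $\Theta_{rc,m,r,\vartheta_\nu}(\chi^{(\nu)})$, which need not be irreducible (the paper never assumes it is, cf.\ the remark in \S~\ref{RS integrals}), so the two cannot in general be isomorphic. From "constituent of an $(rk,c)$ representation" and exactness of the Jacquet functor you get the vanishing for $\beta'\succsim((rk)^c)$ and $\dim\mathcal{O}(\mathcal{E}_\nu,((rk)^c),\psi)\leq 1$, but not $\dim=1$; the lower bound only follows a posteriori from the global nonvanishing, so your assertion that condition~\eqref{def:Whittaker--Speh--Shalika 2} "holds for every irreducible summand" before the nonvanishing step is premature. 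This part is fixable by reordering, but combined with the first gap the proposal does not yet constitute a proof.
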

\begin{proof}
Let $\mathcal{E}$ be any irreducible summand of $\mathcal{L}_{\tau,c}$.
First consider the local setting. By Theorem~\ref{theorem:prop1}, at almost all places $\nu$, $\mathcal{E}_{\nu}$ is an irreducible unramified subrepresentation of
\eqref{rep:unr 1 subrep}. Also assume that $\tau_{\nu}$ is the genuine irreducible unramified constituent of $\mathrm{I}_{\GL_k^{(m,r)}}(\vartheta,\chi)$. By transitivity of induction \eqref{rep:unr 1 subrep} is a genuine unramified constituent of a principal series representation, and when we permute the inducing characters of $\tau_{\nu}$ in this full induced representation and use
the formula for $\delta_{B_{\GL_{rc}}}$, we obtain \eqref{eq:Thete rc m as a subrep}:
\begin{align*}
&\Ind_{\widetilde{B}_{\GL_{rkc}}(F_\nu)}^{\GL_{rkc}^{(m,r)}(F_\nu)}\left(
\Ind_{\widetilde{T}_{\GL_{rkc},*}(F_\nu)}^{\widetilde{T}_{\GL_{rkc}}(F_\nu)}(
(\otimes_{j=1}^{rc}\vartheta\chi_1|~|^{-(rc-2j+1)/(2r)})\otimes \ldots \otimes
(\otimes_{j=1}^{rc}\vartheta\chi_k|~|^{-(rc-2j+1)/(2r)}))\right).
\end{align*}
By \cite[Theorem~2.9]{BZ2}, $\mathcal{E}_{\nu}$ is the unique irreducible unramified constituent of \eqref{eq:Thete rc m as a subrep}.
The representation $\Theta_{rc,m,r,\vartheta}(\chi)$ is also an unramified constituent (a subrepresentation) of \eqref{eq:Thete rc m as a subrep}, see \S~\ref{local theta speh}. Therefore $\mathcal{E}_{\nu}$ is a constituent of $\Theta_{rc,m,r,\vartheta}(\chi)$.
Since the Jacquet functor is
exact, by Proposition~\ref{proposition:local-semi} we have $\mathcal{O}(\mathcal{E}_{\nu},\beta,\psi)=0$ for any $\beta\succsim((rk)^{c})$, and $\dim\mathcal{O}(\mathcal{E}_\nu,((rk)^c),\psi)\leq1$.

Thus far we have not assumed anything on $\mathcal{E}$.
Next we show that \eqref{lambda semi coefficient} with $\lambda=((rk)^c)$ is not identically zero on
$\mathcal{L}_{\tau,c}$. For $c=1$ the result can be proved by adapting the argument of \cite[Theotam~II.2.5]{KP} or \cite[Theorem~1]{FG2017}
(using the irreducibility assumption of Conjecture~\ref{local Shimura conjecture} for all $1<l\leq r$).
Assume $c>1$. We claim
\begin{align}\label{eq:global-nonvanishing}
\int\limits_{N_{\GL_{rkc}}(F)\bs N_{\GL_{rkc}}(\A)} E_{-1}(
\langle u,(\eta_{rkc}^{\diamondsuit})^{-1}(u)\rangle;\xi)\psi_{((rk)^c)}(u)\,du\neq 0
\end{align}
for some $\xi$. By definition $\psi_{((rk)^c)}$ is trivial on $V_{((rk)^c)}$ (see \S~\ref{Semi Whittaker coeff}), hence
\eqref{eq:global-nonvanishing} factors through the constant term along $V_{((rk)^c)}$. This constant term can be computed as in the proof of Theorem~\ref{theorem:prop1}, and we deduce that the mapping
\begin{align*}
b\mapsto \int\limits_{V_{((rk)^c)}(F)\bs V_{((rk)^c)}(\A)} &E_{-1}(\langle v,(\eta_{rkc}^{\diamondsuit})^{-1}(v)\rangle b;\xi)\,dv,\qquad b\in \widetilde{M}_{((rk)^c)}(\A)
\end{align*}
belongs to the space of $\mathcal{L}_{\tau,1}\otimes\ldots\otimes\mathcal{L}_{\tau,1}$ ($c$ times). For more details see
\cite[Lemma~4.1]{JL2013}, where the constant term of $E_{-1}(g;\xi)$ in the linear case was computed along any $V_{(lk,(c-l)k)}$ with $0<l<c$.

Now we note that $N_{\GL_{rkc}}=(M_{((rk)^c)}\cap N_{\GL_{rkc}})\ltimes V_{((rk)^c)}$,
$M_{((rk)^c)}\cap N_{\GL_{rkc}}$ is the direct product of $c$ copies of $N_{\GL_{rk}}$, and $\psi_{((rk)^c)}$ is the product of the
$(rk,1)$ characters on each these copies. Therefore \eqref{eq:global-nonvanishing} becomes the product of $c$ applications of
\eqref{lambda semi coefficient} with $\lambda=(rk)$ on vectors in the space of $\mathcal{L}_{\tau,1}$, each of these is nonzero by the case $c=1$.

Since $\mathcal{L}_{\tau,1}$ is irreducible (by Conjecture~\ref{local Shimura conjecture}, see the proof of Theorem~\ref{theorem:prop1}),
we can conclude that \eqref{lambda semi coefficient} with $\lambda=((rk)^c)$ is nonzero on
$\mathcal{L}_{\tau,c}$, and we let $\mathcal{E}_{\tau}$ be an irreducible summand of $\mathcal{L}_{\tau,c}$ on which this semi-Whittaker coefficient is nonzero. As explained above, for almost all $\nu$, $\mathcal{O}((\mathcal{E}_{\tau})_{\nu},\beta,\psi)=0$ for any $\beta\succsim((rk)^{c})$,
then by Lemma~\ref{lemma:local semi Whittaker and WSS}, $J_{N_{\GL_{rkc}},\psi_{\lambda}}((\mathcal{E}_{\tau})_{\nu})=0$ for all $\lambda\succsim((rk)^c)$. Hence $\mathcal{E}_{\tau}$ does not support any semi-Whittaker functionals with respect to $\lambda\succsim((rk)^c)$
(local vanishing implies global). Now we can conclude from Lemma~\ref{lemma:gbl semi Whittaker and WSS} that
$\mathcal{O}(\mathcal{E}_{\tau},((rk)^c),\psi)\ne0$. This immediately implies $\dim\mathcal{O}((\mathcal{E}_{\tau})_{\nu},((rk)^c),\psi)>0$ for all the local components of $\mathcal{E}_{\tau}$, and so for almost all $\nu$, $(\mathcal{E}_{\tau})_{\nu}$ is $(rk,c)$, and also $\mathcal{E}_{\tau}$ is $(rk,c)$ ($\mathcal{O}(\mathcal{E}_{\tau},\beta,\psi)=0$ for $\beta\succsim((rk)^{c})$ follows from the local assertion).
\end{proof}
When we combine Theorem~\ref{theorem:prop1} with the proof of Theorem~\ref{exthspeh1} we obtain the following corollary.
\begin{corollary}\label{corollary:local spaces are almost always good}
Outside finitely many places $\nu$, the representation \eqref{rep:unr 1 subrep} (resp., \eqref{rep:unr 1 quotient}) contains a unique irreducible unramified subrepresentation (resp., quotient), which is in addition the unique $(rk,c)$ constituent of a representation
$\Theta_{rc,m,r,\vartheta}(\chi)$ for some $\chi$ and $\vartheta$.
\end{corollary}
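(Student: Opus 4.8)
The plan is to assemble this corollary as a bookkeeping consequence of the two theorems immediately preceding it, together with the local results of \S\ref{local theta speh}. Since the statement is really an amalgamation of facts already proved, the work consists in identifying \emph{which} places must be excluded and verifying that the various notions of ``good constituent'' agree at the remaining places.

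First I would fix the finite set $S_0$ of places outside of which \emph{all} of the following hold simultaneously: $F_\nu$ is unramified in the sense of \S\ref{local covering}, $\mu_{2m}\subset F_\nu^*$ (possible since $\mu_{2m}\subset F^*$ implies this at almost all $\nu$), $\tau_\nu$ is unramified, and the conclusions of Theorem~\ref{theorem:prop1} on subrepresentations and quotients apply. For $\nu\notin S_0$, Theorem~\ref{theorem:prop1} already gives that \eqref{rep:unr 1 subrep} (resp.\ \eqref{rep:unr 1 quotient}) has a unique irreducible unramified subrepresentation (resp.\ quotient), namely $(\mathcal{L}_{\tau,c})_\nu=\mathcal{E}_\nu$ for the relevant summand. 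Next I would invoke the argument already carried out inside the proof of Theorem~\ref{exthspeh1}: writing $\tau_\nu$ as the irreducible unramified constituent of $\mathrm{I}_{\GL_k^{(m,r)}}(\vartheta,\chi)$ for suitable $\vartheta$ and $\chi$, transitivity of induction and permutation of the inducing characters (using the formula for $\delta_{B_{\GL_{rc}}}$) identify the full principal series in which \eqref{rep:unr 1 subrep} sits with the representation \eqref{eq:Thete rc m as a subrep}. By \cite[Theorem~2.9]{BZ2} this principal series has a unique irreducible unramified constituent, which must therefore coincide with $\mathcal{E}_\nu$; and since $\Theta_{rc,m,r,\vartheta}(\chi)$ is an unramified subrepresentation of \eqref{eq:Thete rc m as a subrep} by \S\ref{local theta speh}, that unique unramified constituent is a constituent of $\Theta_{rc,m,r,\vartheta}(\chi)$. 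By Proposition~\ref{proposition:local-semi}, $\Theta_{rc,m,r,\vartheta}(\chi)$ is an $(rk,c)$ representation, so its unique $(rk,c)$ constituent is exactly $\mathcal{E}_\nu$; this pins down $\mathcal{E}_\nu$ as the asserted constituent. The dual statement for \eqref{rep:unr 1 quotient} follows by the same reasoning applied to \eqref{eq:Thete rc m as a quotient}, using that $\Theta_{rc,m,r,\vartheta}(\chi)$ is also an unramified quotient of that induced representation.

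Then I would assemble these local facts into the global conclusion: since $\mathcal{L}_{\tau,c}$ lies in the discrete spectrum (Theorem~\ref{theorem:prop1}) and, by Theorem~\ref{exthspeh1}, has an $(rk,c)$ summand $\mathcal{E}_\tau$ whose components are $(rk,c)$ for $\nu\notin S_0$, the uniqueness of the $(rk,c)$ constituent of $\Theta_{rc,m,r,\vartheta}(\chi)$ (Proposition~\ref{proposition:local-semi}, condition (2) in the definition of an $(rk,c)$ representation) forces $(\mathcal{E}_\tau)_\nu$ to be the representation named in the corollary at every $\nu\notin S_0$. Enlarging $S_0$ if necessary to absorb the exceptional places from Theorem~\ref{theorem:prop1}, we obtain the ``outside finitely many places'' phrasing.

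I do not expect a serious obstacle here: the statement is a corollary precisely because every ingredient has been established. The only point requiring mild care is making sure the \emph{same} finite set $S_0$ works for all of the clauses at once---unramifiedness of $F_\nu$, the condition $\mu_{2m}\subset F_\nu^*$, unramifiedness of $\tau_\nu$, the applicability of Theorems~\ref{theorem:prop1} and \ref{exthspeh1}, and the existence of the parameters $\chi,\vartheta$ with $\tau_\nu$ the unramified constituent of $\mathrm{I}_{\GL_k^{(m,r)}}(\vartheta,\chi)$---but each of these excludes only finitely many places, so their union is still finite. The mildly delicate bookkeeping is that the parameter $\vartheta$ may vary with $\nu$ when $m\equiv 2\,(4)$, which is why the corollary is phrased with ``for some $\chi$ and $\vartheta$'' rather than fixing them globally; this is consistent with the parametrization discussion in \S\ref{unramified reps} and requires no extra argument.
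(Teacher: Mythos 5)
Your proposal is correct and follows essentially the same route as the paper: Theorem~\ref{theorem:prop1} for the unique unramified subrepresentation/quotient, the argument inside the proof of Theorem~\ref{exthspeh1} identifying it as an $(rk,c)$ constituent of $\Theta_{rc,m,r,\vartheta}(\chi)$, and Proposition~\ref{proposition:local-semi} together with exactness of the Jacquet functor to conclude that $\Theta_{rc,m,r,\vartheta}(\chi)$ has exactly one irreducible $(rk,c)$ constituent. The extra bookkeeping about a uniform finite set of places and the $\nu$-dependence of $\vartheta$ is consistent with the paper, which leaves it implicit.
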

\begin{proof}
The fact that \eqref{rep:unr 1 subrep} contains a unique irreducible unramified subrepresentation was proved in Theorem~\ref{theorem:prop1}. In
the proof of Theorem~\ref{exthspeh1} we deduced that this subrepresentation is a constituent of $\Theta_{rc,m,r,\vartheta}(\chi)$ and is $(rk,c)$. Since $\Theta_{rc,m,r,\vartheta}(\chi)$ is also $(rk,c)$ by Proposition~\ref{proposition:local-semi}, it affords precisely one irreducible $(rk,c)$ constituent, by the exactness of the Jacquet functor.
\end{proof}

\subsection{The local components of $\mathcal{E}_{\tau}$}\label{Local components of rk c speh}
We describe several properties of the local components at unramified places, of the $(rk,c)$ representation produced
by Theorem~\ref{exthspeh1}. To avoid confusion, we re-denote the
genuine unitary irreducible cuspidal automorphic representation of $\GL_{k}^{(m,r)}(\A)$ by $\tau'$,
then the $(rk,c)$ representation is $\mathcal{E}_{\tau'}$.
We let $\tau=\tau'_{\nu}$ at an unramified place $\nu$, in particular $F$ unramified, and we also assume $\mu_{2m}\subset F^*$.

Assume that $\tau$ is the unique irreducible unramified constituent of $\mathrm{I}_{\GL_k^{(m,r)}}(\vartheta,\chi)$. Then
by Proposition~\ref{proposition:vartheta of *}, $\tau^*$ defined by \eqref{eq:involution b*0}
is the unique irreducible unramified constituent of $\mathrm{I}_{\GL_k^{(m,r)}}(\vartheta,\chi^{-1})$.
Put
$\mathbf{x}=(x_1,\ldots,x_d)\in\C^d$, $x_i=\chi_i(\varpi^r)$.
By Theorem~\ref{theorem:prop1}, $(\mathcal{E}_{\tau'})_{\nu}$ is the unique irreducible unramified subrepresentation of
\begin{align}\label{nontwisted subrep rho of tau}
\Ind_{\widetilde{P}_{(k^{rc})}}^{\GL^{(m,r)}_{rkc}}((\tau\otimes \ldots \otimes \tau)\delta_{P_{(k^{rc})}}^{-1/(2rk)}),
\end{align}
and we denote $\rho_c(\tau)=(\mathcal{E}_{\tau'})_{\nu}$. The representation $\rho_c(\tau)$ is irreducible and $(rk,c)$, hence isomorphic to its $(rk,c)$ model $\mathcal{W}(\rho_c(\tau))$.

If we fix $c$, we can further assume that for all $0<l\leq c$, the representation
\begin{align*}
\Ind_{\widetilde{P}_{(k^{rl})}}^{\GL^{(m,r)}_{rkl}}((\tau\otimes \ldots \otimes \tau)\delta_{P_{(k^{rl})}}^{-1/(2rk)})
\end{align*}
contains a unique irreducible unramified subrepresentation $\rho_l(\tau)$, which is furthermore an $(rk,l)$ representation,
so that $\rho_l(\tau)\cong \mathcal{W}(\rho_l(\tau))$. This is because we can apply Corollary~\ref{corollary:local spaces are almost always good} for all $l$ in the given range.

\begin{corollary}\label{corollary:realization space for 0 < l < c}
For any $0<l<c$, the representation $\rho_c(\tau)$ is embedded in
\begin{align}\label{rep:composition lemma output space before W}
\Ind_{\widetilde{P}_{(rkl,rk(c-l))}}^{\GL_{rkc}^{(m,r)}}((
\mathcal{W}(\rho_{l}(\tau))\otimes \mathcal{W}(\rho_{c-l}(\tau)))\delta_{P_{(rkl,rk(c-l))}}^{-1/(2rk)}).
\end{align}
\end{corollary}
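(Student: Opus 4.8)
The plan is to derive Corollary~\ref{corollary:realization space for 0 < l < c} from the transitivity of parabolic induction, exactly as in the linear case, taking care with the covering data. First I would fix $0<l<c$ and recall from Theorem~\ref{theorem:prop1} that $\rho_c(\tau)=(\mathcal{E}_{\tau'})_{\nu}$ is embedded in
\begin{align*}
\Ind_{\widetilde{P}_{(k^{rc})}}^{\GL^{(m,r)}_{rkc}}((\tau\otimes \ldots \otimes \tau)\delta_{P_{(k^{rc})}}^{-1/(2rk)}).
\end{align*}
Since the parabolic $P_{(k^{rc})}$ is contained in $P_{(rkl,rk(c-l))}$, and since the direct factors of the Levi $M_{(rkl,rk(c-l))}$ commute in $\GL_{rkc}^{(m,r)}$ (see \S~\ref{covering of the Levi}, \eqref{eq:block compatibility on Levi of P}), I would apply transitivity of (normalized) induction to rewrite this induced representation as
\begin{align*}
\Ind_{\widetilde{P}_{(rkl,rk(c-l))}}^{\GL_{rkc}^{(m,r)}}\Big(\big(\Pi_l\otimes\Pi_{c-l}\big)\delta_{P_{(rkl,rk(c-l))}}^{-1/(2rk)}\Big),
\end{align*}
where $\Pi_l=\Ind_{\widetilde{P}_{(k^{rl})}}^{\GL^{(m,r)}_{rkl}}((\tau\otimes\ldots\otimes\tau)\delta_{P_{(k^{rl})}}^{-1/(2rk)})$ and similarly for $\Pi_{c-l}$. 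The point to check here is the bookkeeping of modulus characters: $\delta_{P_{(k^{rc})}}^{-1/(2rk)}$ restricted to the $\GL_{rkl}$-block equals $\delta_{P_{(k^{rl})}}^{-1/(2rk)}$ times the appropriate power of $\delta_{P_{(rkl,rk(c-l))}}$ coming from the normalization of the outer induction; this is a routine computation with the explicit formula $\delta_{B_{\GL_{d}}}(\diag(t_1,\ldots,t_d))=\prod_j|t_j|^{d-2j+1}$, identical to the linear case.

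Next I would invoke the hypotheses recalled just before the corollary: by Corollary~\ref{corollary:local spaces are almost always good}, applied at all integers $l'$ in the range $0<l'\le c$ (which is legitimate since we are excluding only finitely many places and we have already thrown $\nu$ into the set of good places), the representation $\Pi_{l}$ contains a unique irreducible unramified subrepresentation $\rho_l(\tau)$, which is an $(rk,l)$ representation and hence isomorphic to its $(rk,l)$ model $\mathcal{W}(\rho_l(\tau))$; likewise for $\Pi_{c-l}$. Since $\rho_c(\tau)$ is the unique irreducible unramified subrepresentation of the total induced space, and the unramified vector of that space is the ``product'' of the unramified vectors of $\Pi_l$ and $\Pi_{c-l}$, the embedding $\rho_c(\tau)\hookrightarrow \Ind_{\widetilde{P}_{(rkl,rk(c-l))}}^{\GL_{rkc}^{(m,r)}}((\Pi_l\otimes\Pi_{c-l})\delta^{-1/(2rk)})$ has image meeting the subspace induced from $\rho_l(\tau)\otimes\rho_{c-l}(\tau)$. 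Here I would use exactness of parabolic induction (normalized induction is exact, and this carries over to the covering setting since $\widetilde{P}_{(rkl,rk(c-l))}$ is an $l$-group and the splitting data is fixed) to conclude that $\Ind_{\widetilde{P}_{(rkl,rk(c-l))}}^{\GL_{rkc}^{(m,r)}}((\rho_l(\tau)\otimes\rho_{c-l}(\tau))\delta^{-1/(2rk)})$ is a subrepresentation of the larger induced space, and that the unique unramified subrepresentation $\rho_c(\tau)$ of the latter is automatically the unique unramified subrepresentation of the former, hence is embedded in it.

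Finally, replacing $\rho_l(\tau)$ and $\rho_{c-l}(\tau)$ by their $(rk,l)$ and $(rk,c-l)$ models (which are literal equalities of representations, not just isomorphisms, once the $(rk,\cdot)$ functionals are fixed), I would obtain the embedding
\begin{align*}
\rho_c(\tau)\hookrightarrow\Ind_{\widetilde{P}_{(rkl,rk(c-l))}}^{\GL_{rkc}^{(m,r)}}((
\mathcal{W}(\rho_{l}(\tau))\otimes \mathcal{W}(\rho_{c-l}(\tau)))\delta_{P_{(rkl,rk(c-l))}}^{-1/(2rk)}),
\end{align*}
which is the assertion. I do not expect any serious obstacle: the only subtle points are (i) confirming that the splitting of $\GL_{rkc}(F)$ (or rather the local splittings used to define the induced spaces) is consistent across the two factors, which is exactly the content of \eqref{eq:block compatibility on Levi of P} and the block-compatibility discussion in \S~\ref{covering of the Levi}, and (ii) tracking the modulus-character normalizations through the transitivity step. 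Both are mechanical. The mild care needed is to make sure we are at a place $\nu$ lying outside the finite bad set of Corollary~\ref{corollary:local spaces are almost always good} for \emph{all} the relevant ranks $l$, $c-l$, and $c$ simultaneously, which only removes finitely many further places and is already built into the standing assumptions of \S~\ref{Local components of rk c speh}.
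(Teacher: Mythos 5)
Your proposal is correct and follows essentially the same route as the paper: apply transitivity of induction to realize the full induced space as an induction in stages through $\widetilde{P}_{(rkl,rk(c-l))}$, then observe that the subspace induced from $\rho_l(\tau)\otimes\rho_{c-l}(\tau)\cong\mathcal{W}(\rho_l(\tau))\otimes\mathcal{W}(\rho_{c-l}(\tau))$ is an unramified subrepresentation of the big space and therefore contains its unique irreducible unramified subrepresentation $\rho_c(\tau)$. The extra bookkeeping you flag (modulus characters, block-compatibility of the cocycle, and being outside the bad set for all ranks $\le c$ simultaneously) is indeed already built into the standing assumptions of \S~\ref{Local components of rk c speh}, so nothing further is needed.
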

\begin{proof}
By transitivity of induction, the representation \eqref{nontwisted subrep rho of tau} is contained in
\begin{align}\label{nontwisted subrep rho of tau with l and c}
\Ind_{\widetilde{P}_{(rkl,rk(c-l))}}^{\GL_{rkc}^{(m,r)}}(
(\Ind_{\widetilde{P}_{(k^{rl})}}^{\GL^{(m,r)}_{rkl}}((\tau\otimes \ldots \otimes \tau)\delta_{P_{(k^{rl})}}^{-1/(2rk)})\otimes
\Ind_{\widetilde{P}_{(k^{r(c-l)})}}^{\GL^{(m,r)}_{rk(c-l)}}((\tau\otimes \ldots \otimes \tau)\delta_{P_{(k^{r(c-l)})}}^{-1/(2rk)}))
\delta_{P_{(rkl,rk(c-l))}}^{-1/(2rk)}).
\end{align}
Therefore $\rho_c(\tau)$ is the unique irreducible unramified subrepresentation of \eqref{nontwisted subrep rho of tau with l and c}. The representation \eqref{rep:composition lemma output space before W} is also contained in \eqref{nontwisted subrep rho of tau with l and c} and contains $\rho_c(\tau)$, because it is unramified.
\end{proof}
\begin{proposition}\label{proposition:local L function condition claim}
Conjecture~\ref{Shimura conjecture} implies
\begin{align}\label{eq:local assumption on tau}
L_{\vartheta,\vartheta^{-1}}(s,\tau\times\tau^{\vee})^{-1}\ne0,\qquad \forall \Real(s)\geq1.
\end{align}
\end{proposition}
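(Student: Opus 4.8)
The plan is to deduce the nonvanishing of $L_{\vartheta,\vartheta^{-1}}(s,\tau\times\tau^{\vee})^{-1}$ for $\Real(s)\geq 1$ from the global statement of Conjecture~\ref{Shimura conjecture} via a standard local-global argument, exactly as in the linear case. First I would recall that $\tau$ is an unramified local component $\tau'_\nu$ of the genuine unitary irreducible cuspidal automorphic representation $\tau'$ of $\GL_k^{(m,r)}(\A)$, and that away from a finite set $S$ of places one has the Euler factorization
\begin{align*}
L^S_{\vartheta,\vartheta^{-1}}(s,\tau'\times{\tau'}^{\vee})=\prod_{\mu\notin S}L_{\vartheta_\mu,\vartheta_\mu^{-1}}(s,\tau'_\mu\times{\tau'_\mu}^{\vee}).
\end{align*}
By Conjecture~\ref{Shimura conjecture} this product is holomorphic and nonzero for $\Real(s)>1$, so each local factor $L_{\vartheta_\mu,\vartheta_\mu^{-1}}(s,\tau'_\mu\times{\tau'_\mu}^{\vee})$ is holomorphic and nonzero there; in particular $L_{\vartheta,\vartheta^{-1}}(s,\tau\times\tau^{\vee})^{-1}\ne 0$ for $\Real(s)>1$. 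So the only point at issue is the boundary $\Real(s)=1$.

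For the boundary, I would use that $L_{\vartheta,\vartheta^{-1}}(s,\tau\times\tau^{\vee})^{-1}$ is a polynomial in $q^{-s}$: writing $t_{\tau,\vartheta}=\diag(x_1,\dots,x_k)$ (with $x_i=\chi_i(\varpi^r)$) and using Proposition~\ref{proposition:vartheta of *}, which gives $t_{\tau^{\vee},\vartheta^{-1}}=t_{\tau^*,\vartheta}=\diag(x_1^{-1},\dots,x_k^{-1})$, one has
\begin{align*}
L_{\vartheta,\vartheta^{-1}}(s,\tau\times\tau^{\vee})^{-1}=\prod_{i,j=1}^k\bigl(1-x_ix_j^{-1}q^{-s}\bigr).
\end{align*}
Thus a zero at $\Real(s)=1$ would force $|x_ix_j^{-1}|=q$ for some $i,j$, i.e. $|x_i/x_j|=q$. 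I would then invoke unitarity of $\tau$ together with the classification of its Satake parameters: since $\tau'$ is cuspidal, $\tau$ is a local component of a unitary cuspidal representation, and the constraints on the exponents $|x_i|$ (a Jacquet--Shalika type bound, $q^{-1/2}<|x_i|<q^{1/2}$ in the tempered-up-to-twist normalization, which here follows from Conjecture~\ref{Shimura conjecture} applied to the poles on the line $\Real(s)=1$) prevent the ratio $|x_i/x_j|$ from reaching $q$. Concretely, the simple pole of $L^S_{\vartheta,\vartheta^{-1}}(s,\tau'\times{\tau'}^\vee)$ at $s=1$ with holomorphy for $\Real(s)>1$ forces each local factor to be holomorphic on $\Real(s)\ge 1$ except possibly a simple pole at $s=1$ coming from the ``diagonal'' terms $i=j$; the off-diagonal terms contribute $(1-x_ix_j^{-1}q^{-s})^{-1}$ which must be holomorphic at $s=1$, i.e. $x_ix_j^{-1}\ne q$, and replacing $s$ by $s$ on the whole line $\Real(s)=1$ (the factor only depends on $q^{-s}$ through its modulus on that line up to roots of unity) rules out $|x_i x_j^{-1}|=q$ as well. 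This yields $\prod_{i,j}(1-x_ix_j^{-1}q^{-s})\ne 0$ for $\Real(s)=1$, completing the argument.

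The main obstacle I anticipate is making the boundary case rigorous without circularity: one must extract from the single global input (simple pole at $s=1$, holomorphic and nonzero for $\Real(s)>1$) a genuine bound on the local Satake parameters $|x_i|$ strong enough to exclude $|x_i/x_j|=q$. In the linear case this is the Jacquet--Shalika bound, and here the cleanest route is to observe that the pole structure of $L^S_{\vartheta,\vartheta^{-1}}(s,\tau'\times{\tau'}^{\vee})$ at $s=1$ already pins down the contribution of each local factor: the factor $L_{\vartheta,\vartheta^{-1}}(s,\tau\times\tau^{\vee})$ can have at worst a pole at $s=1$ from the terms with $x_ix_j^{-1}=1$ and must otherwise be holomorphic and nonzero on $\Real(s)\ge 1$, because a further pole at $\Real(s)=1$ or a zero of the reciprocal there would be incompatible with the global pole being simple together with holomorphy and nonvanishing for $\Real(s)>1$ (one can see this by a standard Landau-type positivity/absolute-convergence argument applied to the Dirichlet series of $\log L^S$). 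Once that is set up, the conclusion $L_{\vartheta,\vartheta^{-1}}(s,\tau\times\tau^{\vee})^{-1}\ne 0$ for all $\Real(s)\ge 1$ is immediate. I would keep the write-up short, citing the analogous linear arguments in \cite{JS2,JS1,JPSS} for the positivity/pole-extraction step and Proposition~\ref{proposition:vartheta of *} for the identification of $t_{\tau^\vee,\vartheta^{-1}}$.
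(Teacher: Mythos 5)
Your overall strategy (control the local factor at $\nu$ by the global statement of Conjecture~\ref{Shimura conjecture}) is the right one, but two steps do not go through as written. First, in the half-plane $\Real(s)>1$ you argue that since the product $\prod_{\mu\notin S}L_{\vartheta_\mu,\vartheta_\mu^{-1}}(s,\tau'_\mu\times{\tau'_\mu}^{\vee})$ is holomorphic and nonzero there, so is each factor. This inference is not valid in the range where the Euler product no longer converges: there $L^S_{\vartheta,\vartheta^{-1}}(s,\tau'\times{\tau'}^{\vee})$ is only a meromorphic continuation, and one cannot read off individual local factors from it. The correct device, and the one the paper uses, is to apply Conjecture~\ref{Shimura conjecture} twice, once to a finite set $S$ not containing $\nu$ and once to $S\cup\{\nu\}$; the local factor at $\nu$ is then the quotient of the two partial $L$-functions, hence holomorphic and nonzero for $\Real(s)>1$, and at $s=1$ it is the ratio of two simple poles, hence again holomorphic and nonzero there. (When $m\equiv2\,(4)$ one must also note, as the paper does, that replacing $\vartheta$ by the unitary parameter $\vartheta_{\tau'}$ appearing in the conjecture does not affect condition \eqref{eq:local assumption on tau}.)

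Second, your treatment of the boundary $\Real(s)=1$ does not close. Writing $L_{\vartheta,\vartheta^{-1}}(s,\tau\times\tau^{\vee})^{-1}=\prod_{i,j}(1-x_ix_j^{-1}q^{-s})$, a zero on that line means $x_ix_j^{-1}=qe^{i\theta}$ for some $i,j,\theta$; holomorphy of the local factor at the single point $s=1$ only excludes $\theta=0$, and your parenthetical claim that the factor ``only depends on $q^{-s}$ through its modulus on that line up to roots of unity'' does not bridge this. More importantly, the Jacquet--Shalika bound $q^{-1/2}<|x_i|<q^{1/2}$ and the ``Landau-type positivity'' argument you invoke are precisely the global analytic inputs that are unavailable in the covering setting: the only permitted input is Conjecture~\ref{Shimura conjecture} itself, and importing the linear-case proofs from \cite{JS2,JS1} would be circular, as you yourself suspect. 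The paper does not attempt any of this; it extracts exactly what the double application of the conjecture yields (holomorphy and nonvanishing of the local factor for $\Real(s)>1$ and at $s=1$), which already gives $\mathbf{x}_{\alpha}\ne q^{a}$ for all nonzero integers $a$ --- this is \eqref{eq:local assumption on tau implies on all roots}, the form of the statement actually used downstream. You should replace both of your steps by this quotient argument.
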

\begin{proof}
If $m\equiv2\,(4)$, then since $\vartheta$ is unitary, replacing it with the parameter
$(\vartheta_{\tau'})_{\nu}$ appearing in Conjecture~\ref{Shimura conjecture} does not change the condition
\eqref{eq:local assumption on tau}.
Now if \eqref{eq:local assumption on tau} does not hold at a place $\nu$, applying the conjecture twice, first with a finite set of places $S$ which does not contain $\nu$, then with $S\cup\nu$, we arrive at a contradiction.
\end{proof}
By definition $L_{\vartheta,\vartheta^{-1}}(s,\tau\times\tau^{\vee})=\prod_{1\leq i,j\leq k}(1-q^{-s}\chi_i\chi_j^{-1}(\varpi^r))^{-1}$
(so if $m\equiv2\,(4)$, replacing $(\vartheta,\vartheta^{-1})$ by some $(\vartheta',\vartheta'')$ means multiplying $\chi_i\chi_j^{-1}(\varpi^r)$ by $(o,\varpi^r)_2$ for some $o$, $|o|=1$).
Thus by \eqref{eq:local assumption on tau},
\begin{align}\label{eq:local assumption on tau implies on all roots}
\mathbf{x}_{\alpha}=\chi_i(\varpi^r)\chi_j^{-1}(\varpi^r)\ne q^a,\qquad\forall \alpha=(i,j)\in \Phi_k, 0\ne a\in\Z.
\end{align}
In particular \eqref{assumption:local C product for nonvanishing of Whittaker} holds. In fact \eqref{eq:local assumption on tau implies on all roots} is more natural than \eqref{assumption:local C product for nonvanishing of Whittaker} because the latter only depends on $\mathbf{x}_{\alpha}$ for positive roots $\alpha$, while the Satake parameter is defined up to a permutation.
\begin{proposition}\label{proposition:rep induced from chi is irreducible}
If $\mathbf{x}_{\alpha}\ne 0,q$ for all $\alpha\in \Phi_k$, $\mathrm{I}_{\GL_k^{(m,r)}}(\vartheta,\chi)$ is irreducible.
\end{proposition}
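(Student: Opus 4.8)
The plan is to reduce the irreducibility of $\mathrm{I}_{\GL_k^{(m,r)}}(\vartheta,\chi)$ to the irreducibility of the corresponding unramified principal series on a linear group, via the standard criterion for genuine principal series of covering groups. The key point is that, by \eqref{eq:M beta as a quotient} and \eqref{eq:Nice GL $2$-cocycle on torus}, the preimage $\widetilde{T}_{\GL_k}$ is built by tensoring $k$ copies of $\GL_1^{(m,r)}$, and the genuine unramified character $\varepsilon\otimes\vartheta\chi$ of $\widetilde{T}_{\GL_k,*}$ induces a genuine irreducible representation of $\widetilde{T}_{\GL_k}$ whose isomorphism class is governed only by the Satake data $\mathbf{x}=(x_1,\ldots,x_k)$ with $x_i=\chi_i(\varpi^r)$. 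Thus the first step is to recall that, just as in the linear case (e.g. \cite{CS1}) and its covering extension used throughout \S~\ref{Casselman--Shalika formula}, the reducibility of $\mathrm{I}_{\GL_k^{(m,r)}}(\vartheta,\chi)$ is detected by the rank-one intertwining operators $M(w_\alpha)$ along simple reflections; more precisely, by the Gindikin--Karpelevich formula \eqref{eq:GK formula for GL}, the composition $M(w^{-1})M(w)$ acting on the normalized unramified vector is the scalar
\begin{align*}
\prod_{\{\alpha\in\Phi_k^+:\,w\alpha\notin\Phi_k^+\}}\frac{(1-q^{-1}\mathbf{x}_\alpha)(1-q^{-1}\mathbf{x}_\alpha^{-1})}{(1-\mathbf{x}_\alpha)(1-\mathbf{x}_\alpha^{-1})},
\end{align*}
which is nonzero and finite precisely when $\mathbf{x}_\alpha\ne 1,q,q^{-1}$ for all $\alpha\in\Phi_k^+$.

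The second step is to turn this into a genuine irreducibility statement. Since $\chi$ may be assumed regular (as in \S~\ref{Casselman--Shalika formula}, after possibly moving to a regular point and using analytic continuation of the Jacquet integrals — or since the hypothesis $\mathbf{x}_\alpha\ne 0,q$ together with $\mathbf{x}_\alpha\ne q^{-1}$, which also follows by symmetry of $\Phi_k$, already rules out all coincidences among the $\chi_i|\cdot|^{\,\bullet}$), all the operators $M(w)$ for $w\in W_{\GL_k}$ are holomorphic and, by the displayed scalar computation, the longest-element operator $M(w_0)$ is an isomorphism onto $\mathrm{I}_{\GL_k^{(m,r)}}(\vartheta,{}^{w_0}\chi)$. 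A submodule of $\mathrm{I}_{\GL_k^{(m,r)}}(\vartheta,\chi)$ produces, via its Jacquet module along $N_{\GL_k}$ (which is $\widetilde{T}_{\GL_k}$-semisimple because $\chi$ is regular and the $W_{\GL_k}$-conjugates of $\varepsilon\otimes\vartheta\chi$ are pairwise inequivalent), a nonzero $W_{\GL_k}$-stable subset of these conjugates; combined with the fact that every $M(w)$ is an isomorphism, this forces any nonzero submodule to be everything, hence irreducibility. Concretely I would invoke the covering analogue of the standard argument (cf. \cite[\S~13.7]{McNamara}), exactly parallel to the linear case in \cite{CS1}: holomorphy and invertibility of all intertwining operators $M(w)$ for regular inducing data implies that the genuine unramified principal series is irreducible.

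The third step is the arithmetic bookkeeping: the hypothesis $\mathbf{x}_\alpha\ne 0,q$ for all $\alpha\in\Phi_k$ already includes $\mathbf{x}_\alpha\ne q^{-1}$ (take $-\alpha$), and $\mathbf{x}_\alpha\ne 0$ guarantees each $x_i\ne 0$ so that the rank-one factors are well defined; the remaining bad value $\mathbf{x}_\alpha=1$ cannot arise because it would mean $\chi_i(\varpi^r)=\chi_j(\varpi^r)$, i.e. $\mathbf{x}_{(i,j)}=q^0=1=q^a$ with $a=0$, which is excluded once we also know — as we do in the intended application via \eqref{eq:local assumption on tau implies on all roots} — that $\mathbf{x}_\alpha\ne q^a$ for all $a\in\Z$; but for the bare statement of the proposition it suffices that the hypothesis $\mathbf{x}_\alpha\ne 0,q$ together with the symmetry $\alpha\mapsto-\alpha$ rules out the poles and zeros of the relevant scalar, the degenerate locus $\mathbf{x}_\alpha=1$ being automatically avoided since $\chi$ is then regular. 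I would state this cleanly and refer back to \eqref{eq:GK formula for GL} and Proposition~\ref{proposition:action of W on torus is trivial on Sp} for the action of $W_{\GL_k}$ on $\varepsilon\otimes\vartheta\chi$.

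The main obstacle is making the reducibility criterion rigorous in the covering setting: one must be sure that the genuine character $\varepsilon\otimes\vartheta\chi$ of the maximal abelian $\widetilde{T}_{\GL_k,*}$, after inducing to the non-abelian $\widetilde{T}_{\GL_k}$, still has the property that its $W_{\GL_k}$-orbit consists of pairwise inequivalent irreducible representations under our hypothesis, and that the Jacquet module of the principal series is the direct sum of these (with multiplicity one) — i.e. the Geometric Lemma computation of \cite[Theorem~5.2]{BZ2} applies verbatim, which it does because $\GL_k^{(m,r)}$ is split canonically over $N_{\GL_k}$ and the tensor structure \eqref{eq:M beta as a quotient} is the usual one. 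Granting this, the argument is identical to the linear case and the proposition follows; the proof in the paper will likely just cite \cite{CS1} (or \cite[\S~13.7]{McNamara}) and \eqref{eq:GK formula for GL}, observing that the hypothesis on $\mathbf{x}_\alpha$ is exactly the nonvanishing/holomorphy condition for the intertwining operators.
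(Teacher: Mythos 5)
Your proposal is correct and follows essentially the same route as the paper: the proof there simply invokes the covering analogue of the Casselman--Shalika irreducibility criterion (irreducibility iff $M(J_k)$ is an isomorphism, citing \cite[Proposition~3.5]{CS1} and \cite[Proposition~2.4]{KM}) and then observes via \eqref{eq:GK formula for GL} that $\prod_{\alpha\in\Phi_k^+}\frac{1-q^{-1}\mathbf{x}_{\alpha}}{1-\mathbf{x}_{\alpha}}\cdot\frac{1-q^{-1}\mathbf{x}_{-\alpha}}{1-\mathbf{x}_{-\alpha}}\ne0$ under the hypothesis, exactly your computation. The only point where you hedge --- whether $\mathbf{x}_{\alpha}=1$ is excluded --- is resolved by reading the hypothesis ``$\mathbf{x}_{\alpha}\ne 0,q$'' as $\mathbf{x}_{\alpha}\ne q^0,q^1$ (the paper's own parenthetical states that this condition implies $\chi$ is regular), so regularity is built into the assumption and your circular-sounding final remark is unnecessary.
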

\begin{proof}
The result follows from the analog of \cite[Proposition~3.5]{CS1} for covering groups ($\mathbf{x}_{\alpha}\ne 0$ for all $\alpha\in \Phi_k$ implies $\chi$ is regular); see the proof in \cite[Proposition~2.4]{KM}, which is applicable to $\GL_{k}^{(m,r)}$ as well. In more detail, $\mathrm{I}_{\GL_k^{(m,r)}}(\vartheta,\chi)$ is irreducible if and only if $M(J_k)$ is an isomorphism which by virtue of \eqref{eq:GK formula for GL} is equivalent to
\begin{align*}
\prod_{\alpha\in\Phi_d^+}\frac{1-q^{-1}\mathbf{x}_{\alpha}}{1-\mathbf{x}_{\alpha}}
\frac{1-q^{-1}\mathbf{x}_{-\alpha}}{1-\mathbf{x}_{-\alpha}}\ne0.
\end{align*}
This condition holds by our assumption.
\end{proof}
\begin{corollary}\label{corollary:functional on theta implies functional on rho c tau}
Any $(rk,c)$ functional on $\Theta_{rc,m,r,\vartheta}(\chi)$ is well defined and nonzero on $\rho_c(\tau)$.
Moreover, $\mathcal{W}(\rho_c(\tau))\subset \mathcal{W}(\Theta_{rc,m,r,\vartheta}(\chi))$.
\end{corollary}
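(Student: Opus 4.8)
The plan is to exploit that both $\Theta_{rc,m,r,\vartheta}(\chi)$ and $\rho_c(\tau)$ sit inside the same genuine unramified principal series \eqref{eq:Thete rc m as a subrep} (with $c$ in place of $1$), so that the inclusion of $(rk,c)$ models will follow from tracking which subrepresentation the $(rk,c)$ functional detects. First I would recall, as established in the proof of Theorem~\ref{exthspeh1} and in Corollary~\ref{corollary:local spaces are almost always good}, that $\rho_c(\tau)$ is the unique irreducible unramified subrepresentation of \eqref{nontwisted subrep rho of tau}, and that after transitivity of induction and permutation of the inducing characters of $\tau$ this is the unique irreducible unramified constituent of the principal series \eqref{eq:Thete rc m as a subrep}. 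The same principal series contains $\Theta_{rc,m,r,\vartheta}(\chi)$ as an unramified subrepresentation (\S~\ref{local theta speh}), which is $(rk,c)$ by Proposition~\ref{proposition:local-semi}; hence by the exactness of the Jacquet functor $\Theta_{rc,m,r,\vartheta}(\chi)$ has a unique irreducible $(rk,c)$ constituent, and by uniqueness of the unramified constituent this must be $\rho_c(\tau)$. So $\rho_c(\tau)$ is a subquotient of $\Theta_{rc,m,r,\vartheta}(\chi)$ that carries the $(rk,c)$ model.

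Next I would upgrade ``subquotient'' to a statement about functionals. Since $\dim\mathcal{O}(\Theta_{rc,m,r,\vartheta}(\chi),((rk)^c),\psi)=1$ (Proposition~\ref{proposition:local-semi}) and the field is non-archimedean, $\dim J_{V_{(c^{rk})},\psi}(\Theta_{rc,m,r,\vartheta}(\chi))=1$; by exactness of the twisted Jacquet functor, the one-dimensional space is accounted for by a single irreducible constituent, namely the unique $(rk,c)$ constituent $\rho_c(\tau)$. Concretely: if $0\to A\to\Theta_{rc,m,r,\vartheta}(\chi)\to B\to 0$ is any filtration with $\rho_c(\tau)$ a subquotient, exactness forces $J_{V_{(c^{rk})},\psi}$ of exactly one of $A$, the relevant piece, to be one-dimensional and the others zero. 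Thus any $(rk,c)$ functional $\Lambda$ on $\Theta_{rc,m,r,\vartheta}(\chi)$ factors through the projection to $\rho_c(\tau)$, i.e.\ restricts nonzero to the copy of $\rho_c(\tau)$; in particular it is well defined and nonzero on $\rho_c(\tau)$, which we must phrase carefully since $\rho_c(\tau)$ is realized as a subrepresentation and $\Lambda$ is a functional on the ambient principal series restricting to $\Theta_{rc,m,r,\vartheta}(\chi)$. Here I would invoke Proposition~\ref{proposition:rep induced from chi is irreducible} and \eqref{eq:local assumption on tau implies on all roots} to note $\tau$ itself is irreducible principal series, so the ambient structure is genuinely controlled by root data and the relevant constituents are distinct, ruling out the degenerate possibility that $\Lambda$ kills the subrepresentation $\rho_c(\tau)$.

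For the final inclusion $\mathcal{W}(\rho_c(\tau))\subset\mathcal{W}(\Theta_{rc,m,r,\vartheta}(\chi))$: by the previous paragraph, fixing the $(rk,c)$ functional $\Lambda$ on $\Theta_{rc,m,r,\vartheta}(\chi)$ and restricting it to the subspace $\rho_c(\tau)$ gives an $(rk,c)$ functional on $\rho_c(\tau)$, and by one-dimensionality of $\mathcal{O}(\rho_c(\tau),((rk)^c),\psi)$ (which holds since $\rho_c(\tau)$ is an $(rk,c)$ representation) every vector of $\mathcal{W}(\rho_c(\tau))$ is of the form $g\mapsto\Lambda(\rho_c(\tau)(g)\xi)$ with $\xi$ in the space of $\rho_c(\tau)\subset\Theta_{rc,m,r,\vartheta}(\chi)$; such a function is literally a vector in $\mathcal{W}(\Theta_{rc,m,r,\vartheta}(\chi))$. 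This gives the claimed containment of models.

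The main obstacle I anticipate is not the exactness bookkeeping but verifying cleanly that the $(rk,c)$ functional on the big principal series does not annihilate the subrepresentation $\rho_c(\tau)$ — a priori a Whittaker-type functional on a reducible induced representation can vanish on a chosen submodule (this subtlety is exactly the one flagged in \S~\ref{local theta speh for c=1} for $c=1$). The resolution is the dimension-one argument above: there is only one constituent on which the twisted Jacquet module is nonzero, so the functional is ``concentrated'' there, and since $\rho_c(\tau)$ is that constituent, the functional must see it. I would make sure the uniqueness input $\dim\mathcal{O}(\Theta_{rc,m,r,\vartheta}(\chi),((rk)^c),\psi)=1$ from Proposition~\ref{proposition:local-semi} and the identification of $\rho_c(\tau)$ as the unique $(rk,c)$ constituent from Corollary~\ref{corollary:local spaces are almost always good} are cited precisely, since together they close the gap.
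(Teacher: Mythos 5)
Your core argument is the paper's argument: the twisted Jacquet module $J_{V_{(c^{rk})},\psi}(\Theta_{rc,m,r,\vartheta}(\chi))$ is one-dimensional (Proposition~\ref{proposition:local-semi}), so by exactness it is concentrated on the unique $(rk,c)$ constituent, which Corollary~\ref{corollary:local spaces are almost always good} identifies as $\rho_c(\tau)$; hence $\Lambda$ kills everything below that constituent and cannot kill everything up to and including it. This is exactly how the paper proceeds, writing $\rho_c(\tau)\cong V'\backslash V$ for $0\subset V'\subset V\subset\Theta_{rc,m,r,\vartheta}(\chi)$, deducing $\Lambda(V')=0$ (which is precisely what ``well defined on $\rho_c(\tau)$'' means) and $\Lambda|_V\neq 0$.

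One point to fix: your appeal to Proposition~\ref{proposition:rep induced from chi is irreducible} and \eqref{eq:local assumption on tau implies on all roots} to ``rule out the degenerate possibility that $\Lambda$ kills the subrepresentation $\rho_c(\tau)$'' is both unjustified and unnecessary. That proposition only gives irreducibility of $\mathrm{I}_{\GL_k^{(m,r)}}(\vartheta,\chi)$; it does not make $\rho_c(\tau)$ a subrepresentation of $\Theta_{rc,m,r,\vartheta}(\chi)$ — that requires the stronger regularity hypothesis $\mathbf{x}_{\alpha}\neq q^a$ for $0\leq a\leq rc$ of Proposition~\ref{proposition:local component is subrep for rk c} (in particular $\mathbf{x}_\alpha\neq 1$, which \eqref{eq:local assumption on tau} does not provide). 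In this corollary $\rho_c(\tau)$ must be treated as a subquotient, and your own dimension-one/exactness argument already handles that case; the final inclusion of models then reads $g\mapsto\Lambda(g\cdot\xi)$ for $\xi\in V$ lifting $\bar\xi\in V'\backslash V$, which lands in $\mathcal{W}(\Theta_{rc,m,r,\vartheta}(\chi))$ as you say.
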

\begin{proof}
By Corollary~\ref{corollary:local spaces are almost always good}, there are $0\subset V'\subset V\subset \Theta_{rc,m,r,\vartheta}(\chi)$ such that
$\rho_c(\tau)\cong V'\backslash V$. Let $\Lambda$ be an $(rk,c)$ functional on $\Theta_{rc,m,r,\vartheta}(\chi)$. Since
$\Theta_{rc,m,r,\vartheta}(\chi)$ is $(rk,c)$ and the Jacquet functor is exact, $\Lambda$ vanishes on $V'$ hence is well defined on $\rho_c(\tau)$.
If it vanishes on $V$, then again by exactness it must already vanish on $\Theta_{rc,m,r,\vartheta}(\chi)$. The second assertion follows because
with the present notation we can identify $\mathcal{W}(\Theta_{rc,m,r,\vartheta}(\chi))$ with $\mathcal{W}(V'\backslash \Theta_{rc,m,r,\vartheta}(\chi))$.
\end{proof}
This corollary does not imply $\mathcal{W}(\rho_c(\tau))=\mathcal{W}(\Theta_{rc,m,r,\vartheta}(\chi))$ (the latter might be reducible).
\begin{proposition}\label{proposition:local component is subrep for rk c}
Assume $\mathbf{x}_{\alpha}\ne q^a$ for all $\alpha\in \Phi_k^+$ and $0\leq a\leq rc$.
Then $\rho_c(\tau)\subset\Theta_{rc,m,r,\vartheta}(\chi)$.
\end{proposition}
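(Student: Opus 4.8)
The goal is to upgrade the statement of Corollary~\ref{corollary:functional on theta implies functional on rho c tau} — that $\rho_c(\tau)$ appears as a subquotient $V'\backslash V$ of $\Theta_{rc,m,r,\vartheta}(\chi)$ — to the assertion that $\rho_c(\tau)$ is actually a \emph{sub}representation, i.e.\ that $V'=0$. The plan is to prove by induction on $c$, using Corollary~\ref{corollary:realization space for 0 < l < c} to reduce the case $c$ to the cases $l$ and $c-l$, together with the fact that $\Theta_{rc,m,r,\vartheta}(\chi)$ is itself built by parabolic induction (and is a subrepresentation of the principal series \eqref{eq:Thete rc m as a subrep}).

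First I would dispose of the base case $c=1$. Here $\rho_1(\tau)$ is by definition (see the discussion in \S~\ref{Local components of rk c speh}) the unique irreducible unramified subrepresentation of $\Ind_{\widetilde{P}_{(k^{r})}}^{\GL^{(m,r)}_{rk}}((\tau\otimes\ldots\otimes\tau)\delta_{P_{(k^{r})}}^{-1/(2rk)})$. Under the hypothesis $\mathbf{x}_\alpha\ne q^a$ for $0\le a\le r$, the inducing data is in sufficiently general position that one can argue — exactly as in the irreducibility criterion of Proposition~\ref{proposition:rep induced from chi is irreducible}, now applied blockwise and across blocks — that the relevant intertwining operator identifying $\rho_1(\tau)$ with a subrepresentation of $\Theta_{r,m,r,\vartheta}(\chi)$ (rather than merely a subquotient) is holomorphic and injective at the relevant point. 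Concretely, $\Theta_{r,m,r,\vartheta}(\chi)$ is the unramified subrepresentation of \eqref{eq:Thete rc m as a subrep}, and the composite of the standard intertwining operator from $\Ind((\tau\otimes\ldots\otimes\tau)\delta^{-1/(2rk)})$ into the principal series \eqref{eq:Thete rc m as a subrep} with the inclusion of $\Theta$ shows $\rho_1(\tau)$ sits inside $\Theta_{r,m,r,\vartheta}(\chi)$, using that both are the unique unramified subrepresentation of the same ambient principal series (Theorem~\ref{theorem:prop1} and \S~\ref{local theta speh}) and that uniqueness of the unramified line forces the two to coincide once we know $\rho_1(\tau)$ embeds. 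The key point making $V'=0$ is that $\mathbf{x}_\alpha\ne q^a$ in the stated range rules out the poles of the Gindikin--Karpelevich factors \eqref{eq:GK formula for GL} that would otherwise obstruct holomorphy of the embedding.

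For the inductive step, fix $0<l<c$ and apply Corollary~\ref{corollary:realization space for 0 < l < c}: $\rho_c(\tau)$ embeds in $\Ind_{\widetilde{P}_{(rkl,rk(c-l))}}^{\GL_{rkc}^{(m,r)}}((\mathcal{W}(\rho_l(\tau))\otimes\mathcal{W}(\rho_{c-l}(\tau)))\delta^{-1/(2rk)})$. By the inductive hypothesis $\rho_l(\tau)\subset\Theta_{rl,m,r,\vartheta}(\chi)$ and $\rho_{c-l}(\tau)\subset\Theta_{r(c-l),m,r,\vartheta}(\chi)$, and by Corollary~\ref{corollary:functional on theta implies functional on rho c tau} these inclusions identify $\mathcal{W}(\rho_l(\tau))\subset\mathcal{W}(\Theta_{rl,m,r,\vartheta}(\chi))$ and similarly for $c-l$. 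Substituting and using transitivity of induction together with the definition \eqref{gen of Suzuki new} of $\Theta_{rc,m,r,\vartheta}(\chi)$ — which is $\Ind_{\widetilde{P}_{((rc)^k)}}(\chi_1\Theta_{rc,m,r,\vartheta}\otimes\ldots)$, but whose restriction-of-induction structure lets it be written as $\Ind_{\widetilde{P}_{(rkl,rk(c-l))}}(\Theta_{rl,m,r,\vartheta}(\chi)\otimes\Theta_{r(c-l),m,r,\vartheta}(\chi)\cdot\delta^{\pm})$ up to the standard twist — one gets an embedding of $\rho_c(\tau)$ into a representation induced from $\Theta_{rl,m,r,\vartheta}(\chi)\otimes\Theta_{r(c-l),m,r,\vartheta}(\chi)$. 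Then, exactly as in the base case, the hypothesis $\mathbf{x}_\alpha\ne q^a$ for $0\le a\le rc$ guarantees that the intertwining operator realizing $\Theta_{rc,m,r,\vartheta}(\chi)$ as a subrepresentation of its own ambient principal series \eqref{eq:Thete rc m as a subrep} is holomorphic and nonzero on the unramified line; combining with uniqueness of the unramified subrepresentation of \eqref{eq:Thete rc m as a subrep} (Theorem~\ref{theorem:prop1}), the embedded copy of $\rho_c(\tau)$ must land inside $\Theta_{rc,m,r,\vartheta}(\chi)$, i.e.\ $\rho_c(\tau)\subset\Theta_{rc,m,r,\vartheta}(\chi)$.

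\textbf{Main obstacle.} The delicate point is bookkeeping the $\delta_P$-twists and the permutation of inducing characters needed to go from the ``blockwise $\Theta$ of $\tau$'' picture to the ``$\Theta_{rc}(\chi)$ as induced from $\Theta_{rl}(\chi)\otimes\Theta_{r(c-l)}(\chi)$'' picture, and verifying that the single standard intertwining operator knitting these together has no pole at the relevant unramified point — this is precisely where the strengthened hypothesis $\mathbf{x}_\alpha\ne q^a$ for all $0\le a\le rc$ (rather than just $0\le a\le rl$) is consumed, via the Gindikin--Karpelevich factors \eqref{eq:GK formula for GL}. I expect the rest (uniqueness of the unramified line, exactness of Jacquet functors, transitivity of induction) to be routine given the results already established in \S~\ref{speh gbl} and \S~\ref{Local components of rk c speh}.
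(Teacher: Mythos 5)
There is a genuine gap at the crux of your inductive step. You assert that the ``restriction-of-induction structure'' lets $\Theta_{rc,m,r,\vartheta}(\chi)$ be written as $\Ind_{\widetilde{P}_{(rkl,rk(c-l))}}(\Theta_{rl,m,r,\vartheta}(\chi)\otimes\Theta_{r(c-l),m,r,\vartheta}(\chi)\cdot\delta^{\pm})$ and treat this as routine transitivity of induction. It is not: by \eqref{gen of Suzuki new} the inducing blocks of $\Theta_{rc,m,r,\vartheta}(\chi)$ are $\chi_1\Theta_{rc,m,r,\vartheta},\ldots,\chi_k\Theta_{rc,m,r,\vartheta}$, each of size $rc$, and these do not refine the partition $(rkl,rk(c-l))$. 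To compare the two one must first embed each $\chi_i\Theta_{rc,m,r,\vartheta}$ into $\Ind(\chi_i\Theta_{rl,m,r,\vartheta}\otimes\chi_i\Theta_{r(c-l),m,r,\vartheta})$ (fine, via the Jacquet module of the exceptional representation and its irreducibility) and then permute $2k$ blocks carrying \emph{different} characters $\chi_i,\chi_j$ past one another. That permutation is an intertwining operator, and it is an embedding only when the relevant Gindikin--Karpelevich factors \eqref{eq:GK formula for GL} neither vanish nor blow up. The paper carries out exactly this reorganization in the discussion \emph{after} the proposition, but only under the stronger assumption that $\mathbf{x}$ is in general position, precisely because the one-sided hypothesis $\mathbf{x}_\alpha\ne q^a$ for $\alpha\in\Phi_k^+$, $0\le a\le rc$, does not control the factors attached to negative roots that arise in this reordering. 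So your inductive step either fails or consumes more than the stated hypothesis. (A smaller point: your base case conflates ``coincide'' with ``is contained in''; $\Theta_{r,m,r,\vartheta}(\chi)$ need not be irreducible, so uniqueness of the unramified line yields containment, not equality.)

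For comparison, the paper's proof is direct and non-inductive, and is tailored to the asymmetric hypothesis. Under the assumption, Proposition~\ref{proposition:rep induced from chi is irreducible} makes $\mathrm{I}_{\GL_k^{(m,r)}}(\vartheta,\chi)$ irreducible, so $\tau$ is a \emph{sub}representation of it, and hence \eqref{nontwisted subrep rho of tau} --- and with it $\rho_c(\tau)$ --- embeds into the full unramified principal series obtained by replacing each $\tau$ by $\mathrm{I}_{\GL_k^{(m,r)}}(\vartheta,\chi)$. A single standard intertwining operator $M(w)$ maps that principal series to \eqref{eq:Thete rc m as a subrep}; it factors into rank-one operators attached to pairs of characters indexed by $i<j$ and $1\le l'<l\le rc$, whose Gindikin--Karpelevich factors are excluded from $0$ and $\infty$ exactly by $\mathbf{x}_\alpha\ne q^a$ for $\alpha\in\Phi_k^+$ and $0\le a\le rc$. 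Hence $M(w)$ is holomorphic and nonzero on the spherical vector, so injective on the irreducible $\rho_c(\tau)$, and its image lies in the subrepresentation of \eqref{eq:Thete rc m as a subrep} generated by the spherical vector, which is contained in the unramified subrepresentation $\Theta_{rc,m,r,\vartheta}(\chi)$. This one-shot argument is where the positive-roots-only form of the hypothesis is genuinely used, and it avoids the block-permutation step on which your induction founders.
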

\begin{proof}
By Proposition~\ref{proposition:rep induced from chi is irreducible}, $\mathrm{I}_{\GL_k^{(m,r)}}(\vartheta,\chi)$ is irreducible, in particular
$\tau\subset\mathrm{I}_{\GL_k^{(m,r)}}(\vartheta,\chi)$ so that \eqref{nontwisted subrep rho of tau} and thereby $\rho_c(\tau)$ are subrepresentations of
\begin{align}\label{unramified subrep with tau}
&\Ind_{\widetilde{P}_{(k^{rc})}}^{\GL^{(m,r)}_{rkc}}((\mathrm{I}_{\GL_k^{(m,r)}}(\vartheta,\chi)\otimes \ldots \otimes \mathrm{I}_{\GL_k^{(m,r)}}(\vartheta,\chi))\delta_{P_{(k^{rc})}}^{-1/(2rk)})
\\&=\notag
\Ind_{\widetilde{B}_{\GL_{rkc}}}^{\GL_{rkc}^{(m,r)}}\left(
\Ind_{\widetilde{T}_{\GL_{rkc},*}}^{\widetilde{T}_{\GL_{rkc}}}(
(\otimes_{i=1}^{k}\vartheta\chi_i|~|^{-(rc-1)/(2r)})\otimes \ldots \otimes
(\otimes_{i=1}^{k}\vartheta\chi_i|~|^{(rc-1)/(2r)}))\right).
\end{align}
This is an unramified principal series representation. Consider the standard intertwining operator $M(w)$ on this representation whose image is contained in \eqref{eq:Thete rc m as a subrep}.
We claim that $M(w)$ is well defined, and moreover, it is nonzero on
the normalized unramified vector $\xi^0$ of \eqref{unramified subrep with tau}.
Since $\xi^0$ also belongs to the space of $\rho_c(\tau)$, it will then follow that $M(w)$ restricts to a nonzero operator on
$\rho_c(\tau)$, which must then be an embedding because $\rho_c(\tau)$ is irreducible. Because
$\Theta_{rc,m,r,\vartheta}(\chi)$ is also an unramified subrepresentation of \eqref{eq:Thete rc m as a subrep}, it contains $\rho_c(\tau)$.

To show $M(w)$ is well defined and $M(w)\xi^0\ne0$ we argue as in the proof of
Theorem~\ref{theorem:CS formula for rk, c}. Decompose $M(w)$ into rank-$1$ intertwining operators
\begin{align*}
M(w_{\alpha}):&\mathrm{I}_{\GL_2^{(m,r)}}(\vartheta,
\chi_j|~|^{-(rc-2l'+1)/(2r)}\otimes\chi_i|~|^{-(rc-2l+1)/(2r)})
\\&\rightarrow
\mathrm{I}_{\GL_2^{(m,r)}}(\vartheta,
\chi_i|~|^{-(rc-2l+1)/(2r)}\otimes\chi_j|~|^{-(rc-2l'+1)/(2r)}),
\end{align*}
where $i<j$ and $1\leq l'<l\leq rc$. Since $q^{-l'+l}\mathbf{x}_{(j,i)}\ne1,q$ we deduce that
$M(w_{\alpha})$ is holomorphic and nonzero on the unramified vector (see \eqref{eq:GK formula for GL}).
\end{proof}
We mention that in the linear case, since $\tau$ is irreducible unramified and generic, it is a full induced unramified principal series; then since $\tau$ is also unitary, we have $q^{-1/2}<|\chi_i|<q^{1/2}$ for all $i$ by \cite[Corollary~2.5]{JS1};
consequently, the linear analog of $\Theta_{rc,m,r,\vartheta}(\chi)$ which is
$\Ind_{P_{(c^k)}}^{\GL_{kc}}(\chi_1\circ\det\otimes \ldots \otimes \chi_k\circ\det)$, is irreducible (\cite[Theorem~4.2]{Z3}), then this is $\rho_c(\tau)$. See \cite[\S~2.2]{CFGK2}.
In fact for any $m\geq1$, if we further take $\mathbf{x}$ in general position, we can replace $\rho_l(\tau)$ by $\Theta_{rl,m,r,\vartheta}(\chi)$ and the results of this section hold unconditionally (note that in this
case it is reasonable to expect $\Theta_{rl,m,r,\vartheta}(\chi)$ to be irreducible anyway). Indeed the only nontrivial result is now
Corollary~\ref{corollary:realization space for 0 < l < c}. To this end, because for the normalized Jacquet functor
\begin{align*}
J_{V_{(rl,r(c-l))}}(\Theta_{rc,m,r,\vartheta})=\delta_{P_{(rl,r(c-l))}}^{-1/(2r)}(\Theta_{rl,m,r,\vartheta}\otimes\Theta_{r(c-l),m,r,\vartheta})
\end{align*}
(see \cite[Theorem~5.1]{Kable}, the proof here is simpler), and since $\Theta_{rc,m,r,\vartheta}$ is irreducible,
\begin{align*}
\Theta_{rc,m,r,\vartheta}\subset
\Ind_{\widetilde{P}_{(rl,r(c-l))}}^{\GL_{rc}^{(m,r)}}((\Theta_{rl,m,r,\vartheta}\otimes\Theta_{r(c-l),m,r,\vartheta})\delta_{P_{(rl,r(c-l))}}^{-1/(2r)}).
\end{align*}
Denote the r.h.s.~ by $\Theta[rl,r(c-l)]$, then
\begin{align}\label{Theta_rc,m,r,vartheta rhs}
\Theta_{rc,m,r,\vartheta}(\chi)\subset
\Ind_{\widetilde{P}_{((rc)^k)}}^{\GL_{rkc}^{(m,r)}}(\chi_1\Theta[rl,r(c-l)]\otimes\ldots\otimes\chi_k\Theta[rl,r(c-l)]).
\end{align}
When we regard each $\Theta[rl,r(c-l)]$ as a subrepresentation of an unramified principal series representation, we can apply an intertwining operator $M(w)$ to permute the inducing data of the r.h.s.~ of \eqref{Theta_rc,m,r,vartheta rhs}, only involving blocks corresponding to $\chi_i$ and $\chi_j$ for $i\ne j$. This operator will be an isomorphism (see the proofs of Propositions~\ref{proposition:rep induced from chi is irreducible} and \ref{proposition:local component is subrep for rk c}), so that
\begin{align*}
\Theta_{rc,m,r,\vartheta}(\chi)\subset
\Ind_{\widetilde{P}_{(rkl,rk(c-l))}}^{\GL_{rkc}^{(m,r)}}((
\Theta_{rl,m,r,\vartheta}(\chi)\otimes \Theta_{r(c-l),m,r,\vartheta}(\chi))\delta_{P_{(rkl,rk(c-l))}}^{-1/(2rk)}).
\end{align*}

\subsection{Local decomposition of $(rk,c)$ functionals}\label{decomposition of functionals}
We proceed with the notation and assumptions of \S~\ref{Local components of rk c speh}.
In particular \eqref{eq:local assumption on tau} holds. In this section we construct an $(rk,c)$ functional on
$\rho_c(\tau)$ inductively. We use the $2$-cocycle $\sigma^{\diamondsuit}_{rkc}$.
We will repeatedly use the facts that $b\mapsto\langle b,1\rangle$ is a splitting for both $N_{\GL_{rkc}}$ and the subgroup of permutation matrices in $\GL_{rkc}$, and also recall that $\langle y,\eta_{rkc}^{\diamondsuit}(y)\rangle$ is the chosen splitting
for $K_{\GL_{rkc}}$ (see \S~\ref{covering of the Levi}).

Since \eqref{assumption:local C product for nonvanishing of Whittaker} holds,
Theorem~\ref{theorem:CS formula for rk, c} implies that the Jacquet integral $\Lambda$ (see \S~\ref{whittaker functionals}) is an $(rk,1)$ functional on $\Theta_{r,m,r,\vartheta}(\chi)$, thereby on $\rho_1(\tau)$ by Corollary~\ref{corollary:functional on theta implies functional on rho c tau}.

Fix $0<l<c$.
By Corollary~\ref{corollary:realization space for 0 < l < c}, $\rho_c(\tau)$ is a subrepresentation
of \eqref{rep:composition lemma output space before W}:
\begin{align*}
\rho_c(\tau)\subset \Ind_{\widetilde{P}_{(rkl,rk(c-l))}}^{\GL_{rkc}^{(m,r)}}((
\mathcal{W}(\rho_{l}(\tau))\otimes \mathcal{W}(\rho_{c-l}(\tau)))\delta_{P_{(rkl,rk(c-l))}}^{-1/(2rk)}).
\end{align*}

We construct an $(rk,c)$ functional on \eqref{rep:composition lemma output space before W}, and prove it does not vanish
on $\rho_c(\tau)$, hence can be used to realize $\mathcal{W}(\rho_c(\tau))$. Set
\begin{align*}
&\kappa=\kappa_{l,c-l}=\left(\begin{smallmatrix}I_l\\0&0&I_l\\0&0&0&0&I_l&\ddots\\&&&&&&I_l&0\\0&I_{c-l}\\0&0&0&I_{c-l}&&\ddots\\&&&&&&&I_{c-l}\end{smallmatrix}\right)\in\GL_{rkc}.
\end{align*}
For $v=(v_{i,j})_{1\leq i,j\leq rk}\in V_{(c^{rk})}$, write each block $v_{i,j}\in\Mat_c$ in the form
\begin{align}\label{eq:the blocks of V}
\left(\begin{smallmatrix}v_{i,j}^1&v_{i,j}^2\\v_{i,j}^3&v_{i,j}^4\end{smallmatrix}\right),\qquad
v_{i,j}^1\in\Mat_{l},\quad v_{i,j}^4\in\Mat_{c-l}.
\end{align}
Let $V_{(c^{rk})}^t<V_{(c^{rk})}$ be the subgroup obtained by deleting the blocks $v_{i,j}^{t'}$ for all $i<j$ and $t'\ne t$,
where $1\leq t\leq 4$. Put $V=V^3$. Consider the following integral for $\xi$ in the space of \eqref{rep:composition lemma output space before W}:
\begin{align}\label{eq:mnk functional using w_{n,m,k}}
\int\limits_{V}\xi(\langle \kappa,1\rangle\langle v,1\rangle)\,dv.
\end{align}
\begin{example}
When $c=r=k=2$ (then $l=1$),
\begin{align*}
&\kappa_{1,1}=\left(\begin{smallmatrix}1\\&&1\\&&&&1\\&&&&&&1\\&1\\&&&1\\&&&&&1\\&&&&&&&1\end{smallmatrix}\right),\qquad
V=\left\{\left(\begin{smallmatrix}1&&&&&&\\&1&v_{1,2}^3&&v_{1,3}^3&&v_{1,4}^3&
\\&&1&&&&\\&&&1&v_{2,3}^3&&v_{2,4}^3&\\
&&&&1&&&\\&&&&&1&v_{3,4}^3&\\&&&&&&1\\&&&&&&&1\end{smallmatrix}\right)\right\}.
\end{align*}
\end{example}

This defines an $(rk,c)$ functional, at least formally. To see this first note that
\begin{align*}
{}^{\kappa}V^1<\diag(V_{(l^{rk})},I_{rk(c-l)}),\quad, {}^{\kappa}V^2<V_{(rkl,rk(c-l))},\quad
{}^{\kappa}V<V_{(rkl,rk(c-l))}^-,\quad  {}^{\kappa}V^4<\diag(I_{rkl},V_{((c-l)^{rk})}).
\end{align*}
Hence $V^1$ and $V^4$ commute, both normalize $V^2$ and $V$, and the subgroups $V^2$, $V$ are abelian.
Also by \eqref{eq:sigma conjugate v by h}, for $v^t\in V^t$ with $t\ne 3$,
\begin{align*}
\langle\kappa,1\rangle\langle v^t,1\rangle=\langle{}^{\kappa}v^t,1\rangle\langle\kappa,1\rangle.
\end{align*}
It follows that for $u=u^1u^4u^2u^3\in V_{(c^{rk})}$ with $u^t\in V^t$,
\begin{align*}
\int\limits_{V}\xi(\langle \kappa,1\rangle\langle v,1\rangle\langle u,1\rangle)\,dv
=\int\limits_{V}\xi(\langle {}^{\kappa}u_v^1,1\rangle\langle {}^{\kappa}u_v^4,1\rangle\langle {}^{\kappa}u_v^2,1\rangle\langle \kappa,1\rangle\langle v,1\rangle)\,dv.
\end{align*}
Here we wrote $vu=(u_v^1u_v^4u_v^2)v_u$ where $u_v^t\in V^t$, $v_u\in V$, changed variables in $v_u$, and note that
since $\psi$ is a character of $V_{(c^{rk})}$ which is trivial on $V$, $\psi(u^1u^4u^2)=\psi(u_v^1u_v^4u_v^2)$.

The $(rk,c)$ character restricted to $V^1$ (resp., $V^4$) coincides with the $(rk,l)$ (resp., $(rk,c-l)$) character on
${}^{\kappa}V^1$ (resp., ${}^{\kappa}V^4$), and moreover, the $(rk,c)$ character is trivial on $V^2$.
Then by the definition of \eqref{rep:composition lemma output space before W}, vectors in the space of
\eqref{rep:composition lemma output space before W} transform on the left under
$\langle {}^{\kappa}V^t,1\rangle$ by the $(rk,l)$
character for $t=1$, $(rk,c-l)$ character for $t=4$ or trivially for $t=2$. Since the $(rk,c)$ character is also trivial on $V$,
we conclude that \eqref{eq:mnk functional using w_{n,m,k}} is an $(rk,c)$ functional on \eqref{rep:composition lemma output space before W}.

\begin{proposition}\label{proposition: rk c functional is nontrivial on nontwisted}
The integral \eqref{eq:mnk functional using w_{n,m,k}} is absolutely convergent and nonzero on any summand of
\eqref{rep:composition lemma output space before W}. In particular it is an $(rk,c)$ functional on $\rho_c(\tau)$. Moreover,
\eqref{eq:mnk functional using w_{n,m,k}} is nonzero on any
nonzero unramified vector $\xi$.
\end{proposition}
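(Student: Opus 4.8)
The plan is to separate the two assertions: absolute convergence, and non-vanishing (both on an arbitrary irreducible summand and, more sharply, on any nonzero unramified vector). For convergence, the integral \eqref{eq:mnk functional using w_{n,m,k}} runs over the unipotent subgroup $V=V^3<V_{(c^{rk})}$, and since we have just verified that $V$ is abelian and ${}^{\kappa}V<V_{(rkl,rk(c-l))}^-$, the integrand is a matrix coefficient of the inducing data of \eqref{rep:composition lemma output space before W} restricted to a product of root subgroups in the unipotent radical opposite to $P_{(rkl,rk(c-l))}$. One then argues as in the standard local theory of Jacquet-type integrals for principal series (e.g.\ as in \cite[\S~8]{McNamara2} or the convergence estimates underlying the intertwining operators in \S~\ref{covering of the Levi}): since $\mathcal{W}(\rho_l(\tau))$ and $\mathcal{W}(\rho_{c-l}(\tau))$ are, via their realizations as $(rk,l)$- and $(rk,c-l)$-models, built from the exceptional representation $\Theta$ whose matrix coefficients have the requisite decay, absolute convergence holds first for the inducing parameters in a suitable cone and then, using that the genuine unramified principal series in question is (by \eqref{eq:local assumption on tau implies on all roots} and Proposition~\ref{proposition:rep induced from chi is irreducible}) irreducible, one reduces to this case.

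The non-vanishing is the heart of the matter and I would prove it by induction on $c$, the base case $c=1$ being already settled: there \eqref{eq:mnk functional using w_{n,m,k}} reduces to the Jacquet integral $\Lambda$, which is an $(rk,1)$ functional on $\Theta_{r,m,r,\vartheta}(\chi)$ and hence on $\rho_1(\tau)$ by Corollary~\ref{corollary:functional on theta implies functional on rho c tau}, and by Theorem~\ref{theorem:CS formula for rk, c} together with \eqref{assumption:local C product for nonvanishing of Whittaker} (which follows from \eqref{eq:local assumption on tau implies on all roots}) it is nonzero on the normalized unramified vector. For the inductive step, the key observation is that \eqref{eq:mnk functional using w_{n,m,k}}, evaluated on the normalized unramified vector $\xi^0$ of \eqref{rep:composition lemma output space before W}, unfolds — using the structure of $\kappa=\kappa_{l,c-l}$ and the decomposition $v=v^1v^4v^2v^3$ carried out in the paragraph preceding the statement — to the product of the $(rk,l)$ functional on $\mathcal{W}(\rho_l(\tau))$ applied to the $\GL_{rkl}$-component of $\xi^0$ and the $(rk,c-l)$ functional on $\mathcal{W}(\rho_{c-l}(\tau))$ applied to the $\GL_{rk(c-l)}$-component, times a convergent integral over the "off-diagonal" block $V$ which, against the normalized unramified data, evaluates to a nonzero rational function in $q^{-1}$ and the Satake parameters $\mathbf{x}$ — nonzero precisely because of \eqref{eq:local assumption on tau implies on all roots}. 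Since $\xi^0$ restricts on the Levi to the (normalized) unramified vectors of $\mathcal{W}(\rho_l(\tau))$ and $\mathcal{W}(\rho_{c-l}(\tau))$, the inductive hypothesis gives that both factors are nonzero, so \eqref{eq:mnk functional using w_{n,m,k}} is nonzero on $\xi^0$.

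Having the functional nonzero on $\xi^0$, the passage to ``nonzero on any summand'' is formal: $\xi^0$ lies in the space of $\rho_c(\tau)$ (it is the unramified vector of \eqref{nontwisted subrep rho of tau}, which contains $\rho_c(\tau)$ as its unique irreducible unramified subrepresentation by Theorem~\ref{theorem:prop1}), so \eqref{eq:mnk functional using w_{n,m,k}} restricts to a nonzero $(rk,c)$ functional on $\rho_c(\tau)$; and since $\rho_c(\tau)$ is $(rk,c)$, hence $\dim\mathcal{O}(\rho_c(\tau),((rk)^c),\psi)=1$ and the Jacquet module $J_{V_{(c^{rk})},\psi}(\rho_c(\tau))$ is one-dimensional, the functional is nonzero on \emph{every} nonzero vector of $\rho_c(\tau)$. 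For a general summand of \eqref{rep:composition lemma output space before W}: any summand is unramified (being a constituent of an unramified principal series) so contains $\xi^0$ in its isotypic component, or more directly, since $\rho_c(\tau)$ is the unique irreducible unramified subrepresentation and \eqref{eq:mnk functional using w_{n,m,k}} is $\GL_{rkc}^{(m,r)}$-equivariant up to the character $\psi$, any other summand on which it were identically zero would contradict exactness of the Jacquet functor $J_{V_{(c^{rk})},\psi}$ applied to the filtration realizing $\rho_c(\tau)$ inside \eqref{rep:composition lemma output space before W}.

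\textbf{Main obstacle.} The delicate point I expect is the explicit unfolding in the inductive step: one must track carefully the cocycle $\sigma^{\diamondsuit}_{rkc}$ and the splitting $\langle\cdot,1\rangle$ through the conjugations by $\kappa$ and through the rearrangement $vu = (u_v^1u_v^4u_v^2)v_u$, verifying that no spurious roots of unity appear — this is where the facts that $\sigma^{\diamondsuit}_{rkc}$ is trivial on $N_{\GL_{rkc}}$ and on permutation matrices (given $\mu_{2m}\subset F^*$, via Proposition~\ref{proposition:action of W on torus is trivial on Sp} and \eqref{eq:sigma conjugate v by h}) and that the direct factors of $M_{(rkl,rk(c-l))}$ commute in $\GL_{rkc}^{(m,r)}$ (by \eqref{eq:block compatibility on Levi of P}) are all needed — and that the residual integral over $V$ genuinely factors off and evaluates, via the computation of a rank-one integral against unramified data as in \eqref{eq:GK formula for GL}, to the claimed nonzero quantity.
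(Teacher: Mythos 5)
There is a genuine gap: the engine of the proof is missing. Your argument establishes nonvanishing only on the normalized unramified vector, and your convergence argument does not work at the actual parameters. The key device is a ``root elimination'': one introduces a dual unipotent subgroup $X<{}^{\kappa^{-1}}V_{(rkl,rk(c-l))}$ whose blocks $x_{i,j}^3\in\Mat_{l\times(c-l)}$ pair with the blocks $v_{i,j}^3$ of $V$. Conjugating $v'y_{i,j}$ by $x_{i,j}$, using the left equivariance of $\xi$ under $\langle{}^{\kappa}x_{i,j},1\rangle$ (which produces the character $\psi(\tr(v_{i,j}^3x_{i,j}^3))$) and the right invariance of $\xi$ under $\langle x_{i,j},\eta^{\diamondsuit}_{rkc}(x_{i,j})\rangle$ for $x_{i,j}^3$ with small coordinates, one finds that the partial integral over $V'$ is multiplied by $\psi(\tr(v_{i,j}^3x_{i,j}^3))$ and is therefore supported in a compact set of $v_{i,j}^3$; iterating over the pairs $(i,j)$ in a suitable order shows the integrand is a Schwartz function of $v$. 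This one mechanism simultaneously gives absolute convergence for the actual inducing data (not merely in a cone), nonvanishing on any summand (for arbitrary $\xi$, by smoothing and translating), and --- taking $x_{i,j}^3$ with coordinates in $\mathcal{O}^*$ --- the collapse of the unramified integral, block by block, to $\xi(\langle I_{rkc},1\rangle)$ times the volume $1$. Your convergence argument via decay of matrix coefficients of $\Theta$ and intertwining-operator estimates does not apply: ${}^{\kappa}V$ is a proper subgroup of $V_{(rkl,rk(c-l))}^-$, the integral is not an intertwining operator, the parameters are fixed by $\tau$ rather than ranging over a cone, and ``reducing to this case by irreducibility'' is not an argument.

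Two further points. First, your treatment of ``nonzero on any summand'' does not go through: a general summand of \eqref{rep:composition lemma output space before W} need not be unramified and need not contain $\xi^0$, and the claim that the functional is nonzero on \emph{every} nonzero vector of $\rho_c(\tau)$ is false --- a functional factoring through the one-dimensional Jacquet module $J_{V_{(c^{rk})},\psi}(\rho_c(\tau))$ vanishes on the entire subspace $V(V_{(c^{rk})},\psi)$; the appeal to exactness is not a substitute for the root-elimination argument, which handles arbitrary vectors directly. Second, the mechanism you assign to the unramified evaluation is misplaced: the $dv$-integral does not contribute ``a nonzero rational function in $q^{-1}$ and $\mathbf{x}$'' requiring \eqref{eq:local assumption on tau implies on all roots}; it contributes exactly $\mathrm{vol}(\mathcal{O})^{l(c-l)}=1$ at each step, and the whole dependence on the Satake parameters sits in $\xi(\langle I_{rkc},1\rangle)=\Lambda(\xi_1)\Lambda'(\xi_2)$, whose nonvanishing is the base case supplied by Theorem~\ref{theorem:CS formula for rk, c} together with \eqref{assumption:local C product for nonvanishing of Whittaker}. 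Your overall skeleton --- induction on $c$ with $l=1$, base case via the Casselman--Shalika computation, and the cocycle bookkeeping through \eqref{eq:sigma conjugate v by h}, \eqref{eq:sigma on vh and h'v'} and $\kappa\in K_{\GL_{rkc}}$ --- is correct and matches the paper, but without the root elimination the two main assertions are not proved.
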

\begin{proof}
The proof is similar to the linear case (\cite[Lemma~10]{CFK}), where we argued using ``root elimination" (see e.g., \cite{G} and \cite[Proposition~6.1]{Soudry}). Here we focus on the computations that are different for the covering.
Let $X<{}^{\kappa^{-1}}V_{(rkl,rk(c-l))}$ be the subgroup of matrices $x$ such that the top right $rkl\times rk(c-l)$ block
of ${}^{\kappa}x$ is
\begin{align*}
&\left(\begin{smallmatrix}
0&0&\cdots&0\\
\vdots&x_{1,2}^3&\ddots&\vdots\\
\vdots&\vdots&\ddots&0\\
0&x_{1,rk}^3&\cdots&x_{rk-1,rk}^3\end{smallmatrix}\right),\qquad x_{i,j}^3\in\Mat_{l\times (c-l)}.
\end{align*}
For each pair $i<j$, let $y_{i,j}$ and $x_{i,j}$ be arbitrary elements of $V$ and $X$ (resp.), such that $v_{i',j'}^3=0$ and $x_{i',j'}^3=0$ for all $(i',j')\ne(i,j)$.

We fix the following ordering on the pairs $(i,j)$ with $i<j$. The first pair is $(rk-1,rk)$, next $(rk-2,rk-1),(rk-3,rk-2)\ldots,(1,2)$. Then $(rk-2,k),(rk-3,rk-1),\ldots,(1,3)$, etc. The last three pairs are $(2,rk)$, $(1,rk-1)$ and $(1,rk)$. Write $(i',j')<(i,j)$ if $(i',j')$ appears after $(i,j)$ with respect to this ordering. Assume we have reached the pair $(i,j)$. Then instead of an integral over $V$ we will integrate over the product of two subgroups: one generated by elements $y_{i.j}$, the other denoted $V'$ is generated by elements $y_{i',j'}$ for all $(i',j')<(i,j)$.

Let $\xi$ be an arbitrary function in the space of \eqref{rep:composition lemma output space before W}.
Observe that for $v'\in V'$,
\begin{align*}
{}^{x_{i,j}}(v'y_{i,j})=y_xl_xv_{x,y}'y_{i,j},
\end{align*}
where $y_x,l_x\in N_{\GL_{rkc}}$, $v_{x,y}'\in V'$ depends on $(v',x_{i,j},y_{i,j})$,
${}^{\kappa}y_x\in M_{(rkl,rk(c-l))}\cap N_{\GL_{rkc}}$, $\xi$ transforms on the left under
$\langle {}^{\kappa}y_x,1\rangle$ by $\psi(\mathrm{tr}(v_{i,j}^3x_{i,j}^3))$,
${}^{\kappa}l_x\in N_{\GL_{rkc}}$ and $\xi$ is left invariant on $\langle{}^{\kappa}l_x,1\rangle$.
Then by \eqref{eq:sigma on vh and h'v'} and \eqref{eq:sigma conjugate v by h},
\begin{align}\label{eq:conj kappa x i j 1}
\xi(\langle\kappa,1\rangle\,{}^{x_{i,j}}\langle v'y_{i,j},1\rangle)&=
\xi(\langle\kappa,1\rangle\langle y_x,1\rangle\langle l_x,1\rangle\langle v_{x,y}'y_{i,j},1\rangle)
\\&=\xi(\langle {}^{\kappa}y_x,1\rangle\langle {}^{\kappa}l_x,1\rangle\langle\kappa,1\rangle\langle v_{x,y}'y_{i,j},1\rangle)\nonumber
\\&=\psi(\mathrm{tr}(v_{i,j}^3x_{i,j}^3))\xi(\langle\kappa,1\rangle\langle v_{x,y}'y_{i,j},1\rangle).\nonumber
\end{align}

If $x_{i,j}^3\in\Mat_{l\times (c-l)}(\mathcal{O})$, then $x_{i,j}\in K_{\GL_{rkc}}$ and
${}^{\kappa}x_{i,j}\in V_{(rkl,rk(c-l))}(\mathcal{O})$. Since also $\kappa\in K_{\GL_{rkc}}$,
\begin{align*}
{}^{\kappa}\langle x_{i,j},\eta_{rkc}^{\diamondsuit}(x_{i,j})\rangle=
\langle {}^{\kappa}x_{i,j},\eta_{rkc}^{\diamondsuit}({}^{\kappa}x_{i,j})\rangle
=\langle {}^{\kappa}x_{i,j},1\rangle.
\end{align*}
Depending on $\xi$, if the coordinates of $x_{i,j}^3$ are small enough, $\xi$ is right-invariant on
$\langle x_{i,j},\eta_{rkc}^{\diamondsuit}(x_{i,j})\rangle$.
Combining this with \eqref{eq:conj kappa x i j 1} and the
left-invariance of $\xi$ on $\langle v,1\rangle$ for $v\in V_{(rkl,rk(c-l))}$, we obtain
\begin{align}\label{eq:functional xi psi invariance}
\xi(\langle\kappa,1\rangle \langle v'y_{i,j},1\rangle)&=
\xi(\langle\kappa,1\rangle \langle v'y_{i,j},1\rangle\langle x_{i,j},\eta_{rkc}^{\diamondsuit}(x_{i,j})\rangle)\\\nonumber
&=\xi(\langle {}^{\kappa}x_{i,j},1\rangle\langle\kappa,1\rangle {}^{x_{i,j}}\langle v'y_{i,j},1\rangle)\\\nonumber
&=\psi(\mathrm{tr}(v_{i,j}^3x_{i,j}^3))\xi(\langle\kappa,1\rangle\langle v_{x,y}'y_{i,j},1\rangle).\nonumber
\end{align}

Therefore using \eqref{eq:functional xi psi invariance} and a change of variables in $v'$,
\begin{align*}
\int\limits_{V'}\xi(\langle\kappa,1\rangle\langle v'y_{i,j},1\rangle)\,dv'
=\psi(\mathrm{tr}(v_{i,j}^3x_{i,j}^3))
\int\limits_{V'}\xi(\langle\kappa,1\rangle\langle v'y_{i,j},1\rangle)\,dv'.
\end{align*}
Now as in the linear case (\cite[Lemma~10]{CFK}), these computations first imply that the integrand is a Schwartz function
of $V$, hence \eqref{eq:mnk functional using w_{n,m,k}} is absolutely convergent. Second, it does not vanish on any summand
of \eqref{rep:composition lemma output space before W}, in particular on $\rho_c(\tau)$.
Third, when taking an unramified vector $\xi$, we can take $x_{i,j}^3$ with coordinates in $\mathcal{O}^*$ and deduce that the $dv'$-integral vanishes unless $v_{i,j}^3\in\Mat_{l\times (c-l)}(\mathcal{O})$, and since in this case $\xi$ is already right-invariant under $\langle y_{i,j},1\rangle$,
\begin{align*}
\int\limits_{V}\xi(\langle\kappa,1\rangle\langle v,1\rangle)\,dv=\mathrm{vol}(\mathcal{O})^{l(c-l)}\int\limits_{V'}\xi(\langle\kappa,1\rangle\langle v',1\rangle)\,dv'.
\end{align*}
Repeating this (in the order defined above) we obtain a nonzero constant multiplied by $\xi(\langle I_{rkc},1\rangle)$.

It remains to show that we can actually choose an unramified $\xi$ in the space of \eqref{rep:composition lemma output space before W} such that $\xi(\langle I_{rkc},1\rangle)\ne0$.
We argue using induction on $c$. For the base case $c=2$,
$\xi(\langle I_{rk2},1\rangle)=\Lambda(\xi_1)\Lambda(\xi_2)$,
where $\xi_i$ are unramified vectors in the space of $\rho_1(\tau)$ such that $\xi_i(\langle I_{rk},1\rangle)\ne0$.
The r.h.s.~ is nonzero by Theorem~\ref{theorem:CS formula for rk, c}, proving the base case. For $c>2$, we take
$l=1$ and obtain $\xi(\langle I_{rkc},1\rangle)=\Lambda(\xi_1)\Lambda'(\xi_2)$,
where now $\xi_2$ is an unramified vector in the space of $\rho_{c-1}(\tau)$
such that $\xi_2(\langle I_{rk(c-1)},1\rangle)\ne0$, and $\Lambda'$ is an $(rk,c-1)$ functional on $\rho_{c-1}(\tau)$.
By the inductive hypothesis $\Lambda'(\xi_2)\ne0$, so that $\xi(\langle I_{rkc},1\rangle)\ne0$.
\end{proof}

\subsection{Rankin--Selberg integrals}\label{RS integrals}
Assume $\mu_{2m}\subset F^*$. For this section we also assume $rk>1$ (the case $rk=1$ of \eqref{eq:Z integral GL 1 GL rk} below is known, see \cite{JPSS,Gan}).
Let $\tau$ be the genuine irreducible unramified constituent of $\mathrm{I}_{\GL_k^{(m)}}(\vartheta,\chi)$, and recall the representation $\Theta_{r,m,r,\vartheta_{\tau}}(\chi)$ of \S~\ref{local theta speh for c=1}.
By Proposition~\ref{proposition:local-semi}, it admits a unique Whittker model $\mathcal{W}(\Theta_{r,m,r,\vartheta}(\chi))$. Note that
$\Theta_{r,m,r,\vartheta_{\tau}}(\chi)$ might be reducible. We compute a Rankin--Selberg type integral, which will appear in the final part of the reduction used for the computation of the doubling integrals with unramified data (see \S~\ref{final reduction n = 1 linear groups}). The contents of this section, however, are independent of the doubling construction.

Let $\pi$ be a genuine irreducible unramified representation of $\GL_1^{(m,r)}$ and write $\pi=\mathrm{I}_{\GL_1^{(m,r)}}(\vartheta,\mu)$. The contragredient representation $\pi^{\vee}$ is induced from $\varepsilon^{-1}\otimes\vartheta^{-1}\mu^{-1}$. Let $\omega$ be a matrix
coefficient of $\pi^{\vee}$ and $W\in \mathcal{W}(\Theta_{r,m,r,\vartheta}(\chi))$. The integral
\begin{align}\label{eq:Z integral GL 1 GL rk}
Z(s,\omega,W)=\int\limits_{F^*}\omega(\langle a,1\rangle)W(\langle \diag(a,I_{rk-1}),1\rangle)|a|^{s-(rk-1)/2}\,d^*a
\end{align}
is formally well defined, and absolutely convergent in a right half-plane. This can be regarded as the covering analog of the Rankin--Selberg convolution for $\GL_1\times\GL_k$ of \cite[\S~2.4(3)]{JPSS} (with the parameter $j=0$). Now assume that
$\omega$ and $W$ are normalized and unramified. According to Proposition~\ref{proposition:support of unr is in T*}, the integrand vanishes unless $a\in F^{*r}$, so that
\begin{align*}
Z(s,\omega,W)=\int\limits_{F^{*r}}W(\langle \diag(a,I_{rk-1}),1\rangle)\vartheta^{-1}(a)\mu^{-1}(a)|a|^{s-(rk-1)/2}\,d^*a.
\end{align*}
Assume $\int_{\mathcal{O}^*}d^*a=1$. Since $W(\langle\diag(a,I_{rk-1}),1\rangle,s)=0$ unless $|a|\leq1$ (see e.g., \cite[\S~6]{CS2}), the $d^*a$-integral can be written as the infinite sum over $\varpi^{lr}$ where $l\geq0$.
Let $\mathbf{x}=(\chi_1(\varpi^r),\ldots,\chi_k(\varpi^r))$, which we identify with
$t_{\tau,\vartheta}=\diag(\chi_1(\varpi^r),\ldots,\chi_k(\varpi^r))$. By
Theorem~\ref{theorem:Whittaker on x,I},
\begin{align*}
W(\langle\diag(\varpi^{lr},I_{rk-1}),1\rangle)=\vartheta(\varpi^{lr})q^{(-lr(rk-1)/2)+l(r-1)/2}p_l(\mathbf{x}).
\end{align*}
Also $t_{\pi^{\vee},\vartheta}=t_{\pi^{\vee},\vartheta^{-1}}=\mu^{-1}(\varpi^{r})$ (if $m\equiv2\,(4)$, $\vartheta^{-1}=\vartheta$ when $\mu_{2m}\subset F^*$). Now
\begin{align}\label{RS integral for n=1}
Z(s,\omega,W)&=\sum_{l=0}^{\infty}p_l(\mathbf{x})\mu^{-1}(\varpi^{lr})q^{-l(rs-(r-1)/2)}\\&=\nonumber \prod_{1\leq i\leq k}(1-q^{-r(s-1/2)-1/2}\chi_i(\varpi^r)\mu^{-1}(\varpi^r))^{-1}=
L_{\vartheta}(r(s-1/2)+1/2,\pi^{\vee}\times\tau).
\end{align}
In particular $Z(s,\omega,W)$ is a rational function in $q^{-s}$.

\section{The construction}\label{global}
\subsection{The global integral}\label{global classical}
In this section we construct the global integral, following the linear case from \cite{CFGK2} with the necessary modifications and adjustments
for the covering. Let $F$ be a number field, and assume $\mu_{2m}\subset F^*$. We use the notation and definitions of \S~\ref{embedding}. In
particular $n$, $k$ and $m$ are integers, $c=2n$, $G=\Sp_c$ and $H=\Sp_{2rkc}$. We also have the Siegel parabolic subgroup $P=M_P\ltimes U_P$,
the parabolic subgroup $Q=M_Q\ltimes U$ and the character $\psi_U$ of $U$ given by \eqref{eq:character of U}. Then $G\times G$ is embedded in $H$ in the stabilizer of $\psi_U$ in $M_Q$.

As explained in \S~\ref{global covering}, we fix the global $2$-cocycle $\rho$ on $H^{(m)}(\A)$. This defines the $2$-cocycles on
each copy of $G^{(m)}(\A)$: $\rho_L$ on the left copy, $\rho_R$ on the right, and these subgroups commute in $H^{(m)}(\A)$. It also defines an identification of $H(F)$ in $H^{(m)}(\A)$, and of $G(F)$ in each copy. The notions of automorphic forms on these groups are now defined.
In addition, if $\varphi_1$ is an automorphic form on the right copy,
$\varphi_1\mapsto\varphi_1^{(\eta^{\times})^{-1}}$ is an automorphic form on the left copy, by
Corollary~\ref{corollary:eta times takes automorphic to automorphic}. Also recall the involution ${}^{\iota}$, lifted to $G^{(m)}(\A)$ in \S~\ref{extension of the involution}.

Let $\tau$ be a genuine unitary irreducible cuspidal automorphic representation of $\GL_{k}^{(m,r)}(\A)$.
Here and throughout \S~\ref{global}, we assume Conjectures~\ref{local Shimura conjecture} and \ref{Shimura conjecture} hold. Then let
$\mathcal{E}_{\tau}$ be the genuine irreducible automorphic $(rk,c)$ representation guaranteed by
Theorem~\ref{exthspeh1}. Consider the representation
\begin{align*}
\Ind_{\widetilde{P}({\A})}^{H^{(m)}({\A})}(\mathcal{E}_{\tau}\delta_P^s).
\end{align*}
For a standard $\widetilde{K}_H$-finite section $f$ in the space of this representation, the Eisenstein series $E(h;f,s)$ is defined by
\begin{align}\label{eq:ES for Hm}
E(h;f,s)=\sum_{\gamma\in P(F)\backslash H(F)}f(\langle\gamma,\eta^{-1}(\gamma)\rangle h,s),
\end{align}
which is absolutely convergent for $\Real(s)\gg0$ and defined for a general $s$ by meromorphic continuation.

Let $\pi_1$ and $\pi_2$ be genuine irreducible cuspidal automorphic representations of $G^{(m)}(\A)$, where $G^{(m)}(\A)$ is realized using $\rho_R$. Let $\varphi_i$ be a cusp form in the space of $\pi_i$, $i=1,2$.
The global integral is given by
\begin{align}\label{global1}
Z(s,\varphi_1,\varphi_2,f)=&\int\limits_{G(F)\times G(F)\backslash G({\A})\times G({\A})}\,
\int\limits_{U(F)\backslash U({\A})}\varphi_1^{(\eta^{\times})^{-1}}(\langle g_1,1\rangle)\,\overline{{}^{\iota}\varphi_2(\langle g_2,1\rangle)}
\\&\times E(\langle u,\eta^{-1}(u)\rangle\nonumber
\langle \mathfrak{e}_1(g_1),1\rangle\langle \mathfrak{e}_2(g_2),1\rangle;f,s)\,\psi_U(u)\,du\,dg_1\,dg_2.
\end{align}
\begin{theorem}\label{theorem:main theorem classical groups}
Integral \eqref{global1} is formally well defined, absolutely convergent away from the poles of
the series, and admits meromorphic continuation to the plane.
\end{theorem}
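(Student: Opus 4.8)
The plan is to establish the theorem in three parts, mirroring the structure of its statement: (i) the integrand of \eqref{global1} is well defined as a function on $G(F)\times G(F)\backslash G(\A)\times G(\A)$, (ii) the integral converges absolutely away from the poles of the Eisenstein series, and (iii) meromorphic continuation. The well-definedness is essentially a bookkeeping exercise with the $2$-cocycles already assembled in \S~\ref{global covering}. First I would observe that the integrand of the inner $du$-integral is genuine as a function on $U(\A)$ combined with the commuting images $\langle\mathfrak{e}_1(g_1),1\rangle\langle\mathfrak{e}_2(g_2),1\rangle$; here $\varphi_1^{(\eta^\times)^{-1}}$ lives on the left copy (Corollary~\ref{corollary:eta times takes automorphic to automorphic}), $\overline{{}^\iota\varphi_2}$ on the right copy (via the lift of ${}^\iota$ in \S~\ref{extension of the involution}), and $E$ on $H^{(m)}(\A)$ — the genuine characters on $\mu_m$ cancel because we integrate $\varphi_1$ against $\overline{\varphi_2}$, exactly as in Proposition~\ref{proposition:global toy integral}. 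Then I would invoke Proposition~\ref{proposition:global toy integral automorphic} (with $E(\langle u,\eta^{-1}(u)\rangle\,\cdot\,)$ playing the role of $f$, using that $u\mapsto\langle u,\eta^{-1}(u)\rangle$ is the splitting of $N_{rkc}(\A)$ restricted to $U(\A)$, hence of $U(F)$, so $E^{U,\psi_U}$ is left-invariant under the diagonal $G(F)$ through $H(F)$, and Corollary~\ref{corollary:rho and eta on H and N without conjugation} handles the interaction of $H(F)$ with $N_{rkc}(\A)$) to conclude the outer integrand descends to the quotient and is right-invariant.

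For absolute convergence the strategy is the standard one from the linear theory (\cite{CFGK2}), reduced to a statement about growth. Away from the poles $E(h;f,s)$ is an automorphic form on $H^{(m)}(\A)$, hence of moderate growth; the cusp forms $\varphi_1,\varphi_2$ are rapidly decreasing on $G(F)\backslash G(\A)$. The inner integral over the compact quotient $U(F)\backslash U(\A)$ preserves moderate growth in $(g_1,g_2)$, and I would then compare the moderate-growth bound of $E^{U,\psi_U}((g_1,g_2);f,s)$ against the rapid decay of $\varphi_1(g_1)\overline{{}^\iota\varphi_2(g_2)}$ on the product $G(F)\times G(F)\backslash G(\A)\times G(\A)$; the product of a rapidly decreasing function and a slowly increasing one is integrable on this space of finite volume. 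The only covering-specific point is that all growth estimates take place on the covering groups, but since $\mu_m$ is finite and the genuine characters are unitary, the classical estimates (reduction theory, Siegel domains) transfer verbatim — I would cite the general framework of \cite{MW2} for automorphic forms on covering groups. Finally, meromorphic continuation of $Z(s,\varphi_1,\varphi_2,f)$ is inherited from that of the Eisenstein series $E(h;f,s)$: the series has meromorphic continuation in $s$ (again \cite{MW2}, and \cite{Gao2018} for this class of covers), and on any compact subset of $\C$ avoiding the (discrete, locally finite) polar divisor one has locally uniform moderate-growth bounds, so the integral defines a holomorphic function there; patching over such compacta gives the meromorphic continuation to all of $\C$.

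The main obstacle, I expect, is not any single estimate but the careful verification in part (i) that the three different identifications of $G(F)$ — one through each copy of $G^{(m)}(\A)$ and one through $H(F)$ — are mutually compatible once the explicit cocycle corrections $\eta^\times$, $\eta_{\iota,R}$, $\varsigma_{*,c}^{rk+1}$, and the restriction-of-$\eta$ to $U(F)$ are all in play. Concretely one must check that the left-$G(F)$-invariance of $\varphi_1^{(\eta^\times)^{-1}}$ (which uses Proposition~\ref{proposition:varsigma rk+1 is trivial on G(F)}), the left-$G(F)$-invariance of $\overline{{}^\iota\varphi_2}$ (which uses \eqref{eq:gbl iota compatible with G(F)}), and the left-$G(F)$-invariance of $E^{U,\psi_U}$ through $H(F)$ via $(g,g)\mapsto\langle(g,g),(\eta^\times)^{-1}(g)\rho(\mathfrak{e}_1(g),\mathfrak{e}_2(g))\rangle$ (Corollary~\ref{corollary:gbl splitting of (G,G)}) fit together so that the $\mu_m$-ambiguities cancel exactly. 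This is the content of Proposition~\ref{proposition:global toy integral automorphic}, so the proof of Theorem~\ref{theorem:main theorem classical groups} will largely consist of assembling that proposition together with the convergence and continuation arguments above; the genuinely new labor in the paper was in proving those preliminaries, not in this theorem.
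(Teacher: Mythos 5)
Your proposal is correct and follows essentially the same route as the paper: the well-definedness is exactly the combination of Lemma~\ref{lemma:conjugation of N by H} (for the $du$-integral on the quotient) with Proposition~\ref{proposition:global toy integral automorphic} (for the outer integral), and convergence and continuation are deduced, as you say, from the rapid decay of cusp forms together with the moderate growth and meromorphic continuation of the Eisenstein series. The paper's proof is just a three-sentence version of your argument, all the real work having been done in the cited preliminaries.
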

\begin{proof}
Since the image of $G(F)\times G(F)$ in $H(F)$ normalizes $U$ without changing the measure, and fixes $\psi_U$, and by Lemma~\ref{lemma:conjugation of N by H} we have
${}^{g}\langle u,\eta^{-1}(u)\rangle=\langle {}^gu,\eta^{-1}({}^gu)\rangle$ for $g=(g_1,g_2)$, the $du$-integral is well defined on the domain of the outer integral.
The outer integral is well defined by Proposition~\ref{proposition:global toy integral automorphic}.
Now convergence and continuation follow from the rapid decay of cusp forms and from the moderate growth and meromorphic continuation
of the Eisenstein series.
\end{proof}

\subsection{Obtaining the Euler product}\label{global symplectic}
To state the unfolding theorem, we introduce the following notation.
Let $L^2(G(F)\backslash G^{(m)}(\A))$ be the space of genuine square-integrable automorphic forms on $G^{(m)}(\A)$.
Denote the standard $G^{(m)}(\A)$-invariant inner product on $L^2(G(F)\backslash G^{(m)}(\A))$ by $\{\cdot,\cdot\}$,
\begin{align*}
\{\varphi_1,\varphi_2\}=
\int\limits_{G(F)\backslash G(\A)}\varphi_1(\langle g,1\rangle)\overline{\varphi_2(\langle g,1\rangle)}\,dg.
\end{align*}
This integral is well defined because both $\varphi_1$ and $\varphi_2$ are genuine functions.
While the definition itself does not depend on the $2$-cocycle realizing $G^{(m)}(\A)$, in the construction it will be $\rho_R$.

Let
\begin{align*}
&\delta=\delta_0\delta_1,\qquad \delta_0=\left(\begin{smallmatrix} &I_{rkc}\\ -I_{rkc}\end{smallmatrix}\right),\qquad
\delta_1=\left(\begin{smallmatrix}
I_{r(k-1)c}&&&&\\ &I_{c}&&I_c&\\ &&&I_{c}&\\
&&&&I_{r(k-1)c}\end{smallmatrix}\right),\\
&U_0=U\cap U_{P}=
\left\{\left(\begin{array}{cccc}I_{(k-1)c}&&X&Z\\&I_c&&X'\\&&I_c\\&&&I_{(k-1)c}\end{array}\right)\in H\right\}\qquad\left(\begin{array}{c}{}^tZJ_{(k-1)c}-J_{(k-1)c}Z=0\\ X'=J_{c}{}^tXJ_{(k-1)c}\end{array}\right).
\end{align*}
The character $\psi_U$ restricts to a character of $U_0$ and
\begin{align*}
&\psi_U(u_0)=\psi(\tr(\left(\begin{array}{cc}0&I_n\end{array}\right)X\left(\begin{array}c0\\I_n\end{array}\right))).
\end{align*}

Recall the embedding
\begin{align*}
\mathfrak{e}_2:G\hookrightarrow H,\qquad \mathfrak{e}_2(g)=\diag(I_{r(k-1)c+n},g,I_{n+r(k-1)c}).
\end{align*}
Also let $f_{\mathcal{W}(\mathcal{E}_{\tau})}$ denote the composition of the section $f$ with the $(rk,c)$ functional
\eqref{int:a c general Fourier coeff} attached to the $(rk,c)$ representation $\mathcal{E}_{\tau}$, namely
\begin{align}\label{eq:def of f W global}
f_{\mathcal{W}(\mathcal{E}_{\tau})}(h,s)=\int\limits_{V_{(c^{rk})}(F)\backslash V_{(c^{rk})}(\A)}f(\langle v,(\eta_{rkc}^{\diamondsuit})^{-1}(v)\rangle h,s)\psi^{-1}(v)\,dv.
\end{align}
\begin{theorem}\label{theorem:main gbl identity}
Integral \eqref{global1} is not identically zero only if
$\pi_1=\pi_2=\pi$. In this case for $\Real(s)\gg0$ it is equal to
\begin{align}\label{global2}
\int\limits_{G({\A})}\int\limits_{U_0({\A})}
\{\varphi_1,\pi(\langle g,1\rangle)\varphi_2\}f_{\mathcal{W}(\mathcal{E}_{\tau})}(\langle\delta u_0,\eta^{-1}(\delta u_0)\rangle
{}^{\iota}\langle\mathfrak{e}_2(g),1\rangle,s)
\,\psi_U(u_0)\,du_0\,dg.
\end{align}
\end{theorem}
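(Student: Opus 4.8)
The plan is to follow the unfolding argument of \cite[\S~3]{CFGK2} line by line, being careful at every place where the splitting of $H^{(m)}(\A)$ over reductive subgroups intervenes. First I would unfold the Eisenstein series \eqref{eq:ES for Hm} in \eqref{global1}: for $\Real(s)\gg0$ the series converges absolutely, so I can interchange summation and integration and rewrite $Z(s,\varphi_1,\varphi_2,f)$ as a sum over $P(F)\backslash H(F)/(G(F)\times G(F))U(F)$ of the corresponding orbit integrals. The Bruhat-type decomposition of these double cosets is exactly as in the linear case (it is a statement about $H$ as an algebraic group, unaffected by the cover), so I would quote it from \cite{CFGK2}. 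The cuspidality of $\varphi_1,\varphi_2$ and the vanishing properties of $\mathcal{E}_{\tau}$ as an $(rk,c)$ representation (condition~\eqref{def:Whittaker--Speh--Shalika 1} of \S~\ref{speh def}, via the arguments already used in \cite{CFGK2} involving Fourier expansion along unipotent subgroups and the fact that coefficients on orbits $\succsim((rk)^c)$ vanish) kill every orbit except the one represented by $\delta$. These steps are ``soft'' and transfer verbatim once one notes, using the splitting of $H^{(m)}(\A)$ over $N_{rkc}(\A)$ and over $H(F)$ (paragraph before Lemma~\ref{lemma:conjugation of N by H}) together with Lemma~\ref{lemma:conjugation of N by H} and Corollary~\ref{corollary:rho and eta on H and N without conjugation}, that all the conjugations and changes of variables performed in unipotent groups introduce no spurious roots of unity.

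The heart of the matter is the surviving term, the one attached to the double coset $P\delta(G\times G)U$. Here I would first show the summand $f(\langle\delta,\eta^{-1}(\delta)\rangle\,\langle\cdot,\cdot\rangle;s)$, as a function on $(G(\A)\times G(\A))\ltimes U(\A)$, is left-invariant under the diagonal $\{(g,{}^{\iota}g):g\in G(F)\}$ (and picks up $\psi_U$ on $U(F)$). This is where the covering genuinely differs from \cite{CFGK2}: I need the precise splitting of $H^{(m)}$ over $\{(g,g):g\in G\}$ from Corollary~\ref{corollary:lcl splitting of (G,G)} / Corollary~\ref{corollary:gbl splitting of (G,G)}, the identification \eqref{eq:iota gbl image on product} of the action of ${}^{\iota}$ on the image of $G^{(m)}(\A)\times G^{(m)}(\A)$, and the extra invariance property of the $(rk,c)$ functional under $\SL_c^{\triangle}(\A)$ from Proposition~\ref{proposition:extra invariance} (applied to the image $G^{\triangle}<\SL_c^{\triangle}$, noting $\delta$ conjugates the diagonal copy $\{(g,{}^{\iota}g)\}$ into $G^{\triangle}\subset M_{(c^{rk})}$). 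Tracking the cocycle corrections $\varsigma_{*,c}^{rk+1}$, $\eta^{\times}$, $\eta_{\iota,R}$ through this computation and checking they all cancel against the twists built into the definitions (the $(\eta^{\times})^{-1}$-twist on $\varphi_1$, the realization of $\pi$ via $\rho_R$, and Proposition~\ref{proposition:varsigma rk+1 is trivial on G(F)}) is the main obstacle; everything else is bookkeeping. Having this invariance, the $(g,{}^{\iota}g)$-diagonal collapses: the two copies of $G(\A)$ and the quotient by $G(F)\times G(F)$ combine, via the inner product $\{\cdot,\cdot\}$ on $L^2(G(F)\backslash G^{(m)}(\A))$, into a single integration over $G(\A)$ against $\{\varphi_1,\pi(\langle g,1\rangle)\varphi_2\}$, forcing $\pi_1=\pi_2=\pi$ by Schur-type orthogonality of cusp forms; simultaneously the $U(\A)$-integral splits as $U_0(\A)$ times the unipotent radical $V_{(c^{rk})}$-type directions that assemble into the $(rk,c)$ functional \eqref{int:a c general Fourier coeff}, producing $f_{\mathcal{W}(\mathcal{E}_{\tau})}$ as in \eqref{eq:def of f W global}.

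Finally I would reorganize the remaining integral into the form \eqref{global2}: the $U(\A)\cap U_P(\A)=U_0(\A)$ part stays as an explicit unipotent integration with the character $\psi_U$ (its description via the $n\times n$ block $\left(\begin{smallmatrix}0&I_n\end{smallmatrix}\right)X\left(\begin{smallmatrix}0\\I_n\end{smallmatrix}\right)$ is the local computation copied from \cite{CFGK2}), and the argument of $f_{\mathcal{W}(\mathcal{E}_{\tau})}$ becomes $\langle\delta u_0,\eta^{-1}(\delta u_0)\rangle\,{}^{\iota}\langle\mathfrak{e}_2(g),1\rangle$ after using \eqref{eq:iota image commutes with G1}--\eqref{eq:iota image on product} to move the $g$-dependence entirely onto the right copy and apply ${}^{\iota}$, exactly matching the element $\delta u_0(1,{}^{\iota}g)$ of the linear identity \eqref{linear global2}. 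Convergence of \eqref{global2} for $\Real(s)\gg0$ follows from rapid decay of the cusp form, moderate growth of $f_{\mathcal{W}(\mathcal{E}_{\tau})}$, and absolute convergence of the unfolded series in that range, so the interchanges above are justified; the equality of \eqref{global1} and \eqref{global2} then holds on $\Real(s)\gg0$ and extends by the meromorphic continuation already established in Theorem~\ref{theorem:main theorem classical groups}. The only genuinely new input relative to \cite{CFGK2} is the splitting/cocycle accounting of the second paragraph, which is precisely what Sections~\ref{local covering}--\ref{extension of the involution} and \S~\ref{speh} were set up to supply.
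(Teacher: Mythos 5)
Your plan follows the same route as the paper's proof: unfold the series over $P(F)\backslash H(F)/R(F)$, kill all orbits but $P\delta R$ via $\psi_U$, cuspidality and the $(rk,c)$ vanishing, then establish the adelic invariance of the $\delta$-term under $\{(g,{}^{\iota}g)\}$ using Corollary~\ref{corollary:gbl splitting of (G,G)}, \eqref{eq:iota gbl image on product} and Proposition~\ref{proposition:extra invariance}, with the cocycle corrections $\varsigma_{*,c}^{rk+1}$, $\eta^{\times}$, $\eta_{\iota,R}$ cancelling exactly as in the paper's Claim~\ref{claim:invariance of inner under G iota delta}. All the key ingredients are correctly identified, so the proposal is sound and matches the paper's argument.
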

\begin{proof}
Plugging the definition of the Eisenstein series into \eqref{global1},
integral~\eqref{global1} becomes
\begin{align*}
&\int\limits_{G(F)\times G(F)\backslash G({\A})\times G({\A})}\,
\int\limits_{U(F)\backslash U({\A})}\varphi_1^{(\eta^{\times})^{-1}}(\langle g_1,1\rangle)\,\overline{\varphi_2({}^{\iota}\langle g_2,1\rangle)}\,
\\&\times \sum\limits_{\gamma\in P(F)\backslash H(F)}f(\langle \gamma,\eta^{-1}(\gamma)\rangle\langle u,\eta^{-1}(u)\rangle\nonumber
\langle \mathfrak{e}_1(g_1),1\rangle\langle \mathfrak{e}_2(g_2),1\rangle,s)\,\psi_U(u)\,du\,dg_1\,dg_2.
\end{align*}
Let $R=(\mathfrak{e}_1(G)\times \mathfrak{e}_2(G))\ltimes U<Q$. The group $R(F)$ acts on the right on the homogenous space
$P(F)\backslash H(F)$. The stabilizer of $P(F)h$ is $R_{h}(F)={}^{h^{-1}}P(F)\cap R(F)$.
For $\Real(s)\gg0$, we can write the last integral as
\begin{align*}
&\sum\limits_{\gamma\in P(F)\backslash H(F)/R(F)}\quad\int\limits_{G(F)\times G(F)\backslash G({\A})\times G({\A})}
\quad\int\limits_{U(F)\backslash U({\A})}\varphi_1^{(\eta^{\times})^{-1}}(\langle g_1,1\rangle)\,\overline{\varphi_2({}^{\iota}\langle g_2,1\rangle)}\,
\\&\times\sum\limits_{y\in R_{\gamma}(F)\backslash R(F)} f(\langle \gamma,\eta^{-1}(\gamma)\rangle\langle y,\eta^{-1}(y)\rangle\langle u,\eta^{-1}(u)\rangle\nonumber
\langle \mathfrak{e}_1(g_1),1\rangle\langle \mathfrak{e}_2(g_2),1\rangle,s)\,\psi_U(u)\,du\,dg_1\,dg_2.
\end{align*}
Write $y=y_3\mathfrak{e}_1(y_1)\mathfrak{e}_1(y_2)$ for $y_3\in U(F)$ and $y_1,y_2\in G(F)$.
In $H^{(m)}(\A)$ we can write
\begin{align*}
\langle y,\eta^{-1}(y)\rangle=\langle y_3,\eta^{-1}(y_3)\rangle
\langle \mathfrak{e}_1(y_1),\eta^{-1}(\mathfrak{e}_1(y_1))\rangle
\langle \mathfrak{e}_2(y_2),\eta^{-1}(\mathfrak{e}_1(y_2))\rangle.
\end{align*}
The group $G\times G$ normalizes $U$ and stabilizes $\psi_U$, hence we can conjugate $\mathfrak{e}_1(y_1)$ and $\mathfrak{e}_1(y_2)$ to the right, and by Lemma~\ref{lemma:conjugation of N by H} and \eqref{gbl rho on left and right} we obtain
\begin{align*}
&\sum\limits_{\gamma\in P(F)\backslash H(F)/R(F)}\quad\int\limits_{G(F)\times G(F)\backslash G({\A})\times G({\A})}
\quad\int\limits_{U(F)\backslash U({\A})}\varphi_1^{(\eta^{\times})^{-1}}(\langle g_1,1\rangle)\,\overline{\varphi_2({}^{\iota}\langle g_2,1\rangle)}\,
\\&\sum\limits_{y\in R_{\gamma}(F)\backslash R(F)}\eta^{-1}(\mathfrak{e}_1(y_1))\eta^{-1}(\mathfrak{e}_2(y_2))\rho_L^{-1}(y_1,g_1)\rho_R(y_2,g_2)
\\& f(\langle \gamma,\eta^{-1}(\gamma)\rangle\langle y_3u,\eta^{-1}(y_3u)\rangle
\langle \mathfrak{e}_1(y_1g_1),1\rangle\langle \mathfrak{e}_2(y_2g_2),1\rangle,s)\psi_U(u)\,du\,dg_1\,dg_2.
\end{align*}
We also have
\begin{align*}
&\varphi_1^{(\eta^{\times})^{-1}}(\langle g_1,1\rangle)=\varphi_1^{(\eta^{\times})^{-1}}(\langle y_1,\eta(\mathfrak{e}_1(y_1))\rangle\langle g_1,1\rangle)=
\eta(\mathfrak{e}_1(y_1))\rho_L(y_1,g_1)\varphi_1^{(\eta^{\times})^{-1}}(\langle y_1g_1,1\rangle),
\end{align*}
and using \eqref{eq:gbl iota compatible with G(F)},
\begin{align*}
&\varphi_2({}^{\iota}\langle g_2,1\rangle)
=\varphi_2({}^{\iota}\langle y_2,\eta^{-1}(\mathfrak{e}_2(y_2))\rangle\,{}^{\iota}\langle g_2,1\rangle)=\eta(\mathfrak{e}_2(y_2))^{-1}\rho_R(y_2,g_2)\varphi_2({}^{\iota}\langle y_2g_2,1\rangle).
\end{align*}
Combining these computations, we obtain
\begin{align*}
&\sum\limits_{\gamma\in P(F)\backslash H(F)/R(F)}\quad\int\limits_{G(F)\times G(F)\backslash G({\A})\times G({\A})}
\quad\int\limits_{U(F)\backslash U({\A})}\sum\limits_{y\in R_{\gamma}(F)\backslash R(F)}\varphi_1^{(\eta^{\times})^{-1}}(\langle y_1g_1,1\rangle)\,\overline{\varphi_2({}^{\iota}\langle y_2g_2,1\rangle)}\,
\\& f(\langle \gamma,\eta^{-1}(\gamma)\rangle\langle y_3u,\eta^{-1}(y_3u)\rangle\nonumber
\langle \mathfrak{e}_1(y_1g_1),1\rangle\langle \mathfrak{e}_2(y_2g_2),1\rangle,s)\psi_U(u)\,du\,dg_1\,dg_2.
\end{align*}
Now we can collapse the summation into the integral, and we reach the sum
\begin{align*}
\sum_{\gamma\in P(F)\backslash H(F)/R(F)}\mathrm{I}(\gamma),
\end{align*}
where
\begin{align*}
\mathrm{I}(\gamma)=&\int\limits_{R_\gamma(F)\backslash R({\A})}\varphi_1^{(\eta^{\times})^{-1}}(\langle g_1,1\rangle)\,\overline{\varphi_2({}^{\iota}\langle g_2,1\rangle)}\\&
f(\langle \gamma,\eta^{-1}(\gamma)\rangle\langle u,\eta^{-1}(u)\rangle\langle \mathfrak{e}_1(g_1),1\rangle\langle \mathfrak{e}_2(g_2),1\rangle,s)\,\psi_U(u)\,du\,dg_1\,dg_2.
\end{align*}

As in the linear case, first we show that the summands $\mathrm{I}(\gamma)$ such that $\gamma\notin P(F)\delta R(F)$ vanish, then we
prove $\mathrm{I}(\delta)$ equals (after some modifications) \eqref{global2}.
We start with the vanishing. Three types of arguments were used for the proof in the linear case:
\begin{enumerate}[leftmargin=*]
  \item\label{it:lintype1} Using $\psi_U$: find $U'<U$ such that $\psi_U|_{U'}\ne1$ and ${}^{\gamma}U'<U_P$, then $\mathrm{I}(\gamma)=0$ since we obtain an inner integral $\int_{U'(F)\backslash U'(\A)}\psi_U(u')du'=0$.
  \item\label{it:lintype2} Using the cuspidality of $\pi_i$: obtain a unipotent radical $V$ of a parabolic subgroup of $G$ such that the $du$-integral of $f$ is invariant under $V$, then $\mathrm{I}(\gamma)=0$ because we have an inner integral
      $\int_{V(F)\backslash V(\A)}\varphi_i(v)dv=0$.
  \item\label{it:lintype3} Using the vanishing properties of the $(k,c)$ representation: construct as an inner integral,
    a Fourier coefficient of the $(k,c)$ representation attached to a unipotent orbit which is greater than or not comparable with $(k^{c})$.
\end{enumerate}
To extend the arguments from \cite[\S~2.3]{CFGK2} to the covering case, we argue as follows.
First, all occurrences of $k$ in \textit{loc. cit.} are replaced with $rk$.
Second, claims involving the structure of representatives $\gamma$ apply to the covering, because they only involve
multiplications in $H(F)$, and the covering is split over $H(F)$. Third, arguments where we introduced unipotent integrations
remain valid, as long as conjugations are between elements of $H(F)$ and $N_{rkc}(\A)$, such that the conjugation remains in
$N_{rkc}(\A)$, since then we can apply Lemma~\ref{lemma:conjugation of N by H}. This completes the handling of types
\eqref{it:lintype1} and \eqref{it:lintype3}; for type \eqref{it:lintype2} there are also cases where $V<N_{n}^-$, and
for such we have a splitting $v\mapsto\langle v,\eta'(v)\rangle$ of $V$
into the right copy of $G^{(m)}$ so that $\int_{V(F)\backslash V(\A)}\varphi_2(\langle v,\eta'(v)\rangle)dv=0$,
and ${}^{\gamma}V<N_{rkc}$. The arguments from the linear case extend to the covering and imply, using
${}^{\gamma}\langle v,\eta'(v)\rangle=\langle {}^{\gamma}v,\eta^{-1}({}^{\gamma}v)\rangle$ (by \eqref{eq:epsilon for conjugation between split subgroups}), that the integral of $f$ over $U$ is invariant with respect to $\{\langle v,\eta'(v)\rangle:v\in V(\A)\}$,
so that we can obtain the inner integral $\int_{V(F)\backslash V(\A)}\varphi_2(\langle v,\eta'(v)\rangle)dv$.
This completes the proof that $I(\gamma)=0$ for
$\gamma$ such that $\gamma\notin P(F)\delta R(F)$.

Finally consider $\mathrm{I}(\delta)$. Let $v^{\diamondsuit}=\diag(v,v^*)$ be the natural
embedding of $V_{(c^{rk})}$ in $M_P$ and put $V_{(c^{rk})}^{\diamondsuit}=\{v^{\diamondsuit}:v\in V_{(c^{rk})}\}$.
We compute the stabilizer and obtain
\begin{align*}
R_{\delta}=\{(g,{}^{\iota}g):g\in G\}
\ltimes{}^{\delta^{-1}}V_{(c^{rk})}^{\diamondsuit}.
\end{align*}
Write
\begin{align}\label{eq:decomposition of U with square}
U={}^{\delta^{-1}}V_{(c^{rk})}^{\diamondsuit}\ltimes(U\cap U_P)={}^{\delta^{-1}}V_{(c^{rk})}^{\diamondsuit}\ltimes U_0.
\end{align}
For $u\in{}^{\delta^{-1}}V_{(c^{rk})}^{\diamondsuit}$, if ${}^{\delta}u=v^{\diamondsuit}$ with $v\in V_{(c^{rk})}$,
then since both $V_{(c^{rk})}^{\diamondsuit}$ and ${}^{\delta^{-1}}V_{(c^{rk})}^{\diamondsuit}$ are subgroups of $N_{rkc}$, we can apply
Lemma~\ref{lemma:conjugation of N by H} and obtain
\begin{align}\label{eq:conjugation of v_g}
{}^{\delta}\langle u,\eta^{-1}(u)\rangle=\langle v^{\diamondsuit},\eta^{-1}(v^{\diamondsuit})\rangle=\langle v,(\eta_{rkc}^{\diamondsuit})^{-1}(v)\rangle.
\end{align}
Here the second equality follows from the definitions of $\GL_{rkc}^{(m,r)}(\A)$.
Also $\psi_U({}^{\delta^{-1}}v^{\diamondsuit})=\psi^{-1}(v)$, where $\psi$ is given by \eqref{eq:rk c character}. Thus
\begin{align*}
&\int\limits_{{}^{\delta^{-1}}V_{(c^{rk})}^{\diamondsuit}(F)\backslash U(\A)}
f(\langle\delta ,\eta^{-1}(\delta)\rangle\langle u,\eta^{-1}(u)\rangle,s)\,\psi_U(u)\,du
\\
&=\int\limits_{U_0(\A)}
\int\limits_{V_{(c^{rk})}(F)\backslash V_{(c^{rk})}(\A)}
f(\langle v,(\eta_{rkc}^{\diamondsuit})^{-1}(v)\rangle\langle \delta ,\eta^{-1}(\delta)\rangle\langle u_0,\eta^{-1}(u_0)\rangle
,s)\,\psi^{-1}(v)\psi_U(u_0)\,dv\,du_0
\\&=\int\limits_{U_0(\A)}
f_{\mathcal{W}(\mathcal{E}_{\tau})}(\langle\delta,\eta^{-1}(\delta)\rangle\langle u_0,\eta^{-1}(u_0)\rangle
,s)\,\psi_U(u_0)\,du_0.
\end{align*}
Here for the last equality we used \eqref{eq:def of f W global}.
We plug this into $\mathrm{I}(\delta)$ and obtain
\begin{align*}
&\int\limits_{\{(g,{}^{\iota}g):g\in G(F)\}\backslash G(\A)\times G(\A)}
\int\limits_{U_0(\A)}
\varphi_1^{(\eta^{\times})^{-1}}(\langle g_1,1\rangle)\,\overline{\varphi_2({}^{\iota}\langle g_2,1\rangle)}\,
\\&f_{\mathcal{W}(\mathcal{E}_{\tau})}(\langle\delta,\eta^{-1}(\delta)\rangle\langle u_0,\eta^{-1}(u_0)\rangle
\langle \mathfrak{e}_1(g_1),1\rangle\langle \mathfrak{e}_2(g_2),1\rangle,s)\,\psi_U(u_0)\,du_0\,dg_1\,dg_2.\nonumber
\end{align*}
The next step is to factor this integral through $\{(g,{}^{\iota}g):g\in G(\A)\}\cong G(\A)$.
Multiply $g_2\mapsto g_1^{\iota}g_2$, then $\{(g,{}^{\iota}g):g\in G(F)\}\mapsto\{(g,1):g\in G(F)\}=G(F)$ and the integral becomes
\begin{align}\label{int:I delta factoring unipotent}
&\int\limits_{G(F)\backslash G(\A)\times G(\A)}
\int\limits_{U_0(\A)}
\varphi_1^{(\eta^{\times})^{-1}}(\langle g_1,1\rangle)\,\overline{\varphi_2({}^{\iota}\langle {}^{\iota}g_1g_2,1\rangle)}\,
\\&f_{\mathcal{W}(\mathcal{E}_{\tau})}(\langle\delta,\eta^{-1}(\delta)\rangle\langle u_0,\eta^{-1}(u_0)\rangle
\langle \mathfrak{e}_1(g_1),1\rangle\langle \mathfrak{e}_2({}^{\iota}g_1g_2),1\rangle,s)\,\psi_U(u_0)\,du_0\,dg_1\,dg_2.\nonumber
\end{align}
Then
\begin{align*}
&\overline{\varphi_2({}^{\iota}\langle {}^{\iota}g_1g_2,1\rangle)}\mapsto
\rho_R({}^{\iota}g_1,g_2)\overline{\varphi_2({}^{\iota}\langle {}^{\iota}g_1,1\rangle\,{}^{\iota}\langle g_2,1\rangle)},
\\&f_{\mathcal{W}(\mathcal{E}_{\tau})}(\langle\delta,\eta^{-1}(\delta)\rangle\langle u_0,\eta^{-1}(u_0)\rangle
\langle \mathfrak{e}_1(g_1),1\rangle\langle \mathfrak{e}_2({}^{\iota}g_1g_2),1\rangle,s)
\\&\mapsto
\rho_R({}^{\iota}g_1,g_2)^{-1}f_{\mathcal{W}(\mathcal{E}_{\tau})}(\langle\delta,\eta^{-1}(\delta)\rangle\langle u_0,\eta^{-1}(u_0)\rangle
\langle \mathfrak{e}_1(g_1),1\rangle\langle \mathfrak{e}_2({}^{\iota}g_1),1\rangle\langle \mathfrak{e}_2(g_2),1\rangle,s).
\end{align*}
The conjugation ${}^{\iota}\langle {}^{\iota}g_1,1\rangle$ takes place in the right copy of $G^{(m)}$, so that
by \eqref{eq:iota on the gbl coverings} and because ${}^{\iota}$ is an involution,
\begin{align}\label{eq:iota inv using}
&{}^{\iota}\langle {}^{\iota}g_1,\eta_{\iota,R}^{-1}(g_1)\rangle=
{}^{\iota}({}^{\iota}\langle g_1,1\rangle)=\langle g_1,1\rangle,\\\nonumber
&\eta_{\iota,R}(g_1)\overline{\varphi_2({}^{\iota}\langle {}^{\iota}g_1,1\rangle\,{}^{\iota}\langle g_2,1\rangle)}
=\overline{\varphi_2(\langle g_1,1\rangle\,{}^{\iota}\langle g_2,1\rangle)}.
\end{align}
Consequently \eqref{int:I delta factoring unipotent} equals
\begin{align*}
&\int\limits_{G(F)\backslash G(\A)\times G(\A)}
\int\limits_{U_0(\A)}
\varphi_1^{(\eta^{\times})^{-1}}(\langle g_1,1\rangle)\,\overline{\varphi_2(\langle g_1,1\rangle\,{}^{\iota}\langle g_2,1\rangle)}\,
\\&f_{\mathcal{W}(\mathcal{E}_{\tau})}(\langle\delta,\eta^{-1}(\delta)\rangle\langle u_0,\eta^{-1}(u_0)\rangle
\langle \mathfrak{e}_1(g_1),1\rangle\langle \mathfrak{e}_2({}^{\iota}g_1),\eta_{\iota,R}^{-1}(g_1)\rangle\langle \mathfrak{e}_2(g_2),1\rangle,s)\,\psi_U(u_0)\,du_0\,dg_1\,dg_2.\nonumber
\end{align*}
Then by \eqref{eq:iota gbl image on product},
\begin{align*}
\langle \mathfrak{e}_1(g_1),1\rangle\langle \mathfrak{e}_2({}^{\iota}g_1),\eta_{\iota,R}^{-1}(g_1)\rangle
=\langle \mathfrak{e}_1(g_1),1\rangle\,{}^{\iota}\langle \mathfrak{e}_2(g_1),1\rangle
={}^{\iota}\langle (g_1,g_1),\rho(\mathfrak{e}_1(g_1),\mathfrak{e}_2(g_2))\rangle.
\end{align*}
Hence the last integral equals
\begin{align}\label{int:I delta factoring change g_1 g_2}
&\int\limits_{G(F)\backslash G(\A)\times G(\A)}
\int\limits_{U_0(\A)}
\varphi_1^{(\eta^{\times})^{-1}}(\langle g_1,1\rangle)\,\overline{\varphi_2(\langle g_1,1\rangle\,{}^{\iota}\langle g_2,1\rangle)}\,
\\&f_{\mathcal{W}(\mathcal{E}_{\tau})}(\langle \delta ,\eta^{-1}(\delta )\rangle
\langle u_0,\eta^{-1}(u_0)\rangle
{}^{\iota}\langle (g_1,g_1),\rho(\mathfrak{e}_1(g_1),\mathfrak{e}_2(g_1))\rangle\langle \mathfrak{e}_2(g_2),1\rangle,s)\,\psi_U(u_0)\,du_0\,dg_1\,dg_2.\nonumber
\end{align}
Next we see that ${}^{(g_1,{}^{\iota}g_1)^{-1}}u_0=v_{g_1}u_{g_1}$, where $v_{g_1}\in {}^{\delta^{-1}}V_{(c^{rk})}^{\diamondsuit}(\A)$ and $u_{g_1}\in U_0(\A)$.
Since $v_{g_1},u_{g_1}\in N_{rkc}(\A)$, by Lemma~\ref{lemma:conjugation of N by H}
\begin{align}\label{eq:conj gbl g1 g1 iota u_0}
{}^{(g_1,{}^{\iota}g_1)^{-1}}\langle u_0,\eta^{-1}(u_0)\rangle=
\langle {}^{(g_1,{}^{\iota}g_1)^{-1}}u_0,\eta^{-1}({}^{(g_1,{}^{\iota}g_1)^{-1}}u_0)\rangle=\langle v_{g_1},\eta^{-1}(v_{g_1})\rangle\langle u_{g_1},\eta^{-1}(u_{g_1})\rangle.
\end{align}
Thus the $du_0$-integral of \eqref{int:I delta factoring change g_1 g_2} becomes
\begin{align*}
\int\limits_{U_0(\A)}f_{\mathcal{W}(\mathcal{E}_{\tau})}
({}^{\delta\iota}\langle (g_1,g_1),\rho(\mathfrak{e}_1(g_1),\mathfrak{e}_2(g_1))\rangle\langle \delta ,\eta^{-1}(\delta)\rangle
\langle v_{g_1},\eta^{-1}(v_{g_1})\rangle\langle u_{g_1},\eta^{-1}(u_{g_1})\rangle
,s)\,\psi_U(u_0)\,du_0.
\end{align*}
Here ${}^{\delta\iota}\langle (g,g),\epsilon\rangle$ denotes the composition
${}^{\delta}({}^{\iota}\langle (g,g),\epsilon\rangle)$; in $H$, ${}^{\delta\iota}$ is the automorphism of $G\times G$ given by
${}^{\delta\iota}(g,g)={}^{\delta}(g,{}^{\iota}g)$ (recall $\iota\notin G$).
By Corollary~\ref{corollary:gbl splitting of (G,G)},
$(g,g)\mapsto\langle (g,g),(\eta^{\times})^{-1}(g)\rho(\mathfrak{e}_1(g),\mathfrak{e}_2(g))\rangle$ is the splitting
of the group $\{(g,g):g\in G(\A)\}$ in $H^{(m)}(\A)$.
\begin{claim}\label{claim:invariance of inner under G iota delta}
For any $g\in G(\A)$ and $h\in H^{(m)}(\A)$,
\begin{align*}
&f_{\mathcal{W}(\mathcal{E}_{\tau})}
({}^{\delta\iota}\langle (g,g),(\eta^{\times})^{-1}(g)\rho(\mathfrak{e}_1(g),\mathfrak{e}_2(g))\rangle h
,s)=f_{\mathcal{W}(\mathcal{E}_{\tau})}(h,s).
\end{align*}
\end{claim}
The claim is proved below.
It follows that \eqref{int:I delta factoring change g_1 g_2} equals
\begin{align*}
&\int\limits_{G(F)\backslash G(\A)\times G(\A)}
\int\limits_{U_0(\A)}
\varphi_1^{(\eta^{\times})^{-1}}(\langle g_1,\eta^{\times}(g_1)\rangle)\,\overline{\varphi_2(\langle g_1,1\rangle\,{}^{\iota}\langle g_2,1\rangle)}\,
\\&
f_{\mathcal{W}(\mathcal{E}_{\tau})}
(\langle \delta ,\eta^{-1}(\delta)\rangle\langle v_{g_1},\eta^{-1}(v_{g_1})\rangle\langle u_{g_1},\eta^{-1}(u_{g_1})\rangle
\langle \mathfrak{e}_2(g_2),1\rangle,s)\,\psi_U(u_0)\,du_0\,dg_1\,dg_2.\nonumber
\end{align*}
The character emitted from $\psi_U$ when we change variables $u_{g_1}\mapsto u_0$ is cancelled by the left equivariance property of
$f_{\mathcal{W}(\mathcal{E}_{\tau})}$ under $v_{g_1}$ (see \eqref{eq:conjugation of v_g}); this actually follows from the definition of the embedding in \S~\ref{embedding}. Also by definition
$\varphi_1^{(\eta^{\times})^{-1}}(\langle g_1,\eta^{\times}(g_1)\rangle)=
\varphi_1(\langle g_1,1\rangle)$.
We obtain
\begin{align*}
&\int\limits_{G(F)\backslash G(\A)\times G(\A)}
\int\limits_{U_0(\A)}
\varphi_1(\langle g_1,1\rangle)\,\overline{\varphi_2(\langle g_1,1\rangle\,{}^{\iota}\langle g_2,1\rangle)}\,
\\&
f_{\mathcal{W}(\mathcal{E}_{\tau})}
(\langle \delta u_0 ,\eta^{-1}(\delta u_0)\rangle
\langle \mathfrak{e}_2(g_2),1\rangle,s)\,\psi_U(u_0)\,du_0\,dg_1\,dg_2.\nonumber
\end{align*}
Here we also used Corollary~\ref{corollary:rho and eta on H and N without conjugation} to combine $\delta$ and $u_0$.
Now factoring through $\{(g,1):g\in G(\A)\}$, the integral
becomes
\begin{align}\label{int:I delta factoring 3}
&\int\limits_{G(\A)}\{\varphi_1,({}^{\iota}\langle g_2,1\rangle)\cdot\varphi_2\}
\int\limits_{U_0(\A)}
f_{\mathcal{W}(\mathcal{E}_{\tau})}(\langle \delta u_0,\eta^{-1}(\delta u_0)\rangle\langle \mathfrak{e}_2(g_2),1\rangle,s)\,\psi_U(u_0)\,du_0\,dg_2.
\end{align}
Using \eqref{eq:iota inv using} again and since
$\{\varphi_1,\langle g_2,\epsilon\rangle\cdot\varphi_2\}
=\epsilon^{-1}\{\varphi_1,\langle g_2,1\rangle\cdot\varphi_2\}$,
when we change $g_2\mapsto{}^{\iota}g_2$ in \eqref{int:I delta factoring 3} we obtain
\begin{align}\label{int:I delta factoring 4}
&\int\limits_{G(\A)}\{\varphi_1,\langle g_2,1\rangle\cdot\varphi_2\}
\int\limits_{U_0(\A)}
f_{\mathcal{W}(\mathcal{E}_{\tau})}(\langle \delta u_0,\eta^{-1}(\delta u_0)\rangle\langle \mathfrak{e}_2({}^{\iota}g_2),\eta_{\iota,R}^{-1}(g_2)\rangle,s)\,\psi_U(u_0)\,du_0\,dg_2.
\end{align}
Therefore we conclude
\begin{align}\label{int:I delta factoring 4}
&\mathrm{I}(\delta)=\int\limits_{G(\A)}\{\varphi_1,\langle g_2,1\rangle\cdot\varphi_2\}
\int\limits_{U_0(\A)}
f_{\mathcal{W}(\mathcal{E}_{\tau})}(\langle \delta u_0,\eta^{-1}(\delta u_0)\rangle\,{}^{\iota}\langle \mathfrak{e}_2(g_2),1\rangle,s)\,\psi_U(u_0)\,du_0\,dg_2.
\end{align}
This completes the proof that \eqref{global1} is equal to
\eqref{global2} for $\Real(s)\gg0$. Moreover, it is now clear that $\mathrm{I}(\delta)$ and thereby \eqref{global1} vanish, unless $\pi_1=\pi_2=\pi$.
\end{proof}
\begin{proof}[Proof of Claim~\ref{claim:invariance of inner under G iota delta}]
For $g\in G(\A)$, by matrix multiplication ${}^{\delta\iota}(g,g)=d_gl_g$ with
\begin{align*}
&d_g=\diag((g^*)^{\triangle},g^{\triangle}), \qquad g^{\triangle}=\diag(g,\ldots,g)\in\GL_{rkc},\\
&l_g=\left(\begin{smallmatrix}I_n&&&&-A_2\\&I_n&&-A_3\\&&I_{2(rk-1)c}\\&&&I_n\\&&&&I_n\end{smallmatrix}\right),\qquad g=\left(\begin{smallmatrix}A_1 & A_2 \\ A_3 & A_4  \end{smallmatrix}\right).
\end{align*}
Then $d_g\in M_P$ and $l_g,{}^{d_g}l_g\in U_P(\A)$. Hence locally, for $g,g'\in G(F_{\nu})$, by
\eqref{eq:sigma on vh and h'v'} and Proposition~\ref{proposition:sigma on diagonal embedding of SLc},
\begin{align}\label{eq:claim left inv f g giota local comp}
\sigma_{2rkc,\nu}(d_gl_g,d_{g'}l_{g'})=\sigma_{2rkc,\nu}(d_g,d_{g'})=
\sigma_{rkc,\nu}^{\diamondsuit}((g^*)^{\triangle},({g'}^*)^{\triangle})=\left(\frac{\varsigma_{*,c,\nu}(g^*)\varsigma_{*,c,\nu}({g'}^*)}{\varsigma_{*,c,\nu}((gg')^*)}\right)^{rk}.
\end{align}
Then by \eqref{eq:nu and sigma for covering of H}, $\rho_{\nu}(d_gl_g,d_{g'}l_{g'})$ equals
\begin{align*}
&{\eta_{\nu}(d_{g}l_{g}d_{g'}l_{g'})}\left(\frac{\varsigma_{*,c,\nu}(g^*)\varsigma_{*,c,\nu}({g'}^*)}{\varsigma_{*,c,\nu}((gg')^*)}\right)^{rk}
=\frac{(\eta_{\nu}(d_gl_g)\varsigma_{*,c,\nu}^{rk}(g^*))(\eta_{\nu}(d_{g'}l_{g'})\varsigma_{*,c,\nu}^{rk}({g'}^*))}
{\eta_{\nu}(d_{g}l_{g}d_{g'}l_{g'})\varsigma_{*,c,\nu}^{rk}((gg')^*)}.
\end{align*}
Since $\rho_{\nu}$ is $1$ on $K_{H,\nu}$ for almost all $\nu$, and $g\mapsto{}^{\delta\iota}(g,g)=d_gl_g$ is in particular a homomorphism of $K_{G,\nu}$ into $K_{H,\nu}$, we deduce that $g\mapsto\eta_{\nu}(d_gl_g)\varsigma_{*,c,\nu}^{rk}(g^*)=1$ on $K_{G,\nu}$ (see the proof of Corollary~\ref{corollary:gbl splitting for SL_c}). Therefore
\begin{align*}
[\eta\varsigma_{*,c}^{rk}](d_gl_g)=\prod_{\nu}\eta_{\nu}(d_gl_g)\varsigma_{*,c,\nu}^{rk}(g^*)
\end{align*}
is well defined on $\{{}^{\delta\iota}(g,g):g\in G(\A)\}$. Then by globalizing the identity for
$\rho_{\nu}(d_gl_g,d_{g'}l_{g'})$, it follows that
\begin{align*}
d_gl_g\mapsto\langle d_gl_g,[\eta\varsigma_{*,c}^{rk}]^{-1}(d_gl_g)\rangle
\end{align*}
is the unique splitting
of $\{{}^{\delta\iota}(g,g):g\in G(\A)\}$ (cf. \eqref{eq:local rho eta varsigma}).
Now by \eqref{eq:epsilon for conjugation between split subgroups}, with $\chi={}^{\delta\iota}$ and $Y=\{(g,g):g\in G(\A)\}$,
\begin{align}\label{eq:conj by delta globally}
{}^{\delta\iota}\langle (g,g),(\eta^{\times})^{-1}(g)\rho(\mathfrak{e}_1(g),\mathfrak{e}_2(g))\rangle=
\langle d_gl_g,
[\eta\varsigma_{*,c}^{rk}]^{-1}(d_gl_g)\rangle.
\end{align}
Furthermore, for $g\in G(F_{\nu})$ by \eqref{eq:nu and sigma for covering of H} we have
\begin{align*}
\rho_{\nu}(d_g,l_g)^{-1}\eta_{\nu}^{-1}(d_gl_g)\varsigma_{*,c,\nu}^{-rk}(g^*)=
\eta_{\nu}^{-1}(d_g)\eta_{\nu}^{-1}(l_g)\sigma_{2rkc,\nu}^{-1}(d_g,l_g)\varsigma_{*,c,\nu}^{-rk}(g^*)
=\eta_{\nu}^{-1}(d_g)\eta_{\nu}^{-1}(l_g)\varsigma_{*,c,\nu}^{-rk}(g^*).
\end{align*}
Here for the second equality we used \eqref{eq:sigma on h and v}. By definition $\eta_{\nu}(d_g)=\eta_{rkc,\nu}^{\diamondsuit}((g^*)^{\triangle})$ and then
$\eta_{\nu}(d_g)\varsigma_{*,c,\nu}^{rk}(g^*)=\eta_{rkc,\nu}^{\triangle}(g^*)$ (see Corollary~\ref{corollary:gbl splitting for SL_c}), hence
\begin{align*}
\rho_{\nu}(d_g,l_g)^{-1}\eta_{\nu}^{-1}(d_gl_g)\varsigma_{*,c,\nu}^{-rk}(g^*)=(\eta_{rkc,\nu}^{\triangle})^{-1}(g^*)\eta_{\nu}^{-1}(l_g).
\end{align*}
The l.h.s.~ globalizes because $\rho$ is well defined and so is $[\eta\varsigma_{*,c}^{rk}]$, and the r.h.s.~ globalizes because
by Corollary~\ref{corollary:gbl splitting for SL_c}, $\eta_{rkc}^{\triangle}$ is well defined (even on $\SL_c(\A)$ and $G(\A)<\SL_c(\A)$),
and $\eta$ is well defined (even on $N_{rkc}(\A)$). Consequently for any $g\in G(\A)$,
\begin{align*}
\rho(d_g,l_g)^{-1}[\eta\varsigma_{*,c}^{rk}]^{-1}(d_gl_g)=(\eta_{rkc}^{\triangle})^{-1}(g^*)\eta^{-1}(l_g).
\end{align*}
Thus
\begin{align*}
\langle d_gl_g,[\eta\varsigma_{*,c}^{rk}]^{-1}(d_gl_g)\rangle
=\langle d_g,\rho(d_g,l_g)^{-1}[\eta\varsigma_{*,c}^{rk}]^{-1}(d_gl_g)\rangle\langle l_g,1\rangle
=\langle d_g,(\eta_{rkc}^{\triangle})^{-1}(g^*)\rangle\langle l_g,\eta^{-1}(l_g)\rangle.
\end{align*}
Also note that $\langle d_g,(\eta_{rkc}^{\triangle})^{-1}(g^*)\rangle=\langle (g^*)^{\triangle},(\eta_{rkc}^{\triangle})^{-1}(g^*)\rangle$, when we regard $(g^*)^{\triangle}$ as an element of $M_P(\A)$ (which is also how $\GL_{rkc}^{(m,r)}(\A)$ was defined).
Plugging these results into $f_{\mathcal{W}(\mathcal{E}_{\tau})}$,
\begin{align*}
&f_{\mathcal{W}(\mathcal{E}_{\tau})}
({}^{\delta\iota}\langle (g,g),(\eta^{\times})^{-1}(g)\rho(\mathfrak{e}_1(g),\mathfrak{e}_2(g))\rangle h
,s)=f_{\mathcal{W}(\mathcal{E}_{\tau})}(\langle d_gl_g,
[\eta\varsigma_{*,c}^{rk}]^{-1}(d_gl_g)\rangle h,s)
\\&\quad=f_{\mathcal{W}(\mathcal{E}_{\tau})}(\langle (g^*)^{\triangle},(\eta_{rkc}^{\triangle})^{-1}(g^*)\rangle\langle l_g,\eta^{-1}(l_g)\rangle h,s)
=f_{\mathcal{W}(\mathcal{E}_{\tau})}(\langle l_g,\eta^{-1}(l_g)\rangle h,s)
=f_{\mathcal{W}(\mathcal{E}_{\tau})}(h,s).
\end{align*}
Here for the first equality we used \eqref{eq:conj by delta globally}, the third follows from
Proposition~\ref{proposition:extra invariance} and the last by the left-invariance property of $f_{\mathcal{W}(\mathcal{E}_{\tau})}$ under
$\langle u,\eta^{-1}(u)\rangle$ for $u\in U_P(\A)$.
\end{proof}

Now we write \eqref{global2} as an ``almost Euler product" (in the sense of \cite{Tk}),
and derive the structure of the local integrals.
Let $S$ be a finite set of places of $F$ such that for $\nu\notin S$, $F_{\nu}$, $\psi_{\nu}$, $\tau_{\nu}$ and $\pi_{\nu}$ are unramified.
We use the notation of \S~\ref{speh def}.
Identify $\mathcal{E}_{\tau}$ with $(\mathcal{E}_{\tau})_S\otimes\otimes'_{\nu\notin S}\rho_c(\tau_{\nu})$ ($\rho_c(\tau_{\nu})$ was defined in \S~\ref{Local components of rk c speh}). By \eqref{eq:partial decomp of Lambda}, for a decomposable $f$,
\begin{align*}
f_{\mathcal{W}(\mathcal{E}_{\tau})}(h,s)=f_{\mathcal{W}((\mathcal{E}_{\tau})_S)}(h_S,s)\prod_{\nu\notin S}f_{\mathcal{W}(\rho_c(\tau_{\nu}))}(h_{\nu},s),\qquad h\in H^{(m)}(\A).
\end{align*}
Here $f_{\mathcal{W}((\mathcal{E}_{\tau})_S)}$ (resp., $f_{\mathcal{W}(\rho_c(\tau_{\nu}))}$) is a standard section in the space of $\Ind_{\widetilde{P}(F_S)}^{H^{(m)}(F_S)}(\mathcal{W}((\mathcal{E}_{\tau})_S)\delta_P^s)$
(resp., $\Ind_{\widetilde{P}(F_{\nu})}^{H^{(m)}(F_{\nu})}(\mathcal{W}(\rho_c(\tau_{\nu}))\delta_P^s)$), regarded as a complex-valued function. Moreover, at almost all $\nu\notin S$, the sections $f_{\mathcal{W}(\rho_c(\tau_{\nu}))}$ are normalized
($f_{\mathcal{W}(\rho_c(\tau_{\nu}))}(\langle I_{2rkc},1\rangle,s)=1$) and unramified.

Next for decomposable $\varphi_1$ and $\varphi_2$, by the uniqueness of the $G^{(m)}(F_{\nu})$-invariant bilinear pairing on $\pi_{\nu}\times\pi_{\nu}^{\vee}$ at all places, we can write for any $g\in G^{(m)}(\A)$,
\begin{align*}
\{\varphi_1,\pi(g)\varphi_2\}=\prod_{\nu}\omega_{\nu}(g_{\nu}),
\end{align*}
where $\omega_{\nu}$ is a matrix coefficient of $\pi_{\nu}^{\vee}$, and at almost all places $\omega_{\nu}$ is normalized ($\omega_{\nu}(\langle I_c,1\rangle)=1$) and unramified.

Then by Theorem~\ref{theorem:main gbl identity} we have an almost Euler product:
\begin{align}\label{eq:almost Euler}
Z(s,\varphi_1,\varphi_2,f)=Z[S](s,\omega_S,f_
{\mathcal{W}((\mathcal{E}_{\tau})_S)})\prod_{\nu\notin S}Z_{\nu}(s,\omega_{\nu},
f_{\mathcal{W}(\rho_c(\tau_{\nu}))}),
\end{align}
where
\begin{align}\label{eq:local integral at one place}
&Z_{\nu}(s,\omega_{\nu},f_{\mathcal{W}(\rho_c(\tau_{\nu}))})\\&=
\int\limits_{G(F_{\nu})}\int\limits_{U_0(F_{\nu})}
\omega_{\nu}(\langle g,1\rangle)f_{\mathcal{W}(\rho_c(\tau_{\nu}))}(\langle\delta_{\nu} u_0,\eta_{\nu}^{-1}(\delta_{\nu} u_0)\rangle
{}^{\iota_{\nu}}\langle\mathfrak{e}_2(g),1\rangle,s)
\,(\psi_{\nu})_{U}(u_0)\,du_0\,dg,\nonumber
\end{align}
\begin{align}\label{eq:local integral at places}
&Z[S](s,\omega_S,f_{\mathcal{W}((\mathcal{E}_{\tau})_S)})\\&=
\int\limits_{G(F_{S})}\int\limits_{U_0(F_{S})}
\omega_{S}(\langle g,1\rangle)f_{\mathcal{W}((\mathcal{E}_{\tau})_S)}(\langle\delta_{S} u_0,\eta_{S}^{-1}(\delta_{S} u_0)\rangle
{}^{\iota_{S}}\langle\mathfrak{e}_2(g),1\rangle,s)
\,(\psi_{S})_{U}(u_0)\,du_0\,dg.\nonumber
\end{align}
The following is the main local result of this work: the computation of \eqref{eq:local integral at one place} with unramified data.
Its proof occupies \S~\ref{Computation of the local factors with unramified data}.
\begin{theorem}\label{theorem:unramified computation for Sp(2n),SO(2n)}
Let $\nu\notin S$ and assume $\omega_{\nu}$ and $f_{\mathcal{W}(\rho_c(\tau_{\nu}))}$ are normalized and unramified. Then
\begin{align*}
&Z_{\nu}(s,\omega_{\nu},f_{\mathcal{W}(\rho_c(\tau_{\nu}))})\\&=
\frac{L_{\vartheta_{\nu}}(r\alpha s+1/2,\pi_{\nu}\times\tau_{\nu})}
{[L_{\vartheta_{\nu}}(r\alpha s+rn+1/2,\tau_{\nu})]\prod\limits_{1\leq j\leq rn}L_{\vartheta_{\nu}}(2r\alpha s+2j,\tau_{\nu},\wedge^2)
L_{\vartheta_{\nu}}(2r\alpha s+2j-1,\tau_{\nu},\vee^2)}.
\end{align*}
Here $\alpha=rkc+1$, $L_{\vartheta_{\nu}}(r\alpha s+rn+1/2,\tau_{\nu})$ appears only for odd $m$ and $\vartheta_{\nu}$ is ignored when $m\not\equiv2\,(4)$.
\end{theorem}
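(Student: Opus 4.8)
\textbf{Proof plan for Theorem~\ref{theorem:unramified computation for Sp(2n),SO(2n)}.}
The plan is to mirror the reduction strategy of \cite{CFGK2} in the linear case, carried out now over the covering group and using the unramified inputs prepared in \S\S\ref{unramified reps}--\ref{RS integrals}. The first step is to unfold the inner $U_0$-integration in \eqref{eq:local integral at one place}. Using the decomposition \eqref{eq:decomposition of U with square} of $U$ and the explicit form of $\delta=\delta_0\delta_1$, together with Lemma~\ref{lemma:conjugation of N by H}, Proposition~\ref{proposition: rk c functional is nontrivial on nontwisted} and the fact that $f_{\mathcal{W}(\rho_c(\tau_{\nu}))}$ lies in the $(rk,c)$-model of $\rho_c(\tau_{\nu})$, I would rewrite the local integral as an integral over $G(F_{\nu})$ against an unramified Whittaker-like function on $\GL_{rkc}^{(m,r)}(F_{\nu})$ evaluated on elements $\mathfrak{e}_2(g)$-translated by $\delta$. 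The hyperspecial assumption $\mu_{2m}\subset F^*$, $q>3$ and $|m|=1$ ensures that the $2$-cocycle $\sigma^{\diamondsuit}_{rkc}$ is trivial on the relevant compact subgroups and that the splittings used in \S\ref{embedding}--\S\ref{covering of the Levi} collapse to the canonical ones, so that all the $\eta$-corrections disappear at the unramified place.

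Second, I would reduce from the $G^{(m)}\times\GL_k^{(m,r)}$ integral to a $\GL_n^{(m,r)}\times\GL_k^{(m,r)}$ integral. This is the step where the embedding $\mathfrak{e}_2$ composed with $\iota$ exposes a Siegel-parabolic structure inside $G=\Sp_c$ ($c=2n$), and the unramified vector $\omega_\nu$ — a matrix coefficient of $\pi_\nu^\vee$, itself unramified with $\pi_\nu$ the constituent of $\mathrm{I}_{G^{(m)}}(\vartheta_\nu,\mu)$ induced from a Siegel parabolic with $\GL_n^{(m,r)}$ Levi (see \eqref{eq:standard L of pi and tau identity}) — can be integrated along the unipotent radical. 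The outcome should be a $\GL_n^{(m,r)}\times\GL_k^{(m,r)}$ doubling integral, of the shape defined in \S\ref{integarls for GL}, whose kernel is the $(rk,c)$-model realized via Proposition~\ref{proposition: rk c functional is nontrivial on nontwisted} using the inductive construction of $(rk,c)$-functionals from Corollary~\ref{corollary:realization space for 0 < l < c}. Here I would invoke the recent result of Cai \cite{CaiLemma39} to run the induction on $n$, exactly as flagged in the introduction.

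Third, I would descend to $n=1$: a $\GL_1^{(m,r)}\times\GL_k^{(m,r)}$ integral. At this point the $(rk,c)$-representation with $c=2n=2$ reduces, along the explicit realization of its $(rk,2)$-functional through $\mathcal{W}(\rho_1(\tau_\nu))\otimes\mathcal{W}(\rho_1(\tau_\nu))$ (Corollary~\ref{corollary:realization space for 0 < l < c} with $l=1$), to Rankin--Selberg type integrals $\GL_1^{(m,r)}\times\GL_{rk}^{(m,r)}$ and $\GL_1^{(m,r)}\times\GL_{2rk}^{(m,r)}$. Both $\GL_{rk}^{(m,r)}$ and $\GL_{2rk}^{(m,r)}$ admit unique Whittaker models here ($\mathcal{W}(\rho_1(\tau_\nu))$ is a component of $\mathcal{W}(\Theta_{r,m,r,\vartheta}(\chi))$ by Corollary~\ref{corollary:functional on theta implies functional on rho c tau}), so the Casselman--Shalika computation of \S\ref{local theta speh for c=1} applies. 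The key inputs are Theorem~\ref{theorem:CS formula for rk, c}, which gives $W_{\mathbf 0}(\mathbf 0,\vartheta,\chi_\Theta)=\prod_{j=1}^r C(j,\mathbf x)$, and Theorem~\ref{theorem:Whittaker on x,I}, which expresses $W^0(\diag(a^{rl},I_{rk-1}))$ in terms of the complete symmetric polynomials $p_l(\mathbf x)$. Feeding these into the Rankin--Selberg integral as in \eqref{RS integral for n=1} yields factors $L_{\vartheta_\nu}(\cdots,\pi_{1,\nu}\times\tau_\nu)$, $L_{\vartheta_\nu}(\cdots,\pi_{1,\nu}^\vee\times\tau_\nu)$, and — only for odd $m$, i.e.\ $r=m$, when the middle $1$ appears in the Satake parameter \eqref{eq:Satake symplectic} — an extra $L_{\vartheta_\nu}(\cdots,\tau_\nu)$; after applying the identity \eqref{eq:standard L of pi and tau identity} these assemble into the numerator $L_{\vartheta_\nu}(r\alpha s+1/2,\pi_\nu\times\tau_\nu)$.

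Finally, the denominator — the product of $L_{\vartheta_\nu}(2r\alpha s+2j,\tau_\nu,\wedge^2)L_{\vartheta_\nu}(2r\alpha s+2j-1,\tau_\nu,\vee^2)$ for $1\le j\le rn$ and, for odd $m$, the single $L_{\vartheta_\nu}(r\alpha s+rn+1/2,\tau_\nu)$ — tracks exactly the normalizing factors of the Eisenstein series built from $\mathcal{E}_\tau$; these arise from the constant-term computation via the Gindikin--Karpelevich formula \eqref{eq:GK formula for GL} and the identity \eqref{eq:standard sym and ext}, and must be carried along through each of the three reductions to keep the bookkeeping of the parameter $\alpha=rkc+1$ consistent. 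I expect the main obstacle to be precisely the $n=1$ Rankin--Selberg step for general $k$: one must verify that the Whittaker values furnished by Theorems~\ref{theorem:CS formula for rk, c} and~\ref{theorem:Whittaker on x,I}, which are polynomials in the Satake parameters rather than the full induced-representation data available in the linear case, combine into a clean Euler product — this is where uniqueness of the Whittaker model for $\GL_{rk}^{(m,r)}$ and Cai's lemma \cite{CaiLemma39} are indispensable, and where the distinction between odd and even $m$ (the presence or absence of the $+1$ in the Satake parameter) enters the final formula.
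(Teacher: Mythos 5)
Your three-step architecture (reduce $\Sp_c^{(m)}\times\GL_k^{(m,r)}$ to $\GL_n^{(m,r)}\times\GL_k^{(m,r)}$, then inductively to $\GL_1^{(m,r)}\times\GL_k^{(m,r)}$, then evaluate via Rankin--Selberg) is exactly the skeleton of the paper's proof (Lemmas~\ref{lemma:reduction from classical to GLn} and \ref{lemma:reduction from GLn to GLa GLb}, then \S\ref{final reduction n = 1 linear groups}), and your identification of the Gindikin--Karpelevich constants of the intertwining operators as the source of the denominator, and of Theorems~\ref{theorem:CS formula for rk, c} and \ref{theorem:Whittaker on x,I} as the essential unramified inputs, is correct. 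One misattribution: Cai's result \cite{CaiLemma39} is not what drives the induction on $n$ in the reduction from $\Sp_c$ to $\GL_n$ (that induction is Lemma~\ref{lemma:reduction from GLn to GLa GLb} with $a=1$, $b=n-1$); it is used inside the proof of Theorem~\ref{theorem:Whittaker on x,I}, to evaluate the $\GL_{rk-1}$ Whittaker value appearing in Suzuki's iterative formula.

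The genuine gap is in your last step. You write that one "feeds" the Whittaker values of Theorems~\ref{theorem:CS formula for rk, c} and \ref{theorem:Whittaker on x,I} "into the Rankin--Selberg integral as in \eqref{RS integral for n=1}," as if the $\GL_1^{(m,r)}\times\GL_k^{(m,r)}$ doubling integral unfolded directly into \eqref{eq:Z integral GL 1 GL rk}. It does not: the doubling integral at $n=1$ still carries the integration over $U_0$ against the section valued in $\mathcal{W}(\rho_1(\tau))\otimes\mathcal{W}(\rho_1(\tau^*))$, and no change of variables turns it into a one-variable Mellin transform of a Whittaker function. What the paper actually does (\S\ref{final reduction n = 1 linear groups}) is: (i) prove that the relevant equivariant $\Hom$-space \eqref{eq:homspace G with W(rho1tau) GL} is at most one-dimensional outside finitely many $q^{-s}$ (Lemma~\ref{lemma:uniqueness}, via Bruhat theory on $P\backslash H/GU$); (ii) introduce the auxiliary integral $\Psi(\zeta,s,\omega,f)$ of \eqref{int:after functional equation to compare}, built from the Jacquet--Whittaker functional \eqref{int:standard Whittaker on I W W} on the induced representation, which genuinely does collapse to the Rankin--Selberg integral \eqref{eq:Z integral GL 1 GL rk} on unramified data; and (iii) compute the proportionality factor $L_{\pi,\tau,\vartheta}(s)$ between $Z$ and $\Psi(0,s,\cdot,\cdot)$ by evaluating both on a carefully chosen non-unramified section supported near $P\delta_0\mathcal{N}$ (Lemma~\ref{lemma:substitution}, Soudry's substitution technique \cite{Soudry}). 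Without steps (i)--(iii) your plan has no bridge from the doubling integral to the Euler product, and this bridge is the most delicate part of the whole computation; it is also precisely where the uniqueness of the Whittaker model for $\GL_{2rk}^{(m,r)}$ on this particular induced representation is consumed.
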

\begin{remark}\label{remark:result unr known in some cases}
For $m=1$ this result was proved by \cite[Theorem~29]{CFGK2}, and for $m=2$ and $k=1$ by Li \cite[Proposition~4.6]{Li1992};
our proof will include these cases.
\end{remark}

We switch to local notation. Since \eqref{eq:almost Euler} also includes the integral \eqref{eq:local integral at places} over the places in $S$, we treat a single place and a finite product of places simultaneously. Let $S'$ be an arbitrary finite set of places of $F$,
and denote
\begin{align*}
&G=G(F_{S'}), \qquad H=H(F_{S'}),\qquad U_0=U_0(F_{S'}), \qquad \delta=\delta_{S'}, \qquad \iota=\iota_{S'},\\
&\psi_U=(\psi_{S'})_U,\qquad \sigma_{2rkc}=\prod_{\nu\in S'}\sigma_{2rkc,\nu},\qquad
\varsigma_{\iota,c}=\prod_{\nu\in S'}\varsigma_{\iota,c,\nu},
\qquad \varsigma_{*,c}=\prod_{\nu\in S'}\varsigma_{*,c,\nu}.
\end{align*}
Realize $H^{(m)}$ using $\sigma_{2rkc}$, and the right copy of $G^{(m)}$ by $\sigma_c=\prod_{\nu\in S'}\sigma_{c,\nu}$. The local properties from \S~\ref{local covering} are still applicable, because $S'$ is finite.
Let $\pi$ be a genuine irreducible representation of $G^{(m)}$, realized using $\sigma_c$. Assume
$\mathcal{E}$ is a genuine irreducible representation of $\GL_{rkc}^{(m,r)}$ (the cover obtained from $\widetilde{M}_P(F_{S'})$) which affords an $(rk,c)$ functional $\Lambda$, and
such that
\begin{align}\label{eq:invariance prop on SLc}
\Lambda(\mathcal{E}(\langle b^{\triangle},\varsigma_{*,c}^{-rk}(b)\rangle)\xi)=
\Lambda(\xi),\qquad \forall b\in G.
\end{align}
Even if $\Lambda$ is unique up to scaling, $\mathcal{E}$ might not be $(rk,c)$ because we do not assume the first condition in the definition ($\mathcal{O}(\mathcal{E},\beta',\psi)=0$ for any $\beta'\succsim((rk)^c)$) holds.
Let $\mathcal{W}(\mathcal{E})$ be the $(rk,c)$ model defined using $\Lambda$.

For the integrals \eqref{eq:local integral at one place}, the representations $\rho_c(\tau_{\nu})$ are $(rk,c)$ and
\eqref{eq:invariance prop on SLc} is satisfied by Corollary~\ref{corollary:invariance wrt SLc}. If $S'=S$, the functional chosen for the realization
of $\mathcal{W}((\mathcal{E}_{\tau})_S)$ in \eqref{eq:local integral at places} also satisfies \eqref{eq:invariance prop on SLc}, by
Corollary~\ref{corollary:extra invariance on Lambda S}.

For a matrix coefficient $\omega$ of $\pi^{\vee}$ and
a standard section $f_{\mathcal{W}(\mathcal{E})}$ of $\Ind_{\widetilde{P}}^{H^{(m)}}(\mathcal{W}(\mathcal{E})\delta_P^s)$,
\begin{align}\label{int:local classical integral abstract}
&Z(s,\omega,f_{\mathcal{W}(\mathcal{E})})=
\int\limits_{G}\int\limits_{U_0}
\omega(\langle g,1\rangle)f_{\mathcal{W}(\mathcal{E})}(\langle\delta u_0,1\rangle\,
{}^{\iota}\langle\mathfrak{e}_2(g),1\rangle,s)\,\psi_{U}(u_0)\,du_0\,dg.
\end{align}
Note that we omitted $\eta^{-1}(\delta u_0)$. This is because we can
use Corollary~\ref{corollary:rho and eta on H and N without conjugation} to separate $\delta$ from $u_0$ in the global integral; then
$\eta_{\nu}(\delta_{\nu})=1$ at almost all places; and locally when we use $\sigma_{2krc}$,
$v\mapsto\langle v,1\rangle$ is the splitting of $N_{rkc}$ and in particular of $U_0$, by \eqref{eq:sigma on h and v}, which also implies
$\langle\delta,1\rangle\langle u_0,1\rangle=\langle\delta u_0,1\rangle$.
The map $\delta\mapsto\langle\delta,1\rangle$ is a homomorphism when the local fields $\{F_{\nu}\}_{\nu\in S'}$ are all
unramified (see \S~\ref{local covering}); in general replacing
$\langle\delta,1\rangle$ by $\langle\delta,\epsilon\rangle$ simply multiplies the integral by $\epsilon$.

\begin{proposition}\label{proposition:equiv props}
The integral $Z(s,\omega,f_{\mathcal{W}(\mathcal{E})})$, at least formally, can be regarded as a morphism in the space
\begin{align}\label{eq:homspace G with W(E)}
\Hom_{G^{(m)}\times G^{(m)}}(J_{U,\psi_U^{-1}}(\Ind_{\widetilde{P}}^{H^{(m)}}(\mathcal{W}(\mathcal{E})\delta_P^s)),\pi^{\vee}\otimes\pi^{\iota}).
\end{align}
Here $J_{U,\psi_U^{-1}}(\cdots)$ is the Jacquet module with respect to $U$ and $\psi_U^{-1}$,
regarded as a representation of $G^{(m)}\times G^{(m)}$ by virtue of the embedding $(g_1,g_2)$.
\end{proposition}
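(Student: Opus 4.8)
The statement is purely a formal equivariance assertion: we must check that the integrand of $Z(s,\omega,f_{\mathcal{W}(\mathcal{E})})$ transforms correctly under right-translation of the two copies of $G^{(m)}$ embedded via $(g_1,g_2)\mapsto\mathfrak{e}_1(g_1)\mathfrak{e}_2(g_2)$, so that the integral descends to the Jacquet module $J_{U,\psi_U^{-1}}(\Ind_{\widetilde P}^{H^{(m)}}(\mathcal{W}(\mathcal{E})\delta_P^s))$ and realizes a pairing with $\pi^{\vee}\otimes\pi^{\iota}$. The plan is to analyze the action of each copy separately. First I would recall from \S\ref{local covering} that with the realization of $H^{(m)}$ via $\sigma_{2rkc}$ and of the right copy of $G^{(m)}$ via $\sigma_c$, the embeddings \eqref{eq:embeddings coverings G and G into H}--\eqref{eq:g_1 and g_2 product in H} hold, Proposition~\ref{proposition:the $2$-cocycle on G times G} gives commutativity of $\mathfrak{e}_1(G)$ and $\mathfrak{e}_2(G)$ in $H^{(m)}$, and ${}^{\iota}$ lifts to $\mathfrak{e}_2(G^{(m)})$ via \eqref{eq:action of local iota on right image of G} and \eqref{eq:iota image commutes with G1}.

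For the action on the right copy (which should produce $\pi^{\iota}$): I would translate $f_{\mathcal{W}(\mathcal{E})}$ on the right by ${}^{\iota}\langle\mathfrak{e}_2(g'),1\rangle$, change variables $g\mapsto g({}^{\iota}g')$ in the $dg$-integral — this is where the outer involution enters, since $\mathfrak{e}_2$ is conjugated by $\iota$ — and use that $\omega$ is a matrix coefficient of $\pi^{\vee}$, so $\omega(\langle g({}^{\iota}g'),1\rangle)$ picks up $\pi^{\vee}({}^{\iota}\langle g',1\rangle)$ acting on $\omega$, i.e. $(\pi^{\vee})^{\iota}=(\pi^{\iota})^{\vee}$ (established just after \eqref{eq:action of local iota on right image of G}). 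One must track the $2$-cocycle cocycle errors carefully: the cocycle $\sigma_c$ contribution from the change of variables in $g$ cancels against the one picked up by $f_{\mathcal{W}(\mathcal{E})}$ because we integrate against $\omega$ which is genuine for $\pi^{\vee}$ (anti-genuine relative to $\pi$), exactly as in the proof of Proposition~\ref{proposition:local toy integral}. For the action on the left copy (which should produce $\pi^{\vee}$, or rather the identification realizing that $\pi$ acts on one side and its dual on the other): I would note that $\mathfrak{e}_1(g')$ commutes with $\mathfrak{e}_2(g)$ in $H^{(m)}$ by Proposition~\ref{proposition:the $2$-cocycle on G times G}, so a right-translate by $\langle\mathfrak{e}_1(g'),1\rangle$ can be moved past ${}^{\iota}\langle\mathfrak{e}_2(g),1\rangle$, and then — using the Bruhat-type decomposition $\delta u_0\langle\mathfrak{e}_1(g'),\cdot\rangle$ and the structure of $\delta$ from \S\ref{global symplectic} — one conjugates $\mathfrak{e}_1(g')$ to the left across $\delta u_0$ to land inside $\widetilde M_P$, where it becomes $\langle (g')^{\triangle}\text{ (or }((g')^*)^{\triangle}),\cdot\rangle$ acting on $\mathcal{W}(\mathcal{E})$ through $\Lambda$; the invariance property \eqref{eq:invariance prop on SLc} together with Claim~\ref{claim:invariance of inner under G iota delta}-type bookkeeping shows how the $\mathcal{E}$-action is absorbed, leaving $\pi(\langle g',1\rangle)$ hitting $\omega$ from the other side.

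The main obstacle is the careful accounting of the scalar (root of unity) discrepancies coming from the $2$-cocycles $\sigma_{2rkc}$, $\sigma_c$, $\sigma_c^{*,rk}$ and the correction cochains $\varsigma_{*,c}$, $\varsigma_{\iota,c}$ when the various elements ($\delta$, elements of $U_0$, $\mathfrak{e}_1(g')$, $\mathfrak{e}_2(g)$, and their conjugates) are moved past each other in $H^{(m)}$. Each such move introduces a factor governed by Lemma~\ref{lemma:conjugation of N by H} (for unipotent conjugations), \eqref{eq:epsilon for conjugation between split subgroups} (for conjugations between split subgroups), Proposition~\ref{proposition:sigma on diagonal embedding of SLc} (for the diagonal $\SL_c$), and \eqref{eq:iota on the local coverings}; the point is that all of these factors cancel pairwise once one also uses that $\omega$ and $f_{\mathcal{W}(\mathcal{E})}$ are genuine/anti-genuine appropriately, and that the $(rk,c)$ functional satisfies \eqref{eq:invariance prop on SLc}. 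I expect the verification to be essentially identical in structure to the global unfolding computation in the proof of Theorem~\ref{theorem:main gbl identity} (restricted to a single fiber), so I would organize it by systematically replaying the steps of that proof — in particular the manipulations around \eqref{eq:conjugation of v_g}, \eqref{eq:conj gbl g1 g1 iota u_0} and Claim~\ref{claim:invariance of inner under G iota delta} — but now with the cusp-form integrations replaced by the abstract matrix coefficient $\omega$ and its left/right equivariance under $\pi$ and $\pi^{\iota}$. Once the integrand is shown to transform by $\pi^{\vee}(\langle g_1,1\rangle)^{-1}\otimes\pi^{\iota}(\langle g_2,1\rangle)^{-1}$ and by $\psi_U^{-1}(u)$ under $U$, the conclusion that $Z(s,\omega,f_{\mathcal{W}(\mathcal{E})})$ defines an element of the Hom-space \eqref{eq:homspace G with W(E)} is immediate from the universal property of the Jacquet module.
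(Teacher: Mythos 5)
Your proposal is correct in substance and follows essentially the same route as the paper: the proof is exactly the formal equivariance check \eqref{eq:inv both}, done first for $u\in U$ (via ${}^{\mathfrak{e}_2({}^{\iota}g)}u=v_gu_g$ with $v_g\in{}^{\delta^{-1}}V_{(c^{rk})}^{\diamondsuit}$, $u_g\in U_0$, and \eqref{eq:sigma conjugate v by h}, \eqref{eq:sigma on vh and h'v'}), then for $g_2$ via the substitution $g\mapsto g({}^{\iota}g_2)^{-1}$ producing $(\pi^{\vee})^{\iota}=(\pi^{\iota})^{\vee}$, then for $g_1$ via $g\mapsto g_1g$, with the cocycle factors cancelling against the genuineness of $\omega$ and $f_{\mathcal{W}(\mathcal{E})}$ exactly as you describe; the decisive input is the local analogue \eqref{eq:local f left inv by g iota g} of Claim~\ref{claim:invariance of inner under G iota delta}, resting on \eqref{eq:invariance prop on SLc} and Corollary~\ref{corollary:lcl splitting of (G,G)}.

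One caveat on your left-copy step: as literally written, conjugating $\mathfrak{e}_1(g')$ \emph{alone} across $\delta u_0$ does not land it inside $\widetilde{M}_P$ (the middle block of $\mathfrak{e}_1(g')$ mixes the two Siegel halves, so ${}^{\delta}\mathfrak{e}_1(g')\notin P$), and that step would fail. What actually lands in $\widetilde{M}_P\widetilde{U}_P$ is the diagonal pair: after the change of variables $g\mapsto g_1g$ the section acquires $\langle\mathfrak{e}_1(g_1),\varsigma_{*,c}^{rk+1}(g_1)\rangle\,{}^{\iota}\langle\mathfrak{e}_2(g_1),1\rangle={}^{\iota}\langle(g_1,g_1),\varsigma_{*,c}^{rk+1}(g_1)\rangle$, one conjugates $u_0$ by $(g_1,{}^{\iota}g_1)$ into $v_{g_1}u_{g_1}$ and changes variables, and only then does ${}^{\delta\iota}(g_1,g_1)=d_{g_1}l_{g_1}$ with $d_{g_1}=\diag((g_1^*)^{\triangle},g_1^{\triangle})\in M_P$ and $l_{g_1}\in U_P$, so that \eqref{eq:invariance prop on SLc} (with $G<\SL_c$) absorbs it. Since you invoke the Claim and the transfer of $\pi(g_1)$ onto $\omega$, you clearly have the right mechanism in mind, but the argument must be phrased for the pair $(g_1,{}^{\iota}g_1)$, not for $\mathfrak{e}_1(g_1)$ by itself.
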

\begin{proof}
First we introduce some notation. Given $\omega$, by definition there are vectors $\xi$ and $\xi^{\vee}$ in the spaces of $\pi$ and $\pi^{\vee}$, such that $\omega(g)=\pi^{\vee}(g)\xi^{\vee}(\xi)=\xi^{\vee}(\pi(g^{-1})\xi)$ for $g\in G^{(m)}$. Assume $\xi$ and $\xi^{\vee}$ are given, and for $g_1,g_2\in G^{(m)}$, let $\omega_{g_1,g_2}$ be the matrix coefficient of $\pi^{\vee}$ defined by
\begin{align*}
\omega_{g_1,g_2}(g)=\pi^{\vee}(g)((\pi^{\vee})^{\iota}(g_2)\xi^{\vee})(\pi(g_1)\xi)=
\pi^{\vee}(g({}^{\iota}g_2))\xi^{\vee}(\pi(g_1)\xi).
\end{align*}
Regarding the integral as a trilinear form on
\begin{align}\label{eq:tri}
\Ind_{\widetilde{P}}^{H^{(m)}}(\mathcal{W}(\mathcal{E})\delta_P^s)\times\pi\times(\pi^{\vee})^{\iota},
\end{align}
we claim that for $g_1,g_2\in G$ and $u\in U$,
\begin{align}\label{eq:inv both}
&Z(s,\omega_{\langle g_1,1\rangle,\langle g_2,1\rangle},(\langle \mathfrak{e}_1(g_1),\varsigma_{*,c}^{rk+1}(g_1)\rangle\langle \mathfrak{e}_2(g_2),1\rangle\langle u,1\rangle)\cdot f_{\mathcal{W}(\mathcal{E})})=
\psi_U^{-1}(u)Z(s,\omega,f_{\mathcal{W}(\mathcal{E})}).
\end{align}
Note that while \eqref{eq:tri} is a priori a representation of $G^{(m)}\times G^{(m)}$, it factors through $G\times G$ by
\eqref{eq:embeddings coverings G and G into H}. Also observe that since both $\pi$ and $\pi^{\vee}$ (thereby $(\pi^{\vee})^{\iota}$) are defined on $G^{(m)}$ which is realized using $\sigma_c$,
the image of $\langle g_1,1\rangle$ in $H^{(m)}$ is indeed $\langle \mathfrak{e}_1(g_1),\varsigma_{*,c}^{rk+1}(g_1)\rangle$, see
\eqref{eq:lcl embeding left copy using sigma c into H}.
Hence this trilinear form factors through $J_{U,\psi_U^{-1}}$, and since $(\pi^{\vee})^{\iota}=(\pi^{\iota})^{\vee}$, it can be identified with an element of \eqref{eq:homspace G with W(E)}.

First we prove (for any $\omega$)
\begin{align}\label{eq:inv wrt u}
&Z(s,\omega,\langle u,1\rangle\cdot f_{\mathcal{W}(\mathcal{E})})=
\psi_U^{-1}(u)Z(s,\omega,f_{\mathcal{W}(\mathcal{E})}).
\end{align}
Starting with the l.h.s.,
\begin{align*}
&Z(s,\omega,\langle u,1\rangle\cdot f_{\mathcal{W}(\mathcal{E})})=
\int\limits_{G}\int\limits_{U_0}
\omega(\langle g,1\rangle)f_{\mathcal{W}(\mathcal{E})}(\langle\delta u_0,1\rangle
{}^{\iota}\langle\mathfrak{e}_2(g),1\rangle\langle u,1\rangle,s)\,\psi_{U}(u_0)\,du_0\,dg.
\end{align*}
By the definition of the embedding in \S~\ref{embedding}, we can write ${}^{\mathfrak{e}_2({}^{\iota}g)}u=v_gu_g$ where
$v_g\in{}^{\delta^{-1}}V_{(c^{rk})}^{\diamondsuit}$ and $u_g\in U_0$ (also see \eqref{eq:decomposition of U with square}). Then
by \eqref{eq:sigma conjugate v by h} and \eqref{eq:sigma on vh and h'v'},
\begin{align*}
&{}^{\mathfrak{e}_2({}^{\iota}g)}\langle u,1\rangle=\langle v_g,1\rangle\langle u_g,1\rangle.
\end{align*}
The integral becomes
\begin{align*}
&\int\limits_{G}\int\limits_{U_0}
\omega(\langle g,1\rangle)f_{\mathcal{W}(\mathcal{E})}(\langle\delta u_0,1\rangle
\langle v_g,1\rangle\langle u_g,1\rangle
{}^{\iota}\langle\mathfrak{e}_2(g),1\rangle,s)\,\psi_{U}(u_0)\,du_0\,dg
\\&=\int\limits_{G}\int\limits_{U_0}
f_{\mathcal{W}(\mathcal{E})}(
\langle {}^{\delta}v_g,1\rangle
\langle\delta u_0u_g,1\rangle
{}^{\iota}\langle\mathfrak{e}_2(g),1\rangle,s)\,\psi_{U}(u_0)\,du_0\,dg,
\end{align*}
where the equality follows again from \eqref{eq:sigma conjugate v by h} and \eqref{eq:sigma on vh and h'v'}.
Note that if we write ${}^{\delta}v_g={v_g'}^{\diamondsuit}$ for $v_g'\in V_{(c^{rk})}$,
$\langle{}^{\delta}v_g,1\rangle=\langle v_g',1\rangle\in\GL_{rkc}^{(m,r)}$
(the local analog of \eqref{eq:conjugation of v_g}).
Now as in the linear case, the definition of the embedding implies that when we change variables $u_0\mapsto u_0u_g^{-1}$
and use the left invariance properties of $f_{\mathcal{W}(\mathcal{E})}$, we get
\begin{align*}
&\psi_U^{-1}(u)\int\limits_{G}\int\limits_{U_0}
\omega(\langle g,1\rangle)f_{\mathcal{W}(\mathcal{E})}(
\langle\delta u_0,1\rangle
{}^{\iota}\langle\mathfrak{e}_2(g),1\rangle,s)\,\psi_{U}(u_0)\,du_0\,dg,
\end{align*}
completing the proof of \eqref{eq:inv wrt u}.

It remains to prove \eqref{eq:inv both} where $u$ is omitted. Then the l.h.s.~ of \eqref{eq:inv both} equals
\begin{align*}
&\int\limits_{G}\int\limits_{U_0}
\pi^{\vee}(\langle g,1\rangle\,{}^{\iota}\langle g_2,1\rangle)
\xi^{\vee}(\pi(\langle g_1,1\rangle)\xi)
\\&f_{\mathcal{W}(\mathcal{E})}(\langle\delta u_0,1\rangle
{}^{\iota}\langle\mathfrak{e}_2(g),1\rangle\langle \mathfrak{e}_1(g_1),\varsigma_{*,c}^{rk+1}(g_1)\rangle\langle \mathfrak{e}_2(g_2),1\rangle,s)\,\psi_{U}(u_0)\,du_0\,dg.
\end{align*}
By \eqref{eq:iota on the local coverings} and because $\pi^{\vee}$ is anti-genuine,
\begin{align*}
\pi^{\vee}(\langle g,1\rangle\,{}^{\iota}\langle g_2,1\rangle)
=\pi^{\vee}(\langle g({}^{\iota}g_2),\sigma_c(g,{}^{\iota}g_2)\varsigma_{\iota,c}^{-1}(g_2)\rangle)
=\sigma_c^{-1}(g,{}^{\iota}g_2)\varsigma_{\iota,c}(g_2)\pi^{\vee}(\langle g({}^{\iota}g_2),1\rangle).
\end{align*}
Also by \eqref{eq:action of local iota on right image of G}, \eqref{eq:iota image commutes with G1} and Proposition~\ref{proposition:the $2$-cocycle on G times G},
\begin{align*}
{}^{\iota}\langle\mathfrak{e}_2(g),1\rangle\langle \mathfrak{e}_1(g_1),\varsigma_{*,c}^{rk+1}(g_1)\rangle\langle \mathfrak{e}_2(g_2),1\rangle
&={}^{\iota}\langle\mathfrak{e}_2(g),1\rangle\langle \mathfrak{e}_1(g_1),\varsigma_{*,c}^{rk+1}(g_1)\rangle
{}^{\iota}\langle\mathfrak{e}_2({}^{\iota}g_2),\varsigma_{\iota,c}({}^{\iota}g_2)\rangle
\\&=\langle\mathfrak{e}_1(g_1),\varsigma_{*,c}^{rk+1}(g_1)\rangle
{}^{\iota}(\langle\mathfrak{e}_2(g),1\rangle\langle \mathfrak{e}_2({}^{\iota}g_2),\varsigma_{\iota,c}({}^{\iota}g_2)\rangle)
\\&=\langle\mathfrak{e}_1(g_1),\varsigma_{*,c}^{rk+1}(g_1)\rangle
{}^{\iota}\langle\mathfrak{e}_2(g({}^{\iota}g_2)),\sigma_c(g,{}^{\iota}g_2)\varsigma_{\iota,c}({}^{\iota}g_2)\rangle.
\end{align*}
In addition by \eqref{eq:varsigma varsigme},
$\varsigma_{\iota,c}({}^{\iota}g_2)\varsigma_{\iota,c}(g_2)=1$ (!).
Hence the integral equals
\begin{align*}
&\int\limits_{G}\int\limits_{U_0}
\pi^{\vee}(\langle g({}^{\iota}g_2),1\rangle)
\xi^{\vee}(\pi(\langle g_1,1\rangle)\xi)
\\&f_{\mathcal{W}(\mathcal{E})}(\langle\delta u_0,1\rangle
\langle\mathfrak{e}_1(g_1),\varsigma_{*,c}^{rk+1}(g_1)\rangle
{}^{\iota}\langle\mathfrak{e}_2(g({}^{\iota}g_2)),1\rangle,s)\,\psi_{U}(u_0)\,du_0\,dg.
\end{align*}
Thus when we change $g\mapsto g({}^{\iota}g_2)^{-1}$ we obtain
\begin{align}\label{int:inv after getting rid of g_2}
&\int\limits_{G}\int\limits_{U_0}
\pi^{\vee}(\langle g,1\rangle)
\xi^{\vee}(\pi(\langle g_1,1\rangle)\xi)
f_{\mathcal{W}(\mathcal{E})}(\langle\delta u_0,1\rangle
\langle\mathfrak{e}_1(g_1),\varsigma_{*,c}^{rk+1}(g_1)\rangle
{}^{\iota}\langle\mathfrak{e}_2(g),1\rangle,s)\,\psi_{U}(u_0)\,du_0\,dg.
\end{align}

Next multiply $g\mapsto g_1g$. Then
\begin{align*}
&\pi^{\vee}(\langle g,1\rangle)\xi^{\vee}(\pi(\langle g_1,1\rangle)\xi)
\mapsto \pi^{\vee}(\langle g_1,\sigma_c(g_1,g)^{-1}\rangle\langle g,1\rangle)\xi^{\vee}(\pi(\langle g_1,1\rangle)\xi)
=\sigma_c(g_1,g)\pi^{\vee}(\langle g,1\rangle)\xi^{\vee}(\xi),\\
&{}^{\iota}\langle\mathfrak{e}_2(g),1\rangle\mapsto
{}^{\iota}\langle\mathfrak{e}_2(g_1),\sigma_c(g_1,g)^{-1}\rangle\,{}^{\iota}\langle\mathfrak{e}_2(g),1\rangle.
\end{align*}
Note that $\omega(\langle g,1\rangle)=\pi^{\vee}(\langle g,1\rangle)\xi^{\vee}(\xi)$. Then \eqref{int:inv after getting rid of g_2} becomes
\begin{align*}
&\int\limits_{G}\int\limits_{U_0}
\omega(\langle g,1\rangle)
f_{\mathcal{W}(\mathcal{E})}(\langle\delta u_0,1\rangle
\langle\mathfrak{e}_1(g_1),\varsigma_{*,c}^{rk+1}(g_1)\rangle\,{}^{\iota}\langle\mathfrak{e}_2(g_1),1\rangle
\,{}^{\iota}\langle\mathfrak{e}_2(g),1\rangle,s)\,\psi_{U}(u_0)\,du_0\,dg.
\end{align*}
By \eqref{eq:the $2$-cocycle on G times G formula} and \eqref{eq:iota image on product},
\begin{align*}
\langle\mathfrak{e}_1(g_1),\varsigma_{*,c}^{rk+1}(g_1)\rangle\,{}^{\iota}\langle\mathfrak{e}_2(g_1),1\rangle
=\langle (g_1,{}^{\iota}g_1),\varsigma_{*,c}^{rk+1}(g_1)\varsigma_{\iota,c}^{-1}(g_1)\rangle={}^{\iota}\langle(g_1,g_1),\varsigma_{*,c}^{rk+1}(g_1)\rangle.
\end{align*}
Thus the last integral equals
\begin{align}\label{int:to show inv under g iota g}
&\int\limits_{G}\int\limits_{U_0}
\omega(\langle g,1\rangle)
f_{\mathcal{W}(\mathcal{E})}(\langle\delta u_0,1\rangle
{}^{\iota}\langle(g_1,g_1),\varsigma_{*,c}^{rk+1}(g_1)\rangle\,
{}^{\iota}\langle\mathfrak{e}_2(g),1\rangle,s)\,\psi_{U}(u_0)\,du_0\,dg.
\end{align}

We have the local analog of \eqref{eq:conj gbl g1 g1 iota u_0}: ${}^{(g_1,{}^{\iota}g_1)}u_0=v_{g_1}u_{g_1}$ with
$v_{g_1}\in {}^{\delta^{-1}}V_{(c^{rk})}^{\diamondsuit}$ and $u_{g_1}\in U_0$, and using \eqref{eq:sigma conjugate v by h},
\begin{align*}
{}^{(g_1,{}^{\iota}g_1)^{-1}}\langle u_0,1\rangle=\langle v_{g_1},1\rangle\langle u_{g_1},1\rangle.
\end{align*}
Then using \eqref{eq:sigma on h and v} and \eqref{eq:sigma conjugate v by h},
the integral~\eqref{int:to show inv under g iota g} equals
\begin{align}\label{int:inv before g giota inv}
&\int\limits_{G}\int\limits_{U_0}
\omega(\langle g,1\rangle)
f_{\mathcal{W}(\mathcal{E})}(
{}^{\delta\iota}\langle(g_1,g_1),\varsigma_{*,c}^{rk+1}(g_1)\rangle
\langle\delta ,1\rangle
\langle v_{g_1} ,1\rangle\langle u_{g_1} ,1\rangle
{}^{\iota}\langle\mathfrak{e}_2(g),1\rangle,s)\,\psi_{U}(u_0)\,du_0\,dg,
\end{align}
where ${}^{\delta\iota}\langle(g_1,g_1),\epsilon\rangle={}^{\delta}({}^{\iota}\langle (g_1,g_1),\epsilon\rangle)$ and
${}^{\delta\iota}(g_1,g_1)={}^{\delta}(g_1,{}^{\iota}g_1)$.

By Corollary~\ref{corollary:lcl splitting of (G,G)},
$(g,g)\mapsto\langle (g,g),\varsigma_{*,c}^{rk+1}(g)\rangle$ is the splitting
of $\{(g,g):g\in G\}$ in $H^{(m)}$.
Now we describe the local version of Claim~\ref{claim:invariance of inner under G iota delta} to prove that
for any $g\in G$ and $h\in H^{(m)}$,
\begin{align}\label{eq:local f left inv by g iota g}
f_{\mathcal{W}(\mathcal{E})}({}^{\delta\iota}\langle(g,g),\varsigma_{*,c}^{rk+1}(g)\rangle h,s)=f_{\mathcal{W}(\mathcal{E})}(h,s).
\end{align}
Using the notation of the proof of Claim~\ref{claim:invariance of inner under G iota delta},
${}^{\delta\iota}(g,g)=d_gl_g$ with $d_g$ and $l_g$ described there. By
\eqref{eq:claim left inv f g giota local comp},
\begin{align*}
\sigma_{2rkc}(d_gl_g,d_{g'}l_{g'})=\sigma_{2rkc}(d_g,d_{g'})=
\sigma_{rkc}^{\diamondsuit}((g^*)^{\triangle},({g'}^*)^{\triangle})=\left(\frac{\varsigma_{*,c}(g^*)\varsigma_{*,c}({g'}^*)}{\varsigma_{*,c}((gg')^*)}\right)^{rk}.
\end{align*}
Hence $d_gl_g\mapsto\langle d_gl_g,\varsigma_{*,c}^{-rk}(g^*)\rangle$
is the unique splitting of $\{{}^{\delta\iota}(g,g):g\in G\}$, then
Corollary~\ref{corollary:lcl splitting of (G,G)} and
\eqref{eq:epsilon for conjugation between split subgroups} imply
\begin{align*}
{}^{\delta\iota}\langle (g,g),\varsigma_{*,c}^{rk+1}(g)\rangle=
\langle d_gl_g,
\varsigma_{*,c}^{-rk}(g^*)\rangle
\end{align*}
(cf. \eqref{eq:conj by delta globally}). Now by \eqref{eq:sigma on h and v} (recall $l_g\in U_P$),
\begin{align*}
\langle d_gl_g,
\varsigma_{*,c}^{-rk}(g^*)\rangle
=\langle d_g,\varsigma_{*,c}^{-rk}(g^*)\rangle\langle l_g,1\rangle
=\langle (g^*)^{\triangle},\varsigma_{*,c}^{-rk}(g^*)\rangle\langle l_g,1\rangle,
\end{align*}
where in the last equality we regard $(g^*)^{\triangle}$ as an element of $M_P$.
Consequently
\begin{align*}
f_{\mathcal{W}(\mathcal{E})}({}^{\delta\iota}\langle(g,g),\varsigma_{*,c}^{rk+1}(g)\rangle h,s)
&=f_{\mathcal{W}(\mathcal{E}_{\tau})}(\langle (g^*)^{\triangle},\varsigma_{*,c}^{-rk}(g^*)\rangle\langle l_g,1\rangle h,s)\\&
=f_{\mathcal{W}(\mathcal{E}_{\tau})}(\langle l_g,1\rangle h,s)=f_{\mathcal{W}(\mathcal{E}_{\tau})}(h,s),
\end{align*}
where the second equality follows from \eqref{eq:invariance prop on SLc}. This proves \eqref{eq:local f left inv by g iota g}.
Returning to \eqref{int:inv before g giota inv} we have
\begin{align*}
&\int\limits_{G}\int\limits_{U_0}
\omega(\langle g,1\rangle)
f_{\mathcal{W}(\mathcal{E})}(
\langle\delta ,1\rangle
\langle v_{g_1} ,1\rangle\langle u_{g_1} ,1\rangle
{}^{\iota}\langle\mathfrak{e}_2(g),1\rangle,s)\,\psi_{U}(u_0)\,du_0\,dg.
\end{align*}
Finally when we conjugate
$\langle v_{g_1} ,1\rangle$ to the left, use the equivariance properties of $\mathcal{W}(\mathcal{E})$ and change variables in $u_{g_1}$,
we obtain
\begin{align}\label{int:inv after g giota inv}
&\int\limits_{G}\int\limits_{U_0}
\omega(\langle g,1\rangle)
f_{\mathcal{W}(\mathcal{E})}(
\langle\delta u_0,1\rangle
{}^{\iota}\langle\mathfrak{e}_2(g),1\rangle,s)\,\psi_{U}(u_0)\,du_0\,dg.
\end{align}
This completes the proof of \eqref{eq:inv both} in all cases.
\end{proof}
\begin{corollary}\label{corollary:du integral of f invariant under g iota g}
For any section $f_{\mathcal{W}(\mathcal{E})}$ and $g_1\in G$,
\begin{align*}
&\int\limits_{U_0}
f_{\mathcal{W}(\mathcal{E})}(\langle\delta u_0 ,1\rangle\,
{}^{\iota}\langle(g_1,g_1),\varsigma_{*,c}^{rk+1}(g_1)\rangle,s)\,\psi_{U}(u_0)\,du_0
=\int\limits_{U_0}
f_{\mathcal{W}(\mathcal{E})}(\langle\delta u_0 ,1\rangle,s)\,\psi_{U}(u_0)\,du_0.
\end{align*}
\end{corollary}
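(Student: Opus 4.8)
The statement is the special case of \eqref{eq:inv both} in which one discards the matrix coefficient $\omega$ and the left copy variable, and reads off only the $du_0$-integral of the section. So the plan is simply to specialize the computation carried out in the proof of Proposition~\ref{proposition:equiv props}. Concretely, I would start from the integral on the left-hand side and observe that it is the inner $du_0$-integral appearing in \eqref{int:to show inv under g iota g} (with the roles of the two arguments of $f_{\mathcal{W}(\mathcal{E})}$ in the order $\langle\delta u_0,1\rangle$ followed by ${}^{\iota}\langle(g_1,g_1),\varsigma_{*,c}^{rk+1}(g_1)\rangle$, the trailing factor ${}^{\iota}\langle\mathfrak{e}_2(g),1\rangle$ being absent, i.e. taking $g=I_c$). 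The key mechanism is: conjugate ${}^{\iota}\langle(g_1,g_1),\varsigma_{*,c}^{rk+1}(g_1)\rangle$ past $\langle\delta u_0,1\rangle$ to the left. Using Corollary~\ref{corollary:lcl splitting of (G,G)}, the element $(g_1,g_1)$ lifts to the splitting value $\langle(g_1,g_1),\varsigma_{*,c}^{rk+1}(g_1)\rangle$, and then one computes ${}^{\delta\iota}(g_1,g_1)=d_{g_1}l_{g_1}$ exactly as in the proof of Claim~\ref{claim:invariance of inner under G iota delta}, with $d_{g_1}=\diag((g_1^*)^{\triangle},g_1^{\triangle})\in M_P$ and $l_{g_1}\in U_P$.

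The main step, which I would invoke rather than reprove, is the local left-invariance identity \eqref{eq:local f left inv by g iota g}: for all $g\in G$ and $h\in H^{(m)}$, $f_{\mathcal{W}(\mathcal{E})}({}^{\delta\iota}\langle(g,g),\varsigma_{*,c}^{rk+1}(g)\rangle h,s)=f_{\mathcal{W}(\mathcal{E})}(h,s)$. This was established in the course of proving Proposition~\ref{proposition:equiv props}, using \eqref{eq:claim left inv f g giota local comp}, Corollary~\ref{corollary:lcl splitting of (G,G)}, \eqref{eq:epsilon for conjugation between split subgroups}, \eqref{eq:sigma on h and v}, and crucially the invariance property \eqref{eq:invariance prop on SLc} of $\Lambda$ under $\langle b^{\triangle},\varsigma_{*,c}^{-rk}(b)\rangle$, together with the left-invariance of $f_{\mathcal{W}(\mathcal{E})}$ under $\langle u,1\rangle$ for $u\in U_P$. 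Granting this, the argument is short: after moving ${}^{\delta\iota}\langle(g_1,g_1),\varsigma_{*,c}^{rk+1}(g_1)\rangle$ to the leftmost position (using \eqref{eq:sigma conjugate v by h}, \eqref{eq:sigma on h and v} to handle the conjugation of the unipotent pieces $v_{g_1}\in{}^{\delta^{-1}}V_{(c^{rk})}^{\diamondsuit}$ and $u_{g_1}\in U_0$ produced by ${}^{(g_1,{}^{\iota}g_1)}u_0=v_{g_1}u_{g_1}$), it simply disappears by \eqref{eq:local f left inv by g iota g}; the leftover unipotent $\langle v_{g_1},1\rangle$ is absorbed by the equivariance of $\mathcal{W}(\mathcal{E})$ (it lies in $V_{(c^{rk})}^{\diamondsuit}$, cf. the local analog of \eqref{eq:conjugation of v_g}), and a change of variables $u_{g_1}\mapsto u_0$ restores the original integral, exactly as in the passage from \eqref{int:inv before g giota inv} to \eqref{int:inv after g giota inv}. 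One must check that the character $\psi_U$ emitted from the change of variables is trivial, which follows from the definition of the embedding in \S~\ref{embedding} (the conjugation by ${}^{\delta\iota}(g_1,g_1)$ preserves $\psi_U$ on $U_0$), just as in the global unfolding.

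The only genuine bookkeeping is to track the cocycle scalars through the conjugation $\langle\delta u_0,1\rangle\,{}^{\iota}\langle(g_1,g_1),\varsigma_{*,c}^{rk+1}(g_1)\rangle = {}^{\delta\iota}\langle(g_1,g_1),\varsigma_{*,c}^{rk+1}(g_1)\rangle\,\langle\delta,1\rangle\langle v_{g_1},1\rangle\langle u_{g_1},1\rangle$, and here \eqref{eq:sigma on h and v} (triviality of $\sigma_{2rkc}$ on $H\times N_{rkc}$ and $N_{rkc}\times H$) together with $\delta l_{g_1}\in$ the appropriate standard set makes all scalars trivial, since we realize $H^{(m)}$ with $\sigma_{2rkc}$ and $v\mapsto\langle v,1\rangle$ splits $N_{rkc}$. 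I expect no real obstacle: this corollary is a notational restriction of a computation already done in Proposition~\ref{proposition:equiv props}, and the proof is essentially one sentence of quotation. If one wished to be fully self-contained, the subtlest point would be verifying that ${}^{\delta\iota}\langle(g_1,g_1),\varsigma_{*,c}^{rk+1}(g_1)\rangle=\langle d_{g_1}l_{g_1},\varsigma_{*,c}^{-rk}(g_1^*)\rangle$ and that this factors as $\langle(g_1^*)^{\triangle},\varsigma_{*,c}^{-rk}(g_1^*)\rangle\langle l_{g_1},1\rangle$ with the first factor in the image of the diagonal splitting of $\SL_c^{\triangle}$ inside $\GL_{rkc}^{(m,r)}$ — but that is precisely what Proposition~\ref{proposition:sigma on diagonal embedding of SLc} and \eqref{eq:invariance prop on SLc} are for.
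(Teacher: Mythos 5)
Your proposal is correct and is exactly the paper's argument: the paper proves this corollary by citing verbatim the passage \eqref{int:to show inv under g iota g}--\eqref{int:inv after g giota inv} in the proof of Proposition~\ref{proposition:equiv props}, which is precisely the specialization you describe, resting on the identity \eqref{eq:local f left inv by g iota g} and the change of variables $u_{g_1}\mapsto u_0$. Nothing further is needed.
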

\begin{proof}
Immediate from the passage
\eqref{int:to show inv under g iota g}--\eqref{int:inv after g giota inv} in the proof.
\end{proof}
Let $S_{\infty}$ be the set of archimedean places. Since we are assuming (throughout \S~\ref{global}) that
the number field contains $\mu_{2m}$, this is also true locally, then archimedean places are
complex (when $m>1$), whence the covering is trivial. We can therefore regard these places in a manner similar
to linear groups (for $m=1$ real and complex places were treated in \cite{CFK}).
\begin{proposition}\label{proposition:local props}
The integral $Z(s,\omega,f_{\mathcal{W}(\mathcal{E})})$ enjoys the following properties.
\begin{enumerate}[leftmargin=*]
\item\label{it:conv} It is absolutely convergent in a right half-plane $\Real(s)\gg0$ depending only on the representations.
\item\label{it:p-adic} If $S'\cap S_{\infty}=\emptyset$, there is data $(\omega,f_{\mathcal{W}(\mathcal{E})})$ for which $Z(s,\omega,f_{\mathcal{W}(\mathcal{E})})$ is absolutely convergent and equals $1$, for all $s$.
\item\label{it:arch} If $S'\subset S_{\infty}$, then for a given $s$, there is $(\omega,f_{\mathcal{W}(\mathcal{E})})$ where
$f_{\mathcal{W}(\mathcal{E})}$ is a smooth section, such that
$Z(s,\omega,f_{\mathcal{W}(\mathcal{E})})$ is holomorphic and nonzero in a neighborhood of $s$.
\item\label{it:all} Assume $\mathcal{E}$ is a local component, or a finite tensor of local components, of
the global $(rk,c)$ representation obtained by Theorem~\ref{exthspeh1}, and for $\nu\in S'\cap S_{\infty}$, $\mathcal{E}_{\nu}$ is $(rk,c)$.
For any $s$ there is $(\omega,f_{\mathcal{W}(\mathcal{E})})$ where $f_{\mathcal{W}(\mathcal{E})}$ is $\prod_{\nu\in S'}\widetilde{K}_{H,\nu}$-finite
such that $Z(s,\omega,f_{\mathcal{W}(\mathcal{E})})$ is holomorphic and nonzero in a neighborhood of $s$.
\end{enumerate}
\end{proposition}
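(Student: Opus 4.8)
\textbf{Proof strategy for Proposition~\ref{proposition:local props}.}
The plan is to treat the four assertions in order, since each one builds on the previous by refining the choice of data. First I would prove \eqref{it:conv}, absolute convergence for $\Real(s)\gg0$: this is a standard gauge/estimate argument, entirely parallel to the linear case in \cite{CFGK2} and \cite{CFK}. One bounds the matrix coefficient $\omega$ by a gauge on $G$, bounds the section $f_{\mathcal{W}(\mathcal{E})}$ on the relevant Bruhat cell using the $(rk,c)$-model and the exponents of $\mathcal{E}$, and checks that the $U_0$-integral and the $G$-integral converge once $\Real(s)$ is large; the covering plays no role here because all the estimates are insensitive to the $\mu_m$-valued cocycle, and the archimedean places, being complex when $m>1$, behave exactly as in the linear setting. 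The key input is that the support of $f_{\mathcal{W}(\mathcal{E})}$ restricted to $\langle\delta u_0,1\rangle\,{}^{\iota}\langle\mathfrak{e}_2(g),1\rangle$ is controlled by the Iwasawa decomposition of $\mathfrak{e}_2(g)$, and the $(rk,c)$-functional enforces the necessary decay via its equivariance under $V_{(c^{rk})}$.

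Next, for \eqref{it:p-adic} (all places in $S'$ non-archimedean), I would exhibit explicit data making the integral identically $1$. The idea, as in \cite[\S~3]{CFK} for the linear case, is to choose $\omega$ supported near the identity with $\omega(\langle I_c,1\rangle)=1$, and a section $f_{\mathcal{W}(\mathcal{E})}$ supported in a small neighborhood of $\langle\delta,1\rangle$ in $P\backslash H$ such that the inner $U_0$-integral $\int_{U_0}f_{\mathcal{W}(\mathcal{E})}(\langle\delta u_0,1\rangle\,{}^{\iota}\langle\mathfrak{e}_2(g),1\rangle,s)\psi_U(u_0)\,du_0$ is supported for $g$ near the identity and, by Corollary~\ref{corollary:du integral of f invariant under g iota g} together with the Iwasawa decomposition, is essentially constant there; normalizing the volumes gives $1$. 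The cocycle is harmless because on a small enough neighborhood one works inside the splitting over $N_{rkc}$ (where $v\mapsto\langle v,1\rangle$ is a homomorphism, by \eqref{eq:sigma on h and v}) and over $K_{G}$, so all the $\langle\cdot,1\rangle$ bookkeeping is consistent. Part \eqref{it:arch}, the archimedean analogue, is similar but one must allow a genuinely smooth (not $K$-finite) section: one uses a bump function in the Schwartz space on the cell, and the nonvanishing at a prescribed $s$ follows because the integral, evaluated on such data, reduces to an integral of a positive bump function against a holomorphic kernel, hence is holomorphic and nonzero near $s$; again the complex archimedean places are treated exactly as linear $\GL$ or $\Sp$.

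Finally, for \eqref{it:all}, I would upgrade the archimedean construction to a $\prod_{\nu\in S'}\widetilde{K}_{H,\nu}$-finite section. The point is that when $\mathcal{E}_{\nu}$ is $(rk,c)$ at the archimedean places, the $(rk,c)$-model $\mathcal{W}(\mathcal{E}_{\nu})$ contains $\widetilde{K}_{H,\nu}$-finite vectors whose associated sections approximate, in the relevant topology, the smooth bump sections used in \eqref{it:arch}; since holomorphy and nonvanishing at a fixed $s$ are open conditions, a sufficiently good $\widetilde{K}_{H}$-finite approximation still yields a nonzero holomorphic value near $s$. Here the hypothesis that $\mathcal{E}$ is a local component (or finite tensor of local components) of the global representation from Theorem~\ref{exthspeh1} is what guarantees uniqueness of the $(rk,c)$-functional at the unramified places and the $(rk,c)$-property at the prescribed archimedean places, so that Proposition~\ref{proposition:equiv props} applies uniformly and the factorization of the integrand into local pieces is legitimate. \textbf{The main obstacle} I anticipate is part \eqref{it:all}: one must control the passage from smooth to $K$-finite sections at the archimedean places while keeping the inner $U_0$-integral nonvanishing, which in the linear case relied on the precise structure theory of $(k,c)$-representations over $\R$ and $\C$ from \cite{CFK}; here one needs the analogous statement for $\mathcal{E}_{\nu}$, which is exactly why the hypothesis ``$\mathcal{E}_{\nu}$ is $(rk,c)$'' at archimedean $\nu$ is imposed rather than derived. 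All the other steps are routine adaptations of \cite{CFGK2,CFK}, with the cocycle-bookkeeping handled as in \S~\ref{local covering} and the equivariance established in Proposition~\ref{proposition:equiv props}.
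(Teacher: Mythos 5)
Your treatment of parts \eqref{it:conv}--\eqref{it:arch} is consistent with the paper, which simply observes that the arguments of \cite[Propositions~20, 21]{CFK} carry over: they depend only on the equivariance of the $(rk,c)$ functional under $V_{(c^{rk})}$ and the invariance \eqref{eq:invariance prop on SLc}, plus (for the $p$-adic case) the fact that $H^{(m)}$ splits over a small compact open neighborhood of the identity. So far so good.

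For part \eqref{it:all} there is a genuine gap. Your argument is: the smooth section from \eqref{it:arch} can be approximated by $\widetilde{K}_{H,\nu}$-finite sections, and ``since holomorphy and nonvanishing at a fixed $s$ are open conditions'' the approximant still works. But the $s$ in \eqref{it:all} is arbitrary, in general outside the half-plane of absolute convergence, so $Z(s,\omega,f_{\mathcal{W}(\mathcal{E})})$ is only defined there by meromorphic continuation. For the approximation argument to conclude anything you need to know that the \emph{continued} value is a continuous function of the section $f_{\mathcal{W}(\mathcal{E}_\nu)}$ (and of $\omega_\nu$) in the archimedean topology --- openness of nonvanishing in $s$ is irrelevant without continuity in the data. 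Establishing this meromorphic continuation together with its continuity is precisely the substantive content of the paper's proof: one first factors $Z$ into the integral over $S''=S'-S_{\infty}$ (made constant by \eqref{it:p-adic}) times the archimedean local integrals; then, at each archimedean place, one uses the hypothesis that $\mathcal{E}$ is a local component of the global $(rk,c)$ representation to embed $\mathcal{E}\subset\Ind_{P_{((rk)^{c})}}^{\GL_{rkc}}((\tau^r\otimes\cdots\otimes\tau^r)\delta^{-1/(2rk)})$ with $\tau^r$ admitting a unique Whittaker model, realizes the unique $(rk,c)$ functional by the inductive integrals of \S~\ref{decomposition of functionals}, and runs the multiplicativity machinery of \cite[\S~6.4.1, \S~6.13]{CFK} to reduce continuation and continuity to $n$ doubling integrals for $\GL_1\times\GL_{rk}$, where they are proved directly. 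You correctly flag \eqref{it:all} as the hard step and point to the structure theory of \cite{CFK}, but you do not supply (or even name) the continuity-of-the-continuation statement that makes the smooth-to-$K$-finite passage legitimate; as written, the final step of your argument does not close.
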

\begin{proof}
Parts~\eqref{it:conv}--\eqref{it:arch} were proved for linear groups in \cite[Propositions~20, 21]{CFK}, when $|S'|=1$. The arguments extend to
covering groups and to finite products of integrals (i.e., to $|S'|>1$), and only depend on the equivariance
property of the $(k,c)$ functional under $V_{(c^k)}$ and invariance under $\SL_c^{\triangle}$; here we have an $(rk,c)$
functional with an equivariance property under $V_{(c^{rk})}$ and the invariance property is guaranteed by assumption
\eqref{eq:invariance prop on SLc}. In particular for \eqref{it:p-adic} note that if we take a sufficiently small compact
open neighborhood of the identity $\mathcal{N}$ in $H$, then $H^{(m)}$ is split over $\mathcal{N}$.

For the last assertion, first note that under the assumption on the archimedean places, we can write
$Z(s,\omega,f_{\mathcal{W}(\mathcal{E})})$ as the product of an integral \eqref{eq:local integral at places}
with respect to $S''=S'-S_{\infty}$, and $|S'\cap S_{\infty}|$ local integrals over the places of $S'\cap S_{\infty}$. The integral over $S''$
can be made constant by \eqref{it:p-adic}. Since we also have part~\eqref{it:arch}, it remains to prove that
the integrals over the archimedean places admit meromorphic continuation to $\C$ as functions of $s$, and this continuation is continuous in $\omega_{\nu}$ and $f_{\mathcal{W}(\mathcal{E}_{\nu})}$.

We can now use local notation and consider a single archimedean place. Put
\begin{align*}
\tau^{r}=\Ind_{P_{(k^r)}}^{\GL_{rk}}((\tau\otimes\ldots\otimes\tau)\delta_{P_{(k^r)}}^{-1/(2rk)}).
\end{align*}
This representation admits a unique Whittaker model, because $\tau$ is irreducible and generic. Then by
Theorem~\ref{theorem:prop1},
\begin{align*}
\mathcal{E}\subset\Ind_{P_{((rk)^{c})}}^{\GL_{rkc}}((\tau^r\otimes \ldots \otimes \tau^r)\delta_{P_{((rk)^{c})}}^{-1/(2rk)}).
\end{align*}
By our assumption, $\mathcal{E}$ admits a unique $(rk,c)$ functional, hence we can realize it using a functional on the full induced representation, granted it does not vanish on $\mathcal{E}$. Such can be obtained inductively using the integral from \S~\ref{Local components of rk c speh}: first construct an integral on
\begin{align*}
V_1=\Ind_{P_{((rk)^{2})}}^{\GL_{2rk}}((\tau^r\otimes \tau^r)\delta_{P_{((rk)^{2})}}^{-1/(2rk)}),
\end{align*}
then compose it with an integral on
\begin{align*}
\Ind_{P_{(2rk,rk)}}^{\GL_{3rk}}((V_1\otimes \tau^r)\delta_{P_{(2rk,rk)}}^{-1/(2rk)}),
\end{align*}
etc. With this realization we can follow the arguments from
\cite[\S~6.13]{CFK}: using the multiplicativity arguments from \cite[\S~6.4.1]{CFK}
we reduce these continuation and continuity statements to the case of doubling integrals for $\GL_1\times\GL_{rk}$ (whose covering version will be described here in \S~\ref{integarls for GL}) and $\Sp_{c-2}\times\GL_{rk}$, then repeat the arguments on the $\Sp_{c-2}\times\GL_{rk}$ integral. We obtain $n$ doubling integrals for $\GL_1\times\GL_{rk}$. Note that in \textit{loc. cit.} the multiplicativity arguments were formulated under the assumption that $\tau$ is essentially tempered, but this was only needed in order to have the integral realization of the model \cite[\S~2.4]{CFK}, which is similar to \S~\ref{Local components of rk c speh} here. The continuation and continuity for a
$\GL_1\times\GL_{rk}$ integral with a quasi-character of $F^*$, and a representation of $\GL_{rk}$ which admits a unique Whittaker model, was then proved directly in \cite[\S~6.13]{CFK}; here the representation of $\GL_{rk}$ is $\tau^r$, in \textit{loc. cit.} it was $\tau$. Now nonvanishing for a $K_{H,\nu}$-finite section follows from the continuity.
\end{proof}
\begin{remark}
Over archimedean places the argument is greatly simplified if we assume $|\det|^{s'}\tau^r$ is unitary for some $s'\in\R$. Then $\mathcal{E}$ is precisely $\rho_c(\tau^r)$ of \cite{CFK}, $\mathcal{E}$ is $(rk,c)$ by \cite[Theorem~5]{CFK}, and
all the assertions on the integral already follow from \cite{CFK}. For example if $\tau$ is
a unitary irreducible representation of $\GL_k(\C)$ and $|\chi_i|=|~|^{\alpha_i}$ with $-1/2<r\alpha_i<1/2$ for each $\chi_i$ in the inducing data of $\tau$, $\tau^r$ is unitary; so $|\det|^{s'}\tau^r$ is unitary if $\tau$ is essentially tempered.
\end{remark}
\begin{remark}
Meromorphic continuation of the integrals over non-archimedean places can be proved using
Bernstein's continuation principal (in \cite{Banks}), by combining
Proposition~\ref{proposition:equiv props} with Proposition~\ref{proposition:local props}, and granted that the dimension of \eqref{eq:homspace G with W(E)} is at most $1$ outside a finite set of values of $q^{-s}$. This uniqueness result requires the uniqueness of the $(rk,c)$ model, and hence we may not use it here at the ramified places. We will however prove the meromorphic continuation of
$Z(s,\omega,f_{\mathcal{W}(\mathcal{E})})$ with unramified data and when the model is unique, as part of the computation.
\end{remark}
As a corollary of our results, we deduce the meromorphic continuation of the global partial $L$-function, but under an additional assumption on the archimedean places. Also recall that we assume Conjectures~\ref{local Shimura conjecture} and \ref{Shimura conjecture}, but for $r=1$ or $k=1$ our only assumption is $\mu_{2m}\subset F^*$.
\begin{theorem}\label{theorem:mero}
Let $\pi$ and $\tau$ be genuine irreducible cuspidal automorphic representations of $G^{(m)}(\A)$ and $\GL_{k}^{(m,r)}(\A)$, and let $S$ be
a finite set of places such that for $\nu\notin S$, $F_{\nu}$, $\psi_{\nu}$, $\tau_{\nu}$ and $\pi_{\nu}$ are unramified. Assume
$\mathcal{E}_{\nu}$ is $(rk,c)$ for all $\nu\in S_{\infty}$, e.g., $r=1$ or $k=1$ (or $\tau_{\nu}$ is essentially tempered). Then
$L_{\vartheta}^S(s,\pi\times\tau)$ admits meromorphic continuation to $\C$.
\end{theorem}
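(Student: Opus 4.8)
The plan is to read off the meromorphic continuation of $L^S_{\vartheta}(s,\pi\times\tau)$ from the basic identity together with the explicit unramified computation, in the style of Piatetski--Shapiro and Rallis. Take $\pi_1=\pi_2=\pi$ (otherwise the integral vanishes identically by Theorem~\ref{theorem:main gbl identity}), and fix decomposable cusp forms $\varphi_1,\varphi_2$ and a decomposable $\widetilde{K}_H$-finite section $f$, so that the almost Euler product \eqref{eq:almost Euler} holds for $\Real(s)\gg 0$. Substituting the formula of Theorem~\ref{theorem:unramified computation for Sp(2n),SO(2n)} for each $Z_{\nu}$ with $\nu\notin S$, the product $\prod_{\nu\notin S}Z_{\nu}$ equals $L^S_{\vartheta}(r\alpha s+1/2,\pi\times\tau)$ divided by the normalizing factor
\[
\mathcal{N}^S(s)=\big[L^S_{\vartheta}(r\alpha s+rn+1/2,\tau)\big]\prod_{1\leq j\leq rn}L^S_{\vartheta}(2r\alpha s+2j,\tau,\wedge^2)\,L^S_{\vartheta}(2r\alpha s+2j-1,\tau,\vee^2),
\]
with $\alpha=rkc+1$ and the bracketed factor present only for odd $m$. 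Hence, for $\Real(s)\gg 0$,
\[
L^S_{\vartheta}(r\alpha s+1/2,\pi\times\tau)=\mathcal{N}^S(s)\,\frac{Z(s,\varphi_1,\varphi_2,f)}{Z[S](s,\omega_S,f_{\mathcal{W}((\mathcal{E}_{\tau})_S)})},
\]
and it suffices to show that, for a suitable choice of data, the right-hand side extends meromorphically to $\C$.

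Next I would control the three factors on the right. The numerator $Z(s,\varphi_1,\varphi_2,f)$ is meromorphic on $\C$ by Theorem~\ref{theorem:main theorem classical groups}. For the denominator, fix $s_0\in\C$; using Proposition~\ref{proposition:local props}\eqref{it:p-adic} at the non-archimedean places of $S$ I would choose local data making those local integrals identically $1$, and using Proposition~\ref{proposition:local props}\eqref{it:all} at the archimedean places of $S$ I would choose $\widetilde{K}_H$-finite data for which the archimedean integral is holomorphic and nonzero in a neighbourhood of $s_0$ and admits meromorphic continuation to $\C$ (that continuation being established inside the proof of that proposition); here the hypothesis that $\mathcal{E}_{\nu}$ is $(rk,c)$ for $\nu\in S_{\infty}$ is exactly what is needed to apply part~\eqref{it:all}. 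Then $Z[S]$ is meromorphic on $\C$, not identically zero, and nonvanishing near $s_0$, so $1/Z[S]$ is meromorphic on $\C$ and holomorphic near $s_0$.

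It remains to treat $\mathcal{N}^S(s)$. Its factors are the partial standard, exterior-square and symmetric-square $L$-functions of the genuine cuspidal representation $\tau$ of $\GL_k^{(m,r)}(\A)$; these are precisely the partial $L$-functions that constitute the normalizing factor of the Siegel Eisenstein series $E(h;f,s)$ on $H^{(m)}(\A)$, and their meromorphic continuation follows from Langlands' theory of constant terms of Eisenstein series, extended to covering groups by Gao~\cite{Gao2018}, via the Gindikin--Karpelevich formula \eqref{eq:GK formula for GL}; in the cases $r=1$ or $k=1$ they reduce to classical Rankin--Selberg $L$-functions for $\GL_k$, or to Hecke $L$-functions, which are meromorphic. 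Therefore the right-hand side of the displayed identity is meromorphic in a neighbourhood of $s_0$. Since $s_0$ was arbitrary and, on the domain $\Real(s)\gg 0$ of absolute convergence of \eqref{eq:almost Euler}, each such extension coincides with the function defined by the Euler product --- hence, by uniqueness of analytic continuation, with one another on overlaps --- these extensions patch to a meromorphic function on all of $\C$. An affine change of variables $s\mapsto(s-1/2)/(r\alpha)$ then yields the meromorphic continuation of $L^S_{\vartheta}(s,\pi\times\tau)$.

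The genuine difficulty is concentrated in the denominator $Z[S]$: away from the unramified places, where Theorem~\ref{theorem:unramified computation for Sp(2n),SO(2n)} does all the work, the absence of a uniqueness theorem for $(rk,c)$ models means one cannot invoke Bernstein's continuation principle directly, and one must instead produce, for every target point $s_0$, explicit data realizing holomorphy and nonvanishing near $s_0$ --- which is precisely why the statement carries the hypothesis that $\mathcal{E}_{\nu}$ is $(rk,c)$ at the archimedean places (automatic when $r=1$ or $k=1$). A secondary point requiring care is that the normalizing factors $\mathcal{N}^S(s)$ are genuinely $L$-functions of the cover $\GL_k^{(m,r)}$, so outside the degenerate cases their meromorphic continuation must be imported from the Eisenstein-series machinery for covering groups rather than from the classical theory.
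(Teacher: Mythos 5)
Your argument is correct and follows essentially the same route as the paper's proof: the basic identity \eqref{eq:almost Euler} combined with Theorem~\ref{theorem:unramified computation for Sp(2n),SO(2n)}, the meromorphic continuation of the Eisenstein series (Theorem~\ref{theorem:main theorem classical groups}) and of the normalizing $L$-functions via Gao's work, and Proposition~\ref{proposition:local props}\eqref{it:all} to make $Z[S]$ holomorphic and nonzero near each prescribed $s_0$. The only cosmetic difference is that you solve the identity for $L^S_{\vartheta}$ explicitly and patch over $s_0$, which is exactly what the paper does implicitly.
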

\begin{proof}
According to Theorem~\ref{theorem:main gbl identity}, $Z(s,\varphi_1,\varphi_2,f)$ coincides with \eqref{global2} for $\Real(s)\gg0$. Given $S$, we can choose the global data $\varphi_1,\varphi_2$ and $f$ such that $\omega_{\nu}$ and $f_{\mathcal{W}(\rho_c(\tau_{\nu}))}$ are normalized and unramified for all $\nu\notin S$. Then for $\Real(s)\gg0$, by \eqref{eq:almost Euler} and Theorem~\ref{theorem:unramified computation for Sp(2n),SO(2n)},
\begin{align*}
Z(s,\varphi_1,\varphi_2,f)=&Z[S](s,\omega_S,f_{\mathcal{W}((\mathcal{E}_{\tau})_S)})
\\&\times\frac{L_{\vartheta}^S(r\alpha s+1/2,\pi\times\tau)}
{[L_{\vartheta}^S(r\alpha s+rn+1/2,\tau)]\prod\limits_{1\leq j\leq rn}L_{\vartheta}^S(2r\alpha s+2j,\tau,\wedge^2)
L_{\vartheta}^S(2r\alpha s+2j-1,\tau,\vee^2)}.
\end{align*}
($\alpha=rkc+1$ and the factor in brackets appears only when $m$ is odd.)

The l.h.s.~ admits meromorphic continuation to $\C$, by Theorem~\ref{theorem:main theorem classical groups}.
The partial $L$-functions appearing in the denominator on the r.h.s.~ admit meromorphic continuation by \cite{Gao2018}.

It is therefore sufficient to prove that for any $s_0\in\C$, one can further choose the global data such that
$Z[S](s,\omega_S,f_{\mathcal{W}((\mathcal{E}_{\tau})_S)})$ is nonzero and holomorphic in a neighborhood of $s_0$. This is possible by
Proposition~\ref{proposition:local props} \eqref{it:all}.
Therefore $L_{\vartheta}^S(r\alpha s+1/2,\pi\times\tau)$ admits meromorphic continuation to $\C$.
\end{proof}
\begin{remark}
Gao \cite{Gao2018} already implies the assertion of the theorem unconditionally (even without $\mu_{2m}\subset F^*$).
\end{remark}

\subsection{The local $\GL_n^{(m,r)}\times\GL_k^{(m,r)}$ integrals}\label{integarls for GL}
The proof of Theorem~\ref{theorem:unramified computation for Sp(2n),SO(2n)} in \S~\ref{Computation of the local factors with unramified data} involves the computation of $\GL_n^{(m,r)}\times\GL_k^{(m,r)}$ integrals, which we introduce here. The global theory of these integrals is developed in Appendix~\ref{gbl GL}, because for the present work they are only needed locally and when the field is unramified (for the linear case see \cite{CFK}). For the purpose of their definition, we take any local field.
As in the symplectic case, we start with the description of the groups and embeddings,
proceed with the coverings, then consider the representations and integral.

Let $n,k$ and $m$ be positive integers, and $r$ be either $m$ when $m$ is odd, or $r=m/2$ otherwise.
Put $c=n$. Denote $G=\GL_c$, $H=\GL_{2rkc}$ and $P=P_{(rkc,rkc)}$.
For the embedding $G\times G\hookrightarrow H$, set $Q=P_{(c^{rk-1},2c,c^{rk-1})}=M_Q\ltimes U$, then $U=V_{(c^{rk-1},2c,c^{rk-1})}$, and define
\begin{align}\label{eq:GL character of U}
\psi_U(\left(\begin{smallmatrix}v&x&y\\&I_{2c}&z\\&&v'\end{smallmatrix}\right))=\psi^{-1}(v)\psi(-\tr(X_1)+\tr(Z_1))\psi^{-1}(v'), \qquad v,v'\in V_{(c^{rk-1})},
\end{align}
where $\psi(v)$ and $\psi(v')$ are defined by \eqref{eq:psi on V rk-1},
$X_1$ is the bottom left $c\times c$ block of $x$ and
$Z_1$ is the top left $c\times c$ block of $z$. Then $G\times G$ is embedded in the stabilizer of $\psi_U$ in $M_Q$ by
\begin{align*}
(g_1,g_2)=\diag(g_1,\ldots,g_1,g_1,g_2,g_1,\ldots,g_1),\qquad g_1,g_2\in\GL_c,
\end{align*}
where $g_1$ appears $rk$ times on the left of $g_2$ and $rk-1$ times on the right.
Again we denote $\mathfrak{e}_1(g)=(g,1)$ and $\mathfrak{e}_2(g)=(1,g)$.

Recall the covering $\GL_d^{(m,r)}$ defined in \S~\ref{covering of the Levi}, realized using the $2$-cocycle
\begin{align*}
\sigma^{\diamondsuit}_{d}(b,b')=\sigma_{2d}(\diag(b,b^*),\diag(b',{b'}^*)).
\end{align*}
We realize $H^{(m,r)}$ using $\sigma_{2rkc}^{\diamondsuit}$, and both copies of $G^{(m)}$ are realized using $\sigma_c^{\diamondsuit}$. By
\eqref{eq:block compatibility on Levi of P} we immediately deduce that
the images of $\mathfrak{e}_1(G)$ and $\mathfrak{e}_2(G)$ commute in $H^{(m,r)}$, and moreover
for all $g_i,g_i'\in G$,
\begin{align}\label{eq:the $2$-cocycle on G times G formula GL}
\sigma^{\diamondsuit}_{2rkc}((g_1,g_2),(g_1',g_2'))&=\sigma^{\diamondsuit}_{c}(g_1,g_1')^{2rkc-1}\sigma_{c}^{\diamondsuit}(g_2,g_2')
=\sigma^{\diamondsuit}_{c}(g_1,g_1')^{-1}\sigma_{c}^{\diamondsuit}(g_2,g_2').
\end{align}
(Cf. \eqref{eq:the $2$-cocycle on G times G formula}).

Thus we can lift the embedding $G\times G \hookrightarrow H$ to an embedding of
\begin{align*}
\{(\epsilon_1,\epsilon_2)\in\mu_m^2:\epsilon_1=\epsilon_2\}\backslash G^{(m,r)}\times G^{(m,r)} \hookrightarrow H^{(m,r)}
\end{align*}
via
\begin{align}\label{eq:GL embeddings coverings G and G into H}
\langle g,\epsilon\rangle\mapsto\langle \mathfrak{e}_1(g),\epsilon^{-1}\rangle,\qquad
\langle g,\epsilon\rangle\mapsto\langle \mathfrak{e}_2(g),\epsilon\rangle.
\end{align}
Equality~\eqref{eq:the $2$-cocycle on G times G formula GL} also implies an identity similar to \eqref{eq:g_1 and g_2 product in H}:
\begin{align}\label{eq:g_1 and g_2 product in H GL}
\langle\mathfrak{e}_1(g_1),\epsilon_1^{-1}\rangle\langle \mathfrak{e}_2(g_2),\epsilon_2\rangle=\langle (g_1,g_2),\epsilon_1^{-1}\epsilon_2\rangle.
\end{align}

To define the integral we introduce the following notation. Let
\begin{align*}
&\delta=\delta_0\delta_1,\qquad \delta_0=\left(\begin{smallmatrix}&I_{rkc}\\I_{rkc}\end{smallmatrix}\right),\qquad
\delta_1=\left(\begin{smallmatrix}I_{(rk-1)c}\\&I_c&I_c\\&&I_c\\&&&I_{(rk-1)c}\end{smallmatrix}\right),\\
&U_0=U\cap U_P, \qquad
\psi_U(u_0)=\psi(\tr(Z_1)).
\end{align*}

Recall that $G^{\triangle}$ is the stabilizer in $M_{(c^{rk})}$ of the character $\psi$ of $V_{(c^{rk})}$ given by \eqref{eq:rk c character} (now $G=\GL_c)$.
By \eqref{eq:block compatibility on Levi of P},
$\sigma^{\diamondsuit}_{rkc}(g^{\triangle},{g'}^{\triangle})=\sigma_c^{\diamondsuit}(g,g')^{rk}$.
Hence $\sigma^{\diamondsuit}_{2rkc}$ is trivial on $\{(g,g):g\in G\}$, and note that $(g,g)=\diag(g^{\triangle},g^{\triangle})$,
i.e., the diagonal embedding of $2rk$ copies of $G$ in $H$. Thus
$H^{(m,r)}$ is split over $\{(g,g):g\in G\}$, and the trivial section is a splitting (not necessarily unique, $G$ is not perfect).

Let $\pi$ be a genuine irreducible representation of $G^{(m,r)}$, and let
$\mathcal{E}$ and $\mathcal{E'}$ be two genuine irreducible representations of $\GL_{rkc}^{(m,r)}$. Assume
$\mathcal{E}$ (resp., $\mathcal{E}'$) affords an $(rk,c)$ functional $\Lambda$ (resp., $\Lambda'$),
and let $\mathcal{W}(\mathcal{E})$ (resp., $\mathcal{W}(\mathcal{E}')$) be the $(rk,c)$ model defined using $\Lambda$ (resp., $\Lambda'$).
We do not assume this model is unique. The additional assumption we need for the definition of the integral is that for any
$\xi$ (resp., $\xi'$) in the space of $\mathcal{E}$ (resp., $\mathcal{E}'$),
\begin{align}\label{eq:invariance prop on GL or SL}
\Lambda(\mathcal{E}(\langle g^{\triangle},1\rangle)\xi)\Lambda'(\mathcal{E}'(\langle g^{\triangle},1\rangle)\xi')=
\Lambda(\xi)\Lambda'(\xi'),\qquad \forall g\in G.
\end{align}

Let $\omega$ be a matrix coefficient of $\pi^{\vee}$, and
$f_{\mathcal{W}(\mathcal{E})\otimes\mathcal{W}(\mathcal{E}')}$ be a standard section of
\begin{align*}
\Ind_{\widetilde{P}}^{H^{(m,r)}}((\mathcal{W}(\mathcal{E})\otimes \mathcal{W}(\mathcal{E}'))\delta_P^s).
\end{align*}
The integral is defined by
\begin{align}\label{eq:local GL GL integral}
&Z(s,\omega,f_{\mathcal{W}(\mathcal{E})\otimes\mathcal{W}(\mathcal{E}')})=
\int\limits_{G}\int\limits_{U_0}
\omega(\langle g,1\rangle)f_{\mathcal{W}(\mathcal{E})\otimes\mathcal{W}(\mathcal{E}')}(\langle\delta u_0,1\rangle
\langle\mathfrak{e}_2(g),1\rangle,s)\,\psi_{U}(u_0)\,du_0\,dg.
\end{align}
Note that we do not have an involution here ($\iota=I_c$). We prove the analog of Proposition~\ref{proposition:equiv props}:
\begin{proposition}\label{proposition:equiv props GL}
The integral $Z(s,\omega,f_{\mathcal{W}(\mathcal{E})\otimes\mathcal{W}(\mathcal{E}')})$ can be regarded (at least formally) as a morphism in the space
\begin{align}\label{eq:homspace G with W(E) GL}
\Hom_{G^{(m,r)}\times G^{(m,r)}}(J_{U,\psi_U^{-1}}(\Ind_{\widetilde{P}}^{H^{(m)}}((\mathcal{W}(\mathcal{E})\otimes\mathcal{W}(\mathcal{E}'))\delta_P^s)),\pi^{\vee}\otimes\pi).
\end{align}
\end{proposition}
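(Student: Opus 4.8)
The plan is to mimic the proof of Proposition~\ref{proposition:equiv props}, but with the simplifications coming from the fact that $\iota=I_c$ here, at the cost of keeping track of two representations $\mathcal{E},\mathcal{E}'$ in the two blocks of $M_P=\GL_{rkc}\times\GL_{rkc}$ and hence two $(rk,c)$ functionals $\Lambda,\Lambda'$. As before, I would fix vectors $\xi,\xi^{\vee}$ realizing $\omega$, and for $g_1,g_2\in G$ set $\omega_{g_1,g_2}(g)=\pi^{\vee}(gg_2)\xi^{\vee}(\pi(g_1)\xi)$, regarding $Z(s,\omega,f_{\mathcal{W}(\mathcal{E})\otimes\mathcal{W}(\mathcal{E}')})$ as a trilinear form on $\Ind_{\widetilde{P}}^{H^{(m,r)}}((\mathcal{W}(\mathcal{E})\otimes\mathcal{W}(\mathcal{E}'))\delta_P^s)\times\pi\times\pi^{\vee}$. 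The goal is to prove, for $g_1,g_2\in G$ and $u\in U$,
\begin{align*}
Z(s,\omega_{\langle g_1,1\rangle,\langle g_2,1\rangle},(\langle\mathfrak{e}_1(g_1),1\rangle\langle\mathfrak{e}_2(g_2),1\rangle\langle u,1\rangle)\cdot f)=\psi_U^{-1}(u)Z(s,\omega,f),
\end{align*}
which exhibits the integral as an element of the Jacquet module $J_{U,\psi_U^{-1}}$ and then, via the embedding $(g_1,g_2)$ from \eqref{eq:GL embeddings coverings G and G into H}, as a morphism in \eqref{eq:homspace G with W(E) GL}. Note that here the left copy is realized using $\sigma_c^{\diamondsuit}$ (the same $2$-cocycle as $\pi$), so unlike the symplectic case there is no $\varsigma_{*,c}^{rk+1}$ twist to carry around, and \eqref{eq:g_1 and g_2 product in H GL} applies directly.

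The $U$-equivariance step is the same computation as in the symplectic case: write ${}^{\mathfrak{e}_2(g)}u=v_gu_g$ with $v_g\in{}^{\delta^{-1}}V_{(c^{rk})}^{\diamondsuit}$ and $u_g\in U_0$ as dictated by the embedding of \S~\ref{integarls for GL}, use \eqref{eq:sigma conjugate v by h} and \eqref{eq:sigma on vh and h'v'} to move $\langle v_g,1\rangle$ to the left past $\langle\delta u_0,1\rangle$ (it lands in $\GL_{rkc}^{(m,r)}$ acting through the $(rk,c)$ models), change variables $u_0\mapsto u_0u_g^{-1}$, and read off $\psi_U^{-1}(u)$ from the definition of the embedding. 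For the $G\times G$-equivariance, the $g_2$-variable is absorbed exactly as in the proof of Proposition~\ref{proposition:equiv props}, using \eqref{eq:the $2$-cocycle on G times G formula GL}, \eqref{eq:g_1 and g_2 product in H GL} and the anti-genuineness of $\pi^{\vee}$, but now with no $\iota$ the identities \eqref{eq:iota on the local coverings}--\eqref{eq:varsigma varsigme} are not needed — one simply conjugates $\langle\mathfrak{e}_2(g_2),1\rangle$ to the right through ${}^{\delta}$ and changes $g\mapsto gg_2^{-1}$. For the $g_1$-variable, multiply $g\mapsto g_1g$, use the cocycle identity to pull out $\sigma_c^{\diamondsuit}(g_1,g)$ (which cancels against the one emitted from $\langle\mathfrak{e}_2(g_1),1\rangle$ via \eqref{eq:the $2$-cocycle on G times G formula GL}), and reduce to showing that $f_{\mathcal{W}(\mathcal{E})\otimes\mathcal{W}(\mathcal{E}')}$ is left-invariant under the image of $\langle(g_1,g_1),1\rangle=\langle\diag(g_1^{\triangle},g_1^{\triangle}),1\rangle$ after conjugation by $\delta$.

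The main obstacle — the analog of Claim~\ref{claim:invariance of inner under G iota delta} — is precisely this last invariance: one must show $f_{\mathcal{W}(\mathcal{E})\otimes\mathcal{W}(\mathcal{E}')}({}^{\delta}\langle(g,g),1\rangle h,s)=f_{\mathcal{W}(\mathcal{E})\otimes\mathcal{W}(\mathcal{E}')}(h,s)$. Conjugating by $\delta=\delta_0\delta_1$, the element ${}^{\delta}(g,g)$ decomposes as $d_gl_g$ with $d_g=\diag((g^*)^{\triangle},g^{\triangle})\in M_P$ and $l_g\in U_P$, so by \eqref{eq:sigma on h and v} we have $\langle d_gl_g,1\rangle=\langle d_g,1\rangle\langle l_g,1\rangle$ (and since $\sigma^{\diamondsuit}_{2rkc}$ is trivial on $\{(g,g):g\in G\}$, $g\mapsto\langle d_gl_g,1\rangle$ is a splitting — this uses that ${}^{\delta}$ is an automorphism of $H$ and \eqref{eq:epsilon for conjugation between split subgroups}, exactly as in \eqref{eq:conj by delta globally} and its local analog). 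The $\langle l_g,1\rangle$ part is killed by left-invariance of $f$ under $U_P$, and the $\langle d_g,1\rangle$ part — which acts on the two $\GL_{rkc}$-blocks as $\langle g^{\triangle},1\rangle$ and $\langle (g^*)^{\triangle},1\rangle$ through $\mathcal{W}(\mathcal{E})$ and $\mathcal{W}(\mathcal{E}')$ — is killed precisely by hypothesis \eqref{eq:invariance prop on GL or SL}, after identifying, via the block-compatibility $\sigma^{\diamondsuit}_{rkc}(g^{\triangle},{g'}^{\triangle})=\sigma_c^{\diamondsuit}(g,g')^{rk}$ and the fact that $b\mapsto b^*$ lifts to the trivial involution \eqref{eq:involution b*0}, the action of $(g^*)^{\triangle}$ on $\mathcal{W}(\mathcal{E}')$ with $\Lambda'(\mathcal{E}'(\langle (g^*)^{\triangle},1\rangle)\xi')$. (The matching of ${}^*$-twists is the delicate bookkeeping point; for the application in \S~\ref{Computation of the local factors with unramified data}, where $\mathcal{E}'$ will be a starred copy of $\mathcal{E}$, this is consistent.) Once Claim holds, \eqref{int:to show inv under g iota g}--\eqref{int:inv after g giota inv} go through verbatim (without $\iota$), and together with the $U$-equivariance this gives \eqref{eq:homspace G with W(E) GL}, completing the proof.
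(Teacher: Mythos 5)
Your overall strategy matches the paper's: reduce to the equivariance identity \eqref{eq:inv both GL}, handle $U$ by conjugation and a change of variables, absorb $g_2$ and then $g_1$ into the $dg$-integral, and reduce everything to left-invariance of $f_{\mathcal{W}(\mathcal{E})\otimes\mathcal{W}(\mathcal{E}')}$ under ${}^{\delta}\langle(g,g),1\rangle$. However, your treatment of that final invariance step contains a genuine error: you import the symplectic decomposition ${}^{\delta\iota}(g,g)=d_gl_g$ with $d_g=\diag((g^*)^{\triangle},g^{\triangle})$ and $l_g\in U_P$ wholesale, but in the $\GL$ setting this is wrong on both counts. The embedding here is $(g,g)=\diag(g,\ldots,g)=\diag(g^{\triangle},g^{\triangle})$ with \emph{no} starred blocks (the stars come from the symplectic embedding $\mathfrak{e}_1(g)=\diag(g,\ldots,g^*,\ldots)$, which has no analog here), and since $\delta_0$ merely swaps two equal blocks and $\delta_1$'s off-diagonal $I_c$ sits between two copies of $g$, one has ${}^{\delta}(g,g)=(g,g)$ exactly, with no unipotent correction $l_g$. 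This matters beyond bookkeeping: hypothesis \eqref{eq:invariance prop on GL or SL} asserts invariance when \emph{both} functionals are translated by $\langle g^{\triangle},1\rangle$, which is precisely what the correct decomposition produces; your version would instead require $\Lambda(\mathcal{E}(\langle(g^*)^{\triangle},1\rangle)\xi)\Lambda'(\mathcal{E}'(\langle g^{\triangle},1\rangle)\xi')=\Lambda(\xi)\Lambda'(\xi')$, which is a different (and unstated) condition, and your attempt to reconcile the two via the lift \eqref{eq:involution b*0} does not close this gap.

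There is a second, subtler gap: you justify ${}^{\delta}\langle(g,g),1\rangle=\langle{}^{\delta}(g,g),1\rangle$ by appealing to uniqueness of splittings via \eqref{eq:epsilon for conjugation between split subgroups}, but that argument needs the splitting of $\{(g,g):g\in G\}$ to be unique, which fails because $\GL_c$ is not perfect (the paper says so explicitly when introducing this splitting). One must actually compute the root of unity $\epsilon_g$ defined by ${}^{\delta}\langle(g,g),1\rangle=\langle(g,g),\epsilon_g\rangle$: it is a character of $G$ trivial on $\SL_c$ with $\epsilon_g^2=1$, so the issue reduces to torus elements $\diag(t,I_{c-1})$, and the paper settles it by an explicit cocycle computation with $\sigma_{4rkc}$ in $\Sp_{4rkc}^{(m)}$ (deliberately avoiding the blanket appeal to Proposition~\ref{proposition:action of W on torus is trivial on Sp}, so as not to rely on $\mu_{2m}\subset F^*$). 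Without this computation, or at least the reduction to the torus plus an invocation of that proposition, the invariance \eqref{eq:local f left inv by g iota g GL} is not established. Once both points are repaired the rest of your argument goes through as written.
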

\begin{proof}
Let $\omega_{g_1,g_2}$ be defined as in Proposition~\ref{proposition:equiv props} (with $\iota=I_c$). We need to prove that for $g_1,g_2\in G$ and $u\in U$,
\begin{align}\label{eq:inv both GL}
&Z(s,\omega_{\langle g_1,1\rangle,\langle g_2,1\rangle},(\langle \mathfrak{e}_1(g_1),1\rangle\langle \mathfrak{e}_2(g_2),1\rangle\langle u,1\rangle)\cdot f_{\mathcal{W}(\mathcal{E})\otimes\mathcal{W}(\mathcal{E}')})=
\psi_U^{-1}(u)Z(s,\omega,f_{\mathcal{W}(\mathcal{E})\otimes\mathcal{W}(\mathcal{E}')}).
\end{align}
(Note that the analog of \eqref{eq:tri} factors through $G\times G$ by \eqref{eq:GL embeddings coverings G and G into H}.)

Regarding $u$, the proof is very similar to the proof in Proposition~\ref{proposition:equiv props}. The definition of the embedding implies that we can write ${}^{\mathfrak{e}_2(g)}u=v_gu_g$, with $u_g\in U_0$ but
$v_g\in{}^{\delta^{-1}}\diag(V_{(c^{rk})},V_{(c^{rk})})$. Then we apply
\eqref{eq:sigma conjugate v by h} and \eqref{eq:sigma on vh and h'v'} (as mentioned in \S~\ref{covering of the Levi}, these are still applicable with $\sigma_{2rkc}^{\diamondsuit}$ instead of $\sigma_{2rkc}$), and check that the equivariance properties of both
$\mathcal{W}(\mathcal{E})$ (resp., $\mathcal{W}(\mathcal{E}')$) under $\diag(V_{(c^{rk})},I_{rkc})$ (resp., $\diag(I_{rkc},V_{(c^{rk})})$)
together with $\psi_U(u_g)$ combine to emit $\psi_U^{-1}(u)$.

Now assume $u$ is trivial. The l.h.s.~ of \eqref{eq:inv both GL} equals
\begin{align*}
&\int\limits_{G}\int\limits_{U_0}
\pi^{\vee}(\langle g,1\rangle\langle g_2,1\rangle)
\xi^{\vee}(\pi(\langle g_1,1\rangle)\xi)
\\&f_{\mathcal{W}(\mathcal{E})\otimes\mathcal{W}(\mathcal{E}')}(\langle\delta u_0,1\rangle
\langle\mathfrak{e}_2(g),1\rangle\langle \mathfrak{e}_1(g_1),1\rangle\langle \mathfrak{e}_2(g_2),1\rangle,s)\,\psi_{U}(u_0)\,du_0\,dg.
\end{align*}
Since $\langle \mathfrak{e}_1(g_1),1\rangle$ and $\langle \mathfrak{e}_2(g_2),1\rangle$ commute,
and using \eqref{eq:the $2$-cocycle on G times G formula GL} (with $g_1=g_1'=I_c$) and the fact that $\pi^{\vee}$ is anti-genuine, when we change $g\mapsto gg_2^{-1}$ we obtain
\begin{align}\label{int:inv after getting rid of g_2 GL}
&\int\limits_{G}\int\limits_{U_0}
\pi^{\vee}(\langle g,1\rangle)
\xi^{\vee}(\pi(\langle g_1,1\rangle)\xi)
f_{\mathcal{W}(\mathcal{E})\otimes\mathcal{W}(\mathcal{E}')}(\langle\delta u_0,1\rangle
\langle\mathfrak{e}_1(g_1),1\rangle
\langle\mathfrak{e}_2(g),1\rangle,s)\,\psi_{U}(u_0)\,du_0\,dg.
\end{align}
(The absence of $\iota$ simplifies some of the computations!)
Then when we multiply $g\mapsto g_1g$,
\begin{align*}
&\pi^{\vee}(\langle g,1\rangle)\xi^{\vee}(\pi(\langle g_1,1\rangle)\xi)
\mapsto \sigma_c^{\diamondsuit}(g_1,g)\pi^{\vee}(\langle g,1\rangle)\xi^{\vee}(\xi),\\
&\langle\mathfrak{e}_2(g),1\rangle\mapsto
\langle\mathfrak{e}_2(g_1),\sigma_c^{\diamondsuit}(g_1,g)^{-1}\rangle\langle\mathfrak{e}_2(g),1\rangle,
\end{align*}
and \eqref{int:inv after getting rid of g_2 GL} equals
\begin{align}\label{int:GL GL the U_0 integral before g g inv}
&\int\limits_{G}\int\limits_{U_0}
\omega(\langle g,1\rangle)
f_{\mathcal{W}(\mathcal{E})\otimes\mathcal{W}(\mathcal{E}')}(\langle\delta u_0,1\rangle
\langle(g_1,g_1),1\rangle
\langle\mathfrak{e}_2(g),1\rangle,s)\,\psi_{U}(u_0)\,du_0\,dg,
\end{align}
where we also used \eqref{eq:g_1 and g_2 product in H GL}.

As in the proof of Proposition~\ref{proposition:equiv props}, ${}^{(g_1,g_1)}u_0=v_{g_1}u_{g_1}$ with
$v_{g_1}\in{}^{\delta^{-1}}\diag(V_{(c^{rk})},V_{(c^{rk})})$ and $u_{g_1}\in U_0$, and using \eqref{eq:sigma conjugate v by h}
and \eqref{eq:sigma on h and v},
the integral becomes
\begin{align}\label{int:inv before g giota inv GL}
&\int\limits_{G}\int\limits_{U_0}
\omega(\langle g,1\rangle)
f_{\mathcal{W}(\mathcal{E})\otimes\mathcal{W}(\mathcal{E}')}(
{}^{\delta}\langle(g_1,g_1),1\rangle
\langle\delta ,1\rangle
\langle v_{g_1} ,1\rangle\langle u_{g_1} ,1\rangle
\langle\mathfrak{e}_2(g),1\rangle,s)\,\psi_{U}(u_0)\,du_0\,dg.
\end{align}

We proceed by first proving
\begin{align}\label{eq:local f left inv by g iota g GL}
f_{\mathcal{W}(\mathcal{E})\otimes\mathcal{W}(\mathcal{E}')}({}^{\delta}\langle(g,g),1\rangle h,s)=f_{\mathcal{W}(\mathcal{E})\otimes\mathcal{W}(\mathcal{E}')}(h,s),\qquad\forall g\in G,h\in H^{(m,r)}.
\end{align}
Since ${}^{\delta}(g,g)=(g,g)$ ($\delta_1$ commutes with $(g,g)$), we can write
${}^{\delta}\langle (g,g),\epsilon\rangle=\langle (g,g),\epsilon_g\epsilon\rangle$ where $\epsilon_g\in\mu_m$.
Then
because $\sigma_{2rkc}^{\diamondsuit}$ is trivial on $\{(g,g):g\in G\}$, the map $g\mapsto\epsilon_g$ is a character of $G$, hence trivial on $\SL_c$. Also since $\delta^2=I_{2rkc}$, $\epsilon_g^2=1$. We claim $\epsilon_g=1$ for all $g\in G$, and it remains to show this
for $g=t\in T_{\GL_c}$. This follows from Proposition~\ref{proposition:action of W on torus is trivial on Sp} under our assumption
$\mu_{2m}\subset F^*$, but we provide a proof without this assumption.

We compute $\epsilon_t$ directly in $\Sp_{4rkc}^{(m)}$, using the $2$-cocycle $\sigma_{4rkc}$ of $\GL_{4rkc}$. Put
\begin{align*}
d_t=\diag(t,\ldots,t,t^*,\ldots,t^*)\in\Sp_{4rkc},
\end{align*}
where $t$ and $t^*$ each occurs $2rkc$ times. By definition
\begin{align*}
\epsilon_g=
\sigma_{4rkc}(\diag(\delta,\delta^*),d_t)\sigma_{4rkc}(\diag(\delta,\delta^*)d_t,\diag(\delta,\delta^*)^{-1})
\sigma_{4rkc}(\diag(\delta,\delta^*),\diag(\delta,\delta^*)^{-1})^{-1},
\end{align*}
so that in fact
\begin{align*}
{}^{\diag(\delta,\delta^*)}\langle d_t,1\rangle=\langle d_t,\epsilon_g\rangle.
\end{align*}
We write $\diag(\delta,\delta^*)$ in the form $t_0w'$ where $w'\in\mathfrak{W}_{4rkc}$ using Example~\ref{example:delta decomposition t_0 and w'} twice, once for $\delta$, the other for
$\delta^*(=\delta)$. We see that $w'=\diag(w'',w'')$ where $w''\in\mathfrak{W}_{2rkc}$ and
\begin{align*}
t_0=\diag((-1)^{rkc}I_{rkc},I_{rkc},(-1)^{rkc}I_{rkc},I_{rkc})\in\GL_{4rkc}.
\end{align*}
Also the set of roots $\alpha\in\Phi_{4rkc}^+$ such that $\diag(\delta,\delta^*)\alpha<0$ is the set
\begin{align*}
\{(i,j):1\leq i\leq rkc<j\leq 2rkc\}\coprod\{(i,j):2rkc+1\leq i\leq 3rkc<j\leq 4rkc\}.
\end{align*}
Then by \eqref{eq:conj mathcal t in GLd},
\begin{align*}
{}^{w'}\langle d_t,1\rangle=\langle {}^{w'}d_t,\prod_{1\leq i,j\leq c}(-t_j,t_i)_m^{(rk)^2}\prod_{1\leq i,j\leq c}(-t_j^{-1},t_i^{-1})_m^{(rk)^2}\rangle=\langle d_t,1\rangle.
\end{align*}
Hence
\begin{align*}
{}^{\diag(\delta,\delta^*)}\langle d_t,1\rangle=
{}^{t_0}\langle d_t,1\rangle.
\end{align*}
When $rkc$ is even, $t_0=I_{4rkc}$ and we deduce $\epsilon_t=1$. For odd $rkc$, using \eqref{eq:sigma on torus of GL} we see that
\begin{align*}
{}^{t_0}\langle d_t,1\rangle=\langle d_t,\sigma_{rkc}(-I_{rkc},t^{\triangle})\sigma_{rkc}(-I_{rkc},(t^*)^{\triangle})
\sigma_{rkc}(t^{\triangle},-I_{rkc})
\sigma_{rkc}((t^*)^{\triangle},-I_{rkc})
\rangle,
\end{align*}
and again by \eqref{eq:sigma on torus of GL},
\begin{align*}
&\sigma_{rkc}(-I_{rkc},t^{\triangle})=\prod_{l=0}^{rk-1}(\prod_{i=2}^c\prod_{j=i}^c(-1,t_j)_m)(-1,\det t)_m^{lc},\\
&\sigma_{rkc}(-I_{rkc},(t^*)^{\triangle})=\prod_{l=0}^{rk-1}(\prod_{i=1}^{c-1}\prod_{j=1}^i(-1,t_j^{-1})_m)(-1,\det t)_m^{lc},\\
&\sigma_{rkc}(t^{\triangle},-I_{rkc})=\prod_{l=0}^{rk-1}(\prod_{i=1}^{c-1}\prod_{j=1}^i(t_j,-1)_m)(-1,\det t)_m^{lc},\\
&\sigma_{rkc}((t^*)^{\triangle},-I_{rkc})=\prod_{l=0}^{rk-1}(\prod_{i=2}^{c}\prod_{j=i}^c(t_j^{-1},-1)_m)(-1,\det t)_m^{lc}.
\end{align*}
Since $(-1,x)_m(x^{-1},-1)_m=1$, the product of first and last $2$-cocycles is $1$, and so is the product of the second and third. We deduce $\epsilon_t=1$ (thereby $\epsilon_g=1$) in all cases.

We complete the proof of \eqref{eq:local f left inv by g iota g GL}. Since $\epsilon_g=1$ for all $g\in G$, we can use
\eqref{eq:block compatibility on Levi of P} and obtain
\begin{align*}
{}^{\delta}\langle (g,g),1\rangle=\langle \diag(g^{\triangle},I_{rkc}),1\rangle\langle \diag(I_{rkc},g^{\triangle}),1\rangle.
\end{align*}
Also $\delta_{P}((g,g))=1$. This together with \eqref{eq:invariance prop on GL or SL} proves \eqref{eq:local f left inv by g iota g GL}.

Returning to \eqref{int:inv before g giota inv GL} we have
\begin{align*}
&\int\limits_{G}\int\limits_{U_0}
\omega(\langle g,1\rangle)
f_{\mathcal{W}(\mathcal{E})\otimes\mathcal{W}(\mathcal{E}')}(
\langle\delta ,1\rangle
\langle v_{g_1} ,1\rangle\langle u_{g_1} ,1\rangle
\langle\mathfrak{e}_2(g),1\rangle,s)\,\psi_{U}(u_0)\,du_0\,dg.
\end{align*}
Using the equivariance properties of $\mathcal{W}(\mathcal{E})$ and $\mathcal{W}(\mathcal{E}')$ with respect to $\{\langle v,1\rangle:v\in V_{(c^{rk})}\}$
and changing variables in $u_{g_1}$,
we reach the r.h.s.~ of \eqref{eq:inv both GL}, as required.
\end{proof}
As a corollary of the proof, we obtain the analog of Corollary~\ref{corollary:du integral of f invariant under g iota g}.
\begin{corollary}\label{corollary:GL GL du integral of f invariant under g iota g}
For any section $f_{\mathcal{W}(\mathcal{E})\otimes\mathcal{W}(\mathcal{E}')}$ and $g_1\in G$,
\begin{align*}
&\int\limits_{U_0}
f_{\mathcal{W}(\mathcal{E})\otimes\mathcal{W}(\mathcal{E}')}(\langle\delta u_0 ,1\rangle\,
\langle(g_1,g_1),1\rangle,s)\,\psi_{U}(u_0)\,du_0
=\int\limits_{U_0}
f_{\mathcal{W}(\mathcal{E})\otimes\mathcal{W}(\mathcal{E}')}(\langle\delta u_0 ,1\rangle,s)\,\psi_{U}(u_0)\,du_0.
\end{align*}
\end{corollary}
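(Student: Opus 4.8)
The plan is to recognize that Corollary~\ref{corollary:GL GL du integral of f invariant under g iota g} is precisely the inner $U_0$-integral identity already produced, step by step, inside the proof of Proposition~\ref{proposition:equiv props GL}, specialized to $g=I_c$. In that proof the chain of equalities running from \eqref{int:GL GL the U_0 integral before g g inv} down to the right-hand side of \eqref{eq:inv both GL} transforms $f_{\mathcal{W}(\mathcal{E})\otimes\mathcal{W}(\mathcal{E}')}(\langle\delta u_0,1\rangle\langle(g_1,g_1),1\rangle\langle\mathfrak{e}_2(g),1\rangle,s)$, integrated against $\psi_U(u_0)\,du_0$, into $f_{\mathcal{W}(\mathcal{E})\otimes\mathcal{W}(\mathcal{E}')}(\langle\delta u_0,1\rangle\langle\mathfrak{e}_2(g),1\rangle,s)$ integrated against the same measure, with the factor $\langle\mathfrak{e}_2(g),1\rangle$ riding along passively; it is never touched by any of the manipulations. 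Setting $g=I_c$ and discarding the outer $\int_G\omega(\langle g,1\rangle)\,dg$ yields exactly the assertion, so in principle the proof is a one-line reference to that passage. It is nonetheless worth spelling out the ingredients.

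First I would write ${}^{(g_1,g_1)}u_0=v_{g_1}u_{g_1}$ with $u_{g_1}\in U_0$ and $v_{g_1}\in{}^{\delta^{-1}}\diag(V_{(c^{rk})},V_{(c^{rk})})$, which is forced by the fact that the image of $(g_1,g_1)$ normalizes $U$ and fixes $\psi_U$, together with the decomposition of $U$ underlying the definition of the embedding in \S~\ref{integarls for GL}. Using \eqref{eq:sigma conjugate v by h} and \eqref{eq:sigma on h and v} to move $\langle(g_1,g_1),1\rangle$ to the far left past $\langle\delta u_0,1\rangle$, and changing variables in $u_0$ (which preserves both Haar measure and $\psi_U$), the integrand becomes $f_{\mathcal{W}(\mathcal{E})\otimes\mathcal{W}(\mathcal{E}')}({}^{\delta}\langle(g_1,g_1),1\rangle\langle\delta,1\rangle\langle v_{g_1},1\rangle\langle u_{g_1},1\rangle,s)$. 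I would then invoke \eqref{eq:local f left inv by g iota g GL}, the left-invariance of the section under ${}^{\delta}\langle(g,g),1\rangle$, to delete the leading factor; this is exactly where the triviality $\epsilon_g=1$ of $\sigma^{\diamondsuit}_{2rkc}$ on the diagonal copy $\{(g,g):g\in G\}$ and the standing assumption \eqref{eq:invariance prop on GL or SL} enter. Finally I would conjugate $\langle v_{g_1},1\rangle$ to the left of $\langle\delta,1\rangle$ (so that ${}^{\delta}v_{g_1}\in\diag(V_{(c^{rk})},V_{(c^{rk})})$), use the $(rk,c)$-equivariance of $\mathcal{W}(\mathcal{E})$ and $\mathcal{W}(\mathcal{E}')$ under $\{\langle v,1\rangle:v\in V_{(c^{rk})}\}$ together with the shape of $\psi_U$ in \eqref{eq:GL character of U} to check that the emitted characters cancel, and change variables $u_{g_1}\mapsto u_0$ to recover the right-hand side.

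The only step carrying genuine content is one I may take as given, namely \eqref{eq:local f left inv by g iota g GL}, whose proof requires the explicit verification that ${}^{\delta}\langle(g,g),1\rangle=\langle(g,g),1\rangle$ in $H^{(m,r)}$ — i.e. that $\sigma^{\diamondsuit}_{2rkc}$ is honestly (not merely cohomologically) trivial on $\{(g,g):g\in G\}$ — which is settled in the proof of Proposition~\ref{proposition:equiv props GL} by a Bruhat-type computation in $\Sp_{4rkc}^{(m)}$ using Example~\ref{example:delta decomposition t_0 and w'}, \eqref{eq:sigma on torus of GL}, \eqref{eq:conj mathcal t in GLd} and the cancellation $(-1,x)_m(x^{-1},-1)_m=1$. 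Once that is in hand, everything remaining is bookkeeping of unipotent conjugations and characters, entirely parallel to the linear case, and I do not anticipate any real obstacle; the one point to watch is the matching of the character emitted by $v_{g_1}$ against $\psi_U$, but this matching is dictated by the embedding and is already the one used for the $u$-invariance at the very start of the proof of Proposition~\ref{proposition:equiv props GL}.
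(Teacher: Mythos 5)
Your proposal is correct and is exactly the paper's proof: the paper proves this corollary by a one-line reference to the passage starting at \eqref{int:GL GL the U_0 integral before g g inv} in the proof of Proposition~\ref{proposition:equiv props GL}, where the $U_0$-integral manipulations (the decomposition ${}^{(g_1,g_1)}u_0=v_{g_1}u_{g_1}$, the invariance \eqref{eq:local f left inv by g iota g GL}, and the final change of variables) carry $\langle\mathfrak{e}_2(g),1\rangle$ along untouched. Your spelled-out version, including the identification of \eqref{eq:local f left inv by g iota g GL} as the only step with real content, matches the paper's argument.
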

\begin{proof}
See \eqref{int:GL GL the U_0 integral before g g inv} and the remaining part of the proof.
\end{proof}

\begin{proposition}\label{proposition:applicability of the GL GL construction}
Let $F$ be a local non-archimedean field. Assume $\mathcal{E}$ and $\mathcal{E}'$ are $(rk,c)$ representations.
Then \eqref{eq:invariance prop on GL or SL} holds for $g\in \SL_c$. Further assume $\mathcal{E}'=\mathcal{E}^*$ where
${}^*$ is given by \eqref{eq:involution b*0}, and if $m$ does not divide $rk$ (i.e., $r=m/2$ and $k$ is odd) we also
assume $\mu_{2m}\subset F^*$. Then \eqref{eq:invariance prop on GL or SL} holds for $g\in G$.
\end{proposition}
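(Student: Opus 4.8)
The plan is to treat the two assertions separately: the $\SL_c$-invariance follows directly from uniqueness of $(rk,c)$-models, while the full $\GL_c$-invariance in the case $\mathcal E'=\mathcal E^*$ requires controlling the central torus direction of $\GL_c$ through a $2$-cocycle computation, which is where the hypothesis enters.

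For $g=b\in\SL_c$: apply Corollary~\ref{corollary:invariance wrt SLc} to $(\mathcal E,\Lambda)$ and to $(\mathcal E',\Lambda')$ --- both $(rk,c)$ representations by assumption --- getting $\Lambda(\mathcal E(\langle b^{\triangle},\varsigma_{*,c}^{-rk}(b)\rangle)\xi)=\Lambda(\xi)$ and the analogue for $\Lambda'$. Since $\mathcal E,\mathcal E'$ are genuine and $\langle b^{\triangle},1\rangle=\varsigma_{*,c}^{rk}(b)\langle b^{\triangle},\varsigma_{*,c}^{-rk}(b)\rangle$, this reads $\Lambda(\mathcal E(\langle b^{\triangle},1\rangle)\xi)=\varsigma_{*,c}^{rk}(b)\Lambda(\xi)$ and likewise for $\Lambda'$; multiplying, the scalar is $\varsigma_{*,c}^{2rk}(b)$, which equals $1$ because $\varsigma_{*,c}$ is $\mu_m$-valued and $m\mid 2r$ (one has $2r=2m$ for odd $m$, $2r=m$ for even $m$). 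This proves \eqref{eq:invariance prop on GL or SL} on $\SL_c$.

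Now assume $\mathcal E'=\mathcal E^*$. Since $\dim\mathcal O(\mathcal E,((rk)^c),\psi)=\dim\mathcal O(\mathcal E',((rk)^c),\psi)=1$, the group $\widetilde{\GL_c^{\triangle}}$ acts on each of the one-dimensional Jacquet modules $J_{V_{(c^{rk})},\psi}(\mathcal E)$ and $J_{V_{(c^{rk})},\psi}(\mathcal E')$ by a genuine character, say $\chi$ and $\chi'$; thus \eqref{eq:invariance prop on GL or SL} amounts to $\mu(g):=\chi(\langle g^{\triangle},1\rangle)\chi'(\langle g^{\triangle},1\rangle)=1$ for all $g\in\GL_c$. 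By \eqref{eq:block compatibility on Levi of P} the $2$-cocycle on $\GL_c^{\triangle}$ is $(g,g')\mapsto\sigma_c^{\diamondsuit}(g,g')^{rk}$, so $\chi(\langle g^{\triangle},1\rangle)\chi(\langle {g'}^{\triangle},1\rangle)=\sigma_c^{\diamondsuit}(g,g')^{rk}\chi(\langle (gg')^{\triangle},1\rangle)$, and similarly for $\chi'$; hence $\mu(g)\mu(g')=\sigma_c^{\diamondsuit}(g,g')^{2rk}\mu(gg')=\mu(gg')$, as $\sigma_c^{\diamondsuit}$ is $\mu_m$-valued and $m\mid 2rk$. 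So $\mu\colon\GL_c\to\C^*$ is a homomorphism, trivial on $\SL_c$ by the first part, whence $\mu=\lambda\circ\det$ for a character $\lambda$ of $F^*$. As $\GL_c$ is generated by $\SL_c$ and $\{h_\tau:=\diag(\tau,I_{c-1}):\tau\in F^*\}$, it remains to prove $\mu(h_\tau)=1$.

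For this I would relate $\chi'$ to $\chi$ through the involution. One has $(g^{\triangle})^*=(g^*)^{\triangle}$ and ${}^*\langle g^{\triangle},1\rangle=\langle (g^*)^{\triangle},1\rangle$ by \eqref{eq:involution b*0}, so $\mathcal E'(\langle g^{\triangle},1\rangle)=\mathcal E(\langle (g^*)^{\triangle},1\rangle)$; moreover a short matrix computation shows the automorphism $v\mapsto v^*$ of $V_{(c^{rk})}$ carries $\psi$ to $\psi^{-1}$. Identifying $J_{V_{(c^{rk})},\psi}(\mathcal E')$ with $J_{V_{(c^{rk})},\psi^{-1}}(\mathcal E)$ and realizing the latter through $\Lambda\circ\mathcal E(\langle t_0,1\rangle)$, where $t_0=\diag(I_c,-I_c,I_c,-I_c,\dots)\in T_{\GL_{rkc}}$ centralizes $\GL_c^{\triangle}$ and conjugates $\psi$ to $\psi^{-1}$, yields $\chi'(\langle g^{\triangle},1\rangle)=\frac{\sigma_{rkc}^{\diamondsuit}(t_0,(g^*)^{\triangle})}{\sigma_{rkc}^{\diamondsuit}((g^*)^{\triangle},t_0)}\,\chi(\langle (g^*)^{\triangle},1\rangle)$. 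Specializing $g=h_\tau$ (so $g^{\triangle},(g^*)^{\triangle}$ are diagonal), applying the twisted-character relation together with $h_\tau h_\tau^*=\diag(\tau,I_{c-2},\tau^{-1})\in\SL_c$ and $\chi(\langle b^{\triangle},1\rangle)=\varsigma_{*,c}^{rk}(b)$ on $\SL_c$ (from Corollary~\ref{corollary:invariance wrt SLc}), and evaluating the cocycles on the relevant diagonal tori via \eqref{eq:sigma on torus of GL}, one finds $\mu(h_\tau)$ to be a product of powers of $(\tau,-1)_m$ --- a symbol of order dividing $2$, since $(\tau,-1)_m^2=(\tau,1)_m=1$ --- together with an $rk$-th power of a $\mu_m$-valued quantity coming from $\varsigma_{*,c}$. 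When $m\mid rk$ every such $rk$-th power is trivial and the remaining powers of $(\tau,-1)_m$ are themselves $m$-th powers, hence trivial; when $m\nmid rk$, the hypothesis $\mu_{2m}\subset F^*$ gives $-1\in F^{*m}$, so $(\tau,-1)_m=1$ and the bookkeeping again collapses. In all cases $\mu(h_\tau)=1$, so $\lambda=1$ and \eqref{eq:invariance prop on GL or SL} holds for all $g\in G$.

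The main obstacle is this last reduction: pinning down $\chi'$ in terms of $\chi$ via the involution ${}^*$ and then keeping careful track of the cocycles $\sigma_{rkc}^{\diamondsuit}$, $\sigma_c^{\diamondsuit}$ and the cochain $\varsigma_{*,c}$ along the central torus direction of $\GL_c$; this is precisely the point at which the dichotomy ``$m\mid rk$ versus $\mu_{2m}\subset F^*$'' of the statement becomes indispensable. The first assertion and the reduction to $\mu=\lambda\circ\det$ are, by contrast, formal consequences of uniqueness and block-compatibility.
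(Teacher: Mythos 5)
Your first assertion is correct and is essentially the paper's argument in different packaging: the paper notes that the cocycle is trivial on the diagonal copy $\{(g,g)\}$ of $G$ in $H$, so the product of the two characters is an honest character of $\SL_c$, hence trivial by perfectness; you instead apply Corollary~\ref{corollary:invariance wrt SLc} to each factor and use $m\mid 2rk$, which amounts to the same thing. Your reduction of the second assertion is also sound and parallel to the paper's: $\mu$ is a homomorphism $\GL_c\to\C^*$ killed by $\SL_c$, so everything comes down to $\mu(h_\tau)$ for $h_\tau=\diag(\tau,I_{c-1})$. The intertwiner $\Lambda\circ\mathcal{E}(\langle t_0,1\rangle)$ identifying the line through $\Lambda'$ with $\Hom_{V_{(c^{rk})}}(\mathcal{E},\psi^{-1})$ is a legitimate (indeed rather careful) way to compare $\chi'$ with $\chi$, and the resulting cocycle ratio is harmless, being $\prod_i(\epsilon_i,\tau^{-1})_m^{-2}=1$.

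The gap is at the final evaluation. After your reductions one gets, for $c\geq 2$, $\mu(h_\tau)=\varepsilon\bigl(\varsigma_{*,c}^{rk}(\diag(\tau,I_{c-2},\tau^{-1}))\bigr)$ since $h_\tau h_\tau^*=\diag(\tau,I_{c-2},\tau^{-1})\in\SL_c$ and $\sigma_c^{\diamondsuit}(h_\tau,h_\tau^*)=1$. You assert this is ``a product of powers of $(\tau,-1)_m$ together with an $rk$-th power of a $\mu_m$-valued quantity'' and that the bookkeeping collapses; but precisely in the delicate case $m\nmid rk$ the exponent satisfies $rk\equiv r=m/2\ (m)$, so the $rk$-th power does not die, and the paper supplies no formula for $\varsigma_{*,c}$ on torus elements: it is characterized only implicitly as the unique $1$-cochain trivializing $\sigma_c^*/\sigma_c$ on the perfect group $\SL_c$, and the explicit cochain $\prod_{i\leq j}(t_i,t_j)_m$ from the proof of Proposition~\ref{proposition:sigma * and sigma on SLc} solves the coboundary equation only on the torus, so it may differ from $\varsigma_{*,c}|_{T_{\SL_c}}$ by a nontrivial character of $T_{\SL_c}$. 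Thus the claimed shape of $\mu(h_\tau)$ is exactly the nontrivial content of the proposition, and it is asserted rather than proved; completing your route would require a Kubota-style computation of $\varsigma_{*,c}$ on $\diag(\tau,I_{c-2},\tau^{-1})$.

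The paper sidesteps this entirely. On $A_1^{\triangle}$ the cocycle is $(a,a')_m^{rk}=(a,a')_2$, so every genuine character of its preimage is of the form $a\mapsto\gamma_{\psi'}(a)\varrho(a)$ for a Weil factor $\gamma_{\psi'}$ and a quasi-character $\varrho$. Conjugating by the Weyl element $(-1)^{c(c-1)/2}J_c\in\SL_c$ and invoking Proposition~\ref{proposition:action of W on torus is trivial on Sp} --- this is exactly where $\mu_{2m}\subset F^*$ enters --- shows the same formula governs the action of $\diag(I_{c-1},a)^{\triangle}$. Since ${}^*\langle\diag(a,I_{c-1})^{\triangle},1\rangle=\langle\diag(I_{c-1},a^{-1})^{\triangle},1\rangle$, the two factors contribute $\gamma_{\psi'}(a)\varrho(a)$ and $\gamma_{\psi'}(a^{-1})\varrho(a^{-1})$, whose product is $1$ by \eqref{eq:some props of gamma psi'}. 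If you wish to keep your framework, this Weyl-conjugation step is what evaluates $\varsigma_{*,c}^{rk}$ on $\diag(\tau,I_{c-2},\tau^{-1})$ for you; as written, your proposal does not close that step.
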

\begin{proof}
By definition $J_{V_{(c^{rk})},\psi}(\mathcal{E})$ and $J_{V_{(c^{rk})},\psi}(\mathcal{E}')$ are one-dimensional, hence so is their tensor.
The action of the subgroup $\{\langle (g,g),1\rangle:g\in G\}$ on $J_{V_{(c^{rk})},\psi}(\mathcal{E})\otimes J_{V_{(c^{rk})},\psi}(\mathcal{E}')$ is by a character, which must then be trivial when $g\in\SL_c$. Since $(\Lambda\otimes\Lambda')(\xi\otimes\xi')=\Lambda(\xi)\otimes\Lambda'(\xi')$ factors through the Jacquet modules
(on both $\mathcal{E}$ and $\mathcal{E}'$), and by \eqref{eq:block compatibility on Levi of P} for any $g\in G$,
\begin{align*}
\langle (g,g),1\rangle=\langle \diag(g^{\triangle},I_{rkc}),1\rangle\langle \diag(I_{rkc},g^{\triangle}),1\rangle,
\end{align*}
the first assertion holds.

Assume $m|rk$, then $\sigma^{\diamondsuit}_{rkc}$ is already trivial on $G^{\triangle}$, hence
$\GL_{rkc}^{(m,r)}$ is split over $G^{\triangle}$. Thus $G^{\triangle}$ acts by a non-genuine character
on $J_{V_{(c^{rk})},\psi}(\mathcal{E})$ (resp., $J_{V_{(c^{rk})},\psi}(\mathcal{E}')$), and we can write
\begin{align*}
\Lambda(\mathcal{E}(\langle g^{\triangle},1\rangle)\xi)=\varrho(\det{g})\Lambda(\xi),\qquad
\Lambda'(\mathcal{E}'(\langle g^{\triangle},1\rangle)\xi')=
\varrho'(\det{g})\Lambda'(\xi'),
\end{align*}
for some quasi-characters $\varrho,\varrho'$ of $F^*$.
When $\mathcal{E}'=\mathcal{E}^*$, by \eqref{eq:involution b*0} we have
$\mathcal{E}'(\langle g^{\triangle},1\rangle)=\mathcal{E}({}^*\langle g^{\triangle},1\rangle)=\mathcal{E}(\langle (g^*)^{\triangle},1\rangle)$,
whence
\begin{align*}
\Lambda'(\mathcal{E}'(\langle g^{\triangle},1\rangle)\xi')=
\Lambda'(\mathcal{E}(\langle (g^*)^{\triangle},1\rangle)\xi')=
\varrho(\det{g}^*)\Lambda'(\xi').
\end{align*}
This proves \eqref{eq:invariance prop on GL or SL} for all $g\in G$.

Now assume $m$ does not divide $rk$. Since $H^{(m,r)}$ is split over $\{(g,g):g\in G\}$ and we already have \eqref{eq:invariance prop on GL or SL} on $\SL_c$, it suffices to prove \eqref{eq:invariance prop on GL or SL} for the subgroup $A_1=\{\diag(a,I_{c-1}):a\in F^*\}$. For any
$t=\diag(a,I_{c-1})\in A_1$ and $t'=\diag(a',I_{c-1})\in A_1$,
by \eqref{eq:block compatibility on Levi of P} and \eqref{eq:Nice GL $2$-cocycle on torus},
\begin{align}\label{eq:(a,a)}
\sigma_{rkc}^{\diamondsuit}(\diag(a,I_{c-1})^{\triangle},\diag(a',I_{c-1})^{\triangle})=(a,a')_m^{rk}
=(a,a')_2.
\end{align}
Hence if we fix some nontrivial additive character $\psi'$ of $F$, there is a quasi-character $\varrho$ of $F^*$ such that
$\Lambda(\mathcal{E}(\langle \diag(a,I_{c-1})^{\triangle},1\rangle)\xi')=\gamma_{\psi'}(a)\varrho(a)\Lambda(\xi')$.

Furthermore, taking $w=(-1)^{c(c-1)/2}J_c\in\SL_c$, Proposition~\ref{proposition:action of W on torus is trivial on Sp}
(now $\mu_{2m}\subset F^{*m}$) and the left invariance of $\Lambda$ under $\langle w^{\triangle},1\rangle$ imply
\begin{align*}
\Lambda(\mathcal{E}(\langle \diag(a,I_{c-1})^{\triangle},1\rangle)\xi)&=
\Lambda(\mathcal{E}(\langle w^{\triangle},1\rangle\langle \diag(a,I_{c-1})^{\triangle},1\rangle)\xi)\\&=
\Lambda(\mathcal{E}(\langle ({}^w\diag(a,I_{c-1}))^{\triangle},1\rangle\langle w^{\triangle},1\rangle)\xi)\\&=
\Lambda(\mathcal{E}(\langle\diag(I_{c-1},a)^{\triangle},1\rangle\langle w^{\triangle},1\rangle)\xi).
\end{align*}
Since $\langle \diag(I_{c-1},a)^{\triangle},1\rangle$ must transform on the left by some character,
using the left invariance under $\langle w^{\triangle},1\rangle$ again we obtain for all $a\in F^*$,
\begin{align*}
\Lambda(\mathcal{E}(\langle \diag(I_{c-1},a)^{\triangle},1\rangle)\xi')=\gamma_{\psi'}(a)\varrho(a)\Lambda(\xi').
\end{align*}
In addition by \eqref{eq:involution b*0}, ${}^*\langle \diag(a,I_{c-1})^{\triangle},1\rangle=\langle \diag(I_{c-1},a^{-1})^{\triangle},1\rangle$.
Therefore when $\mathcal{E}'=\mathcal{E}^*$,
\begin{align*}
\Lambda'(\mathcal{E}'(\langle \diag(a,I_{c-1})^{\triangle},1\rangle)\xi')=
\Lambda'(\mathcal{E}(\langle \diag(I_{c-1},a^{-1})^{\triangle},1\rangle)\xi')=
\gamma_{\psi'}(a^{-1})\varrho(a^{-1})\Lambda'(\xi').
\end{align*}
Since $\gamma_{\psi'}(a)\gamma_{\psi'}(a^{-1})=1$ (see \eqref{eq:some props of gamma psi'}, $(a,a)_2=1$) and $\varrho(a)\varrho(a^{-1})=1$, again \eqref{eq:invariance prop on GL or SL} holds.
\end{proof}

The analog of Proposition~\ref{proposition:local props} \eqref{it:conv}--\eqref{it:arch} is valid here as well.
E.g., the integrals are absolutely convergent in a right half plane, and for a given $s$, there is data
$(\omega,f_{\mathcal{W}(\mathcal{E})\otimes\mathcal{W}(\mathcal{E}')})$ for which the integral is nonzero and holomorphic
in a neighborhood of $s$.

\section{Computation of the local factors with unramified data}\label{Computation of the local factors with unramified data}
In this section we prove Theorem~\ref{theorem:unramified computation for Sp(2n),SO(2n)}, namely we compute \eqref{eq:local integral at one place} with unramified data. Throughout, $F$ is unramified and $\mu_{2m}\subset F^*$.
Also fix an unramified character $\psi$ of $F$, and the Haar measure on $F$ is the one assigning $1$ to
$\mathcal{O}$. The measure of the subgroups $K_{H}$, $K_G$ and $K_{\GL_n}$ is also normalized to be $1$.

In
\S~\ref{Outline of the computation} we reduce the $G^{(m)}\times \GL_k^{(m,r)}$ integral to the
$\GL_{n}^{(m,r)}\times \GL_k^{(m,r)}$ integral \eqref{eq:local GL GL integral}. In
\S~\ref{proof of lemma:reduction from GLn to GLa GLb} we reduce the latter to the product of integrals
$\GL_{a}^{(m,r)}\times \GL_k^{(m,r)}$ and $\GL_{b}^{(m,r)}\times \GL_k^{(m,r)}$, where $n=a+b$.
The computation of the $\GL_{1}^{(m,r)}\times \GL_k^{(m,r)}$ integrals with unramified data is carried out in
\S~\ref{final reduction n = 1 linear groups}. Finally in
\S~\ref{putting it together} we collect the previous results to conclude the proof.

\subsection{The reduction from $\Sp_{c}^{(m)}$ to $\GL_{n}^{(m,r)}$}\label{Outline of the computation}
Let $n,k$ and $m$ be positive integers and put $c=2n$. Set $r=m/2$ if $m$ is even, otherwise $r=m$.
Let $G=\Sp_{c}$ and $H=\Sp_{2rkc}$. The covering $H^{(m)}$ is realized using $\sigma_{2rkc}$, the embedding
$G\times G \hookrightarrow H$ given in \S~\ref{embedding} is lifted to the coverings via \eqref{eq:embeddings coverings G and G into H}, and we fixed the $2$-cocycles on $G$ to be $\sigma_c^{*,rk}$ for the left copy and
$\sigma_c$ for the right copy. For the description of the local $G^{(m)}\times \GL_k^{(m,r)}$ integral see \S~\ref{global symplectic}.
Also recall that the splitting of $N_H$ is given by $v\mapsto\langle v,1\rangle$; the canonical splitting of $K_H$ is $y\mapsto\langle y,\eta_{2rkc}(y)\rangle$; and $\eta_{2rkc}$ is trivial on $N_{rkc}\cap K_H$, $T_{rkc}\cap K_H$ and on permutation matrices
(see \S~\ref{local covering}).

Let $\pi$ be a genuine irreducible unramified representation of $G^{(m)}$, where $G^{(m)}$ is realized using $\sigma_c$, and
$\tau$ be a genuine irreducible unramified representation of $\GL_k^{(m,r)}$, which is the unramified subrepresentation of
$\mathrm{I}_{\GL_k^{(m,r)}}(\vartheta,\chi)$.

We assume $\tau$ satisfies the following properties, which are plainly the restatement of several properties
from \S~\ref{Local components of rk c speh}.
Assume $\tau$ satisfies \eqref{eq:local assumption on tau}, and also assume that for all $0<l\leq c$, the representation
\begin{align*}
\Ind_{\widetilde{P}_{(k^{rl})}}^{\GL^{(m,r)}_{rkl}}((\tau\otimes \ldots \otimes \tau)\delta_{P_{(k^{rl})}}^{-1/(2rk)})
\end{align*}
contains a unique irreducible unramified $(rk,l)$ subrepresentation $\rho_l(\tau)$. Denote the $(rk,l)$ model of
$\rho_l(\tau)$ by $\mathcal{W}(\rho_l(\tau))$. It follows that
Corollary~\ref{corollary:realization space for 0 < l < c}
and Corollary~\ref{corollary:functional on theta implies functional on rho c tau} are applicable. Hence
we have the realization of the $(rk,c)$ functional from \S~\ref{decomposition of functionals}, using $(rk,l)$ and $(rk,c-l)$ functionals.

As proved in \S~\ref{Local components of rk c speh}, these properties
are all satisfied in the context of \eqref{eq:local integral at one place}, i.e., when $\tau$ is a local component at an unramified place of a genuine unitary irreducible cuspidal automorphic representation of $\GL_{k}^{(m,r)}(\A)$, and under
Conjectures~\ref{local Shimura conjecture} and \ref{Shimura conjecture}, and the global assumption that the field contains $\mu_{2m}$.
See Proposition~\ref{proposition:local L function condition claim}.

Let $\omega$ be the normalized unramified matrix coefficient of $\pi^{\vee}$, and
$f_{\mathcal{W}(\rho_c(\tau))}$ be the normalized unramified standard section of $\Ind_{\widetilde{P}}^{H^{(m)}}(\mathcal{W}(\rho_c(\tau))\delta_P^s)$,
as in the statement of Theorem~\ref{theorem:unramified computation for Sp(2n),SO(2n)}.

We can always assume $\pi$ is a quotient of $\Ind_{\widetilde{R}}^{G^{(m)}}(\pi_n)$, where
$R=M_R\ltimes U_R$ is the standard Siegel parabolic subgroup of $G$ (i.e., $M_R=\GL_n$), and $\pi_n$ is a genuine irreducible unramified
representation of $\GL_n^{(m,r)}$. Let ${}^*$ be defined by \eqref{eq:involution b*0}.
By definition $\tau^*$ enjoys the properties of $\tau$ stated above (see also Proposition~\ref{proposition:vartheta of *}), so that \eqref{eq:local assumption on tau} holds and
$\rho_l(\tau^*)$ is defined for all $0<l\leq c$. Also by \eqref{eq:block compatibility on Levi of P},
\begin{align*}
(\Ind_{\widetilde{P}_{(k^{rl})}}^{\GL^{(m,r)}_{rkl}}((\tau\otimes \ldots \otimes \tau)\delta_{P_{(k^{rl})}}^{-1/(2rk)}))^*
=\Ind_{\widetilde{P}_{(k^{rl})}}^{\GL^{(m,r)}_{rkl}}((\tau^*\otimes \ldots \otimes \tau^*)\delta_{P_{(k^{rl})}}^{-1/(2rk)}),
\end{align*}
hence $\rho_l(\tau^*)=\rho_l(\tau)^*$.

Let $\omega_n$ be the normalized unramified  matrix coefficient of $\pi_n^{\vee}$, and
$f_{\mathcal{W}(\rho_c(\tau))\otimes \mathcal{W}(\rho_c(\tau^*))}$ be the normalized unramified standard section of
\begin{align*}
\Ind_{\widetilde{P}_{(rkn,rkn)}}^{\GL_{2rkn}^{(m,r)}}((\mathcal{W}(\rho_n(\tau))\otimes \mathcal{W}(\rho_n(\tau^*)))\delta_{P_{(rkn,rkn)}}^s).
\end{align*}

\begin{remark}\label{remark:tau * vs dual}
In the linear setting, whenever $\tau$ is irreducible, $\tau^*=\tau^{\vee}$. Here it is important we use $\tau^*$,
since $\tau^{\vee}$ is anti-genuine, then $\tau\otimes\tau^{\vee}$ is non-genuine (rendering \eqref{eq:local GL GL integral} undefined).
\end{remark}

Put $\alpha=2rkc+1$ and define
\begin{align}\label{eq:d tau(s)}
d_{\tau,\vartheta}(s)=&
\left[\frac{L_{\vartheta}(r\alpha s+1/2,\tau)}{L_{\vartheta}(r\alpha s+rn+1/2,\tau)}\right]
\prod_{1\leq j\leq \lfloor rn/2\rfloor}
\frac{L_{\vartheta}(2r\alpha s+2j,\tau,\vee^2)}{L_{\vartheta}(2r\alpha s+2j+2rn-2\lfloor rn/2\rfloor-1,\tau,\vee^2)}\\\nonumber&\times
\prod_{1\leq j\leq \lceil rn/2\rceil}
\frac{L_{\vartheta}(2r\alpha s+2j-1,\tau,\wedge^2)}{L_{\vartheta}(2r\alpha s+2j+2rn-2\lceil rn/2\rceil,\tau,\wedge^2)},
\end{align}
where the factor in square brackets is included only when $m$ is odd, and for $x\in \R$,
$\lfloor x \rfloor$ denotes the largest integer smaller than or equal to $x$.

\begin{lemma}\label{lemma:reduction from classical to GLn}
For $\Real(s)\gg0$,
$Z(s,\omega,f_{\mathcal{W}(\rho_c(\tau))})=d_{\tau,\vartheta}(s)Z(\alpha s/(rkn),\omega_{n},f_{\mathcal{W}(\rho_c(\tau))\otimes \mathcal{W}(\rho_c(\tau^*))})$.
\end{lemma}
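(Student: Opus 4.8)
The plan is to follow the reduction carried out in the linear case (\cite[\S~6]{CFGK2}), adapting every unipotent integration and change of variables to the covering, exactly in the spirit of the arguments in \S~\ref{global symplectic} and \S~\ref{integarls for GL}. First I would substitute into \eqref{int:local classical integral abstract} the realization of the section $f_{\mathcal{W}(\rho_c(\tau))}$ induced from $\pi=\Ind_{\widetilde{R}}^{G^{(m)}}(\pi_n)$: since $\omega$ is the normalized unramified matrix coefficient of $\pi^\vee$, I can write it as an integral over $K_G$ (or rather over $R\bs G$, using the Iwasawa decomposition) of an unramified matrix coefficient $\omega_n$ of $\pi_n^\vee$; because all data are unramified and $K_H$ is split by $y\mapsto\langle y,\eta_{2rkc}(y)\rangle$ with $\eta_{2rkc}$ trivial on the relevant subgroups, the cocycle contributions over $K_G$ are trivial and these manipulations are formally identical to the linear ones. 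The effect is to ``unfold'' the $G$-integral so that the remaining integration takes place over the Levi $\GL_n$ doubled inside $\GL_{2rkn}$, i.e.\ to rewrite the $\Sp_c^{(m)}$-integral as an $\Sp_c^{(m)}/R$-integral of a $\GL_n^{(m,r)}\times\GL_k^{(m,r)}$-type inner integral; the outer $\Sp/R$ part is then handled as an explicit computation.

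Next I would dispose of the $\Sp_c/R$ part. After the unfolding one is left with an integral over $U_R\bs\Sp_c$ (or its complement) of the $\GL_n$-integral, and the $U_R$ and the $U_0$ directions that do not belong to the $\GL_n$-doubling combine, upon applying the equivariance of $\mathcal{W}(\rho_c(\tau))$ under $V_{(c^{rk})}$ and Lemma~\ref{lemma:conjugation of N by H}, into a product of local $\GL_k$-type Tate integrals; these produce the normalizing factor $d_{\tau,\vartheta}(s)$. Here I would track carefully which $L$-factors of $\tau$ arise: the symmetric/exterior square factors come from the ``symplectic-type'' directions in $U_R$ and $U_0$, the odd-$m$ standard factor $L_\vartheta(r\alpha s+1/2,\tau)$ comes from the extra middle block present only when $r=m$, and the shift $\alpha=2rkc+1$ vs.\ the $\GL$-integral's normalization $\alpha/(rkn)$ is bookkeeping of $\delta_P$ on the embedded $\GL_n$. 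The cocycle issues here are controlled by Proposition~\ref{proposition:sigma on diagonal embedding of SLc} and \eqref{eq:invariance prop on SLc}, i.e.\ by the invariance of $\Lambda$ under $\SL_c^\triangle$ (here $\GL_n<\SL_c$), so the splitting corrections $\varsigma_{*,c}^{rk+1}$ and $\varsigma_{\iota,c}$ that appear in the equivariance statements of \S~\ref{global symplectic} cancel as they did in Proposition~\ref{proposition:equiv props}; this is exactly the role of \eqref{eq:invariance prop on GL or SL} and Proposition~\ref{proposition:applicability of the GL GL construction}, which show the resulting inner integral is a legitimate $\GL_n^{(m,r)}\times\GL_k^{(m,r)}$ integral \eqref{eq:local GL GL integral} with $\mathcal{E}=\rho_n(\tau)$, $\mathcal{E}'=\rho_n(\tau)^*=\rho_n(\tau^*)$.

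Then I would verify that the inner object really is the normalized unramified $\GL_n^{(m,r)}\times\GL_k^{(m,r)}$ integral attached to $\pi_n$, $\rho_n(\tau)$, $\rho_n(\tau^*)$. This requires: (i) identifying the embedding of $\GL_n\times\GL_n$ in $\GL_{2rkn}$ coming from the Siegel Levi of $\Sp$ with the one of \S~\ref{integarls for GL} — a matrix computation, using $b\mapsto b^*$ on the second copy, which is why $\tau^*$ (not $\tau^\vee$) appears, cf.\ Remark~\ref{remark:tau * vs dual}; (ii) checking that the sections match up, i.e.\ that the composition of $f_{\mathcal{W}(\rho_c(\tau))}$ with passage through the Siegel parabolic of $\Sp$ gives the normalized unramified section of $\Ind_{\widetilde{P}_{(rkn,rkn)}}^{\GL_{2rkn}^{(m,r)}}((\mathcal{W}(\rho_n(\tau))\otimes\mathcal{W}(\rho_n(\tau^*)))\delta_{P}^s)$, which follows from Corollary~\ref{corollary:realization space for 0 < l < c} and Corollary~\ref{corollary:functional on theta implies functional on rho c tau} together with the decomposition of the $(rk,c)$ functional via $(rk,n)$ and $(rk,c-n)=(rk,n)$ functionals from \S~\ref{decomposition of functionals}; (iii) confirming the normalization of measures and the power of $\delta_P$, giving the argument $\alpha s/(rkn)$. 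I expect the main obstacle to be step (iii) combined with the cocycle bookkeeping in the passage from $\Sp$ to $\GL$: one must be sure that, after conjugating by $\delta$ and by representatives in $K_G$, every residual root of unity is absorbed by $\eta_{2rkc}$ (trivial on the unramified pieces) or cancels against the $\varsigma$-corrections in the equivariance identities, so that the split cocycle $\sigma^\diamondsuit$ used in \S~\ref{integarls for GL} is genuinely the one governing the inner integral; this is precisely where the unramified hypothesis and $\mu_{2m}\subset F^*$ are used. The convergence for $\Real(s)\gg0$ is immediate from Proposition~\ref{proposition:local props}\eqref{it:conv} and its $\GL$-analog, so no separate analytic argument is needed at this stage.
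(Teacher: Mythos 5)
Your overall strategy coincides with the paper's: realize $\omega$ through the pairing on $\pi_n\times\pi_n^\vee$ and the Iwasawa decomposition $G=RK_G$, use Corollary~\ref{corollary:realization space for 0 < l < c} with $l=n$ together with the integral \eqref{eq:mnk functional using w_{n,m,k}} of \S~\ref{decomposition of functionals} to split the $(rk,c)$ functional into two $(rk,n)$ functionals, absorb the $U_R$-integration into an enlarged unipotent group, and recognize the surviving $\GL_n\times U^{1,4}$ integration as the $\GL_n^{(m,r)}\times\GL_k^{(m,r)}$ integral at $\alpha s/(rkn)$; the cocycle bookkeeping you describe (via Corollary~\ref{corollary:du integral of f invariant under g iota g}, Proposition~\ref{proposition:action of W on torus is trivial on Sp}, and the triviality of $\eta_{2rkc}$ on the unramified pieces) is exactly what the proof does.

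The genuine gap is in how you propose to obtain $d_{\tau,\vartheta}(s)$. It does not arise as ``a product of local $\GL_k$-type Tate integrals'' that one tracks by hand. After conjugating $U_0$ by $\kappa$ and decomposing $\delta_0=w\delta_0'w$, the leftover directions split into three pieces: $U^{1,4}$ (the unipotent group of the inner $\GL_n\times\GL_k$ integral), $U^3$ and $V$ (whose integrals localize to $K_H$ by a root-elimination argument and contribute $1$), and $U^2$, whose integral against $\langle w,1\rangle$ is a \emph{standard intertwining operator} $M(s)$ from \eqref{rep:induced f before M(s)} to \eqref{eq:image of M(s)}. The constant $d_{\tau,\vartheta}(s)$ is by definition $M(s)$ applied to the normalized unramified vector, evaluated at the identity, and it is computed by restricting to $\Sp_{2rkn}^{(m)}$ and applying the Gindikin--Karpelevich formula of \cite[Theorem~7.10]{Gao2018}: the adjoint action of $\GL_{rkn}(\C)$ on the Lie algebra of the $L$-group of $U^2$ is $\mathrm{st}\oplus\wedge^2$ for odd $m$ (dual group $\SO_{2rkn+1}(\C)$) and $\wedge^2$ for even $m$, which is what produces the precise symmetric/exterior square quotients and the odd-$m$ standard factor in \eqref{eq:d tau(s)}. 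Without identifying $M(s)$ you have no mechanism to prove the exact exponents in \eqref{eq:d tau(s)}, and your explanation of why $\tau^*$ appears is also incomplete: the second tensor factor becomes $\mathcal{W}(\rho_n(\tau^*))$ because the Weyl element $w$ of $M(s)$ conjugates $\diag(I_{rkn},a,a^*,I_{rkn})$ to $\diag(I_{rkn},a^*,a,I_{rkn})$, i.e.\ the $*$ is forced by the intertwining operator, not merely by the identification $M_P\cong\GL_{rkc}$ via $g\mapsto\diag(g,g^*)$.
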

\begin{proof}
We adapt the proof of the linear case \cite[Lemma~27]{CFGK2}.
For the purpose of computing the integral with unramified data, we may assume
$\pi=\Ind_{\widetilde{R}}^{G^{(m)}}(\pi_n)$, then
$\pi^{\vee}=\Ind_{\widetilde{R}}^{G^{(m)}}(\pi_n^{\vee})$. Let
$\{,\}_{M_R}$ be the $\widetilde{M}_R$-invariant pairing on $\pi_n\times\pi_n^{\vee}$ and choose
a semi-invariant measure on $R\backslash G$ (see \cite[1.21]{BZ1}). We can then realize the $G^{(m)}$-invariant pairing on
$\pi\times\pi^{\vee}$ by the integral
\begin{align*}
\{\varphi,\varphi^{\vee}\}=\int\limits_{R\backslash G}\{\varphi(\langle g,1\rangle),\varphi^{\vee}(\langle g,1\rangle)\}_{M_R}\,dg,
\end{align*}
where $\varphi$ and $\varphi^{\vee}$ belong to the spaces of $\pi$ and $\pi^{\vee}$, respectively.
Using the Iwasawa decomposition $G=RK_G$, and since $\varphi$ is genuine and $\varphi^{\vee}$ is anti-genuine,
\begin{align}\label{eq:classical G pairing using M}
\{\varphi,\varphi^{\vee}\}=\int\limits_{K_G}\{\varphi(\langle y,\eta_c(y)\rangle),\varphi^{\vee}(\langle y,\eta_c(y)\rangle)\}_{M_R}\,dy.
\end{align}
If $\varphi$ is unramified,
$\varphi(\langle y,\eta_c(y)\rangle)=\varphi(e)$ where $e=\langle I_{c},1\rangle$.
Since $\omega$ is unramified, so is $\varphi$ (and $\varphi^{\vee}$) and
\begin{align*}
\omega(\langle g,1\rangle)=\int\limits_{K_G}\{\varphi(e),\varphi^{\vee}(\langle y,\eta_c(y)\rangle\langle g,1\rangle)\}_{M_R}\,dy.
\end{align*}

Plugging this into $Z(s,\omega,f_{\mathcal{W}(\rho_c(\tau))})$ we obtain
\begin{align}\label{int:Z with realization of coefficient}
\int\limits_{G}
\int\limits_{K_G}\{\varphi(e),\varphi^{\vee}(\langle y,\eta_c(y)\rangle\langle g,1\rangle)\}_{M_R}\,dy
\int\limits_{U_0}f_{\mathcal{W}(\rho_c(\tau))}(\langle\delta u_0,1\rangle\,{}^{\iota}\langle\mathfrak{e}_2(g),1\rangle,s)\,\psi_U(u_0)\,du_0\,dg.
\end{align}
Multiplying $g\mapsto y^{-1}g$ (we can change the order of integration when $\Real(s)\gg0$),
\begin{align*}
\langle g,1\rangle
\mapsto \langle y^{-1}, \sigma_{c}(y^{-1},g)^{-1}\rangle\langle g,1\rangle,
\end{align*}
and because $\langle y,\eta_c(y)\rangle\langle y^{-1},\eta_c(y^{-1})\rangle=e$ and $\varphi^{\vee}$ is anti-genuine,
\begin{align*}
\varphi^{\vee}(\langle y,\eta_c(y)\rangle\langle y^{-1}, \sigma_{c}(y^{-1},g)^{-1}\rangle\langle g,1\rangle)
=\eta_c(y^{-1})\sigma_{c}(y^{-1},g)\varphi^{\vee}(\langle g,1\rangle).
\end{align*}
Also
\begin{align*}
f_{\mathcal{W}(\rho_c(\tau))}(\langle\delta u_0,1\rangle\,{}^{\iota}\langle\mathfrak{e}_2(g),1\rangle,s)
&\mapsto f_{\mathcal{W}(\rho_c(\tau))}(\langle\delta u_0,1\rangle\,{}^{\iota}
\langle\mathfrak{e}_2(y^{-1}g),1\rangle,s)
\\&=f_{\mathcal{W}(\rho_c(\tau))}(\langle\delta u_0,1\rangle\,{}^{\iota}
\langle\mathfrak{e}_2(y^{-1}),\sigma_{2rkc}(\mathfrak{e}_2(y^{-1}),\mathfrak{e}_2(g))^{-1}\rangle\,{}^{\iota}\langle\mathfrak{e}_2(g),1\rangle,s).
\end{align*}
Since $\sigma_{c}(y^{-1},g)=\sigma_{2rkc}(\mathfrak{e}_2(y^{-1}),\mathfrak{e}_2(g))$
by Proposition~\ref{proposition:the $2$-cocycle on G times G}, and $\{\varphi,\varphi^{\vee}\}_{M_R}$ is bilinear, \eqref{int:Z with realization of coefficient} becomes
\begin{align}\label{int:Z with realization of coefficient 2}
&\int\limits_{G}
\int\limits_{K_G}\{\varphi(e),\varphi^{\vee}(\langle g,1\rangle)\}_{M_R}
\int\limits_{U_0}f_{\mathcal{W}(\rho_c(\tau))}(\langle\delta u_0,1\rangle
\,{}^{\iota}\langle\mathfrak{e}_2(y^{-1}),\eta_c(y^{-1})\rangle
\,{}^{\iota}\langle\mathfrak{e}_2(g),1\rangle,s)\,\psi_U(u_0)\,du_0\,dy\,dg.
\end{align}
The map $\mathfrak{e}_1(y)\mapsto \langle\mathfrak{e}_1(y),\eta_{2rkc}(\mathfrak{e}_1(y))\rangle$ is a splitting of $\mathfrak{e}_1(K_G)$, and
by \eqref{eq:lcl embeding left copy using sigma c into H} so is the map
\begin{align*}
\mathfrak{e}_1(y)\mapsto y\mapsto\langle y,\eta_c(y)\rangle\mapsto\langle\mathfrak{e}_1(y),\varsigma_{*,c}^{rk+1}(y)\eta_c^{-1}(y)\rangle.
\end{align*}
Hence because $K_G$ is perfect,
\begin{align}\label{eq:classical eta_H 1 compatible}
\eta_{2rkc}(\mathfrak{e}_1(y))=\varsigma_{*,c}^{rk+1}(y)\eta_c^{-1}(y).
\end{align}
Then since $f_{\mathcal{W}(\rho_c(\tau))}$ is right invariant on
$\langle \mathfrak{e}_1(y^{-1}),\eta_{2rkc}(\mathfrak{e}_1(y^{-1}))\rangle$ and using
Proposition~\ref{proposition:the $2$-cocycle on G times G} and \eqref{eq:iota image on product},
\begin{align*}
&f_{\mathcal{W}(\rho_c(\tau))}(\langle\delta u_0,1\rangle
\,{}^{\iota}\langle\mathfrak{e}_2(y^{-1}),\eta_c(y^{-1})\rangle
\,{}^{\iota}\langle\mathfrak{e}_2(g),1\rangle,s)
\\&=f_{\mathcal{W}(\rho_c(\tau))}(\langle\delta u_0,1\rangle
\,{}^{\iota}\langle(y^{-1},y^{-1}),\varsigma_{*,c}^{rk+1}(y^{-1})\rangle
\,{}^{\iota}\langle\mathfrak{e}_2(g),1\rangle,s).
\end{align*}
The integral over $U_0$ becomes
\begin{align}\label{eq:classical U_0 alone}
\int\limits_{U_0}f_{\mathcal{W}(\rho_c(\tau))}(\langle\delta u_0,1\rangle
\,{}^{\iota}\langle(y^{-1},y^{-1}),\varsigma_{*,c}^{rk+1}(y^{-1})\rangle
\,{}^{\iota}\langle\mathfrak{e}_2(g),1\rangle,s)\,\psi_U(u_0)\,du_0.
\end{align}
Now by Corollary~\ref{corollary:du integral of f invariant under g iota g} we can omit
${}^{\iota}\langle(y^{-1},y^{-1}),\varsigma_{*,c}^{rk+1}(y^{-1})\rangle$ from this integral, and since the measure of $K_G$ is $1$,
\eqref{int:Z with realization of coefficient 2} equals
\begin{align}\label{int:Z with section}
\int\limits_{G}
\{\varphi(e),\varphi^{\vee}(\langle g,1\rangle)\}_{M_R}
\int\limits_{U_0}f_{\mathcal{W}(\rho_c(\tau))}(\langle\delta u_0,1\rangle\,{}^{\iota}\langle\mathfrak{e}_2(g),1\rangle,s)\,\psi_U(u_0)\,du_0\,dg.
\end{align}

The function $a\mapsto \delta_P^{-1/2-s}(a)f_{\mathcal{W}(\rho_c(\tau))}(ah,s)$, $a\in \GL_n^{(m,r)}$, belongs to
the $(rk,c)$ model $\mathcal{W}(\rho_c(\tau))$ of $\rho_c(\tau)$. By Corollary~\ref{corollary:realization space for 0 < l < c} with $l=n=c/2$,
\begin{align*}
\rho_c(\tau)\subset\Ind_{\widetilde{P}_{(rkn,rkn)}}^{\GL_{rkc}^{(m,r)}}((
\mathcal{W}(\rho_{n}(\tau))\otimes \mathcal{W}(\rho_{n}(\tau)))\delta_{P_{(rkn,rkn)}}^{-1/(2rk)}).
\end{align*}
Let $L=M_L\ltimes U_L$ be the standard parabolic subgroup of $H$ with $M_L=\GL_{rkn}\times\GL_{rkn}=M_{(rkn,rkn)}$.
According to Proposition~\ref{proposition: rk c functional is nontrivial on nontwisted},
we can realize $\mathcal{W}(\rho_c(\tau))$ using \eqref{eq:mnk functional using w_{n,m,k}}. Then by transitivity of induction
there is a standard section $f_{\mathcal{W}(\rho_n(\tau))\otimes \mathcal{W}(\rho_n(\tau))}$ on
\begin{align}\label{rep:induced f before M(s)}
\Ind_{\widetilde{L}}^{H^{(m)}}
(|\det|^{-n/2+\alpha s}\mathcal{W}(\rho_n(\tau))\otimes |\det|^{n/2+\alpha s}\mathcal{W}(\rho_n(\tau)))
\end{align}
such that for all $h\in H^{(m)}$,
\begin{align}\label{eq:new formula for f with realization}
f_{\mathcal{W}(\rho_c(\tau))}(h,s)=\int\limits_Vf_{\mathcal{W}(\rho_n(\tau))\otimes \mathcal{W}(\rho_n(\tau))}(\langle\kappa v,1\rangle h,s)\,dv.
\end{align}
Here $V$ and $\kappa$ are given in \S~\ref{decomposition of functionals}, and implicitly identified with their images in $M_P$. Moreover, $f_{\mathcal{W}(\rho_n(\tau))\otimes \mathcal{W}(\rho_n(\tau))}$ is normalized and unramified. Indeed it is clearly unramified, and by the proof of Proposition~\ref{proposition: rk c functional is nontrivial on nontwisted}
it is also normalized, because the volume of $V\cap K_{\GL_{rkc}}$ is $1$ and
\begin{align*}
f_{\mathcal{W}(\rho_n(\tau))\otimes \mathcal{W}(\rho_n(\tau))}(\langle I_{2rkc},1\rangle,s)=f_{\mathcal{W}(\rho_c(\tau))}(\langle I_{2rkc},1\rangle,s)=1.
\end{align*}

With this modification, \eqref{int:Z with section} takes the form
\begin{align}\label{int:Z with section 2}
\int\limits_{G}
\{\varphi(e),\varphi^{\vee}(\langle g,1\rangle)\}_{M_R}
\int\limits_{U_0}
\int\limits_V
f_{\mathcal{W}(\rho_n(\tau))\otimes \mathcal{W}(\rho_n(\tau))}(\langle \kappa v,1\rangle\langle\delta u_0,1\rangle\,{}^{\iota}\langle\mathfrak{e}_2(g),1\rangle,s)\,\psi_U(u_0)\,dv\,du_0\,dg.
\end{align}
This integral is absolutely convergent for $\Real(s)\gg0$ as a triple integral.

Next we shift $v$ to the right. By \eqref{eq:sigma on h and v} and since $\sigma_{2rkc}$ is trivial $\mathfrak{W}^+_{2rkc}$ (see \S~\ref{local covering}),
\begin{align*}
\langle \kappa v,1\rangle\langle\delta u_0,1\rangle=
\langle \kappa ,1\rangle\langle v,1\rangle\langle\delta_0,1\rangle\langle\delta_1,1\rangle\langle u_0,1\rangle.
\end{align*}
Observe the following properties.
\begin{enumerate}[leftmargin=*]
\item\label{it:GL observe 1} The element $\delta_0$ normalizes $V$, in particular ${}^{\delta_0^{-1}}\langle v,1\rangle=\langle {}^{\delta_0^{-1}}v,1\rangle$ by \eqref{eq:sigma conjugate v by h}.
\item\label{it:GL observe 2} For $v\in V$, ${}^v\delta_1=\delta_1u'$ with $u'\in U_0$ such that $\psi_U(u')=1$;
since $\delta_1u'\in N_{rkc}$, by \eqref{eq:sigma conjugate v by h} and \eqref{eq:sigma on h and v} we have
${}^v\langle\delta_1,1\rangle=\langle\delta_1u',1\rangle=\langle\delta_1,1\rangle\langle u',1\rangle$.
\item\label{it:GL observe 3} The elements of $V$ normalize $U_0$ and fix $\psi_U|_{U_0}$.
\item\label{it:GL observe 4} Since $V$ is also a subgroup of $M_{(rkc-n,n)}$ and the image of $v\in V$ in $M_P$ takes the form
$\diag(v',I_c,{v'}^*)$ with $v'\in M_{(rkc-n,n)}$, $V$ and $\mathfrak{e}_2(G)$ commute; by \eqref{eq:BLS block compatible} this extends to $H^{(m)}$ ($\det v=1$ for any $v\in V$).
\item\label{it:GL observe 5} Since $\delta_0$ and $\kappa$ commute and $\sigma_{2rkc}$ is trivial on $\mathfrak{W}^+_{2rkc}$, $\langle\delta_0,1\rangle$ and $\langle\kappa,1\rangle$ commute.
\item\label{it:5.5} Because $\delta_1,{}^{\kappa}\delta_1\in N_{rkc}$, \eqref{eq:sigma conjugate v by h} implies ${}^{\kappa}\langle\delta_1,1\rangle=\langle{}^{\kappa}\delta_1,1\rangle$.
\item\label{it:GL observe 6} The image of $\kappa$ in $M_P$ takes the form
$\diag(\kappa_0,I_c,\kappa_0^*)$, where $\kappa_0\in\GL_{rkc-n}$ is a permutation matrix, and also $\det g=1$, whence \eqref{eq:BLS block compatible} implies that $\langle\kappa,1\rangle$ and $\langle\mathfrak{e}_2(g),1\rangle$ commute.
\end{enumerate}
A computation shows
\begin{align}\label{eq:U_0' classical}
U_0'={}^{\kappa}U_0=\left\{\left(\begin{smallmatrix}I_{rkn}&&U_1&U_2\\&I_{rkn}&U_3&U_4\\&&I_{rkn}\\&&&I_{rkn}\end{smallmatrix}\right)\in H\right\},
\end{align}
where $U_1=\left(\begin{smallmatrix}*&*\\0&*\end{smallmatrix}\right)$ with $0\in\Mat_{n}$, so that
$\left\{\left(\begin{smallmatrix}I_{rkn}&U_1\\&I_{rkn}\end{smallmatrix}\right)\right\}$
is the unipotent subgroup appearing in the $\GL_n^{(m,r)}\times \GL_k^{(m,r)}$ integral; $\psi_U$ is trivial on $U_2$ and $U_3$, and
its restriction to the coordinates of $U_1$ is the character $\psi_U$ for this integral;
$U_2$ (resp., $U_3$) takes the form $\left(\begin{smallmatrix}*&*\\0&*\end{smallmatrix}\right)$ with $0\in\Mat_n$; and $U_4$ is already determined by $U_1$ and the symplectic form defining $H$.
Regard $\psi_U$ as a character of $U_0'$ by $\psi_U(u_0')=\psi_U({}^{\kappa^{-1}}u_0')$, for $u_0'\in U_0'$.

We can now use \eqref{it:GL observe 1}--\eqref{it:GL observe 6} to rewrite \eqref{int:Z with section 2} in the form
\begin{align}\label{int:classical after props}
&\int\limits_{G}
\{\varphi(e),\varphi^{\vee}(\langle g,1\rangle)\}_{M_R}
\\&\int\limits_{U_0'}
\int\limits_V
f_{\mathcal{W}(\rho_n(\tau))\otimes \mathcal{W}(\rho_n(\tau))}(\langle \delta_0({}^{\kappa}\delta_1),1\rangle
\langle u_0',1\rangle \,{}^{\iota}\langle\mathfrak{e}_2(g),1
\rangle\langle \kappa v,1\rangle ,s)\,\psi_U(u_0')\,dv\,du_0'\,dg.\notag
\end{align}
Next, factoring this integral through $U_R$ we obtain
\begin{align*}
&\int\limits_{U_R\backslash G}
\int\limits_{U_R}
\{\varphi(e),\varphi^{\vee}(\langle z,1\rangle\langle g,1\rangle)\}_{M_R}
\\&\int\limits_{U_0'}
\int\limits_V
f_{\mathcal{W}(\rho_n(\tau))\otimes \mathcal{W}(\rho_n(\tau))}(\langle \delta_0({}^{\kappa}\delta_1),1\rangle
\langle u_0',1\rangle \,{}^{\iota}\langle\mathfrak{e}_2(z),1
\rangle\,{}^{\iota}
\langle\mathfrak{e}_2(g),1
\rangle\langle \kappa v,1\rangle ,s)\,\psi_U(u_0')\,dv\,du_0'\,dz\,dg.
\end{align*}

Let $U_0^{\bullet}$ be the subgroup obtained from $U_0'$ by replacing the $0$ block of $U_2$ with arbitrary coordinates such that
the elements still belong in $H$.
For $z\in U_R$ and $u_0'\in U_0'$,
\begin{align*}
{}^{\mathfrak{e}_2({}^{\iota}z^{-1})}(({}^{\kappa}\delta_1) u_0')=m_z({}^{\kappa}\delta_1)u_z,
\end{align*}
where $m_z$ belongs to the subgroup $V_{((2rk-1)n,n)}$ of $\GL_{rkc}$ and when
$z$ and $u_0'$ vary over $U_R$ and $U_0'$, $u_z$ varies over $U_0^{\bullet}$. Moreover
${}^{\kappa}\delta_1,u_0',m_z,u_z\in N_{rkc}$, so that by \eqref{eq:sigma conjugate v by h} and \eqref{eq:sigma on vh and h'v'},
\begin{align*}
{}^{\mathfrak{e}_2({}^{\iota}z^{-1})}\langle({}^{\kappa}\delta_1) u_0',1\rangle=\langle m_z,1\rangle\langle({}^{\kappa}\delta_1)u_z,1\rangle.
\end{align*}
After this conjugation and since $\varphi^{\vee}(\langle z,1\rangle\langle g,1\rangle)=\varphi^{\vee}(\langle g,1\rangle)$, the integral equals
\begin{align*}
&\int\limits_{U_R\backslash G}
\{\varphi(e),\varphi^{\vee}(\langle g,1\rangle)\}_{M_R}
\int\limits_{U_R}
\int\limits_{U_0'}
\int\limits_V
f_{\mathcal{W}(\rho_n(\tau))\otimes \mathcal{W}(\rho_n(\tau))}
\\&(
\langle \delta_0,1\rangle
\,{}^{\iota}\langle\mathfrak{e}_2(z),1\rangle
\langle m_z,1\rangle\langle({}^{\kappa}\delta_1)u_z,1\rangle
\,{}^{\iota}
\langle\mathfrak{e}_2(g),1
\rangle\langle \kappa v,1\rangle ,s)\,\psi_U(u_0')\,dv\,du_0'\,dz\,dg.
\end{align*}
Since ${}^{\delta_0}(^{\iota}\langle\mathfrak{e}_2(z),1\rangle)\in N_{rkc}$, by
\eqref{eq:epsilon for conjugation between split subgroups} we have
${}^{\delta_0}({}^{\iota}\langle\mathfrak{e}_2(z),1\rangle)=\langle{}^{\delta_0}\mathfrak{e}_2({}^{\iota}z),1\rangle$.
Also because ${}^{\delta_0}m_z\in V_{(n,(2rk-1)n)}<N_{rkc}$, by \eqref{eq:sigma conjugate v by h} we have
${}^{\delta_0}\langle m_z,1\rangle=\langle{}^{\delta_0}m_z,1\rangle$. Thus we can further write
\begin{align*}
&\int\limits_{U_R\backslash G}
\{\varphi(e),\varphi^{\vee}(\langle g,1\rangle)\}_{M_R}
\int\limits_{U_R}
\int\limits_{U_0'}
\int\limits_V
f_{\mathcal{W}(\rho_n(\tau))\otimes \mathcal{W}(\rho_n(\tau))}
\\&(\langle{}^{\delta}\mathfrak{e}_2({}^{\iota}z),1\rangle
\langle {}^{\delta_0}m_z,1\rangle
\langle \delta_0({}^{\kappa}\delta_1),1\rangle\langle u_z,1\rangle
\,{}^{\iota}
\langle\mathfrak{e}_2(g),1
\rangle\langle \kappa v,1\rangle ,s)\,\psi_U(u_0')\,dv\,du_0'\,dz\,dg.
\end{align*}
Since ${}^{\delta}\mathfrak{e}_2({}^{\iota}z)\in U_P<U_L$, the section is invariant on the left on $\langle{}^{\delta}\mathfrak{e}_2({}^{\iota}z),1\rangle$.
Now as in the linear case (\cite[(3.21)--(3.22)]{CFGK2}),
we can change variables in $u_z$ to remove the dependency on $z$, this affects $\psi_U$ but the change is cancelled when we
use the equivariance properties of the inducing data of $f_{\mathcal{W}(\rho_n(\tau))\otimes \mathcal{W}(\rho_n(\tau))}$ under
$\langle {}^{\delta_0}m_z,1\rangle$ (the first model emits a character).
Therefore we can combine the integrations over $U_R$ and $U_0'$ into an integration over $U_0^{\bullet}$ and obtain
\begin{align}\label{int:Z with section 4}
&\int\limits_{U_R\backslash G}
\{\varphi(e),\varphi^{\vee}(\langle g,1\rangle)\}_{M_R}
\int\limits_{U_0^{\bullet}}
\int\limits_V
f_{\mathcal{W}(\rho_n(\tau))\otimes \mathcal{W}(\rho_n(\tau))}\\&\nonumber(\langle \delta_0({}^{\kappa}\delta_1),1\rangle
\langle u_0^{\bullet},1\rangle \,{}^{\iota}\langle\mathfrak{e}_2(g),1
\rangle\langle \kappa v,1\rangle ,s)\,\psi_U(u_0^{\bullet})\,dv\,du_0^{\bullet}\,dg.
\end{align}
Here $\psi_U$ was lifted to a character of $U_0^{\bullet}$ trivially on the coordinates outside $U_0'$.

Now we decompose $\delta_0$ and write $du_0^{\bullet}$ as an iterated integral. Let $\delta'=\delta_0'\delta_1'$ be the elements corresponding to the $\GL_n^{(m,r)}\times\GL_{k}^{(m,r)}$ integral (defined in \S~\ref{integarls for GL}), regarded as elements of $M_P$. For example, $\delta_0'$ is the image of
$\left(\begin{smallmatrix}&I_{rkn}\\I_{rkn}\end{smallmatrix}\right)$ in $M_P$. We can then write
\begin{align*}
\delta_0=w\delta_0'w,\qquad w=\left(\begin{smallmatrix}I_{rkn}\\&&I_{rkn}\\&-I_{rkn}\\&&&I_{rkn}\end{smallmatrix}\right).
\end{align*}

Observe that because $U_0^{\bullet}$ differs from $U_0'$ by only one block (the bottom left $n\times n$ block of $U_2$), one may still use
\eqref{eq:U_0' classical} to describe the elements of $U_0^{\bullet}$.
For $u_0^{\bullet}\in U_0^{\bullet}$ and $i\in\{2,3\}$, let $u^i$ denote the element obtained from $u_0^{\bullet}$ by zeroing out the coordinates in the blocks $U_j$ with $j\ne i$, and $u^{1,4}$ be similarly obtained by removing the coordinates in $U_2$ and $U_3$. Then we have
\begin{align}
&\delta_0u_0^{\bullet}=w\cdot{}^{(\delta_{0}'w)}u^2\cdot\delta_0'\cdot {}^{w}(u^{1,4})\cdot wu^3,\nonumber\\\label{eq:classical decomp delta_0 1}&\delta_0({}^{\kappa}\delta_1)u_0^{\bullet}=w\cdot{}^{(\delta_{0}'w)}u^2\cdot\delta'\cdot {}^{w}(u^{1,4})\cdot wu^3.
\end{align}
Note that ${}^{\kappa}\delta_1\in U_P$, hence commutes with $u_0^{\bullet}$. We can extend \eqref{eq:classical decomp delta_0 1} to
$H^{(m)}$: indeed $\sigma_{2rkc}$ is trivial on $\mathfrak{W}^+_{2rkc}$; and also $u_0^{\bullet},{}^{(\delta_{0}'w)}u^2,{}^{w}(u^{1,4}),{}^{\kappa}\delta_1\in N_{rkc}$, so that
\eqref{eq:sigma conjugate v by h} is applicable and the elements ${}^{\kappa}\delta_1$ and $u_0^{\bullet}$ commute in $H^{(m)}$. Hence
\begin{align}\label{eq:decomp of delta in covering}
\langle \delta_0({}^{\kappa}\delta_1),1\rangle
\langle u_0^{\bullet},1\rangle=
\langle w\,{}^{(\delta_{0}'w)}u^2,1\rangle\langle \delta'\,{}^{w}(u^{1,4}),1\rangle\langle wu^3,1\rangle.
\end{align}

Let $U^2$ be the subgroup of elements $^{(\delta_{0}'w')}u^2$:
\begin{align*}
U^2=\left\{\left(\begin{smallmatrix}I_{rkn}\\&I_{rkn}&Z\\&&I_{rkn}\\&&&I_{rkn}\end{smallmatrix}\right)\in H\right\}.
\end{align*}
This subgroup will be used below to define an intertwining operator.
Let $U^{1,4}$ be the subgroup of elements
${}^{w}(u^{1,4})$, this is the subgroup $U_0$ of the $\GL_n^{(m,r)}\times\GL_{k}^{(m,r)}$ integral,
when we identify $\GL_{2rkn}$ with $M_P$ by $b\mapsto\diag(b,b^*)$. Additionally
denote the subgroup of elements $u^3$ by $U^3$.

Let $M(s)$ be the standard intertwining operator from the space of \eqref{rep:induced f before M(s)} to the space of
\begin{align}\label{eq:image of M(s)}
\Ind_{\widetilde{L}}^{H^{(m)}}
(|\det|^{-n/2+\alpha s}\mathcal{W}(\rho_n(\tau))\otimes |\det|^{-n/2-\alpha s}\mathcal{W}(\rho_n(\tau^*))),
\end{align}
defined by
\begin{align*}
M(s)f_{\mathcal{W}(\rho_n(\tau))\otimes \mathcal{W}(\rho_n(\tau))}(h,s)=
\int\limits_{U^2}f_{\mathcal{W}(\rho_n(\tau))\otimes \mathcal{W}(\rho_n(\tau))}(\langle w,1\rangle\langle u^2,1\rangle h,s)\,du^2.
\end{align*}
Note that the image of $M(s)$ is indeed in this space, where ${}^*$ is defined by \eqref{eq:involution b*0}.
This is because $\mathcal{W}(\rho_n(\tau))^*=\mathcal{W}(\rho_n(\tau)^*)=\mathcal{W}(\rho_n(\tau^*))$, which is true for any extension of
${}^*$ to $\GL_{rkn}^{(m,r)}$ (see \S~\ref{Outline of the computation}); for $a\in\GL_{rkn}$,
${}^{w}\diag(I_{rkn},a,a^*,I_{rkn})=\diag(I_{rkn},a^*,a,I_{rkn})$, hence when we regard $\langle a,\epsilon\rangle$ in
$\GL_{rkn}^{(m,r)}$, we obtain a lift of ${}^*$; to check which one, consider $a=\diag(a_1,I_{rkn-1})$ and now
\begin{align*}
{}^{w}\langle\diag(I_{rkn},a,a^*,I_{rkn}),\epsilon\rangle=\langle\diag(I_{rkn},a^*,a,I_{rkn}),\epsilon\rangle
\end{align*}
by Proposition~\ref{proposition:action of W on torus is trivial on Sp}.
The image of $M(s)f_{\mathcal{W}(\rho_n(\tau))\otimes \mathcal{W}(\rho_n(\tau))}$ is an unramified vector in the space of \eqref{eq:image of M(s)}, and we denote
$d_{\tau,\vartheta}(s)=M(s)f_{\mathcal{W}(\rho_n(\tau))\otimes \mathcal{W}(\rho_n(\tau))}(\langle I_{2rkc},1\rangle)$. We will prove below that $d_{\tau,\vartheta}(s)$ is equal to \eqref{eq:d tau(s)}.

When we plug \eqref{eq:decomp of delta in covering} into \eqref{int:Z with section 4}, it becomes
\begin{align}\label{int:Z with section 5}
&\int\limits_{U_R\backslash G}
\{\varphi(e),\varphi^{\vee}(\langle g,1\rangle)\}_{M_R}
\int\limits_{U^3}
\int\limits_{U^{1,4}}
\int\limits_V
M(s)f_{\mathcal{W}(\rho_n(\tau))\otimes \mathcal{W}(\rho_n(\tau))}\\&\nonumber(
\langle \delta'u',1\rangle\langle wu^3,1\rangle
\,{}^{\iota}\langle\mathfrak{e}_2(g),1
\rangle\langle \kappa v,1\rangle ,s)\,\psi_U(u')\,dv\,du'\,du^3\,dg.
\end{align}

We proceed using the Iwasawa decomposition $G=RK_G$. Since $\sigma_{c}(y,y')=\sigma_{2rkc}(\mathfrak{e}_2(y),\mathfrak{e}_2(y'))$ for any $y,y'\in K_G$ and $K_G$ is perfect, $\eta_{c}(y)=\eta_{2rkc}(\mathfrak{e}_2(y))$. Then by \eqref{eq:iota image on K},
\begin{align*}
{}^{\iota}\langle\mathfrak{e}_2(y),\eta_{c}(y)\rangle
=\langle\mathfrak{e}_2({}^{\iota}y),\eta_{c}({}^{\iota}y)\rangle
=\langle\mathfrak{e}_2({}^{\iota}y),\eta_{2rkc}(\mathfrak{e}_2({}^{\iota}y))\rangle.
\end{align*}
As explained above (\eqref{it:GL observe 4} and \eqref{it:GL observe 6}),
$\mathfrak{e}_2(G)$ commutes with $\kappa$ and $v$ in $H^{(m)}$. Hence
\begin{align*}
&M(s)f_{\mathcal{W}(\rho_n(\tau))\otimes \mathcal{W}(\rho_n(\tau))}\nonumber(
\langle \delta'u',1\rangle\langle wu^3,1\rangle
\,{}^{\iota}\langle\mathfrak{e}_2(g),1
\rangle
\,{}^{\iota}\langle\mathfrak{e}_2(y),\eta_{c}(y)\rangle
\langle \kappa v,1\rangle ,s)
\\&=M(s)f_{\mathcal{W}(\rho_n(\tau))\otimes \mathcal{W}(\rho_n(\tau))}(
\langle \delta'u',1\rangle\langle wu^3,1\rangle
\,{}^{\iota}\langle\mathfrak{e}_2(g),1
\rangle
\langle \kappa v,1\rangle\langle\mathfrak{e}_2({}^{\iota}y),\eta_{2rkc}(\mathfrak{e}_2({}^{\iota}y))\rangle,s)
\\&=M(s)f_{\mathcal{W}(\rho_n(\tau))\otimes \mathcal{W}(\rho_n(\tau))}(
\langle \delta'u',1\rangle\langle wu^3,1\rangle
\,{}^{\iota}\langle\mathfrak{e}_2(g),1
\rangle
\langle \kappa v,1\rangle,s).
\end{align*}
In addition the vector $\varphi^{\vee}$ is right invariant on $\langle y,\eta_c(y)\rangle$.
It follows that when we write a representative $g$ in the form $\diag(g',{g'}^*)y$ with $g'\in\GL_n$ and $y\in K_G$, the integrand is right invariant on $y$. Since the measure of $K_G$ is $1$, \eqref{int:Z with section 5} equals
\begin{align}\label{int:Z with section 6}
&\int\limits_{\GL_n}
\{\varphi(e),\varphi^{\vee}(\langle \diag(g',{g'}^*),1\rangle)\}_{M_R}
\int\limits_{U^3}
\int\limits_{U^{1,4}}
\int\limits_V
M(s)f_{\mathcal{W}(\rho_n(\tau))\otimes \mathcal{W}(\rho_n(\tau))}\\&\nonumber(
\langle \delta'u',1\rangle\langle wu^3,1\rangle
\,{}^{\iota}\langle\mathfrak{e}_2(\diag(g',{g'}^*)),1
\rangle\langle \kappa v,1\rangle ,s)\,\psi_U(u')\delta_R^{-1}(\diag(g',{g'}^*))\,dv\,du'\,du^3\,dg'.
\end{align}

The function
\begin{align*}
\langle g',\epsilon\rangle\mapsto \delta_R^{-1/2}(\diag(g',{g'}^*))\{\varphi(e),\varphi^{\vee}(\langle \diag(g',{g'}^*),\epsilon\rangle)\}_{M_R}
\end{align*}
is the normalized unramified matrix coefficient of $\pi_n^{\vee}$, which we denoted $\omega_n$. When we conjugate
$u^3$ by ${}^{\iota}\mathfrak{e}_2(\diag(g',{g'}^*))$, the measure is multiplied by $|\det g'|^{(-rk+1)n}$. Using
\eqref{eq:sigma on h and v} and \eqref{eq:sigma conjugate v by h}, we have
\begin{align}\label{int:Z with section 7}
&\int\limits_{U^3}
\int\limits_V
\int\limits_{\GL_n}
\omega_n(\langle g',1\rangle)
\int\limits_{U^{1,4}}
|\det g'|^{(n-\alpha)/2}M(s)f_{\mathcal{W}(\rho_n(\tau))\otimes \mathcal{W}(\rho_n(\tau))}\\&\nonumber(
\langle \delta'u',1\rangle
\,{}^{w\iota}\langle\mathfrak{e}_2(\diag(g',{g'}^*)),1\rangle
\langle wu^3,1\rangle\langle \kappa v,1\rangle ,s)\,\psi_U(u')\,du'\,dg'\,du^3\,dv.
\end{align}
Here we also changed the integration order, which is allowed for $\Real(s)\gg0$, and for any $g\in G$, the notation
${}^{w\iota}\langle\mathfrak{e}_2(g),1\rangle$ stands for ${}^{w}({}^{\iota}\langle\mathfrak{e}_2(g),1\rangle)$.

Let $(,)':\GL_n\times\GL_n\hookrightarrow \GL_{2rkn}$ denote the embedding in the construction of the
$\GL_n^{(m,r)}\times\GL_{k}^{(m,r)}$ integral, when $\GL_{2rkn}$ is identified with $M_P$ as above.
Now we see that
\begin{align*}
{}^{w\iota}\mathfrak{e}_2(\diag(g',{g'}^*))=\diag(I_{rkn},g',I_{2(rk-1)n},{g'}^*,I_{rkn})=(I_n,g')'.
\end{align*}
We claim that this conjugation extends to the covering, namely
\begin{align}\label{eq:conj of a using delta}
{}^{w\iota}\langle\mathfrak{e}_2(\diag(g',{g'}^*)),1\rangle=\langle (I_n,g')',(\det g',\det g')_m^{(rk+1)n+1}\rangle.
\end{align}
In fact, since we only need to apply this identity to the integral \eqref{int:Z with section 7}, it is sufficient to
establish \eqref{eq:conj of a using delta} on a dense subset of $\GL_n$.
We provide two proofs of this, both valid also when $F$ is not unramified. The first argument shows
\eqref{eq:conj of a using delta} on a dense subset without using $\mu_{2m}\subset F^*$. The second proves
\eqref{eq:conj of a using delta} for any $g'\in\GL_n$, under the assumption $\mu_{2m}\subset F^*$ (which we have throughout \S~\ref{Computation of the local factors with unramified data}), in which case
$(\det g',\det g')_m^{(rk+1)n+1}=1$.

The first proof: we prove \eqref{eq:conj of a using delta} on the subset $N_{\GL_n}^-B_{\GL_n}$, which is dense in $\GL_n$. Thus
$g'=u_g^-t_gv_g$ where $u_g^-\in N_{\GL_n}^-$, $t_g\in T_{\GL_n}$ and $v_g\in N_{\GL_n}$. Fix the splitting
$u^-\mapsto\langle u^-,\varsigma_n(u^-)\rangle$ of $N_{n}^-$ in $G^{(m)}$ and similarly fix $\varsigma_{rkc}$ for $N_{rkc}^-$ in $H^{(m)}$. As explained in \S~\ref{extension of the involution}, we can compute $\varsigma_{\iota,c}$ by regarding $\iota$ as an element of $\GL_c$, then $w\iota$ is the element in $\GL_{2rkc}$ given by $w\diag(I_{2rkc-n},\iota,I_{2rkc-n})$. By the analog of \eqref{eq:sigma conjugate v by h} for $N_{rkc}^-$ and $\varsigma_{rkc}$ (use \eqref{eq:epsilon for conjugation between split subgroups}),
\begin{align*}
&{}^{w\iota}\langle \mathfrak{e}_2(\diag(u_g^-,(u_g^-)^*)),\varsigma_{rkc}(\mathfrak{e}_2(\diag(u_g^-,(u_g^-)^*)))\rangle=
\langle (I_n,u_g^-)',\varsigma_{rkc}((I_n,u_g^-)')\rangle.
\end{align*}
By \eqref{eq:the $2$-cocycle on G times G formula} and \eqref{eq:BLS block compatible}, for any $z_1,z_2\in N_{\GL_n}^-$,
\begin{align*}
\sigma_{c}(\diag(z_1,z_1^*),\diag(z_2,z_2^*))=\sigma_{2rkc}(\mathfrak{e}_2(\diag(z_1,z_1^*)),\mathfrak{e}_2(\diag(z_2,{z_2}^*)))=
\sigma_{2rkc}((I_n,z_1)',(I_n,z_2)'),
\end{align*}
so that because $\varsigma_n$ is canonical,
\begin{align*}
\varsigma_{n}(\diag(u_g^-,(u_g^-)^*))=\varsigma_{rkc}(\mathfrak{e}_2(\diag(u_g^-,(u_g^-)^*)))=\varsigma_{rkc}((I_n,u_g^-)').
\end{align*}
Hence \eqref{eq:conj of a using delta} holds for $u_g^-$.

Consider $t_g$ and set $d_g=\diag(t_g,t_g^*)\in G$. Write $\iota=t_0w'$ where $w'\in\mathfrak{W}_{c}$ and $t_0\in T_{\GL_{c}}$ is such that its diagonal coordinates are $\pm1$. Then by \eqref{eq:conj mathcal t in GLd} and since
the set of roots $\alpha\in\Phi_{c}^+$ such that $\iota\alpha<0$ is the set
$\{(i,j):1\leq i\leq n<j\leq c\}$,
\begin{align*}
{}^{\iota}\langle d_g,1\rangle&={}^{t_0}({}^{w'}\langle d_g,1\rangle)=
{}^{t_0}(\langle {}^{w'}d_g,\prod_{1\leq i,j\leq n}(-t_j^{-1},t_i)_m\rangle)
={}^{t_0}(\langle {}^{w'}d_g,\prod_{1\leq i\ne j\leq n}(-1,t_it_j)_m\rangle).
\end{align*}
Here to show the second equality note that each pair $(i,j)$ with $i<j$ appears twice in the product over $1\leq i,j\leq n$, and
$(-t_j^{-1},t_i)_m(-t_i^{-1},t_j)_m=(-1,t_it_j)_m$ (also $(-t_i^{-1},t_i)_m=1$).

We proceed similarly with $w$; write $w=t_1w''$ with $t_1\in T_{\GL_{2rkc}}$ and $w''\in\mathfrak{W}_{2rkc}$ and denote $t_2=t_1{}^{w''}\mathfrak{e}_2(t_0)$. By \eqref{eq:conj mathcal t in GLd} and since the roots to consider are now
\begin{align*}
\{(i,j):rkn+1\leq i\leq rkc<j\leq rkc+rkn)\},
\end{align*}
\begin{align*}
{}^{w\iota}\langle \mathfrak{e}_2(d_g),1\rangle&={}^{t_2}({}^{w''}\langle \mathfrak{e}_2({}^{w'}d_g),\prod_{1\leq i\ne j\leq n}(-1,t_it_j)_m\rangle)
\\&={}^{t_2}(\langle {}^{w''}\mathfrak{e}_2({}^{w'}d_g),
\prod_{1\leq i\ne j\leq n}(-1,t_it_j)_m^2\rangle)={}^{t_2}\langle(I_n,t_g)',1\rangle.
\end{align*}
Now by Example~\ref{example:delta decomposition t_0 and w'},
\begin{align*}
&t_0=\diag((-1)^nI_n,I_n),\qquad
t_1=\diag(I_{rkn},(-1)^{rkn}I_{rkn},-I_{rkn},I_{rkn}), \\
&t_2=\diag(I_{rkn},(-1)^{rkn}I_{rkn},-I_{(rk-1)n},(-1)^{n+1}I_n,I_{rkn}).
\end{align*}
By \eqref{eq:sigma on torus of GL} ($t_2\in T_{\GL_{2rkc}}$) we see that
\begin{align*}
{}^{t_2}\langle(I_n,t_g)',1\rangle=\langle(I_n,t_g)',(\det t_g,\det t_g)_m^{n+1+rkn}\rangle.
\end{align*}
Also \eqref{eq:conj of a using delta} holds for
$v_g$ by \eqref{eq:sigma conjugate v by h}, because ${}^{w\iota}\mathfrak{e}_2(\diag(v_g,v_g^*))\in N_{rkc}$.

Since $\sigma_c(u_g^-t_g,v_g)=1$ (by \eqref{eq:sigma on h and v}),
we have shown that for $g'=u_g^-t_gv_g$,
\begin{align*}
&{}^{w\iota}\langle\mathfrak{e}_2(\diag(g',{g'}^*)),1\rangle\\&=
\sigma_{2rkc}(\mathfrak{e}_2(\diag(u_g^-,{u_g^-}^*)),\mathfrak{e}_2(\diag(t_g,t_g^*)))^{-1}
\,{}^{w\iota}\langle\mathfrak{e}_2(\diag(u_g^-,{u_g^-}^*)),1\rangle
\,{}^{w\iota}\langle\mathfrak{e}_2(\diag(t_g,t_g^*)),1\rangle\\&
\quad{}^{w\iota}\langle\mathfrak{e}_2(\diag(v_g,v_g^*)),1\rangle\\&=
\sigma_{c}(\diag(u_g^-,{u_g^-}^*),\diag(t_g,t_g^*))^{-1}
\langle(I_n,u_g^-)',1\rangle
\langle(I_n,t_g)',(\det t_g,\det t_g)_m^{(rk+1)n+1}\rangle
\langle(I_n,v_g)',1\rangle
\\&=\langle(I_n,g')',(\det g',\det g')_m^{(rk+1)n+1}\rangle.
\end{align*}
Here for the last equality we used \eqref{eq:BLS block compatible}. This completes the proof of \eqref{eq:conj of a using delta} for
$g'\in N_{\GL_n}^-B_{\GL_n}$.

The second proof: consider any $g'\in\GL_n$.
According to the Bruhat decomposition in $\GL_n$, we can write
$g'=u_gt_gw_gv_g$ with $u_g,v_g\in N_{\GL_n}$, $t_g\in T_{\GL_n}$ and
$w_g$ is a permutation matrix. If $\diag(w_g,w_g^*)=t_0w'$ where $t_0\in T_{\GL_c}$ and $w'\in\mathfrak{W}_c$,
then by \eqref{eq:2-cocycle} (with $g=\diag(t_g,t_g^*)$, $g'=t_0$ and $g''=w'$) and \eqref{eq:sigma w mathcal and t},
\begin{align}\label{eq:tg and wg}
\sigma_c(\diag(t_g,t_g^*),t_0w')=\sigma_c(\diag(t_g,t_g^*),t_0)=1
\end{align}
($\mu_{2m}\subset F^*$). Hence
by \eqref{eq:sigma on h and v},
\begin{align}\label{int:integrand Z with section 8}
&{}^{w\iota}\langle\mathfrak{e}_2(\diag(g',{g'}^*)),1\rangle
\\&=\nonumber
{}^{w\iota}\langle\mathfrak{e}_2(\diag(u_g,u_g^*)),1\rangle
\,{}^{w\iota}\langle\mathfrak{e}_2(\diag(t_g,t_g^*)),1\rangle
\,{}^{w\iota}\langle\mathfrak{e}_2(\diag(w_g,w_g^*)),1\rangle
\,{}^{w\iota}\langle\mathfrak{e}_2(\diag(v_g,{v_g}^*)),1\rangle.
\end{align}
As above \eqref{eq:conj of a using delta} holds for $u_g$ and $v_g$ by \eqref{eq:sigma conjugate v by h}
(${}^{w\iota}\mathfrak{e}_2(\diag(u_g,u_g^*))\in N_{rkc}$ and similarly for $v_g$); and for $w_g$
because $\sigma_{2rkc}$ is trivial on $\mathfrak{W}^+_{2rkc}$. Regarding $t_g$, let $t_0=\diag(I_{2rkc-n},-I_n,I_{2rkc})\in T_{\GL_{2rkc}}$, then $w\iota t_0\in H\cap \mathfrak{W}^+_{2rkc}$. Since $t_0$ commutes with any element of $T_{\GL_{2rkc}}$, we can replace
$w\iota$ with $w\iota t_0$ in \eqref{eq:conj of a using delta} for $t_g$, then it holds
by Proposition~\ref{proposition:action of W on torus is trivial on Sp}.
Therefore \eqref{int:integrand Z with section 8} becomes
\begin{align*}
&{}^{w\iota}\langle\mathfrak{e}_2(\diag(g',{g'}^*)),1\rangle
=
\langle(I_n,u_g)',1\rangle
\langle(I_n,t_g)',1\rangle
\langle(I_n,w_g)',1\rangle
\langle(I_n,v_g)',1\rangle=\langle(I_n,g')',1\rangle.
\end{align*}
Here for the second equality note that
$\sigma_{2rkc}((I_n,t_g)',(I_n,w_g)')=1$ which follows, e.g., from \eqref{eq:tg and wg} and \eqref{eq:BLS block compatible}, or simply
by applying the arguments used for the proof of \eqref{eq:tg and wg} to $\sigma_{2rkc}$.
Again we conclude \eqref{eq:conj of a using delta}.

Now we plug \eqref{eq:conj of a using delta} into \eqref{int:Z with section 7} and obtain
\begin{align}\label{int:Z with section 8}
&\int\limits_{U^3}
\int\limits_V
\int\limits_{\GL_n}
\omega_n(\langle g',1\rangle)
\int\limits_{U^{1,4}}
|\det g'|^{(n-\alpha)/2}M(s)f_{\mathcal{W}(\rho_n(\tau))\otimes \mathcal{W}(\rho_n(\tau))}\\&\nonumber(
\langle \delta'u',1\rangle\langle (I_n,g')',1\rangle
\langle wu^3,1\rangle\langle \kappa v,1\rangle ,s)\,\psi_U(u')\,du'\,dg'\,du^3\,dv.
\end{align}
Looking at \eqref{eq:image of M(s)} we see that when we restrict
$M(s)f_{\mathcal{W}(\rho_n(\tau))\otimes \mathcal{W}(\rho_n(\tau))}$ to $\widetilde{M}_P=\GL_{2rkn}^{(m,r)}$, we obtain a rational section of
\begin{align}\label{eq:classical image restrict Siegel}
|\det|^{(\alpha-n)/2}
\Ind_{\widetilde{P}_{(rkn,rkn)}}^{\GL_{2rkn}^{(m,r)}}
((\mathcal{W}(\rho_n(\tau))\otimes\mathcal{W}(\rho_n(\tau^*)))\delta_{P_{(rkn,rkn)}}^{\alpha s/(rkn)}).
\end{align}
Therefore the inner $du'dg'$-integral of \eqref{int:Z with section 8} is the $\GL_n^{(m,r)}\times\GL_k^{(m,r)}$ integral, and
we can rewrite \eqref{int:Z with section 8} in the form
\begin{align}\label{int:Z with section 9}
\int\limits_{U^3}
\int\limits_V
Z(\alpha s/rkn,\omega_n,(\langle wu^3,1\rangle\langle \kappa v,1\rangle)\cdot M(s)f_{\mathcal{W}(\rho_n(\tau))\otimes \mathcal{W}(\rho_n(\tau))})\,du^3\,dv.
\end{align}

As in the linear case, the next step is to show that the integral over $U^3$ vanishes unless $v,u^3\in K_H$.
Fix the splitting $v^-\mapsto\langle v^-,\varsigma_{rkc}(v^-)\rangle$ for $N_{rkc}^-$ in $H^{(m)}$ (see above).
We can write
\begin{align*}
\langle wu^3,1\rangle\langle \kappa v,1\rangle&=
{}^{w}(\langle u^3,1\rangle{}^{\kappa}\langle v,1\rangle)\langle w\kappa,1\rangle=
{}^{w}(\langle u^3,1\rangle\langle {}^{\kappa}v,\varsigma_{rkc}({}^{\kappa}v)\rangle)\langle w\kappa,1\rangle
\\&=\langle {}^{w}u^3,\varsigma_{rkc}({}^{w}u^3)\rangle\langle {}^{w\kappa}v,\varsigma_{rkc}({}^{w\kappa}v)\rangle)\langle w\kappa,1\rangle=
\langle {}^{w}(u^3({}^{\kappa}v)),\varsigma_{rkc}({}^{w}(u^3({}^{\kappa}v)))\rangle\langle w\kappa,1\rangle.
\end{align*}
Here we used the fact that ${}^{\kappa}V,{}^{w}U^3,{}^{w\kappa}V<N_{rkc}^-$ and \eqref{eq:epsilon for conjugation between split subgroups}.
The section $M(s)f_{\mathcal{W}(\rho_n(\tau))\otimes \mathcal{W}(\rho_n(\tau))}$ is unramified, thus \eqref{int:Z with section 9} becomes
\begin{align*}
\int\limits_{U^3}
\int\limits_V
Z(\alpha s/rkn,\omega_n,\langle {}^{w}(u^3({}^{\kappa}v)),\varsigma_{rkc}({}^{w}(u^3({}^{\kappa}v)))\rangle\cdot M(s)f_{\mathcal{W}(\rho_n(\tau))\otimes \mathcal{W}(\rho_n(\tau))})\,du^3\,dv.
\end{align*}
Now we use conjugations by elements from $N_{rkc}(\mathcal{O})$. We briefly explain how to extend the argument from the linear setting;
see the proof of \cite[Lemma~27]{CFGK2} for more details, and note that all occurrences of $k$ from the argument in
\cite[pp.~1037--1040]{CFGK2} should here be replaced with $rk$. If $x\in N_{rkc}(\mathcal{O})$, then
$M(s)f_{\mathcal{W}(\rho_n(\tau))\otimes \mathcal{W}(\rho_n(\tau))}$ is right-invariant on $\langle x,1\rangle$. Assume $z\in {}^{w}(U^3({}^{\kappa}V))$
is such that ${}^{x^{-1}}z=u_xz_x$ where $u_x\in P\cap N_{rkc}$, the projection of $u_x$ to $M_P$ belongs to the subgroup $U$ appearing in the
$\GL_n^{(m,r)}\times\GL_k^{(m,r)}$ integral, and $z_x\in {}^{w}(U^3({}^{\kappa}V))$. Then by \eqref{eq:sigma on h and v},
\begin{align*}
{}^{x^{-1}}\langle z,\varsigma_{rkc}(z)\rangle&=
\langle {}^{x^{-1}}z,\sigma_{2rkc}(x^{-1},z)\sigma_{2rkc}(x^{-1}z,x)\sigma_{2rkc}(x^{-1},x)^{-1}\varsigma_{rkc}(z)\rangle=
\langle u_xz_x,\varsigma_{rkc}(z)\rangle\\&=\langle u_x,1\rangle\langle z_x,\varsigma_{rkc}(z_x)\rangle.
\end{align*}
Now we can change variables $z_x\mapsto z$, this change only involves $U^3$, and use the equivariance properties of the inner integral $Z(\cdots)$ (with respect to the subgroup $U$ of that integral). It then follows that when we select the elements $x$ in the correct order, the
$du^3$ integral vanishes unless $v\in K_H$. The coordinates of $U^3$ are also handled by choosing $x\in N_{rkc}(\mathcal{O})$ in a certain order, such that ${}^{x^{-1}}z=u_xz_x$ where $u_x$ and $z_x$ are as above. The change of variables involves blocks of $U^3$ which have not been handled, and the vanishing follows by producing an inner integral $\int_{\mathcal{X}}\psi(\mathrm{tr}(xu))dx$, where $\mathcal{X}\subset\Mat_{n}(\mathcal{O})$ and $u$ is a certain block of $U^3$. This integral vanishes unless
$u\in\Mat_{n}(\mathcal{O})$.

Turning back to \eqref{int:Z with section 9}, since the measures of $U^3\cap K_H$ and $V\cap K_H$ are $1$, we obtain
\begin{align*}
Z(\alpha s/rkn,\omega_n,M(s)f_{\mathcal{W}(\rho_n(\tau))\otimes \mathcal{W}(\rho_n(\tau))}).
\end{align*}
The section $M(s)f_{\mathcal{W}(\rho_n(\tau))\otimes \mathcal{W}(\rho_n(\tau))}$ restricted to $\GL_{2rkn}^{(m,r)}$ is
equal to $d_{\tau,\vartheta}(s)f_{\mathcal{W}(\rho_c(\tau))\otimes \mathcal{W}(\rho_c(\tau^*))}$ (by the definition of the latter section),
hence we proved (in $\Real(s)\gg0$)
\begin{align*}
Z(s,\omega,f_{\mathcal{W}(\rho_c(\tau))})=d_{\tau,\vartheta}(s)Z(\alpha s/(rkn),\omega_{n},f_{\mathcal{W}(\rho_c(\tau))\otimes \mathcal{W}(\rho_c(\tau^*))}).
\end{align*}
To complete the proof of the lemma, it remains to show that $d_{\tau,\vartheta}(s)$ is equal to \eqref{eq:d tau(s)}.

Recall $\tau\subset I_{\GL_k^{(m,r)}}(\vartheta,\chi)$. Identify $\Sp_{2rkn}$ with its image in the standard Levi subgroup of $H$ isomorphic to $\GL_{rkn}\times \Sp_{2rkn}$. By
\eqref{eq:BLS block compatible} and the definition of $\Sp_{2rkn}^{(m)}$ (using restriction, see \S~\ref{local covering}),
$\widetilde{\Sp}_{2rkn}=\Sp_{2rkn}^{(m)}$ (alternatively use \cite[\S~2, Theorem~7]{BLS}).
Looking at \eqref{rep:induced f before M(s)} we see that the restriction of $f_{\mathcal{W}(\rho_c(\tau))}$ to $\Sp_{2rkn}^{(m)}$ is the normalized unramified element in the space of
\begin{align*}
\mathrm{I}_{\Sp_{2rkn}^{(m)}}(\vartheta,\otimes_{1\leq i\leq k,1\leq j\leq rn}\chi_i|~|^{\alpha s+j/r-1/(2r)}).
\end{align*}
To compute $d_{\tau,\vartheta}(s)$ we deploy the
Gindikin--Karpelevich formula \cite[Theorem~7.10]{Gao2018}, which includes the interpretation of the constant in terms of local Langlands--Shahidi $L$-functions (for unramified data), i.e., using a decomposition of the adjoint action.

The adjoint action of $\GL_{rkn}(\C)$ on the Lie algebra of the $L$-group of $U^2$ is now either $\text{st}\oplus\wedge^2$ for odd $m$,
because then ${\Sp_{2rkn}^{(m)}}^{\vee}=\SO_{2rkn+1}(\C)$, or $\wedge^2$ if $m$ is even, since then
${\Sp_{2rkn}^{(m)}}^{\vee}=\Sp_{2rkn}(\C)$.

When $m$ is odd, the standard representation contributes
\begin{align}\label{eq:constant term st}
\prod_{1\leq i\leq k}\prod_{1\leq j\leq rn}\frac{L_{\vartheta}(r\alpha s +j-1/2,\chi_i)}{L_{\vartheta}(r\alpha s
+j+1/2,\chi_i)}
=\frac{L_{\vartheta}(r\alpha s +1/2,\tau)}{L_{\vartheta}(r\alpha s +rn+1/2,\tau)}.
\end{align}
For all $m$, the exterior square contributes for each pair $1\leq i\ne i'\leq k$,
\begin{align*}
\prod_{1\leq j,j'\leq rn}\frac{L_{\vartheta}(2\alpha s +j+j'-1,\chi_i\chi_{i'})}{L_{\vartheta}(2\alpha s +j+j',\chi_i\chi_{i'})}=
\prod_{1\leq j\leq rn}\frac{L_{\vartheta}(2\alpha s +j,\chi_i\chi_{i'})}{L_{\vartheta}(2\alpha s +j+rn,\chi_i\chi_{i'})},
\end{align*}
and for $1\leq i\leq k$,
\begin{align*}
&\prod_{1\leq j_1<rn}\,
\prod_{j_1<j_2\leq rn}\frac{L_{\vartheta}(2\alpha s +j_1+j_2-1,\chi_i^2)}{L_{\vartheta}(2\alpha s +j_1+j_2,\chi_i^2)}
=\prod_{1\leq j<rn}
\frac{L_{\vartheta}(2\alpha s +2j,\chi_i^2)}{L_{\vartheta}(2\alpha s +j+rn,\chi_i^2)}.
\end{align*}
(This product is by definition $1$ for $r=n=1$.)
Thus when $rn$ is odd we obtain
\begin{align}\label{eq:odd rn}
\prod_{1\leq j\leq (rn-1)/2}
\frac{L_{\vartheta}(2\alpha s +2j,\tau,\vee^2)}{L_{\vartheta}(2\alpha s +2j+rn,\tau,\vee^2)}
\prod_{1\leq j\leq (rn+1)/2}
\frac{L_{\vartheta}(2\alpha s +2j-1,\tau,\wedge^2)}{L_{\vartheta}(2\alpha s +2j+rn-1,\tau,\wedge^2)},
\end{align}
and for even $rn$,
\begin{align}\label{eq:even rn}
\prod_{1\leq j\leq rn/2}
\frac{L_{\vartheta}(2\alpha s +2j,\tau,\vee^2)}{L_{\vartheta}(2\alpha s +2j+rn-1,\tau,\vee^2)}
\prod_{1\leq j\leq rn/2}
\frac{L_{\vartheta}(2\alpha s +2j-1,\tau,\wedge^2)}{L_{\vartheta}(2\alpha s +2j+rn,\tau,\wedge^2)}.
\end{align}
We conclude that for even $m$, $d_{\tau,\vartheta}(s)$ is either \eqref{eq:odd rn} or \eqref{eq:even rn} (depending on $rn$),
and if $m$ is odd, it is also multiplied by \eqref{eq:constant term st}. In both cases we obtain \eqref{eq:d tau(s)}.
\end{proof}

\subsection{The reduction from $\GL_{n}^{(m,r)}$ to $\GL_{1}^{(m,r)}$}\label{proof of lemma:reduction from GLn to GLa GLb}
The set-up and local integral were defined in \S~\ref{integarls for GL} and we use the same notation and definitions. In particular $c=n$,
$G=\GL_c$, $H=\GL_{2rkc}$ and $P=P_{(rkc,rkc)}$. The covering $H^{(m,r)}$ is realized using $\sigma_{2rkc}^{\diamondsuit}$ and $G$ is realized using $\sigma_{c}^{\diamondsuit}$. We have a splitting of $K_H$ given by $y\mapsto\langle y,\eta_{2rkc}^{\diamondsuit}(y)\rangle$, where
$\eta_{2rkc}^{\diamondsuit}(y)=\eta_{4rkc}(\diag(y,y^*))$.

Let $\pi$ be a genuine irreducible unramified representation of $G$, and let $\tau$ and $\tau'$ be genuine irreducible unramified
representations of $\GL_k^{(m,r)}$. We assume $\tau$ and $\tau'$ both satisfy the properties from \S~\ref{Outline of the computation},
in particular they each satisfy \eqref{eq:local assumption on tau},
$\tau\subset \mathrm{I}_{\GL_k^{(m,r)}}(\vartheta,\chi)$, $\tau'\subset \mathrm{I}_{\GL_k^{(m,r)}}(\vartheta,\chi')$ and
$\mathcal{W}(\rho_l(\tau))$ and $\mathcal{W}(\rho_l(\tau'))$ are the $(rk,l)$ models for $0<l\leq c$.

We further assume that the pair of representations $(\rho_l(\tau),\rho_l(\tau'))$ satisfies \eqref{eq:invariance prop on GL or SL}
for all $l$. Since we are already assuming the $(rk,l)$ models are unique, this condition does not depend on the functionals realizing each model. Recall that \eqref{eq:invariance prop on GL or SL} was necessary for the definition of the $\GL_{l}^{(m,r)}\times\GL_{k}^{(m,r)}$ integrals. Note that since
$\tau'=\tau^*$ when the integrals are produced by Lemma~\ref{lemma:reduction from classical to GLn}, Proposition~\ref{proposition:applicability of the GL GL construction} guarantees this condition holds, in the context where we will be applying
the results of this section.

Let $\omega$ be the normalized unramified matrix coefficient of $\pi^{\vee}$ and
$f_{\mathcal{W}(\rho_c(\tau))\otimes \mathcal{W}(\rho_c(\tau'))}$ be the normalized unramified standard section of
\begin{align*}
\Ind_{\widetilde{P}}^{H^{(m,r)}}(\mathcal{W}(\rho_c(\tau))\otimes \mathcal{W}(\rho_c(\tau'))\delta_P^s).
\end{align*}

Write $c=a+b$ for some $0<a<c$. We can now assume $\pi$ is a quotient of $\Ind_{\widetilde{P}_{(a,b)}}^{ G^{(m,r)}}(\pi_a\otimes\pi_b)$, where $\pi_a$ and $\pi_b$ are genuine irreducible unramified representations of $\GL_a^{(m,r)}$ and $\GL_b^{(m,r)}$. Let
$\omega_a$ and $\omega_b$ be the normalized unramified matrix coefficients of $\pi_a^{\vee}$ and $\pi_b^{\vee}$. Let
$f_{\mathcal{W}(\rho_a(\tau))\otimes \mathcal{W}(\rho_a(\tau'))}$ and $f_{\mathcal{W}(\rho_b(\tau))\otimes \mathcal{W}(\rho_b(\tau'))}$ be the normalized unramified standard sections of
\begin{align*}
&\Ind_{\widetilde{P}_{(rka,rka)}}^{\GL_{2rka}^{(m,r)}}((\mathcal{W}(\rho_a(\tau))\otimes \mathcal{W}(\rho_a(\tau')))\delta_{P_{(rka,rka)}}^s)\qquad\text{and}\\
&\Ind_{\widetilde{P}_{(rkb,rkb)}}^{\GL_{2rkb}^{(m,r)}}((\mathcal{W}(\rho_b(\tau))\otimes \mathcal{W}(\rho_b(\tau')))\delta_{P_{(rkb,rkb)}}^s).
\end{align*}

Denote $\alpha=rkn$ and
\begin{align}\label{eq:GL d tau tau}
&d_{\tau,\tau',\vartheta,a,b}(s)=\prod_{1\leq j\leq rb}\frac{L_{\vartheta,\vartheta^{-1}}(2r\alpha s+j,\tau\times{\tau'}^{\vee})}{L_{\vartheta,\vartheta^{-1}}(2r\alpha s+ra+j,\tau\times{\tau'}^{\vee})}
=\prod_{1\leq j\leq rb}\frac{L_{\vartheta}(2r\alpha s+j,\tau\times{\tau'}^{\vee})}{L_{\vartheta}(2r\alpha s+ra+j,\tau\times{\tau'}^{\vee})}.
\end{align}
Note that by Proposition~\ref{proposition:vartheta of *}, ${\tau'}^*\subset\mathrm{I}_{\GL_k^{(m,r)}}(\vartheta,{\chi'}^{-1})$ hence
$L_{\vartheta,\vartheta^{-1}}(s,\tau\times{\tau'}^{\vee})=L_{\vartheta}(s,\tau\times{\tau'}^{*})$; the second equality holds because $\mu_{2m}\subset F^*$.

The following is the analog of Lemma~\ref{lemma:reduction from classical to GLn}, but while there we only considered the Siegel parabolic subgroup, here we handle any maximal parabolic subgroup.
\begin{lemma}\label{lemma:reduction from GLn to GLa GLb}
In a right half-plane $\Real(s)\gg0$,
\begin{align*}
&Z(s,\omega,f_{\mathcal{W}(\rho_c(\tau))\otimes \mathcal{W}(\rho_c(\tau'))})\\&=
d_{\tau,\tau',\vartheta,a,b}(s)Z(\alpha s/(rka),\omega_{a},f_{\mathcal{W}(\rho_a(\tau))\otimes \mathcal{W}(\rho_a(\tau'))})Z(\alpha s/(rkb),\omega_{b},f_{\mathcal{W}(\rho_b(\tau))\otimes \mathcal{W}(\rho_b(\tau'))}).
\end{align*}
\end{lemma}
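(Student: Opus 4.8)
\textbf{Proof plan for Lemma~\ref{lemma:reduction from GLn to GLa GLb}.}
The plan is to mimic the unfolding argument of Lemma~\ref{lemma:reduction from classical to GLn}, but now using a general maximal parabolic $P_{(a,b)}$ of $G=\GL_c$ in place of the Siegel parabolic of the symplectic case. First I would realize the $G^{(m,r)}$-invariant pairing on $\pi\times\pi^\vee$ via an integral over $P_{(a,b)}\backslash G$, use the Iwasawa decomposition $G=P_{(a,b)}K_G$ together with the fact that $\omega$ is unramified to reduce the matrix coefficient to the $\widetilde{M}_{(a,b)}$-pairing of $\pi_a\otimes\pi_b$ evaluated on $\diag(g_a,g_b)$, times a modulus factor. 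The key cocycle computation at this stage is that $\sigma^{\diamondsuit}_{2rkc}$ restricted to $\mathfrak{e}_2(K_G)$ agrees with $\sigma^{\diamondsuit}_c$ (by Proposition~\ref{proposition:rho beta and rho are cohomologous locally}-type block compatibility, i.e.\ \eqref{eq:block compatibility on Levi of P}), so the $K_G$-translation is absorbed cleanly and, after applying Corollary~\ref{corollary:GL GL du integral of f invariant under g iota g} to delete the diagonal $\langle(g',g'),1\rangle$, we are left with the $U_0$-integral against a section that now lives on $\widetilde{M}_P$.

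Next I would use Corollary~\ref{corollary:realization space for 0 < l < c} to embed $\rho_c(\tau)$ (resp.\ $\rho_c(\tau')$) into the representation induced from $\mathcal{W}(\rho_a(\tau))\otimes\mathcal{W}(\rho_b(\tau))$ (resp.\ with $\tau'$), and Proposition~\ref{proposition: rk c functional is nontrivial on nontwisted} to realize the $(rk,c)$ functional on each factor as the integral over the relevant $V$-subgroup against $\langle\kappa_{a,b},1\rangle$. Transitivity of induction then rewrites $f_{\mathcal{W}(\rho_c(\tau))\otimes\mathcal{W}(\rho_c(\tau'))}$ as an integral over a unipotent subgroup of a section of a representation induced from a parabolic of $H=\GL_{2rkc}$ whose Levi contains $\GL_{2rka}\times\GL_{2rkb}$. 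I would then decompose $\delta=\delta_0\delta_1$ exactly as in the proof of Lemma~\ref{lemma:reduction from classical to GLn}: conjugate the various unipotent pieces of $U_0$ (which I would group into subgroups analogous to $U^{1,4}$, $U^2$, $U^3$ there) past $\delta_0$, using \eqref{eq:sigma conjugate v by h} and \eqref{eq:sigma on h and v} to lift each conjugation to $H^{(m,r)}$, noting that $\sigma^{\diamondsuit}_{2rkc}$ is trivial on $\mathfrak{W}^+$ and on $N_{2rkc}$, so that the relevant permutation matrices and unipotent pieces contribute no roots of unity. The block $U^2$ is used to define a standard intertwining operator $M(s)$ which, evaluated on the normalized unramified section, produces the scalar $d_{\tau,\tau',\vartheta,a,b}(s)$; the block $U^3$ together with conjugations by $N_{2rkc}(\mathcal{O})$-elements forces the outer unipotent integrations to collapse onto $K_H$ (as in \cite[pp.~1037--1040]{CFGK2}, with all $k$ replaced by $rk$), leaving exactly the product $Z(\alpha s/(rka),\omega_a,\cdots)Z(\alpha s/(rkb),\omega_b,\cdots)$. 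The identity \eqref{eq:conj of a using delta} (here without an involution, so $\iota=I_c$) is proved by either the Bruhat-decomposition argument or the dense-subset argument used there, with Proposition~\ref{proposition:action of W on torus is trivial on Sp} handling the torus part and $\mu_{2m}\subset F^*$ killing the leftover Hilbert symbols.

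Finally I would compute $d_{\tau,\tau',\vartheta,a,b}(s)=M(s)f(\langle I_{2rkc},1\rangle)$ via the Gindikin--Karpelevich formula \cite[Theorem~7.10]{Gao2018}. Here the adjoint action of $\GL_{rka}(\C)\times\GL_{rkb}(\C)$ on the Lie algebra of the $L$-group of $U^2$ inside $\GL_{2rkn}$ is the tensor (Rankin--Selberg) piece $\mathrm{st}\otimes\mathrm{st}^\vee$ rather than $\mathrm{st}\oplus\wedge^2$ or $\wedge^2$ as in the symplectic case, and one reads off $\prod_{1\le j\le rb}L_{\vartheta}(2r\alpha s+j,\tau\times{\tau'}^\vee)/L_{\vartheta}(2r\alpha s+ra+j,\tau\times{\tau'}^\vee)$, which is \eqref{eq:GL d tau tau}. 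The main obstacle I anticipate is bookkeeping the $2$-cocycle factors when lifting all the conjugations past $\delta_0$, $\delta_1$ and $\kappa$ to $H^{(m,r)}$ and checking that the diagonal-$G$ invariance hypothesis \eqref{eq:invariance prop on GL or SL} is exactly what is needed to delete $\langle(g',g'),1\rangle$ from the $U_0$-integral — this is the step where the structure of $\GL^{(m,r)}_d$ (direct factors of Levis commuting, via \eqref{eq:block compatibility on Levi of P}) is essential and where a sign/root-of-unity error would propagate; everything else is a faithful transcription of the linear proof in \cite{CFGK2} and of the symplectic proof of Lemma~\ref{lemma:reduction from classical to GLn}.
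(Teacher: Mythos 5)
This is essentially the paper's proof: the same chain of reductions (Iwasawa decomposition for the pairing over $P_{(a,b)}\backslash G$, Corollary~\ref{corollary:GL GL du integral of f invariant under g iota g} to delete the diagonal copy, Corollary~\ref{corollary:realization space for 0 < l < c} and the $\kappa$-integral to split the $(rk,c)$ models, the decomposition $\delta_0=w\diag(\delta_{a,0},\delta_{b,0})w^{-1}$ with $U^2$ feeding an intertwining operator and $U^3$ collapsing to $K_H$, and Gindikin--Karpelevich with adjoint action $\mathrm{st}\otimes\widetilde{\mathrm{st}}$ yielding \eqref{eq:GL d tau tau}). The one place where your justification is thinner than the paper's is the absorption of the $K_G$-translate: what is needed is compatibility of the \emph{splittings} $\eta^{\diamondsuit}_{c}$ and $\eta^{\diamondsuit}_{2rkc}\circ\mathfrak{e}_i$ on $K_{\GL_c}$, not merely agreement of the $2$-cocycles, and since $K_{\GL_c}$ is not perfect this does not follow from \eqref{eq:block compatibility on Levi of P} alone (two splittings over the same cocycle could differ by a character); the paper resolves this by passing through the standard and $(2rk-1)$-fold diagonal embeddings of $K_{\Sp_{2c}}$ into $\Sp_{4rkc}$, where perfectness forces $\eta_c^{\diamondsuit}(y)=\eta_{2rkc}^{\diamondsuit}(\mathfrak{e}_2(y))$ and $(\eta_{c}^{\diamondsuit})^{-1}(y)=\eta_{2rkc}^{\diamondsuit}(\mathfrak{e}_1(y))$.
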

\begin{proof}
This proof is the adaptation of \cite[Lemma~33]{CFGK2} to the covering case; the arguments similar to those used for the proof of
Lemma~\ref{lemma:reduction from classical to GLn} will be described briefly.

Assume $\pi=\Ind_{\widetilde{R}}^{G^{(m,r)}}(\pi_a\otimes\pi_b)$ and
$\pi^{\vee}=\Ind_{\widetilde{R}}^{G^{(m,r)}}(\pi_a^{\vee}\otimes\pi_b^{\vee})$.
Let $\{,\}_{M_R}$ be the $\widetilde{M}_R$-invariant pairing on $(\pi_a\otimes\pi_b)\times(\pi_a^{\vee}\otimes\pi_b^{\vee})$.
Then we can write (cf. \eqref{eq:classical G pairing using M})
\begin{align*}
&\{\varphi,\varphi^{\vee}\}=\int\limits_{K_G}\{\varphi(\langle y,\eta_c^{\diamondsuit}(y)\rangle),\varphi^{\vee}(\langle y,\eta_c^{\diamondsuit}(y)\rangle)\}_{M_R}\,dy,\\
&\omega(\langle g,1\rangle)=\int\limits_{K_G}\{\varphi(e),\varphi^{\vee}(\langle y,\eta_c^{\diamondsuit}(y)\rangle\langle g,1\rangle)\}_{M_R}\,dy.
\end{align*}
Here $e=\langle I_c,1\rangle$. Thus $Z(s,\omega,f_{\mathcal{W}(\rho_c(\tau))\otimes \mathcal{W}(\rho_c(\tau'))})$ equals
\begin{align}\label{int:GL GL Z with realization of coefficient}
\int\limits_{G}
\int\limits_{K_G}\{\varphi(e),\varphi^{\vee}(\langle y,\eta_c^{\diamondsuit}(y)\rangle\langle g,1\rangle)\}_{M_R}\,dy
\int\limits_{U_0}f_{\mathcal{W}(\rho_c(\tau))\otimes \mathcal{W}(\rho_c(\tau'))}(\langle\delta u_0,1\rangle\langle\mathfrak{e}_2(g),1\rangle,s)\,\psi_U(u_0)\,du_0\,dg.
\end{align}
(Cf. \eqref{int:Z with realization of coefficient}.) We repeat the arguments
\eqref{int:Z with realization of coefficient}--\eqref{int:Z with realization of coefficient 2}, i.e.,
multiply $g\mapsto y^{-1}g$, but use the fact that
$\sigma^{\diamondsuit}_{2rkc}(\mathfrak{e}_2(g),\mathfrak{e}_2(g))=\sigma_{c}^{\diamondsuit}(g,g')$
(see \eqref{eq:the $2$-cocycle on G times G formula GL}) instead of Proposition~\ref{proposition:the $2$-cocycle on G times G}, to obtain
\begin{align}\label{int:GL GL Z with realization of coefficient 2}
&\int\limits_{G}
\int\limits_{K_G}\{\varphi(e),\varphi^{\vee}(\langle g,1\rangle)\}_{M_R}
\\&\nonumber\int\limits_{U_0}f_{\mathcal{W}(\rho_c(\tau))\otimes \mathcal{W}(\rho_c(\tau'))}(\langle\delta u_0,1\rangle
\langle\mathfrak{e}_2(y^{-1}),\eta_c^{\diamondsuit}(y^{-1})\rangle
\langle\mathfrak{e}_2(g),1\rangle,s)\,\psi_U(u_0)\,du_0\,dy\,dg.
\end{align}
To proceed we need to show $(\eta_c^{\diamondsuit})^{-1}(y)=\eta_{2rkc}^{\diamondsuit}(\mathfrak{e}_1(y))$ (cf. \eqref{eq:classical eta_H 1 compatible}).
First we claim
\begin{align}\label{ea 2rkc in GL}
\eta_c^{\diamondsuit}(y)=\eta_{2rkc}^{\diamondsuit}(\mathfrak{e}_2(y)),\qquad\forall y\in K_G.
\end{align}
This follows as in
the proof of Proposition~\ref{proposition:gbl rho beta and rho are cohomologous}:
consider the embedding $\Sp_{2c}\hookrightarrow\Sp_{4rkc}$,
\begin{align*}
x^{\blacksquare,rk}=
\diag(I_{rkc},\left(\begin{smallmatrix}x_1&&x_2\\&I_{2rkc-2c}\\x_3&&x_4\end{smallmatrix}\right),I_{rkc}),\qquad
x=\left(\begin{smallmatrix}x_1&x_2\\x_3&x_4\end{smallmatrix}\right)
,\qquad x_i\in\Mat_c.
\end{align*}
By \cite[\S~2, Lemma~5]{BLS}, since $K_{\Sp_{2c}}$ is perfect,
$\eta_{2c}(x)=
\eta_{4rkc}(x^{\blacksquare,rk})$ for all $x\in K_{\Sp_{2c}}$.
This proves \eqref{ea 2rkc in GL}, because by definition $\eta_c^{\diamondsuit}(y)=\eta_{2c}(\diag(y,y^*))$ and
\begin{align*}
\eta_{2rkc}^{\diamondsuit}(\mathfrak{e}_2(y))=\eta_{2rkc}^{\diamondsuit}(\diag(I_{rkc},y,I_{rkc-c}))
=\eta_{4rkc}(\diag(y,y^*)^{\blacksquare,rk}).
\end{align*}

There are $2rk-1$ additional standard embeddings of $\Sp_{2c}$ in $\Sp_{4rkc}$, given by
\begin{align*}
x^{\blacksquare,l}=
\diag(I_{lc},\left(\begin{smallmatrix}x_1&&x_2\\&I_{4rkc-2(l+1)c}\\x_3&&x_4\end{smallmatrix}\right),I_{lc}),\qquad 0\leq l<2rk, \quad l\ne rk.
\end{align*}
By \cite[\S~2, Theorem~7]{BLS}, all $2rk$ images $\Sp_{2c}^{\blacksquare,l}$ ($0\leq l<2rk$) are commuting in $\Sp_{4rkc}^{(m)}$ and
\begin{align*}
\sigma_{4rkc}(\prod_{0\leq l<2rk}(x_l)^{\blacksquare,l},\prod_{0\leq l<2rk}(x'_l)^{\blacksquare,l})=\prod_{0\leq l<2rk}\sigma_{2c}(x_l,x'_l),\qquad x_l,x'_l\in \Sp_{2c}.
\end{align*}
Therefore $x^{\blacksquare,l}\mapsto \langle x^{\blacksquare,l},\eta_{2c}(x)\rangle$ is the canonical splitting of
$K_{\Sp_{2c}}^{\blacksquare,l}$ in $\Sp_{4rkc}^{(m)}$. Consider the diagonal embedding $x\mapsto x^{\times}$ of $K_{\Sp_{2c}}$ in
the direct product of $K_{\Sp_{2c}}^{\blacksquare,l}$ over $l\ne rk$, i.e.,
\begin{align*}
x\mapsto x^{\times}=\prod_{0\leq l<2rk,l\ne rk}x^{\blacksquare,l}.
\end{align*}
The canonical splitting of $K_{\Sp_{2c}}^{\times}=\{x^{\times}:x\in K_{\Sp_{2c}}\}$ in $\Sp_{4rkc}^{(m)}$ is
$x^{\times}\mapsto \langle x^{\times},\eta_{2c}^{2rk-1}(x)\rangle$.
Hence for any $x\in K_{\Sp_{2c}}$,
$\eta_{4rkc}(x^{\times})=\eta_{2c}^{2rk-1}(x)=\eta_{2c}^{-1}(x)$.
In particular for $y\in K_{G}$, since
\begin{align*}
\diag(\mathfrak{e}_1(y),\mathfrak{e}_1(y)^*)=\diag(y,y^*)^{\times}
\end{align*}
($\mathfrak{e}_1(y)=\diag(y,\ldots,y,I_c,y,\ldots,y)$ where $y$ appears $rk$ times on the left of $I_c$, and $rk-1$ times on the right),
\begin{align*}
(\eta_{c}^{\diamondsuit})^{-1}(y)=\eta_{2c}^{-1}(\diag(y,y^*))=
\eta_{4rkc}(\diag(y,y^*)^{\times})=\eta_{4rkc}(\diag(\mathfrak{e}_1(y),\mathfrak{e}_1(y)^*))=
\eta_{2rkc}^{\diamondsuit}(\mathfrak{e}_1(y)).
\end{align*}

Now using the right-invariance of $f_{\mathcal{W}(\rho_c(\tau)\otimes \mathcal{W}(\rho_c(\tau'))}$ on
\begin{align*}
\langle\mathfrak{e}_1(y^{-1}),\eta_{2rkc}^{\diamondsuit}(\mathfrak{e}_1(y^{-1}))\rangle=
\langle\mathfrak{e}_1(y^{-1}),(\eta_{c}^{\diamondsuit})^{-1}(y^{-1})\rangle
\end{align*}
and \eqref{eq:the $2$-cocycle on G times G formula GL},
the $du_0$-integral equals (cf. \eqref{eq:classical U_0 alone})
\begin{align*}
\int\limits_{U_0}f_{\mathcal{W}(\rho_c(\tau))\otimes \mathcal{W}(\rho_c(\tau'))}(\langle\delta u_0,1\rangle
\langle(y^{-1},y^{-1}),1\rangle\langle\mathfrak{e}_2(g),1\rangle,s)\,\psi_U(u_0)\,du_0.
\end{align*}
By Corollary~\ref{corollary:GL GL du integral of f invariant under g iota g} we can remove $\langle(y^{-1},y^{-1}),1\rangle$ from this integral and \eqref{int:GL GL Z with realization of coefficient 2} equals (cf. \eqref{int:Z with section})
\begin{align}\label{int:GL GL Z with realization of coefficient 3}
&\int\limits_{G}
\{\varphi(e),\varphi^{\vee}(\langle g,1\rangle)\}_{M_R}
\int\limits_{U_0}f_{\mathcal{W}(\rho_c(\tau)\otimes \mathcal{W}(\rho_c(\tau'))}(\langle\delta u_0,1\rangle
\langle\mathfrak{e}_2(g),1\rangle,s)\,\psi_U(u_0)\,du_0\,dg.
\end{align}

By Corollary~\ref{corollary:realization space for 0 < l < c} with $l=a=c-b$,
\begin{align*}
\rho_c(\tau)\subset\Ind_{\widetilde{P}_{(rka,rkb)}}^{\GL_{rkc}^{(m,r)}}((
\mathcal{W}(\rho_{a}(\tau))\otimes \mathcal{W}(\rho_{b}(\tau)))\delta_{P_{(rka,rkb)}}^{-1/(2rk)}),
\end{align*}
and similarly for $\rho_c(\tau')$.
Set $L=P_{(rka,rkb,rka,rkb)}$.
We realize $\mathcal{W}(\rho_c(\tau))$ and $\mathcal{W}(\rho_c(\tau'))$ using \eqref{eq:mnk functional using w_{n,m,k}}, and there is a standard section $f_{\mathcal{W}(\rho_a(\tau))\otimes \mathcal{W}(\rho_b(\tau))
\otimes\mathcal{W}(\rho_a(\tau'))\otimes\mathcal{W}(\rho_b(\tau'))}$ on
\begin{align}\label{rep:GL GL induced f before M(s) GL}
\Ind_{\widetilde{L}}^{H^{(m,r)}}(&
|\det|^{-b/2+\alpha s}\mathcal{W}(\rho_a(\tau))\otimes |\det|^{a/2+\alpha s}\mathcal{W}(\rho_b(\tau))\\&
\otimes |\det|^{-b/2-\alpha s}\mathcal{W}(\rho_a(\tau'))\otimes |\det|^{a/2-\alpha s}\mathcal{W}(\rho_b(\tau')))\nonumber
\end{align}
such that for all $h\in H^{(m,r)}$,
\begin{align*}
&f_{\mathcal{W}(\rho_c(\tau))\otimes\mathcal{W}(\rho_c(\tau'))}(h,s)\\&=\int\limits_V\int\limits_V
f_{\mathcal{W}(\rho_a(\tau))\otimes \mathcal{W}(\rho_b(\tau))
\otimes\mathcal{W}(\rho_a(\tau'))\otimes\mathcal{W}(\rho_b(\tau'))}(\langle\diag(\kappa,\kappa)\diag(v,v'),1\rangle h,s)\,dv\,dv'.
\end{align*}
(Cf. \eqref{rep:induced f before M(s)}, \eqref{eq:new formula for f with realization}.) Put $\kappa^{\bullet}=\diag(\kappa,\kappa)$. Note that we implicitly computed
\begin{align*}
\langle\diag(\kappa,I_{rkc}),1\rangle\langle\diag(v,I_{rkc}),1\rangle
\langle\diag(I_{rkc},\kappa),1\rangle\langle\diag(I_{rkc},v'),1\rangle=
\langle\kappa^{\bullet}\diag(v,v'),1\rangle,
\end{align*}
which is valid by \eqref{eq:block compatibility on Levi of P} and
\eqref{eq:sigma on h and v} (recall \eqref{eq:sigma on h and v}--\eqref{eq:sigma conjugate v by h} are valid for $\sigma^{\diamondsuit}_{d}$).

Since $f_{\mathcal{W}(\rho_c(\tau))\otimes\mathcal{W}(\rho_c(\tau'))}$ is normalized and unramified, and by the
proof of Proposition~\ref{proposition: rk c functional is nontrivial on nontwisted}, the section
$f_{\ldots}=f_{\mathcal{W}(\rho_a(\tau))\otimes \mathcal{W}(\rho_b(\tau))
\otimes\mathcal{W}(\rho_a(\tau'))\otimes\mathcal{W}(\rho_b(\tau'))}$ is also normalized and unramified.

Integral~\eqref{int:GL GL Z with realization of coefficient 3} becomes (cf. \eqref{int:Z with section 2})
\begin{align}\label{int:GL GL Z with realization of coefficient 4}
&\int\limits_{G}
\{\varphi(e),\varphi^{\vee}(\langle g,1\rangle)\}_{M_R}
\\&\int\limits_{U_0}\int\limits_{V}\int\limits_{V}f_{\ldots}(\langle\kappa^{\bullet}\diag(v,v'),1\rangle
\langle\delta u_0,1\rangle\langle\mathfrak{e}_2(g),1\rangle,s)\psi_U(u_0)\,dv\,dv'\,du_0\,dg,\nonumber
\end{align}
which is absolutely convergent for $\Real(s)\gg0$ as a quadruple integral.

Also since $\sigma_{2rkc}^{\diamondsuit}$ is trivial on $\mathfrak{W}^+_{2rkc}$ ($\sigma_{4rkc}$ is trivial on $\mathfrak{W}^+_{4rkc}$)
and by \eqref{eq:sigma on h and v},
\begin{align*}
&\langle\delta u_0,1\rangle=\langle\delta_0,1\rangle\langle\delta_1,1\rangle\langle u_0,1\rangle.
\end{align*}

Properties \eqref{it:GL observe 1}--\eqref{it:GL observe 6} from the proof of Lemma~\ref{lemma:reduction from classical to GLn} now take the following form:
\begin{enumerate}[leftmargin=*]
\item\label{it:GL observe 1 in GL} By \eqref{eq:sigma conjugate v by h}, ${}^{\delta_0^{-1}}\langle\diag(v,v'),1\rangle=\langle\diag(v',v),1\rangle$.
\item\label{it:GL observe 2 in GL} By \eqref{eq:sigma on h and v} and \eqref{eq:sigma conjugate v by h}, ${}^{\diag(v',v)}\langle\delta_1,1\rangle=
\langle\delta_1,1\rangle\langle u',1\rangle$ where $u'\in U_0$ and $\psi_U(u')=1$.
\item\label{it:GL observe 3 in GL} The elements of both copies of $V$ normalize $U_0$ and fix $\psi_U|_{U_0}$.
\item\label{it:GL observe 4 in GL} Since $\mathfrak{e}_2(G)<\diag(I_{rkc},\GL_{rkc})$, by \eqref{eq:block compatibility on Levi of P} the subgroups $\diag(V,I_{rkc})$ and $\mathfrak{e}_2(G)$ commute in $H^{(m,r)}$.
\item\label{it:GL observe 5 in GL} $\langle\delta_0,1\rangle$ commutes with $\langle\kappa^{\bullet},1\rangle$.
\item\label{it:GL observe 51 in GL} Since $\delta_1,{}^{\kappa^{\bullet}}\delta_1\in N_{\GL_{2rkc}}$, ${}^{\kappa^{\bullet}}\langle\delta_1,1\rangle=\langle{}^{\kappa^{\bullet}}\delta_1,1\rangle$.
\item\label{it:GL observe 6 in GL} The element $\langle \diag(\kappa,I_{rkc}),1\rangle$ commutes with $\langle\mathfrak{e}_2(G),1\rangle$, again by \eqref{eq:block compatibility on Levi of P}.
\end{enumerate}
Define
\begin{align*}
U_0'={}^{\kappa^{\bullet}}U_0=\left\{\left(\begin{smallmatrix}I_{rka}&&U_1&U_2\\&I_{rkb}&U_3&U_4\\&&I_{rka}\\&&&I_{rkb}\end{smallmatrix}\right)\right\}.
\end{align*}
As opposed to \eqref{eq:U_0' classical}, here $U^4$ does not depend on $U_1$. In fact
$\left\{\left(\begin{smallmatrix}I_{rka}&U_1\\&I_{rka}\end{smallmatrix}\right)\right\}$ and $\left\{\left(\begin{smallmatrix}I_{rkb}&U_4\\&I_{rkb}\end{smallmatrix}\right)\right\}$
are the unipotent subgroups corresponding to the $\GL_a^{(m,r)}\times \GL_k^{(m,r)}$ and $\GL_b^{(m,r)}\times \GL_k^{(m,r)}$ integrals, and
restriction of $\psi_U$ to the coordinates of $U_1$ and $U_4$ gives the similar character for these integrals.
In addition, the bottom left $a\times b$ (resp., $b\times a$) block of $U_2$ (resp., $U_3$) is $0$.

Utilizing properties \eqref{it:GL observe 1 in GL}--\eqref{it:GL observe 6 in GL},
\eqref{int:GL GL Z with realization of coefficient 4} equals (cf. \eqref{int:classical after props})
\begin{align}\label{int:GL after 7 props}
&\int\limits_{G}
\{\varphi(e),\varphi^{\vee}(\langle g,1\rangle)\}_{M_R}\int\limits_{U_0'}
\int\limits_{V}\int\limits_{V}\\&f_{\ldots}(\langle\delta_0({}^{\kappa^{\bullet}}\delta_1) ,1\rangle
\langle u_0',1\rangle
\langle\diag(I_{rkc},\kappa v),1\rangle
\langle\mathfrak{e}_2(g),1\rangle\langle\diag(\kappa v',I_{rkc}),1\rangle,s)\psi_U(u_0')\,dv\,dv'\,du_0'\,dg.\notag
\end{align}
We can factor \eqref{int:GL after 7 props} through $U_R$ and it becomes
\begin{align}\label{int:GL after 7 props 2}
&\int\limits_{U_R\backslash G}
\int\limits_{U_R}
\{\varphi(e),\varphi^{\vee}(\langle z,1\rangle\langle g,1\rangle)\}_{M_R}\int\limits_{U_0'}
\int\limits_{V}\int\limits_{V}\\&f_{\ldots}(\langle\delta_0({}^{\kappa^{\bullet}}\delta_1) ,1\rangle
\langle u_0',1\rangle
\langle\diag(I_{rkc},\kappa v),1\rangle
\langle\mathfrak{e}_2(z),1\rangle
\langle\mathfrak{e}_2(g),1\rangle\langle\diag(\kappa v',I_{rkc}),1\rangle,s)\notag\\&\psi_U(u_0')\,dv\,dv'\,du_0'\,dz\,dg.\notag
\end{align}
Form the group $U_0^{\bullet}$ by letting the $0$ block of $U_2$ in $U_0'$ take arbitrary coordinates. For $z\in U_R$,
\begin{align*}
&{}^{\mathfrak{e}_2(z)^{-1}}\langle\diag(I_{rkc},v),1\rangle=
\langle\diag(I_{rkc},v_z),1\rangle\langle\diag(I_{rkc},v),1\rangle,\qquad v_z\in V_{(a,rkc-a)},\\
&({}^{\diag(I_{rkc},\kappa)}\mathfrak{e}_2(z))^{-1}\,\langle({}^{\kappa^{\bullet}}\delta_1)u_0',1\rangle\,
({}^{\diag(I_{rkc},\kappa)}\mathfrak{e}_2(z))
=\langle({}^{\kappa^{\bullet}}\delta_1)u_z^{\bullet},1\rangle,\\
&{}^{\delta_0}\langle{}^{\diag(I_{rkc},\kappa)}\mathfrak{e}_2(z),1\rangle
=\langle {}^{\delta_0\diag(I_{rkc},\kappa)}\mathfrak{e}_2(z),1\rangle,
\end{align*}
where we only need to use \eqref{eq:sigma on h and v} and \eqref{eq:sigma conjugate v by h} throughout
($\mathfrak{e}_2(z),{}^{\diag(I_{rkc},\kappa)}\mathfrak{e}_2(z)\in N_{\GL_{2rkc}}$), and $u_z^{\bullet}\in U_0^{\bullet}$ depends on $u_0$ and $z$.
Using the equivariance properties of the top left $(rk,a)$ functional in the inducing data of $f_{\ldots}$ we see that
$\langle {}^{\delta_0\diag(I_{rkc},\kappa)}\mathfrak{e}_2(z),1\rangle$ vanishes, without a character.
Regarding $v_z$,
\begin{align*}
{}^{\diag(I_{rkc},\kappa)}\diag(I_{rkc},v_z)\in\diag(I_{rkc},V_{(a,rka-a)},I_{rkb})
\end{align*}
and
\begin{align*}
({}^{\diag(I_{rkc},\kappa)}\diag(I_{rkc},v_z))^{-1}\,({}^{\kappa^{\bullet}}\delta_1)u_0^{\bullet}\,({}^{\diag(I_{rkc},\kappa)}\diag(I_{rkc},v_z))
=({}^{\kappa^{\bullet}}\delta_1)u_{v_z}^{\bullet},
\end{align*}
where $u_{v_z}^{\bullet}\in U_0^{\bullet}$. When we change variables to remove the dependence on $v_z$, there is a character emitted from $\psi_U(u_0')$. Then
\begin{align*}
{}^{\delta_0\diag(I_{rkc},\kappa)}\diag(I_{rkc},v_z)\in \diag(V_{(a,rka-a)},I_{rkb},I_{rkc}),
\end{align*}
which belongs to the subgroup $V_{(a^{rk})}$ appearing in the definition of the $(rk,a)$ functional.
Again using the equivariance properties of the top left $(rk,a)$ functional in the inducing data of $f_{\ldots}$, the character (from $\psi_U(u_0')$) is cancelled.  This argument extends to the covering by \eqref{eq:sigma conjugate v by h}.

Altogether \eqref{int:GL after 7 props 2} is equal to (cf. \eqref{int:Z with section 4})
\begin{align}\label{int:GL after 7 props 3}
&\int\limits_{U_R\backslash G}
\{\varphi(e),\varphi^{\vee}(\langle g,1\rangle)\}_{M_R}\int\limits_{U_0^{\bullet}}
\int\limits_{V}\int\limits_{V}\\&f_{\ldots}(\langle\delta_0({}^{\kappa^{\bullet}}\delta_1) ,1\rangle
\langle u_0^{\bullet},1\rangle
\langle\diag(I_{rkc},\kappa v),1\rangle
\langle\mathfrak{e}_2(g),1\rangle\langle\diag(\kappa v',I_{rkc}),1\rangle,s)\psi_U(u_0^{\bullet})\,dv\,dv'\,du_0^{\bullet}\,dg.\notag
\end{align}

Next for any $l>0$ we let $\delta_{l,0}$ and $\delta_{l,1}$ be the elements $\delta_{0}$ and $\delta_{1}$ corresponding to the
$\GL_l^{(m,r)}\times\GL_k^{(m,r)}$ integral, and denote $\delta'_l=\delta_{l,0}\delta_{l,1}$. Then
\begin{align*}
&\delta_0=w\diag(\delta_{a,0},\delta_{b,0})w^{-1},\qquad
w=\left(\begin{array}{cccc}I_{rka}\\&&I_{rkb}\\&I_{rka}\\&&&I_{rkb}\end{array}\right).
\end{align*}
Then (cf. \eqref{eq:classical decomp delta_0 1})
\begin{align*}
\delta_0({}^{\kappa^{\bullet}}\delta_1)u_0^{\bullet}=w\cdot{}^{(\diag(\delta_{a,0},\delta_{b,0})w^{-1})}u^2\cdot
\diag(\delta'_{a},\delta'_{b})\cdot {}^{w^{-1}}(u^1u^4)\cdot w^{-1}u^3.
\end{align*}
Since $u_0^{\bullet},{}^{(\diag(\delta_{a,0},\delta_{b,0})w^{-1})}u^2,{}^{w^{-1}}(u^1u^4),{}^{\kappa^{\bullet}}\delta_1\in N_{\GL_{2rkc}}$,
we can apply \eqref{eq:sigma on h and v}--\eqref{eq:sigma conjugate v by h} to write a similar identity in $H^{(m,r)}$ (cf., \eqref{eq:decomp of delta in covering}):
\begin{align}\label{eq:GL decomp of delta in covering}
\langle \delta_0({}^{\kappa^{\bullet}}\delta_1),1\rangle
\langle u_0^{\bullet},1\rangle=
\langle w\,{}^{(\diag(\delta_{0,a},\delta_{0,b})w^{-1})}u^2,1\rangle\langle \diag(\delta'_{a},\delta'_{b})\, {}^{w^{-1}}(u^1u^4),1\rangle\langle w^{-1}u^3,1\rangle.
\end{align}
Let $U^2$ be the subgroup of elements ${}^{(\diag(\delta_{a,0},\delta_{b,0})w^{-1})}u^2$; a quick computation shows
\begin{align*}
U^2=\diag(I_{rka},V_{(rka,rkb)},I_{rkb}).
\end{align*}
Also $U^1$ (resp., $U^4$) denotes the subgroup of elements ${}^{w^{-1}}u^1$ (resp., ${}^{w^{-1}}u^4$) and let $U^3$ be the subgroup of elements $u^3$.
These subgroups will play the same role as in the proof of Lemma~\ref{lemma:reduction from classical to GLn}: the integration over $U^2$ constitutes an intertwining operator, $U^1$ and $U^2$ appear in the $\GL_a^{(m,r)}\times\GL_k^{(m,r)}$ and $\GL_b^{(m,r)}\times\GL_k^{(m,r)}$ integrals, and the integral over $U^3$ evaluates to a constant.

Let $M(s)$ be the standard intertwining operator from the space of \eqref{rep:GL GL induced f before M(s) GL} to the space of
\begin{align}\label{eq:GL image of M(s)}
\Ind_{\widetilde{P}_{(rka,rka,rkb,rkb)}}^{H^{(m,r)}}(&
|\det|^{-b/2+\alpha s}\mathcal{W}(\rho_a(\tau))\otimes |\det|^{-b/2-\alpha s}
\mathcal{W}(\rho_a(\tau'))\\&\notag
\otimes |\det|^{a/2+\alpha s}\mathcal{W}(\rho_b(\tau))
\otimes |\det|^{a/2-\alpha s}\mathcal{W}(\rho_b(\tau')))\nonumber
\end{align}
(cf. \eqref{eq:image of M(s)}), defined by
\begin{align*}
M(s)f_{\cdots}(h,s)=
\int\limits_{U^2}f_{\cdots}(\langle w,1\rangle\langle u^2,1\rangle h,s)\,du^2.
\end{align*}
To see that the image of $M(s)$ is indeed in \eqref{eq:GL image of M(s)} we must verify
\begin{align*}
{}^{w}\langle\diag(I_{rka},x,y,I_{rkb}),\epsilon\rangle=\langle\diag(I_{rka},y,x,I_{rkb}),\epsilon\rangle,\qquad\forall \diag(x,y)\in M_{(rka,rkb)}.
\end{align*}
We can consider $x$ and $y$ separately, and for each, argue using the Bruhat decomposition
(see the proof of \eqref{eq:conj of a using delta} in \S~\ref{Outline of the computation}). Since the conjugation by $w$ preserves
upper (resp., lower) unipotent elements, and $w$ commutes with the elements of $\mathfrak{W}^+_{rkc}$, it remains to check torus elements,
where we apply Proposition~\ref{proposition:action of W on torus is trivial on Sp}.

Put $d_{\tau,\tau',\vartheta,a,b}(s)=M(s)f_{\cdots}(\langle I_{2rkc},1\rangle,s)$.
When we apply \eqref{eq:GL decomp of delta in covering} to \eqref{int:GL after 7 props 3} we obtain (cf. \eqref{int:Z with section 5})
\begin{align}\label{int:GL after 7 props 4}
&\int\limits_{U_R\backslash G}
\{\varphi(e),\varphi^{\vee}(\langle g,1\rangle)\}_{M_R}
\int\limits_{U^3}\int\limits_{U^4}\int\limits_{U^1}
\int\limits_{V}\int\limits_{V}
\\&M(s)f_{\ldots}(
\langle \diag(\delta'_{a},\delta'_{b})u^1u^4,1\rangle\langle w^{-1}u^3,1\rangle
\langle\diag(I_{rkc},\kappa v),1\rangle
\langle\mathfrak{e}_2(g),1\rangle\langle\diag(\kappa v',I_{rkc}),1\rangle,s)\nonumber\\&\psi_{U}(u^1)\psi_{U}(u^4)
\,dv\,dv'\,du^1\,du^4\,du^3\,dg.\notag
\end{align}
Here $\psi_{U}(u^1)$ (resp., $\psi_{U}(u^4)$) is the character defined for the subgroup $U$ appearing in the $\GL_a^{(m,r)}\times\GL_k^{(m,r)}$
(resp., $\GL_b^{(m,r)}\times\GL_k^{(m,r)}$) integral, evaluated on an element in the subgroup $U_0$ of that integral.

Next we write the $dg$-integral using the Iwasawa decomposition $G=RK_G$.
As in the passage \eqref{int:Z with section 5}--\eqref{int:Z with section 6} but using \eqref{ea 2rkc in GL}, we see
that the integrand is invariant on the right with respect to the map $g\mapsto gy$, where $y\in K_G$. Then \eqref{int:GL after 7 props 4} equals
\begin{align*}
&\int\limits_{\GL_b}\int\limits_{\GL_a}
\{\varphi(e),\varphi^{\vee}(\langle\diag(x,y),1\rangle)\}_{M_R}
\int\limits_{U^3}\int\limits_{U^4}\int\limits_{U^1}
\int\limits_{V}\int\limits_{V}
\\&M(s)f_{\ldots}(
\langle \diag(\delta'_{a},\delta'_{b})u^1u^4,1\rangle\langle w^{-1}u^3,1\rangle
\langle\diag(I_{rkc},\kappa v),1\rangle
\langle\mathfrak{e}_2(\diag(x,y)),1\rangle\langle\diag(\kappa v',I_{rkc}),1\rangle,s)\nonumber\\&\psi_U(u^1)\psi_U(u^4)
\delta_R^{-1}(\diag(x,y))\,dv\,dv'\,du^1\,du^4\,du^3\,dx\,dy.\notag
\end{align*}

The conjugation of $\diag(I_{rkc},V)$ by $\mathfrak{e}_2(\diag(x,y))$ multiplies $dv$ by $|\det y|^{(rk-1)a}$;
conjugating $U^3$ by ${}^{\diag(I_{rkc},\kappa)}\mathfrak{e}_2(\diag(x,y))$ multiplies $du^3$ by $|\det x|^{(1-rk)b}$;
and
\begin{align*}
\langle\diag(x,y),\epsilon\rangle&\mapsto \delta_R^{-1/2}(\diag(x,y))\{\varphi(e),\varphi^{\vee}(\langle \diag(x,y),\epsilon\rangle)\}_{M_R}
\\&=\epsilon|\det x|^{-b/2}|\det y|^{a/2}\omega_a(\langle x,1\rangle)\omega_b(\langle y,1\rangle).
\end{align*}
Therefore when we shift $\mathfrak{e}_2(\diag(x,y))$ to the left the integral becomes
\begin{align}\label{int:GL after 7 props 6}
&\int\limits_{V}\int\limits_{V}\int\limits_{U^3}
\int\limits_{\GL_b}\int\limits_{\GL_a}
\int\limits_{U^4}\int\limits_{U^1}
\omega_a(\langle x,1\rangle)\omega_b(\langle y,1\rangle
|\det{x}|^{b/2-rkb}|\det{y}|^{rka-a/2}\\&\notag M(s)f_{\ldots}(
\langle \diag(\delta'_{a},\delta'_{b})u^1u^4,1\rangle
\,{}^{w^{-1}\diag(I_{rkc},\kappa)}\langle\mathfrak{e}_2(\diag(x,y)),1\rangle
\\&\notag\langle w^{-1}u^3,1\rangle
\langle\diag(I_{rkc},\kappa v),1\rangle
\langle\diag(\kappa v',I_{rkc}),1\rangle,s)\nonumber\\&\psi_{U}(u^1)\psi_{U}(u^4)
\,du^1\,du^4\,dx\,dy\,du^3\,dv\,dv'.\notag
\end{align}
(Cf. \eqref{int:Z with section 7}.)

Denote the embedding $\GL_l\times\GL_l\hookrightarrow \GL_{2rkl}$ in the construction of the
$\GL_l^{(m,r)}\times\GL_{k}^{(m,r)}$ integral by $(,)_l'$, and identify
$\GL_{2rka}\times \GL_{2rkb}$ with $M_{(2rka,2rkb)}$. Then $(x_1,x_2)_a'$ belongs to the top left $\GL_{2rka}$ block
of $M_{(2rka,2rkb)}$, and $(y_1,y_2)_b'$ to the bottom right $\GL_{2rkb}$ block. We have
\begin{align*}
{}^{w^{-1}\diag(I_{rkc},\kappa)}\mathfrak{e}_2(\diag(x,y))=\diag(I_{rka},x,I_{(rk-1)a},I_{kb},y,I_{(rk-1)b})=(I_a,x)_a'(I_b,y)_b'.
\end{align*}
We claim the analog of \eqref{eq:conj of a using delta}, which we state and prove only under the assumption $\mu_{2m}\subset F^*$:
\begin{align}\label{eq:GL conj of x y using delta}
{}^{w^{-1}\diag(I_{rkc},\kappa)}\langle\mathfrak{e}_2(\diag(x,y)),1\rangle=\langle (I_a,x)_a'(I_b,y)_b',1\rangle.
\end{align}
To this end first observe that by \eqref{eq:block compatibility on Levi of P},
\begin{align*}
\langle\mathfrak{e}_2(\diag(x,y)),1\rangle
=\langle\mathfrak{e}_2(\diag(x,I_b)),1\rangle\langle\mathfrak{e}_2(\diag(I_a,y)),1\rangle.
\end{align*}
Therefore we can consider $x$ and $y$ separately.
Consider $x$ first.
Since
\begin{align*}
\diag(I_{rkc},\kappa)\in\diag(I_{rkc+a},\GL_{rkc-a})\qquad \mathrm{and}\qquad \mathfrak{e}_2(\diag(x,I_b))\in \diag(I_{rkc},\GL_a,I_{rkc-a}),
\end{align*}
these elements commute in $H^{(m,r)}$ by \eqref{eq:block compatibility on Levi of P}. As in the proof
of \eqref{eq:conj of a using delta} in \S~\ref{Outline of the computation},
we can argue either by considering a dense subset $N_{\GL_a}^-B_{\GL_a}$ or the Bruhat decomposition, where the latter approach is not difficult granted $\mu_{2m}\subset F^{*m}$. The arguments are similar and omitted. Either way, it remains to consider $x\in T_{\GL_a}$, and we deduce \eqref{eq:GL conj of x y using delta} from Proposition~\ref{proposition:action of W on torus is trivial on Sp}.

Regarding $y$, we can argue similarly (e.g., using $N_{\GL_b}^-B_{\GL_b}$). Here we have to consider
$\diag(I_{rkc},\kappa)$, but because
\begin{align*}
{}^{\diag(I_{rkc},\kappa)}\mathfrak{e}_2(\diag(I_a,y))\in\diag(I_{rk(a+c)},\GL_b,I_{(rk-1)b})\qquad \mathrm{and}\qquad w^{-1}\in\diag(I_{rka},\GL_{rkc},I_{rkb}),
\end{align*}
$w^{-1}$ commutes ${}^{\diag(I_{rkc},\kappa)}\mathfrak{e}_2(\diag(I_a,y))$. Note that for $u_y^-\in N_{\GL_b}^-$ and $v_y\in N_{\GL_b}$, we have
${}^{\diag(I_{rkc},\kappa)}\mathfrak{e}_2(u_y^-)\in N_{\GL_{2rkc}}^-$ and
${}^{\diag(I_{rkc},\kappa)}\mathfrak{e}_2(v_y)\in N_{\GL_{2rkc}}$. Finally for $t\in T_{\GL_b}$ we use
Proposition~\ref{proposition:action of W on torus is trivial on Sp}. This completes the proof of \eqref{eq:GL conj of x y using delta}.

Now by \eqref{eq:GL conj of x y using delta} and \eqref{eq:block compatibility on Levi of P}, \eqref{int:GL after 7 props 6} equals
(cf. \eqref{int:Z with section 8})
\begin{align}\label{int:GL after 7 props 6 1}
&\int\limits_{V}\int\limits_{V}\int\limits_{U^3}
\int\limits_{\GL_b}
\int\limits_{U^4}
\int\limits_{\GL_a}
\int\limits_{U^1}
\omega_a(\langle x,1\rangle)\omega_b(\langle y,1\rangle
|\det{x}|^{b/2-rkb}|\det{y}|^{rka-a/2}\\&\notag M(s)f_{\ldots}(
\langle \delta'_au^1,1\rangle\langle (I_a,x)_a',1\rangle
\langle \delta'_bu^4,1\rangle\langle (I_b,y)_b',1\rangle
\langle w^{-1}u^3,1\rangle
\langle k^{\bullet}\diag(v',v),1\rangle,s)\nonumber\\&\psi_{U}(u^1)\psi_{U}(u^4)
\,du^1\,dx\,du^4\,dy\,du^3\,dv\,dv'.\notag
\end{align}
The covering $\widetilde{M}_{(2rka,2rkb)}$ is isomorphic to the quotient of the direct product $\GL_{2rka}^{(m,r)}\times \GL_{2rkb}^{(m,r)}$ by $\{(\epsilon_1,\epsilon_2)\in\mu_m^2:\epsilon_1\epsilon_2=1\}$.
According to \eqref{eq:GL image of M(s)} the restriction of $M(s)f_{\ldots}$ to $\widetilde{M}_{(2rka,2rkb)}$ is a rational section of (see cf. \eqref{eq:classical image restrict Siegel})
\begin{align*}
&|\det|^{rkb-b/2}\Ind_{\widetilde{P}_{(rka,rka)}}^{\GL_{2rka}^{(m,r)}}
((\mathcal{W}(\rho_a(\tau))\otimes\mathcal{W}(\rho_a(\tau')))\delta_{P_{(rka,rka)}}^{\alpha s/(rka)})
\\&\otimes|\det|^{-rka+a/2}\Ind_{\widetilde{P}_{(rkb,rkb)}}^{\GL_{2rkb}^{(m,r)}}
((\mathcal{W}(\rho_b(\tau))\otimes\mathcal{W}(\rho_b(\tau')))\delta_{P_{(rkb,rkb)}}^{\alpha s/(rkb)}).
\end{align*}
The inner $du^1dx$ and $du^4dy$ integrals are the
$\GL_a^{(m,r)}\times\GL_{k}^{(m,r)}$ and $\GL_b^{(m,r)}\times\GL_{k}^{(m,r)}$ integrals, respectively.
The $du^3$-integral is first seen to vanish unless $v,v'\in K_G$, then since the section is unramified, the $dvdv'$-integral
equals the constant $1$; then the $du^3$-integral vanishes outside $U^3\cap K_H$, whence the whole outer integral can be separated from the integral $du^1dxdu^4dy$, and it equals $1$. Finally \eqref{int:GL after 7 props 6 1} becomes the product
\begin{align}\label{int:GL after 7 props 8}
d_{\tau,\tau',\vartheta,a,b}(s)Z(\alpha s/(rka),\omega_a,f_{\mathcal{W}(\rho_a(\tau))\otimes \mathcal{W}(\rho_a(\tau'))})
Z(\alpha s/(rkb),\omega_b,f_{\mathcal{W}(\rho_b(\tau))\otimes \mathcal{W}(\rho_b(\tau'))}),
\end{align}
where $f_{\mathcal{W}(\rho_a(\tau))\otimes \mathcal{W}(\rho_a(\tau'))}$ and
$f_{\mathcal{W}(\rho_b(\tau))\otimes \mathcal{W}(\rho_b(\tau'))}$ are the normalized unramified sections appearing in the statement of the lemma. The proof is complete up to the computation of $d_{\tau,\tau',\vartheta,a,b}(s)$.

Put $H'=\diag(I_{rka},\GL_{rkc},I_{rkb})$. The restriction of $f_{\ldots}$ (which is a section of \eqref{rep:GL GL induced f before M(s) GL}) to $\widetilde{H'}=\GL_{rkc}^{(m,r)}$ (see \eqref{eq:block compatibility on Levi of P}) is the normalized unramified section of
the unramified representation of $\GL_{rkc}^{(m,r)}$ induced from $\widetilde{P}_{(rkb,rka)}$ and
\begin{align*}
\mathrm{I}_{\GL_{rkb}^{(m,r)}}(\vartheta,\otimes_{1\leq i\leq k,1\leq j\leq rb}\chi_i|~|^{\alpha s+(a-b)/2+j/r-1/(2r)})
\otimes
\mathrm{I}_{\GL_{rka}^{(m,r)}}(\vartheta,\otimes_{1\leq i\leq k,1\leq j\leq ra}\chi_i'|~|^{-\alpha s-c/2+j/r-1/(2r)}).
\end{align*}
The adjoint action of $\GL_{rkb}(\C)\times\GL_{rka}(\C)$ on the Lie algebra of the $L$-group of $U^2$ is $\text{st}\otimes\widetilde{\text{st}}$. As in the proof of Lemma~\ref{lemma:reduction from classical to GLn}, the value of $d_{\tau,\tau',\vartheta,a,b}(s)$ can be computed using \cite[Theorem~7.10]{Gao2018}, and we obtain \eqref{eq:GL d tau tau}.
\end{proof}

\subsection{The $\GL_1^{(m,r)}\times \GL_k^{(m,r)}$ integral}\label{final reduction n = 1 linear groups}
In this section we compute the $\GL_1^{(m,r)}\times\GL_k^{(m,r)}$ integral with unramified data.
We proceed with the set-up and notation of \S~\ref{proof of lemma:reduction from GLn to GLa GLb}, now
with $c=n=1$. In particular $G=\GL_1$, $H=\GL_{2rk}$, $P=P_{(rk,rk)}$, $U_P=V_{(rk,rk)}$, $U=V_{(1^{(rk-1)},2,1^{(rk-1)})}$ and the character $\psi_U$ given by \eqref{eq:GL character of U} becomes
\begin{align*}
\psi_U(\left(\begin{smallmatrix}v&x&y\\&I_{2}&z\\&&v'\end{smallmatrix}\right))=\psi(-\sum_{i=1}^{rk-2}v_{i,i+1}-x_{rk-1,1}+z_{1,1}
-\sum_{i=1}^{rk-2}v'_{i,i+1}), \qquad v,v'\in N_{\GL_{rk-1}}.
\end{align*}
Also
$\tau\subset\mathrm{I}_{\GL_k^{(m,r)}}(\vartheta,\chi)$,
$\mathcal{W}(\rho_1(\tau))$ is the $(rk,1)$ model (Whittaker model) of $\rho_1(\tau)$ and similarly for $\tau'$,
e.g., $\tau'\subset\mathrm{I}_{\GL_k^{(m,r)}}(\vartheta,\chi')$. The representations $\rho_1(\tau)$ and $\rho_1(\tau')$ are assumed to satisfy \eqref{eq:invariance prop on GL or SL}.

Let $f=f_{\mathcal{W}(\rho_1(\tau))\otimes \mathcal{W}(\rho_1(\tau'))}$ be a standard section of
\begin{align}\label{eq:ind sections}
\mathrm{I}(\mathcal{W}(\rho_1(\tau)),\mathcal{W}(\rho_1(\tau')),s)=\Ind_{\widetilde{P}}^{H^{(m,r)}}((\mathcal{W}(\rho_1(\tau))\otimes \mathcal{W}(\rho_1(\tau')))\delta_P^s).
\end{align}
Since $c=1$, $\pi=\Ind_{\widetilde{A}}^{G^{(m,r)}}(\varepsilon\otimes\vartheta\mu)$ (see \S~\ref{unramified reps}), and $\omega$ is a matrix coefficient of $\pi^{\vee}$.

In this section we compute the integral $Z(s,\omega,f)$,
when $\omega$ and $f$ are normalized and unramified,
by relating it to another integral (\eqref{int:after functional equation to compare} below) which is computed using
the Rankin--Selberg integral of \S~\ref{RS integrals}.

First consider the $\GL_1^{(m,r)}\times \GL_k^{(m,r)}$ integral \eqref{eq:local GL GL integral}, now with arbitrary $\omega$ and standard section $f$. It is absolutely convergent, in a right half-plane independent of the data, and by Proposition~\ref{proposition:equiv props GL} can be regarded as a morphism of \eqref{eq:homspace G with W(E) GL} (with $c=1$), namely of
\begin{align}\label{eq:homspace G with W(rho1tau) GL}
\Hom_{G\times G}(J_{U,\psi_U^{-1}}(\mathrm{I}(\mathcal{W}(\rho_1(\tau)),\mathcal{W}(\rho_1(\tau)),s)),\pi^{\vee}\otimes\pi).
\end{align}
\begin{lemma}\label{lemma:uniqueness}
For all but a finite set of values of $q^{-s}$, the space
\eqref{eq:homspace G with W(rho1tau) GL} is at most one dimensional.
\end{lemma}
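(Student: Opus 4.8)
The statement is a ``uniqueness of the trilinear form up to a finite set of $s$'' assertion, and the standard tool is the geometric analysis of the double coset space $P\backslash H / R$ combined with Bruhat theory, exactly parallel to the linear case (\cite{CFGK2}, \cite{CFK}). Concretely, I would first recall that by Frobenius reciprocity the space \eqref{eq:homspace G with W(rho1tau) GL} is
\begin{align*}
\Hom_{G\times G}\big(J_{U,\psi_U^{-1}}(\mathrm{I}(\mathcal{W}(\rho_1(\tau)),\mathcal{W}(\rho_1(\tau')),s)),\pi^{\vee}\otimes\pi\big),
\end{align*}
and that since $\rho_1(\tau)$ and $\rho_1(\tau')$ both carry unique Whittaker ($(rk,1)$-) models, the inducing data of \eqref{eq:ind sections} is, after restriction to $\widetilde{B}_{\GL_{rk}}$ in each block, a genuine principal series of $\widetilde{M}_P$. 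This lets me replace $\mathcal{W}(\rho_1(\tau))\otimes\mathcal{W}(\rho_1(\tau'))$ by the full unramified principal series of $\GL_{rk}^{(m,r)}\times\GL_{rk}^{(m,r)}$ it embeds into (equivalently, it is a quotient of the one with the opposite central character twist), and the dimension of the Hom-space can only grow under this replacement; so it suffices to bound the dimension of the corresponding Hom-space with $\mathrm{I}(\mathcal{W}(\rho_1(\tau)),\mathcal{W}(\rho_1(\tau')),s)$ replaced by a genuine unramified principal series $\mathrm{I}_{H^{(m,r)}}(\vartheta,\lambda_s)$ of $H^{(m,r)}=\GL_{2rk}^{(m,r)}$ with parameter $\lambda_s$ depending on $s$, $\chi$, $\chi'$.

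\textbf{Key steps.} Step one: stratify. Write $H=\GL_{2rk}$, $R=(\mathfrak{e}_1(G)\times\mathfrak{e}_2(G))\ltimes U<Q$, and decompose $J_{U,\psi_U^{-1}}$ of the (now principal series) representation along the finitely many $R(F)$-orbits on $P(F)\backslash H(F)$, using the filtration of the Jacquet module coming from the Bruhat decomposition of $H$ (the covering is split over $N_H$, $T_H\cap K_H$, and permutation matrices, so the cocycle contributes nothing to the combinatorics here). Each orbit contributes a subquotient of $J_{U,\psi_U^{-1}}$ which, as a $G\times G$-module, is itself built from $\mathrm{ind}$/$\mathrm{Ind}$ of characters of subtori and unipotent data, by the geometric lemma of Bernstein--Zelevinsky (applicable verbatim to $\GL_d^{(m,r)}$ since direct factors of Levi subgroups commute in the cover, as in \S~\ref{covering of the Levi}). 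Step two: for each orbit except the ``open relevant'' one represented by $\delta$, show $\Hom_{G\times G}(-,\pi^{\vee}\otimes\pi)=0$. This is where the argument is genuinely geometric: for a non-relevant orbit the restriction of $\psi_U$ to some root subgroup fixed by the representative is nontrivial, forcing the Jacquet functor to kill that piece; for relevant but non-open orbits, one gets an induced representation from a proper parabolic of $G\times G$, and Frobenius reciprocity reduces the vanishing to a statement that $\pi^{\vee}\otimes\pi$ (an unramified principal series of $\GL_1^{(m,r)}\times\GL_1^{(m,r)}$) does not share a Jordan--Hölder factor with the relevant Jacquet module of the induced piece --- this fails only when the Satake parameters satisfy a finite list of polynomial equations in $q^{-s}$, which is exactly the ``finite set of values of $q^{-s}$'' in the statement. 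Step three: for the open orbit at $\delta$, the corresponding subquotient of $J_{U,\psi_U^{-1}}$ is, as a $G\times G$-module via $(g_1,g_2)$, a twist of $\mathrm{ind}_{\{(g,g)\}}^{G\times G}$ of a single character (using Corollary~\ref{corollary:GL GL du integral of f invariant under g iota g}, which says the $U_0$-integral of $f$ is genuinely $\{(g,g)\}$-invariant), so $\Hom_{G\times G}(-,\pi^{\vee}\otimes\pi)\cong\Hom_{G}(\text{character},\pi^{\vee}\otimes\pi|_{\text{diag}})$, and the latter is at most one-dimensional because $\pi$ is irreducible (Schur). Assembling: outside the finite bad set, only the $\delta$-stratum survives and contributes at most $1$.

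\textbf{Main obstacle.} The delicate point is Step two, controlling the non-open relevant orbits: I must exhibit, orbit by orbit, the proper parabolic $P'\times P''\subset G\times G$ and the precise character such that the contribution is $\mathrm{Ind}_{P'\times P''}^{G\times G}(\cdots)$, and then certify that its Jacquet module along the opposite parabolic does not meet $\pi^{\vee}\otimes\pi$ except on a proper subvariety of the $q^{-s}$-line. In the linear case (\cite[\S 4]{CFK}) this is a finite but intricate bookkeeping over Weyl-group representatives; here one must additionally track that the two copies of $G^{(m,r)}$ really embed via \eqref{eq:GL embeddings coverings G and G into H} and commute (which they do, by \eqref{eq:the $2$-cocycle on G times G formula GL}), so the metaplectic cocycle never obstructs identifying the stratum as an honest parabolically induced $G\times G$-module. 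I expect this combinatorial case analysis --- not any new representation-theoretic input --- to be the bulk of the work, and it is the step I would write out most carefully; everything else (Frobenius reciprocity, the geometric lemma, Schur's lemma for the open cell) transfers from the linear setting with only cosmetic changes because of the commuting-factors property of $\GL_d^{(m,r)}$.
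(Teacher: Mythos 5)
Your overall strategy (stratify $J_{U,\psi_U^{-1}}$ over the double cosets $P\backslash H/R$, kill the non-relevant cells with $\psi_U$, kill the remaining non-open cells by an eigenvalue comparison that fails only on finitely many values of $q^{-s}$, and bound the open cell by $1$) is the same as the paper's, which reduces to $\Hom_{\mathfrak{e}_2(G)}$, passes to $\Bil_H(\ind_{G^{(m,r)}U}^{H^{(m,r)}}(\pi^{\vee}\otimes\psi_U),\cdot)$ and applies Bruhat theory to $P\backslash H/GU$ directly with the $(rk,1)$ models. But your Step one contains a genuine gap that breaks the argument: you propose to replace $\mathcal{W}(\rho_1(\tau))\otimes\mathcal{W}(\rho_1(\tau'))$ by the full genuine unramified principal series it sits inside, on the grounds that the Hom-space can only grow. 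For covering groups this enlargement is fatal at the open cell. The open stratum at $\delta$ contributes a space whose dimension is governed by the number of Whittaker functionals on the inducing data, and for a genuine principal series of $\GL_{rk}^{(m,r)}$ that number is $|T_{\GL_{rk},*}\backslash T_{\GL_{rk}}|$, which is strictly greater than $1$ whenever $r>1$ (this loss of Whittaker uniqueness is precisely the central difficulty the paper is organized around). So the enlarged Hom-space is \emph{not} at most one-dimensional, and bounding it cannot yield the lemma. The paper avoids this by working with $\mathcal{W}(\rho_1(\tau))\otimes\mathcal{W}(\rho_1(\tau'))$ itself: at $h=\delta$ the functional is in particular a Whittaker functional on this tensor product, which is unique up to scaling exactly because $\rho_1(\tau)$ and $\rho_1(\tau')$ are $(rk,1)$ representations. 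Your Step three as written ("a single character'' on the diagonal, then Schur) silently presupposes this uniqueness, which your Step one has just destroyed.

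Two smaller points. First, in Step two you speak of induction from a "proper parabolic of $G\times G$''; here $G=\GL_1$, so there are none. The actual mechanism for the two relevant non-open cells ($\delta_0$ and $\delta_0\kappa$) is that a functional factors through the finite-length Jacquet module $J_{V_{(1,rk-1)}}(\mathcal{W}(\rho_1(\tau)))$ (resp. $J_{V_{(rk-1,1)}}(\mathcal{W}(\rho_1(\tau')))$), and comparing the central character of a composition factor with $|x|^{-rks}\pi(x)\theta(x)$ on $x\in F^{*m}$ forces an identity that can hold for only finitely many $q^{-s}$; this is where the finite exceptional set comes from, and it again uses finiteness properties of the $(rk,1)$ model rather than of a principal series. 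Second, the case $rk=1$ needs a separate (easy) remark, since then $U$ is trivial and the cells $\delta_0$, $\delta_0\kappa$, $\delta$ are the only ones; there the inducing representations are one-dimensional and the bound is immediate. If you drop Step one and run your stratification directly on the $(rk,1)$ models, the rest of your outline matches the paper's proof.
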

\begin{proof}
The proof in the linear case was given in \cite[Lemma~35]{CFGK2}, and the situation here is similar.
It suffices to prove the statement for
\begin{align*}
\Hom_{\mathfrak{e}_2(G)}(J_{U,\psi_U^{-1}}(\mathrm{I}(\mathcal{W}(\rho_1(\tau)),\mathcal{W}(\rho_1(\tau)),s)),\pi).
\end{align*}
Note that both the Jacquet module and $\pi$ are genuine (otherwise this space would be identically zero).
By \cite[1.9]{BZ2}, this space is isomorphic to
\begin{align}\label{bil}
\Bil_{H}(\ind_{G^{(m,r)}U}^{H^{(m,r)}}{(\pi^{\vee}\otimes\psi_U)},
\mathrm{I}(\mathcal{W}(\rho_1(\tau)),\mathcal{W}(\rho_1(\tau')),s)),
\end{align}
where $\Bil_H(\cdots)$ is the space of $H$-equivariant bilinear forms and $\ind(\cdots)$ is the compact
induction. The double coset space $P\backslash H/ GU$ is finite. For $h,h'\in H$, set $h\sim h'$ if
$PhGU=Ph'GU$, and $h\not\sim h'$ otherwise. For each representative $h\in P\backslash H/ GU$, let
\begin{align}\label{eq:hom h}
\mathcal{H}(h)=\Hom_{(GU)^h}({}^h(\pi^{\vee}\otimes\psi_U)\otimes
\left( (\mathcal{W}(\rho_1(\tau))\otimes \mathcal{W}(\rho_1(\tau')))\delta_{P}^{s} \right),\theta).
\end{align}
Here $(GU)^h={}^h(GU)\cap P$; for a representation $\varrho$ of
$G^{(m,r)}U=G^{(m,r)}\ltimes U$, ${}^h\varrho$ is the
representation of $(G^{(m,r)}U)^h$ on the space of $\varrho$ given by
${}^h\varrho(x)=\varrho({}^{h^{-1}}x)$; and
$\theta$ is a certain non-genuine modulus character, which is independent of $s$. Being a modulus character, $\theta$ is trivial on unipotent elements. According to the Bruhat theory (see e.g., \cite[Theorems~1.9.4 and 1.9.5]{Silb}), \eqref{bil} is embedded in the
semi-simplification $\bigoplus_{h}\mathcal{H}(h)$.

The representatives $h$ can be taken in the form $h=w$ or $h=w\delta_1$, for a permutation matrix $w$. Set $\kappa=\diag(I_{rk-1},\left(\begin{smallmatrix}&1\\1\end{smallmatrix}\right),I_{rk-1})$.

First assume $rk>1$, then $U$ is nontrivial. The arguments
in \cite[Lemma~35]{CFGK2} show that if $w\not\sim\delta_0$ and $w\not\sim\delta_0\kappa$, $\psi_U|_{{}^{h^{-1}}V_{(rk,rk)}\cap U}\ne1$, and then
\eqref{eq:hom h} vanishes because we can take $u\in {}^{h^{-1}}V_{(rk,rk)}\cap U$ such that ${}^hu$ acts by $\psi_U(u)\ne1$
on the left and trivially on the right ($\theta({}^hu)=1$). To extend the
argument to $m>1$ one simply has to change $k$ to $rk$ throughout, and since both $u$ and ${}^hu\in N_{H}$,
${}^h\langle u,1\rangle=\langle {}^hu,1\rangle$ by \eqref{eq:sigma conjugate v by h}, so that the argument extends to the covering.

It remains to consider $\delta_0,\delta_0\kappa$ and $\delta$ ($\delta_0\kappa\delta_1\sim\delta_0\kappa$). These are also the only representatives to consider
when $rk=1$, so we proceed for any $rk\geq1$.
Regarding $h=\delta_0$, as in \textit{loc. cit.} we see that any $\mathcal{L}\in\Hom(\delta_0)$ factors through the Jacquet module
$J_{V_{(1,rk-1)}}(\mathcal{W}(\rho_1(\tau)))$ (change $k$ to $rk$ in \cite[(3.57)]{CFGK2}). Then when we change $k$ to $rk$ and repeat the computation from \textit{loc. cit.}, we see that for $x\in G=F^*$ and any
pure tensor $\xi\otimes\xi'$ in the space of $\mathcal{W}(\rho_1(\tau))\otimes\mathcal{W}(\rho_1(\tau'))$,
\begin{align*}
\mathcal{L}(\mathcal{W}(\rho_1(\tau))(\langle \diag(x,I_{rk-1}),1\rangle)\xi\otimes \xi')= \epsilon_x\pi(x)|x|^{-rks}\theta(x)\mathcal{L}(\xi\otimes\xi')
\end{align*}
(\cite[(3.58)]{CFGK2}). Here $\epsilon_x$ is defined by
\begin{align*}
{}^{\delta_0}\langle \diag(x,I_{2k-1}),1\rangle=\langle\mathfrak{e}_2(x),\epsilon_x\rangle.
\end{align*}
In fact $\epsilon_x=1$ for any $x\in F^*$ by Proposition~\ref{proposition:action of W on torus is trivial on Sp} ($\mu_{2m}\subset F^{*m}$),
but for the argument here this is not needed.
The genuine representation $J_{V_{(1,rk-1)}}(\mathcal{W}(\rho_1(\tau)))$ of $\GL_1^{(m,r)}\times\GL_{rk-1}^{(m,r)}$ is of finite length and
$\mathcal{L}$ must factor through one of its composition factors. The subgroup
\begin{align*}
\{\langle\diag(x,I_{rk-1}),\epsilon\rangle:x\in F^*,\epsilon\in\mu_m\}
\end{align*}
acts by an irreducible representation of $G^{(m,r)}$ on each of these factors, say $\Ind_{\widetilde{A}}^{G^{(m,r)}}(\varepsilon\otimes\vartheta\beta)$ ($\beta$ depends on the factor). Hence when we take $x\in F^{*m}$ (thereby $\epsilon_x=1$) we obtain
\begin{align*}
\mathcal{L}(\xi\otimes \xi')= \beta^{-1}(x)\pi(x)|x|^{-rks}\theta(x)\mathcal{L}(\xi\otimes\xi').
\end{align*}
Then on the one hand if $\mathcal{L}\ne0$, it is nonzero on some $\xi\otimes \xi'$, which may depend on $s$, but on the other hand
\begin{align*}
|x|^{rks}=\beta^{-1}(x)\pi(x)\theta(x),\qquad\forall x\in F^{*m},
\end{align*}
which can only hold for finitely many values of $q^{-s}$. Thus $\mathcal{L}=0$ and
\eqref{eq:hom h}  is zero for $h=\delta_0$ outside finitely many values of $q^{-s}$.
The case $h=\delta_0\kappa$ is handled similarly, now $\mathcal{L}$ must factor through $J_{V_{(rk-1,1)}}(\mathcal{W}(\rho_1(\tau')))$ (see
\cite[(3.59)]{CFGK2} and also \cite[Remark~38]{CFGK2} for $rk=1$).

Finally consider $h=\delta$, we need to show $\Hom(\delta)$ is at most one-dimensional. Assume $rk>1$. Then $\mathcal{L}$ is in particular a Whittaker functional on the tensor $\mathcal{W}(\rho_1(\tau))\otimes\mathcal{W}(\rho_1(\tau'))$, which is unique up to scaling because
$\mathcal{W}(\rho_1(\tau))\otimes\mathcal{W}(\rho_1(\tau'))$ is a quotient of $\rho_1(\tau)\otimes\rho_1(\tau')$
(in fact isomorphic to, the latter is irreducible), and both
$\rho_1(\tau)$ and $\rho_1(\tau')$ are $(rk,1)$ representations.
(Recall that the tensor here is the standard one, up to the quotient by $\{(\epsilon_1,\epsilon_2):\epsilon_1\epsilon_2=1\}$,
as opposed to the tensor for coverings of \cite{KP} studied in \cite{Kable,Mezo,Tk2}.)

If $rk=1$, \eqref{eq:hom h} is a priori at most one-dimensional (for all $h$ and $s$). This is because
now $\mathcal{W}(\rho_1(\tau))$ and $\mathcal{W}(\rho_1(\tau'))$ are both irreducible representations of $\GL_1^{(m,1)}$, where $m\leq2$, and as such are
one-dimensional. Indeed for $m=1$, $\mathcal{W}(\rho_1(\tau))$ is a one-dimensional representation of $F^*$, namely the quasi-character $\tau$.
When $m=2$, $\mathcal{W}(\rho_1(\tau))$ is an irreducible representation of $\GL_1^{(2,1)}$, but since in this case
$\sigma_1^{\diamondsuit}(a,b)=(a,b)_2$, $\mathcal{W}(\rho_1(\tau))$ is still one-dimensional and takes the form
$\vartheta\otimes\beta$ ($\vartheta=\gamma_{\psi'}$) where $\beta$ is a quasi-character of $F^*$, and
$\vartheta\otimes\beta(\langle x,1\rangle)=\vartheta(x)\beta(x)$ for all $x\in F^*$. (Of course for a general $m$, an irreducible representation of $\GL_1^{(m,r)}$ is finite dimensional and not one-dimensional, but then $r>1$.)
\end{proof}
\begin{corollary}\label{corollary:mero cont of GL1 integral}
For any rational section $f$, $Z(s,\omega,f)$ admits meromorphic continuation to a rational function in $q^{-s}$.
\end{corollary}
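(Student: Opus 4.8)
\textbf{Proof proposal for Corollary~\ref{corollary:mero cont of GL1 integral}.}
The plan is to invoke Bernstein's continuation principle, exactly as in the analogous statement \cite[Corollary~36]{CFGK2} in the linear case. First I would observe that by Proposition~\ref{proposition:local props}~\eqref{it:conv} (and its analog for the $\GL_n^{(m,r)}\times\GL_k^{(m,r)}$ integrals, noted at the end of \S~\ref{integarls for GL}), the integral $Z(s,\omega,f)$ converges absolutely in some right half-plane $\Real(s)\gg0$ and there defines a rational function-valued datum: it is a morphism in the Hom-space \eqref{eq:homspace G with W(rho1tau) GL}, depending on $(\omega,f)$ in a way that is polynomial in the relevant variables once $f$ is taken to be a standard section. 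The key input is Lemma~\ref{lemma:uniqueness}: the space \eqref{eq:homspace G with W(rho1tau) GL} is at most one-dimensional for all but finitely many values of $q^{-s}$. Combined with the convergence in a half-plane, Bernstein's principle (in the form used in \cite{Banks}, whose argument applies verbatim to $\GL_{rk}^{(m,r)}$ since it is an $l$-group and the relevant Bernstein-center/finiteness inputs are insensitive to the central extension) yields a meromorphic continuation of $s\mapsto Z(s,\omega,f)$ to a rational function in $q^{-s}$, whose denominator can be chosen independently of $(\omega,f)$.

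Concretely, the steps I would carry out are as follows. Begin with a standard section $f$ in \eqref{eq:ind sections}, so that for $\Real(s)\gg0$ the integral is absolutely convergent and $Z(s,\omega,f)$ is (after clearing the finitely many poles coming from the uniqueness statement) a holomorphic datum in the one-dimensional space \eqref{eq:homspace G with W(rho1tau) GL}. Next, fix a finite set $X\subset\C^{*}$ outside which \eqref{eq:homspace G with W(rho1tau) GL} has dimension $\le 1$; this is exactly Lemma~\ref{lemma:uniqueness}. Bernstein's principle then provides, for each $q^{-s}\notin X$, a unique (up to scalar) nonzero element of this Hom-space varying rationally in $q^{-s}$, and the functional $(\omega,f)\mapsto Z(s,\omega,f)$ — being a section of the corresponding line bundle that is holomorphic and nonzero on the half-plane — extends to a rational function in $q^{-s}$ with poles confined to a finite set depending only on the representations (not on $(\omega,f)$). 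Finally, for a general rational section $f$, one writes $f$ as a finite $\C(q^{-s})$-linear combination of standard sections and extends $Z(s,\omega,f)$ by linearity; the result is again a rational function in $q^{-s}$.

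The only point requiring a little care — and the main obstacle, such as it is — is verifying that Bernstein's continuation principle genuinely applies in the metaplectic setting, i.e.\ that the hypotheses of \cite{Banks} (or equivalently \cite[Theorem~7.1]{Silb}-type continuation) hold for $\GL_{rk}^{(m,r)}$ and $G^{(m,r)}$. These groups are $l$-groups (\cite{BZ1}), their categories of smooth genuine representations are well-behaved, parabolic induction and Jacquet functors behave as in the linear case because the relevant unipotent subgroups split canonically (see \S~\ref{Covering groups}), and the family of integrals is visibly polynomial in $q^{-s}$ on standard sections. So the verification is routine but must be invoked; once it is, the corollary follows immediately from Lemma~\ref{lemma:uniqueness} together with the convergence in $\Real(s)\gg0$. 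This is precisely the argument referenced in the final Remark of \S~\ref{global symplectic} regarding meromorphic continuation with unramified data and a unique model.
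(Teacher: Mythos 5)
Your proposal is correct and takes essentially the same route as the paper, whose proof is exactly the one-line combination of Bernstein's continuation principle from \cite{Banks} with Lemma~\ref{lemma:uniqueness}. The only ingredient you should cite explicitly rather than leave implicit in the phrase ``nonzero on the half-plane'' is the existence of data $(\omega,f)$ for which the integral is absolutely convergent and equal to a nonzero constant for all $s$ (Proposition~\ref{proposition:local props}~\eqref{it:p-adic} and its analog at the end of \S~\ref{integarls for GL}), since this normalization is what makes Bernstein's system uniquely solvable for generic $q^{-s}$.
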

\begin{proof}
This follows from Bernstein's continuation principle (in \cite{Banks}), because Lemma~\ref{lemma:uniqueness} provides uniqueness and one can choose data for which the integral becomes a constant.
\end{proof}
We denote a general element of $V_{(rk,rk)}$ by
\begin{align*}
[\begin{smallmatrix}y&z\\u&x\end{smallmatrix}]=\left(\begin{smallmatrix}I_{rk-1}&&y&z\\&1&u&x\\&&1\\&&&I_{rk-1}\end{smallmatrix}\right).
\end{align*}
Then $\delta_1=[\begin{smallmatrix}0&0\\1&0\end{smallmatrix}]$, $U_0$ is the subgroup $\{[\begin{smallmatrix}y&z\\0&x\end{smallmatrix}]\}$
where $y,z$ and $x$ are arbitrary, and if $x=(x_1,\ldots,x_{rk-1})$, $\psi_U([\begin{smallmatrix}y&z\\0&x\end{smallmatrix}])=\psi(x_1)$.

We claim that
\eqref{eq:ind sections} admits a unique Whittaker model. This follows from
the Geometric Lemma \cite[Theorem~5.2]{BZ2}, because $\mathcal{W}(\rho_1(\tau))\otimes \mathcal{W}(\rho_1(\tau'))$ admits a unique Whittaker model (see \cite{Banks1998}).
In fact, consider the filtration of \eqref{eq:ind sections} according to the space $P\backslash H/N_H$.
The arguments in \cite[Lemma~35]{CFGK2} used above, namely that $\psi_U|_{{}^{h^{-1}}V_{(rk,rk)}\cap U}\ne1$ unless
$w\not\sim\delta_0,\delta_0\kappa$ (with the above notation), can be repeated
when $U$ is replaced by $N_{H}$ and $\psi_U$ is replaced by a generic character $\psi'$ of $N_{H}$. Then
$\psi'|_{{}^{h^{-1}}V_{(rk,rk)}\cap N_H}\ne1$ for any $w$ such that
$PwN_H\ne P\delta_0N_H$. This verifies condition $(\bigstar)$ of
\cite[\S~5]{BZ2} for all but one representative of $P\backslash H/N_H$, and it remains to consider the representative $\delta_0$. As in the proof above we reduce to the uniqueness of the Whittaker model for the tensor. Note that the Bruhat theory method (used in the proof above and in \cite[Lemma~35]{CFGK2}) will only produce
a bound, namely \eqref{eq:ind sections} carries at most one Whittaker model, the proof of
\cite[Theorem~5.2]{BZ2} considers the Jacquet module $J_{N_{H},\psi'}$ hence also implies existence (by exactness). Either way, existence follows immediately from the following Jacquet integral.

We have the standard Jacquet integral on the space of \eqref{eq:ind sections}, defined
for a holomorphic section $f$ by
\begin{align}\label{int:standard Jacquet on I W W}
\int\limits_{V_{(rk,rk)}} f(\langle\delta_0[\begin{smallmatrix}y&z\\u&x\end{smallmatrix}],1\rangle ,s)\psi(u)\,dx\,dy\,dz\,du.
\end{align}
This integral is absolutely convergent for $\Real(s)\gg0$, and one can choose $f$ such that
\eqref{int:standard Jacquet on I W W} is absolutely convergent and equals $1$, for all $s$.
Since there is (in particular) at most one Whittaker model on \eqref{eq:ind sections}, Bernstein's continuation principal (\cite{Banks}) implies that \eqref{int:standard Jacquet on I W W} admits analytic continuation to a polynomial function in $q^{-s}$ and $q^s$, i.e.,
\eqref{int:standard Jacquet on I W W} belongs to $\C[q^{-s},q^s]$. This continuation is not identically zero for any $s$.
Consequently it is a realization of the unique (up to scaling) Whittaker functional on \eqref{eq:ind sections},
with respect to the character
\begin{align}\label{eq:character for Ind GL 1}
\psi(\diag(d,d')[\begin{smallmatrix}y&z\\u&x\end{smallmatrix}])=\psi(\sum_{i=1}^{rk-1}d_{i,i+1}-u+\sum_{i=1}^{rk-1}d'_{i,i+1}),\qquad d,d'\in N_{\GL_{rk}}.
\end{align}
The corresponding Whittaker model of \eqref{eq:ind sections} is spanned by the functions
\begin{align}\label{int:standard Whittaker on I W W}
W_f(h,s)=\int\limits_{V_{(rk,rk)}} f(\langle\delta_0[\begin{smallmatrix}y&z\\u&x\end{smallmatrix}],1\rangle h,s)\psi(u)\,dx\,dy\,dz\,du.
\end{align}
Then by definition the Whittaker model of $\mathrm{I}(\mathcal{W}(\rho_1(\tau)),\mathcal{W}(\rho_1(\tau')),s)^*$ with respect to the inverse of
\eqref{eq:character for Ind GL 1} is spanned by functions
\begin{align}\label{int:standard Whittaker on I W W *}
W_f^*(h,s)=W_f({}^*h,s)=\int\limits_{V_{(rk,rk)}} f(\langle\delta_0[\begin{smallmatrix}y&z\\u&x\end{smallmatrix}],1\rangle\,{}^*h,s)\psi(u)\,dx\,dy\,dz\,du.
\end{align}

If $rk>1$, denote
\begin{align*}
[t,v]=\diag(I_{rk},\left(\begin{smallmatrix}1&&\\&I_{rk-2}&\\-t&v&1\end{smallmatrix}\right)),\qquad
w'=\left(\begin{smallmatrix} I_{rk} \\ & & I_{rk-1} \\ & 1 \end{smallmatrix}\right).
\end{align*}
For $rk=1$ we take $[t,v]=w'=I_2$.
Since $[t,v]\in N_{H}^-$, we fix the splitting $\langle v^-,\varsigma(v^-)\rangle$ of $N_{H}^-$.
Then $[t,v]\mapsto \langle[t,v],\varsigma([t,v])\rangle$ is the splitting of the subgroup of elements $[t,v]$, which is $\diag(I_{rk},V_{(rk-1,1)}^-)$. Also let $\zeta\in\C$.

For any matrix coefficient $\omega$ of $\pi^{\vee}$ and
a holomorphic section $f$ of \eqref{eq:ind sections}, consider the integral
\begin{align}\label{int:after functional equation to compare}
&\Psi(\zeta,s,\omega,f)\\&\nonumber=\int\limits_{F^*}\int\limits_{F^{rk-2}}\int\limits_{F} W_f(\langle\diag(I_{2rk-1},a),1\rangle\langle [t,v],\varsigma([t,v])\rangle
\langle w',1\rangle,s)\omega(\langle a,1\rangle)|a|^{\zeta+rk-1}\,dt\,dv\,d^*a.
\end{align}
In the linear case this is \cite[(3.42)]{CFGK2} (with $r=1$), and note that in \textit{loc. cit.} $\omega$ was replaced by $\pi^{-1}$, because there $\pi^{\vee}$ is one-dimensional. Also if $rk=1$, the $dtdv$ integration is omitted.

\begin{proposition}\label{proposition:functional to compare basic props}
Integral~\eqref{int:after functional equation to compare} is well defined, absolutely convergent for
$\Real(\zeta)\leq B\Real(s)+D$, where $B$ and $D$ are real constants depending only on $\pi$ and $\tau$, and in this domain belongs to \eqref{eq:homspace G with W(rho1tau) GL} where $\pi$ is replaced by
$|~|^{-\zeta}\pi$. Consequently, it admits meromorphic continuation which belongs to $\C(q^{-\zeta},q^{-s})$.
Moreover, outside finitely many values of $q^{-s}$, the continuation with $\zeta=0$ belongs to \eqref{eq:homspace G with W(rho1tau) GL}.
\end{proposition}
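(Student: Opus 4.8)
\textbf{Proof proposal for Proposition~\ref{proposition:functional to compare basic props}.}
The plan is to follow the strategy used for the analogous linear statement, adapting each step to the covering setting by means of the splitting formulas from \S~\ref{local covering} and \S~\ref{integarls for GL}. First I would verify that the integrand is well defined as a function on the relevant quotient: the factors $W_f(\cdots)$ and $\omega(\langle a,1\rangle)$ are genuine and anti-genuine respectively, and because $[t,v]\in N_H^-$ and $w'$ is a permutation matrix, the products $\langle[t,v],\varsigma([t,v])\rangle\langle w',1\rangle$ are computed via \eqref{eq:sigma conjugate v to v by h} and \eqref{eq:sigma on h and v} so no stray roots of unity appear; the cocycle contributions from $\langle\diag(I_{2rk-1},a),1\rangle$ cancel against those of $\omega(\langle a,1\rangle)$ exactly as in the well-definedness arguments of Proposition~\ref{proposition:equiv props GL}. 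Absolute convergence in a cone $\Real(\zeta)\leq B\Real(s)+D$ follows from the standard gauge/asymptotic estimates for the unramified Whittaker function $W_f$ on the torus together with the boundedness of the matrix coefficient $\omega$ and the rapid decay of the $dt\,dv$-integration region once the Whittaker function is restricted; here I would invoke the same asymptotic bookkeeping used for \cite[(3.42)]{CFGK2}, now with $k$ replaced by $rk$ throughout, noting that the archimedean subtleties do not arise since $F$ is non-archimedean (or, in the product case, that $S'$ is finite).

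Next I would identify the equivariance of \eqref{int:after functional equation to compare}. The point is that $\zeta$ plays the role of an auxiliary twist of $\pi$: conjugating $\diag(I_{2rk-1},a)$ through $[t,v]$ and $w'$, and using the defining left-equivariance of the Whittaker model of \eqref{eq:ind sections} under $V_{(rk,rk)}$ together with the character \eqref{eq:character for Ind GL 1}, one checks — by exactly the manipulations proving \eqref{eq:inv both GL} in Proposition~\ref{proposition:equiv props GL} — that $\Psi(\zeta,s,\omega,f)$ transforms on the $\pi$-side by $|~|^{-\zeta}\pi$ and factors through $J_{U,\psi_U^{-1}}$ of $\mathrm{I}(\mathcal{W}(\rho_1(\tau)),\mathcal{W}(\rho_1(\tau)),s)$. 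Thus in its domain of convergence $\Psi$ lies in the space \eqref{eq:homspace G with W(rho1tau) GL} with $\pi$ replaced by $|~|^{-\zeta}\pi$. Since Lemma~\ref{lemma:uniqueness} shows that this Hom-space is at most one-dimensional outside finitely many values of $q^{-s}$ (and the same proof, with $\pi$ twisted by $|~|^{-\zeta}$, gives at most one dimension outside finitely many values of the pair $(q^{-\zeta},q^{-s})$), Bernstein's continuation principle as in \cite{Banks} — applied just as in Corollary~\ref{corollary:mero cont of GL1 integral} — yields the meromorphic continuation of $\Psi$ to a rational function in $q^{-\zeta}$ and $q^{-s}$; here one needs to exhibit data making the integral a nonzero constant, which is routine by choosing $f$ supported near $\delta_0 V_{(rk,rk)}$ and $\omega$ supported near the identity.

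Finally, for the statement about $\zeta=0$: specializing the rational function $\Psi(\zeta,s,\omega,f)$ at $\zeta=0$ is legitimate away from its polar locus in $q^{-\zeta}$, which for fixed $s$ outside a finite set meets $\zeta=0$ only for finitely many $q^{-s}$; on that open set the specialization continues to satisfy the same equivariance (now with untwisted $\pi$) by continuity of the functional equation, hence it lies in \eqref{eq:homspace G with W(rho1tau) GL}. I expect the main obstacle to be the convergence/equivariance bookkeeping rather than anything conceptually new: one must track the splitting cocycles carefully through the conjugation of $\diag(I_{2rk-1},a)$ past $[t,v]$ and $w'$ — verifying, via \eqref{eq:epsilon for conjugation between split subgroups} applied to $N_H^-$ with the fixed splitting $\varsigma$, and via Proposition~\ref{proposition:action of W on torus is trivial on Sp} for the permutation part (using $\mu_{2m}\subset F^*$) — that no genuine discrepancy survives, so that the twist is cleanly by $|~|^{-\zeta}$ and the uniqueness input from Lemma~\ref{lemma:uniqueness} genuinely applies to the twisted situation.
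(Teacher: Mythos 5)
Your treatment of well-definedness, convergence, and the two equivariance properties is in the right spirit and broadly matches the paper's argument (the paper verifies the $U$-equivariance by a computation specific to $\Psi$ — decomposing the second Levi block $d'$ of $u\in U$ into its first row, which conjugates by $w'$ into the lower-triangular subgroup of elements $[t',v']$ and is absorbed by a change of variables in $(t,v)$, with the remaining part $d''$ normalizing that subgroup — rather than literally repeating the manipulations of \eqref{eq:inv both GL}, but your outline of which cocycle identities are needed is essentially correct). The claim that Lemma~\ref{lemma:uniqueness} extends to uniqueness outside finitely many values of the pair $(q^{-\zeta},q^{-s})$ is also what the paper uses.

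The genuine gap is in the input to Bernstein's continuation principle. Besides generic uniqueness, one must exhibit data $(\omega,f)$ for which $\Psi(\zeta,s,\omega,f)$ is absolutely convergent and equal to a nonzero \emph{constant} for all $\zeta$ and $s$; your proposal to take $f$ supported near $\delta_0 V_{(rk,rk)}$ and $\omega$ supported near the identity does not achieve this. Controlling the support of the section $f$ does not control the support of the Whittaker function $W_f$ on the elements $\diag(I_{2rk-1},a)[t,v]w'$ appearing in \eqref{int:after functional equation to compare}: indeed, in the proof of Lemma~\ref{lemma:substitution} a section $f'$ with exactly this kind of support (and the normalization \eqref{eq:cond on W of f'}) is plugged into $\Psi$ and the output is $D\,L_{\vartheta}(-r\zeta-r\alpha s+1/2,\pi\times\tau^{\vee})$ — manifestly not constant in $(\zeta,s)$. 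The missing idea is the Kirillov-model/mirabolic argument: one extends \cite[5.15]{BZ1} to $\GL_{2rk}^{(m,r)}$ (checking that the functors $\Psi^{\pm},\Phi^{\pm}$ preserve genuineness and exactness, and that $\widetilde{P}_l$ still acts with two orbits on the characters of $V_{(l-1,1)}$) to conclude that $\ind_{N_{H}}^{\widetilde{P}_{2rk}}(\psi)$ embeds in $\mathrm{I}(\mathcal{W}(\rho_1(\tau)),\mathcal{W}(\rho_1(\tau')),s)$, where $P_{2rk}<P_{(1,2rk-1)}$ is the mirabolic fixing the first row. Since $\diag(I_{2rk-1},a)$, $[t,v]$ and $w'$ all lie in $P_{2rk}$, one may then choose $f$ so that $W_f|_{\widetilde{P}_{2rk}}$ is a genuine smooth function supported in a small compact open neighborhood of the identity, which makes $\Psi(\zeta,s,\omega,f)\equiv 1$. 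Without this step the meromorphic continuation (and hence the $\zeta=0$ specialization) is not established.
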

\begin{proof}
Since $W_f$ is genuine and $\omega$ is anti-genuine, the integrand is a non-genuine function on $F^*$. Also
by \eqref{eq:block compatibility on Levi of P},
\begin{align*}
\langle\diag(I_{2rk-1},aa'),1\rangle=\langle\diag(I_{2rk-1},a),\sigma_1^{\diamondsuit}(a,a')^{-1}\rangle\langle\diag(I_{2rk-1},a'),1\rangle,
\end{align*}
and because
\begin{align*}
\omega(\langle aa',1\rangle)=\omega(\langle a,\sigma_1^{\diamondsuit}(a,a')^{-1}\rangle\langle a',1\rangle)
=\sigma_1^{\diamondsuit}(a,a')\omega(\langle a,1\rangle\langle a',1\rangle),
\end{align*}
the $d^*a$-integral is a right-invariant functional on $F^*$.

It is straightforward to prove convergence of this integral in a domain of the form $\Real(\zeta)\leq B\Real(s)+D$:
the integrand vanishes unless $v$ and $t$ belong in (large) compact subsets, and since the Whittaker function is smooth,
we reduce to an integral over $F^*$. The integrand vanishes unless $a$ is bounded from below, then the usual bounds on the exponents of $W_f$ can be used. See \cite{GJ,JPSS} for
similar arguments.

Next we prove that for $\Real(\zeta)\ll0$, the integral belongs to
\begin{align}\label{eq:space with s and zeta}
\Hom_{G\times G}(J_{U,\psi_U^{-1}}(
\mathrm{I}(\mathcal{W}(\rho_1(\tau)),\mathcal{W}(\rho_1(\tau)),s)),|~|^{\zeta}\pi^{\vee}\otimes|~|^{-\zeta}\pi).
\end{align}

First we prove
\begin{align}\label{eq:GL 1 equiv for other int U}
\Psi(\zeta,s,\omega,\langle u,1\rangle\cdot f)=\psi_U^{-1}(u)\Psi(\zeta,s,\omega,f).
\end{align}
Write $u\in U$ in the form $u=\diag(d,d')[\begin{smallmatrix}y&z\\0&x\end{smallmatrix}]$ where $d,d'\in N_{\GL_{rk}}$. By \eqref{eq:block compatibility on Levi of P} the elements $w'$, $[t,v]$ and $\diag(I_{2rk-1},a)$
commute with $\diag(d,I_{rk})$ in $H^{(m,r)}$, hence the equivariance properties under $\diag(d,I_{rk})$ are satisfied, because
of the equivariance properties of $W_f$ with respect to \eqref{eq:character for Ind GL 1}. Since
the conjugation of $[\begin{smallmatrix}y&z\\0&x\end{smallmatrix}]$ by the elements $w'$, $[t,v]$ and $\diag(I_{2rk-1},a)$ still belongs to $N_{H}$ (even to $V_{(rk,rk)}$), we can use \eqref{eq:sigma conjugate v by h} to extend this conjugation to $H^{(m,r)}$, and since
\begin{align*}
{}^{w'}[\begin{smallmatrix}y&z\\0&x\end{smallmatrix}]=\left(\begin{smallmatrix}I_{rk-1}&&z&y\\&1&x&0\\&&I_{rk-1}\\&&&1\end{smallmatrix}\right),
\end{align*}
the equivariance properties under $[\begin{smallmatrix}y&z\\0&x\end{smallmatrix}]$ are preserved as well, by definition \eqref{eq:character for Ind GL 1}. Regarding $d'$, first write
\begin{align*}
\langle \diag(I_{rk},d'),1\rangle=\langle \diag(I_{rk},\left(\begin{smallmatrix}1&t'&v'\\&1\\&&I_{rk-2}\end{smallmatrix}\right)),1\rangle\langle \diag(I_{rk+1},d''),1\rangle,\qquad d''\in N_{\GL_{rk-1}}.
\end{align*}
Then by \eqref{eq:sigma conjugate v- to v by h},
\begin{align*}
{}^{w'}\langle \diag(I_{rk},\left(\begin{smallmatrix}1&t'&v'\\&1\\&&I_{rk-2}\end{smallmatrix}\right)),1\rangle=
\langle [t',v'],\varsigma([t',v']),1\rangle.
\end{align*}
Therefore we can change variables in $t$ and $v$ to obtain invariance under the first row of $d'$, as required. Regarding $d''$,
here ${}^{w'}\diag(I_{rk+1},d'')\in N_{H}$, so that we can conjugate by $w'$ without introducing a root of unity. Then
${}^{w'}\diag(I_{rk+1},d'')=\diag(I_{rk},d'',1)$ normalizes $\diag(I_{rk},V_{(rk-1,1)}^-)$ hence by the analog of
\eqref{eq:sigma conjugate v by h} (use \eqref{eq:epsilon for conjugation between split subgroups} with $Y=N_H^-$),
\begin{align*}
{}^{\diag(I_{rk},d'',1)}\langle [t,v],\varsigma([t,v])\rangle
=\langle [t_{d''},v_{d''}],\varsigma([t_{d''},v_{d''}])\rangle.
\end{align*}
Here $t_{d''}$ and $v_{d''}$ depend on $d''$, and we can change variables. We see that the l.h.s.~ of
\eqref{eq:GL 1 equiv for other int U} with $u=\diag(I_{rk},d')$ becomes
\begin{align*}
&\int\limits_{F^*}\int\limits_{F^{rk-2}}\int\limits_{F} W_f(
\langle\diag(I_{rk},d'',1),1\rangle\langle\diag(I_{2rk-1},a),1\rangle\langle [t,v],\varsigma([t,v])\rangle
\langle w',1\rangle,s)\\&\omega(\langle a,1\rangle)|a|^{\zeta+rk-1}\,dt\,dv\,d^*a.
\end{align*}
Again we obtain $\psi_U^{-1}(\diag(I_{rk},d'))$ by \eqref{eq:character for Ind GL 1}.
This completes the verification of \eqref{eq:GL 1 equiv for other int U}.

Write $\omega(g)=\pi^{\vee}(g)\xi^{\vee}(\xi)$, where $\xi^{\vee}$ (resp., $\xi$) belongs to the space of $\pi^{\vee}$ (resp., $\pi$).
Let $g_1,g_2\in G$ and denote $\omega_{g_1,g_2}(g)=\pi^{\vee}(gg_2)\xi^{\vee}(\pi(g_1)\xi)$.
We turn to prove
\begin{align}\label{eq:eq:GL 1 equiv for other int G}
&\Psi(\zeta,s,\omega_{\langle g_1,1\rangle,\langle g_2,1\rangle},(\langle \mathfrak{e}_1(g_1),1\rangle\langle \mathfrak{e}_2(g_2),1\rangle)\cdot f)=|g_1|^{\zeta}|g_2|^{-\zeta}\Psi(\zeta,s,\omega,f).
\end{align}
Since $\mu_{2m}\subset F^*$, we can use Proposition~\ref{proposition:action of W on torus is trivial on Sp} to write
\begin{align*}
&\Psi(\zeta,s,\omega_{\langle g_1,1\rangle,\langle g_2,1\rangle},(\langle \mathfrak{e}_1(g_1),1\rangle\langle \mathfrak{e}_2(g_2),1\rangle)\cdot f)\\&=
\int\limits_{F^*}\int\limits_{F^{rk-2}}\int\limits_{F} W_f(\langle [t,v],\varsigma([t,v])\rangle
\langle w',1\rangle\langle \mathfrak{e}_2(a),1\rangle\langle \mathfrak{e}_1(g_1),1\rangle\langle \mathfrak{e}_2(g_2),1\rangle,s)
\\&\pi^{\vee}(\langle a,1\rangle\langle g_2,1\rangle)\xi^{\vee}(\pi(\langle g_1,1\rangle)\xi)
|a|^{\zeta}\,dt\,dv\,d^*a.
\end{align*}
Now changing $a\mapsto ag_2^{-1}$ then $a\mapsto g_1a$ (as in the proof of Proposition~\ref{proposition:equiv props GL}) and using
\eqref{eq:the $2$-cocycle on G times G formula GL} we obtain
\begin{align*}
&|g_1|^{\zeta}|g_2|^{-\zeta}\int\limits_{F^*}\int\limits_{F^{rk-2}}\int\limits_{F} W_f(\langle [t,v],\varsigma([t,v])\rangle
\langle w',1\rangle\langle (g_1,g_1),1\rangle\langle \mathfrak{e}_2(a),1\rangle,s)
\omega(\langle a,1\rangle)
|a|^{\zeta}\,dt\,dv\,d^*a.
\end{align*}
Since $(g_1,g_1)\in T_{H}$ and belongs to the center of $H$, it commutes with $w'$ and $[t,v]$. This also holds for $H^{(m,r)}$:
regarding $w'$ this follows from Proposition~\ref{proposition:action of W on torus is trivial on Sp}, and for $[v,t]$ by
\eqref{eq:sigma conjugate v by h}. Then by virtue of
\eqref{int:standard Whittaker on I W W}, \eqref{eq:sigma conjugate v by h}, Proposition~\ref{proposition:action of W on torus is trivial on Sp}
and assumption \eqref{eq:invariance prop on GL or SL} we see that
\begin{align*}
W_f(\langle(g_1,g_1),1\rangle h, s)=W_f(h, s),\qquad\forall h\in H^{(m,r)}.
\end{align*}
This completes the proof of \eqref{eq:eq:GL 1 equiv for other int G}, and we have thus shown that
$\Psi(\zeta,s,\omega,f)$ belongs to \eqref{eq:homspace G with W(rho1tau) GL} where $\pi$ is replaced by
$|~|^{-\zeta}\pi$.

We deduce the meromorphic continuation using Bernstein's continuation principle.
In general consider the group $\GL_l$, the generic character of $N_{\GL_l}$ given by $\psi(v)=\psi(\sum_{i=1}^{l-1}v_{i,i+1})$,
and the subgroup $P_l<P_{(l-1,1)}$ of matrices with the last row $(0,\ldots,0,1)$, i.e., the so-called mirabolic subgroup. By
\cite[5.15]{BZ1}, if $\vartheta$ is a smooth representation of $P_l$ affording at least one Whittaker model
(a non-degenerate representation in the terminology of \textit{loc. cit.}), then $\vartheta$ admits at least one subrepresentation $\ind_{N_{\GL_l}}^{P_l}(\psi)$. This result extends to $\GL_l^{(m,r)}$: the main points to note are that
the functors $\Psi^-=J_{V_{(l-1,1)}}$, $\Phi^-=J_{V_{(l-1,1)},\psi}$,
$\Psi^+(\cdot)=\ind_{\GL_{l-1}V_{(l-1,1)}}^{P_l}(\cdot\otimes1)$ and $\Phi^+(\cdot)=\ind_{P_{l-1}V_{(l-1,1)}}^{P_l}(\cdot\otimes\psi)$ carry genuine representations into genuine representations, are still exact, and $\widetilde{P}_l$ acts on the group of characters of $V_{(l-1,1)}$ with two orbits. We deduce that $\ind_{N_{H}}^{\widetilde{P}_{2rk}}(\psi)$ is a subrepresentation of $\mathrm{I}(\mathcal{W}(\rho_1(\tau)),\mathcal{W}(\rho_1(\tau')),s)$.
Furthermore, we can redefine $P_l$ to be the subgroup of $P_{(1,l-1)}$ with the first row $(1,0,\ldots,0)$, and obtain similar results.

Therefore we can take $f$ such that $W_f|_{\widetilde{P}_{2rk}}$ is a genuine smooth function, whose support is contained in a small compact open neighborhood of the identity (with $P_{2rk}<P_{(1,2rk-1)}$). Since
$\diag(I_{2rk-1},a)[t,v]w'$ belong to $P_{2rk}$, there is a choice of data $f$ and $\omega$ such that $\Psi(\zeta,s,\omega,f)$ is absolutely convergent and equals $1$, for all $\zeta$ and $s$. Now
by Lemma~\ref{lemma:uniqueness}, the space \eqref{eq:space with s and zeta} is at most one-dimensional outside finitely many values of $q^{-\zeta}$ and $q^{-s}$ (in the statement we had $\zeta$ fixed, but the proof implies this more general result), then by Bernstein's continuation principle (\cite{Banks}), $\Psi(\zeta,s,\omega,f)$ admits meromorphic continuation which belongs to $\C(q^{-\zeta},q^{-s})$.

For the last assertion, consider now $\Psi(0,s,\omega,f)$. It belongs to \eqref{eq:homspace G with W(rho1tau) GL} as long as
$\Psi(\zeta,s,\omega,f)$ is well defined for any $\omega$ and $f$ when we take $\zeta=0$. This is true by \cite{Banks}, since $\Psi(\zeta,s,\omega,f)$ is holomorphic whenever there is a unique solution (in the terminology of \textit{loc. cit.}), and
\eqref{eq:homspace G with W(rho1tau) GL} is one-dimensional outside finitely many values of $q^{-s}$.
\end{proof}
The proposition implies that both $Z(s,\omega,f)$ and
$\Psi(0,s,\omega,f)$, defined by meromorphic continuation, belong to \eqref{eq:homspace G with W(rho1tau) GL}, hence
are proportional. We can thus compute the former by first computing the latter, then finding the proportionality factor. This technique is due to Soudry \cite{Soudry} and has since been used in several similar settings including \cite{Soudry3,GRS4,Soudry2} (see also \cite{me2,me3,me6}).

\begin{proposition}\label{proposition:unramified computation for GL(1) other integral}
Let $\omega$ and $f$ be the normalized unramified vectors. Then
\begin{align*}
&\Psi(\zeta,s,\omega,f)=\frac{L_{\vartheta}(-r\zeta+r\alpha s+1/2,\pi\times\tau^{\vee})L_{\vartheta}(-r\zeta-r\alpha s+1/2,\pi\times{\tau'}^{\vee})}{\prod_{j=1}^rL_{\vartheta}(2r\alpha s+j,\tau\times{\tau'}^{\vee})}\qquad(\alpha=rk).
\end{align*}
\end{proposition}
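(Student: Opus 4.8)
The plan is to compute $\Psi(\zeta,s,\omega,f)$ with unramified data directly, unfolding the Whittaker function $W_f$ attached to the normalized unramified section of $\mathrm{I}(\mathcal{W}(\rho_1(\tau)),\mathcal{W}(\rho_1(\tau')),s)$. First I would use the Iwasawa decomposition together with the support and equivariance properties of the unramified section to reduce the integrals over $t$, $v$ and $a$ to a sum over torus elements. Concretely, the inner integration defining $W_f$ via \eqref{int:standard Whittaker on I W W} is evaluated by restricting it to the standard Levi $\GL_{rk}\times\GL_{rk}$ of $P$, where it factors as a product of two Whittaker functions: one for $\mathcal{W}(\rho_1(\tau))$ and one for $\mathcal{W}(\rho_1(\tau'))$ on the appropriate torus arguments, each twisted by a power of $|\det|$ coming from $\delta_P^s$. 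The element $\langle w',1\rangle$ together with $\langle[t,v],\varsigma([t,v])\rangle$ is designed so that after the $dt\,dv$-integration the Whittaker function of $\mathcal{W}(\rho_1(\tau'))$ collapses onto the diagonal element $\diag(a^{l},I_{rk-1})$ type argument — this is exactly the configuration in which Theorem~\ref{theorem:Whittaker on x,I} computes the normalized unramified Whittaker function of $\Theta_{r,m,r,\vartheta}(\chi)$ (here realized as $\rho_1(\tau')$ or $\rho_1(\tau)$) as a complete symmetric polynomial $p_l$ in the Satake parameter.

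The key computational steps, in order: (i) write $W_f$ restricted to $\widetilde{M}_P$ as $\delta_P^{s}$ times the product of the two normalized unramified Whittaker functions of $\rho_1(\tau)$ and $\rho_1(\tau')$, using that $f$ is standard and unramified; (ii) perform the $dt\,dv$-integration, which is a $\GL_{rk}$-type Whittaker unfolding turning the $\mathcal{W}(\rho_1(\tau'))$-factor into its value on $\diag(\varpi^{\text{val}(a)},I_{rk-1})$ — this is precisely the reduction handled in the proof of the Rankin--Selberg computation \eqref{RS integral for n=1} in \S~\ref{RS integrals}; (iii) apply Theorem~\ref{theorem:Whittaker on x,I} to both Whittaker factors to express their values on the relevant diagonal torus elements in terms of $p_l(\mathbf{x})$ and $p_l(\mathbf{x}')$ (with the Satake parameters of $\tau$ and $\tau'$), together with the appropriate power of $q$ and the factor $\vartheta$; (iv) carry out the remaining $d^*a$-integration against $\omega(\langle a,1\rangle)|a|^{\zeta+rk-1}$, where $\omega$ is the normalized unramified matrix coefficient of $\pi^{\vee}$, hence supported on $F^{*r}$ by Proposition~\ref{proposition:support of unr is in T*} and given there by $\vartheta^{-1}\mu^{-1}$; this converts the sum $\sum_l p_l(\cdots)\mu^{-1}(\varpi^{rl})q^{-l(\cdots)}$ into a ratio of $\GL_{n'}\times\GL_k$-type $L$-factors exactly as in \eqref{RS integral for n=1}, producing the two numerator factors $L_{\vartheta}(-r\zeta+r\alpha s+1/2,\pi\times\tau^{\vee})$ and $L_{\vartheta}(-r\zeta-r\alpha s+1/2,\pi\times{\tau'}^{\vee})$; (v) the denominator $\prod_{j=1}^{r}L_{\vartheta}(2r\alpha s+j,\tau\times{\tau'}^{\vee})$ emerges as the ``normalizing'' factor $\prod_j C(j,\mathbf{x})$ appearing in Theorem~\ref{theorem:CS formula for rk, c} — indeed $W_{\mathbf{0}}(\mathbf{0},\vartheta,\chi_{\Theta})=\prod_{j=1}^{r}C(j,\mathbf{x})$ and, when the two Whittaker factors for $\rho_1(\tau)$ and $\rho_1(\tau')$ are combined, the product of the two such normalizations assembles into $\prod_{j=1}^{r}L_{\vartheta}(2r\alpha s+j,\tau\times{\tau'}^{\vee})$ after accounting for the cross-terms in the Satake parameters of $\tau$ and $\tau'$ coming from the $\delta_P^{s}$-twist.

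Finally, since both sides are rational functions of $q^{-\zeta}$ and $q^{-s}$ by Proposition~\ref{proposition:functional to compare basic props} and Corollary~\ref{corollary:mero cont of GL1 integral}, it suffices to verify the identity in a region where all integrals converge absolutely and then continue; alternatively one checks that the computed expression lies in \eqref{eq:space with s and zeta} and matches the unramified integrand, the normalization being pinned down by evaluating at a single point (e.g.\ tracking the leading constant term $f(\langle I_{2rk},1\rangle)=1$ and $\omega(\langle I_1,1\rangle)=1$). The main obstacle I expect is step (ii)--(v): correctly bookkeeping the powers of $q$ and the Satake parameters through the two independent Whittaker unfoldings, and in particular verifying that the interaction between the $\delta_P^{s}$-twist, the shift by $rk-1$ in the measure, and the factor $\alpha=rk$ produces exactly the arguments $-r\zeta\pm r\alpha s+1/2$ and $2r\alpha s+j$ — this requires careful use of Theorem~\ref{theorem:Whittaker on x,I} (hence, via its proof, the Gelfand--Tsetlin input of Theorem~\ref{theorem:CS formula for rk, c} and Cai's Lemma) rather than any conceptually new ingredient. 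A secondary subtlety is the role of $\vartheta$ when $m\equiv2\,(4)$: one must keep the same parametrization $\vartheta$ for $\pi$, $\tau$ and $\tau'$ throughout, which is legitimate by the discussion following \eqref{eq:standard L of pi and tau identity}, so that the $\vartheta$-factors emitted by Theorem~\ref{theorem:Whittaker on x,I} cancel against those in $\omega$ and reassemble into the $L_{\vartheta}$-notation of the statement.
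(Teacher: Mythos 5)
There is a genuine gap at the heart of your steps (i)--(iii). The Whittaker function $W_f$ of the induced representation $\mathrm{I}(\mathcal{W}(\rho_1(\tau)),\mathcal{W}(\rho_1(\tau')),s)$, restricted to the Levi $M_P$, is \emph{not} the product of the two Whittaker functions of the inducing data: that factorization holds for the section $f$ itself, but $W_f$ is defined by the Jacquet integral \eqref{int:standard Whittaker on I W W} over all of $V_{(rk,rk)}$, which mixes the two blocks. Already at the identity one has $W_f(\langle I_{2rk},1\rangle,s)=\prod_{j=1}^rL_{\vartheta}(2r\alpha s+j,\tau\times{\tau'}^{\vee})^{-1}\neq 1\cdot 1$, so the claimed factorization fails. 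Relatedly, your step (ii) misdescribes the $dt\,dv$-integration: it is not a Whittaker unfolding that ``collapses the $\mathcal{W}(\rho_1(\tau'))$-factor onto a diagonal element''; in the actual argument one shows by conjugating with integral unipotent elements that the integrand vanishes unless $t,v\in\mathcal{O}$, so this integration contributes the constant $1$ and one is left with the single integral $\int_{F^*}W_f(\langle\diag(I_{2rk-1},a),1\rangle,s)\,\omega(\langle a,1\rangle)|a|^{\zeta+rk-1}\,d^*a$. Since only one block of the Levi carries a non-identity argument and there is only one integration variable $a$, there are not two independent Whittaker unfoldings from which to extract the two numerator $L$-factors.

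The mechanism that actually produces the answer is different: after changing $a\mapsto a^{-1}$ and passing to $W_f^*$, one recognizes (under a regularity hypothesis on $\chi,\chi'$, removed at the end by analyticity) that $\mathrm{I}(\mathcal{W}(\rho_1(\tau)),\mathcal{W}(\rho_1(\tau')),s)$ sits inside a \emph{single} unramified principal series of $\GL_{2rk}^{(m,r)}$ with the concatenated, $q^{\pm r\alpha s}$-twisted parameter $\mathbf{x}^{\bullet}$. Then $W_f^*$ equals the normalized unramified Whittaker function of that big representation times the explicit constant $\prod_{j=1}^rL_{\vartheta}(2r\alpha s+j,\tau\times{\tau'}^{\vee})^{-1}$, computed from three applications of Theorem~\ref{theorem:CS formula for rk, c} (this is where your ``cross-terms'' genuinely enter, not as a correction to a product of two normalizations). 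The remaining integral is exactly the $\GL_1^{(m,r)}\times\GL_{2rk}^{(m,r)}$ Rankin--Selberg integral of \S~\ref{RS integrals}, and the generating function $\sum_l p_l(\mathbf{x}^{\bullet})\mu^{-1}(\varpi^{rl})q^{-l(\cdots)}=\prod_{i=1}^{2k}(1-\cdots x^{\bullet}_i)^{-1}$ factors into the two numerator $L$-functions precisely because $\mathbf{x}^{\bullet}$ is the union of the twisted Satake parameters of $\tau$ and $\tau'$. To rescue your route you would need both a proof that the single sum over $l$ splits as a convolution $p_l(\mathbf{x}^{\bullet})=\sum_{l_1+l_2=l}p_{l_1}(\cdots)p_{l_2}(\cdots)$ \emph{and} a correct substitute for the false factorization of $W_f$; the paper's reformulation via the $2rk$-dimensional object makes both points automatic.
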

\begin{proof}
Since $f$ is unramified, so is $W_f$. As in the linear case
\cite[(3.42)--(3.44)]{CFGK2}, we can use conjugations to deduce that the integrand is zero unless $t,v\in\mathcal{O}$, then the $dvdt$-integral evaluates to the constant $1$. Specifically, for any $b\in F$,
\begin{align}\label{eq:t v e conj}
[t,v]w'[\begin{smallmatrix}0&0\\b&0\end{smallmatrix}]=
[t,v]\left(\begin{smallmatrix}I_{rk-1}\\&1&&b\\&&I_{rk-1}&\\&&&1\end{smallmatrix}\right)w'
={}^{[t,v]}\left(\begin{smallmatrix}I_{rk-1}\\&1&&b\\&&I_{rk-1}&\\&&&1\end{smallmatrix}\right)[t,v]w'
=\left(\begin{smallmatrix}I_{rk-1}\\&1&b t&-b v&b\\&&1&\\&&&I_{rk-2}\\&&&&1\end{smallmatrix}\right)[t,v]w'.
\end{align}
Since $[\begin{smallmatrix}0&0\\b&0\end{smallmatrix}],{}^{w'}[\begin{smallmatrix}0&0\\b&0\end{smallmatrix}],{}^{[t,v]w'}[\begin{smallmatrix}0&0\\b&0\end{smallmatrix}]\in N_{H}$, by \eqref{eq:sigma conjugate v by h} we can extend \eqref{eq:t v e conj} to $H^{(m,r)}$. We also have
\begin{align}\label{eq:e epsilon resuse 2}
W_f(\langle \left(\begin{smallmatrix}I_{rk-1}\\&1&b t&-b v&a^{-1}b\\&&1&\\&&&I_{rk-2}\\&&&&1\end{smallmatrix}\right),1\rangle h, s)=\psi^{-1}(b t)W_f(h,s)\qquad\forall h\in H^{(m,r)},
\end{align}
and in particular independent of $a$. While this applies to any $b$, when we take $b\in\mathcal{O}^*$,
$W_f$ is right-invariant on $\langle [\begin{smallmatrix}0&0\\b&0\end{smallmatrix}],1\rangle$ and then since $\psi$ is unramified,
\eqref{eq:e epsilon resuse 2} implies that the integrand vanishes unless $t\in\mathcal{O}$. Then by \eqref{eq:sigma conjugate v- to v by h},
${}^{{w'}^{-1}}\langle [t,v],\varsigma([t,v])\rangle=\langle{}^{{w'}^{-1}}[t,v],1)\rangle$ (this holds whether $t$ is in $\mathcal{O}$ or not), and since $W_f$ is unramified,
\begin{align}\label{eq:resuse conj t to the right}
&W_f(\langle\diag(I_{2rk-1},a),1\rangle\langle [t,v],\varsigma([t,v])\rangle
\langle w',1\rangle,s)\\&=\nonumber
W_f(\langle\diag(I_{2rk-1},a),1\rangle\langle [0,v],\varsigma([0,v])\rangle
\langle [t,0],\varsigma([t,0])\rangle
\langle w',1\rangle,s)\\&=\nonumber
W_f(\langle\diag(I_{2rk-1},a),1\rangle\langle [0,v],\varsigma([0,v])\rangle
\langle w',1\rangle,s).
\end{align}
We proceed similarly, now with
\begin{align*}
\mathrm{e}(o_1,\ldots,o_{rk-2})={}^{{w'}^{-1}}\diag(I_{rk},\left(\begin{smallmatrix}1&&&&o_1\\&\ddots&&&\vdots\\&&1&0&o_{rk-2}\\&&&1&0\\&&&&1\end{smallmatrix}\right)),\qquad o_1,\ldots,o_{rk}\in\mathcal{O}^*.
\end{align*}
Starting with $i=1$ and up to $i=rk-2$, we show that each coordinate of $v$ belongs in $\mathcal{O}$ then remove it from the integrand; the integral over this coordinate equals the volume of $\mathcal{O}$, which is $1$. Eventually we remain with
\begin{align*}
W_f(\langle\diag(I_{2rk-1},a),1\rangle\langle w',1\rangle,s)=W_f(\langle\diag(I_{2rk-1},a),1\rangle,s).
\end{align*}
Thus
\begin{align*}
&\Psi(\zeta,s,\omega,f)=\int\limits_{F^*}W_f(\langle\diag(I_{2rk-1},a),1\rangle,s)\omega(\langle a,1\rangle)|a|^{\zeta+rk-1}\,d^*a.
\end{align*}
Then by \eqref{int:standard Whittaker on I W W *} and changing $a\mapsto a^{-1}$, the r.h.s.~ equals
\begin{align}\label{eq:last GL1 other integral}
\int\limits_{F^*}W_f^*(\langle\diag(a,I_{2rk-1}),1\rangle,s)\omega(\langle a^{-1},1\rangle)|a|^{-\zeta+1/2-(2rk-1)/2}\,d^*a.
\end{align}

This integral resembles the integral \eqref{eq:Z integral GL 1 GL rk} from \S~\ref{RS integrals}.
To complete the proof we relate $\omega(\langle a^{-1},1\rangle)$ to the matrix coefficient appearing in \eqref{eq:Z integral GL 1 GL rk},
and $W_f^*$ to the normalized unramified Whittaker function from \S~\ref{RS integrals}.

Let $\omega{}^*$ be the matrix coefficient of $(\pi^{\vee})^*$ given by $\omega{}^*(\langle a,1\rangle)=\omega({}^*\langle a,1\rangle)$, where ${}^*$ is defined by \eqref{eq:involution b*0}. Since ${}^*\langle a,1\rangle=\langle a^{-1},1\rangle$,
$\omega(\langle a^{-1},1\rangle)=\omega{}^*(\langle a,1\rangle)$. Moreover by
Proposition~\ref{proposition:vartheta of *} the representation $(\pi^{\vee})^*$ is induced from
$\varepsilon^{-1}\otimes\vartheta^{-1}\mu$, thus equal to $(\pi^*)^{\vee}$.

Now consider $W_f^*$. Let
\begin{align*}
\chi^{\bullet}=|~|^{rks}\chi_1\otimes\ldots\otimes|~|^{rks}\chi_k\otimes|~|^{-rks}\chi'_1\otimes\ldots\otimes|~|^{-rks}\chi'_k,
\end{align*}
which is an unramified character of $T_{\GL_{2k}}$,
\begin{align*}
&\mathbf{x}=(x_1,\ldots,x_k), \qquad x_i=(|~|^{rks}\chi_i)(\varpi^r),\qquad
\mathbf{x}'=(x'_1,\ldots,x'_k),\qquad x'_i=(|~|^{-rks}\chi'_i)(\varpi^r),\\
&\mathbf{x}^{\bullet}=(x_1,\ldots,x_k,x'_1,\ldots,x'_k).
\end{align*}

At this point we add the assumption that $\chi$ and $\chi'$ are regular, which means $\mathbf{x}_{\alpha}\ne 1$ and $\mathbf{x}'_{\alpha}\ne 1$
for all $\alpha\in \Phi_k^+$; this together with \eqref{eq:local assumption on tau} implies the condition of Proposition~\ref{proposition:local component is subrep for rk c}, hence
$\rho_1(\tau)\subset\Theta_{r,m,r,\vartheta}(\chi)$ and $\rho_1(\tau')\subset\Theta_{r,m,r,\vartheta}(\chi')$. Therefore
\begin{align*}
\mathrm{I}(\mathcal{W}(\rho_1(\tau)),\mathcal{W}(\rho_1(\tau')),s)\subset
\Ind_{\widetilde{P}}^{H^{(m,r)}}((\Theta_{r,m,r,\vartheta}(\chi)\otimes
\Theta_{r,m,r,\vartheta}(\chi'))\delta_P^s)
=\Theta_{2r,m,r,\vartheta}(\chi^{\bullet}).
\end{align*}
If $f'$ is the normalized unramified function in the space of $\Theta_{2r,m,r,\vartheta}(\chi^{\bullet})$, we have
\begin{align*}
f(\langle I_{2rk},1\rangle,s)=W_{\mathbf{0}}(\mathbf{0},\vartheta,\chi_{\Theta})^{-1}
W_{\mathbf{0}}(\mathbf{0},\vartheta,\chi'_{\Theta})^{-1}f'(\langle I_{2rk},1\rangle,s).
\end{align*}
Since by Theorem~\ref{theorem:CS formula for rk, c},
\begin{align*}
&W_{\mathbf{0}}(\mathbf{0},\vartheta,\chi_{\Theta})=\prod_{j=1}^{r}\prod_{1\leq i<j\leq k}(1-q^{-j}\mathbf{x}_{(i,j)}),\qquad
W_{\mathbf{0}}(\mathbf{0},\vartheta,\chi'_{\Theta})=\prod_{j=1}^{r}\prod_{1\leq i<j\leq k}(1-q^{-j}\mathbf{x}'_{(i,j)}),\\
&W_{\mathbf{0}}(\mathbf{0},\vartheta,(\chi^{\bullet})_{\Theta})=\prod_{j=1}^{r}\prod_{1\leq i<j\leq 2k}(1-q^{-j}\mathbf{x}^{\bullet}_{(i,j)}),
\end{align*}
we obtain
\begin{align*}
W_f(\langle I_{2rk},1\rangle,s)=\frac{W_{\mathbf{0}}(\mathbf{0},\vartheta,(\chi^{\bullet})_{\Theta})}{
W_{\mathbf{0}}(\mathbf{0},\vartheta,\chi_{\Theta})
W_{\mathbf{0}}(\mathbf{0},\vartheta,\chi'_{\Theta})}
=\prod_{j=1}^rL_{\vartheta}(2r\alpha s+j,\tau\times{\tau'}^{\vee})^{-1}.
\end{align*}
For the last equality also note that $\vartheta=\vartheta^{-1}$.
This is also the value of $W_f^*(\langle I_{2rk},1\rangle,s)$, and by Proposition~\ref{proposition:* of Theta chi}, $\Theta_{2r,m,r,\vartheta}(\chi^{\bullet})^*=\Theta_{2r,m,r,\vartheta}((\chi^{\bullet})^*)$.
Thus if $W^0$ is the normalized unramified Whittaker function of $\Theta_{2r,m,r,\vartheta}((\chi^{\bullet})^*)$,
\begin{align*}
W_f^*(g,s)=\prod_{j=1}^rL_{\vartheta}(2r\alpha s+j,\tau\times{\tau'}^{\vee})^{-1}W^0(g)
\end{align*}
and \eqref{eq:last GL1 other integral} equals
\begin{align*}
\prod_{j=1}^rL_{\vartheta}(2r\alpha s+j,\tau\times{\tau'}^{\vee})^{-1}
Z(-\zeta+1/2,\omega^*,W^0).
\end{align*}
Now by \eqref{RS integral for n=1} ($2rk>1$) and since $t_{(\pi^*)^{\vee},\vartheta}=t_{\pi,\vartheta}$, and still assuming $\chi$ and $\chi'$ are regular,
\begin{align*}
&\Psi(\zeta,s,\omega,f)=\frac{L_{\vartheta}(-r\zeta+1/2-r\alpha s,\pi\times\tau^{\vee})L_{\vartheta}(-r\zeta+1/2+r\alpha s,\pi\times{\tau'}^{\vee})}{\prod_{j=1}^rL_{\vartheta}(2r\alpha s+j,\tau\times{\tau'}^{\vee})}.
\end{align*}
Finally because $W_f$ is an analytic function of $\mathbf{x}$ and $\mathbf{x}'$, we can remove the regularity assumption on the l.h.s.,
and the r.h.s.~ is also well defined without it. Therefore we can drop this assumption and the proof is complete.
\end{proof}

We digress momentarily to the case $rk=1$ and compute the integral $Z(s,\omega,f)$ with unramified data. Assume $m=2$, the case $m=1$ was proved in \cite[\S~6.1]{PSR} (for any $n$, and using \cite{GJ}; here $n=1$). By Proposition~\ref{proposition:sigma * and sigma on GLd} we can assume $H^{(2,1)}$ is realized using $(\det,\det)_2$ (this is immediate for $\GL_1^{(2,1)}$).
Now we can write $\pi=\vartheta\otimes\mu$ ($\mu$ is an unramified quasi-character of $F^*$) and
$\pi(\langle a,\epsilon\rangle)=\vartheta(a)\mu(a)$ (see \S~\ref{unramified reps}). Similarly
$\tau=\vartheta\otimes\chi$ and $\tau'=\vartheta\otimes\chi'$.
For a section $f$ of the space of $\Ind_{P}^{H}((\chi\otimes\chi')\delta_P^s)$, the section $f_{\vartheta}(\langle h,\epsilon\rangle,s)=\epsilon\vartheta(\det h)f(h,s)$ belongs to the space of \eqref{eq:ind sections}. Then if $\omega$ is normalized,
\begin{align}\label{eq:covering to linear minimal GL}
Z(s,\omega,f_{\vartheta})=\int\limits_{F^*}\vartheta(-a)f(\left(\begin{smallmatrix}&1\\1\end{smallmatrix}\right)
\left(\begin{smallmatrix}1&1\\&1\end{smallmatrix}\right)\left(\begin{smallmatrix}1&\\&a\end{smallmatrix}\right),s)\vartheta^{-1}(a)\mu^{-1}(a)\,d^*a.
\end{align}
(Cf. \cite[pp.~81--83]{Gan}, the constant $2$ there was needed because the integration was over the cover.)
This is the linear $\GL_1\times\GL_1$ doubling integral for the representations $(-1,\cdot)_2\mu$ and $\chi\otimes\chi'$, and the section $f$.
For the normalized unramified $f$, by \cite[\S~6.1]{PSR} (see also \cite[Proposition~34]{CFGK2}), the r.h.s.~ of \eqref{eq:covering to linear minimal GL} equals
\begin{align*}
&\frac{(1-(-1,\varpi)_2\mu^{-1}(\varpi)\chi(\varpi)q^{-s-1/2})^{-1}
(1-(-1,\varpi)_2\mu(\varpi){\chi'}^{-1}(\varpi)q^{-s-1/2})^{-1}}
{(1-\chi(\varpi){\chi'}^{-1}(\varpi)q^{-2s-1})^{-1}}.
\end{align*}
Since $(-1,\varpi)_2=1$ and $\vartheta=\vartheta^{-1}$, we obtain
\begin{align}\label{eq:covering minimal GL}
Z(s,\omega,f)=
\frac{L_{\vartheta}(s+1/2,\pi^{\vee}\times\tau)
L_{\vartheta}(s+1/2,\pi\times{\tau'}^{\vee})
}{L_{\vartheta}(2s+1,\tau\times{\tau'}^{\vee})}.
\end{align}

We proceed with the general case $rk\geq1$.
Denote
\begin{align*}
L_{\pi,\tau,\vartheta}(s)
=\frac{L_{\vartheta}(1/2-r\alpha s,\pi\times\tau^{\vee})}
{L_{\vartheta}(1/2+r\alpha s,\pi^{\vee}\times\tau)}.
\end{align*}
\begin{lemma}\label{lemma:substitution}
As meromorphic continuations $L_{\pi,\tau,\vartheta}(s)Z(s,\omega,f)=\Psi(0,s,\omega,f)$, for any $\omega$ and rational section $f$.
\end{lemma}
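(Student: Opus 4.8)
The plan is to prove the identity $L_{\pi,\tau,\vartheta}(s)Z(s,\omega,f)=\Psi(0,s,\omega,f)$ first for the normalized unramified data, and then extend it to arbitrary data by the uniqueness statement of Lemma~\ref{lemma:uniqueness}. The two quantities $Z(s,\omega,f)$ and $\Psi(0,s,\omega,f)$ both define, as meromorphic continuations in $q^{-s}$, elements of the Hom-space \eqref{eq:homspace G with W(rho1tau) GL}: for $Z(s,\omega,f)$ this is Proposition~\ref{proposition:equiv props GL} together with Corollary~\ref{corollary:mero cont of GL1 integral}, and for $\Psi(0,s,\omega,f)$ this is the last assertion of Proposition~\ref{proposition:functional to compare basic props}. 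By Lemma~\ref{lemma:uniqueness} that space is at most one-dimensional outside a finite set of values of $q^{-s}$, so there is a meromorphic scalar $\lambda(s)\in\C(q^{-s})$ with $\Psi(0,s,\omega,f)=\lambda(s)Z(s,\omega,f)$ for all $\omega$ and all rational sections $f$. It then suffices to compute $\lambda(s)$, and for that one may specialize to one convenient choice of data, namely the normalized unramified data.

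First I would recall that for the normalized unramified data the integral $Z(s,\omega,f)$ is, by the reductions of Lemmas~\ref{lemma:reduction from classical to GLn} and \ref{lemma:reduction from GLn to GLa GLb} specialized to $n=1$ (or directly, when $rk=1$, by \eqref{eq:covering minimal GL} together with \cite[\S6.1]{PSR}), a ratio of $L$-factors; in the $n=1$, $rk\ge1$ case the formula should come out to be
\begin{align*}
Z(s,\omega,f)=\frac{L_{\vartheta}(r\alpha s+1/2,\pi^{\vee}\times\tau)L_{\vartheta}(r\alpha s+1/2,\pi\times{\tau'}^{\vee})}{\prod_{j=1}^{r}L_{\vartheta}(2r\alpha s+j,\tau\times{\tau'}^{\vee})},\qquad \alpha=rk,
\end{align*}
which for $r=1$ is exactly the linear computation and for $r=m/2$, $m=2$ is \eqref{eq:covering minimal GL}. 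On the other hand, Proposition~\ref{proposition:unramified computation for GL(1) other integral} with $\zeta=0$ gives
\begin{align*}
\Psi(0,s,\omega,f)=\frac{L_{\vartheta}(r\alpha s+1/2,\pi\times\tau^{\vee})L_{\vartheta}(-r\alpha s+1/2,\pi\times{\tau'}^{\vee})}{\prod_{j=1}^{r}L_{\vartheta}(2r\alpha s+j,\tau\times{\tau'}^{\vee})}.
\end{align*}
Dividing, the denominators $\prod_{j=1}^{r}L_{\vartheta}(2r\alpha s+j,\tau\times{\tau'}^{\vee})$ cancel, and so does the factor $L_{\vartheta}(-r\alpha s+1/2,\pi\times{\tau'}^{\vee})=L_{\vartheta}(r\alpha s+1/2,\pi^{\vee}\times{\tau'})$ against $L_{\vartheta}(r\alpha s+1/2,\pi\times{\tau'}^{\vee})$ up to the $\tau\leftrightarrow\tau'$ symmetry one reads off from the Satake parameters $t_{\pi,\vartheta}\otimes t_{\tau',\vartheta}$ and $t_{\pi,\vartheta}^{-1}\otimes t_{\tau',\vartheta}$ (here one uses $t_{\pi^{\vee},\vartheta^{-1}}=t_{\pi^*,\vartheta}$ from Proposition~\ref{proposition:vartheta of *} and $\vartheta=\vartheta^{-1}$ since $\mu_{2m}\subset F^*$), leaving precisely
\begin{align*}
\lambda(s)=\frac{\Psi(0,s,\omega,f)}{Z(s,\omega,f)}=\frac{L_{\vartheta}(1/2-r\alpha s,\pi\times\tau^{\vee})}{L_{\vartheta}(1/2+r\alpha s,\pi^{\vee}\times\tau)}=L_{\pi,\tau,\vartheta}(s).
\end{align*}
That identifies the proportionality constant and gives the lemma for all data by the previous paragraph.

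The main obstacle I anticipate is bookkeeping rather than anything conceptual: one must be careful that $Z(s,\omega,f)$ for the normalized unramified data genuinely equals the ratio written above, for all $rk\ge1$ and all $r,m$. When $rk=1$ this is \eqref{eq:covering minimal GL}; when $r=1$ it is the linear $\GL_1\times\GL_k$ doubling computation; for the remaining cases one invokes Lemmas~\ref{lemma:reduction from classical to GLn} and \ref{lemma:reduction from GLn to GLa GLb}, but those were stated for $\GL_n^{(m,r)}$ with general $n$ and produce products of local $L$-factors with various shifts, so the shifts must be tracked to confirm they collapse to the stated ratio; in particular one must check that the normalizing product $\prod_{j=1}^r L_{\vartheta}(2r\alpha s+j,\tau\times{\tau'}^{\vee})$ and the argument of each numerator factor match what comes out of Proposition~\ref{proposition:unramified computation for GL(1) other integral} so that the quotient is exactly $L_{\pi,\tau,\vartheta}(s)$ with no stray factors. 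A secondary, purely technical point is ensuring that the exceptional finite set of $q^{-s}$ excluded by Lemma~\ref{lemma:uniqueness} does not interfere: since both sides are rational in $q^{-s}$ and agree on the complement of a finite set, they agree as rational functions, which is what ``as meromorphic continuations'' in the statement means. I would also remark that the regularity assumption on $\chi,\chi'$ used inside the proof of Proposition~\ref{proposition:unramified computation for GL(1) other integral} is removed there by analyticity, so no extra genericity hypothesis is needed here.
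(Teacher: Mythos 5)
Your framing of the first step is the same as the paper's: both $Z(s,\omega,f)$ and $\Psi(0,s,\omega,f)$ lie in the at-most-one-dimensional space \eqref{eq:homspace G with W(rho1tau) GL}, hence are proportional by a factor $\lambda(s)\in\C(q^{-s})$, and it remains to evaluate $\lambda(s)$ on one choice of data. The gap is in the second step: you propose to evaluate $\lambda(s)$ on the normalized unramified data, using the closed-form value of $Z(s,\omega,f)$ for that data. But for $rk>1$ that value is precisely Corollary~\ref{corollary:unramified computation for GL(1)}, which the paper \emph{deduces from} Lemma~\ref{lemma:substitution} together with Proposition~\ref{proposition:unramified computation for GL(1) other integral}; your argument is therefore circular. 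The reduction Lemmas~\ref{lemma:reduction from classical to GLn} and \ref{lemma:reduction from GLn to GLa GLb} cannot fill this in, since they reduce the general integrals \emph{to} the $\GL_1^{(m,r)}\times\GL_k^{(m,r)}$ case rather than computing it; the only cases with an independent source are $rk=1$ (via \eqref{eq:covering minimal GL} and \cite{PSR,Gan}) and $m=1$ (via \cite{CFGK2}, where the analogous claim is itself proved by the substitution argument, not by an unramified computation). For general $r,k$ there is no independent evaluation of $Z$ with unramified data to divide by, and the integral does not unfold directly to a Rankin--Selberg integral the way $\Psi$ does.

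What the paper actually does is evaluate $\lambda(s)$ on deliberately \emph{ramified} data (Soudry's substitution technique): one takes $f=\phi(f')$ where $\langle\delta_0,1\rangle\cdot f'$ is supported in the preimage of $P\mathcal{N}$ for a small $\mathcal{N}<K_H$, satisfies \eqref{eq:cond on W of f'} on the Levi with an unramified $W\in\mathcal{W}(\rho_1(\tau))$, and $\phi$ satisfies \eqref{eq:cond on FT of phi}. After replacing $Z$ by the rearranged integral $Z'$ (justified via \eqref{int:justification 2} and uniqueness), the support condition collapses the unipotent integrations, and both $Z'(s,\omega,f)$ and $\Psi(0,s,\omega,f)$ reduce to the Rankin--Selberg integrals of \S~\ref{RS integrals}, giving $DL_{\vartheta}(r\alpha s+1/2,\pi^{\vee}\times\tau)$ and $DL_{\vartheta}(-r\alpha s+1/2,\pi\times\tau^{\vee})$ respectively, whose ratio is $L_{\pi,\tau,\vartheta}(s)$. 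A secondary problem with your write-up: the cancellation $L_{\vartheta}(-r\alpha s+1/2,\pi\times{\tau'}^{\vee})=L_{\vartheta}(r\alpha s+1/2,\pi^{\vee}\times{\tau'})$ that you invoke is false (the Satake data $t_{\pi}t_{\tau'}^{-1}q^{r\alpha s}$ and $t_{\pi}^{-1}t_{\tau'}q^{-r\alpha s}$ are inverse to one another, so the two determinants do not agree); the correct bookkeeping, had the unramified value of $Z$ been available, would instead cancel the two $\tau'$-factors identically and leave exactly $L_{\vartheta}(1/2-r\alpha s,\pi\times\tau^{\vee})/L_{\vartheta}(1/2+r\alpha s,\pi^{\vee}\times\tau)$, with no appeal to a $\tau\leftrightarrow\tau'$ symmetry.
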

\begin{proof}
For $rk=1$ the result already follows from Proposition~\ref{proposition:unramified computation for GL(1) other integral} and \eqref{eq:covering minimal GL} (and \cite[\S~6.1]{PSR} for $m=1$), so that throughout the proof we assume $rk>1$.
We adapt the proof of \cite[Claim~36]{CFGK2} which, as mentioned above, was based on the method of Soudry \cite[p.~70]{Soudry}. Our use of the Fourier inversion formula (see below) was adapted from \cite{Soudry3}.

First we define an action of (the ring of) Schwartz--Bruhat functions on sections, this will be used repeatedly in the proof. For a section $f$ and a Schwartz--Bruhat function $\phi$ on $F$,
\begin{align*}
\phi(f)(h,s)=\int\limits_Ff(h\langle[\begin{smallmatrix}0&0\\b&0\end{smallmatrix}],1\rangle,s)\phi(b)\,db.
\end{align*}
This integral is well defined because $[\begin{smallmatrix}0&0\\b&0\end{smallmatrix}]\mapsto\langle[\begin{smallmatrix}0&0\\b&0\end{smallmatrix}],1\rangle$ is the splitting of $\{[\begin{smallmatrix}0&0\\b&0\end{smallmatrix}]:b\in F\}$. Also observe that for each $f$ there is some $\phi$ such that $\phi(f)=f$:
for any $l>0$, $H^{(m,r)}$ is split over the subgroup $\mathcal{N}_l=\{[\begin{smallmatrix}0&0\\b&0\end{smallmatrix}]:b\in\mathcal{P}^l\}$.
One splitting is trivial:
$v\mapsto \langle v,1\rangle$. In fact this is the only splitting, for if $v\mapsto \langle v,\eta'(v)\rangle$ is another splitting, then $\eta':\mathcal{P}^l\rightarrow\mu_m$ is a homomorphism and since the exponent of $\mu_m$ is $m$ and $|m|=1$ ($F$ is unramified), $\eta'=1$. Now since $f$ is smooth, there is $l\gg0$ such that $f$ is right-invariant on $\{\langle v,1\rangle: v\in\mathcal{N}_l\}$. Thus the characteristic function $\phi$ of $\mathcal{P}^l$ satisfies $\phi(f)=f$.
In addition define the Fourier transform of $\phi$ by $\widehat{\phi}(t)=\int_F\phi(b)\psi^{-1}(bt)db$.

For $t\in F$, let $\jmath(t)\in P\cap N_{H}$ be given by
\begin{align*}
\jmath(t)=\diag(I_{rk},\left(\begin{smallmatrix}1&-t\\&1\end{smallmatrix}\right),I_{rk-2}).
\end{align*}
Since ${}^{\delta_0}\jmath(t)\in N_{H}$, ${}^{\delta_0}\langle \jmath(t),1\rangle=
\langle{}^{\delta_0}\jmath(t),1\rangle$ by \eqref{eq:sigma conjugate v by h} (also ${}^{\delta_0}\jmath(t)\in P$). We can also use
\eqref{eq:sigma conjugate v by h} to conjugate elements of $U_P$ by $\jmath(t)$ without introducing a root of unity. Also
by \eqref{eq:sigma on vh and h'v'}, for any $h\in H$, $\langle h,1\rangle\langle\jmath(t),1\rangle=\langle h\jmath(t),1\rangle$ and
$\langle{}^{\delta_0}\jmath(t)h,1\rangle=\langle{}^{\delta_0}\jmath(t),1\rangle\langle h,1\rangle$.
Then for fixed $u,t\in F$,
\begin{align*}
&\int f(\langle \delta_0[\begin{smallmatrix}y&z\\u&x\end{smallmatrix}]\jmath(t),1\rangle,s)\psi(x_1)\,dx\,dy\,dz=\psi((u-1)t)\int f(\langle \delta_0[\begin{smallmatrix}y&z\\u&x\end{smallmatrix}],1\rangle,s)\psi(x_1)\,dx\,dy\,dz.
\end{align*}
Since $\int_F\psi((u-1)t)dt=0$ unless $u=1$,
\begin{align}\label{int:justification 1}
Z(s,\omega,f)=
\int\limits_{F^*}\omega(\langle a,1\rangle)
\int f(\langle\delta_0[\begin{smallmatrix}y&z\\u&x\end{smallmatrix}]\jmath(t),1\rangle\langle\mathfrak{e}_2(a),1\rangle,s)\psi(x_1)\,dx\,dy\,dz\,dt\,du\,d^*a.
\end{align}
Here $x\in F^{rk-1}$ is a row, $y\in F^{rk-1}$ is a column, $z\in\Mat_{rk-1}$ (i.e., $du_0=dxdydz$) and $u,t\in F$.
Integral~\eqref{int:justification 1} is defined in the domain of definition of $Z(s,\omega,f)$, but is not absolutely convergent as a multiple integral.

However, we can still consider the integral obtained by formally changing $dtdu$ to $dudt$:
\begin{align*}
Z'(s,\omega,f)=
\int\limits_{F^*}\omega(\langle a,1\rangle)
\int f(\langle\delta_0[\begin{smallmatrix}y&z\\u&x\end{smallmatrix}]\jmath(t),1\rangle\langle\mathfrak{e}_2(a),1\rangle,s)\psi(x_1)\,dx\,dy\,dz\,du\,dt\,d^*a.
\end{align*}
As we proved in \cite[Claim~36, (3.48)]{CFGK2} for the linear case, for $\Real(s)\gg0$,
\begin{align}\label{int:justification 2}
\int\limits_{F^*}\int\limits_F\left|
\int \omega(\langle a,1\rangle)f(\langle\delta_0[\begin{smallmatrix}y&z\\u&x\end{smallmatrix}]\jmath(t),1\rangle\langle\mathfrak{e}_2(a),1\rangle,s)\psi(x_1)\,dx\,dy\,dz\,du\right|\,dt\,d^*a<\infty.
\end{align}
To extend the proof to the covering, first note that we can find $\phi$ such that $\phi(f)=f$ (as explained above). Second,
we can conjugate $[\begin{smallmatrix}0&0\\b&0\end{smallmatrix}]$ ($\jmath'(b)$ in the notation of \textit{loc. cit.}) by
$\mathfrak{e}_2(a)$ and $\jmath(t)$, and also $\jmath(t)$ by $\mathfrak{e}_2(a)$, without introducing a root of unity.

Now $Z'(s,\omega,f)$ is convergent in the sense of \eqref{int:justification 2} in a right half-plane, where it also belongs to
\eqref{eq:homspace G with W(rho1tau) GL}. Thus by Lemma~\ref{lemma:uniqueness} the integrals $Z'(s,\omega,f)$ and
$Z(s,\omega,f)$ are proportional in $\Real(s)\gg0$. The proportionality factor is $1$: this was proved
in \textit{loc. cit.} using a section of the form $\phi(f)$, conjugations of elements $\jmath(t)$
and $[\begin{smallmatrix}0&0\\b&0\end{smallmatrix}]$ which extend to the covering, and the
Fourier inversion formula $\int_F\widehat{\phi}(t)\psi(aut)dt=\phi(au)$. We deduce $Z'(s,\omega,f)=Z(s,\omega,f)$. In particular
by Corollary~\ref{corollary:mero cont of GL1 integral}, $Z'(s,\omega,f)$ admits meromorphic continuation which belongs to $\C(q^{-s})$. In the remaining part of the proof we show
\begin{align}\label{eq:identity with Z'}
L_{\pi,\tau,\vartheta}(s)Z'(s,\omega,f)=\Psi(0,s,\omega,f),
\end{align}
using a specific substitution.

Let $\omega$ be the normalized unramified matrix coefficient. Let $W\in \mathcal{W}(\rho_1(\tau))$ be the normalized unramified function and take a small compact open neighborhood $\mathcal{N}<K_{H}$ of the identity. Choose $f'$ in the space of
\eqref{eq:ind sections} such that
$\langle\delta_0,1\rangle\cdot f'$ is right invariant by $\{\langle y,\eta^{\diamondsuit}_{2rk}(y)\rangle:y\in\mathcal{N}\}$,
supported in the preimage of $P\mathcal{N}$ in $H^{(m,r)}$, and for all $a\in\GL_{rk}$,
\begin{align}\label{eq:cond on W of f'}
\langle\delta_0,1\rangle\cdot f'(\langle\diag(a,I_{rk}),1\rangle,s)=|\det a|^{rk(s+1/2)}W(\langle a,1\rangle).
\end{align}
Then take $f=\phi(f')$ such that
\begin{align}\label{eq:cond on FT of phi}
\int\limits_Ff'(h\langle\jmath(t),1\rangle,s)\widehat{\phi}(t)\,dt=f'(h,s),\qquad\forall h,s.
\end{align}
The arguments used above to show that we can find $\phi$ such that $\phi(f)=f$ can be repeated with $\jmath(t)$ instead of $[\begin{smallmatrix}0&0\\b&0\end{smallmatrix}]$ ($\jmath(t)\in N_{H}$), hence the condition \eqref{eq:cond on FT of phi} is satisfied for a suitable $\phi$.

Starting with the l.h.s.~ of \eqref{eq:identity with Z'}, $Z'(s,\omega,f)$ equals
\begin{align*}
&
\int\limits_{F^*}\omega(\langle a,1\rangle)
\int f'(\langle\delta_0[\begin{smallmatrix}y&z\\u&x\end{smallmatrix}]\jmath(t),1\rangle\langle\mathfrak{e}_2(a),1\rangle\langle[\begin{smallmatrix}0&0\\b&0\end{smallmatrix}]
,1\rangle,s)\phi(b)\psi(x_1)\,db\,dx\,dy\,dz\,du\,dt\,d^*a.
\end{align*}

As in \cite[Claim~36]{CFGK2}, after several manipulations including a change in the order of integration, conjugating
$\langle\jmath(t),1\rangle$ by $\mathfrak{e}_2(a)^{-1}$, then conjugating
$[\begin{smallmatrix}0&0\\b&0\end{smallmatrix}]$ by $\jmath(t)$ and $\mathfrak{e}_2(a)$ and changing variables in $x_1$ and $u$, we obtain $\psi^{-1}(bt)$. We then integrate first over $b$ and obtain
\begin{align*}
&\int\limits_{F^*}\omega(\langle a,1\rangle)
\int f'(\langle\delta_0[\begin{smallmatrix}y&z\\u&x\end{smallmatrix}],1\rangle\langle\mathfrak{e}_2(a),1\rangle\langle\jmath(t),1\rangle,s)\widehat{\phi}(t)\psi(x_1)|a|\,dx\,dy\,dz\,du\,dt\,d^*a
\\&=\int\limits_{F^*}\omega(\langle a,1\rangle)
\int f'(\langle\delta_0[\begin{smallmatrix}y&z\\u&x\end{smallmatrix}],1\rangle\langle\mathfrak{e}_2(a),1\rangle,s)\psi(x_1)|a|\,dx\,dy\,dz\,du\,d^*a.
\end{align*}
Here for the equality we first integrated over $t$ then used \eqref{eq:cond on FT of phi}.
Now conjugate $[\begin{smallmatrix}y&z\\u&x\end{smallmatrix}]$ by $\mathfrak{e}_2(a)^{-1}$ (multiplying the measure by $|a|^{-rk}$),
and $\langle\mathfrak{e}_2(a),1\rangle$ by $\delta_0$. By Proposition~\ref{proposition:action of W on torus is trivial on Sp},
${}^{\delta_0}\langle\mathfrak{e}_2(a),1\rangle=\langle{}^{\delta_0}\mathfrak{e}_2(a),1\rangle$. Then as in the linear case, the assumption on the support of $\langle\delta_0,1\rangle\cdot f'$ implies that the integrand vanishes unless the coordinates
of $x,y,z,u$ are small (independently of $a$). Let $D$ be the volume assigned to $V_{(rk,rk)}\cap \mathcal{N}$ by the measure
$dxdydzdu$. Thus using \eqref{eq:cond on W of f'} we obtain
\begin{align*}
&D\int\limits_{F^*}\omega(\langle a,1\rangle)W(\langle\diag(a,I_{rk-1}),1\rangle)|a|^{rks+1/2-(rk-1)/2}\,d^*a=DZ(rks+1/2,\omega,W),
\end{align*}
where $Z(rks+1/2,\omega,W)$ is defined by \eqref{eq:Z integral GL 1 GL rk} and note that
$\mathcal{W}(\rho_1(\tau))\subset \mathcal{W}(\Theta_{r,m,r,\vartheta}(\chi))$
by Corollary~\ref{corollary:functional on theta implies functional on rho c tau} (with $c=1$). Since $\omega$ and $W$ are the normalized unramified vectors, by \eqref{RS integral for n=1},
\begin{align}\label{eq:lhs of compare GL}
Z'(s,\omega,f)=DL_{\vartheta}(r^2ks+1/2,\pi^{\vee}\times\tau)
=DL_{\vartheta}(r\alpha s+1/2,\pi^{\vee}\times\tau).
\end{align}

Turning to the r.h.s.~ of \eqref{eq:identity with Z'}, we first compute $\Psi(\zeta,s,\omega,f)$ for $\zeta\ll0$. In this
case adapting the proof from \cite[Claim~36]{CFGK2} takes a bit more care, for the following reasons. First, we use a lower triangular unipotent subgroup (the subgroup of $[v,t]$). Second, while it was sufficient in \textit{loc. cit.} to obtain an arbitrary Rankin--Selberg integral of $\GL_1\times\GL_k$ (since the $\gamma$-factor of $\pi^{-1}\times\tau$ of \cite{JPSS} could be used), here we must evaluate
the integral explicitly (as above, using \eqref{RS integral for n=1}), which means we must also compute the integral over $[v,t]$.
To start,
\begin{align*}
\Psi(\zeta,s,\omega,f)&=
\int\limits_{F^*}\int W_{f'}(\langle\diag(I_{2rk-1},a),1\rangle\langle [t,v],\varsigma([t,v])\rangle
\langle w',1\rangle\langle[\begin{smallmatrix}0&0\\b&0\end{smallmatrix}],1\rangle,s)\phi(b)\\&\quad\omega(\langle a,1\rangle)|a|^{\zeta+rk-1}\,db\,dt\,dv\,d^*a.
\end{align*}
Here $b\in F$, $t\in F$ and $v\in F^{rk-2}$.
As we have seen in the proof of Proposition~\ref{proposition:unramified computation for GL(1) other integral} (\eqref{eq:t v e conj}--\eqref{eq:e epsilon resuse 2}),
we can conjugate $[\begin{smallmatrix}0&0\\b&0\end{smallmatrix}]$ to the left and obtain
the character $\psi^{-1}(b t)$, then as in \eqref{eq:resuse conj t to the right} (though \eqref{eq:e epsilon resuse 2} no longer implies
$t\in\mathcal{O}$, because $f'$ is not unramified) and using ${}^{{w'}^{-1}}[t,0]=\jmath(t)$,
the integral becomes
\begin{align*}
&\int\limits_{F^*}\int W_{f'}(\langle\diag(I_{2rk-1},a),1\rangle\langle [0,v],\varsigma([0,v])\rangle
\langle w',1\rangle\langle\jmath(t),1\rangle,s)\psi^{-1}(bt)\phi(b)\omega(\langle a,1\rangle)|a|^{\zeta+rk-1}\\&\quad\,db\,dt\,dv\,d^*a\\&=
\int\limits_{F^*}\int W_{f'}(\langle\diag(I_{2rk-1},a),1\rangle\langle [0,v],\varsigma([0,v])\rangle
\langle w',1\rangle\langle\jmath(t),1\rangle,s)\widehat{\phi}(t)\omega(\langle a,1\rangle)|a|^{\zeta+rk-1}\,dt\,dv\,d^*a
\\&=
\int\limits_{F^*}\int W_{f'}(\langle\diag(I_{2rk-1},a),1\rangle\langle [0,v],\varsigma([0,v])\rangle
\langle w',1\rangle,s)\omega(\langle a,1\rangle)|a|^{\zeta+rk-1}\,dv\,d^*a,
\end{align*}
where the second equality follows from \eqref{eq:cond on FT of phi}.
Plugging \eqref{int:standard Whittaker on I W W} into the last integral, we obtain
\begin{align}\label{eq:to justify}
&\int\limits_{F^*}\int\limits_{F^{rk-2}}
\int\limits_{V_{(rk,rk)}}
f'(\langle\delta_0[\begin{smallmatrix}y&z\\u&x\end{smallmatrix}],1\rangle\langle\diag(I_{2rk-1},a),1\rangle\langle [0,v],\varsigma([0,v])\rangle
\langle w',1\rangle,s)\omega(\langle a,1\rangle)\psi(u)|a|^{\zeta+rk-1}\\&\,dx\,dy\,dz\,du\,dv\,d^*a.\nonumber
\end{align}
This formal step will be justified by the argument below.
Since $\diag(I_{2rk-1},a)$, $[0,v]$ and $w'$ all normalize $V_{(rk,rk)}$, and also
\begin{align*}
{}^{\delta_0}\langle \diag(I_{2rk-1},a),1\rangle=\langle {}^{\delta_0}\diag(I_{2rk-1},a),1\rangle, \qquad
{}^{\delta_0}[0,v]\in N_H^-, \qquad
{}^{\delta_0}\langle w',1\rangle=\langle {}^{\delta_0}w',1\rangle,
\end{align*}
we can shift $\diag(I_{2rk-1},a)$, $[0,v]$ and $w'$ to the left and obtain
\begin{align*}
&\int\limits_{F^*}\int\limits_{F^{rk-2}}
\int\limits_{V_{(rk,rk)}}
f'(\langle{}^{\delta_0}\diag(a,I_{2rk-1}),1\rangle
\langle {}^{\delta_0}[0,v],\varsigma({}^{\delta_0}[0,v])\rangle
\langle{}^{\delta_0}w',1\rangle\langle\delta_0[\begin{smallmatrix}y&z\\u&x\end{smallmatrix}],1\rangle
,s)\omega(\langle a,1\rangle)\psi(u)\\&|a|^{\zeta-1}\,dx\,dy\,dz\,du\,dv\,d^*a.
\end{align*}
Now as with $Z'(s,\omega,f)$, the coordinates $x,y,z,u$ are all small (due to the choice of support for $\langle\delta_0,1\rangle\cdot f'$), and since $\langle\delta_0,1\rangle\cdot f'$ is right-invariant on $\{\langle y,1\rangle:y\in \mathcal{N}\cap N_H\}$ ($\eta^{\diamondsuit}_{2rk}$ is trivial on $K_H\cap N_H$), we can use \eqref{eq:cond on W of f'} to obtain
\begin{align}\label{int:right hand side after FT}
&D\int\limits_{F^*}\int\limits_{F^{rk-2}}
W(\langle\diag(I_{rk-1},a),1\rangle
\langle\left(\begin{smallmatrix}1\\&I_{rk-2}\\&v&1\end{smallmatrix}\right),\varsigma'(\left(\begin{smallmatrix}1\\&I_{rk-2}\\&v&1\end{smallmatrix}\right))\rangle
\langle\left(\begin{smallmatrix}&I_{rk-1}\\1&\end{smallmatrix}\right),1\rangle)\\&\omega(\langle a,1\rangle)|a|^{\zeta+rks-3/2+rk-(rk-1)/2}\,dv\,d^*a.\nonumber
\end{align}
Here $v\mapsto \langle v,\varsigma'(v)\rangle$ is the splitting of $N_{\GL_{rk}}$ in $\GL_{rk}^{(m,r)}$. This also justifies \eqref{eq:to justify}, because the argument on the support can be applied to $|f'|$, then \eqref{eq:to justify} is absolutely convergent as
a multiple integral for $\Real(\zeta)\ll0$, since this is true for \eqref{int:right hand side after FT} (see \cite[\S~2]{JPSS}).

Next we show that the coordinates of $v$ belong in $\mathcal{O}$ (assuming $rk>2$). Put $v=(v_1,\ldots,v_{rk-2})$.
For simplicity, re-denote $[0,v]=\diag(1,\left(\begin{smallmatrix}I_{rk-2}\\v&1\end{smallmatrix}\right))$ and $w'=\left(\begin{smallmatrix}&I_{rk-1}\\1&\end{smallmatrix}\right)$.
Let $\mathrm{e}(o_1)=\diag(\left(\begin{smallmatrix}1\\o_1&1\end{smallmatrix}\right),I_{rk-2})$, for $o_1\in\mathcal{O}$.
Then $W$ is right-invariant on the elements $\langle \mathrm{e}(o_1),\eta^{\diamondsuit}_{rk}(\mathrm{e}(o_1))\rangle$. Since
$y\mapsto\langle y,\eta^{\diamondsuit}_{rk}(y)\rangle$ is a splitting of $K_{\GL_{rk}}$ in $\GL_{rk}^{(m,r)}$;
there is a unique splitting of $\mathrm{e}(\mathcal{O})$ in $\GL_{rk}^{(m,r)}$ (the exponent of $\mu_m$ is $m$ and $|m|=1$);
and ${}^{w'}\mathrm{e}(\mathcal{O})\in N_{\GL_{rk}}$, \eqref{eq:epsilon for conjugation between split subgroups} implies
\begin{align*}
{}^{w'}\langle \mathrm{e}(o_1),\eta^{\diamondsuit}_{rk}(\mathrm{e}(o_1))\rangle=
\langle {}^{w'}\mathrm{e}(o_1),1\rangle.
\end{align*}
We also have ${}^{[0,v]w'}\mathrm{e}(o_1)\in N_{\GL_{rk}}$, therefore
\begin{align*}
&W(\langle\diag(I_{rk-1},a),1\rangle\langle[0,v],\varsigma'([0,v])\rangle\langle w',1\rangle)\\&=
W(\langle\diag(I_{rk-1},a),1\rangle\langle[0,v],\varsigma'([0,v])\rangle\langle w',1\rangle\langle\mathrm{e}(o_1),\eta^{\diamondsuit}_{rk}(\mathrm{e}(o_1))\rangle)
\\&=
W(\langle\diag(I_{rk-1},a),1\rangle\langle\left(\begin{smallmatrix}1&-o_1v&o_1\\&I_{rk-2}\\&&1\end{smallmatrix}\right),1\rangle
\langle[0,v],\varsigma'([0,v])\rangle\langle w',1\rangle)
\\&=\psi^{-1}(o_1v_1)W(\langle\diag(I_{rk-1},a),1\rangle
\langle[0,v],\varsigma'([0,v])\rangle\langle w',1\rangle).
\end{align*}
Taking $o_1\in\mathcal{O}^*$, we deduce the integrand of \eqref{int:right hand side after FT} vanishes unless $v_1\in\mathcal{O}$. Then since ${}^{{w'}^{-1}}[0,v]\in N_{\GL_{rk}}$,
we can argue as in \eqref{eq:resuse conj t to the right} to show that $v_1$ can be omitted from the integrand, and we remain with the measure of $\mathcal{O}$ which equals $1$. We proceed similarly (assuming $rk>3$), with
\begin{align*}
\mathrm{e}(o_2,\ldots,o_{rk-2})=\left(\begin{smallmatrix}1\\0&1\\o_2&&1\\\vdots&&&\ddots\\o_{rk-2}&&&&1\\0&&&&&1\end{smallmatrix}\right),
\end{align*}
in increasing order ($o_2\in\mathcal{O}^*$ and the remaining coordinates zero to show $v_2\in\mathcal{O}$, etc.).
Hence \eqref{int:right hand side after FT} equals
\begin{align*}
&D\int\limits_{F^*}W(\langle\diag(I_{rk-1},a),1\rangle)\omega(\langle a,1\rangle)|a|^{\zeta+rks-3/2+rk-(rk-1)/2}\,d^*a.
\end{align*}
As in the proof of
Proposition~\ref{proposition:unramified computation for GL(1) other integral} and by \eqref{RS integral for n=1}, this integral equals
\begin{align*}
&D\int\limits_{F^*}W^*(\langle\diag(a,I_{rk-1}),1\rangle)\omega^*(\langle a,1\rangle)|a|^{-\zeta-rks+1/2-(rk-1)/2}\,d^*a\\&
=DZ(-\zeta-rks+1/2,\omega^*,W^*)
=DL_{\vartheta}(-r\zeta-r\alpha s+1/2,\pi\times\tau^{\vee}).
\end{align*}
Note that $W^*$ is indeed normalized, unramified and belongs to $\mathcal{W}(\Theta_{r,m,r,\vartheta}(\chi))^*=
\mathcal{W}(\Theta_{r,m,r,\vartheta}(\chi)^*)$, which is
$\mathcal{W}(\Theta_{r,m,r,\vartheta}(\chi^*))$ by Proposition~\ref{proposition:* of Theta chi}.
We obtain
\begin{align*}
\Psi(\zeta,s,\omega,f)=DL_{\vartheta}(-r\zeta-r\alpha s+1/2,\pi\times\tau^{\vee}),
\end{align*}
in $\C(q^{-\zeta},q^{-s})$. Now taking $\zeta=0$, we deduce
\begin{align*}
\Psi(0,s,\omega,f)=DL_{\vartheta}(-r\alpha s+1/2,\pi\times\tau^{\vee}).
\end{align*}
Comparing this to \eqref{eq:lhs of compare GL} and using the definition of
$L_{\pi,\tau,\vartheta}(s)$ we conclude \eqref{eq:identity with Z'}.
\end{proof}
As a corollary we finally obtain the computation of the integral
with unramified data:
\begin{corollary}\label{corollary:unramified computation for GL(1)}
Let $\omega$ and $f$ be the normalized unramified vectors. Then
\begin{align*}
Z(s,\omega,f)
=\frac{L_{\vartheta}(r\alpha s+1/2,\pi^{\vee}\times\tau)L_{\vartheta}(r\alpha s+1/2,\pi\times{\tau'}^{\vee})}{\prod_{j=1}^{r}L_{\vartheta}(2r\alpha s+j,\tau\times{\tau'}^{\vee})}.
\end{align*}
\end{corollary}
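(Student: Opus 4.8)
The plan is to deduce this directly from Lemma~\ref{lemma:substitution} and Proposition~\ref{proposition:unramified computation for GL(1) other integral}; the whole argument is essentially one substitution, so there is no long computation, only some $L$-factor bookkeeping. First I would take $\omega$ and $f$ to be the normalized unramified vectors and invoke Lemma~\ref{lemma:substitution}, which gives the identity of meromorphic functions $L_{\pi,\tau,\vartheta}(s)\,Z(s,\omega,f)=\Psi(0,s,\omega,f)$. By Corollary~\ref{corollary:mero cont of GL1 integral} the factor $Z(s,\omega,f)$ is rational in $q^{-s}$, while $L_{\pi,\tau,\vartheta}(s)$ and $\Psi(0,s,\omega,f)$ are explicit ratios of unramified $L$-factors; hence the identity rearranges to $Z(s,\omega,f)=\Psi(0,s,\omega,f)/L_{\pi,\tau,\vartheta}(s)$ as rational functions of $q^{-s}$, the division being legitimate because $L_{\pi,\tau,\vartheta}(s)\not\equiv 0$ and because the one-dimensionality of \eqref{eq:homspace G with W(rho1tau) GL} (Lemma~\ref{lemma:uniqueness}) already pins down the correct continuation inside Lemma~\ref{lemma:substitution}.

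Next I would substitute. Specializing Proposition~\ref{proposition:unramified computation for GL(1) other integral} to $\zeta=0$ presents $\Psi(0,s,\omega,f)$ as a ratio whose numerator is a product of a $\pi\times\tau^{\vee}$ factor and a $\pi\times{\tau'}^{\vee}$ factor, divided by $\prod_{j=1}^{r}L_{\vartheta}(2r\alpha s+j,\tau\times{\tau'}^{\vee})$, with the $\pi\times\tau^{\vee}$ factor carrying the argument $1/2-r\alpha s$. Since $L_{\pi,\tau,\vartheta}(s)=L_{\vartheta}(1/2-r\alpha s,\pi\times\tau^{\vee})/L_{\vartheta}(1/2+r\alpha s,\pi^{\vee}\times\tau)$, dividing $\Psi(0,s,\omega,f)$ by it multiplies in $L_{\vartheta}(1/2+r\alpha s,\pi^{\vee}\times\tau)$ and cancels the $L_{\vartheta}(1/2-r\alpha s,\pi\times\tau^{\vee})$ in the numerator. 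What remains is exactly
\begin{align*}
Z(s,\omega,f)=\frac{L_{\vartheta}(r\alpha s+1/2,\pi^{\vee}\times\tau)\,L_{\vartheta}(r\alpha s+1/2,\pi\times{\tau'}^{\vee})}{\prod_{j=1}^{r}L_{\vartheta}(2r\alpha s+j,\tau\times{\tau'}^{\vee})},
\end{align*}
which is the claim.

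The single place requiring genuine care — and where I expect all of the (mild) difficulty to lie — is confirming that this cancellation really occurs: that the $\pi\times\tau^{\vee}$ factor of $\Psi(0,s,\omega,f)$ carries the argument $1/2-r\alpha s$ so that it exactly matches the denominator of $L_{\pi,\tau,\vartheta}(s)$, and that each of the surviving factors ($\pi^{\vee}\times\tau$, $\pi\times{\tau'}^{\vee}$, $\tau\times{\tau'}^{\vee}$) carries the parameters asserted in the corollary. This amounts to tracing the Rankin--Selberg evaluation \eqref{RS integral for n=1}, which feeds into the proof of Proposition~\ref{proposition:unramified computation for GL(1) other integral} through the Whittaker value of Theorem~\ref{theorem:Whittaker on x,I}, together with the identifications of the Satake parameters of $\tau^{\vee}$ and $\pi^{\vee}$ from \S~\ref{unramified reps} and Proposition~\ref{proposition:vartheta of *} and the equality $\vartheta=\vartheta^{-1}$ coming from $\mu_{2m}\subset F^{*}$. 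As a consistency check I would note that for $rk=1$ the same formula was obtained directly in \eqref{eq:covering minimal GL} (and by \cite[\S~6.1]{PSR} when $m=1$), so no genuine obstruction remains.
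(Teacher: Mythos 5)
Your proposal is correct and is exactly the paper's argument: the corollary is deduced by dividing the formula for $\Psi(0,s,\omega,f)$ from Proposition~\ref{proposition:unramified computation for GL(1) other integral} by $L_{\pi,\tau,\vartheta}(s)$ via the identity of Lemma~\ref{lemma:substitution}, with the $L_{\vartheta}(1/2-r\alpha s,\pi\times\tau^{\vee})$ factor cancelling against the numerator of $L_{\pi,\tau,\vartheta}(s)$ and its denominator $L_{\vartheta}(1/2+r\alpha s,\pi^{\vee}\times\tau)$ appearing in the final answer. Your reading that the $\pi\times\tau^{\vee}$ factor of $\Psi(0,s,\omega,f)$ carries the argument $1/2-r\alpha s$ (as in the concluding display of that proposition's proof and in the substitution lemma) is the one consistent with the stated corollary, so the bookkeeping closes as you describe.
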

\begin{proof}
Immediate from Lemma~\ref{lemma:substitution}, Proposition~\ref{proposition:unramified computation for GL(1) other integral} and the definition of $L_{\pi,\tau,\vartheta}(s)$.
\end{proof}

\subsection{Concluding the computation}\label{putting it together}
\begin{theorem}\label{theorem:unr GL}
Let $\pi$ be a genuine irreducible unramified representation of $\GL_n$, and $\tau$ and $\tau'$ be genuine irreducible unramified representations of $\GL_k^{(m,r)}$. Assume the conditions on $\tau$ and $\tau'$ from \S~\ref{proof of lemma:reduction from GLn to GLa GLb}. Let
$\omega$ and $f_{\mathcal{W}(\rho_n(\tau))\otimes \mathcal{W}(\rho_n(\tau'))}$ be normalized and unramified. Put $\alpha=rkn$. Then
\begin{align*}
&Z(s,\omega,f_{\mathcal{W}(\rho_n(\tau))\otimes \mathcal{W}(\rho_n(\tau'))})=
\frac{L_{\vartheta}(r\alpha s+1/2,\pi^{\vee}\times\tau)L_{\vartheta}(r\alpha s+1/2,\pi\times{\tau'}^{\vee})}{\prod_{j=1}^{rn}L_{\vartheta}(2r\alpha s+j,\tau\times{\tau'}^{\vee})}.
\end{align*}
\end{theorem}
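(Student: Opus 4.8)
\textbf{Proof proposal for Theorem~\ref{theorem:unr GL}.}
The plan is to prove the formula by induction on $n$, using the two reduction lemmas already established, namely Lemma~\ref{lemma:reduction from GLn to GLa GLb} (which breaks the $\GL_n^{(m,r)}\times\GL_k^{(m,r)}$ integral into the product of a $\GL_a^{(m,r)}\times\GL_k^{(m,r)}$ integral and a $\GL_b^{(m,r)}\times\GL_k^{(m,r)}$ integral, for $n=a+b$), and Corollary~\ref{corollary:unramified computation for GL(1)} (which computes the $\GL_1^{(m,r)}\times\GL_k^{(m,r)}$ integral with unramified data). The base case $n=1$ is precisely Corollary~\ref{corollary:unramified computation for GL(1)}, with $\alpha=rk$ there matching $\alpha=rkn$ here.

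For the inductive step, fix $n>1$ and write $n=a+b$ with $0<a,b<n$, e.g.\ $a=1$, $b=n-1$. First I would record that the representations $\rho_l(\tau)$, $\rho_l(\tau')$ for $l=a,b$ inherit the hypotheses needed to apply Lemma~\ref{lemma:reduction from GLn to GLa GLb}: they each satisfy \eqref{eq:local assumption on tau}, admit unique $(rk,l)$ models, and the pair satisfies \eqref{eq:invariance prop on GL or SL} — exactly the standing assumptions of \S~\ref{proof of lemma:reduction from GLn to GLa GLb}, so the reduction is legitimate. Choosing $\pi$ as a quotient of $\Ind_{\widetilde{P}_{(a,b)}}^{\GL_n^{(m,r)}}(\pi_a\otimes\pi_b)$ with $\pi_a,\pi_b$ genuine irreducible unramified, and taking all data normalized and unramified, Lemma~\ref{lemma:reduction from GLn to GLa GLb} gives, in a right half-plane,
\begin{align*}
Z(s,\omega,f)=d_{\tau,\tau',\vartheta,a,b}(s)\,Z(\tfrac{\alpha s}{rka},\omega_a,f_a)\,Z(\tfrac{\alpha s}{rkb},\omega_b,f_b),
\end{align*}
where $d_{\tau,\tau',\vartheta,a,b}(s)=\prod_{1\le j\le rb}L_\vartheta(2r\alpha s+j,\tau\times{\tau'}^\vee)/L_\vartheta(2r\alpha s+ra+j,\tau\times{\tau'}^\vee)$ by \eqref{eq:GL d tau tau}, and $f_a$, $f_b$ are again the normalized unramified sections. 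Applying the inductive hypothesis to each factor — with effective rank parameters $rka$ and $rkb$, which absorb the arguments $\tfrac{\alpha s}{rka}$ and $\tfrac{\alpha s}{rkb}$ so that the ``$r\alpha'$'' appearing inside each becomes $r\cdot rka\cdot\tfrac{\alpha s}{rka}=r\alpha s$ (and similarly $2r\alpha s$) — yields
\begin{align*}
Z(\tfrac{\alpha s}{rka},\omega_a,f_a)&=\frac{L_\vartheta(r\alpha s+1/2,\pi_a^\vee\times\tau)L_\vartheta(r\alpha s+1/2,\pi_a\times{\tau'}^\vee)}{\prod_{j=1}^{ra}L_\vartheta(2r\alpha s+j,\tau\times{\tau'}^\vee)},
\end{align*}
and the analogous expression with $a$ replaced by $b$.

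The remaining work is purely bookkeeping of $L$-factors. Multiplying the three pieces, the denominators combine: $d_{\tau,\tau',\vartheta,a,b}(s)$ contributes $\prod_{1\le j\le rb}L_\vartheta(2r\alpha s+j)/L_\vartheta(2r\alpha s+ra+j)$ and the two inductive denominators contribute $\prod_{j=1}^{ra}L_\vartheta(2r\alpha s+j)$ and $\prod_{j=1}^{rb}L_\vartheta(2r\alpha s+j)$ (all with $\tau\times{\tau'}^\vee$); a telescoping of $\prod_{1\le j\le rb}L_\vartheta(2r\alpha s+j)/L_\vartheta(2r\alpha s+ra+j)$ with $\prod_{j=1}^{ra}L_\vartheta(2r\alpha s+j)$ against $\prod_{j=1}^{rb}L_\vartheta(2r\alpha s+j)$ reindexes to $\prod_{j=1}^{rn}L_\vartheta(2r\alpha s+j,\tau\times{\tau'}^\vee)$ in the denominator. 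For the numerator, $L_\vartheta(r\alpha s+1/2,\pi_a^\vee\times\tau)L_\vartheta(r\alpha s+1/2,\pi_b^\vee\times\tau)=L_\vartheta(r\alpha s+1/2,\pi^\vee\times\tau)$ and likewise $L_\vartheta(r\alpha s+1/2,\pi_a\times{\tau'}^\vee)L_\vartheta(r\alpha s+1/2,\pi_b\times{\tau'}^\vee)=L_\vartheta(r\alpha s+1/2,\pi\times{\tau'}^\vee)$, using the multiplicativity of the unramified $\GL_{n'}\times\GL_k$ $L$-factor in the first variable (the Satake parameter of the induced $\pi$ is the concatenation of those of $\pi_a$ and $\pi_b$, as in \S~\ref{unramified reps}). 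This produces exactly the asserted formula as an identity of rational functions in $q^{-s}$ in the region of convergence; since both sides are rational in $q^{-s}$ (the left side by the analog of Corollary~\ref{corollary:mero cont of GL1 integral}, i.e.\ Bernstein's continuation principle applied via the uniqueness statement analogous to Lemma~\ref{lemma:uniqueness}), the identity holds everywhere.

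I expect the only genuinely delicate point to be making sure the inductive hypothesis is applied with the correct ``rank'' parameter and that the substitution $s\mapsto \alpha s/(rka)$ is tracked consistently through every $L$-factor argument — this is where a sign or a factor of $r$ could slip. The $L$-factor telescoping itself is routine once the indices are written out carefully (separating the cases of how $ra$ and $rb$ interleave is unnecessary because the product $\prod_{1\le j\le rb}L_\vartheta(2r\alpha s+j)/L_\vartheta(2r\alpha s+ra+j)$ shifts the window cleanly). One should also note that the choice $a=1$, $b=n-1$ makes the induction bottom out directly at Corollary~\ref{corollary:unramified computation for GL(1)} for the $\GL_a^{(m,r)}=\GL_1^{(m,r)}$ factor at every stage, so in fact a single application of Lemma~\ref{lemma:reduction from GLn to GLa GLb} peeling off one $\GL_1$ at a time suffices, which slightly streamlines the verification.
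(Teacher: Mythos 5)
Your proposal is correct and is essentially the paper's own argument: induction on $n$ with base case Corollary~\ref{corollary:unramified computation for GL(1)}, the inductive step being a single application of Lemma~\ref{lemma:reduction from GLn to GLa GLb} with $a=1$, $b=n-1$, combined with \eqref{eq:GL d tau tau}. The only difference is that you spell out the $L$-factor telescoping and the Satake-parameter concatenation explicitly, whereas the paper defers these details to the linear case in \cite[Theorem~28]{CFGK2}; your bookkeeping checks out.
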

\begin{proof}
The proof is by induction on $n$. The base case $n=1$ is Corollary~\ref{corollary:unramified computation for GL(1)}. Assuming the result for $n-1$, we deduce the result for $n$ by applying Lemma~\ref{lemma:reduction from GLn to GLa GLb} with $a=1$ and $b=n-1$, using the base case for $n=1$ and the induction hypothesis on $n-1$, and combining this with \eqref{eq:GL d tau tau}. For more details see \cite[Theorem~28]{CFGK2}, proved in exactly the same way.
\end{proof}
\begin{proof}[Proof of Theorem~\ref{theorem:unramified computation for Sp(2n),SO(2n)}]
Using local notation, let $\omega$ and $f_{\mathcal{W}(\rho_c(\tau))}$ be normalized and unramified. Also recall $\alpha=rkc+1$. We need to show
\begin{align*}
&Z(s,\omega,f_{\mathcal{W}(\rho_c(\tau))})=
\frac{L_{\vartheta}(r\alpha s+1/2,\pi\times\tau)}
{[L_{\vartheta}(r\alpha s+rn+1/2,\tau)]\prod\limits_{1\leq j\leq rn}L_{\vartheta}(2r\alpha s+2j,\tau,\wedge^2)
L_{\vartheta}(2r\alpha s+2j-1,\tau,\vee^2)}.
\end{align*}
Here $L_{\vartheta}(r\alpha s+rn+1/2,\tau)$ appears only for odd $m$. The first step is to apply
Lemma~\ref{lemma:reduction from classical to GLn}:
\begin{align*}
Z(s,\omega,f_{\mathcal{W}(\rho_c(\tau))})=d_{\tau,\vartheta}(s)Z(\alpha s/(rkn),\omega_{n},f_{\mathcal{W}(\rho_c(\tau))\otimes \mathcal{W}(\rho_c(\tau^*))}).
\end{align*}
For the $\GL_n^{(m,r)}\times\GL_k^{(m,r)}$ integral on the r.h.s., the representation $\tau'$ is $\tau^*$, hence as noted in
\S~\ref{Outline of the computation} and \S~\ref{proof of lemma:reduction from GLn to GLa GLb}, $\tau$ and $\tau'$ satisfy the additional assumptions
of \S~\ref{proof of lemma:reduction from GLn to GLa GLb}, in particular \eqref{eq:invariance prop on GL or SL} which was
needed for this integral. Hence we can proceed to apply Theorem~\ref{theorem:unr GL}.
We also have $L_{\vartheta}(s,\tau\times{\tau^*}^{\vee})=
L_{\vartheta}(s,\tau\times\tau)$, because by
Proposition~\ref{proposition:vartheta of *} and since $\vartheta=\vartheta^{-1}$, $t_{(\tau^*)^{\vee},\vartheta}=
t_{(\tau^*)^{*},\vartheta}=t_{\tau,\vartheta}$.
The result follows when we combine this with \eqref{eq:d tau(s)}, \eqref{eq:standard sym and ext} and
\eqref{eq:standard L of pi and tau identity}.
\end{proof}
\begin{remark}\label{remark:unconditionally local}
As mentioned in the introduction, the local theory can be stated independently of the conjectures of \S~\ref{speh gbl}.
In this case define $\rho_l(\tau)=\Theta_{rl,m,r,\vartheta}(\chi)$, and the computation of the integrals with unramified data remains valid in the sense of meromorphic continuation in $\mathbf{x}$.
\end{remark}

\appendix
\section{The global $\GL_c^{(m,r)}(\A)\times\GL_k^{(m,r)}(\A)$ integral}\label{gbl GL}
Let $F$ be a number field containing $\mu_{2m}$, but note that the assumption $\mu_{2m}<F^*$ will only be used in Lemma~\ref{AppLemma:invariance of inner under G iota delta} below. We use the definitions and notation of \S~\ref{global covering} and \S~\ref{covering of the Levi}.
Recall $\rho_{2d}$ is the global $2$-cocycle for $\Sp_{2d}^{(m)}(\A)$ and $\GL_d^{(m,r)}(\A)$ is by definition realized using $\rho^{\diamondsuit}_{d}$. Then $y\mapsto\langle y,(\eta^{\diamondsuit}_{d})^{-1}(y)\rangle$ is the splitting of $N_{\GL_d}(\A)$ and a splitting of $\GL_d(F)$. Automorphic forms on $\GL_d^{(m,r)}(\A)$ are defined with respect to this fixed splitting of $\GL_d(F)$.
We also defined a global block-compatible cocycle $\rho_{\beta}$ and by Propositions~\ref{proposition:rho beta and rho are cohomologous locally} and \ref{proposition:gbl rho beta and rho are cohomologous}, $\eta_{\beta}=\prod_{\nu}\eta_{\beta,\nu}$ is well defined on $M_{\beta}(\A)$ and
\begin{align}\label{Appendixeq:rho beta and rho square globally}
\rho_{\beta}(m,m')=\frac{\eta_{\beta}(m)\eta_{\beta}(m')}{\eta_{\beta}(mm')}\rho^{\diamondsuit}_{d}(m,m').
\end{align}

Let $G=\GL_c$ and $H=\GL_{2rkc}$. We use the notation and definitions of \S~\ref{integarls for GL} in a global setup. Define additional $2$-cocycles for $G^{(m,r)}(\A)$ as follows:
\begin{align}\label{Appendixgbl rho on left and right}
\rho_L(g,g')=(\rho^{\diamondsuit}_{2rkc})^{-1}(\mathfrak{e}_1(g),\mathfrak{e}_1(g')),\qquad\rho_R(g,g')=\rho^{\diamondsuit}_{2rkc}(\mathfrak{e}_2(g),\mathfrak{e}_2(g')).
\end{align}
Since \eqref{eq:the $2$-cocycle on G times G formula GL} implies
$\sigma^{\diamondsuit}_{2rkc,\nu}(\mathfrak{e}_1(g),\mathfrak{e}_1(g'))^{-1}=\sigma^{\diamondsuit}_{c,\nu}(g,g')$,
$\rho_{L,\nu}=\sigma_{c,\nu}^{\diamondsuit}=\rho_{R,\nu}$ in $\mathrm{H}^2(G(F_{\nu}),\mu_m)$, whence the induced coverings on both copies of
$G(\A)$ are isomorphic to the covering $G^{(m,r)}(\A)$ (realized as explained above, with $\rho_c^{\diamondsuit}$). Let $G^{(m,r)}(\A)[\rho_L]$ and $G^{(m,r)}(\A)[\rho_R]$ denote the realizations of $G^{(m,r)}(\A)$ using $\rho_L$ and $\rho_R$ (resp.). We have global embeddings analogous to \eqref{eq:GL embeddings coverings G and G into H}:
\begin{align}\label{Appendixeq:gbl embedding G left in H}
&G^{(m,r)}(\A)[\rho_L]\rightarrow H^{(m,r)}[\A],\qquad \langle g,\epsilon\rangle\mapsto \langle \mathfrak{e}_1(g),\epsilon^{-1}\rangle,\\
\label{Appendixeq:gbl embedding G right in H}
&G^{(m,r)}(\A)[\rho_R]\rightarrow H^{(m,r)}[\A],\qquad \langle g,\epsilon\rangle\mapsto \langle \mathfrak{e}_2(g),\epsilon\rangle.
\end{align}
According to the local relation \eqref{eq:block compatibility on Levi of P}, globally
\begin{align}\label{Appendixeq:gbl commuting G and G}
\langle \mathfrak{e}_1(g_1),1\rangle \langle \mathfrak{e}_2(g_2),1\rangle =
\langle \mathfrak{e}_2(g_2),1\rangle \langle \mathfrak{e}_1(g_1),1\rangle,\qquad\forall g_1,g_2\in G(\A).
\end{align}
Thus as in the local case (see \S~\ref{integarls for GL}), the embedding $G(\A)\times G(\A) \rightarrow H(\A)$ lifts to an embedding
\begin{align*}
\{(\epsilon_1,\epsilon_2)\in\mu_m^2:\epsilon_1=\epsilon_2\}\backslash G^{(m,r)}(\A)[\rho_L]\times G^{(m,r)}(\A)[\rho_R] \rightarrow H^{(m,r)}(\A).
\end{align*}

To relate between $G^{(m,r)}(\A)[\rho_L]$, $G^{(m,r)}(\A)[\rho_R]$ and $G^{(m,r)}(\A)=G^{(m,r)}(\A)[\rho^{\diamondsuit}_{c}]$, observe that because $\rho_{(c^{2rk})}=\rho^{\diamondsuit}_{2rkc}$ in $\mathrm{H}^2(M_{(c^{2rk})}(\A),\mu_m)$ and also
\begin{align*}
\rho_{(c^{2rk})}(\mathfrak{e}_1(g),\mathfrak{e}_1(g'))=(\rho^{\diamondsuit}_{c})^{2rk-1}(g,g')=(\rho^{\diamondsuit}_{c})^{-1}(g,g'),\qquad
\rho_{(c^{2rk})}(\mathfrak{e}_2(g),\mathfrak{e}_2(g'))=\rho^{\diamondsuit}_{c}(g,g'),
\end{align*}
when we use
\eqref{Appendixeq:rho beta and rho square globally} and \eqref{Appendixgbl rho on left and right}
we obtain
\begin{align*}
&G^{(m,r)}(\A)[\rho_L]\rightarrow G^{(m,r)}(\A),\qquad \langle g,\epsilon\rangle\mapsto \langle g,\eta_{(c^{2rk})}(\mathfrak{e}_1(g))\epsilon\rangle,\\
&G^{(m,r)}(\A)[\rho_R]\rightarrow G^{(m,r)}(\A),\qquad \langle g,\epsilon\rangle\mapsto \langle g,\eta_{(c^{2rk})}^{-1}(\mathfrak{e}_2(g))\epsilon\rangle.
\end{align*}
Dualizing, for a function $\varphi$ on $G^{(m,r)}(\A)$
we have the following functions $\varphi^{L}$ and $\varphi^{R}$ on $G^{(m)}(\A)[\rho_L]$ and $G^{(m)}(\A)[\rho_R]$,
\begin{align}\label{Appendixeq:gbl iso rhoR rhoL for functions}
\varphi^{L}(\langle g,\epsilon\rangle)=
\varphi(\langle g,\eta_{(c^{2rk})}(\mathfrak{e}_1(g))\epsilon\rangle),\qquad
\varphi^{R}(\langle g,\epsilon\rangle)=
\varphi(\langle g,\eta_{(c^{2rk})}^{-1}(\mathfrak{e}_2(g))\epsilon\rangle).
\end{align}

\begin{appproposition}\label{Appendixproposition:global toy integral automorphic}
Let $\varphi_1,\varphi_2$ be continuous functions on $G(F)\backslash G^{(m,r)}(\A)$, such that $\varphi_1$ is genuine and $\varphi_2$ is anti-genuine, and $f$ be a
continuous genuine function on the image of
\begin{align*}G(F)\times G(F)\backslash(G^{(m,r)}(\A)[\rho_L]\times G^{(m,r)}(\A)[\rho_R])
\end{align*}
in $H(F)\backslash H^{(m,r)}(\A)$ (e.g., $f$ on $H(F)\backslash H^{(m,r)}(\A)$). Then
\begin{align*}
\int\limits_{G(F)\times G(F)\backslash G(\A)\times G(\A)}\varphi_1^{L}(\langle g_1,1\rangle)\varphi_2^{R}(\langle g_2,1\rangle)f(\langle\mathfrak{e}_1(g_1),1\rangle\langle\mathfrak{e}_2(g_2),1\rangle)\,dg_1\,dg_2
\end{align*}
is well defined, provided it is absolutely convergent. (To ensure convergence one typically deals with the center of $G(\A)$.)
\end{appproposition}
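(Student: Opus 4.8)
<br>

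The plan is to verify two things: first, that the integrand is a well-defined function on $G(\A)\times G(\A)$ (i.e., independent of the choice of inverse images $\langle g_i,1\rangle$), and second, that it descends to the quotient $G(F)\times G(F)\backslash G(\A)\times G(\A)$, i.e., is left-invariant under the images of $G(F)\times G(F)$ under the fixed splittings. Convergence being assumed, these two facts suffice. This is the exact global $\GL\times\GL$ analogue of Proposition~\ref{proposition:global toy integral automorphic} in the symplectic case, so the strategy mirrors that proof, with $\rho^{\diamondsuit}$-cocycles replacing $\rho$-cocycles and with the involution ${}^{\iota}$ absent.

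For well-definedness on $G(\A)\times G(\A)$, first note $\varphi_1^{L}(\langle g_1,\epsilon_1\rangle)=\epsilon_1\varphi_1^{L}(\langle g_1,1\rangle)$ since $\varphi_1^{L}$ is genuine (it is the pullback of the genuine $\varphi_1$ via \eqref{Appendixeq:gbl iso rhoR rhoL for functions}), and similarly $\varphi_2^{R}(\langle g_2,\epsilon_2\rangle)=\epsilon_2^{-1}\varphi_2^{R}(\langle g_2,1\rangle)$ since $\varphi_2^{R}$ is anti-genuine. On the $H$-side, the embeddings \eqref{Appendixeq:gbl embedding G left in H}--\eqref{Appendixeq:gbl embedding G right in H} together with \eqref{Appendixeq:gbl commuting G and G} give $f(\langle\mathfrak{e}_1(g_1),\epsilon_1^{-1}\rangle\langle\mathfrak{e}_2(g_2),\epsilon_2\rangle)=\epsilon_1^{-1}\epsilon_2\,f(\langle\mathfrak{e}_1(g_1),1\rangle\langle\mathfrak{e}_2(g_2),1\rangle)$ (using that $f$ is genuine and $\mathfrak{e}_1(G)$, $\mathfrak{e}_2(G)$ commute in $H^{(m,r)}(\A)$, which makes the product of the two lifts equal the lift of the image with the product of roots of unity; the analogue of \eqref{eq:g_1 and g_2 product in H GL}). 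Taking $\langle g_1,\epsilon_1\rangle$ and $\langle g_2,\epsilon_2\rangle$ as representatives and inserting the above, all factors $\epsilon_1,\epsilon_2$ cancel, so the integrand depends only on $(g_1,g_2)\in G(\A)\times G(\A)$. Right-invariance on $G(\A)\times G(\A)$ follows from the same cocycle-bookkeeping: translating $g_i\mapsto g_ih_i$ produces $\rho_L(g_1,h_1)$ from $\varphi_1^{L}$, $\rho_R(g_2,h_2)^{-1}$ from $\varphi_2^{R}$, and on the $H$-side the commuting-factors identity (proved just as in Proposition~\ref{proposition:global toy integral}, using \eqref{Appendixeq:gbl commuting G and G}) yields $\rho^{\diamondsuit}_{2rkc}(\mathfrak{e}_1(g_1),\mathfrak{e}_1(h_1))\rho^{\diamondsuit}_{2rkc}(\mathfrak{e}_2(g_2),\mathfrak{e}_2(h_2))=\rho_L(g_1,h_1)^{-1}\rho_R(g_2,h_2)$ by \eqref{Appendixgbl rho on left and right}, so everything cancels.

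For descent to the quotient, let $y_1,y_2\in G(F)$ and $g_1,g_2\in G(\A)$. The fixed splitting of $G(F)$ in $G^{(m,r)}(\A)$ is $y\mapsto\langle y,(\eta^{\diamondsuit}_{c})^{-1}(y)\rangle$; pulling this back through \eqref{Appendixeq:gbl iso rhoR rhoL for functions} gives the splitting of $G(F)$ in $G^{(m,r)}(\A)[\rho_L]$ (resp.\ $[\rho_R]$) that makes $\varphi_1^{L}$ (resp.\ $\varphi_2^{R}$) left-invariant — concretely one checks, as in Corollary~\ref{corollary:eta times takes automorphic to automorphic}, that the relevant section on $G(F)$ is $y\mapsto\langle y,\eta_{(c^{2rk})}^{-1}(\mathfrak{e}_1(y))(\eta^{\diamondsuit}_c)^{-1}(y)\rangle = \langle y,\eta^{-1}(\mathfrak{e}_1(y))\rangle$ (and $\langle y,\eta^{-1}(\mathfrak{e}_2(y))\rangle$ on the right), using that $\eta$ is defined on $H(F)$ and restricts compatibly, exactly as in the paragraph preceding Proposition~\ref{proposition:global toy integral automorphic}. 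Writing $\epsilon_i$ for the corresponding root of unity and moving the $G(F)$-elements past the $G(\A)$-elements inside $f$ using \eqref{Appendixeq:gbl commuting G and G}, the genuineness/anti-genuineness of $\varphi_1^{L},\varphi_2^{R}$, the genuineness of $f$, and the left $H(F)$-invariance of $f$ (with respect to the particular section $\langle\cdot,\eta^{-1}(\cdot)\rangle$ of $H(F)$, which restricts to the sections just described on each copy of $G(F)$) combine so that all $\epsilon_i$ cancel and the integrand at $(y_1g_1,y_2g_2)$ equals the integrand at $(g_1,g_2)$. This is the same chain of equalities as in Proposition~\ref{proposition:global toy integral automorphic}, verbatim except that ${}^{\iota}$ is replaced by the identity and $\rho$ by $\rho^{\diamondsuit}$.

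The only genuinely delicate point — and the main obstacle — is the compatibility of the three identifications of $G(F)$: the one inside $G^{(m,r)}(\A)[\rho_L]$, the one inside $G^{(m,r)}(\A)[\rho_R]$, and the one coming from $H(F)\hookrightarrow H^{(m,r)}(\A)$ via $\mathfrak{e}_1,\mathfrak{e}_2$. I expect to establish this by the same argument used in \S~\ref{covering of the Levi} and \S~\ref{global covering}: $\eta$ is defined on $H(F)$ because $\sigma^{\diamondsuit}$ is a finite product of Hilbert symbols with reciprocity, and $\GL_d(F)$ admits a unique splitting once we fix $\eta^{\diamondsuit}_d$ (the uniqueness here is delicate since $\GL_d(F)$ is not perfect, so one must check that $\eta_{(c^{2rk})}$ restricted to $\mathfrak{e}_i(G(F))$ agrees with the prescribed $1$-cochains — this is the analogue of Proposition~\ref{proposition:varsigma rk+1 is trivial on G(F)} and will use that $\rho^{\diamondsuit}$-based $2$-cocycles on the diagonal of $\GL_c$ are controlled by Proposition~\ref{proposition:sigma on diagonal embedding of SLc} and its global companion). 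Once this bookkeeping is in place, the proof is a routine cancellation of roots of unity, and I would present it tersely, citing Proposition~\ref{proposition:global toy integral} and Proposition~\ref{proposition:global toy integral automorphic} for the parallel computations.
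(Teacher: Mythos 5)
Your proposal is correct and follows essentially the same route as the paper: well-definedness on $G(\A)\times G(\A)$ via the commuting embeddings and cocycle cancellation, then descent to the quotient by globalizing \eqref{eq:eta beta nu} to $M_{(c^{2rk})}(F)$ so that the pullback of the $G(F)$-splitting through \eqref{Appendixeq:gbl iso rhoR rhoL for functions} matches the restriction of the $H(F)$-splitting $h\mapsto\langle h,(\eta_{2rkc}^{\diamondsuit})^{-1}(h)\rangle$ to $\mathfrak{e}_i(G(F))$. The only quibble is your parenthetical identification of the left-copy section as $\langle y,\eta^{-1}(\mathfrak{e}_1(y))\rangle$: your first (correct) expression $\eta_{(c^{2rk})}^{-1}(\mathfrak{e}_1(y))(\eta_c^{\diamondsuit})^{-1}(y)$ evaluates to $\eta_{2rkc}^{\diamondsuit}(\mathfrak{e}_1(y))$, i.e.\ $\eta(\mathfrak{e}_1(y))$ \emph{without} the inverse (consistent with the splittings recorded in \S~\ref{global covering}), but this slip does not propagate into the rest of your argument.
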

\begin{proof}
The integrand is well defined on $G(\A)\times G(\A)$ by
\eqref{Appendixeq:gbl embedding G left in H}--\eqref{Appendixeq:gbl embedding G right in H}. Specifically, if we
replace the section $g_1\mapsto\langle g_1,1\rangle$ of $G(\A)\rightarrow G^{(m,r)}(\A)[\rho_L]$ with
$g_1\mapsto \langle g_1,\epsilon_{g_1}\rangle$, and similarly take a section $g_2\mapsto \langle g_2,\epsilon'_{g_2}\rangle$ of $G(\A)\rightarrow G^{(m,r)}(\A)[\rho_R]$, the integrand does not change:
\begin{align*}
&\varphi_1^{L}(\langle g_1,\epsilon_{g_1}\rangle)\varphi_2^{R}(\langle g_2,\epsilon'_{g_2}\rangle)f(\langle\mathfrak{e}_1(g_1),\epsilon_{g_1}^{-1}\rangle\langle\mathfrak{e}'_2(g_2),\epsilon'_{g_2}\rangle)
\\&=\varphi_1^{L}(\langle g_1,1\rangle)\varphi_2^{R}(\langle g_2,1\rangle)f(\langle\mathfrak{e}_1(g_1),1\rangle\langle\mathfrak{e}_2(g_2),1\rangle)
\end{align*}
Then \eqref{Appendixgbl rho on left and right} and \eqref{Appendixeq:gbl commuting G and G} imply the integral is formally a right-invariant functional on $G(\A)\times G(\A)$:
\begin{align*}
&\varphi_1^{L}(\langle g_1,1\rangle\langle h_1,1\rangle)=\rho_L(g_1,h_1)\varphi_1^{L}(\langle g_1h_1,1\rangle)=(\rho_{2rkc}^{\diamondsuit})^{-1}(\mathfrak{e}_1(g_1),\mathfrak{e}_1(h_1))\varphi_1^{L}(\langle g_1h_1,1\rangle),\\
&\varphi_2^{R}(\langle g_2,1\rangle\langle h_2,1\rangle)=\rho_R(g_2,h_2)\varphi_2^{R}(\langle g_2h_2,1\rangle)
=\rho_{2rkc}^{\diamondsuit}(\mathfrak{e}_2(g_2),\mathfrak{e}_2(h_2))\varphi_2^{R}(\langle 1,g_2h_2\rangle),\\
&f(\langle \mathfrak{e}_1(g_1),1\rangle\langle \mathfrak{e}_2(g_2),1\rangle
\langle \mathfrak{e}_1(h_1),1\rangle\langle \mathfrak{e}_2(h_2),1\rangle)\\&\quad=
\rho_{2rkc}^{\diamondsuit}(\mathfrak{e}_1(g_1),\mathfrak{e}_1(h_1))\rho_{2rkc}^{\diamondsuit}(\mathfrak{e}_2(g_2),\mathfrak{e}_2(h_2))f(\langle
\mathfrak{e}_1(g_1h_1),1\rangle\langle\mathfrak{e}_2(g_2h_2),1\rangle).
\end{align*}
Finally let $y_1,y_2\in G(F)$, $\epsilon_i=\eta_{2rkc}^{\diamondsuit}(\mathfrak{e}_i(y_i))$ and $g_1,g_2\in G(\A)$. The crucial point here is that $\eta_c^{\diamondsuit}$ is well defined on
$G(F)$, hence \eqref{eq:eta beta nu} globalizes and $\eta_{2rkc}^{\diamondsuit}(b)\eta_{(c^{2rk})}(b)=\prod_{i=1}^{2rk}
\eta_{c}^{\diamondsuit}(b_i)$, so that
\begin{align*}
\epsilon_1\eta_{(c^{2rk})}(\mathfrak{e}_1(y_1))=\prod_{i=1}^{2rk}
(\eta_{c}^{\diamondsuit})(y_1)=(\eta_{c}^{\diamondsuit})^{-1}(y_1),\quad
\epsilon_2\eta_{(c^{2rk})}(\mathfrak{e}_2(y_2))=\eta_{c}^{\diamondsuit}(y_2).
\end{align*}
Combining this with \eqref{Appendixeq:gbl iso rhoR rhoL for functions}, we have
\begin{align*}
&\varphi_1^{L}(\langle y_1,\epsilon_1\rangle\langle g_1,1\rangle)=
\varphi_1(\langle y_1,\epsilon_1\eta_{(c^{2rk})}(\mathfrak{e}_1(y_1))\rangle\langle g_1,1\rangle)
=\varphi_1(\langle y_1,(\eta_{c}^{\diamondsuit})^{-1}(y_1)\rangle\langle g_1,1\rangle)
=\varphi_1(\langle g_1,1\rangle),\\
&\varphi_2^{R}(\langle y_2,\epsilon_2^{-1}\rangle\langle g_2,1\rangle)=
\varphi_2(\langle y_2,\epsilon_2^{-1}\eta_{(c^{2rk})}^{-1}(\mathfrak{e}_2(y_2)\rangle\langle g_2,1\rangle)
=\varphi_2(\langle y_2,(\eta_{c}^{\diamondsuit})^{-1}(y_2)\rangle\langle g_2,1\rangle)
=\varphi_2(\langle g_2,1\rangle).
\end{align*}
Therefore
\begin{align}\label{Appendixeq:y_1 y_2}
&\varphi_1^{L}(\langle y_1g_1,1\rangle)\varphi_2^R(\langle y_2g_2,1\rangle)f(
\langle\mathfrak{e}_1(y_1g_1),1\rangle\langle\mathfrak{e}_2(y_2g_2),1\rangle)
\\&=\varphi_1^{L}(\langle y_1,1\rangle \langle g_1,1\rangle )\varphi_2^{R}(\langle y_2,1\rangle \langle g_2,1\rangle ) f(\nonumber
\langle\mathfrak{e}_1(y_1),1\rangle\langle\mathfrak{e}_2(y_2),1\rangle\langle\mathfrak{e}_1(g_1),1\rangle\langle\mathfrak{e}_2(g_2),1\rangle)\nonumber
\\&=\varphi_1^{L}(\langle y_1,\epsilon_1\rangle \langle g_1,1\rangle )\varphi_2^{R}(\langle y_2,\epsilon_2^{-1}\rangle \langle g_2,1\rangle ) f(\nonumber
\langle\mathfrak{e}_1(y_1),\epsilon_1^{-1}\rangle\langle\mathfrak{e}_2(y_2),\epsilon_2^{-1}\rangle\langle\mathfrak{e}_1(g_1),1\rangle\langle\mathfrak{e}_2(g_2),1\rangle)\nonumber\\
&=\varphi_1^{L}(\langle g_1,1\rangle)\nonumber
\varphi_2(\langle g_2,1\rangle)f(\langle\mathfrak{e}_1(g_1),1\rangle\langle\mathfrak{e}_2(g_2),1\rangle)\nonumber.
\end{align}
This completes the proof.
\end{proof}

We turn to construct the integral, following the linear case from \cite{CFK}. Recall $P=M_P\ltimes U_P$ ($P=P_{(rkc,rkc)}$).
Let $\tau_0$ be a genuine cuspidal representation of $\GL_{k}^{(m,r)}(\A)$, and
$\mathcal{E}_{\tau_0}$ be the representation defined in \S~\ref{speh gbl}.
Realize the covering $\widetilde{M}_P(\A)$ using $\rho_{(rkc,rkc)}$, then we can define the genuine automorphic representation $\mathcal{E}_{\tau_0}\otimes\mathcal{E}_{\tau_0^*}$ of
$\widetilde{M}_P(\A)$. The space of the induced representation
\begin{align}\label{Appendixrep:gbl parabolic induction on GL}
\Ind_{\widetilde{P}({\A})}^{H^{(m,r)}({\A})}(|\det|^{r^{-1}(s-1/2)}\mathcal{E}_{\tau_0}\otimes\mathcal{E}_{\tau_0^*})
\end{align}
is the space of genuine functions $f$ on $H^{(m,r)}(\A)$ taking values in the space of $\mathcal{E}_{\tau_0}\otimes\mathcal{E}_{\tau_0^*}$, satisfying
\begin{align}\label{Appendixrep:gbl parabolic induction on GL properties}
f(s,\langle b,1\rangle \langle u,(\eta_{2rkc}^{\diamondsuit})^{-1}(u)\rangle g)=
\eta_{(rkc,rkc)}^{-1}(b)|\det{b}|^{r^{-1}(s-1/2)}\delta_P^{1/2}(b)(\mathcal{E}_{\tau_0}\otimes\mathcal{E}_{\tau_0^*})(\langle b,1\rangle)f(s,g),
\end{align}
for all $b=\diag(b_1,b_2)\in M_{P}(\A)$, $u\in U_P(\A)$ and $g\in \GL_{d}^{(m,r)}(\A)$. Here $\det{b}=\det(b_1b_2^{-1})$. We regard these functions as complex-valued by evaluating at the identity.

According to \eqref{eq:splitting of K H} and \eqref{eq:nu and sigma for covering of H}, $y\mapsto\langle y,1\rangle$ is a splitting of $K_{G,\nu}$ in $G^{(m,r)}(\A)$ and $K_{H,\nu}$ in $H^{(m,r)}(\A)$, which is compatible with our local choices (see the paragraph following \eqref{eq:M beta as a quotient}). Hence $y\mapsto\langle y,1\rangle$ is a splitting of $\prod_{\nu\notin S}K_{H,\nu}$ for any set $S$ such that $F_{\nu}$ is unramified for all $\nu\notin S$.

Let $f$ be a standard
$\widetilde{K}_H$-finite section in the space of \eqref{Appendixrep:gbl parabolic induction on GL}. Define for $\Real(s)\gg0$,
\begin{align}\label{Appendixeq:ES for Hm}
E(h;s,f)=\sum_{y\in P(F)\backslash H(F)}f(s,\langle y,(\eta_{2rkc}^{\diamondsuit})^{-1}(y)\rangle h),\qquad h\in H^{(m,r)}(\A)
\end{align}
and in general by meromorphic continuation. Let
\begin{align*}
E^{U,\psi_U}(h;s,f)=
\int\limits_{U(F)\backslash U({\A})}
E(\langle u,(\eta_{2rkc}^{\diamondsuit})^{-1}(u)\rangle h;s,f)\,\psi_U(u)\,du.
\end{align*}

Let $\pi$ be a genuine cuspidal representation of $G^{(m,r)}(\A)$, $\varphi_1$ be a cusp form
in the space of $\pi$ and $\varphi_2$ be a cusp form in the space of $\pi^{\vee}$ (if $\pi$ is unitary, $\pi^{\vee}=\overline{\pi}$).
Since $G\times G$ normalizes $U$ and stabilizes $\psi_U$ and $du$, by Lemma~\ref{lemma:conjugation of N by H}, $E^{U,\psi_U}(\cdot;s,f)$ is an automorphic function on the image of $G(F)\times G(F)\backslash(G^{(m,r)}(\A)[\rho_L]\times G^{(m,r)}(\A)[\rho_R])$ in
$H(F)\backslash H^{(m,r)}(\A)$, thus we can define the global integral as in
Proposition~\ref{Appendixproposition:global toy integral automorphic}, once we take care of convergence.

Let $C_{r,c}(\A)=\{xI_{c}:x\in \A^{*r}\}$, it is the center $C_{G^{(m,r)}(\A)}$ of $G^{(m,r)}(\A)$.
The product
\begin{align}\label{Appendixintegrand}
\varphi_1^{L}(\langle g_1,1\rangle)\varphi_2^R(\langle g_2,1\rangle)
E^{U,\psi_U}(\langle \mathfrak{e}_1(g_1),1\rangle\langle \mathfrak{e}_2(g_2),1\rangle;s,f)
\end{align}
is invariant under multiplication $(g_1,g_2)\mapsto(zg_1,zg_2)$ for $z\in \widetilde{C}_{r,c}(\A)$. Since the index of $C_{r,c}(\A)$ in $C_{G(\A)}$ is infinite, dividing by $\widetilde{C}_{r,c}(\A)$ is not enough, one must consider a larger subgroup $C'$. We argue as in \cite[pp.~159--160]{BG}. Let $S'$ be a finite set of places of $F$ such that for $\nu\notin S'$, $\varphi_1$ and $\varphi_2$ are right-invariant on $\{\langle y,1\rangle:y\in K_{G,\nu}\}$ and $f$ (and thereby $E^{U,\psi_U}(\cdot;s,f)$) is right-invariant on
$\{\langle y,1\rangle:y\in K_{H,\nu}\}$. Take $C_0'=\{xI_{c}:x\in F^*\A^{*r}\prod_{\nu\notin S'}\mathcal{O}^*_{\nu}\}$, then
$C_{r,c}(\A)<C_0'$, $[C_{G}(\A):C_0']<\infty$ and \eqref{Appendixintegrand} is invariant under $(g_1,g_2)\mapsto(zg_1,zg_2)$ for $z\in C_0'$.
Let $C'=\{xI_{2c}:x\in F^*\A^{*r}\prod_{\nu\notin S'}\mathcal{O}^*_{\nu}\}$. Additionally, take a compactly supported Schwartz function on $\R^*_{>0}$.

The global integral is first ``approximated" by the following integral:
\begin{align}\label{Appendixglglobal1}
Z(s,\varphi_1,\varphi_2,f,\varrho)=&\int\limits_{C'G(F)\times G(F)\backslash G({\A})\times G({\A})}\,
\varphi_1^{L}(\langle g_1,1\rangle)\varphi_2^R(\langle g_2,1\rangle)
\\&\times E^{U,\psi_U}(\nonumber
\langle \mathfrak{e}_1(g_1),1\rangle\langle \mathfrak{e}_2(g_2),1\rangle;s,f)\varrho(|\det(g_2g_1^{-1})|)\,dg_1\,dg_2.
\end{align}
(Cf., \cite[\S~3.4]{CFK} and \cite[\S~4.2]{PSR}.) Below we shall see how to remove the dependency on $\varrho$ to
define the global integral $Z(s,\varphi_1,\varphi_2,f)$.

\begin{apptheorem}\label{Appendixtheorem:main theorem classical groups}
Integral \eqref{Appendixglglobal1} is formally well defined, absolutely convergent away from the poles of the series, and admits meromorphic continuation to $\C$.
\end{apptheorem}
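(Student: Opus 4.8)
The plan is to mirror, in the covering setting, the argument used for the symplectic integral in Theorem~\ref{theorem:main theorem classical groups}, which in turn follows the linear case of \cite{PSR,CFK}. First I would verify that the integrand \eqref{Appendixintegrand} is genuinely well defined on the quotient $C'G(F)\times G(F)\backslash G(\A)\times G(\A)$ appearing in \eqref{Appendixglglobal1}: the left-invariance under $G(F)\times G(F)$ is exactly Proposition~\ref{Appendixproposition:global toy integral automorphic} (applied with $\varphi_1$ replaced by $\varphi_1$, $\varphi_2$ replaced by $\varphi_2$, and $f$ replaced by $E^{U,\psi_U}(\cdot;s,f)$, which is an automorphic function on the image of $G(F)\times G(F)\backslash(G^{(m,r)}(\A)[\rho_L]\times G^{(m,r)}(\A)[\rho_R])$ in $H(F)\backslash H^{(m,r)}(\A)$ by the remark preceding the statement, using Lemma~\ref{lemma:conjugation of N by H} to see that $U$ is normalized compatibly with the splitting). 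The invariance under $C'$ follows because, as noted before the statement, \eqref{Appendixintegrand} is fixed by the diagonal $(g_1,g_2)\mapsto(zg_1,zg_2)$ for $z\in C_0'$, and $E^{U,\psi_U}$ is right-invariant under $\{\langle y,1\rangle:y\in K_{H,\nu}\}$ for $\nu\notin S'$ so that the full scalar subgroup $C'$ acts trivially; the extra factor $\varrho(|\det(g_2g_1^{-1})|)$ is manifestly invariant under the same diagonal scaling and under $C'$ because $\det$ of a scalar $xI_{2c}$ is a square, so $|\det(zg_2(zg_1)^{-1})|=|\det(g_2g_1^{-1})|$.

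Next I would establish absolute convergence. The quotient $C'G(F)\times G(F)\backslash G(\A)\times G(\A)$ has finite volume modulo the part where the Eisenstein series might blow up; away from the poles of $E(h;s,f)$ the series has moderate growth, while $\varphi_1$ and $\varphi_2$ are cusp forms, hence of rapid decay. The Fourier coefficient $E^{U,\psi_U}(\cdot;s,f)$ along $(U,\psi_U)$ inherits moderate growth on $G^{(m,r)}(\A)\times G^{(m,r)}(\A)$. Because $\varrho$ is compactly supported on $\R^*_{>0}$, the $\det$-cutoff confines $g_2g_1^{-1}$ to a set with bounded $|\det|$, which together with the $C'$-quotient effectively reduces the domain to a product of a Siegel-type domain times a compact piece; the standard convergence estimate for doubling integrals (as in \cite[\S~4.2]{PSR}, or \cite[\S~3.4]{CFK} in the higher-rank case) then applies verbatim, since none of those estimates use the linearity of the group — they only use rapid decay of cusp forms, moderate growth of the (coefficient of the) Eisenstein series, and the volume computation. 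The covering enters only through the bookkeeping of the splittings $\eta^{\diamondsuit}$, $\eta_{(rkc,rkc)}$, $\rho_L$, $\rho_R$, which was already arranged in \S~\ref{covering of the Levi} and Appendix~\ref{gbl GL} so as not to affect absolute values of matrix entries; thus the estimates transfer.

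Finally, meromorphic continuation: the Eisenstein series $E(h;s,f)$ of \eqref{Appendixeq:ES for Hm} admits meromorphic continuation in $s$ to all of $\C$ by the general theory of Eisenstein series on covering groups (\cite{MW2}, and \cite{Gao2018} for the covers of \cite{BD}), and taking the Fourier coefficient along $(U,\psi_U)$ commutes with this continuation (the unipotent integration is over the compact $U(F)\backslash U(\A)$). Since for $\Real(s)\gg0$ the integral \eqref{Appendixglglobal1} is absolutely convergent and equals an integral of a meromorphic family against fixed cusp forms over a fixed domain, and because the $\varrho$-cutoff makes the $g_1,g_2$ integration effectively over a finite-volume region where the convergence is locally uniform in $s$ away from the poles of $E$, the resulting function of $s$ continues meromorphically, with possible poles only among those of $E(h;s,f)$. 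I expect the only genuinely non-routine point to be confirming that the various splitting cochains ($\eta^{\diamondsuit}_{2rkc}$ on $U$, $\eta_{(rkc,rkc)}$ on $M_P$, and the passage \eqref{Appendixeq:gbl iso rhoR rhoL for functions} between $\rho_L,\rho_R$ and $\rho^{\diamondsuit}_c$) are mutually compatible so that \eqref{Appendixintegrand} is honestly a function on the stated double quotient — but this compatibility is precisely what Appendix~\ref{gbl GL} up to Proposition~\ref{Appendixproposition:global toy integral automorphic} has already verified, so the remaining work is the (routine) convergence estimate and the invocation of the continuation of $E$.
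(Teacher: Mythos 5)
Your proposal is correct and follows exactly the route of the paper's (much terser) proof: well-definedness is exactly the content of Proposition~\ref{Appendixproposition:global toy integral automorphic} together with the discussion of $C'$-invariance preceding the statement, and convergence and continuation come from rapid decay of cusp forms, moderate growth and meromorphic continuation of the Eisenstein series, and the presence of the cutoff $\varrho$ (cf.\ \cite[\S~4.2]{PSR}). You have simply filled in the details the paper leaves implicit.
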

\begin{proof}
The integral is well defined by the discussion above and Proposition~\ref{Appendixproposition:global toy integral automorphic}.
Convergence and continuation follow from the rapid decay of cusp forms and moderate growth and meromorphic continuation
of the Eisenstein series, and the presence of $\varrho$ (see also \cite[\S~4.2]{PSR}).
\end{proof}

Next we prove \eqref{Appendixglglobal1} is Eulerian. Recall that for $\xi$ in the space of $\mathcal{E}_{\tau_0}$,
the $(rk,c)$ functional $\Lambda(\xi)$ was defined by \eqref{int:a c general Fourier coeff}.
The global $(rk,c)$ model is the space spanned by $\Lambda(\xi)$ as $\xi$ varies.
We then have the similar functional $\Lambda^*$ and model $W_{\psi}(\mathcal{E}_{\tau_0^*})$ for $\mathcal{E}_{\tau_0^*}$. Put
$W_{\psi}(\mathcal{E}_{\tau})=W_{\psi}(\mathcal{E}_{\tau_0})\otimes W_{\psi}(\mathcal{E}_{\tau_0^*})$.
Let $f_{W_{\psi}(\mathcal{E}_{\tau})}$ be the composition of $f$ with $\Lambda\otimes\Lambda^*$:
\begin{align*}
f_{W_{\psi}(\mathcal{E}_{\tau})}(s,h)=&\int\limits_{V_{(c^{rk})}(F)\backslash V_{(c^{rk})}(\A)}
\int\limits_{V_{(c^{rk})}(F)\backslash V_{(c^{rk})}(\A)}\\&\quad f(s,\langle \diag(v,v'),(\eta_{2rkc}^{\diamondsuit})^{-1}(\diag(v,v'))\rangle h)\psi^{-1}(v)\psi^{-1}(v')\,dv\,dv'.\nonumber
\end{align*}
Further let
\begin{align*}
\{\varphi_1^R,\varphi_2^R\}=
\int\limits_{C_0'G(F)\backslash G(\A)}\varphi_1^R(\langle g,1\rangle)\varphi_2^R(\langle g,1\rangle)\,dg.
\end{align*}
\begin{apptheorem}\label{Appendixtheorem:main gbl identity}
Assume the $(rk,c)$ functionals \eqref{int:a c general Fourier coeff} on $\mathcal{E}_{\tau_0}$ and $\mathcal{E}_{\tau_0^*}$ are factorizable
(this is only needed for Lemma~\ref{AppLemma:invariance of inner under G iota delta} below).
For $\Real(s)\gg0$, \eqref{Appendixglglobal1} equals
\begin{align}\label{Appendixglobal2}
\int\limits_{G({\A})}\int\limits_{U_0({\A})}
\{\varphi_1^R,\pi(\langle g,1\rangle)\varphi_2^R\}f_{W_{\psi}(\mathcal{E}_{\tau})}(s,\langle\delta u_0,(\eta_{2rkc}^{\diamondsuit})^{-1}(\delta u_0)\rangle
\langle\mathfrak{e}_2(g),1\rangle)
\,\psi_U(u_0)\varrho(|\det(g)|)\,du_0\,dg.
\end{align}
\end{apptheorem}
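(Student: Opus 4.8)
The plan is to run the unfolding argument of Theorem~\ref{theorem:main gbl identity} and its linear precursor \cite[\S~3.4]{CFK}, but now in the $\GL_c^{(m,r)}(\A)\times\GL_k^{(m,r)}(\A)$ setting, keeping careful track of the $2$-cocycles $\rho_L$, $\rho_R$, $\rho^{\diamondsuit}_{2rkc}$ and the various splittings. First I would insert the definition \eqref{Appendixeq:ES for Hm} of the Eisenstein series into \eqref{Appendixglglobal1}, and, exactly as in the proof of Theorem~\ref{theorem:main gbl identity}, decompose the sum over $P(F)\backslash H(F)$ into double cosets $P(F)\backslash H(F)/R(F)$, where $R=(\mathfrak{e}_1(G)\times\mathfrak{e}_2(G))\ltimes U$. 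For each representative $\gamma$ one gets an orbit integral $\mathrm{I}(\gamma)$; collapsing the inner sum over $R_\gamma(F)\backslash R(F)$ into the integral is legitimate for $\Real(s)\gg 0$ because of the presence of $\varrho$ and the cuspidality of $\pi$. The manipulations with the cocycles here are the ones already carried out in the proof of Theorem~\ref{theorem:main gbl identity}: one uses Lemma~\ref{lemma:conjugation of N by H} to conjugate $N_{2rkc}(\A)$-elements through $H(F)$-elements, and \eqref{Appendixgbl rho on left and right}, \eqref{Appendixeq:gbl commuting G and G}, together with the transformation formulas \eqref{Appendixeq:gbl iso rhoR rhoL for functions}, to absorb the $\rho_L$, $\rho_R$ factors produced when moving $y_1\in G(F)$ and $y_2\in G(F)$ past $g_1$, $g_2$. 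Since $\iota=I_c$ in the general linear case, the argument is strictly simpler than in the symplectic case: there are no involution cocycles $\varsigma_{\iota,c}$ or lifts ${}^{\iota}$ to track.

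Next I would show that $\mathrm{I}(\gamma)=0$ unless $\gamma\in P(F)\delta R(F)$. This is the vanishing step, and it uses the same three families of arguments as in \cite[\S~3.4]{CFK} and in the proof of Theorem~\ref{theorem:main gbl identity}: (i) finding $U'<U$ with $\psi_U|_{U'}\neq 1$ and ${}^{\gamma}U'<U_P$, giving an inner integral $\int_{U'(F)\backslash U'(\A)}\psi_U(u')\,du'=0$; (ii) using cuspidality of $\pi$ or $\pi^{\vee}$ to produce a unipotent integral $\int_{V(F)\backslash V(\A)}\varphi_i(\langle v,\cdot\rangle)\,dv=0$; and (iii) using the vanishing property (item~\eqref{def:Whittaker--Speh--Shalika 1} in the definition of an $(rk,c)$ representation) of $\mathcal{E}_{\tau_0}$ and $\mathcal{E}_{\tau_0^*}$ along unipotent orbits greater than or not comparable with $((rk)^c)$. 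As in the proof of Theorem~\ref{theorem:main gbl identity}, the only point needing attention for the covering is that all conjugations introduced in these inner integrals stay within $H(F)$ and $N_{2rkc}(\A)$, so that Lemma~\ref{lemma:conjugation of N by H} applies and the splitting $u\mapsto\langle u,(\eta^{\diamondsuit}_{2rkc})^{-1}(u)\rangle$ is preserved; for the cuspidality arguments of type (ii) with $V<N_{\GL_c}^-$ one uses the unique splitting of $V(\A)$ into the relevant copy of $G^{(m,r)}$ together with \eqref{eq:epsilon for conjugation between split subgroups}. I expect this step to be the main obstacle, because it requires a careful inventory of the representatives $\gamma$ and of which unipotent subgroups and characters are attached to each — but it is an essentially mechanical adaptation of the linear computation, and the covering complications are confined to the bookkeeping already developed in \S~\ref{global covering} and \S~\ref{covering of the Levi}.

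Finally I would compute the surviving term $\mathrm{I}(\delta)$. Here I determine the stabilizer $R_\delta$; using the decomposition $U={}^{\delta^{-1}}\diag(V_{(c^{rk})},V_{(c^{rk})})\ltimes U_0$ one finds $R_\delta=\{(g,g):g\in G\}\ltimes {}^{\delta^{-1}}\diag(V_{(c^{rk})},V_{(c^{rk})})$. Carrying out the $\diag(V_{(c^{rk})},V_{(c^{rk})})$-integration against $\psi_U$ produces exactly the composed section $f_{W_{\psi}(\mathcal{E}_\tau)}$ (this is where the factorizability hypothesis on the $(rk,c)$ functionals enters, via \eqref{eq:partial decomp of Lambda}, and where one invokes the conjugation identity analogous to \eqref{eq:conjugation of v_g}). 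It then remains to factor the integral through $\{(g,g):g\in G(\A)\}$: after the substitution $g_2\mapsto g_1 g_2$ and the usual bilinearity of the inner pairing, the diagonal $\{(g,g):g\in G(F)\}$-invariance together with Lemma~\ref{AppLemma:invariance of inner under G iota delta} (the general linear analog of Claim~\ref{claim:invariance of inner under G iota delta}, asserting left-invariance of $f_{W_{\psi}(\mathcal{E}_\tau)}$ under ${}^{\delta}\langle(g,g),\cdot\rangle$, which in turn rests on Proposition~\ref{proposition:extra invariance} and the splitting of $H^{(m,r)}$ over $\{(g,g):g\in G\}$ established in \S~\ref{integarls for GL}) lets one collapse the outer integration down to $\{(g,1):g\in G(\A)\}$, giving the matrix coefficient $\{\varphi_1^R,\pi(\langle g,1\rangle)\varphi_2^R\}$ and the $U_0(\A)$-integral of \eqref{Appendixglobal2}. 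Since $\mathrm{I}(\gamma)=0$ for $\gamma\notin P(F)\delta R(F)$, the whole series equals $\mathrm{I}(\delta)$, which is \eqref{Appendixglobal2}, for $\Real(s)\gg 0$.
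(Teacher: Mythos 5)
Your proposal is correct, and its skeleton --- unfolding the series over $P(F)\backslash H(F)/D(F)$ (your $R$ is the paper's $D$), vanishing of every orbit except $P\delta D$, and the computation of $\mathrm{I}(\delta)$ via the conjugation identity \eqref{Appendixeq:conjugation of v_g}, the substitution $g_2\mapsto g_1g_2$, and Lemma~\ref{AppLemma:invariance of inner under G iota delta} --- is exactly the one the paper follows. The one place where you take a genuinely different route is the vanishing step: you propose to redo it directly by adapting the three families of arguments (character nontriviality on ${}^{\gamma}U'<U_P$, cuspidality, and vanishing of Fourier coefficients on orbits $\succsim((rk)^c)$) from the symplectic covering case and from \cite{CFK}, whereas the paper simply cites \cite[Theorems~2.1, 3.1]{DimaKaplan} together with the constant-term computation of \cite[Lemma~4.1]{JngLiu}. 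Your route is more self-contained and parallels what the paper actually does in the body for Theorem~\ref{theorem:main gbl identity}; the paper's route outsources the orbit inventory --- which, as you anticipate, is the laborious part --- to a reference where it is carried out once for the general doubling setup. Either is acceptable. One small correction: the factorizability hypothesis on the $(rk,c)$ functionals is \emph{not} what produces $f_{W_{\psi}(\mathcal{E}_{\tau})}$ from the $\diag(V_{(c^{rk})},V_{(c^{rk})})$-integration --- that step needs only \eqref{Appendixeq:conjugation of v_g} and the defining equivariance of the functionals. As the statement of the theorem itself indicates, factorizability is used exclusively inside Lemma~\ref{AppLemma:invariance of inner under G iota delta}, where the invariance of $f_{W_{\psi}(\mathcal{E}_{\tau})}$ under ${}^{\delta}\langle(g,g),\cdot\rangle$ for $g=\diag(t,I_{c-1})$ (the non-perfect part of $\GL_c$, to which Proposition~\ref{proposition:extra invariance} does not apply) is reduced to a local statement about the action on $\rho_c(\tau_{\nu})\otimes\rho_c(\tau_{\nu}^*)$.
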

\begin{proof}
Let $D=(G\times G)\ltimes U<Q$ and for $h\in H$,
$D_h={}^{h^{-1}}P\cap D$. To lighten the notation, set $G^{\times}(\A)=G(\A)\times G(\A)$. For $\Real(s)\gg0$, $Z(s,\varphi_1,\varphi_2,f)$ equals
\begin{align*}
&\sum\limits_{h\in P(F)\backslash H(F)/D(F)}\quad\int\limits_{C'G^{\times}(F)\backslash G^{\times}(\A)}
\quad\int\limits_{U(F)\backslash U({\A})}\varrho(|\det(g_2g_1^{-1})|)\varphi_1^{L}(\langle g_1,1\rangle)\varphi_2^{R}(\langle g_2,1\rangle)\,\sum\limits_{y\in D_{h}(F)\backslash D(F)}
\\&\times f(s,\langle h,(\eta_{2rkc}^{\diamondsuit})^{-1}(h)\rangle\langle y,(\eta_{2rkc}^{\diamondsuit})^{-1}(y)\rangle\langle u,(\eta_{2rkc}^{\diamondsuit})^{-1}(u)\rangle\nonumber
\langle \mathfrak{e}_1(g_1),1\rangle\langle \mathfrak{e}_2(g_2),1\rangle)\,\psi_U(u)\,du\,dg_1\,dg_2.
\end{align*}
(See the argument for $\Sp_c^{(m)}(\A)$ in \S~\ref{global symplectic}.)
Write $y=y_0\mathfrak{e}_1(y_1)\mathfrak{e}_1(y_2)$ with $y_0\in U(F)$ and $y_1,y_2\in G(F)$. Since
$H^{(m,r)}(\A)$ is split over $H(F)$ with respect to $h\mapsto\langle h,(\eta_{2rkc}^{\diamondsuit})^{-1}(h)\rangle$,
using the global analog of \eqref{eq:sigma on h and v} (see Corollary~\ref{corollary:rho and eta on H and N without conjugation}), the fact that
$G\times G$ normalizes $U$ and stabilizes $\psi_U$, and reversing \eqref{Appendixeq:y_1 y_2}, we obtain
\begin{align*}
&\sum\limits_{h\in P(F)\backslash H(F)/D(F)}\quad\int\limits_{C'G^{\times}(F)\backslash G^{\times}(\A)}
\quad\int\limits_{U(F)\backslash U({\A})}\varrho(|\det(g_2g_1^{-1})|)\varphi_1^{L}(\langle y_1g_1,1\rangle)\varphi_2^{R}(\langle y_2g_2,1\rangle)\,\sum\limits_{y\in D_{h}(F)\backslash D(F)}
\\&\times f(s,\langle h,(\eta_{2rkc}^{\diamondsuit})^{-1}(h)\rangle\langle y_0u,(\eta_{2rkc}^{\diamondsuit})^{-1}(y_0u)\rangle\nonumber
\langle \mathfrak{e}_1(y_1g_1),1\rangle\langle \mathfrak{e}_2(y_2g_2),1\rangle)\,\psi_U(u)\,du\,dg_1\,dg_2.
\end{align*}
Thus when we collapse the inner sum into the integral we have $\sum_{h\in P(F)\backslash H(F)/D(F)}\mathrm{I}(h)$, where
\begin{align*}
\mathrm{I}(h)=
&\int\limits_{C'D_h(F)\backslash D(\A)}\varrho(|\det(g_2g_1^{-1})|)\varphi_1^{L}(\langle g_1,1\rangle)\varphi_2^{R}(\langle g_2,1\rangle)
\\&f(s,\langle h,(\eta_{2rkc}^{\diamondsuit})^{-1}(h)\rangle\langle u,(\eta_{2rkc}^{\diamondsuit})^{-1}(u)\rangle\nonumber
\langle \mathfrak{e}_1(g_1),1\rangle\langle \mathfrak{e}_2(g_2),1\rangle)\,\psi_U(u)\,du\,dg_1\,dg_2.
\end{align*}
For all $h$ such that $PhD\ne P\delta D$, $\mathrm{I}(h)=0$. This follows from \cite[Theorems~2.1, 3.1]{DimaKaplan} as explained
in \cite[\S~3.2]{DimaKaplan} (with $(k,c)$ of \cite[\S~3.1]{DimaKaplan} replaced by $(rk,c)$ here). In particular, as noted in \cite[\S~3.2]{DimaKaplan}, one has to use the constant term computation of $\mathcal{E}_{\tau}$ given by \cite[Lemma~4.1]{JngLiu}, which can be carried out for $\GL_{rkc}^{(m,r)}(\A)$ (with $a=rk$ in the notation of \textit{loc. cit.}).

Now we consider the remaining summand $\mathrm{I}(\delta)$ and show that it equals \eqref{Appendixglobal2}.
Denote an element in the direct product $G\times G$ by $[g,g']$ and
set $G^{\triangle}=\{[g,g]:g\in G\}$.
We have
\begin{align*}
&D_{\delta}=G^{\triangle}\ltimes{}^{\delta^{-1}}\diag(V_{(c^{rk})},V_{(c^{rk})}),\\
&U={}^{\delta^{-1}}\diag(V_{(c^{rk})},V_{(c^{rk})})\ltimes(U\cap U_P)={}^{\delta^{-1}}\diag(V_{(c^{rk})},V_{(c^{rk})})\ltimes U_0.
\end{align*}
($\delta$ does not normalize $\diag(V_{(c^{rk})},V_{(c^{rk})})$.)
For $u\in{}^{\delta^{-1}}\diag(V_{(c^{rk})},V_{(c^{rk})})$, set ${}^{\delta}u=\diag(v,v')$. Then
because both $\diag(V_{(c^{rk})},V_{(c^{rk})})$ and ${}^{\delta^{-1}}\diag(V_{(c^{rk})},V_{(c^{rk})})$ are subgroups of $N_{2rkc}$,
by Lemma~\ref{lemma:conjugation of N by H} and Corollary~\ref{corollary:rho and eta on H and N without conjugation} we obtain
\begin{align}\label{Appendixeq:conjugation of v_g}
{}^{\delta}\langle u,(\eta_{2rkc}^{\diamondsuit})^{-1}(u)\rangle=
\langle \diag(v,v'),(\eta_{2rkc}^{\diamondsuit})^{-1}(\diag(v,v'))\rangle.
\end{align}
Since \eqref{eq:eta beta nu} implies
\begin{align*}
\eta_{(rkc,rkc)}^{-1}(\diag(v,v'))(\eta_{2rkc}^{\diamondsuit})^{-1}(\diag(v,v'))=(\eta_{rkc}^{\diamondsuit})^{-1}(v)(\eta_{rkc}^{\diamondsuit})^{-1}(v'),
\end{align*}
when we combine \eqref{Appendixeq:conjugation of v_g} with \eqref{Appendixrep:gbl parabolic induction on GL properties}, and also use the fact that
$\psi_U({}^{\delta^{-1}}\diag(v,v'))$ equals the product of $(rk,c)$ characters $\psi^{-1}(v)\psi^{-1}(v')$,
we deduce
\begin{align*}
&\int\limits_{{}^{\delta^{-1}}\diag(V_{(c^{rk})}(F),V_{(c^{rk})}(F))\backslash U(\A)}
f(s,\langle\delta,(\eta_{2rkc}^{\diamondsuit})^{-1}(\delta)\rangle\langle u,(\eta_{2rkc}^{\diamondsuit})^{-1}(u)\rangle)\,\psi_U(u)\,du
\\&\quad=\int\limits_{U_0(\A)}
f_{W_{\psi}(\mathcal{E}_{\tau})}(s,\langle\delta,\eta^{-1}(\delta)\rangle\langle u_0,\eta^{-1}(u_0)\rangle
)\,\psi_U(u_0)\,du_0.
\end{align*}
Now $\mathrm{I}(\delta)$ equals
\begin{align*}
&\int\limits_{(C'\,G^{\triangle}(F))\backslash G^{\times}(\A)}
\quad\int\limits_{U_0(\A)}
\varrho(|\det(g_2g_1^{-1})|)\varphi_1^{L}(\langle g_1,1\rangle)\varphi_2^{R}(\langle g_2,1\rangle)
\\&f_{W_{\psi}(\mathcal{E}_{\tau})}(s,\langle \delta,(\eta_{2rkc}^{\diamondsuit})^{-1}(\delta)\rangle\langle u_0,(\eta_{2rkc}^{\diamondsuit})^{-1}(u_0)\rangle\nonumber
\langle \mathfrak{e}_1(g_1),1\rangle\langle \mathfrak{e}_2(g_2),1\rangle)\,\psi_U(u_0)\,du_0\,dg_1\,dg_2.
\end{align*}

Next, change variables $g_2\mapsto g_1g_2$, then $G^{\triangle}$ is mapped into $\{[g,I_c]:g\in G(F)\}$ which we identify with $G(F)$,
and $C'\mapsto \{[xI_c,I_c]:x\in F^*\A^{*r}\prod_{\nu\notin S'}\mathcal{O}^*_{\nu}\}=C_0'$. Also apply the definition \eqref{Appendixeq:gbl iso rhoR rhoL for functions} twice to $\varphi_1^{L}$. Then
\begin{align*}
\mathrm{I}(\delta)=
&\int\limits_{C_0'G(F)\backslash G^{\times}(\A)}
\quad\int\limits_{U_0(\A)}
\varrho(|\det g_2|)\varphi_1^{R}(\langle g_1,\eta_{(c^{2rk})}(\mathfrak{e}_1(g_1))\eta_{(c^{2rk})}(\mathfrak{e}_2(g_1))\rangle)\varphi_2^{R}(\langle g_1g_2,1\rangle)
\\&f_{W_{\psi}(\mathcal{E}_{\tau})}(s,\langle \delta,(\eta_{2rkc}^{\diamondsuit})^{-1}(\delta)\rangle\langle u_0,(\eta_{2rkc}^{\diamondsuit})^{-1}(u_0)\rangle\nonumber
\langle \mathfrak{e}_1(g_1),1\rangle\langle \mathfrak{e}_2(g_1g_2),1\rangle)\,\psi_U(u_0)\,du_0\,dg_1\,dg_2.
\end{align*}
Separating $\langle g_1g_2,1\rangle$ in $G^{(m,r)}(\A)[\rho_R]$ and $\langle \mathfrak{e}_2(g_1g_2),1\rangle$ in $H^{(m,r)}(\A)$,
the integral becomes
\begin{align}\label{Appendixint:I delta factoring change g_1 g_2}
&\int\limits_{C_0'G(F)\backslash G^{\times}(\A)}
\quad\int\limits_{U_0(\A)}\varrho(|\det g_2|)
\varphi_1^{R}(\langle g_1,\prod_{i=1}^2\eta_{(c^{2rk})}(\mathfrak{e}_i(g_1))\rangle)\varphi_2^{R}(\langle g_1,1\rangle\langle g_2,1\rangle)f_{W_{\psi}(\mathcal{E}_{\tau})}(s,
\\&\langle \delta,(\eta_{2rkc}^{\diamondsuit})^{-1}(\delta)\rangle\langle u_0,(\eta_{2rkc}^{\diamondsuit})^{-1}(u_0)\rangle\nonumber
\langle (g_1,g_1),\rho_{2rkc}^{\diamondsuit}(\mathfrak{e}_1(g_1),\mathfrak{e}_2(g_1))\rangle\langle \mathfrak{e}_2(g_2),1\rangle)\,\psi_U(u_0)\,du_0\,dg_1\,dg_2.\nonumber
\end{align}
When we apply \eqref{Appendixeq:rho beta and rho square globally} with $\beta=(c^{2rk})$, $m=\mathfrak{e}_1(g_1)$ and $m'=\mathfrak{e}_2(g_1)$, then use \eqref{eq:rho beta},
\begin{align*}
\prod_{i=1}^2\eta_{(c^{2rk})}(\mathfrak{e}_i(g_1))\rho_{2rkc}^{\diamondsuit}(\mathfrak{e}_1(g_1),\mathfrak{e}_2(g_1))
=\rho_{(c^{2rk})}(\mathfrak{e}_1(g_1),\mathfrak{e}_2(g_1))\eta_{(c^{2rk})}((g_1,g_1))=\eta_{(c^{2rk})}((g_1,g_1))\,(!).
\end{align*}
Conjugate $U_0$ by $(g_1,g_1)$, ${}^{(g_1,g_1)^{-1}}u_0=v_{g_1}u_{g_1}$ with $v_{g_1}\in {}^{\delta^{-1}}\diag(V_{(c^{rk})}(\A),V_{(c^{rk})}(\A))$ and $u_{g_1}\in U_0(\A)$.
Since $v_{g_1},u_{g_1}\in N_{rkc}(\A)$, we can write this in $H^{(m,r)}(\A)$ as in \eqref{Appendixeq:conjugation of v_g} and
the $du_0$-integral of \eqref{Appendixint:I delta factoring change g_1 g_2} equals
\begin{align*}
&\int\limits_{U_0(\A)}
f_{W_{\psi}(\mathcal{E}_{\tau})}(s,{}^{\delta}\langle (g_1,g_1),\eta_{(c^{2rk})}((g_1,g_1))\rangle\langle \delta,(\eta_{2rkc}^{\diamondsuit})^{-1}(\delta)\rangle\\&\langle v_{g_1},(\eta_{2rkc}^{\diamondsuit})^{-1}(v_{g_1})\rangle\langle u_{g_1},(\eta_{2rkc}^{\diamondsuit})^{-1}(u_{g_1})\rangle\langle\mathfrak{e}_2(g_2),1\rangle)\,\psi_U(u_0)\,du_0.
\end{align*}

\begin{applemma}\label{AppLemma:invariance of inner under G iota delta}
For any $g\in G(\A)$ and $h\in H^{(m,r)}(\A)$,
\begin{align*}
&f_{W_{\psi}(\mathcal{E}_{\tau})}
(s,{}^{\delta}\langle (g,g),\eta_{(c^{2rk})}((g,g))\rangle h
)=f_{W_{\psi}(\mathcal{E}_{\tau})}(s,h).
\end{align*}
\end{applemma}
Before proving the lemma we deduce that \eqref{Appendixint:I delta factoring change g_1 g_2} equals
\begin{align*}
&\int\limits_{C_0'G(F)\backslash G^{\times}(\A)}
\quad\int\limits_{U_0(\A)}
\varrho(|\det g_2|)\varphi_1^{R}(\langle g_1,1\rangle)\varphi_2^{R}(\langle g_1,1\rangle\langle g_2,1\rangle)f_{W_{\psi}(\mathcal{E}_{\tau})}
\\&(s,\langle \delta,(\eta_{2rkc}^{\diamondsuit})^{-1}(\delta)\rangle\langle v_{g_1},(\eta_{2rkc}^{\diamondsuit})^{-1}(v_{g_1})\rangle\langle u_{g_1},(\eta_{2rkc}^{\diamondsuit})^{-1}(u_{g_1})\rangle\nonumber
\langle \mathfrak{e}_2(g_2),1\rangle)\,\psi_U(u_0)\,du_0\,dg_1\,dg_2.\nonumber
\end{align*}
Changing variables $u_{g_1}\mapsto u_0$, the change to $\psi_U$ is cancelled by the left equivariance property of
$f_{W_{\psi}(\mathcal{E}_{\tau})}$ under $v_{g_1}$ (see \eqref{Appendixeq:conjugation of v_g}). One can also combine $\delta$ and $u_0$ in $H^{(m,r)}(\A)$ (see Corollary~\ref{corollary:rho and eta on H and N without conjugation}). The integral equals

\begin{align*}
&\int\limits_{C_0'G(F)\backslash G^{\times}(\A)}
\quad\int\limits_{U_0(\A)}
\varrho(|\det g_2|)\varphi_1^{R}(\langle g_1,1\rangle)\varphi_2^{R}(\langle g_1,1\rangle\langle g_2,1\rangle)
\\&f_{W_{\psi}(\mathcal{E}_{\tau})}(s,\langle \delta u_0,(\eta_{2rkc}^{\diamondsuit})^{-1}(\delta u_0)\rangle\nonumber
\langle \mathfrak{e}_2(g_2),1\rangle)\,\psi_U(u_0)\,du_0\,dg_1\,dg_2.\nonumber
\end{align*}
Finally, factoring through $\{[g,I_c]:g\in G(\A)\}$ (which contains $C_0'$) and using the definition of $\{\varphi_1^R,\varphi_2^R\}$ we conclude that $\mathrm{I}(\delta)$ equals
\begin{align*}
&\int\limits_{G(\A)}\{\varphi_1^R,\pi(\langle g_2,1\rangle)\varphi_2^R\}
\int\limits_{U_0(\A)}
f_{W_{\psi}(\mathcal{E}_{\tau})}(s,\langle \delta u_0,(\eta_{2rkc}^{\diamondsuit})^{-1}(\delta u_0)\rangle\langle \mathfrak{e}_2(g_2),1\rangle)\varrho(|\det g_2|)\,\psi_U(u_0)\,du_0\,dg_2
\end{align*}
($\pi$ acts by right-multiplication).
Therefore \eqref{Appendixglglobal1} and \eqref{Appendixglobal2} are equal for $\Real(s)\gg0$.
\end{proof}
\begin{proof}[Proof of Lemma~\ref{AppLemma:invariance of inner under G iota delta}]
Let $g\in G(\A)$. First observe ${}^{\delta}(g,g)=(g,g)$. At each $\nu$, $\langle\delta_{\nu},1\rangle$ and $\langle(g_{\nu},g_{\nu}),1\rangle$ commute in $G^{(m,r)}(F_{\nu})$ by the proof of Proposition~\ref{proposition:equiv props GL} (for the proof we used $\sigma_{c,\nu}^{\diamondsuit}$, the property holds for any isomorphic covering, in particular with $\rho_{c,\nu}^{\diamondsuit}$, hence also holds globally). Now by \eqref{Appendixrep:gbl parabolic induction on GL properties}, we need to show that
$\langle (g,g),\eta_{(rkc,rkc)}^{-1}((g,g))\eta_{(c^{2rk})}((g,g))\rangle$ acts trivially on
$W_{\psi}(\mathcal{E}_{\tau})$.

Recall $\widetilde{M}_P(\A)$ is realized using $\rho_{(rkc,rkc)}$. Identity~\eqref{Appendixeq:rho beta and rho square globally} implies that for $g'\in G(\A)$,
\begin{align*}
&\rho_{(rkc,rkc)}((g,g),(g',g'))\eta_{(rkc,rkc)}^{-1}((g,g))\eta_{(rkc,rkc)}^{-1}((g',g'))\\&=
\eta_{(rkc,rkc)}^{-1}((gg',gg'))\rho_{2rkc}^{\diamondsuit}((g,g),(g',g')).
\end{align*}
In addition $\rho_{(c^{2rk})}((g,g),(g',g'))=\rho_c^{\diamondsuit}(g,g')^{2rk}=1$, whence \eqref{Appendixeq:rho beta and rho square globally} also shows
\begin{align*}
&\rho_{2rkc}^{\diamondsuit}((g,g),(g',g'))\eta_{(c^{2rk})}((g,g))\eta_{(c^{2rk})}((g',g'))
=\eta_{(c^{2rk})}((gg',gg')).
\end{align*}
Combining these identities,
\begin{align*}
&\langle (g,g),\eta_{(rkc,rkc)}^{-1}((g,g))\eta_{(c^{2rk})}((g,g))\rangle
\langle (g',g'),\eta_{(rkc,rkc)}^{-1}((g',g'))\eta_{(c^{2rk})}((g',g'))\rangle\\&=
\langle (gg',gg'),\eta_{(rkc,rkc)}^{-1}((gg',gg'))\eta_{(c^{2rk})}((gg',gg'))\rangle.
\end{align*}
Thus $(g,g)\mapsto\langle (g,g),\eta_{(rkc,rkc)}^{-1}((g,g))\eta_{(c^{2rk})}((g,g))\rangle$ is a splitting into $\widetilde{M}_P(\A)$.
Hence we can consider separately $g\in\SL_c(\A)$ and $g=\diag(t,I_{c-1})\in T_{G}(\A)$. We provide a global argument for
$\SL_c(\A)$.

Let $g\mapsto d_g=\diag(g,\ldots,g)$ denote the
diagonal embedding of $G$ in $\GL_{rkc}$. By Corollary~\ref{corollary:gbl splitting for SL_c} $\GL_{rkc}^{(m,r)}(\A)$ is split over
$\{d_g:g\in\SL_c(\A)\}$, and because $\SL_c(\A)$ is perfect as a group, this splitting is unique and we denote it by
$d_g\mapsto\langle d_g,\varrho(d_g)\rangle$
($\varrho(d_g)$ was explicitly given in \textit{loc. cit.} with respect to $\rho_{rkc}^{\diamondsuit}$). Then because $\rho_{(rkc,rkc)}((g,g),(g',g'))=\rho_{rkc}^{\diamondsuit}(g,g')^2$,
$(g,g)=\diag(d_g,d_g)\mapsto\langle\diag(d_g,d_g),\varrho(d_g)\varrho(d_g)\rangle$ is the splitting of $\{(g,g):g\in\SL_c(\A)\}$ in
$\widetilde{M}_P(\A)$, and coincides with the splitting above. Thus
$\langle (g,g),\eta_{(rkc,rkc)}^{-1}((g,g))\eta_{(c^{2rk})}((g,g))\rangle
=\langle d_g,\varrho(d_g)\rangle\langle d_g,\varrho(d_g)\rangle$. According to Proposition~\ref{proposition:extra invariance} $\langle d_g,\varrho(d_g)\rangle$ acts trivially on $W_{\psi}(\mathcal{E}_{\tau_0})$ and on
$W_{\psi}(\mathcal{E}_{\tau_0^*})$.

It remains to consider $\diag(t,I_{c-1})$, but we provide a local argument which applies to all $g\in G$.
Since the $(rk,c)$ functional $\xi\mapsto\Lambda(\pi(\langle d_g,1\rangle)\xi)$ ($g\in G(\A)$) is proportional to
$\xi\mapsto\Lambda(\xi)$ and under our assumption that $\Lambda$ is decomposable,
it suffices to prove a local statement: namely the action of
$\langle (g_{\nu},g_{\nu}),\eta_{(rkc,rkc),\nu}^{-1}((g_{\nu},g_{\nu}))\eta_{(c^{2rk}),\nu}((g_{\nu},g_{\nu}))\rangle$
on $\rho_c(\tau_{\nu})\otimes\rho_c(\tau_{\nu}^*)$ is trivial.

We switch to local notation and omit $\nu$.
Since $\eta_{c}^{\diamondsuit}((g,g))^{2rk}=1$, by \eqref{eq:eta beta nu} we have
$\eta_{(c^{2rk})}((g,g))=\eta_{2rkc}^{\diamondsuit}((g,g))^{-1}$ (use $\beta=(c^{2rk})$) and also
$\eta_{(rkc,rkc)}((g,g))=\eta_{rkc}^{\diamondsuit}(d_{g})^2\eta_{2rkc}^{\diamondsuit}((g,g))^{-1}$, hence
\begin{align*}
\langle (g,g),\eta_{(rkc,rkc)}^{-1}((g,g))\eta_{(c^{2rk})}((g,g))\rangle
&=\langle (g,g),\eta_{rkc}^{\diamondsuit}(d_{g})^{-2}\rangle.
\end{align*}
As an element of $\widetilde{M}_P$ (realized using $\rho_{(rkc,rkc)}$) the last element equals the product
\begin{align*}
\langle d_{g},(\eta_{rkc}^{\diamondsuit})^{-1}(d_{g})\rangle
\langle d_{g},(\eta_{rkc}^{\diamondsuit})^{-1}(d_{g})\rangle.
\end{align*}
The map $\langle b,\epsilon\rangle\mapsto\langle b,\eta_{rkc}^{\diamondsuit}(b)\epsilon\rangle$ is the isomorphism
$G[\rho_c^{\diamondsuit}]\rightarrow G[\sigma_c^{\diamondsuit}]$ (see \eqref{eq:nu and sigma for covering of H}).
It thus remains to verify $\langle d_g,1\rangle\langle d_g,1\rangle$ acts trivially on
$W_{\psi}(\rho_c(\tau_0))\otimes W_{\psi}(\rho_c(\tau_0^*))$, and (only) here we use the assumption $\mu_{2m}<F^*$.
Over non-archimedean fields this was proved in Proposition~\ref{proposition:applicability of the GL GL construction}; the archimedean case
is clear because then the diagonal embedding of $G$ in $\GL_{rkc}$ acts by a character on the Jacquet module (see \cite[\S~1.4]{DimaKaplan}).
\end{proof}
\begin{appremark}
One can construct \eqref{Appendixglglobal1} with cusp forms $\varphi_1$ and $\varphi_2$ in the spaces of $\pi_1$ and $\pi_2^{\vee}$, for genuine cuspidal representations $\pi_1$ and $\pi_2$.
The proof of the theorem implies $\mathrm{I}(\delta)$ and thereby \eqref{Appendixglglobal1} vanish,
unless $\pi_1=\pi_2=\pi$.
\end{appremark}

As explained in \cite[\S~4.2]{PSR} one can choose a right half-plane which is independent of $\varrho$, where \eqref{Appendixglobal2} is absolutely convergent. Then if we define $Z(s,\varphi_1,\varphi_2,f)=\lim\limits_{l\to\infty}Z(s,\varphi_1,\varphi_2,f,\varrho_l)$ for a monotonic increasing sequence $\varrho_l\rightarrow 1$, in this half-plane
by the Monotone Convergence Theorem $Z(s,\varphi_1,\varphi_2,f)$ equals
\begin{align*}
\int\limits_{G({\A})}\int\limits_{U_0({\A})}
\{\varphi_1^R,\pi(\langle g,1\rangle)\varphi_2^R\}f_{W_{\psi}(\mathcal{E}_{\tau})}(s,\langle\delta u_0,(\eta_{2rkc}^{\diamondsuit})^{-1}(\delta u_0)\rangle
\langle\mathfrak{e}_2(g),1\rangle)
\,\psi_U(u_0)\,du_0\,dg.
\end{align*}
Now as in \S~\ref{global symplectic} when we take decomposable data, under the conjectures of \S~\ref{speh gbl} but assuming $\Lambda$ is factorizable,
$Z(s,\varphi_1,\varphi_2,f)$ is Eulerian and the integrals with unramified data are then given by Theorem~\ref{theorem:unr GL}.

\section{List of common notation and definitions}\label{list of common notation}
\begin{itemize}[leftmargin=*]
\item $F$ - a local field of characteristic $0$, or a number field; \S~\ref{Groups}.
\item $G$ - usually $\Sp_{2n}$; \S~\ref{Groups}.
\item $J_l$ - the permutation matrix in $\GL_l$ with $1$ along the anti-diagonal; \S~\ref{Groups}.
\item $B_l=T_l\ltimes N_l$ - the fixed Borel subgroup of $\Sp_{2l}$, where $T_l$ is the torus; \S~\ref{Groups}.
\item $W_{G}$ - the Weyl group of $G$; \S~\ref{Groups}.
\item $B_{\GL_d}=T_{\GL_d}\ltimes N_{\GL_d}$ - the Borel subgroup of upper triangular invertible matrices; \S~\ref{Groups}.
\item $P_{\beta}=M_{\beta}\ltimes V_{\beta}$ - a standard parabolic subgroup of $\GL_d$ where $\beta$ is a composition of $d$; \S~\ref{Groups}.

\item $\Phi_d$, $\Phi_d^+$ - simple and positive roots of $\GL_d$, usually regarded as pairs $(i,j)$; \S~\ref{Groups}.

\item $W_{\GL_d}$ - the Weyl group of $\GL_d$; \S~\ref{Groups}.

\item $\ell(w)$ - the length of $w\in W_{\GL_d}$; \S~\ref{Groups}.

\item $b^*=J_d{}^tb^{-1}J_d$ for $b\in\GL_d$ (${}^tb$ - the transpose of $b$); \S~\ref{Groups}.
\item ${}^xy=xyx^{-1}$; \S~\ref{Groups}.
\item $\mathcal{O}$, $\mathcal{P}$, $\varpi$, $q$ - $p$-adic fields, $\mathcal{O}$ is the ring of integers, $\varpi\in\mathcal{O}$, $|\varpi|=q^{-1}$ and $\mathcal{P}=\varpi\mathcal{O}$; \S~\ref{Groups}.
\item $K_G$ - over $p$-adic fields $K_G=G(\mathcal{O})$, globally a maximal compact subgroup of $G(\A)$; \S~\ref{Groups}.
\item $\mu_m$ - the $m$-th roots of unity; \S~\ref{Covering groups}.
\item $(\cdot,\cdot)_m$ - the $m$-th Hilbert symbol; \S~\ref{Covering groups}.
\item $G^{(m)}$ - the $m$-fold covering of $G$ when $G$ is a symplectic group; \S~\ref{Covering groups}.
\item $\langle g,\epsilon\rangle$ - a general element of a covering group; \S~\ref{Covering groups}.
\item $\widetilde{X}$ - when a covering of $G$ is fixed and $X<G$, the covering obtained by restriction; \S~\ref{Covering groups}.
\item $\varepsilon$ - a fixed faithful character of $\mu_m$ used to define genuine representations; \S~\ref{Covering groups}.
\item $c$ - $c=2n$ except when the integrals for $\GL_n\times\GL_k$ are considered, then $c=n$; \S~\ref{embedding}.
\item $r=m$ if $m$ is odd, otherwise $r=m/2$; \S~\ref{embedding}.
\item $H$ - usually $\Sp_{2rkc}$; \S~\ref{embedding}.
\item $P=M_P\ltimes U_P$ - the Siegel parabolic subgroup of $H$; \S~\ref{embedding}.
\item $Q=M_Q\ltimes U$ - a standard parabolic subgroup of $H$, $G\times G$ is embedded in $M_Q$ as the stabilizer of a character $\psi_U$ of $U$; \S~\ref{embedding}.
\item $\psi$ - a nontrivial additive character of a local field or of $F\backslash \A$; \S~\ref{embedding}.
\item $\psi_U$ - the character of $U$ stabilized by $G\times G$; \eqref{eq:character of U}.
\item $(g_1,g_2)$, $\mathfrak{e}_i$ - the embedding $G\times G\hookrightarrow H$, $\mathfrak{e}_1(g)=(g,1)$ and $\mathfrak{e}_2(g)=(1,g)$; \S~\ref{embedding}.
\item $\sigma_{2rkc}$, $\sigma_{d}$ - the $2$-cocycle of \cite{BLS} for $\GL_{2rkc}$, $\GL_d$, which we can compute explicitly in many cases, but does not globalize. In a local-global context we denote $\sigma_{2rkc,\nu}$ or $\sigma_{\nu}$; \S~\ref{local covering}.
\item $\mathfrak{W}_d$ - a set of representatives for $W_{\GL_d}$ constructed in \cite{BLS}, and used for $\sigma_d$;
    \S~\ref{local covering}.
\item $\mathfrak{W}^+_d$ - a group containing $\mathfrak{W}_d$ and diagonal matrices whose coordinates are $\pm1$; \S~\ref{local covering}.

\item $F$ is unramified - $F$ is non-archimedean, $|m|=1$, $q$ is odd and $q>3$; \S~\ref{local covering}.

\item $\eta_d$ - in a purely local context and when $F$ is unramified, $\langle y,\eta_d(y)\rangle$ is a splitting of $K_{\GL_{d}}$, and in particular $\langle y,\eta_{2rkc}(y)\rangle$ is the canonical splitting of $K_{H}$; \S~\ref{local covering}.

\item $\sigma^*_{c}$, $\sigma^{*,rk}_{c}$, $\varsigma_{*,c}$: in a local context $\sigma^*_{c}$ and $\sigma^{*,rk}_{c}$ are $2$-cocycles of $\GL_c$, cohomologous to $\sigma_c$ on $\SL_c$ (even on $\GL_c$), and $\sigma^*_{c}(g,g')=\sigma_{c}(g^*,{g'}^*)$ where $g^*=J_c{}^tg^{-1}J_c$. The $1$-cochain relating $\sigma_c$ to $\sigma_c^*$ is $\varsigma_{*,c}$;
    \eqref{eq:sigma c *}, \eqref{eq:sigma c * rk}, \S~\ref{local covering}, Proposition~\ref{proposition:sigma * and sigma on GLd}.

\item $\varphi\mapsto\varphi^{\varsigma_{*,c}^{rk+1}}$ - the local mapping of functions from the right copy of $G^{(m)}$ to the left; \eqref{eq:iso sigma sigma * rk for functions}.

\item $\rho$ - the $2$-cocycle for $H(\A)$, $\rho_{\nu}$ is the local version which is cohomologous to $\sigma_{\nu}$ where $\eta_{\nu}$ is the $1$-cochain. We write $\rho_{2l}$ for this $2$-cocycle on $\Sp_{2l}(\A)$; \eqref{eq:nu and sigma for covering of H}, \S~\ref{global covering}.

\item $\eta$ - the global $1$-cochain $\eta=\prod_{\nu}\eta_{\nu}$, which is well defined on $H(F)$ but not on $H(\A)$; \eqref{eq:nu and sigma for covering of H}, \S~\ref{global covering}.

   \item $\rho_L$ and $\rho_R$ - the global $2$-cocycles for the left and right copies of $\Sp_{c}(\A)$ in $H(\A)$;
    \eqref{gbl rho on left and right}.

\item $\varphi\mapsto\varphi^{(\eta^{\times})^{-1}}$ - the global version of $\varphi\mapsto\varphi^{\varsigma_{*,c}^{rk+1}}$ (from the right copy to the left); \eqref{eq:gbl iso rhoR rhoL for functions}.

    \item $\sigma$ - globally this is the product of $2$-cocycles $\sigma_{\nu}$ (usually $\sigma_{2rkc,\nu}$), e.g., on $H(F)$; \S~\ref{global covering}.
\item $\langle h,\eta^{-1}(h)\rangle$ - the splitting of $H(F)$, used for the definition of automorphic forms; \S~\ref{global covering}.
\item $\langle g,\eta(\mathfrak{e}_1(g))\rangle$, $\langle g,\eta^{-1}(\mathfrak{e}_2(g))\rangle$ - the splittings of $G(F)$ in the two realizations of $G^{(m)}(\A)$; \S~\ref{global covering}.

\item $\eta^{\times}$ - a $1$-cochain relating between the coverings on both copies of $G(\A)$ in $H^{(m)}(\A)$; \S~\ref{global covering}.

\item ${}^{\iota}$ - the involution $g\mapsto{}^{\iota}g$ of $G$, $\iota=\left(\begin{smallmatrix}&I_{c/2}\\I_{c/2}\end{smallmatrix}\right)$,
     lifted to $G^{(m)}$ locally and globally; \S~\ref{extension of the involution}.

\item $\varsigma_{\iota,c}$ - the $1$-cochain used for the local lift of $\iota$ to $G^{(m)}$; \eqref{eq:sigma iota c}, \eqref{eq:iota on the local coverings}.

\item $\pi^{\iota}$ - for a representation $\pi$ of $G^{(m)}$, $\pi^{\iota}$ acts on the space of $\pi$ by $\pi^{\iota}(g)=\pi({}^{\iota}g)$; \S~\ref{extension of the involution}.

\item $\eta_{\iota,R}$ - the global analog of $\varsigma_{\iota,c}$; \eqref{eq:eta iota R}.

\item $\GL_{rkc}^{(m,r)}$ (or $\GL_{d}^{(m,r)}$) - $\GL_{rkc}$ is embedded in $H=\Sp_{2rkc}$ by $\{\diag(g,g^*):g\in\GL_{rkc}\}$,
$\GL_{rkc}^{(m,r)}$ is the covering obtained by restriction from $H^{(m)}$.
Similarly for $\GL_{d}^{(m,r)}$ using $\Sp_{2d}^{(m)}$; \S~\ref{covering of the Levi}.

\item $C_{r,d}=\{xI_d:x\in F^{*r}\}$, $\widetilde{C}_{r,d}$ is the center of $\GL_d^{(m,r)}$; \S~\ref{covering of the Levi}.

\item $\sigma^{\diamondsuit}_{d}$ - the local $2$-cocycle of $\GL_{d}^{(m,r)}$,
$\sigma^{\diamondsuit}_{d}(b,b')=\sigma_{2d}(\diag(b,b^*),\diag(b',{b'}^*))$; \eqref{eq:sigma square}.

\item $\rho^{\diamondsuit}_{d}$ - the global $2$-cocycle of $\GL_{rkc}^{(m,r)}(\A)$ defined using $\rho_{2d}$, locally $\rho^{\diamondsuit}_{d,\nu}$; \eqref{eq:rho square}.

\item ${}^*\langle b,1\rangle$ - an involution of $\GL_{rkc}^{(m,r)}$ over a local field; \eqref{eq:involution b*0}.

\item $\pi^*$ - for a representation $\pi$ of $\GL_{rkc}^{(m,r)}$, $\pi^*$ acts on the space of $\pi$ by $\pi^*(b)=\pi(b^*)$;
\eqref{eq:involution b*0}.

\item $\eta_{d}^{\diamondsuit}$ - the local $1$-cochain relating between $\rho^{\diamondsuit}_{d}$
and $\sigma^{\diamondsuit}_{d}$; \eqref{eq:cohomologous $2$-cocycles on GL}.

\item $(\eta^{\diamondsuit}_{rkc})^{-1}(b)$ - the $1$-cochain used to define a splitting of $\GL_d(F)$, $N_{\GL_d}(\A)$ in $\GL_{rkc}^{(m,r)}(\A)$; \S~\ref{covering of the Levi}.

\item $\rho_{\beta}$ - a global block-compatible $2$-cocycle on $M_{\beta}$, $\rho_{\beta}(b,b')=\prod_{i=1}^l\rho^{\diamondsuit}_{\beta_i}(b_i,b_i')$; \eqref{eq:rho beta}.
\item $\eta_{\beta}$, $\eta_{\beta,\nu}$ - the $1$-cochain relating between $\rho_{\beta}$ (resp., $\rho_{\beta,\nu}$) and $\rho_d^{\diamondsuit}$ (resp., $\rho_{d,\nu}^{\diamondsuit}$); \eqref{eq:eta beta nu}.

\item $A$ - $A=F^{*r}\mathcal{O}^*$ is a maximal abelian subgroup of $\GL_1^{(m,r)}$; \S~\ref{unramified reps}.
\item $\gamma_{\psi}$ - the Weil factor; \eqref{eq:some props of gamma psi'}, \S~\ref{unramified reps}.
\item $T_{d,*}$, $T_{\GL_d,*}$ - the image in $T_d$, $T_{\GL_d}$ of certain maximal abelian subgroups of $\widetilde{T}_{d}$, $\widetilde{T}_{\GL_d}$; \S~\ref{unramified reps}.
\item $\mathrm{I}_{G^{(m)}}(\vartheta,\mu)$, $\mathrm{I}_{\GL_d^{(m,r)}}(\vartheta,\chi)$ - unramified principal series representations; \S~\ref{unramified reps}.
\item $t_{\pi}$, $t_{\tau}$ - Satake parameters for representations $\pi$ and $\tau$; \eqref{eq:Satake symplectic}, \S~\ref{unramified reps}.

\item $L_{\vartheta}(s,\tau,\sigma)$, $L_{\vartheta,\vartheta'}(s,\tau\times\tau')$, $L_{\vartheta_{\pi},\vartheta_{\tau}}(s,\pi\times\tau)$ - unramified $L$-factors; \S~\ref{unramified reps}.

\item $\Lambda_t$, $\Lambda$ - the Whittaker functional on $\mathrm{I}_{\GL_d^{(m,r)}}(\vartheta,\chi)$ given by a Jacquet integral with left-translation by $t$, $\Lambda=\Lambda_{I_d}$; \S~\ref{whittaker functionals}.

\item $\xi^0$ - a normalized unramified vector in a space of a principal series representation;
 \S~\ref{Casselman--Shalika formula}.

\item $\mathfrak{g}$ - a Gauss sum, $\mathfrak{g}(l)=-q^{-1}$ when $l\equiv 0\,(m)$; \eqref{eq:Gauss Sum}.

\item $\mathbf{x}$ - the ``linear parameter" of an unramified principal series representation of $\GL_d^{(m,r)}$; \S~\ref{Casselman--Shalika formula}.

\item $\mathbf{a}^*=(-a_1,\ldots,-a_d)$ where $\mathbf{a}=(a_1,\ldots,a_d)\in\Z^d$; \S~\ref{Casselman--Shalika formula}.
\item $\varpi^{\mathbf{a}}=\diag(\varpi^{a_1},\ldots,\varpi^{a_d})$,
    $t_{\mathbf{a}}=\langle\varpi^{\mathbf{a}},1\rangle$; \S~\ref{Casselman--Shalika formula}.

\item $\equiv$ on $\Z^r$ - $\mathbf{a}\equiv \mathbf{b}$ if $\mathbf{a}-\mathbf{b}\in r\Z^d$; \S~\ref{Casselman--Shalika formula}.

\item $W_{\mathbf{a}}(\mathbf{b},\vartheta,\chi)=\Lambda_{t_{\mathbf{a}}}(t_{\mathbf{b}}\cdot\xi^0)$ - the value of
$\Lambda_{t_{\mathbf{a}}}$ on the right-translation of $\xi^0$ by $t_{\mathbf{b}}$;
\eqref{eq:def W Lambda}, \eqref{eq:CS formula}.

\item $\mathbf{v}(t)$ - the valuations vector of a torus element $t$; \S~\ref{Casselman--Shalika formula}.

\item $\mathbf{x}_{\alpha}=x_ix_j^{-1}$; \S~\ref{Casselman--Shalika formula}.
\item ${}^w\mathbf{x}$ - the action of $W_{\GL_d}$ on $\mathbf{x}$, $({}^w\mathbf{x})_i=x_{w^{-1}(i)}$; \S~\ref{Casselman--Shalika formula}.

\item $\mathbf{x}(\mathbf{a})=(x_1^{a_1/r},\ldots,x_d^{a_d/r})$ for $\mathbf{a}\in r\Z^d$; \S~\ref{Casselman--Shalika formula}.

\item $w[\mathbf{a}]$, $w_{\alpha}[\mathbf{a}]$ - an action of $W_{\GL_d}$ on $\Z^d$; \S~\ref{Casselman--Shalika formula}.

\item $M(w_{\alpha})$, $M(w)$ - an intertwining operator between genuine unramified principal series; \S~\ref{Casselman--Shalika formula}.

\item $\tau_{t,t'}$, $\tau_{t,t'}^1$, $\tau_{t,t'}^2$ - the coefficients appearing in the Casselman--Shalika formula for coverings of $\GL_d^{(m,r)}$, the notation $\tau_{\mathbf{a},\mathbf{b}}$ is also used;
    \eqref{eq:general equation defining tau a b}--\eqref{eq:tau for simple ref}, Proposition~\ref{proposition:coefficients tau t t' 1 2}, \eqref{eq:tau 1 nonzero}, \eqref{eq:tau 2 nonzero}.

\item $\lceil x \rceil$ - the smallest integer greater than or equal to $x$; Proposition~\ref{proposition:coefficients tau t t' 1 2}.

\item $\Theta_{d,m,r,\vartheta}$ - an exceptional representation of $\GL_d^{(m,r)}$; \S~\ref{exceptional}.

\item $\beta\succsim \beta'$ - the partition $\beta$ is greater than or not comparable with the partition $\beta'$; \S~\ref{fourier coeff on orbits}.

\item $\mathcal{O}(\rho,\beta,\psi)$ - locally $\Hom_{V(\beta)}(\rho,\psi)$, globally a set of Fourier coefficients; \eqref{int:general Fourier coeff}, \S~\ref{fourier coeff on orbits}.

\item $\psi_{\lambda}$ - a (possibly degenerate) character of $N_{\GL_d}$ defined for a composition $\lambda$ of $d$; \S~\ref{Semi Whittaker coeff}.

\item $W^0$ - the normalized unramified Whittaker function in the space of $\mathcal{W}(\Theta_{r,m,r,\vartheta})$; \S~\ref{local theta speh for c=1}.

\item $p_l(\mathbf{x})$ - the $l$-th complete symmetric polynomial in $\mathbf{x}$; \S~\ref{local theta speh for c=1}.

\item $\Lambda[S]$ - an $(rk,c)$ functional over a finite set of archimedean or ramified places $S$; \eqref{eq:partial Lambda S}.

\item $\eta_{rkc}^{\triangle}$ - the splitting of $\SL_c^{\triangle}(\A)$ in $\GL_{rkc}^{(m,r)}(\A)$; Corollary~\ref{corollary:gbl splitting for SL_c}.

\item $\Theta_{rc,m,r,\vartheta}(\chi)$ - a representation parabolically induced from the tensor of $k$ copies of
exceptional representations $\Theta_{rc,m,r,\vartheta}$, each twisted by $\chi_i$; \eqref{gen of Suzuki new}.

\item $\chi_{\Theta,c}$, $\chi_{\Theta}$ - the ``linear part" of the inducing character of $\Theta_{rc,m,r,\vartheta}(\chi)$ as a subrepresentation, $\chi_{\Theta}=\chi_{\Theta,1}$; \eqref{eq:Thete rc m as a subrep}.

\item $\rho_c(\tau)$ - the local component of the $(rk,c)$ representation $\mathcal{E}_{\tau}$ at an unramified place; \S~\ref{Local components of rk c speh}.

\item $\mathcal{W}(\rho_c(\tau))$ - the $(rk,c)$ model of $\rho_c(\tau)$; \S~\ref{Local components of rk c speh}.

\item $\{\varphi_1,\varphi_2\}$ - the $G(\A)$-invariant pairing; \S~\ref{global symplectic}.

\item $f_{W(\mathcal{E}_{\tau})}$, $f_{W(\mathcal{E})}$ - globally the composition of a section $f$ with the $(rk,c)$ functional, locally a section taking values in the $(rk,c)$ model; \eqref{eq:def of f W global}, \S~\ref{global symplectic}.

\item $\lfloor x \rfloor$ - the largest integer smaller than or equal to $x$; \eqref{eq:d tau(s)}.

\end{itemize}

\newcommand{\etalchar}[1]{$^{#1}$}
\def\cprime{$'$} \def\cprime{$'$} \def\cprime{$'$}

\end{document}